\theoremstyle{plain}
\newtheorem*{theorem*}{Theorem}
\newtheorem{theorem}{Theorem}[section]
\newtheorem*{lemma*}{Lemma}
\newtheorem{lemma}[theorem]{Lemma}
\newtheorem*{proposition*}{Proposition}
\newtheorem{proposition}[theorem]{Proposition}
\newtheorem*{corollary*}{Corollary}
\newtheorem{corollary}[theorem]{Corollary}
\newtheorem*{claim*}{Claim}
\newtheorem{question}[theorem]{Question}
\theoremstyle{definition}
\newtheorem*{assumption*}{Assumption}
\newtheorem*{definition*}{Definition}
\newtheorem{definition}[theorem]{Definition}
\newtheorem*{convention*}{Convention}
\newtheorem{convention}[theorem]{Convention}
\newtheorem*{example*}{Example}
\newtheorem{example}[theorem]{Example}
\newtheorem*{algorithm*}{Algorithm}
\newtheorem*{remark*}{Remark}
\newtheorem*{remarks*}{Remarks}
\newtheorem{remark}[theorem]{Remark}
\numberwithin{equation}{section}
\newcounter{mnotecount}[section]
\newcommand{\rmnote}[1]{}
\def\al{\alpha}
\def\be{\beta}
\def\ga{\gamma}
\def\de{\delta}
\def\ep{\epsilon}
\def\et{\eta}
\def\th{\theta}
\def\vt{\vartheta}
\def\ka{\kappa}
\def\la{\lambda}
\def\rh{\rho}
\def\si{\sigma}
\def\ta{\tau}
\def\ph{\varphi}
\def\vh{\varphi}
\def\ch{\chi}
\def\ps{\psi}
\def\om{\omega}
\def\Ga{\Gamma}
\def\La{\Lambda}
\def\Si{\Sigma}
\def\Om{\Omega}
\def\C{\mathbb{C}}
\def\K{\mathbb{K}}
\def\N{\mathbb{N}}
\def\R{\mathbb{R}}
\def\cA{\mathcal{A}}
\def\cB{\mathcal{B}}
\def\cD{\mathcal{D}}
\def\cE{\mathcal{E}}
\def\cH{\mathcal{H}}
\def\cN{\mathcal{N}}
\def\cQ{\mathcal{Q}}
\def\fG{\mathfrak{G}}
\def\fM{\mathfrak{M}}
\def\fN{\mathfrak{N}}
\def\fS{\mathfrak{S}}
\def\fT{\mathfrak{T}}
\def\fW{\mathfrak{W}}
\def\sP{\mathscr{P}}
\def\RR{\R}
\def\NN{\N}
\def\p{\partial}
\def\ol{\overline}
\def\ul{\underline}
\def\preceq{\preccurlyeq}
\def\A{\;\forall}
\def\E{\;\exists}
\def\Bmap{j^\infty_{\{0\}}}
\renewcommand{\Re}{\mathrm{Re}}
\renewcommand{\Im}{\mathrm{Im}}
\def\<{\langle}
\def\>{\rangle}
\renewcommand{\o}{\circ}
\def\supp{\on{supp}}
\let\on=\operatorname
\newcommand{\sr}[1]%
{\ifmmode{}^\dagger\else${}^\dagger$\fi\ifvmode
\vbox to 0pt{\vss
 \hbox to 0pt{\hskip\hsize\hskip1em
 \vbox{\hsize3cm\raggedright\pretolerance10000
 \noindent #1\hfill}\hss}\vss}\else
 \vadjust{\vbox to0pt{\vss%
 \hbox to 0pt{\hskip\hsize\hskip1em%
 \vbox{\hsize3cm\raggedright\pretolerance10000%
 \noindent #1\hfill}\hss}\vss}}\fi%
}
\title{Ultradifferentiable extension theorems: a survey}
\author[Armin Rainer]
{Armin Rainer}
\address{Fakult\"at f\"ur Mathematik, Universit\"at Wien,
Oskar-Morgenstern-Platz~1, A-1090 Wien, Austria}
\email{armin.rainer@univie.ac.at}
\begin{document}

\begin{abstract}
  We survey ultradifferentiable extension theorems, i.e., quantitative versions of Whitney's classical extension theorem,
  with special emphasis on the existence of continuous linear extension operators.
  The focus is on Denjoy--Carleman classes for which we develop the theory from scratch and discuss important related concepts
  such as (non-)quasianalyticity.
  It allows us to give an efficient and, to a fair extent, elementary
  introduction to Braun--Meise--Taylor classes based on their representation
  as intersections and unions of  Denjoy--Carleman classes.
\end{abstract}

\thanks{The author was supported by the Austrian Science Fund (FWF), Grant P~32905-N and START Programme Y963}
\keywords{Whitney's extension theorem, Borel map, (non-)quasianalyticity, ultradifferentiable classes, extension operators}
\subjclass[2020]{
26E10, 
30D60, 
46E10, 
46E25, 
46A63, 
58C25  
}
\date{\today}

\maketitle

\tableofcontents

\section{Introduction}

The development of differential analysis in the last century was strongly influenced by Whitney's work on differentiable functions.
Especially fruitful was Whitney's extension theorem which describes the restrictions of smooth functions on $\R^n$ to closed subsets.
For a closed non-empty set $A \subseteq \R^n$ we consider the \emph{restriction map}
\[
  j^\infty_A : C^\infty(\R^n) \to C^0(A)^{\N^n}, \quad f \mapsto (f^{(\al)}|_A)_{\al \in \N^n}.
\]
Taylor's theorem implies that the image $j^\infty_A  C^\infty(\R^n)$ is contained in the set $\cE(A)$ of \emph{Whitney jets}:
$F= (F^\al)_\al \in C^0(A)^{\N^n}$ belongs to $\cE(A)$ if for all compact subsets $K \subseteq A$, all $p \in \N$, and all $|\al| \le p$
\[
	(R^p_xF)^\al(y) = o(|x-y|^{p-|\al|}) \quad \text{ as } |x-y| \to 0, ~ x,y \in K,
\]
where
\[
	(R^p_xF)^\al(y) := F^\al(y) - \sum_{|\be|\le p -|\al|} \frac{(y-x)^\be}{\be!} F^{\al+\be}(x).
\]
Whitney's extension theorem \cite{Whitney34a} states that $j^\infty_A  C^\infty(\R^n)=\cE(A)$, i.e.,
every Whitney jet $F \in \cE(A)$ admits an extension $f \in C^\infty(\R^n)$ such that $j^\infty_Af = F$.

In this paper we shall be interested in quantitative versions of Whitney's extension theorem.

\begin{question} \label{Q1}
	Given that a Whitney jet $F \in \cE(A)$ satisfies certain uniform growth properties,
	can these properties be preserved by the extension?
	If they cannot be preserved, can the loss of regularity be controlled?
  Can the extension be performed by a continuous linear map?
\end{question}

The uniform growth properties we have in mind are bounds on the multisequence of partial derivatives imposed in terms of a suitable weight
sequence which measures the deviation from the Cauchy estimates and hence from analyticity.
They give rise to so-called \emph{ultradifferentiable classes}
which form scales of regularity classes between the real analytic and the smooth class.
More specifically, the use of a \emph{weight sequence} $M$ leads to \emph{Denjoy--Carleman classes} which originated around 1900
in work of Borel, Gevrey, Holmgren, Hadamard, etc. A different approach based on decay properties of the Fourier transform is due to
Beurling and Bj\"orck in the 1960s. An equivalent description of this second approach was later given by Braun, Meise, and Taylor.
The resulting classes are called \emph{Braun--Meise--Taylor classes}; their definition involves a \emph{weight function} $\om$.

In this survey we will first treat the extension problem extensively for Denjoy--Carleman classes.
In a second part we will discuss it for Braun--Meise--Taylor classes. Our treatment of the latter is based on
their description as intersections and unions of Denjoy--Carleman classes in terms of an associated
family of weight sequences (called \emph{weight matrix}).

Our study of \Cref{Q1} will first focus on the simplest case when the closed set $A$ is just the singleton $\{0\}$.
Then $j^\infty_{\{0\}}$ is called \emph{Borel map}, $\cE(\{0\})$ is isomorphic to
the ring of formal power series $\K[[X_1,\ldots,X_n]]$ (where $\K$ is $\R$ or $\C$), and Whitney's extension theorem reduces to Borel's lemma.
\'Emile Borel's discovery at the end of the 19th century
that there exist classes of smooth functions which have an analytic continuation property (called \emph{quasianalyticity}),
but contain functions that are nowhere analytic, led to a lot of activity. A class of smooth functions is quasianalytic
if the restriction of the Borel map to this class is injective. The famous Denjoy--Carleman theorem characterizes quasianalyticity
of Denjoy--Carleman classes in terms of the weight sequence $M$. We give a full proof based on two
elementary lemmas (\Cref{lem:sizenearflat} and \Cref{lem:specialbump}) which
will turn out to be foundational for the whole theory.
For example, we will deduce the characterization of quasianalyticity for
Braun--Meise--Taylor classes from the Denjoy--Carleman theorem.

The bounds on the derivatives defining an ultradifferentiable class naturally
induce corresponding bounds for the infinite jet of such functions at $0$.
Thus the natural codomain of the Borel map on an ultradifferentiable class is a sequence space defined by the corresponding bounds,
and one may ask about surjectivity of this map.
We will see that surjectivity on Denjoy--Carleman classes is equivalent to a condition on the weight sequence $M$
that is called \emph{strong non-quasianalyticity}.
It will turn out that this condition is indispensable for the existence of suitable (even \emph{optimal}) cutoff functions
crucial for the solution of the extension problem.
By different methods (studying the distribution of zeros of quasianalytic functions and their derivatives)
we shall also see that the Borel map is never surjective in the quasianalytic setting,
except for the real analytic class.

All ultradifferentiable classes considered in this paper come in two types,
\emph{Beurling} type and \emph{Roumieu} type.
The classes of Beurling type carry a natural locally convex topology that make them to Fr\'echet spaces, the
topology of the classes of Roumieu type is more complicated.
We shall see that the Borel map on Denjoy--Carleman classes of Beurling type is even split surjective if
$M$ is strongly non-quasianalytic. In the Roumieu case, generally, the Borel map does not admit a continuous linear
right-inverse.

Having solved the extension problem for the Borel map, the solution of the general problem for arbitrary
non-empty closed subsets $A$ of $\R^n$ depends on the existence of \emph{optimal} cutoff functions.
They are optimal in the sense that they realize necessary sharp bounds.
In conjunction with a family of Whitney cubes for $A$, they yield optimal partitions of unity with the help
of which the local extensions (provided by the solution for the Borel map) can be glued to a global extension
of Whitney ultrajets on $A$. The construction of these cutoff functions involves some delicate properties of
auxiliary functions associated with the weight sequence $M$.

The existence of extensions in the Beurling case follows by a reduction argument from the Roumieu case.
But we will also find optimal cutoff functions of Beurling type and utilize them for a direct proof of the
split surjectivity of the restriction map $j^\infty_A$ on Denjoy--Carleman classes of Beurling type.
In this way we give an elementary constructive proof for the existence of extension operators in this setting,
without relying on the abstract splitting theorem for Fr\'echet spaces, which we shall however discuss briefly.

If strong non-quasianalyticity is lacking, and hence extension preserving the class is impossible,
one is led to the problem of controlling the loss of regularity.
This problem is solved by describing the pairs
of weight sequences $(M,N)$ (later called \emph{admissible pairs})
such that Whitney jets on a closed set $A$ satisfying $M$-bounds admit extensions with $N$-bounds, both in the Beurling and
Roumieu case. Technically, this requires an interesting tool inspired by Dyn'kin's theory of almost analytic functions:
instead of the local extensions (for the singleton) one uses the Taylor polynomials of the jet
to higher and higher degree as the closed set $A$ is approached.

The second part of the survey is dedicated to a concise introduction to Braun--Meise--Taylor classes
and a discussion of the extension problem in that framework.
Our approach is based on the description of these classes as suitable intersections and unions of Denjoy--Carleman classes
which allows (to a fair extent) for a swift and elementary treatment, building on the extensive study of the latter in part one.
The discussion of the extension problem for Braun--Meise--Taylor classes will be more expository (compared to part one).
We will present the state of the art of this area mostly without proofs but indicating
the important ideas and methods involved.
Interestingly, there appear phenomena that are not present in the framework of Denjoy--Carleman classes.
For instance, if extensions preserving the class are possible, even in the Beurling case
they cannot always be realized by extension operators.
The existence of extension operators (always in the Beurling case) depends on the geometry of the set $A$ and
on the weight function $\om$.
The singleton $\{0\}$, for example, admits an extension operator if and only if $\om$ has a certain additional property
(namely, it is a so-called \emph{(DN)-weight}).
And, if $\om$ has this property, then every closed set $A$ has an extension operator.
Given that $\om$ lacks that property, then the existence of an extension operator on a compact set $K$ is characterized by the
linear topological invariant (DN) of the space of Whitney $\om$-ultrajets of Beurling type on $K$.
A compact set may or may not satisfy this condition: sets with real analytic boundary do
and sets with sharp (i.e., infinitely flat) cusps do not.

In order to make the exposition not too technical we do not strive for the utmost generality of the results.
But we provide appropriate references for the interested reader.

This survey article arose from notes for a mini-course I gave at the
\emph{School of Real Geometry in Fortaleza}, Brazil, May 24-28, 2021.\footnote{\url{https://sites.google.com/view/scregefor2020/}}

\subsection*{Notation}

We use standard multiindex notation.

The supremum norm is denoted by $\|u\|_K := \sup_{x\in K} |u(x)|$.

A sequence $(a_k)$ is called \emph{increasing} if $a_k \le a_{k+1}$
and \emph{strictly increasing} if $a_k<a_{k+1}$ for all $k$;
analogously with \emph{decreasing}. That an increasing sequence $(a_k)$ tends to infinity
is abbreviated by $a_k \nearrow \infty$.

For two non-negative sequences $a=(a_k)$ and $b=(b_k)$ we write $a \lesssim b$ if there is a constant $C>0$
such that $a_k \le C b_k$ for all $k$; similarly for functions.
We will also use $a \preceq b$ for $a_k^{1/k} \lesssim b_k^{1/k}$ and
$a \lhd b$ for $a_k^{1/k}/ b_k^{1/k} \to 0$.

The indicator function of a subset $A \subseteq \R^n$ is denoted by $\mathbf{1}_A$.
If $A$ is non-empty, then $\on{diam} A := \sup\{|a- b| : a,b \in A\}$ is the diameter of $A$.
If $B \subseteq \R^n$ is another non-empty set, then
$\on{dist}(A,B) := \inf\{|a-b| : a \in A, \, b \in B\}$ is the Euclidean distance between $A$ and $B$.
In particular, $d_A(z) = d(z,A) = \on{dist}(z,A) := \on{dist}(\{z\},A)$ for $z \in \R^n$.

For open subsets $A,B \subseteq \R^n$ we use the symbol $A \Subset B$ to indicate that $A$ is relatively compact in $B$.
And we write $K \subseteq_{cp} A$ if $K$ is a compact subset of $A$.

\part{Denjoy--Carleman classes}

\section{Ultradifferentiable functions}

In this section we discuss natural growth conditions for the infinite sequence of derivatives of smooth functions.

\subsection{Denjoy--Carleman classes}

Let $M= (M_k)_{k \in \mathbb{N}}$ be a sequence of positive real numbers.
Let $U$ be an open subset of $\mathbb{R}^n$.
For $f \in C^\infty(U)$, $\rho >0$, and compact $K \subseteq U$ we consider
the seminorm
\[
	\|f\|^M_{K,\rho} := \sup_{x \in K} \sup_{\alpha \in \mathbb{N}^n} \frac{|f^{(\alpha)}(x)|}{\rho^{|\alpha|} M_{|\alpha|}}
	= \sup_{\alpha \in \mathbb{N}^n} \frac{\|f^{(\alpha)}\|_K}{\rho^{|\alpha|} M_{|\alpha|}},
\]
where $\|u\|_K := \sup_{x\in K} |u(x)|$ denotes the supremum norm.
(In the definition of $\|f\|^M_{K,\rho}$ it is not important that $K$ is compact; we will occasionally use $\|f\|^M_{U,\rho}$ for
open sets $U$.)
We define the \emph{Denjoy--Carleman class of Beurling type}
\begin{align*}
	\mathcal{E}^{(M)}(U) &:= \big\{f \in C^\infty(U) : \forall K \subseteq_{cp} U ~\forall \rh > 0 : \|f\|^M_{K,\rho} < \infty\big\}
\end{align*}
and the \emph{Denjoy--Carleman class of Roumieu type}
\begin{align*}
	\mathcal{E}^{\{M\}}(U) &:= \big\{f \in C^\infty(U) : \forall K \subseteq_{cp} U ~\exists \rh > 0 : \|f\|^M_{K,\rho} < \infty\big\}.
\end{align*}
It is convenient to consider also the \emph{global Denjoy--Carleman classes}
\begin{align*}
	\mathcal{B}^{(M)}(U) &:= \big\{f \in C^\infty(U) : \forall \rh > 0 : \|f\|^M_{U,\rho} < \infty\big\},
	\\
	\mathcal{B}^{\{M\}}(U) &:= \big\{f \in C^\infty(U) : \exists \rh > 0 : \|f\|^M_{U,\rho} < \infty\big\},
\end{align*}
as well as the Banach space $\cB^{M}_\rh(U) = \{f \in C^\infty(U) : \|f\|^M_{U,\rho} < \infty\}$.
We consider the natural locally convex topologies
on these spaces, i.e.,
\begin{align*}
  		\mathcal{B}^{(M)}(U) &= \on{proj}_{n\in \N} \cB^{M}_{1/n}(U), \quad
	\mathcal{B}^{\{M\}}(U) = \on{ind}_{n\in \N} \cB^{M}_n(U),
\end{align*}
and
\begin{align*}
	\cE^{[M]}(U) = \on{proj}_{V \Subset U} \mathcal{B}^{[M]}(V).
\end{align*}

\begin{convention} \label{convention}
  For notational convenience we use $\mathcal{E}^{[M]}$  as placeholder for either $\mathcal{E}^{(M)}$
  or $\mathcal{E}^{\{M\}}$
  with the understanding that if the placeholder appears repeatedly in a statement then it must be interpreted by
  either $\mathcal{E}^{(M)}$ or $\mathcal{E}^{\{M\}}$ at all instances.
  In an analogous fashion we use $\cB^{[M]}$, $\cD^{[M]}$, etc.
\end{convention}

The spaces $\cB^{(M)}(U)$ and $\cE^{(M)}(U)$ are Fr\'echet spaces, $\cB^{\{M\}}(U)$ is a Silva space,
and all spaces are nuclear provided that the weight sequence $M$ is derivation-closed, see \Cref{sec:stability}.

Furthermore, we need the subspaces consisting of functions with compact support: for compact $K \subseteq \R^n$ let
\[
	\cD^{[M]}(K) := \{f \in \cE^{[M]}(\R^n) : \on{supp} f \subseteq K\}
\]
with the induced topology.
If $U \subseteq\R^n$ is open,
we set
\[
	\cD^{[M]}(U) := \on{ind}_{K\subseteq_{cp} U} \cD^{[M]}(K).
\]
We shall see below that $\cD^{[M]}(U)$ can be trivial.

For the sequence $M_k = k!$, the Roumieu class $\mathcal{E}^{\{M\}}(U)$ coincides with the class of real analytic functions $C^\omega(U)$ and
the Beurling class $\mathcal{E}^{(M)}(U)$ consists of the restrictions to $U$ of the entire functions $\cH(\C^n)$ on $\mathbb{C}^n$.
This follows from the Cauchy estimates. In general the sequence $M$ describes a deviation from the Cauchy estimates.

\subsection{Weight sequences}

It is convenient to require some basic mild regularity conditions for the sequence $M$.
Most importantly we assume that $M$ is
\emph{log-convex}, that is $(\log M_k)_k$ is a convex sequence.\footnote{Generally, this assumption can be made without loss of generality.
Indeed, if $M$ is a positive sequence, then
$\mathcal{E}^{\{M\}}(U) = \mathcal{E}^{\{\underline M\}}(U)$
and $\mathcal{E}^{(M)}(U) = \mathcal{E}^{(\underline M)}(U)$,
where $\ul M$ is the log-convex minorant of $M$,
provided that the classes contain the real analytic class, respectively.
This follows from the Cartan--Gorny inequality, see \cite{Gorny39,Cartan40} and \cite[Theorem 2.15]{RainerSchindl12}.
\label{fn1}}
Equivalently, the associated sequence $\mu$ with
\[
  \mu_k := \frac{M_k}{M_{k-1}}, \quad k \ge 1,
\]
is increasing. We also assume $\mu_0:= 1 \le \mu_1$.

\begin{definition}
We call a positive log-convex sequence $M=(M_k)$ with $M_0= 1 \le M_1$ and $M_k^{1/k} \to \infty$
a \emph{weight sequence}.
\end{definition}

Log-convexity of the sequence $m$ defined by
\[
  m_k := \frac{M_k}{k!}
\]
is a stronger useful property; it means that $\mu^*$ with $\mu^*_0:=1$ and
\begin{equation*}
  \mu_k^* := \frac{m_k}{m_{k-1}} = \frac{\mu_k}{k}, \quad k\ge 1,
\end{equation*}
is increasing.
A weight sequence $M$ with this property is said to be \emph{strongly log-convex}.

\begin{lemma} \label{lem:atinfinity}
	A positive log-convex sequence $M$ with $M_0=1 \le M_1$ has the following properties:
	\begin{enumerate}
		\item $M_k^{1/k} \le \mu_k$ for all $k \ge 1$.
		\item The sequences $(M_k)$ and $(M_k^{1/k})$ are increasing.
		\item $M_j M_k \le M_{j+k}$ for all $j,k \in \N$.
		\item $M_k^{1/k} \to \infty$ if and only if $\mu_k \to \infty$.
	\end{enumerate}
\end{lemma}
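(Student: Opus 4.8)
The plan is to derive everything from the monotonicity of the quotient sequence $\mu_k = M_k/M_{k-1}$, which is equivalent to log-convexity of $M$. First, for (1): writing $M_k = M_0 \prod_{j=1}^k \mu_j = \prod_{j=1}^k \mu_j$, and using that $(\mu_j)$ is increasing, each factor satisfies $\mu_j \le \mu_k$, hence $M_k \le \mu_k^k$, which gives $M_k^{1/k} \le \mu_k$. For (2): monotonicity of $(M_k)$ is immediate since $\mu_k \ge \mu_1 \ge 1$, so $M_k = \mu_k M_{k-1} \ge M_{k-1}$. For monotonicity of $(M_k^{1/k})$, the cleanest route is to observe that log-convexity of $(\log M_k)$ together with $\log M_0 = 0$ forces the slopes $\frac{\log M_k - \log M_0}{k} = \frac{\log M_k}{k}$ to be increasing in $k$ (a standard fact: for a convex function vanishing at $0$, the average slope from $0$ is nondecreasing); exponentiating gives $M_k^{1/k} \le M_{k+1}^{1/(k+1)}$.

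For (3), the superadditivity $M_j M_k \le M_{j+k}$: fix $j$ and write $M_{j+k}/M_k = \prod_{i=k+1}^{j+k} \mu_i$ and $M_j/M_0 = M_j = \prod_{i=1}^{j} \mu_i$. Since $(\mu_i)$ is increasing, $\mu_i \le \mu_{k+i}$ for each $i = 1, \dots, j$, so $M_j = \prod_{i=1}^j \mu_i \le \prod_{i=1}^j \mu_{k+i} = M_{j+k}/M_k$, i.e. $M_j M_k \le M_{j+k}$. (The case $j = 0$ or $k = 0$ is trivial since $M_0 = 1$.) For (4): if $\mu_k \to \infty$, then given $R > 0$ pick $N$ with $\mu_k > R$ for $k > N$; for $k > N$ we have $M_k = M_N \prod_{i=N+1}^k \mu_i \ge M_N R^{k-N}$, so $M_k^{1/k} \ge M_N^{1/k} R^{1-N/k} \to R$, whence $\liminf M_k^{1/k} \ge R$ for every $R$, giving $M_k^{1/k} \to \infty$. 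Conversely, by part (1), $M_k^{1/k} \le \mu_k$, so $\mu_k \to \infty$ whenever $M_k^{1/k} \to \infty$.

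The only mildly delicate point is the monotonicity of $(M_k^{1/k})$ in (2), which relies on the lemma that a convex sequence $(c_k)$ with $c_0 = 0$ has nondecreasing averages $c_k/k$; this in turn follows because convexity gives $c_k \le \frac{k}{k+1} c_{k+1} + \frac{1}{k+1} c_0 = \frac{k}{k+1} c_{k+1}$, i.e. $c_k/k \le c_{k+1}/(k+1)$. Everything else is a direct telescoping computation using that $(\mu_k)$ is increasing with $\mu_k \ge 1$, so I do not anticipate any real obstacle; the main thing to be careful about is handling the boundary indices $k = 0, 1$ and the normalization $M_0 = 1$ consistently.
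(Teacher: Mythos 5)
Your proof is correct and follows essentially the same route as the paper: every part is deduced by telescoping $M_k=\mu_1\cdots\mu_k$ and using that $(\mu_k)$ is increasing with $\mu_1\ge 1$. The only cosmetic difference is in (2), where your chord inequality $c_k/k\le c_{k+1}/(k+1)$ for the convex sequence $c_k=\log M_k$ is, after exponentiating, exactly the paper's reduction of $M_k^{1/k}\le M_{k+1}^{1/(k+1)}$ to $M_k^{1/k}\le\mu_{k+1}$; and in (4) your direct $\liminf$ estimate replaces the paper's use of the monotonicity of $M_k^{1/k}$ along the subsequence $2k_n$.
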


\begin{proof}
By assumption, we have $1 \le \mu_1 \le \mu_2 \le \cdots$ and thus $M_k^{1/k} = (\mu_1 \mu_2 \cdots \mu_k)^{1/k} \le \mu_k$, that is (1).
An easy computation shows that $M_k^{1/k} \le M_{k+1}^{1/(k+1)}$ is equivalent to $M_k^{1/k}\le \mu_{k+1}$. So (2) follows.
To see (3) observe that
$M_jM_k\le (\mu_1\cdots\mu_j)(\mu_1\cdots\mu_k) \le \mu_1 \cdots \mu_{j+k} = M_{j+k}$.
Let us check (4).
That $M_k^{1/k} \to \infty$ implies $\mu_k \to \infty$ follows from (1).
	If $\mu_k \to \infty$, then for each positive integer $n$ there is $k_n$ such that $\mu_k \ge n$ for all $k \ge k_n$. Then
	\[
		M_{2k_n} =  M_{k_n} \mu_{k_n+1} \cdots \mu_{2k_n} \ge n^{k_n}
	\]
	and thus $M_{2k_n}^{1/(2k_n)} \ge  \sqrt{n}$ for all $n$. So $M_k^{1/k} \to \infty$, since $M_k^{1/k}$ is increasing.
\end{proof}

\subsection{Characteristic functions}

Every Roumieu class contains functions whose derivatives
max out the defining bounds.

\begin{lemma} \label{lem:characteristic}
	If $M=(M_k)$ is a weight sequence, then there exists $f \in \cB^{\{M\}}(\mathbb{R},\mathbb{C})$
	such that $f^{(k)}(0) = i^k a_k$ with $a_k \ge M_k$ for all $k \in \mathbb{N}$ and $i= \sqrt{-1}$.
\end{lemma}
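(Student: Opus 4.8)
The plan is to write $f$ explicitly as a superposition of complex exponentials whose frequencies are built from the quotients $\mu_k=M_k/M_{k-1}$, but \emph{dilated by a fixed factor}. Recalling $\mu_0=1$, I would try
\[
  f(x) \;:=\; \sum_{j=0}^{\infty} c_j\, e^{i t_j x},
  \qquad t_j := 2\mu_j,\quad c_j := \frac{M_j}{t_j^{\,j}}=\frac{M_j}{(2\mu_j)^{j}}>0 .
\]
Since $M_j^{1/j}\le\mu_j$ (see \Cref{lem:atinfinity}), one has $c_j\le 2^{-j}$, so the series converges; differentiating formally term by term suggests $f^{(k)}(0)=i^k a_k$ with
\[
  a_k \;=\; \sum_{j=0}^{\infty} c_j t_j^{\,k} \;=\; \sum_{j=0}^{\infty} 2^{\,k-j}\,M_j\,\mu_j^{\,k-j}\, .
\]
The lower bound $a_k\ge M_k$ is then immediate for every $k\in\mathbb{N}$ (including $k=0$): the single summand $j=k$ equals $2^{0}M_k\mu_k^{\,0}=M_k$.

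The substantive step is the upper bound $a_k\le C\rho^k M_k$, which is what forces $f$ into $\cB^{\{M\}}$. I would split the series at $j=k$. For $0\le j\le k$, monotonicity of $\mu$ gives $\mu_j^{\,k-j}\le \mu_{j+1}\cdots\mu_k$, hence $M_j\mu_j^{\,k-j}\le M_k$, and summing the geometric weights $2^{k-j}$ over $j=0,\dots,k$ yields at most $(2^{k+1}-1)M_k$. For $j=k+p$ with $p\ge1$, writing $M_{k+p}=M_k\,\mu_{k+1}\cdots\mu_{k+p}$ and using $\mu_{k+1}\cdots\mu_{k+p}\le\mu_{k+p}^{\,p}$ gives $M_{k+p}\mu_{k+p}^{-p}\le M_k$, so the extra factor $2^{-p}$ coming from the dilated frequencies produces a convergent tail $\sum_{p\ge1}2^{-p}M_k=M_k$. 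Altogether $M_k\le a_k\le 2^{k+1}M_k$.

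Once this finite bound is in hand as an estimate for the series of nonnegative terms, the formal manipulations are justified a posteriori: each termwise-differentiated series is dominated on $\mathbb{R}$ by $\sum_j c_j t_j^{\,k}=a_k<\infty$, so $f\in C^\infty(\mathbb{R},\mathbb{C})$, term-by-term differentiation is legitimate, $f^{(k)}(0)=i^k a_k$, and $\|f^{(k)}\|_{\mathbb{R}}\le a_k\le 2^{k+1}M_k$; hence $\|f\|^M_{\mathbb{R},2}\le 2$ and $f\in\cB^{\{M\}}(\mathbb{R},\mathbb{C})$, completing the proof.

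The one point I would watch is precisely the tail for $j>k$: without the dilation, $\sum_{j>k}M_j\mu_j^{\,k-j}$ need not be $\le C\rho^k M_k$ uniformly in $k$ (for slowly increasing $\mu$ it can acquire a nontrivial, though subexponential, factor in $k$), and one would be tempted to chase growth conditions on $\mu$. Inserting the fixed factor $2$ into the frequencies---equivalently, rescaling the variable---turns this tail into a plain geometric series and is exactly what makes the estimate go through with no extra hypothesis.
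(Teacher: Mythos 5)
Your construction is correct and is essentially Bang's argument used in the paper: the paper takes $f(x)=\sum_k \frac{M_k}{(2\mu_{k+1})^k}e^{2i\mu_{k+1}x}$ and bounds the termwise-differentiated series via $\mu_{k+1}^{j-k}\le M_j/M_k$, which is exactly your two-case estimate $M_j\mu_j^{k-j}\le M_k$ up to a harmless index shift in the frequencies. Both versions give $\|f^{(k)}\|_{\R}\le 2^{k+1}M_k$ and $f^{(k)}(0)/i^k\ge M_k$, so nothing further is needed.
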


\begin{proof}
	The following construction is due to \cite{Bang46}, see also \cite[Theorem 1]{Thilliez08}.
	Since $\mu_k$ is increasing, we have
	\[
		\mu_{k+1}^{j-k} \le \frac{M_j}{M_k}, \quad \text{ for all } (j,k) \in \mathbb{N}^2.
	\]
	Then the function
	\[
			f(x) := \sum_{k = 0}^\infty \frac{M_k}{(2 \mu_{k+1})^k} e^{2 i \mu_{k+1} x}
	\]
	has the required properties. Indeed, $j$-fold term-wise differentiation yields
	\[
		\Big|\sum_{k = 0}^\infty \frac{M_k}{(2 \mu_{k+1})^k} (2 i \mu_{k+1})^j  e^{2 i \mu_{k+1} x}\Big|
		\le M_j \sum_{k = 0}^\infty 2^{j-k}  = 2^{j+1} M_j
	\]
	whence the series is uniformly convergent and $f\in \cB^{\{M\}}(\mathbb{R}, \mathbb C)$. Moreover,
	\[
		\frac{f^{(j)}(0)}{i^j} =  \sum_{k = 0}^\infty \frac{M_k}{(2 \mu_{k+1})^{k-j}}  \ge M_j.
	\]
	The lemma is proved.
\end{proof}

\subsection{Inclusion relations}

Let $M=(M_k)$ and $N= (N_k)$ be positive sequences.
It follows from the definition that the inclusion $\mathcal{E}^{[M]}(U) \subseteq \mathcal{E}^{[N]}(U)$ holds
for all open $U \subseteq \mathbb{R}^n$ and all $n \ge 1$, provided that the positive sequences $M$ und $N$ satisfy
\[
		\sup_{k \in \mathbb{N}} \Big(\frac{M_k}{N_k} \Big)^{1/k} < \infty.
\]
In that case we write $M \preceq N$.
Notice that the inclusion $\mathcal{E}^{(M)}(U) \subseteq \mathcal{E}^{\{M\}}(U)$ is trivially true.
It is also clear from the definition that
\[
	\lim_{k \to \infty} \Big(\frac{M_k}{N_k} \Big)^{1/k} = 0
\]
implies the inclusion $\mathcal{E}^{\{M\}}(U) \subseteq \mathcal{E}^{(N)}(U)$
for all open $U \subseteq \mathbb{R}^n$, $n \ge 1$.
We abbreviate this relation by $M \lhd N$\index{MN@$M \lhd N$}.

\begin{lemma}
	Let $M$ be a weight sequence and $N$ a positive sequence. Then:
	\begin{enumerate}
		\item The inclusion $\mathcal{E}^{[M]}(U) \subseteq \mathcal{E}^{[N]}(U)$ for all open $U \subseteq \mathbb{R}^n$, $n\ge 1$, is equivalent to $M \preceq N$.
		\item The inclusion $\mathcal{E}^{\{M\}}(U) \subseteq \mathcal{E}^{(N)}(U)$ for all open $U \subseteq \mathbb{R}^n$, $n\ge 1$, is equivalent to $M \lhd N$.
		\item The inclusion $\mathcal{E}^{(M)}(U) \subseteq \mathcal{E}^{\{N\}}(U)$ for all open $U \subseteq \mathbb{R}^n$, $n\ge 1$, is equivalent to $M \preceq N$.
	\end{enumerate}
	Actually, for the necessity of $M \preceq N$, respectively $M \lhd N$,
	it is enough to have the respective inclusion relation for some non-empty open subset $U$
	of $\mathbb{R}$.
\end{lemma}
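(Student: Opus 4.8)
The plan is to prove sufficiency by the standing observations preceding the lemma, and to concentrate on necessity, which by the final sentence only needs to be extracted from an inclusion over a single nonempty open $U \subseteq \R$. The key idea is to feed the characteristic function of \Cref{lem:characteristic} into the assumed inclusion. Suppose $\cE^{[M]}(U) \subseteq \cE^{[N]}(U)$ for some open $U \subseteq \R$; after translating we may assume $0 \in U$. By \Cref{lem:characteristic} there is $f \in \cB^{\{M\}}(\R,\C) \subseteq \cE^{\{M\}}(U)$ with $|f^{(k)}(0)| = a_k \ge M_k$ for all $k$.

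First I would handle case (1). Since $\cE^{\{M\}}(U) \subseteq \cE^{[N]}(U)$ follows from $\cE^{[M]}(U) \subseteq \cE^{[N]}(U)$ in both the Beurling and Roumieu readings (using $\cE^{(M)} \subseteq \cE^{\{M\}}$ and, in the Beurling target case, that the inclusion of a Roumieu class into a Beurling class is even stronger — so I should be slightly careful and in the Beurling instance instead directly use that $f|_U \in \cE^{\{M\}}(U)$ need not lie in $\cE^{(M)}(U)$, hence I run the argument with the target class $\cE^{[N]}$ and the source function merely lying in $\cE^{\{M\}}$; this works because $f \in \cB^{\{M\}}$ already gives $f \in \cE^{[N]}(U)$ via the hypothesis applied after noting $\cE^{(M)}\subseteq\cE^{\{M\}}$ and the transitivity of the relevant inclusions). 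Concretely: pick a compact $K \subseteq U$ containing $0$ in its interior. For $\cE^{[N]} = \cE^{\{N\}}$ there is $\rho > 0$ with $\|f\|^N_{K,\rho} =: C < \infty$, so $M_k \le a_k = |f^{(k)}(0)| \le C\,\rho^k N_k$, giving $(M_k/N_k)^{1/k} \le C^{1/k}\rho \to \rho$, i.e. $M \preceq N$. For $\cE^{[N]} = \cE^{(N)}$ one gets this for every $\rho > 0$, which is even stronger. This settles (1); statement (3) is literally the same computation since the hypothesis there is $\cE^{(M)}(U) \subseteq \cE^{\{N\}}(U)$ and $f \in \cB^{\{M\}}$ restricts into $\cE^{(M)}(U)$? — no: $f \notin \cE^{(M)}$ in general, so for (3) I instead need a Beurling-type witness. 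The fix is to run (3) with the translated/dilated functions $f_\lambda(x) := f(\lambda x)$: for (3) the source is $\cE^{(M)}(U)$, and I should instead invoke that the inclusion $\cE^{(M)} \subseteq \cE^{\{N\}}$ forces $M \preceq N$ by a scaling argument on a suitable family rather than on a single characteristic function — but the cleanest route is to reduce (3) to (1): if $\cE^{(M)}(U) \subseteq \cE^{\{N\}}(U)$ and $M \not\preceq N$, choose, using log-convexity of $M$, a weight sequence $L$ with $M \lhd L$ and $L \not\preceq N$ (e.g. $L_k := M_k c_k^k$ with $c_k \nearrow \infty$ slowly), so that $\cE^{\{M\}} \subseteq \cE^{(L)} \subseteq \cE^{(M)}$-fails... this is getting delicate.

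Let me instead organize necessity uniformly through the characteristic function together with a diagonal/scaling trick, which is the standard device. The robust statement is: if $\cE^{\{M\}}(U) \subseteq \cE^{(N)}(U)$ then $M \lhd N$ — this is (2) — and (1),(3) then follow by inserting trivial inclusions. For (2): take $f$ as above, so $f|_U \in \cE^{\{M\}}(U) \subseteq \cE^{(N)}(U)$, hence $\|f\|^N_{K,\rho} < \infty$ for \emph{every} $\rho>0$; thus for each $\rho>0$ there is $C_\rho$ with $M_k \le a_k \le C_\rho \rho^k N_k$, so $\limsup_k (M_k/N_k)^{1/k} \le \rho$ for all $\rho > 0$, i.e. $(M_k/N_k)^{1/k}\to 0$, which is $M \lhd N$. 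For (1): sufficiency is the remark before the lemma; for necessity, if $\cE^{[M]}(U)\subseteq\cE^{[N]}(U)$ in the Roumieu reading, the single-function computation above gives $M \preceq N$ directly; in the Beurling reading, $\cE^{(M)}(U)\subseteq\cE^{(N)}(U)$, and here I use the classical trick of applying the inclusion to the family $g_j(x) := $ a fixed real-analytic-type bump rescaled, or more simply: $\cE^{(M)}(U)\subseteq\cE^{(N)}(U)$ together with $\cE^{(M)} = \operatorname{proj}$ of Banach spaces and the closed graph / open mapping theorem yields that the inclusion is continuous, so each seminorm $\|\cdot\|^N_{K,1/n}$ is bounded by finitely many $\|\cdot\|^M_{K',1/m}$; evaluating on monomials $x \mapsto x^k/k!$ (or on the exponentials $e^{i t x}$) then forces $(M_k/N_k)^{1/k} \lesssim 1$. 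For (3): $\cE^{(M)}(U)\subseteq\cE^{\{N\}}(U)$; since this sits between the Beurling-to-Beurling and Roumieu-to-Roumieu cases, apply the closed graph theorem to the inclusion $\cE^{(M)}(U) \to \cE^{\{N\}}(U)$ (Fréchet into (LB)-space), obtain continuity, hence on a compact $K$ some $\rho, C, m$ with $\|\cdot\|^N_{K,\rho} \le C \|\cdot\|^M_{K',1/m}$; testing on the exponentials $e^{itx}$, whose $\|\cdot\|^M_{K',1/m}$ norm is $\sup_k (m|t|)^k/M_k$ and whose $k$-th derivative at a point has modulus $|t|^k$, gives $|t|^k \le C \rho^k M_k \sup_j(m|t|)^j/M_k \cdot$... — after optimizing in $t$ and $k$ via the associated function of $M$ this yields $M \preceq N$.

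The main obstacle, and the place I would spend the most care, is the Beurling-target necessity in (1) and (3): a single Roumieu characteristic function does not live in a Beurling source class, so I cannot test with one fixed function. The clean resolution is to invoke the closed graph theorem (all the spaces involved are webbed/ultrabornological, so inclusions with closed graph are continuous) to convert the set-theoretic inclusion into a seminorm estimate, and then to test that estimate on an explicit family — either the rescaled exponentials $x \mapsto e^{itx}$ or the functions $x\mapsto (itx)^k/k!\cdot\chi(x)$ for a fixed cutoff $\chi$ — and optimize over the free parameters using \Cref{lem:atinfinity}(1) (the bound $M_k^{1/k}\le\mu_k$) to extract the comparison $(M_k/N_k)^{1/k} = O(1)$. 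Everything else is the routine substitution $M_k \le |f^{(k)}(0)| \le (\text{seminorm})\cdot\rho^k N_k$ followed by taking $k$-th roots and letting $k \to \infty$.
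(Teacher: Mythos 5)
Your proposal arrives, after several false starts, at the same two-pronged strategy the paper adopts: test with the characteristic function of \Cref{lem:characteristic} when the source class is of Roumieu type, and use a closed-graph argument (the paper cites~\cite{Bruna80/81}) when the source is of Beurling type. That final organization is correct and matches the paper.

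Two intermediate claims that you make and only partially retract are, however, false and would mislead a linear reader. First, from $\cE^{(M)}(U)\subseteq\cE^{(N)}(U)$ one cannot deduce $\cE^{\{M\}}(U)\subseteq\cE^{(N)}(U)$: since $\cE^{(M)}\subseteq\cE^{\{M\}}$, the latter inclusion is \emph{stronger}, not implied, so the ``transitivity'' you invoke runs in the wrong direction. The characteristic function lives in $\cB^{\{M\}}$ but in general not in $\cE^{(M)}$, and cannot be fed through a Beurling hypothesis. Your ``this settles (1)'' is therefore valid only for the Roumieu reading of (1). Second, the claim ``(1),(3) then follow [from (2)] by inserting trivial inclusions'' fails for exactly the same reason. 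You do abandon both of these routes in your final paragraph; they should be excised outright rather than left hedged, since a reader following the text before the corrections would be misled.

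A small technical note on the closed-graph step for~(3): $\cE^{\{N\}}(U)$ is a projective limit over compact exhaustions of (LB)-spaces, not itself an (LB)-space, so you should localize first --- e.g.\ apply the closed graph theorem to the inclusion $\cE^{(M)}(K')\to\cB^{\{N\}}(K)$ for compacts $K\subseteq K'\Subset U$. Once continuity is in hand, the exponential test closes the argument as you sketch: with $|t|=\mu_k/m$, log-convexity of $M$ places the maximizing index in $\sup_j (m|t|)^j/M_j$ at $j=k$, yielding the value $\mu_k^k/M_k$; the $N$-seminorm of $e^{it\cdot}$ at scale $\rho$ is at least $\mu_k^k/((m\rho)^kN_k)$; and the continuity estimate then gives $M_k \le C(m\rho)^kN_k$, i.e.\ $M\preceq N$. (Your displayed inequality has a typo: the factor on the target side should be $N_k$ and the source sup should run over $M_j$.)
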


If the smaller space is of Roumieu type, then the necessity of the various conditions follows easily from \Cref{lem:characteristic}.
That $\mathcal{E}^{(M)}(\R) \subseteq \mathcal{E}^{(N)}(\R)$ and $\mathcal{E}^{(M)}(\R) \subseteq \mathcal{E}^{\{N\}}(\R)$
imply $M \preceq N$, respectively, can be shown by an argument of \cite{Bruna80/81} based on the closed graph theorem.

Let $M$ and $N$ be weight sequences.
The lemma implies that $\mathcal{E}^{[M]} = \mathcal{E}^{[N]}$ if and only if
$M \preceq N \preceq M$. In that case we say that the weight sequences $M$ and $N$ are
\emph{equivalent}.
(Here (and below) $\mathcal{E}^{[M]} \subseteq \mathcal{E}^{[N]}$ means that the inclusion
$\mathcal{E}^{[M]}(U) \subseteq \mathcal{E}^{[N]}(U)$ holds for all open subsets $U \subseteq \mathbb R^n$, $n \ge 1$.)

In view of $C^\om(U) \cong \cE^{\{(k!)_k\}}(U)$ and $\cH(\C^n) \cong \cE^{((k!)_k)}(U)$ one easily deduces the
following corollary.

\begin{corollary}
	Let $M$ be a positive sequence. Then:
	\begin{enumerate}
		\item The inclusions $C^\omega(U) \subseteq \mathcal{E}^{\{M\}}(U)$ and
		$\mathcal{H}(\mathbb{C}^n) \subseteq \mathcal{E}^{(M)}(U)$ for all open $U \subseteq \mathbb{R}^n$, $n \ge 1$,
		are both equivalent to the condition
		\[
			\liminf_{k\to \infty} m_k^{1/k} > 0.
		\]
		\item The inclusion $C^\omega(U) \subseteq \mathcal{E}^{(M)}(U)$ for all open $U \subseteq \mathbb{R}^n$, $n \ge 1$,
		is equivalent to the condition
		\[
			\lim_{k\to \infty} m_k^{1/k} = \infty.
		\]
		In this case the inclusion $C^\omega(U) \subseteq \mathcal{E}^{(M)}(U)$ is strict.
	\end{enumerate}
\end{corollary}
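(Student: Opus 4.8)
The plan is to reduce everything to the already-established inclusion lemma together with the identifications $C^\om(U) \cong \cE^{\{(k!)_k\}}(U)$ and $\cH(\C^n) \cong \cE^{((k!)_k)}(U)$. Writing $L = (k!)_k$, the conditions to be characterized are: $\cE^{[L]} \subseteq \cE^{[M]}$, which by parts (1) and (3) of the inclusion lemma is equivalent to $L \preceq M$; and $\cE^{\{L\}} \subseteq \cE^{(M)}$, which by part (2) is equivalent to $L \lhd M$. Thus the whole corollary becomes a matter of translating $L \preceq M$ and $L \lhd M$ into statements about $m_k = M_k/k!$.

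First I would treat part (1). By definition $L \preceq M$ means $\sup_k (L_k/M_k)^{1/k} = \sup_k (k!/M_k)^{1/k} < \infty$, i.e.\ there is $C$ with $(k!/M_k)^{1/k} \le C$ for all $k$, equivalently $m_k^{1/k} = (M_k/k!)^{1/k} \ge 1/C > 0$ for all $k$. Since $m_k^{1/k}$ is bounded below away from $0$ for all $k$ precisely when $\liminf_{k\to\infty} m_k^{1/k} > 0$ (the finitely many initial terms are positive anyway, so only the tail matters), this is exactly the stated condition. Note that the two inclusions $C^\om(U) \subseteq \cE^{\{M\}}(U)$ and $\cH(\C^n) \subseteq \cE^{(M)}(U)$ both correspond to $L \preceq M$ via parts (1) and (3) respectively, so they are indeed equivalent to each other and to the same condition.

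For part (2), $L \lhd M$ means $\lim_{k\to\infty}(L_k/M_k)^{1/k} = \lim_{k\to\infty}(k!/M_k)^{1/k} = 0$, which is the same as $m_k^{1/k} = (M_k/k!)^{1/k} \to \infty$; so $C^\om(U) \subseteq \cE^{(M)}(U)$ for all $U$ is equivalent to $\lim_k m_k^{1/k} = \infty$ by part (2) of the inclusion lemma. It remains to note strictness: if $m_k^{1/k} \to \infty$ then in particular $\liminf m_k^{1/k} > 0$, so by part (1) we also have $\cE^{\{M\}}(U) \supseteq C^\om(U)$; but $\cE^{(M)}(U) \subsetneq \cE^{\{M\}}(U)$ whenever $M$ is (or is equivalent to) a weight sequence, which here follows because $M$ itself is a positive sequence with $m_k^{1/k}\to\infty$ hence $M_k^{1/k}\to\infty$, so $\cE^{\{M\}}$ contains a characteristic function by \Cref{lem:characteristic} which cannot lie in the Beurling class $\cE^{(M)}$; thus $C^\om(U) \subseteq \cE^{(M)}(U) \subsetneq \cE^{\{M\}}(U)$ forces the first inclusion to be strict as well (one may also simply exhibit a real analytic function, e.g.\ a suitably chosen power series with radius of convergence $1$, that fails the Beurling bound for small $\rho$).

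The only mildly delicate point — not a real obstacle — is making sure the passage between ``for all $k$'' and ``$\liminf$'' in part (1) is legitimate: the quantity $\sup_k(k!/M_k)^{1/k}$ is finite iff the sequence $(m_k^{1/k})$ is bounded below by a positive constant, and since no $m_k$ vanishes and the relevant behaviour is at infinity, this is equivalent to $\liminf_k m_k^{1/k}>0$. Everything else is a direct unwinding of the definitions of $\preceq$ and $\lhd$ against the previously proved inclusion lemma.
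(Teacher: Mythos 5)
Your reduction of the two equivalences to the inclusion lemma via $C^\om(U)\cong\cE^{\{(k!)_k\}}(U)$ and $\cH(\C^n)\cong\cE^{((k!)_k)}(U)$, and the translation of $L\preceq M$ and $L\lhd M$ (with $L=(k!)_k$) into $\liminf_k m_k^{1/k}>0$ and $m_k^{1/k}\to\infty$, is exactly the route the paper intends (it offers no written proof beyond ``one easily deduces''), and that part is correct. A cosmetic point: the inclusion $\cH(\C^n)\subseteq\cE^{(M)}(U)$ is the Beurling instance of part~(1) of the lemma, not part~(3) (which is the mixed Beurling-into-Roumieu case); this does not affect the conclusion since both yield $L\preceq M$.

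The strictness argument, however, has a genuine gap. From $C^\om(U)\subseteq\cE^{(M)}(U)\subsetneq\cE^{\{M\}}(U)$ you cannot conclude that the \emph{first} inclusion is strict: a chain $A\subseteq B\subsetneq C$ is perfectly compatible with $A=B$. Your parenthetical fallback is also backwards: a real analytic function that ``fails the Beurling bound'' would contradict the inclusion $C^\om(U)\subseteq\cE^{(M)}(U)$ you have just established (and indeed no such function exists when $m_k^{1/k}\to\infty$); strictness requires a function \emph{in} $\cE^{(M)}(U)$ that is \emph{not} real analytic. A correct argument along the lines of the paper's toolkit: since $\ep_k:=\inf_{j\ge k}m_j^{1/j}\nearrow\infty$, one can choose $\si_k\nearrow\infty$ with $\si_k=o(\ep_k)$ so that $N_k:=k!\,\si_k^k$ is a weight sequence with $(k!)_k\lhd N\lhd M$; then $\cE^{\{N\}}(U)\subseteq\cE^{(M)}(U)$, and \Cref{lem:characteristic} applied to $N$ (note this lemma needs log-convexity, which is why one passes to such an $N$ rather than applying it to the merely positive sequence $M$ as you do) produces $f\in\cE^{\{N\}}$ with $|f^{(k)}(0)|\ge N_k$, hence $(|f^{(k)}(0)|/k!)^{1/k}\ge\si_k\to\infty$, so $f$ is not analytic at $0$. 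Alternatively, one can argue that $C^\om(U)=\cE^{(M)}(U)$ would force both $L\lhd M$ and $M\preceq L$, i.e.\ $(L_k/M_k)^{1/k}\to 0$ and $(L_k/M_k)^{1/k}$ bounded below, a contradiction --- though the necessity direction of the lemma for a Beurling domain requires a closed-graph argument, so the explicit construction above is the more elementary fix.
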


\subsection{Stability properties} \label{sec:stability}

Under suitable assumptions on the weight sequence, Denjoy--Carleman classes are stable under basic operations of smooth analysis.
Let $M=(M_k)$ be a weight sequence.
\begin{description}
	\item[Pointwise multiplication] Let $U \subseteq \R^n$ be open. Then $\cE^{[M]}(U)$ forms a ring with respect to pointwise
	multiplication of functions. This follows from the Leibniz rule and \Cref{lem:atinfinity}(3).
	\item[Analytic change of variables] Suppose that $C^\om(U) \subseteq \cE^{[M]}(U)$ and $\vh : V \to U$ is real analytic, $V \subseteq \R^m$ open.
	Then the pullback $\vh^* : \cE^{[M]}(U) \to \cE^{[M]}(V)$, $\vh^*(f) = f\o \vh$, is well-defined; cf.\ \cite{FurdosNenningRainer}.
	Hence one can consider $\cE^{[M]}$-functions on real analytic manifolds.
\end{description}
For the following properties the weight sequence $M$ must satisfy additional conditions.
\begin{description}
 	\item[Stability under differentiation] $\mathcal{E}^{[M]}(U)$ is stable under differentiation
	(i.e.\ $\partial^\al \mathcal{E}^{[M]}(U) \subseteq \mathcal{E}^{[M]}(U)$ for all $\al \in \mathbb{N}^n$)
	if and only if
	\begin{equation} \label{eq:der-closed}
		\sup_{k\ge 1} \Big(\frac{M_{k+1}}{M_k}\Big)^{1/k} < \infty.
	\end{equation}
	In this case we say that $M$ is \emph{derivation-closed}.
	It is easy to see that \eqref{eq:der-closed} implies that
	there is a constant $C\ge 1$ such that $M_k \le C^{k^2}$ for all $k$.
	Actually, also the converse implication holds if $M$ is log-convex, see \cite{Mandelbrojt52}.
	\item[Stability under composition] $\cE^{[M]}$ is stable under composition (i.e.,
		$\mathcal{E}^{[M]}(U) \o \mathcal{E}^{[M]}(V,U) \subseteq \mathcal{E}^{[M]}(V)$ for all open sets
		$U \subseteq \mathbb{R}^n$ and $V \subseteq \mathbb{R}^m$) provided that
		\begin{equation}
			\label{eq:FdB}
			M^\o
			\preceq M,
		\end{equation}
		where $m_k^\o := \max\{m_j m_{\al_1} \cdots m_{\al_j} : \al_i \in \mathbb{N}_{>0}, ~ \al_1 + \cdots + \al_j = k\}$. This follows easily from
		Fa\`a di Bruno's formula; cf.\ \cite{RainerSchindl12}.\footnote{It was shown in \cite{RainerSchindl12} that \eqref{eq:FdB} is also necessary
		for the stability under composition of $\mathcal{E}^{[M]}$ provided that the class is stable by differentiation; this is based on the characterization of inverse-closedness and
		stability of $\mathcal{E}^{[M]}$ under superposition by entire functions due to \cite{Rudin62,Bruna80/81}.}
		We call \eqref{eq:FdB} the \emph{(FdB)-property}.
 \end{description}

 \begin{remark}
 	The (FdB)-property is not easy to check, but it follows from several conditions that are easier to handle.
	If $M$ is a positive sequence, then each of the following conditions implies that $M$ has the (FdB)-property:
	\begin{enumerate}
		\item $m=(m_k)$ is log-convex (i.e.\ $M$ is strongly log-convex).
		\item $m_j m_k \le m_1 m_{j+k-1}$ for all $j,k \ge 1$.
		\item $M$ is derivation-closed and the sequence $m_k^{1/k}$ is \emph{almost increasing}\index{almost increasing}, i.e.,
		there exists $C>0$ such that $m_j^{1/j} \le C m_k^{1/k}$ for all $j \le k$.
	\end{enumerate}
	Conversely, if $M$ is a weight sequence having the (FdB)-property, then
	$m_k^{1/k}$ is almost increasing; cf.\ \cite{RainerSchindl12}.
	See \cite[3.6]{RainerSchindl12} for an example of a weight sequence $M$ such that $\cE^{[M]}$ is stable under composition, but there
	is no strongly log-convex sequence $N$ that is equivalent to $M$.
 \end{remark}

 \begin{description}
 	\item[Inverse mappings] For any $\cE^{[M]}$-mapping $f : U \to V$, where $U \subseteq \R^m$ and $V \in \R^n$ are open,
 	such that $f'(x_0) \in L(\R^m,\R^n)$ is invertible at $x_0 \in U$ there exist neighborhoods $x_0 \in U_0 \subseteq U$ and
 	$f(x_0) \in V_0 \subseteq V$ and a $\cE^{[M]}$-mapping $g : V_0 \to U_0$ such that $f \o g = \on{id}_{V_0}$, provided that
 	$C^\om \subseteq \cE^{[M]}$, $M$ is derivation-closed, and $m_k^{1/k}$ is almost increasing.
 	Under these assumptions we evidently also have the implicit mapping theorem in $\cE^{[M]}$.
 	\item[Solving ODEs] For any $\cE^{[M]}$-mapping $f : \R \times \R^n \to \R^n$ the solution
 	of the initial value problem $x' = f(t,x)$, $x(0)=x_0$, is of class $\cE^{[M]}$, where it exists,
 	 provided that
 	$C^\om \subseteq \cE^{[M]}$, $M$ is derivation-closed, and $m_k^{1/k}$ is almost increasing.
 \end{description}
 It was proved in \cite{RainerSchindl14} that, given that
 $C^\om \subseteq \cE^{[M]}$ and $M$ is derivation-closed, the condition that $m_k^{1/k}$ is almost increasing is also necessary for
 stability under composition, inverse mappings, and solving ODEs, respectively.
 As a consequence $\cE^{[M]}$ is \emph{inverse-closed}, that is $1/f \in \cE^{[M]}(U)$ if $f \in \cE^{[M]}(U)$ is non-vanishing.

\subsection{Moderate growth}
Let us finish this section by briefly discussing another property which
often plays a decisive role.
We say that a weight sequence $M$ has \emph{moderate growth} if
\begin{equation} \label{eq:mg}
  \exists C>0 ~\forall j,k \in \N : M_{j+k} \le C^{j+k}M_j M_k.
\end{equation}
 	Evidently, \eqref{eq:mg} entails \eqref{eq:der-closed}. It is not hard to see that \eqref{eq:mg} is equivalent to $\mu_{k+1} \lesssim M_k^{1/k}$
 	and in turn to $\mu_{2k} \lesssim \mu_k$; cf.\ \cite[Lemma 2.2]{RainerSchindl16a}. In terms of the spaces $\cE^{[M]}$, moderate growth of $M$ is equivalent to
 	\emph{separativity} \cite{Matsumoto84}, validity of the \emph{exponential law} \cite{KMRc,KMRq,KMRu}, and
 	\emph{stability under ultradifferential operators} \cite{Komatsu73}, respectively.
  Note that \eqref{eq:mg} holds for $M$ if and only if it holds for $m$.

 The moderate growth condition is rather restrictive. Indeed, it implies that there is a constant $C>0$ such that $\mu_{2^j} \le C \mu_{2^{j-1}}$ for all $j$, and
 thus, if $2^j \le k < 2^{j+1}$,
 \[
 \mu_k \le \mu_{2^{j+1}} \le C \mu_{2^j} \le C^{j+1} \le C^{j+1}\frac{k^s}{2^{js}}
 \]
 for any $s\ge 1$. If we choose $s\ge 1$ such that the sequence $(C^{j+1}/2^{js})_j$ is bounded, we find that $M \preceq G^s$, where
  $G^s_k := k!^s$.

 \begin{example} \label{eq:Gevrey1}
  For $s\ge 1$ the sequence $G^s = (G^s_k)$ is a strongly log-convex weight sequences of moderate growth.
  It is called the
  \emph{Gevrey sequence} of index $s$.
  We have $\cE^{\{G^1\}} = C^\om$. For
  $s>1$ a typical function in $\cE^{\{G^s\}}(\R)$ is $t \mapsto \exp(- 1/t^{1/(s-1)})$.
  It is easy to check that $(k^{sk})_k$ is an equivalent weight sequence.
\end{example}

\section{The Borel map and the Denjoy--Carleman theorem}

Coming back to \Cref{Q1} we shall now discuss our problem for the singleton $\{0\}$.
Suppose that $M$ is a weight sequence and $f \in \cE^{[M]}(U)$, where $U$ is an open connected neighborhood
of $0 \in \R^n$. Then the jet $F:= j^\infty_{\{0\}}f = (f^{(\al)}(0))_{\al \in \N^n}$
obviously satisfies
\begin{equation}
	|F|^M_{\rh}  := \sup_{\al \in \N^n} \frac{|F^{\al}|}{\rh^{|\al|} M_{|\al|}} < \infty
\end{equation}
for some $\rh>0$ or for all $\rh>0$, depending on whether we consider the Roumieu case $\cE^{\{M\}}$ or the Beurling case $\cE^{(M)}$.
We define
\begin{align*}
	\La^{\{M\}}_n &:= \{a \in \C^{\N^n} : ~\exists \rh>0 : |a|^M_\rh <\infty\},
	\\
	\La^{(M)}_n &:= \{a \in \C^{\N^n} : ~\forall \rh>0 : |a|^M_\rh <\infty\},
\end{align*}
and equip them with their natural locally convex topology.
In accordance with \Cref{convention}, we use $\La^{[M]}_n$ for both of them and
just write $\La^{[M]}$
if the dimension $n$ is clear from the context.
Now the (restriction of the) \emph{Borel map}
\begin{equation} \label{eq:Borel}
	j^\infty_{\{0\}} : \cE^{[M]}(U) \to \La^{[M]}
\end{equation}
is well-defined.
We may specify \Cref{Q1} in this setting and ask:
\begin{quote}
    \emph{When is the Borel mapping \eqref{eq:Borel} surjective or injective?}
\end{quote}

We start with discussing injectivity.

\subsection{Quasianalyticity}
Let $M$ be a weight sequence.
We say that the class $\cE^{[M]}$ is \emph{quasianalytic}
if for all open connected neighborhoods $U$ of $0 \in \mathbb{R}^n$ the Borel map $j^\infty_{\{0\}} : \cE^{[M]}(U) \to \La^{[M]}$ is injective.
The class $\cE^{[M]}$ is called \emph{non-quasianalytic} if it is not quasianalytic.
In that case it is easy to construct non-trivial functions with compact support (so-called \emph{bump functions}) in the class,
since $\cE^{[M]}$ is stable by multiplication.\footnote{The study of quasianalytic classes started
at the end of the nineteenth century with the work of
\'Emile Borel who found examples of non-trivial sets $\mathcal{C}$ of smooth functions on $\mathbb{R}$ containing nowhere analytic
elements such that all members $f$ of $\mathcal{C}$ have the unique continuation property
$\big(\forall k \in \mathbb{N} : f^{(k)}(0) = 0\big) \implies f =0$.}

\begin{remark}
	There is nothing special about the origin.
	It is easy to see that $\cE^{[M]}$ is quasianalytic if and only if
	$j^\infty_{\{a\}} : \cE^{[M]}(V) \to \La^{[M]}$ is injective,
	where $V \subseteq \R^n$ is any open connected neighborhood of an arbitrary point $a$.

	Note that it suffices to check in dimension one whether a class $\cE^{[M]}$ is quasianalytic or not.
	Indeed, if there exists	a bump function $f$ in dimension one. Then $f \otimes \cdots \otimes f$ ($n$ times)
	is a bump function in dimension $n$.
	And, if a function $f$ defined in a neighborhood of $0 \in \R^n$ satisfies
	$j^\infty_{\{0\}} f =0$,
	then $j^\infty_{\{0\}} (f\o \ell) =0$ for all lines $\ell$ through $0$.
\end{remark}

In a communication \cite{Hadamard:1912aa} to the \emph{Soci\'et\'e Math\'ematique de France}
Jacques Hadamard asked whether
quasianalyticity can be characterized in terms of a growth condition on the iterated derivatives.
This was essentially confirmed a decade later by
\cite{Denjoy21} and \cite{zbMATH02599917,zbMATH02598188}
and became known as the \emph{Denjoy--Carleman theorem}.
We shall give a proof based on the approach of \cite{MR225957} and \cite[Theorem 1.3.8]{Hoermander83I}.

\subsection{The size of a function near points of flatness}

\begin{lemma} \label{lem:sizenearflat}
	Let $M=(M_k)$ be a weight sequence.
	Let $I = (-r,r) \subseteq \mathbb{R}$ for some $r > 0$.
 	Let $f \in C^\infty(I)$ be such that $f(t) = 0$ for all $t \le 0$ and
 	$\|f^{(k)}\|_I \le M_k$ for $0 \le k \le n$.
 	Then, for all $1 \le \ell \le n$ and $0\le t \le \min\big\{r, \frac{1}{4} \sum_{k = \ell}^n \frac{1}{\mu_k} \big\}$,
	\begin{equation} \label{eq:sizenearflat}
      	|f(t)| \le  \Big( \frac{2 t}{\sum_{k = \ell}^n \frac{1}{\mu_k}}\Big)^\ell.
  \end{equation}
\end{lemma}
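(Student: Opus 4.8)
The plan is to prove the estimate by induction on $\ell$, going \emph{downward} from $\ell = n$ to $\ell = 1$, exploiting that $f$ vanishes on the negative axis so that $f(t)$ can be recovered by integrating $f'$ (and, iterating, the higher derivatives) from $0$. The base of a suitable induction is the crude bound: since $f(0) = 0$ and $\|f'\|_I \le M_1 = \mu_1$, we get $|f(t)| \le \mu_1 t$ for all $t \in [0,r)$; more to the point, since $f^{(n)}$ is controlled by $M_n$ and all lower derivatives of $f$ vanish at $0$, Taylor's formula with integral remainder gives $|f(t)| \le M_n t^n/n!$, but that is not quite the shape we want. The right object to induct on is the function $g := f'$, which again vanishes on $(-\infty,0]$ and satisfies $\|g^{(k)}\|_I \le M_{k+1}$ for $0 \le k \le n-1$.

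\textbf{Key steps.} First I would set $S_\ell := \sum_{k=\ell}^n 1/\mu_k$ and observe the telescoping-type relation $S_\ell = 1/\mu_\ell + S_{\ell+1}$, together with the monotonicity $S_\ell \ge S_{\ell+1}$. The inductive claim I would carry is precisely \eqref{eq:sizenearflat}: for $0 \le t \le \min\{r, S_\ell/4\}$ one has $|f(t)| \le (2t/S_\ell)^\ell$. For the top case $\ell = n$: here $S_n = 1/\mu_n$, and since $f$ together with its first $n-1$ derivatives vanish at $0$ while $\|f^{(n)}\|_I \le M_n = \mu_1\cdots\mu_n \le \mu_n^n$, Taylor's theorem gives $|f(t)| \le \mu_n^n t^n/n! \le (\mu_n t)^n = (t/S_n)^n \le (2t/S_n)^n$, which is even better than needed. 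For the inductive step, suppose the bound holds for $\ell+1$ applied to $f' =: g$ — but note $g$ satisfies $\|g^{(k)}\|_I \le M_{k+1}$, so the relevant ratios are $M_{k+1}/M_k = \mu_{k+1}$, i.e. the sequence $\mu$ shifted by one. This suggests the cleaner route: prove by induction on $\ell$ (downward) the statement for $f$ directly, writing $f(t) = \int_0^t f'(s)\,ds$ and feeding into the integral the estimate for $f'$ at parameter $\ell - 1$ (shifted index), then optimizing. Concretely, if $|f'(s)| \le (2s/T_{\ell-1})^{\ell-1}$ with $T_{\ell-1} := \sum_{k=\ell}^{n} 1/\mu_k$ (the shift by one in the lower index of the sum matches the shift $f \mapsto f'$), then
\[
  |f(t)| \le \int_0^t \Big(\frac{2s}{T_{\ell-1}}\Big)^{\ell-1} ds = \frac{t}{\ell}\Big(\frac{2t}{T_{\ell-1}}\Big)^{\ell-1}.
\]
Comparing $T_{\ell-1}$ with $S_\ell$ and absorbing the factor $1/\ell$ into a shift by one more term (using $1/\mu_{\ell-1} \ge 1/\mu_\ell$ does not directly help, so one instead uses $S_{\ell-1} = 1/\mu_{\ell-1} + S_\ell$ and the elementary bound $\tfrac{1}{\ell}(2t/S_\ell)^{\ell-1}\cdot(S_\ell/2t)^{?}$...) — this bookkeeping is where care is needed.

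\textbf{Main obstacle.} The genuinely delicate point is the index bookkeeping: matching the differentiation $f \mapsto f'$ (which shifts $M_k \mapsto M_{k+1}$, i.e. replaces $\mu_k$ by $\mu_{k+1}$) against the shift in the summation range $\sum_{k=\ell}^n \mapsto \sum_{k=\ell-1}^n$, and checking that the constant stays exactly $2$ (not something larger) after the repeated halving incurred by the factors $1/\ell, 1/(\ell-1),\dots$ produced by the integrations. The clean way to handle this is to prove, uniformly in $\ell$, the stronger statement that for $0 \le t \le \tfrac14 S_\ell$ one has $|f^{(\ell-1)}(t)| \le M_{\ell-1}\,(2t/S_\ell)^{?}$ — no: rather one inducts on the pair (number of integrations performed, current summation index) and keeps track that each integration both raises the power of $t$ by one and contributes a factor $\le 1/2$ once $t \le \tfrac14 S_\ell$ because then $2t/S_\ell \le 1/2$, so $\tfrac1\ell (2t/S_\ell)^{\ell-1} \le (2t/S_\ell)^{\ell}$ exactly when $2t/S_\ell \le \ell^{-1/1}$... — i.e. one needs $2t \le S_\ell/\ell$? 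That is too strong. The correct and standard resolution (following Bang) is instead: estimate $|f(t)|$ by integrating $|f^{(\ell)}|$ $\ell$ times against the constraint near $0$, getting $|f(t)| \le M_\ell t^\ell/\ell!$, and separately observe $M_\ell/\ell! \le \mu_\ell \cdots$? No. Ultimately the cleanest argument — and the one I would commit to — is: show $|f(t)| \le (2t \mu_n)^\ell \cdot (\text{product of } 1/(2\mu_k) \text{ for } k=\ell+1,\dots,n)^{-1}$ type bound by iterating the one-step inequality $\|f\|_{[0,\delta]} \le \delta \|f'\|_{[0,\delta]}$ on the interval where $f$ is ``small'', splitting $[0,t]$ using the harmonic-type partition with steps $1/(4\mu_k)$; convexity of $\mu$ guarantees the steps sum correctly, and keeping the constant equal to $2$ follows from the AM–GM bound $\prod (2\mu_k/t)$ ... — so the main obstacle, concretely, is verifying that the harmonic partition of $[0,t]$ with mesh sizes proportional to $1/\mu_k$ is feasible under the hypothesis $t \le \tfrac14 S_\ell$ and yields exactly the exponent $\ell$ and constant $2$; everything else is routine integration and use of $M_k^{1/k} \le \mu_k$ from Lemma~\ref{lem:atinfinity}(1).
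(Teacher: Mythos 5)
Your proposal does not contain a proof; it records a sequence of attempts, correctly diagnoses that the first of them fails, and ends with a gesture at the right construction without carrying it out. The first attempt — downward induction on $\ell$ via $f(t)=\int_0^t f'(s)\,ds$, feeding in the bound for $f'$ at index $\ell-1$ — does not close: after the shift $\mu_k\mapsto\mu_{k+1}$ and the normalization by $M_1$ (which your hypothetical bound $|f'(s)|\le(2s/T_{\ell-1})^{\ell-1}$ silently drops, since the lemma requires $M_0=1$), the integral gives
\[
|f(t)|\le \frac{M_1 S_\ell}{2\ell}\Big(\frac{2t}{S_\ell}\Big)^\ell,
\]
and you would need $M_1 S_\ell \le 2\ell$, which fails for fixed $\ell$ once $n$ is large. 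You notice this yourself (``this bookkeeping is where care is needed''). Your later paragraph correctly names a harmonic partition with steps $\sim 1/\mu_k$ and iterating a one-step fundamental-theorem bound, but none of this is executed; moreover, as stated, the inequality $\|f\|_{[0,\delta]}\le\delta\|f'\|_{[0,\delta]}$ only applies on intervals whose left endpoint is $0$, whereas the harmonic partition of $[0,t]$ produces subintervals with positive left endpoint, and the displayed ``$(2t\mu_n)^\ell\cdot\prod(2\mu_k)$ type bound'' is wrong in shape (it increases in the $\mu_k$, which cannot be).

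The missing content is a double induction on a two-parameter family. Tile $[0,t_\ell]$ by intervals $I_k=[t_{k+1},t_k]$, $k=\ell,\dots,n$, of lengths $|I_k|=a/\mu_k$ with $a:=t_\ell/S_\ell$ (so they sum to $t_\ell$ exactly, not approximately), and append $I_{n+1}:=(-r,0]$ where $f$ and all its derivatives vanish. Set $F(s,k):=\|f^{(s)}\|_{I_k}$. The claim $F(s,k)\le(2a)^{k-s}M_s$ for $\ell\le k\le n+1$ and $0\le s\le k$ is proved by induction using the two-term recurrence $F(s,k)\le F(s,k+1)+|I_k|\,F(s+1,k)$, obtained by writing $f^{(s)}$ on $I_k$ as its value at the left endpoint $t_{k+1}\in I_{k+1}$ plus the integral of $f^{(s+1)}$ across $I_k$; this is the correct replacement for your one-step inequality when the left endpoint is not $0$. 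Since $\mu_{s+1}\le\mu_k$ for $s<k$, plugging in the inductive bounds yields $F(s,k)\le(2a)^{k-s}M_s\bigl(2a+\tfrac{1}{2}\bigr)$, and the hypothesis $t_\ell\le\tfrac{1}{4} S_\ell$ is exactly what gives $a\le\tfrac{1}{4}$, hence $2a+\tfrac{1}{2}\le1$, closing the induction. Then $|f(t_\ell)|\le F(0,\ell)\le(2a)^\ell$, which is \eqref{eq:sizenearflat}. Your ``main obstacle'' — keeping the constant equal to $2$ through the integrations — is resolved because each integration runs only over one short interval $I_k$ and costs either a lateral move to $F(s,k+1)$ or one extra derivative in $|I_k|F(s+1,k)$; there is no accumulating $\prod 1/\ell$ to absorb.
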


\begin{proof}
  Cf. \cite{MR1382568} which is based on \cite{Bang46} and \cite{MR225957}.
  Fix $\ell$ with $1 \le \ell \le n$ and
  $t = t_\ell \in (0,r)$.
  Let $t_{\ell+1}> t_{\ell+2} > \cdots >  t_{n+1} = 0$ be such that the interval $I_k := [t_{k+1},t_k]$ has length
	\[
		|I_k| = \frac{a}{\mu_k}, \quad \text{ where } \quad a := \frac{t_\ell}{\sum_{k = \ell}^n \frac{1}{\mu_k}}.
	\]
	Additionally, we set $I_{n+1} := (-r,0]$.
	We define $F(s,k) = \|f^{(s)}\|_{I_k}$ and claim
	\begin{align} \label{eq:Fsk}
	 	F(s,k) \le (2 a)^{k-s} M_s \quad \text{ for } \ell \le k \le n+1 \text{ and } 0 \le s \le k.
	 \end{align}
	The claim holds for $s = k$ and for $s \le k = n+1$ by assumption,
	and the fundamental theorem of calculus gives the bound
	\[
		F(s,k) \le F(s,k+1) + |I_k| F(s+1,k).
	\]
	Let $0 \le s < k \le n$.
	If we assume by induction, that \eqref{eq:Fsk} is true for $(s,k+1)$ and $(s+1,k)$, then
	\[
		F(s,k)
		\le (2 a)^{k+1-s} M_s +  \frac{a}{\mu_k} (2 a)^{k-s-1} M_{s+1}
		\le	(2 a)^{k-s} M_s \Big( 2a + \frac12\Big),
	\]
	since $M_{s+1} = M_s \mu_{s+1} \le M_s \mu_k$.
	Provided that $2 a \le 1/2$, claim \eqref{eq:Fsk} is proved by induction.
	In particular, for $t = t_\ell \in I_\ell$ and $s = 0$, we obtain
	\(
  |f(t)| \le F(0,\ell) \le (2a)^\ell
	\)
  which is \eqref{eq:sizenearflat}.
\end{proof}

As an immediate consequence we observe that the divergence of the series
$\sum_{k} \frac{1}{\mu_k}$
is a
sufficient condition for the injectivity of $j^\infty_{\{0\}} : \cE^{\{M\}}(\R) \to \La^{\{M\}}$.
To see that it is also necessary we aim to construct a non-trivial function with compact support in
$\mathcal{E}^{\{M\}}(\mathbb{R})$ provided that the series converges.

\subsection{Special bump functions}

For $a>0$ let us consider the step function $H_a := \frac{1}{a} \mathbf 1_{(0,a)}$.
	If $f$ is a continuous function on $\mathbb{R}$, then
	\[
		f * H_a(x) = \frac{1}{a} \int_0^a f(x-t) \, dt = \frac{1}{a} \int_{x-a}^x f(t) \, dt
	\]
	is $C^1$ with derivative
	\[
		(f * H_a)'(x) = \frac{f(x) - f(x-a)}{a}.
	\]
	Hence $f * H_a$ is $C^{k+1}$ provided that $f$ is $C^k$.

\begin{lemma} \label{lem:specialbump}
	Let $(a_k)$ be a decreasing positive sequence with $a := \sum_{k = 0}^\infty a_k < \infty$.
	Then $f_k := H_{a_0} * \cdots * H_{a_k}$ is of class $C^{k-1}$ and has support in $[0,a]$.
	The sequence $(f_k)$ converges to a $C^\infty$-function $f$ with support in $[0,a]$
	such that $\int f \, dx = 1$ and
	\[
		|f^{(k)}(x)| \le \frac{1}2 \int |f^{(k+1)}(x)| \, dx \le \frac{2^k}{a_0 a_1 \cdots a_k},
	\]
	for all $k \in \mathbb{N}$ and $x \in \mathbb{R}$.
\end{lemma}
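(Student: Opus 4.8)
The plan is to build the whole argument on the single elementary identity recorded just before the statement, $(g*H_a)'(x) = \bigl(g(x)-g(x-a)\bigr)/a$, together with the commutativity and associativity of convolution, keeping every manipulation at the level of honest functions. The basic properties come essentially for free: each $H_{a_j}$ is a nonnegative $L^1\cap L^\infty$ function with $\int H_{a_j}\,dx = 1$ and $\supp H_{a_j}=[0,a_j]$; hence $f_k\ge 0$, $\int f_k\,dx = \prod_{j=0}^{k}\int H_{a_j}\,dx = 1$, and $\supp f_k \subseteq [0,a_0]+\cdots+[0,a_k]\subseteq[0,a]$. Regularity is inductive: $f_1=H_{a_0}*H_{a_1}$ is piecewise linear (its value at $x$ is $(a_0a_1)^{-1}$ times the length of $(0,a_0)\cap(x-a_1,x)$), hence continuous, and then $f_k=f_{k-1}*H_{a_k}$ gains one order of differentiability at each step by the displayed formula, so $f_k\in C^{k-1}$.

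The heart of the matter is the estimate
\[
  \int_\R |f_k^{(j)}(x)|\,dx \le \frac{2^j}{a_0 a_1\cdots a_{j-1}}\qquad (0\le j\le k-1),
\]
with the empty product read as $1$. To prove it I would differentiate $f_k$ step by step, each time reassociating the convolution to bring the next factor to the front and applying $(H_b*G)'=b^{-1}\Delta_bG$ with $\Delta_bG:=G-G(\cdot-b)$; iterating $j$ times yields
\[
  f_k^{(j)} = \frac{1}{a_0a_1\cdots a_{j-1}}\,\Delta_{a_0}\Delta_{a_1}\cdots\Delta_{a_{j-1}}\bigl(H_{a_j}*\cdots*H_{a_k}\bigr),
\]
all intermediate convolutions being smooth enough for this because $j\le k-1$. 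Since $\int_\R|\Delta_bG|\,dx\le 2\int_\R|G|\,dx$ by the triangle inequality, and $H_{a_j}*\cdots*H_{a_k}$ is again the density of a probability measure (so has $L^1$-norm $1$), the claimed bound follows at once.

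Next comes the convergence $f_k\to f$. Fix $j$ and take $m$ large (depending on $j$) and $k>m$; writing $f_k=f_m*\psi$ with $\psi:=H_{a_{m+1}}*\cdots*H_{a_k}$ a probability density supported in $[0,\varepsilon_m]$, $\varepsilon_m:=\sum_{i>m}a_i$, one gets $f_k^{(j)}-f_m^{(j)}=f_m^{(j)}*\psi-f_m^{(j)}$, hence $\|f_k^{(j)}-f_m^{(j)}\|_\R\le\varepsilon_m\,\|f_m^{(j+1)}\|_\R$. By the estimate above together with the elementary bound $\|g\|_\R\le\tfrac12\int_\R|g'|\,dx$ valid for compactly supported $C^1$ functions (integrate from $-\infty$ and from $+\infty$ and add), $\|f_m^{(j+1)}\|_\R\le\tfrac12\int_\R|f_m^{(j+2)}|\,dx\le 2^{j+1}/(a_0\cdots a_{j+1})$ uniformly in $m$. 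Since $\varepsilon_m\to0$, each sequence $(f_m^{(j)})_m$ is uniformly Cauchy, so $f_m$ converges uniformly together with all derivatives to some $f\in C^\infty(\R)$ with $f^{(j)}=\lim_m f_m^{(j)}$; the supports stay in $[0,a]$ and $\int f\,dx=1$ pass to the limit. Finally, letting $m\to\infty$ in the displayed estimate gives $\int_\R|f^{(j)}|\,dx\le 2^j/(a_0\cdots a_{j-1})$ for all $j$; the middle inequality of the statement is exactly the same compactly-supported fundamental-theorem-of-calculus bound applied to $g=f^{(k)}$, and combining it with the case $j=k+1$ gives $\tfrac12\int_\R|f^{(k+1)}|\,dx\le 2^k/(a_0\cdots a_k)$, which closes the chain.

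There is no deep obstacle here; the content is entirely elementary. The two points that need care are the bookkeeping of regularity in the peeling identity (so that all the convolutions involved remain $C^1$ rather than mere distributions — alternatively one could argue distributionally and skip this) and the control of $\|f_k^{(j)}\|_\R$ uniformly in $k$ needed for $C^\infty$-convergence, which is precisely what the $L^1$-estimate supplies. I expect the convergence step to be the fiddliest to write up cleanly, and the reassociate-and-peel formula for $f_k^{(j)}$ to be the one genuine idea.
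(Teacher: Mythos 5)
Your proposal is correct and follows essentially the same route as the paper: the same peeling identity $f_k^{(j)}=\tfrac{1}{a_0\cdots a_{j-1}}(1-\tau_{a_0})\cdots(1-\tau_{a_{j-1}})\bigl(H_{a_j}*\cdots*H_{a_k}\bigr)$, the same $L^1$-estimate via the fact that each difference operator at most doubles the $L^1$-norm while $H_{a_j}*\cdots*H_{a_k}$ is a probability density, and the same tail-convolution comparison $f_k=f_m*\psi$ for convergence. The only cosmetic difference is that you obtain the sup-norm bound from the inequality $\|g\|_\R\le\tfrac12\int|g'|$, whereas the paper gets it directly from $|u*v|\le\sup|u|\int|v|$; both yield the same estimate, and your route has the small advantage of explicitly proving the middle inequality asserted in the statement.
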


\begin{proof}
  Cf.\ \cite[Theorem 1.3.5]{Hoermander83I}.
	It is easy to see that $f_1$ is the piecewise linear function which vanishes outside $[0,a_0+a_1]$
	has slope $(a_0 a_1)^{-1}$ in $[0,a_1]$, is constant in $[a_1,a_0]$ and decreases linearly to $0$ in $[a_0,a_0+a_1]$.
	In particular, $f_1$ is continuous.

	It follows that $f_k$ is $C^{k-1}$ with support in $[0,a_0 + a_1 + \cdots + a_k]$.
	Writing $(\ta_a f)(x) = f(x-a)$ for the translation operator, we have
	\[
		f_k'(x) = H_{a_0}' * H_{a_1} * \cdots * H_{a_k} (x) = \frac{1}{a_0} (1 - \ta_{a_0}) H_{a_1} * \cdots * H_{a_k} (x)
	\]
	and iterating we find
	\[
		f_k^{(j)} = \prod_{i = 0}^{j-1} \frac{1}{a_i} (1 - \ta_{a_i}) H_{a_j} * \cdots * H_{a_k}, \quad \text{ for } j \le k-1.
	\]
	Since $\int H_{a_i} \, dx = 1$ for all $i$, it follows that
	\[
		|f_k^{(j)}| \le \frac{2^j}{a_0 a_1 \cdots a_j}  \quad \text{ and } \quad \int |f_k^{(j)}| \, dx
		\le \frac{2^j}{a_0 a_1 \cdots a_{j-1}},
	\]
	where we use $| u * v | \le \sup |u| \int |v| \, dx$ and $\int u * v \,dx = \int u  \,dx \int v \,dx$.
	We have
	\begin{align*}
		|f_{k+\ell}(x)- f_\ell(x)| &= |f_\ell * H_{a_{\ell +1}} * \cdots * H_{a_{\ell +k}}(x) - f_\ell(x)|
		\\
		&= |\int( f_\ell(x-y) - f_\ell(x)) H_{a_{\ell +1}} * \cdots * H_{a_{\ell +k}}(y) \,dy |
		\\
		&\le (a_{\ell +1} + \cdots + a_{\ell +k})  \sup |f_\ell'|
		\\
		&\le 2 \frac{a_{\ell +1} + \cdots + a_{\ell +k}}{a_0 a_1},
	\end{align*}
	since $H_{a_{\ell +1}} * \cdots * H_{a_{\ell +k}}$ has support in $[0,a_{\ell +1} + \cdots + a_{\ell +k}]$.
	It follows that $f_k$ has a uniform limit $f$. Similarly, one sees that all derivatives of $f_k$
	have uniform limits.
	Hence $f$ is $C^\infty$ and the lemma follows easily.
\end{proof}

\subsection{The Denjoy--Carleman theorem}
We are ready to characterize quasianalyticity in the Roumieu case.
In order to also include the Beurling case we first treat the following lemma.
This lemma will be very useful for reducing the Beurling to the Roumieu case
at several occasions.

\begin{lemma}[{\cite[Lemma 16]{ChaumatChollet94}}] \label{lem:CC16}
	Let $(\al_k)_{k\ge 1}$, $(\be_k)_{k\ge 1}$, and $(\ga_k)_{k\ge 1}$ be sequences with the following properties:
  \begin{enumerate}
    \item $\al_k \ge 0$ and $\sum_k \al_k<\infty$.
    \item $\be_k >0$ and $\be_k \to 0$.
    \item $\ga_k >0$ and $\ga_k \searrow 0$.
  \end{enumerate}
	Then there exists a positive sequence $(\la_k)_{k\ge 1}$ such that $\la_k \nearrow \infty$ and
  \begin{enumerate}
    \item $\sum_{j \ge k} \la_j \al_j \le 8 \la_k \sum_{j \ge k} \al_j$  for all  $k \ge 1$,
    \item $\la_k \be_k \to 0$,
    \item $\la_k \ga_k$ is decreasing.
  \end{enumerate}
\end{lemma}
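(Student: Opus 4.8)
The plan is to construct $\la_k$ by hand as a ``slowly growing'' sequence that simultaneously respects the three constraints, building it as a supremum (or maximum) of three separately-constructed auxiliary sequences — one tailored to each condition — and then verifying that the maximum still satisfies all three. The key observation is that conditions (1) and (3) are ``upper'' constraints on how fast $\la_k$ may grow (they bound $\la_k$ from above in terms of tails or in terms of $\ga_k/\ga_{k+1}$-type ratios), while condition (2) is a ``lower'' constraint forcing $\la_k\to\infty$ but not too fast relative to $1/\be_k$. So the real content is that these are mutually compatible.

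First I would handle (3): since $\ga_k\searrow 0$, I can pick any sequence $\la_k\nearrow\infty$ with $\la_k\le \ga_1/\ga_k$ going to infinity slowly enough — e.g. set $\la_k := \min\{k,\ \sup\{j : \ga_j \ge \ga_k^{1/2}\}\}$ or, more robustly, a diagonal construction: choose an increasing sequence of indices $k_1<k_2<\cdots$ with $\ga_{k_i}\le 2^{-i}\ga_1$ and define $\la_k := i$ for $k_i\le k<k_{i+1}$; then $\la_k\ga_k\le i\,\ga_{k_i}\cdot(\ga_k/\ga_{k_i})$, and one checks $\la_k\ga_k$ is eventually decreasing because each time $\la_k$ jumps by $1$ the value $\ga_k$ has already dropped by a factor $\ge 1/2$, which beats the multiplicative factor $(i+1)/i$ for $i\ge 1$. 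For (2): since $\be_k\to 0$, by a similar diagonal trick pick $\la_k\to\infty$ so slowly that $\la_k\be_k\to 0$ (e.g. along a subsequence on which $\be_k$ is small, raise $\la_k$ by one; elsewhere keep it constant). The point is that \emph{any} prescribed rate of divergence to $\infty$, however slow, can be arranged to beat a given sequence tending to $0$; so it suffices to take $\la_k$ growing slowly enough for \emph{both} (2) and (3), which is possible since the conjunction of ``grow slower than $f_1$'' and ``grow slower than $f_2$'' is just ``grow slower than $\min(f_1,f_2)$''.

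The main obstacle — and the reason the lemma is nontrivial — is condition (1), the Abel-type summation bound $\sum_{j\ge k}\la_j\al_j\le 8\la_k\sum_{j\ge k}\al_j$, which must hold for \emph{all} $k$, not just eventually, and which interacts with the growth of $\la_k$: if $\la_k$ grows too fast the weighted tail $\sum_{j\ge k}\la_j\al_j$ can be much larger than $\la_k$ times the unweighted tail. The standard device here is to build $\la_k$ adapted to the tails $A_k := \sum_{j\ge k}\al_j$ of the summable sequence: one wants $\la_k$ to roughly double only when $A_k$ has halved, so that on each ``block'' where $\la$ is constant the contribution is controlled, and summing the geometric series of blocks gives the constant $8$. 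Concretely, define $n_0 := 1$ and $n_{i+1} := \min\{k : A_k \le \tfrac12 A_{n_i}\}$ (which is finite since $A_k\to 0$, provided $A_1>0$; if some tail is zero the problem is finite-dimensional and trivial), and set $\la_k := 2^i$ for $n_i\le k<n_{i+1}$. Then for $n_i\le k<n_{i+1}$,
\[
\sum_{j\ge k}\la_j\al_j \;=\; \sum_{p\ge i}\Big(\sum_{n_p\le j<n_{p+1},\ j\ge k}\al_j\Big)2^p \;\le\; \sum_{p\ge i} 2^p\, A_{\max(k,n_p)} \;\le\; 2^i A_k + \sum_{p>i}2^p A_{n_p},
\]
and since $A_{n_p}\le 2^{-(p-i)}A_{n_i}\le 2^{-(p-i)}\cdot 2 A_k$ (using $A_{n_i}\le A_k \cdot$ nothing — rather $A_k\le A_{n_i}\le 2A_{n_{i+1}}$... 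I would be careful: $A_k\ge A_{n_{i+1}}\ge\tfrac12 A_{n_i}$ so $A_{n_i}\le 2A_k$), the tail sum is $\le \sum_{p>i}2^p\cdot 2^{-(p-i)}\cdot 2A_k = 2^{i}\cdot 2\sum_{q\ge 1}2^{-q}\cdot 2 A_k \cdot$, giving a geometric bound of the form $2^i A_k\cdot(1+\text{const})$; adjusting the block ratio (halving) yields the clean constant $8 = 2^i\cdot 8/2^i\cdot\dots$, i.e. $\le 8\cdot 2^i A_k = 8\la_k A_k$, which is exactly (1).

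Finally I would reconcile the three constructions: the sequence from (1) is forced to grow at a definite rate (it doubles each time the $\al$-tail halves), so I cannot freely slow it down; instead I would take $\la_k$ to be the \emph{maximum} of the block-sequence from (1) and the slow sequences from (2)–(3) — but this risks breaking (1). The resolution is the reverse: first fix the $(1)$-sequence $\la^{(1)}_k$, then \emph{further slow down} the targets in (2), (3) is not possible, so instead one slows down $\la^{(1)}$... The cleanest route, which I would adopt, is to define $\la_k := \min\{\la^{(1)}_k,\ \la^{(2)}_k,\ \la^{(3)}_k\}$ where $\la^{(2)},\la^{(3)}\to\infty$ are chosen \emph{after} seeing nothing special — the minimum still tends to $\infty$, conditions (2) and (3) are preserved under replacing $\la$ by a smaller positive sequence that still $\to\infty$ (for (3), $\min$ of two eventually-decreasing-times-$\ga$ sequences: need care, but $\la^{(1)}_k\ga_k$ is handled since $\la^{(1)}$ also grows slowly when $\al$ decays slowly — hmm). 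I would therefore argue that $\la^{(1)}_k$ \emph{itself} already satisfies (2) and (3) after possibly passing to $\min(\la^{(1)}_k, c_k)$ for a slow $c_k\nearrow\infty$, using that shrinking $\la$ only helps (1) (the left side decreases, the right side decreases by less since we also shrink $\la_k$ on the right — here one must recheck, but the block structure is robust to truncation from above by a slowly increasing cap). This compatibility check between the rigid $(1)$-construction and the flexible upper bounds from $(2),(3)$ is the technical heart of the argument and the step I expect to require the most care.
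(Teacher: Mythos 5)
Your identification of condition (1) as the hard constraint, and of a block construction keyed to the tails $A_k=\sum_{j\ge k}\al_j$, is the right instinct, but the proposal has two genuine gaps.

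First, the specific block construction you write down does not work: you take $n_{i+1}$ to be the first index where the tail has \emph{halved} and set $\la_k=2^i$ on block $i$. Your own verification hides a divergence --- after substituting $A_{n_p}\le 2^{-(p-i)}\cdot 2A_k$ into $\sum_{p>i}2^pA_{n_p}$ you get $\sum_{p>i}2^p\cdot 2^{-(p-i)}\cdot 2A_k=\sum_{p>i}2^{i+1}A_k$, which is $+\infty$; the ``$\sum_{q\ge 1}2^{-q}$'' you write is not there. A concrete failure: take $\al_j=1/j^2$, so $A_k\asymp 1/k$ and the tail halves when $k$ doubles, forcing $\la^{(1)}_j\asymp j$; then $\la^{(1)}_j\al_j\asymp 1/j$ and $\sum_{j\ge 1}\la^{(1)}_j\al_j=\infty$, while $8\la^{(1)}_1A_1<\infty$. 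The fix used in the paper is to let the tails drop by a factor $1/4$ (more generally, strictly less than $1/2$) while $\la$ only doubles, so that the geometric series $\sum 2^n/4^n$ converges and gives the constant $8$.

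Second, your ``cleanest route'' of taking a minimum (or capping a $(1)$-sequence $\la^{(1)}$ with a slow increasing sequence $c_k$) does not preserve (1), and you are right to flag this as the step needing the most care --- it genuinely fails. Capping can break (1): the property $\sum_{j\ge k}\la_j\al_j\le 8\la_k\sum_{j\ge k}\al_j$ is \emph{not} monotone under replacing $\la$ by $\min(\la,c)$, because both sides shrink and the right side can shrink faster (for instance $\al=(1,0,\dots,0,1,0,\dots)$, $\la^{(1)}\equiv 1$, $c_1=\ep$, $c_j=1$ for $j\ge 2$ destroys the inequality at $k=1$ as $\ep\to 0$). So one cannot build three separate sequences and intersect them. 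The paper instead builds a \emph{single} index decomposition $(j_p)$ refining all three requirements simultaneously (tails drop by $1/4$, $\be_j\le 4^{-p}$ beyond $j_{p+1}$, $\ga$ drops by $1/4$), and then defines one sequence $\la_k\approx 2^p$ on block $p$. The small extra wrinkle --- letting $\la_k=2^p\ga_{j_p}/\ga_k$ on the early part of each block and $2^{p+1}$ on the late part --- makes $\la_k\ga_k$ genuinely decreasing (not just eventually), a point your diagonal construction for (3) does not achieve either.
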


\begin{proof}
	We find a strictly increasing sequence $(j_p)_{p\ge 1}$ of integers with $j_1 =1$ and
	\begin{align*}
	 	\sum_{j \ge j_{p+1}} \al_j \le \frac{1}{4} \sum_{j \ge j_{p}} \al_j, \quad
	 	\be_j \le \frac{1}{4^p} ~ \text{ if } j \ge j_{p+1}, \quad
	 	\ga_{j_{p+1}} \le \frac{1}{4} \ga_{j_p}
 	\end{align*}
 	for all $p\ge 1$.
 	Then $2^{p+1} \ga_{j_{p+1}} < 2^p \ga_{j_{p}}< 2^{p+1} \ga_{j_{p}}$. Let $k_p$ be the largest
 	integer $k$ satisfying $j_p \le k < j_{p+1}$ and $2^p \ga_{j_{p}}\le 2^{p+1} \ga_{k}$.
  Define $(\la_k)_{k\ge 1}$ by setting
 	\[
 		\la_k :=
 		\begin{cases}
 			2^p \frac{\ga_{j_p}}{\ga_{k}} &\text{ if } j_p \le k \le k_{p},
 			\\
 			2^{p+1} & \text{ if } k_p < k \le j_{p+1}.
 		\end{cases}
 	\]
 	It is clear that $\la_k \nearrow \infty$ and that $\la_k\ga_k$ is decreasing.
 	If $j_{p+1} \le k \le j_{p+2}$, then $2^{p+1} \le \la_k \le 2^{p+2}$ and hence
 	$\la_k \be_k \le  2^{-p+2}$, whence $\la_k \be_k \to 0$.
  Finally,
 	\begin{align*}
 		\sum_{j \ge j_{p+1}} \la_j \al_j
 		=  \sum_{n = 0}^\infty \sum_{j= j_{p+1+n}}^{j_{p+ 2 + n}-1} \la_j \al_j
 	  &\le  \sum_{n = 0}^\infty  2^{p+2 +n}\sum_{j= j_{p+1+n}}^{j_{p+2+n}-1}  \al_j
 		\\
    &\le  \sum_{n = 0}^\infty  2^{p+2+n} \frac{1}{4^{n}} \sum_{j\ge j_{p+1}}  \al_j
 	  = 2^{p+3}  \sum_{j\ge j_{p+1}}  \al_j.
 	\end{align*}
  For $k \ge 1$ take $p$ such that $j_p \le k < j_{p+1}$.
  Then
 	\begin{align*}
 		\sum_{j \ge k} \la_j \al_j
 		&= \sum_{j= k}^{j_{p+1}-1} \la_j \al_j + \sum_{j \ge j_{p+1}} \la_j \al_j
 		\\
    &\le 2^{p+1} \sum_{j= k}^{j_{p+1}-1} \al_j + 2^{p+3}  \sum_{j\ge j_{p+1}}  \al_j
 		\le 8 \cdot 2^{p} \sum_{j\ge  k} \al_j
 		\le 8 \la_k \sum_{j\ge  k} \al_j.
 	\end{align*}
 	The proof is complete.
\end{proof}

\begin{corollary} \label{lem:smaller}
	Let $(\al_k)$ be a decreasing positive sequence with $\sum_k a_k < \infty$.
	There exists a decreasing positive sequence $(\be_k)$ such that $\sum_k \be_k < \infty$
	and $\be_k/\al_k \nearrow \infty$.
\end{corollary}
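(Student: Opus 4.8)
The plan is to deduce this directly from \Cref{lem:CC16}, feeding it the sequence $(\al_k)$ \emph{twice}: once as the summable sequence whose weighted sum is to be controlled, and once --- this is the point --- as the monotone null sequence $(\ga_k)$ whose product with the resulting weight is forced to remain decreasing. This is legitimate: $(\al_k)$ is positive and decreasing, and $\al_k\to 0$ because $\sum_k\al_k<\infty$, so $\al_k\searrow 0$.

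Concretely, I would apply \Cref{lem:CC16} with $(\al_k)$ playing the role of both the first and the third input sequence, and with any positive null sequence (e.g.\ $(\al_k)$ again, or $(1/k)$) in the role of the second. This yields $\la_k\nearrow\infty$ such that $(\la_k\al_k)$ is decreasing and $\sum_{j\ge 1}\la_j\al_j\le 8\la_1\sum_{j\ge 1}\al_j$. Setting $\be_k:=\la_k\al_k$, we then have $\be_k>0$; $(\be_k)$ decreasing by conclusion~(3); $\be_k/\al_k=\la_k\nearrow\infty$; and $\sum_k\be_k\le 8\la_1\sum_k\al_k<\infty$ by conclusion~(1) with $k=1$, the bound being finite by hypothesis. (If the sequences start at $k=0$, run the argument on $(\al_k)_{k\ge1}$ and set $\be_0:=\be_1$; this keeps $(\be_k)$ decreasing, and $\be_k/\al_k$ increasing since $\al_1\le\al_0$.)

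The only real subtlety is the one already handled inside \Cref{lem:CC16}. A naive attempt --- put $r_k:=\sum_{j\ge k}\al_j$ and $\be_k:=\al_k/\sqrt{r_k}$ --- gives $\be_k/\al_k=r_k^{-1/2}\nearrow\infty$ and, via $\al_k/\sqrt{r_k}\le 2(\sqrt{r_k}-\sqrt{r_{k+1}})$, summability $\sum_k\be_k\le 2\sqrt{r_1}$, but $(\be_k)$ need not be monotone. Reconciling summability with monotonicity is precisely what the weight $(\la_k)$ of \Cref{lem:CC16} achieves (conclusions~(1) and~(3) together), so essentially all the work lives there. I would also note in passing that $\searrow$ in the hypotheses of \Cref{lem:CC16} is to be read as ``decreasing to $0$'' --- its proof nowhere exploits strict monotonicity --- which is exactly what $(\al_k)$ supplies.
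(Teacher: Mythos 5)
Your proof is correct and is exactly the paper's argument: the paper also applies \Cref{lem:CC16} with $\al_k=\be_k=\ga_k$ and sets $\be_k:=\la_k\al_k$, reading off summability from conclusion (1) at $k=1$, monotonicity from conclusion (3), and $\be_k/\al_k=\la_k\nearrow\infty$ by construction. Your side remarks (that $\al_k\searrow 0$ follows from summability, and that strict monotonicity is not needed in \Cref{lem:CC16}) are accurate but not spelled out in the paper.
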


\begin{proof}
	Apply \Cref{lem:CC16} to $\al_k = \be_k = \ga_k$. Then $\be_k:= \la_k \al_k$ has the required properties.
\end{proof}

\begin{theorem}[Denjoy--Carleman theorem] \label{thm:DC}
	Let $M=(M_k)$ be a weight sequence. Then the following conditions are equivalent:
	\begin{enumerate}
			\item $\cE^{\{M\}}$ is quasianalytic.
			\item $\cE^{(M)}$ is quasianalytic.
			\item $\sum_{k} \frac{1}{\mu_k} = \infty$.
	\end{enumerate}
\end{theorem}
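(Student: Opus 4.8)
The plan is to close the cycle $(2)\Rightarrow(3)\Rightarrow(1)\Rightarrow(2)$, using the two foundational lemmas \Cref{lem:sizenearflat} and \Cref{lem:specialbump} together with \Cref{lem:CC16} to pass between the Roumieu and Beurling cases. The implication $(1)\Rightarrow(2)$ is free, since $\cE^{(M)}(U)\subseteq\cE^{\{M\}}(U)$ and injectivity on the larger space forces injectivity on the smaller one. It therefore suffices to prove $(2)\Rightarrow(3)$ and $(3)\Rightarrow(1)$.

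\emph{$(2)\Rightarrow(3)$ (contrapositive).} Assume $\sum_k 1/\mu_k<\infty$; I would construct a nontrivial compactly supported function in $\cE^{(M)}(\R)$, which violates quasianalyticity of $\cE^{(M)}$ (and hence also of $\cE^{\{M\}}$). First, since $(1/\mu_k)$ is a decreasing positive summable sequence, apply \Cref{lem:smaller} to obtain a decreasing positive summable sequence $(a_k)$ with $a_k\mu_k\to\infty$, equivalently $1/(a_k\mu_k)\to 0$; in fact by iterating (or applying \Cref{lem:CC16} directly with a rapidly-to-zero auxiliary $\be_k$) one can arrange $1/(\rh^k a_0a_1\cdots a_k\,\cdot\,\text{(product of }\mu\text{'s)})$ to be controlled for \emph{every} $\rh>0$. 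Concretely: feed $(a_k)$ into \Cref{lem:specialbump} to get $f\in C^\infty(\R)$ with support in a compact interval, $\int f=1$, and $|f^{(k)}(x)|\le 2^k/(a_0a_1\cdots a_k)$. It remains to check $f\in\cE^{(M)}(\R)$, i.e.\ $\|f\|^M_{\R,\rh}<\infty$ for all $\rh>0$; this amounts to bounding $2^k/(a_0\cdots a_k\,\rh^k M_k)$ uniformly in $k$. Since $M_k=\mu_1\cdots\mu_k$, this ratio is $\prod_{j=1}^k \frac{2}{a_j\rh\mu_j}$ (times a harmless factor), and by choosing $(a_k)$ via \Cref{lem:CC16} so that $a_k\mu_k\to\infty$ fast enough the factors are eventually $<1$, giving a finite supremum. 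Hence $f$ is a nontrivial element of the kernel of $j^\infty_{\{0\}}$ on $\cE^{(M)}$ (any point outside its support witnesses non-injectivity after a translation), so $\cE^{(M)}$ is not quasianalytic.

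\emph{$(3)\Rightarrow(1)$.} Assume $\sum_k 1/\mu_k=\infty$; I must show $j^\infty_{\{0\}}:\cE^{\{M\}}(U)\to\La^{\{M\}}$ is injective for every open connected neighborhood $U$ of $0$ in $\R^n$. By the remark following \Cref{lem:characteristic} (restricting to lines through $0$, or tensoring in reverse), it suffices to treat $n=1$ and $U=I=(-r,r)$. So let $f\in\cE^{\{M\}}(I)$ with $f^{(k)}(0)=0$ for all $k$; after shrinking $r$ and rescaling we may assume $\|f^{(k)}\|_I\le M_k$ for all $k$. The function $g(t):=f(t)$ for $t\ge 0$ and $g(t):=0$ for $t\le 0$ need not be smooth, so instead I apply \Cref{lem:sizenearflat} directly to $f$ near the flat point: for each $n$ and each $\ell\le n$ and small $t>0$, $|f(t)|\le\big(2t/\sum_{k=\ell}^n 1/\mu_k\big)^\ell$. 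Now let $n\to\infty$: since $\sum_{k\ge\ell}1/\mu_k=\infty$, the right-hand side tends to $0$ for every fixed $t$ with $0<t<r$, forcing $f(t)=0$ for all $t\in[0,r)$; by the same argument applied to $t\mapsto f(-t)$ (which also has a flat point at $0$ with the same bounds), $f\equiv 0$ on $(-r,r)$. Since $I$ was an arbitrary symmetric interval and $U$ is connected, a standard connectedness argument (the set where $f$ and all its derivatives vanish is open and closed in $U$) gives $f\equiv 0$ on $U$. Thus $\cE^{\{M\}}$ is quasianalytic.

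\emph{Main obstacle.} The routine direction is $(3)\Rightarrow(1)$; the delicate point is $(2)\Rightarrow(3)$, specifically arranging the sequence $(a_k)$ so that the special bump function of \Cref{lem:specialbump} lands in the \emph{Beurling} class (all $\rh>0$ simultaneously), not merely the Roumieu class. A single application of \Cref{lem:smaller} only buys $a_k\mu_k\to\infty$, which handles one $\rh$; the trick is to use \Cref{lem:CC16} (or diagonalize over a sequence of applications of \Cref{lem:smaller}) to make $a_k\mu_k\to\infty$ fast enough that $\prod_j 2/(a_j\rh\mu_j)$ stays bounded for every $\rh$ — this is exactly the kind of book-keeping \Cref{lem:CC16} was stated to streamline.
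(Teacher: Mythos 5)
Your cycle $(1)\Rightarrow(2)$ (trivial inclusion), $(2)\Rightarrow(3)$, $(3)\Rightarrow(1)$ is essentially the paper's argument: the same two lemmas carry all the weight, and your sequence $(a_k)$ with $a_k\mu_k\to\infty$ obtained from \Cref{lem:smaller} is exactly the paper's auxiliary weight sequence $N$ with $\nu_k=1/a_k$ and $N\lhd M$ --- the paper merely packages the Beurling membership of the bump function as the inclusion $\cE^{\{N\}}\subseteq\cE^{(M)}$, where you estimate directly. Two corrections to the write-up are in order.

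First, in $(3)\Rightarrow(1)$ you dismiss the one-sided truncation $g=f\cdot\mathbf 1_{[0,\infty)}$ as possibly non-smooth and propose to apply \Cref{lem:sizenearflat} ``directly to $f$''. This is backwards: $g$ \emph{is} smooth precisely because $f$ is flat at $0$ (all one-sided derivatives at $0$ agree and vanish), and it satisfies the same bounds $\|g^{(k)}\|_I\le M_k$; whereas $f$ itself does \emph{not} satisfy the hypothesis ``$f(t)=0$ for $t\le 0$'' of the lemma, whose proof uses the vanishing on $(-r,0]$ as the base case of the induction. So the lemma must be applied to $g$ (and to the analogous truncation of $t\mapsto f(-t)$), not to $f$. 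With that substitution the rest of your argument for $(3)\Rightarrow(1)$ is fine.

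Second, your ``main obstacle'' is not an obstacle. If $a_k\mu_k\to\infty$, then for \emph{every} fixed $\rh>0$ all but finitely many of the factors $2/(\rh a_j\mu_j)$ in $\prod_{j=1}^k 2/(\rh a_j\mu_j)$ are at most $1$, so the product is bounded by the finite initial product $C_\rh$ uniformly in $k$; hence $\|f\|^M_{\R,\rh}<\infty$ for all $\rh>0$ and $f\in\cB^{(M)}(\R)$ already after a single application of \Cref{lem:smaller}. No diagonalization or further use of \Cref{lem:CC16} is needed, since the constant is allowed to depend on $\rh$.
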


\begin{proof}
  The equivalence of (1) and (3) is an easy consequence of \Cref{lem:sizenearflat} and
  \Cref{lem:specialbump} (with $a_k=\mu_k^{-1}$).

If $\cE^{(M)}$ contains non-trivial functions of compact support then so does $\cE^{\{M\}}$,
by the trivial inclusion $\cE^{(M)} \subseteq \cE^{\{M\}}$, which shows (3) $\Rightarrow$ (2)
since we already have the equivalence of (1) and (3).

In order to show that
(2) implies (3) suppose that $\sum_{k} \frac{1}{\mu_k} < \infty$.
By \Cref{lem:smaller}, we find an increasing sequence $(\nu_k)$ such that $\sum_k \frac{1}{\nu_k}  < \infty$
and $\frac{\mu_k}{\nu_k} \nearrow \infty$. So the sequence $\frac{M_k}{N_k}$ is log-convex and thus
$(\frac{M_k}{N_k})^{1/k} \to \infty$ since $\frac{\mu_k}{\nu_k} \to \infty$ (cf.\ \Cref{lem:atinfinity}(4)).
Then $N=(N_k)$ is a weight sequence satisfying $N \lhd M$ and thus $\cE^{\{N\}} \subseteq \cE^{(M)}$.
Since $\sum_k \frac{1}{\nu_k}  < \infty$, we find that $\cE^{\{N\}}$ is non-quasianalytic, and hence so is $\cE^{(M)}$.
\end{proof}

The Denjoy--Carleman theorem completely characterizes injectivity of the Borel map \eqref{eq:Borel}.
It shows that the following definition makes sense.

\begin{definition}
	A weight sequence $M=(M_k)$ is called \emph{quasianalytic} if
	the equivalent conditions of \Cref{thm:DC} hold, and it is called
	\emph{non-quasianalytic} otherwise.
\end{definition}

\begin{remark} \label{rem:reduction}
	For later reference note that the proof of the implication (2) $\Rightarrow$ (3) in \Cref{thm:DC} shows the following:
	For any non-quasianalytic weight sequence $M$ there exists a non-quasianalytic weight sequence $N$ with $N \lhd M$.
\end{remark}

\begin{remark} \label{lem:Carleman}
  For a weight sequence $M=(M_k)$, the divergence of the series $\sum_k \frac{1}{\mu_k}$ is
  equivalent to the divergence of $\sum_k \frac{1}{M_k^{1/k}}$.
  This follows from $M_k^{1/k} \le \mu_k$ for all $k$ (cf.\ \Cref{lem:atinfinity}(1))
  and \emph{Carleman's inequality}:
  	 If $(a_k)$ is a positive sequence, then
  	 \[
  	 	\sum_{n=1}^\infty (a_1 a_2 \cdots a_n)^{1/n} \le e \sum_{k=1}^\infty a_k.
  	 \]
  Cf.\ \cite[Lemma 1.3.9]{Hoermander83I}.

  If $M$ is \emph{not} necessarily a weight sequence, then one can work with the \emph{log-convex minorant}
  \[
  \ul M_k := \inf\{M_k, M_j^{\frac{\ell-k}{\ell-j}} M_\ell^{\frac{k-j}{\ell-j}} : j <k < \ell\}
  \] of $M$
  or the \emph{increasing minorant}
  \[
  u_k :=\inf_{k \ge j} M_k^{1/k}
  \]
  of $M_k^{1/k}$. In fact, the following are equivalent (see \cite[Theorem 1.3.8]{Hoermander83I}):
  \begin{enumerate}
    \item $\cE^{[M]}$ is quasianalytic.
    \item $\sum_k \frac{1}{u_k} = \infty$.
    \item $\sum_k \frac{1}{\ul M_k^{1/k}} = \infty$.
    \item $\sum_k \frac{1}{\ul \mu_k} = \infty$.
  \end{enumerate}
\end{remark}

\begin{example} \label{ex:Denjoy}
  (1) The Gevrey sequences $G^s$, $s\ge 1$, are non-quasianalytic if and only if $s>1$.

  (2) The \emph{Denjoy sequences}
    \[
      Q^{n,\de}_k = k^k \prod_{j=1}^{n-1} \big(\log^{[j]}(k + \exp^{[j]}(1))\big)^{k}
      \big(\log^{[n]}(k + \exp^{[n]}(1))\big)^{\de k},
    \]
    where $n \in \N_{\ge 1}$, $0< \de \le 1$,
    and $f^{[n]}$ is the $n$-fold composite of $f$, are quasianalytic strongly log-convex weight sequences
    of moderate growth.
\end{example}

\subsection{Non-quasianalytic cutoff functions}

Before we study surjectivity of the Borel map \eqref{eq:Borel} in the next section
we show that we may use \Cref{lem:specialbump} to construct non-quasianalytic
cutoff functions in all dimensions.

\begin{proposition}[{\cite[Theorem 1.4.2]{Hoermander83I}}] \label{prop:cutoff}
	Let $U \subseteq \mathbb{R}^n$ be open and $K \subseteq U$ compact.
	Let $\| \cdot \|$ be any norm on $\mathbb{R}^n$.
	If $d = \inf\{ \| x-y\| : x \in K,~ y \in \mathbb{R}^n\setminus U\}$
	and $(d_j)$ is a positive decreasing sequence with
	$\sum_{j=1}^\infty d_j < d$, then
	there exists $\ph \in C^\infty_c(U)$ with $0 \le \ph \le 1$ such that $\ph =1$ on a neighborhood
	of $K$ and
	\[
		|\ph^{(k)}(x)(v_1,\ldots,v_k)|
		\le C(n)^k \frac{\|v_1\| \|v_2\| \cdots \|v_k\|}{d_1 d_2 \cdots d_k}, \quad k \ge 1, ~ x \in U.
	\]
\end{proposition}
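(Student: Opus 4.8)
The plan is to obtain the cutoff $\varphi$ as an $n$-fold tensor-product-style construction built from the one-dimensional bump function $f$ produced by \Cref{lem:specialbump}, applied with the summable sequence $(a_k) = (d_{k+1})$ (after a harmless re-indexing so that the indices line up). First I would reduce to a convenient normalized geometric situation: replacing $K$ by a slightly larger compact neighborhood $K'$ with $K \subseteq \on{int} K'$ and $\on{dist}(K', \R^n \setminus U) > \sum_{j} d_j$ still holding (shrink the $d_j$ slightly if needed), so that it suffices to produce $\varphi$ with $\varphi = 1$ on $K$ and $\on{supp}\varphi \subseteq \{x : \on{dist}(x,K) < \sum_j d_j\} \Subset U$, with the stated derivative bounds.

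The core one-variable tool is this: given the $C^\infty$ function $f \ge 0$ from \Cref{lem:specialbump} with $\on{supp} f \subseteq [0,a]$, $\int f = 1$, and $|f^{(k)}(x)| \le 2^k/(a_0 \cdots a_k)$, define $g := \mathbf{1}_{(-\infty, b]} * f$ for a suitable $b$, i.e. $g(x) = \int_{x}^{\infty} f(t-b)\,dt$ rescaled, so that $g$ is $C^\infty$, equals $1$ for $x$ to the left of $\on{supp}$-shifted region, equals $0$ to the right, is monotone, takes values in $[0,1]$, and satisfies $|g^{(k)}(x)| = |f^{(k-1)}(\cdot)| \le 2^{k-1}/(a_0 \cdots a_{k-1})$ for $k \ge 1$. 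This gives a one-sided smooth plateau-to-zero transition over an interval of length $a = \sum_k a_k$ with derivative bounds governed by the partial products of $(a_k)$. Composing with an affine map and multiplying two such functions gives, on $\R$, a smooth $\psi$ that is $1$ on an interval, $0$ outside a slightly larger one, with $|\psi^{(k)}(x)| \le C^k/(a_0 \cdots a_{k-1})$ by the Leibniz rule.

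For the $n$-dimensional statement with an arbitrary norm $\|\cdot\|$, the clean route is to first prove it for the $\ell^\infty$-type setup where one can take $\varphi(x) = \prod_{i=1}^n \psi_i(x_i)$ with each $\psi_i$ a one-dimensional plateau function as above, chosen so that $\varphi = 1$ on a box containing $K'$ and $\on{supp}\varphi$ inside a concentric box inside $U$; the mixed partial $\partial^\alpha \varphi$ then factors as $\prod_i \psi_i^{(\alpha_i)}(x_i)$, and bounding each factor by $C^{\alpha_i}/(a_0 \cdots a_{\alpha_i - 1})$ together with the superadditivity-type inequality for partial products of a \emph{decreasing} sequence ($a_0 \cdots a_{j-1} \cdot a_0 \cdots a_{k-1} \ge a_0^{?}\cdots$ — more precisely, using that $a_0 a_1 \cdots a_{j+k-1} \le (a_0 \cdots a_{j-1})(a_0 \cdots a_{k-1})$ since the sequence is decreasing so each of the last $k$ factors on the left is $\le$ the corresponding factor $a_0, \dots, a_{k-1}$) yields $\prod_i (a_0 \cdots a_{\alpha_i-1}) \ge a_0 a_1 \cdots a_{|\alpha|-1}$, hence the desired bound $C(n)^{|\alpha|}/(d_1 \cdots d_{|\alpha|})$ after re-indexing $d_{k+1} = a_k$. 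Passing from the box (sup-norm) version to a general norm $\|\cdot\|$ only changes the geometry by a fixed comparison constant between $\|\cdot\|$ and the Euclidean/sup norm, which is absorbed into $C(n)$ and into a harmless rescaling of the $d_j$; alternatively one composes the product cutoff with a linear change of coordinates adapted to the norm, paying a constant in the chain rule.

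\medskip

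\textbf{Main obstacle.} The routine parts are the one-variable mollification estimates (already essentially in \Cref{lem:specialbump}) and the Leibniz rule. The step requiring genuine care is the \emph{bookkeeping of the derivative bounds through the product}: one must choose the auxiliary sequences entering the several one-dimensional factors so that, after multiplying $n$ plateau functions and differentiating a total of $|\alpha| = k$ times distributed as $\alpha = (\alpha_1,\dots,\alpha_n)$, the product of the individual denominators $\prod_i a_0 a_1 \cdots a_{\alpha_i-1}$ still dominates $a_0 a_1 \cdots a_{k-1}$ (equivalently $d_1 \cdots d_k$) uniformly in how $k$ is split — this is exactly where monotone decrease of $(d_j)$ is used and cannot be dropped. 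A secondary subtlety is ensuring the supports nest correctly inside $U$ while the plateau still covers $K$: this forces the slight shrinking of the $d_j$ at the outset (the hypothesis is the strict inequality $\sum d_j < d$, which leaves room for this), and one should be explicit that the constant $C(n)$ depends only on $n$ (through the number of factors and the norm-comparison constant) and not on $U$, $K$, or the sequence $(d_j)$.
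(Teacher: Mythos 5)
Your one‑variable tool (the plateau function built from $f$ of \Cref{lem:specialbump}) and the Leibniz bookkeeping (including the correct use of decrease of $(d_j)$ to get $\prod_i d_1\cdots d_{\alpha_i} \ge d_1\cdots d_{|\alpha|}$) are sound, but the overall construction has a genuine geometric gap. Taking $\varphi(x) = \prod_{i=1}^n \psi_i(x_i)$ produces a cutoff that is $1$ on a box and $0$ outside a slightly larger concentric box. For general $K\subseteq U$ there is no reason that any box containing $K$ is contained in $U$: think of $K$ an annulus (or any non‑convex set, or a disconnected set) and $U$ a thin neighborhood of $K$. Your ``reduce to a normalized geometric situation'' step does not supply a box between $K$ and $U$, so the tensor‑product cutoff simply cannot be placed, and nothing in the proposal explains how to pass from the box case to arbitrary $K,U$.

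The paper fixes exactly this point by a different (and in fact shorter) use of the same one‑dimensional function. Rather than forming a product of plateau functions, it sets $h(t)=f\bigl(t+\tfrac12\sum d_j\bigr)$, forms the tensor‑product \emph{mollifier} $\chi(x)=h(x_1)\cdots h(x_n)$ with $\int\chi=1$ and $\supp\chi\subseteq\{\,\|x\|_\infty\le\tfrac12\sum d_j\,\}$, and then defines
\[
  \varphi := \chi * \mathbf{1}_{K_{d/2}},\qquad K_{d/2}:=\{y:\on{dist}_\infty(y,K)\le d/2\}.
\]
This gives $\varphi=1$ near $K$, $\supp\varphi\Subset U$, and $|\varphi^{(\alpha)}|\le\int|\chi^{(\alpha)}| \le 2^{|\alpha|}/(d_1\cdots d_{|\alpha|})$, all with no constraint on the shape of $K$ or $U$; the passage to an arbitrary norm then costs only a constant $C(n)$. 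So the missing idea in your proposal is to convolve the mollifier with $\mathbf{1}_{K_{d/2}}$ instead of trying to make the cutoff itself a product of one‑variable functions: convolution adapts to arbitrary $K$ while the product construction is locked to boxes. If you make that single change, the rest of your estimates go through and you essentially recover the paper's proof.
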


\begin{proof}
	Let us first consider the norm $\|x\| = \max_i |x_i|$.
	Let $f$ be the function from \Cref{lem:specialbump} (with $a_j = d_{j+1}$)
	and set $h(t) := f(t + \frac{1}{2} \sum_{j=1}^\infty d_j)$.
	Then $t \in \on{supp} h$ implies $|t| \le \frac{1}{2} \sum_{j=1}^\infty d_j$,
	we have $\int h \, dt = 1$, and
	\[
	\int |h^{(j)}(t)| \,dt \le \frac{2^j}{d_1 d_2 \cdots d_j}.
	\]
	Let $\ch(x) := h(x_1) h(x_2) \cdots h(x_n)$ which satisfies $\int \ch \, dx =1$
	and set $\ph := \ch * \mathbf 1_{K_{d/2}}$,
	where $K_{d/2} := \{y\in \R^n : \|x-y\| \le d/2 \text{ for some } x \in K\}$.
	Then $\ph  \in C^\infty_c(U)$ is as required, since
	\[
		|\ph^{(\al)}| \le  \int |\ch^{(\al)}(x)| \,dx \le \frac{2^{|\al|}}{d_1 d_2 \cdots d_{|\al|}}
	\]
	as $(d_k)$ is decreasing.
	The statement for arbitrary norms follows from the fact that any two norms on $\mathbb R^n$ are
	equivalent with constants only depending on $n$.
\end{proof}

\begin{corollary} \label{cor:cutoff}
	Let $M$ be a non-quasianalytic weight sequence.
	Let $U \subseteq \mathbb{R}^n$ be open and $K \subseteq U$ compact.
	Then there exists a function $\ph \in \mathcal{E}^{[M]}(U)$ with support contained in $U$ such that
	$\ph = 1$ in a neighborhood of $K$.
\end{corollary}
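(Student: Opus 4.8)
The plan is to deduce this directly from \Cref{prop:cutoff} by choosing the decreasing sequence $(d_j)$ so that the resulting cutoff function $\ph$ from that proposition automatically lies in $\mathcal{E}^{[M]}(U)$. The point is that \Cref{prop:cutoff} gives, for \emph{any} admissible sequence $(d_j)$, a function $\ph \in C^\infty_c(U)$ with $\ph=1$ near $K$ and derivative bounds $\|\ph^{(k)}\|_U \le C(n)^k/(d_1\cdots d_k)$; so it suffices to pick $(d_j)$ decreasing with $\sum_j d_j < d$ and such that $\prod_{j=1}^k d_j \gtrsim \rho^k M_k$ for a suitable $\rho$ (all $\rho>0$ in the Beurling case, some $\rho>0$ in the Roumieu case). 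Then $\|\ph\|^M_{U,\rho'}<\infty$ for the appropriate $\rho'$, which is exactly membership in $\mathcal{B}^{[M]}(U) \subseteq \mathcal{E}^{[M]}(U)$, and the support condition is already built in.

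First I would reduce the product condition on $(d_j)$ to a condition on its terms: since $M$ is log-convex, the natural candidate is $d_j \asymp 1/\mu_j$ (up to a positive constant), because then $\prod_{j=1}^k d_j \asymp c^k/M_k$. So the core step is to produce a \emph{decreasing} positive sequence $(d_j)$ with $\sum_j d_j<\infty$ and $(\prod_{j=1}^k d_j)^{-1/k} \lesssim \mu_k^{1/k}$-type control — concretely, I want $d_j \ge c/\mu_j$ for the Roumieu case and $d_j \ge \varepsilon_j/\mu_j$ with $\varepsilon_j \to \infty$ arbitrarily slowly for the Beurling case. Non-quasianalyticity of $M$ means $\sum_k 1/\mu_k < \infty$ (by \Cref{thm:DC}), and $(1/\mu_k)$ is decreasing, so $(d_j) = (1/\mu_j)$ already works for the Roumieu statement: rescaling $U$ or the $d_j$ by a fixed small factor arranges $\sum_j d_j < d$, and then $\|\ph^{(k)}\|_U \le C(n)^k \prod_{j\le k}\mu_j = C(n)^k M_k$ gives $\ph \in \mathcal{B}^{\{M\}}(U)$.

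For the Beurling case the same $\ph$ need not lie in $\mathcal{E}^{(M)}(U)$, so here I would invoke \Cref{lem:smaller}: applied to $\al_k = 1/\mu_k$ it yields a decreasing positive summable sequence $(\be_k)$ with $\be_k\mu_k = \be_k/\al_k \nearrow \infty$. Taking $d_j := \be_j$ (again rescaled so that $\sum_j d_j < d$) gives $\prod_{j\le k} d_j = \prod_{j\le k}\be_j$, hence $\|\ph^{(k)}\|_U \le C(n)^k \prod_{j\le k} \be_j = C(n)^k M_k \prod_{j\le k}(\be_j\mu_j^{-1}\cdot 1)$... more cleanly: $\prod_{j\le k}\be_j^{-1} = \prod_{j\le k}(\be_j\mu_j)^{-1}\mu_j \le (\min_{j\le k}\be_j\mu_j)^{-k} M_k$, and since $\be_j\mu_j \nearrow\infty$ this is $\le \delta_k^{-k} M_k$ with $\delta_k\to\infty$; therefore for every $\rho>0$ we get $\|\ph^{(k)}\|_U \le \rho^k M_k$ for $k$ large, so $\|\ph\|^M_{U,\rho}<\infty$, i.e.\ $\ph\in\mathcal{B}^{(M)}(U) \subseteq \mathcal{E}^{(M)}(U)$. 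A final remark handles the necessary passage from the max-norm to the Euclidean norm via equivalence of norms on $\R^n$, exactly as in the proof of \Cref{prop:cutoff}, which only changes the constant $C(n)$.

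The step I expect to be the main (minor) obstacle is the bookkeeping in the Beurling case: making precise that $\prod_{j\le k}\be_j \ge \rho^k M_k^{-1}$ eventually holds for every $\rho>0$, i.e.\ translating ``$\be_j\mu_j\nearrow\infty$'' into the geometric-mean bound on the partial products, and being careful that the supremum defining $\|\ph\|^M_{U,\rho}$ over \emph{all} $k$ (not just large $k$) is still finite — which is fine since only finitely many terms $k$ are exceptional and each is individually finite. Everything else is a direct citation of \Cref{prop:cutoff}, \Cref{thm:DC}, \Cref{lem:smaller}, and \Cref{lem:atinfinity}.
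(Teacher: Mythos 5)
Your proof is correct and follows essentially the same route as the paper: both use \Cref{prop:cutoff} with $d_j \propto 1/\mu_j$ for the Roumieu case, and for the Beurling case both ultimately rest on \Cref{lem:smaller} (the paper packages it as \Cref{rem:reduction}, extracting a non-quasianalytic $N\lhd M$, applying the Roumieu construction to $N$, and using $\cE^{\{N\}}\subseteq\cE^{(M)}$, whereas you unwind this and verify the Beurling bounds directly from the sequence $(\be_j)$). One small slip in your Beurling computation: $\min_{j\le k}\be_j\mu_j = \be_1\mu_1$ is constant, so the displayed bound $\prod_{j\le k}(\be_j\mu_j)^{-1}\le(\min_{j\le k}\be_j\mu_j)^{-k}$ does not give $\delta_k\to\infty$; you should instead take $\delta_k$ to be the geometric mean (which does tend to infinity since $\be_j\mu_j\nearrow\infty$), or split the product $\prod_{j\le k}\be_j\mu_j$ at an index $j_0$ past which $\be_j\mu_j\ge L$ — you flag exactly this in your last paragraph, and the fix is routine.
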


\begin{proof}
	Let $d := \on{dist}(K,\mathbb R^n \setminus U)$\footnote{The distance
	$\on{dist}(A,B) := \inf\{|a-b| : a \in A, b \in B\}$ of two subsets of $\R^n$ is defined with respect
	to the Euclidean norm.}
	and $d_j := \frac{d}{s\mu_j}$, where $s := \sum_{j\ge 1} \frac{1}{\mu_j} < \infty$.
	Then the proposition provides a function $\ph \in \mathcal{E}^{\{M\}}(U)$ with the desired properties.
	Using \Cref{rem:reduction},
	we may achieve that $\ph \in \mathcal{E}^{(M)}(U)$.
\end{proof}

That non-quasianalytic functions are ``abundant'' is witnessed by the following remark.

\begin{remark}
  Let $M$ be a non-quasianalytic weight sequence.
  For each closed subset $A \subseteq \R^n$ there exists $f \in \mathcal{B}^{[M]}(\mathbb{R}^n)$
  such that $f$ is flat on $A$, i.e., $j^\infty_A f =0$, and
  $f|_{\R^n \setminus A}$ is positive and real analytic. Cf. \cite{Langenbruch:1999aa,Langenbruch03}.
\end{remark}

\section{Surjectivity of the Borel map}

Let $M=(M_k)$ be a weight sequence. We shall see that a necessary and sufficient condition for
the surjectivity of the Borel map $j^\infty_{\{0\}} : \cE^{[M]}(\R) \to \La^{[M]}$ is
\begin{equation} \label{eq:snq}
				\sup_{k} \frac{\mu_k}{k} \sum_{j \ge k} \frac{1}{\mu_j} < \infty.
\end{equation}
In the Beurling case the condition even guarantees that the Borel map is split surjective.
This section is primarily based on \cite{Petzsche88}.

\begin{definition}
	A weight sequence $M$ satisfying \eqref{eq:snq} is said to be \emph{strongly non-quasianalytic}.\footnote{Many authors use Komatsu's (M.$\ul{\hspace{2mm}}$) notation \cite{Komatsu73}:
  log-convexity (M.1),
  moderate growth (M.2),
  strong non-quasianalyticity (M.3),
  derivation closedness (M.2)',
  and
  non-quasianalyticity (M.3)'.}
\end{definition}

Obviously a strongly non-quasianalytic weight sequence $M$ is non-quasianalytic, and thus
$\cE^{[M]}$ admits cutoff functions. We will see that \eqref{eq:snq} allows for
special cutoff functions that are in a certain sense optimal; see \Cref{sec:cutoff}.

\begin{example}
	The Gevrey sequences $G^s$ are strongly non-quasianalytic provided that $s>1$.
	Indeed, $\ga^s_k = G^s_k/G^s_{k-1} = k^s$ and
	\[
		\sum_{j \ge k} \frac{1}{j^s} \le \int_{k-1}^\infty \frac{1}{t^s}\, dt
		= \frac{1}{s-1} \frac{1}{(k-1)^{s-1}} = \frac{1}{s-1} \Big(\frac{k}{k-1}\Big)^{s-1} \frac{k}{k^s}
		\le  \frac{2^{s-1}}{s-1} \frac{k}{k^s}.
	\]
\end{example}

\subsection{Necessity}
Let us assume throughout this section that $M$ is non-quasianalytic.
The quasianalytic case which requires different methods will be discussed in \Cref{sec:quasianalytic}.

\begin{theorem} \label{thm:necessityB}
	Let $M$ be a non-quasianalytic weight sequence.
  If the Borel map $j^\infty_{\{0\}} : \cE^{(M)}(\R) \to \La^{(M)}$ or
  $j^\infty_{\{0\}} : \cE^{\{M\}}(\R) \to \La^{\{M\}}$ is surjective, then $M$ is strongly non-quasianalytic.
\end{theorem}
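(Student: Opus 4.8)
The plan is to argue by contraposition: assuming that $M$ is not strongly non-quasianalytic, i.e.\ that $\sup_k \tfrac{\mu_k}{k}\sum_{j\ge k}\tfrac{1}{\mu_j}=\infty$, I would exhibit a jet in $\La^{[M]}$ that is not hit by $j^\infty_{\{0\}}$. The natural strategy is a closed-graph/Baire argument combined with the size estimate of \Cref{lem:sizenearflat}. First I would observe that, were the Borel map surjective, it would be a continuous linear surjection between suitable (webbed/Fr\'echet-type) spaces — in the Beurling case $\cE^{(M)}(\R)\to\La^{(M)}$ is a map of Fr\'echet spaces and in the Roumieu case $\cE^{\{M\}}(\R)\to\La^{\{M\}}$ is a map of (DFS)-type spaces — so by the open mapping theorem it would be open. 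Concretely this yields: there is a compact neighbourhood $K$ of $0$, a radius $\rho$, and a constant $C$ such that every jet $a=(a_k)$ with $|a|^M_{1}\le 1$ (resp.\ $|a|^M_\rho\le 1$) admits an extension $f\in\cE^{[M]}(\R)$ with $\|f\|^M_{K,\rho}\le C$ (resp.\ with a controlled seminorm). Thus the problem reduces to a \emph{quantitative} extension statement with uniform constants.

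The second and central step is to contradict this uniform extension bound using flatness. Given a one-dimensional jet $a$ supported in high order — say $a_k=0$ for $k<n$ and $a_n=M_n$, everything else tuned so $|a|^M_\rho$ is bounded independently of $n$ — any extension $f$ must satisfy $f^{(k)}(0)=0$ for $k<n$, so $f$ is $n$-flat at $0$. Writing $g=f$ on one side of $0$ (after multiplying by a cutoff and splitting into $t\ge 0$ and $t\le 0$ pieces, using that $M$ is non-quasianalytic so cutoffs exist, by \Cref{cor:cutoff}), the hypothesis $\|f\|^M_{K,\rho}\le C$ gives $\|g^{(k)}\|\le C\rho^k M_k$ for $k\le n$. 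Then \Cref{lem:sizenearflat} applied with $\ell=n$ bounds $|g(t)|$ by $\big(2t/\sum_{k=\ell}^{n}\mu_k^{-1}\big)^{\ell}$ on an interval of length comparable to $\sum_{k=n}^{\cdot}\mu_k^{-1}$; but on the other hand $g$ cannot be too small everywhere near $0$ because $g^{(n)}(0)=\pm M_n\neq 0$ forces $|g(t)|\gtrsim M_n t^n/n!$ on a comparable scale. Comparing these two estimates on the common interval produces, after simplification, precisely an inequality of the form $\tfrac{\mu_n}{n}\sum_{j\ge n}\tfrac{1}{\mu_j}\le \text{const}$ (the tail being finite here since in the non-quasianalytic regime $\sum 1/\mu_j<\infty$), uniformly in $n$ — which is exactly \eqref{eq:snq}. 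Hence failure of \eqref{eq:snq} contradicts surjectivity.

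The main obstacle I anticipate is the bookkeeping in the second step: one must choose the test jet $a$ (and a cutoff localizing $f$ near $0$ on one side) so that (i) $|a|^M_\rho$ stays bounded as $n\to\infty$, (ii) the resulting function $g$ genuinely satisfies the hypotheses of \Cref{lem:sizenearflat} with the same $M$ (possibly after rescaling the variable by $\rho$ and absorbing the constant $C$, which only changes $M_k\mapsto C\rho^k M_k$, harmless up to equivalence), and (iii) the lower bound $|g(t)|\gtrsim M_n t^n/n!$ is valid on an interval matching the scale $\sum_{k\ge n}\mu_k^{-1}$ on which the upper bound is sharp — this requires noting, via \Cref{lem:atinfinity}, that $M_n/n!\cdot(\sum_{k\ge n}\mu_k^{-1})^n$ and $\big(2/\ ( \cdot)\big)^n\cdot(\sum_{k\ge n}\mu_k^{-1})^n$ are comparable only when $\tfrac{\mu_n}{n}\sum_{j\ge n}\mu_j^{-1}$ is bounded. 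I would also need to handle the Beurling and Roumieu cases in parallel: the Beurling case is cleanest via the Fr\'echet open mapping theorem, while the Roumieu case needs either a Baire-category argument on the Banach building blocks $\cB^M_\rho$ or the corresponding open mapping theorem for (LB)-spaces; once the uniform extension estimate is in hand the flatness computation is identical.
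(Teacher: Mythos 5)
Your overall strategy matches the paper's: obtain uniform extension bounds from the open mapping theorem (Beurling) or Grothendieck factorization (Roumieu), then play them off against \Cref{lem:sizenearflat}. But two steps in the plan have genuine gaps.

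The one-sided extension you describe is not smooth. To invoke \Cref{lem:sizenearflat} you need a $C^\infty$ function $g$ vanishing identically on $(-r,0]$; but if you set $g=f$ on $t\ge 0$ and $g=0$ on $t<0$, then $g\notin C^\infty$, because $f$ is \emph{not} flat at $0$ (by design $f^{(n)}(0)=a_n\ne0$), so $g^{(n)}$ jumps at $0$. Multiplying $f$ by a cutoff equal to $1$ near $0$ does not help — it leaves the germ of $f$ at $0$ unchanged — and multiplying by a cutoff vanishing near $0$ destroys the lower bound you need. The paper avoids this entirely by applying \Cref{lem:sizenearflat} not to $f_k$ but to a normalization $g_k$ of $f_k^{(k)}(t)-1$, zero-extended across $0$. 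That function \emph{is} flat at $0$, since $f_k^{(k)}(0)=1$ and $(f_k^{(k)})^{(j)}(0)=f_k^{(k+j)}(0)=0$ for $j\ge1$ (recall the jets used are $f_k^{(j)}(0)=\delta_{jk}$).

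Even granting a smooth modification, the head-on comparison you propose is too weak to recover \eqref{eq:snq}. Writing $\Sigma_n:=\sum_{j\ge n}1/\mu_j$, comparing the Taylor lower bound $|g(t)|\gtrsim m_n t^n$ against the \Cref{lem:sizenearflat} bound $(2t/\Sigma_n)^n$ (with $\ell=n$) lets the $t^n$ cancel and leaves only $m_n^{1/n}\,\Sigma_n\lesssim 1$. But by \Cref{lem:atinfinity}(1), $M_n^{1/n}\le\mu_n$, hence $m_n^{1/n}\le e\,\mu_n/n$; so $\frac{\mu_n}{n}\Sigma_n\lesssim1$ \emph{implies} $m_n^{1/n}\Sigma_n\lesssim1$ and not conversely. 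Your remark that the two estimates are ``comparable only when $\frac{\mu_n}{n}\Sigma_n$ is bounded'' therefore runs in the wrong direction, and the inequality you actually obtain is a priori weaker than strong non-quasianalyticity. The paper's mechanism is structurally different and sharper: it introduces $t_k:=\inf\{t\in[0,1]:f_k^{(k)}(t)<1/2\}$, derives the upper bound $t_{2k}\lesssim k/\mu_k$ (by integrating $f_{2k}^{(2k)}\ge\frac12$ down to $f_{2k}^{(k)}(t)\ge t^k/(2k!)$ on $[0,t_{2k}]$ and matching against the seminorm bound at $t=t_{2k}$), and derives the lower bound $t_k\gtrsim\Sigma_{2k}$ for large $k$ (from \Cref{lem:sizenearflat} applied to $g_k$, using that $|g_k(t_k)|$ is bounded below by construction). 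It is the combination of two bounds on the same auxiliary quantity $t_k$ — rather than two bounds on $|g|$ at a single point — that isolates exactly \eqref{eq:snq}.
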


\begin{proof}
  We give details in the Beurling case and indicate the required changes for the Roumieu case.

	We may suppose that $j^\infty_{\{0\}} : \cB^{(M)}((-1,1)) \to \La^{(M)}$ is surjective,
	by multiplication with a suitable cutoff function.
	Then the open mapping theorem for Fr\'echet spaces
	(cf.\ \cite[24.30]{MeiseVogt97}) implies that
	there exist constants $C,\rh>0$ and functions $f_k \in \cB^{(M)}((-1,1))$ such that $f_k^{(j)}(0) = \de_{jk}$ and
	\begin{equation} \label{eq:nec0}
		\|f_k\|^M_{[-1,1],1} \le \frac{C}{\rh^k M_k}, \quad k \in \N.
	\end{equation}
	Let $t_k := \inf\{t \in [0,1] : f_k^{(k)}(t) < \frac{1}{2}\}$. Then $f_k^{(k)}(t)\ge \frac{1}{2}$ for $t \in [0,t_k]$ and hence
	\(
		f_{2k}^{(k)}(t) \ge \frac{t^k}{2 k!}
	\)
  for $t \in [0,t_{2k}]$.
	In particular, for $t=t_{2k}$ we find, by \eqref{eq:nec0},
	\(
		\frac{t_{2k}^k}{2 k!} \le f_{2k}^{(k)}(t_{2k}) \le \frac{C M_k}{\rh^{2k} M_{2k}}
	\)
	which implies (using that $\mu_k$ is increasing)
	\begin{equation} \label{eq:nec1}
		t_{2k} \le \frac{(2C)^{1/k}}{\rh^2} \frac{k}{\mu_k}.
	\end{equation}
	We claim that there exists $0<h< \min\{\tfrac{1}{4}, \tfrac{\rh}{4}\}$ such that
	\begin{equation} \label{eq:nec2}
		t_{k} \ge h \sum_{j\ge 2k} \frac{1}{\mu_j} \quad \text{ for all sufficiently large } k.
	\end{equation}
	Then \eqref{eq:nec1} and \eqref{eq:nec2} imply \eqref{eq:snq},
	\begin{align*}
		\sum_{j\ge k} \frac{1}{\mu_j} = \sum_{j= k}^{4k-1} \frac{1}{\mu_j} + \sum_{j\ge 4k} \frac{1}{\mu_j}
		\le \frac{3k}{\mu_k} + \frac{t_{2k}}{h}	\lesssim \frac{k}{\mu_k}.
	\end{align*}
	It remains to show \eqref{eq:nec2}. To this end we apply \Cref{lem:sizenearflat} to the functions
	\[
		g_k(t) :=
		\begin{cases}
			\big(\frac{C}{\rh^k} + 1\big)^{-1} (f_k^{(k)}(t) -1) & \text{ if } t\ge 0,
			\\
			0 & \text{ if } t<0.
		\end{cases}
	\]
  Indeed, each $g_k$ is $C^\infty$, satisfies
	$|g_k| \le 1$, and for $j\ge 1$ we have
	\[
		|g_k^{(j)}| = \Big(\frac{C}{\rh^k} + 1\Big)^{-1}  |f_k^{(k+j)}| \le \Big(\frac{C}{\rh^k} + 1\Big)^{-1} \frac{C}{\rh^k}  \frac{M_{k+j}}{M_k}\le \mu_{k+1}\cdots \mu_{k+j}=:M^{+k}_j,
	\]
  by \eqref{eq:nec0}.
	Then $M^{+k}$ (with $M^{+k}_{0}:=1$) is a weight sequence and \Cref{lem:sizenearflat} gives
	\[
		|g_k(t)| \le \Big(\frac{2t}{\sum_{j\ge 2k} \frac{1}{\mu_j}}\Big)^k, \quad 0\le t \le \frac{1}{4}\sum_{j\ge 2k} \frac{1}{\mu_j}.
	\]
	Suppose for contradiction that \eqref{eq:nec2} is violated for infinitely many $k$.
	For those $k$ we obtain
	\[
		\frac{1}{2} \frac{\rh^k}{C+\rh^k} = |g_k(t_k)| \le \Big(\frac{2t_k}{\sum_{j\ge 2k} \frac{1}{\mu_j}}\Big)^k \le (2h)^k,
	\]
	which is however only possible for finitely many $k$. Thus \eqref{eq:nec2} is proved.

  Now suppose that $j^\infty_{\{0\}} : \cB^{\{M\}}((-1,1)) \to \La^{\{M\}}$ is surjective.
  By the Grothendieck factorization theorem
  (cf.\ \cite[24.33]{MeiseVogt97}),
  there exist $C,\rh>0$ with
  \[
    \{(a_k) : |(a_k)|^M_1 \le 1\} \subseteq j^\infty_{\{0\}} \{ f : \|f\|^M_{[-1,1],\rh} \le C\}.
  \]
  Thus there exist $f_k \in \cB^{\{M\}}((-1,1))$ such that $f_k^{(j)}(0) = \de_{jk}$ and
  \[
    \|f_k\|^M_{[-1,1],\rh} \le \frac{C}{M_k}, \quad k \in \N.
  \]
  It is easy to modify the above proof to conclude again \eqref{eq:snq}.
\end{proof}

\subsection{Sufficiency}

\begin{theorem} \label{thm:Borelsufficient}\label{cor:extopsingleton}
	Let $M$ be a strongly non-quasianalytic weight sequence. Then:
  \begin{enumerate}
    \item $j^\infty_{\{0\}} : \cE^{\{M\}}(\R) \to \La^{\{M\}}$ is surjective.
    \item $j^\infty_{\{0\}} : \cE^{(M)}(\R) \to \La^{(M)}$ is \emph{split surjective}, i.e., there exists a continuous linear right-inverse
    $E : \La^{(M)} \to \cE^{(M)}(\R)$ with $j^\infty_{\{0\}} \o E = \on{id}_{\La^{(M)}}$.
  \end{enumerate}
\end{theorem}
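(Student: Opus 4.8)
The plan is to construct the extension explicitly by summing suitably rescaled bump functions, one for each Taylor coefficient, using the optimal cutoff functions furnished by strong non-quasianalyticity. First I would fix, via \Cref{lem:specialbump}, a $C^\infty$ bump function $\theta$ supported in $[-1,1]$ with $\int \theta\,dx = 1$, normalized so that $\theta^{(j)}(0) = \delta_{j0}$ after an appropriate primitive-taking adjustment; more precisely I want a function $\theta$ with $\theta(0)=1$, $\theta^{(j)}(0) = 0$ for $j \ge 1$, $\operatorname{supp}\theta \subseteq [-1,1]$, and derivative bounds $\|\theta^{(k)}\|_{\R} \le C^k/(a_1\cdots a_k)$ for a decreasing positive sequence $(a_k)$ with $\sum a_k < \infty$. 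The key point is that strong non-quasianalyticity \eqref{eq:snq} lets one take $a_k = \frac1{c}\sum_{j\ge k}\mu_j^{-1}$, which by \eqref{eq:snq} satisfies $a_k \lesssim k/\mu_k$, hence $a_1\cdots a_k \lesssim k!\,/M_k \cdot C^k$, giving $\|\theta^{(k)}\|_{\R} \lesssim C^k M_k/k!$. In other words, this $\theta$ realizes the sharpest possible growth compatible with the sequence $M$ — it is the "optimal cutoff" alluded to in the introduction and in \Cref{sec:cutoff}.

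Given $a = (a_k)_{k\in\N} \in \La^{[M]}$, I would then set
\[
	E(a)(x) := \sum_{k=0}^\infty \frac{a_k}{k!}\, x^k\, \theta(\lambda_k x),
\]
where $\lambda_k \ge 1$ is a rapidly increasing sequence of scaling parameters chosen so that the tail contributions to each derivative at points away from $0$ are summable; the standard choice makes $\lambda_k$ depend on $a$ in the Roumieu case (this is harmless for mere surjectivity) and a fixed $\lambda_k$ works in the Beurling case since there the bounds must hold for every $\rho$. Differentiating term by term and using the Leibniz rule on $x^k\theta(\lambda_k x)$, one sees that the $k$-th term contributes, to the $j$-th derivative, a quantity controlled by $|a_k|\,\lambda_k^{j-k} \cdot (\text{binomials}) \cdot \|\theta^{(\cdot)}\|$; choosing $\lambda_k$ large compared to the relevant $M$-data forces geometric decay in $k$ once $j$ is fixed, so the series and all its derivative series converge uniformly on $\R$. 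Evaluating at $x=0$: only the $k=j$ term survives (because $x^k$ and its derivatives up to order $j-1$ vanish at $0$, the terms with $k>j$ vanish, and for $k<j$ the factor $\theta(\lambda_k x)$ has all its positive-order derivatives vanishing at $0$ while $x^k/k!$ differentiated $j$ times is zero), and one gets $E(a)^{(j)}(0) = a_j$, so $j^\infty_{\{0\}}\o E = \on{id}$. Linearity of $E$ in $a$ is immediate, and continuity in the Beurling case follows because the estimates above give, for each $\rho>0$, a bound $\|E(a)\|^M_{[-1,1],\rho} \le C_\rho\, |a|^M_{\rho'}$ with $\rho'$ depending only on $\rho$ — which is exactly continuity of $E : \La^{(M)} \to \cB^{(M)}((-1,1)) \hookrightarrow \cE^{(M)}(\R)$ after multiplying by a fixed cutoff, or one arranges $\theta$ to already have global compact support.

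The main obstacle I anticipate is the bookkeeping in the derivative estimate: one must track three competing factors — the coefficient bound $|a_k| \le |a|^M_\rho\,\rho^k M_k$, the $\theta$-derivative bound $\|\theta^{(i)}\|_\R \lesssim C^i M_i/i!$, and the scaling powers $\lambda_k^{i}$ from the chain rule — and combine them through the Leibniz expansion of $(x^k\theta(\lambda_k x))^{(j)}$ while invoking log-convexity of $M$ (specifically $M_iM_{j}\le M_{i+j}$ from \Cref{lem:atinfinity}(3), together with $\mu$ increasing) to collapse products $M_a M_b \cdots$ into a single $M_j$ times a harmless geometric factor. Getting the dependence of $\lambda_k$ on the data right so that the Beurling estimates hold simultaneously for all $\rho$ is the delicate part; the trick is that $M^{1/k}_k \to \infty$ (equivalently $\mu_k\to\infty$) gives enough room to absorb any fixed geometric loss by pushing $\lambda_k$ to infinity. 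For part (1) in the Roumieu case the same construction works with $\lambda_k$ allowed to depend on the single $\rho$ for which $|a|^M_\rho < \infty$, so surjectivity is the easier half and split surjectivity of (2) is what the full strength of \eqref{eq:snq} and the Fréchet structure deliver.
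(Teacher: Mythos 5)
Your overall architecture (basis functions $\ps_k$ with $\ps_k^{(j)}(0)=\de_{jk}$ of the form ``monomial $\frac{t^k}{k!}$ times cutoff'', summed against $a_k$) is exactly the paper's, but the construction of the cutoffs has a genuine gap that cannot be repaired by bookkeeping. First, the profile $\theta$ you want --- compactly supported with $\|\theta^{(k)}\|_\R \lesssim C^k m_k$ --- need not exist: by the Denjoy--Carleman theorem such a function exists only if $(m_k)$ is a non-quasianalytic sequence, i.e.\ $\sum_k k/\mu_k<\infty$, and this is strictly stronger than \eqref{eq:snq} (it fails for the Gevrey sequences $G^s$ with $1<s\le 2$, which are strongly non-quasianalytic). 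Concretely, your choice $a_k=\tfrac1c\sum_{j\ge k}\mu_j^{-1}$ in \Cref{lem:specialbump} violates the hypothesis $\sum_k a_k<\infty$ precisely in these cases, since $\sum_k\sum_{j\ge k}\mu_j^{-1}=\sum_j j/\mu_j$. (Also note that your derivation of the bound on $\|\theta^{(k)}\|$ runs the inequality backwards: an upper bound on $a_1\cdots a_k$ gives a \emph{lower} bound on $2^k/(a_1\cdots a_k)$.)

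Second, and more fundamentally, no fixed profile rescaled by $\la_k$ can work. In the Leibniz expansion of $\big(\tfrac{x^k}{k!}\theta(\la_k x)\big)^{(j)}$ the term with all $j$ derivatives on $\theta$ is $\tfrac{x^k}{k!}\la_k^j\theta^{(j)}(\la_k x)$; substituting $u=\la_k x$ shows that for $j=k$ its supremum equals $\sup_u|u|^k|\theta^{(k)}(u)|/k!$, which is \emph{independent of} $\la_k$ and of order $\|\theta^{(k)}\|/k!$. Since any compactly supported $\theta$ must have $\|\theta^{(k)}\|/(c^k k!)\to\infty$ for every $c>0$ (non-quasianalyticity of the derivative bounds), multiplying by $|a_k|\sim \si^k M_k$ overshoots the target $\rh^k M_k$ by an unbounded factor; the scaling parameter cancels exactly against $|x|^k$ and cannot absorb this loss. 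This is why the paper does not rescale a single bump: \Cref{prop:cutoff} is applied to the index-dependent sequence \eqref{eq:modifiedseq}, which repeats $\mu_k$ exactly $k$ times, producing a cutoff $\vh_k$ supported in $(-Ak/\mu_k,Ak/\mu_k)$ whose derivatives satisfy the \emph{purely geometric} bound $|\vh_k^{(j)}|\le (2\mu_k)^j$ for all $j\le k$, degrading to $M_j/M_k$-type growth only for $j>k$. That geometric bound up to order $k$ is exactly what the factor $t^k/k!$ on the shrinking support can compensate, and it is unattainable by rescaling a fixed profile. With these $\vh_k$ your argument for interpolation at $0$, linearity, and continuity of $E$ in the Beurling case goes through as you describe.
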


\begin{remark} \label{rem:extopBorelRoumieu}
	By \cite{Petzsche88} (see also \cite[p.223]{SchmetsValdivia00}),
	the existence of a continuous linear right-inverse for
		$j^\infty_{\{0\}} : \mathcal{E}^{\{M\}}(\R) \to \Lambda^{\{M\}}$ is equivalent to \eqref{eq:snq} and additionally
		\begin{equation} \label{eq:PetzscheRI}
      \forall \ep>0 ~\exists a \in \mathbb{N}_{>1} :
			\limsup_{k \to \infty} \Big(\frac{M_{ak}}{M_k}\Big)^{\frac{1}{k(a-1)}} \frac{1}{\mu_{ak}} \le \ep.
    \end{equation}

	This condition is not easy to satisfy; for instance Gevrey sequences fail it.
	Indeed
	\[
		\Big(\frac{G^s_{ak}}{G^s_k}\Big)^{\frac{1}{k(a-1)}} \frac{1}{\ga^s_{ak}}
		\ge \big((k(a-1))!^s\big)^{\frac{1}{k(a-1)}} \frac{1}{(ak)^s}
		\ge \Big(\frac{a-1}{ea}\Big)^s.
	\]
	See also \Cref{sec:extopRoumieu}.
\end{remark}

Let us concentrate on the more interesting Beurling case (2) in \Cref{thm:Borelsufficient}.
We have to develop some tools before we can give the proof in \Cref{sec:proofBorelBeurlingsufficient}.

\subsection{The descendant of a non-quasianalytic weight sequence}

A strongly non-quasianalytic weight sequence is equivalent to a strongly log-convex sequence, as will be shown in the next lemma.
More generally, we may associate with any non-quasianalytic
weight sequence $N$ a weight sequence $S$ with many good properties.
The following construction was used in \cite{Rainer:2019ac}; it is inspired by \cite[Proposition 1.1]{Petzsche88}.

\begin{definition}
  Let $\nu=(\nu_k)$ be an increasing positive sequence with $\nu_0 =1$ and  $\sum_k \frac{1}{\nu_k} < \infty$.
  The \emph{descendant} of $\nu$ is the positive sequence $\si = \si(\nu)$ defined by setting $\si_0 := 1$,
  \begin{equation} \label{tau}
    \ta_k := \frac{k}{\nu_k} + \sum_{j\ge k} \frac 1 {\nu_j}, \quad k \ge 1,
  \end{equation}
and
  \begin{equation} \label{sigma}
    \si_k := \frac{\ta_1 k}{\ta_k}, \quad k \ge 1.
  \end{equation}
We shall also say that $S_k=\si_0 \si_1 \cdots \si_k$ is the descendant of $N_k = \nu_0 \nu_1 \cdots \nu_k$.
\end{definition}

\begin{lemma} \label{lem:log-convex}
  Let $\si$ be the descendant of $\nu$. Then:
  \begin{enumerate}
    \item $\sup_k \frac{\si_k}{\nu_k} < \infty$.
    \item $\sup_{k} \frac{\si_k}k \sum_{j\ge k}  \frac{1}{\nu_j} < \infty$.
    \item $1 \le  \frac{\si_k}k =: \si^* \nearrow \infty$.
    \item $\si$ is maximal among the sequences satisfying (1) and (2):
    If $\mu$ is an increasing positive sequence with $\sup_k \frac{\mu_k}{\nu_k} < \infty$
    and $\sup_k \frac{\mu_k}k \sum_{j\ge k}  \frac{1}{\nu_j} < \infty$, then $\sup_k \frac{\mu_k}{\si_k} < \infty$.
  \end{enumerate}
\end{lemma}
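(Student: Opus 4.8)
The plan is to verify the four items essentially in order, as most of them are direct consequences of the defining formulas \eqref{tau} and \eqref{sigma}. First I would record the elementary monotonicity of $(\ta_k)$: since $\nu_k$ is increasing and $k/\nu_k$ need not be monotone, the key observation is that $\ta_k - \ta_{k+1} = \frac{k}{\nu_k} - \frac{k+1}{\nu_{k+1}} + \frac{1}{\nu_k}$, which one checks is $\ge 0$ because $\frac{k+1}{\nu_{k+1}} \le \frac{k+1}{\nu_k} = \frac{k}{\nu_k} + \frac{1}{\nu_k}$; hence $(\ta_k)_{k\ge 1}$ is decreasing. This immediately gives that $\si^*_k := \si_k/k = \ta_1/\ta_k$ is increasing, and since $\ta_1 = \frac{1}{\nu_1} + \sum_{j\ge 1}\frac{1}{\nu_j} \ge \ta_k$ we get $\si^*_k \ge 1$; that $\si^*_k \nearrow \infty$ follows since $\ta_k = \frac{k}{\nu_k} + \sum_{j\ge k}\frac{1}{\nu_j} \to 0$ (the first term tends to $0$ because $\sum 1/\nu_j < \infty$ forces $k/\nu_k \to 0$ along a subsequence, and monotonicity of $\ta_k$ upgrades this to a full limit). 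This proves (3).

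For (2): $\frac{\si_k}{k}\sum_{j\ge k}\frac{1}{\nu_j} = \frac{\ta_1}{\ta_k}\sum_{j\ge k}\frac{1}{\nu_j} \le \frac{\ta_1}{\ta_k}\cdot \ta_k = \ta_1 < \infty$, using that $\sum_{j\ge k}\frac{1}{\nu_j} \le \ta_k$ by \eqref{tau}. For (1): $\frac{\si_k}{\nu_k} = \frac{\ta_1 k}{\ta_k \nu_k}$, and since $\ta_k \ge \frac{k}{\nu_k}$ (again directly from \eqref{tau}), we get $\frac{k}{\ta_k \nu_k} \le 1$, hence $\frac{\si_k}{\nu_k} \le \ta_1 < \infty$. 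So items (1)--(3) are all one- or two-line computations from the definition.

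The only part with any real content is the maximality statement (4). Suppose $\mu$ is increasing, positive, with $A := \sup_k \mu_k/\nu_k < \infty$ and $B := \sup_k \frac{\mu_k}{k}\sum_{j\ge k}\frac{1}{\nu_j} < \infty$. The goal is $\sup_k \mu_k/\si_k < \infty$, i.e.\ $\sup_k \frac{\mu_k \ta_k}{\ta_1 k} < \infty$, i.e.\ $\mu_k \ta_k \lesssim k$. But $\mu_k \ta_k = \mu_k\big(\frac{k}{\nu_k} + \sum_{j\ge k}\frac{1}{\nu_j}\big) = k\cdot\frac{\mu_k}{\nu_k} + \mu_k\sum_{j\ge k}\frac{1}{\nu_j} \le Ak + Bk = (A+B)k$. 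Thus $\mu_k/\si_k \le (A+B)/\ta_1$ for all $k$, which is exactly (4). The main obstacle I anticipate is essentially cosmetic: being careful about the index ranges ($k\ge 1$ versus $k=0$) and confirming that $\sum_j 1/\nu_j < \infty$ genuinely forces $k/\nu_k \to 0$ (which follows since $\nu_k$ is increasing, so $\sum_{j=k}^{2k} 1/\nu_j \ge k/\nu_{2k}$, and the left side tends to $0$; then monotonicity of $\ta_k$ handles the whole sequence). No deep idea is needed beyond unwinding \eqref{tau}--\eqref{sigma}.
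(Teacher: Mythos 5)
Your proposal is correct and follows essentially the same route as the paper: items (1)--(3) are the routine computations you give (the paper simply calls them ``immediate''), and your bound $\mu_k\ta_k\le(A+B)k$ in (4) is exactly the paper's estimate $\ta_k\lesssim\frac{k}{\nu_k}+\frac{k}{\mu_k}\lesssim\frac{k}{\mu_k}$ written multiplicatively. The details you supply for (3) — monotonicity of $(\ta_k)$ from $\nu_k\le\nu_{k+1}$, and $k/\nu_k\to 0$ via the Cauchy tail estimate $\sum_{j=k}^{2k}1/\nu_j\ge k/\nu_{2k}$ — are all sound.
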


\begin{proof}
  (1), (2), and (3) are immediate.
  The assumptions on $\mu$ in (4) imply
  \(
    \ta_k \lesssim \frac{k}{\nu_k} +  \frac{k}{\mu_k} \lesssim \frac{k}{\mu_k}
  \)
  and hence $\mu_k \lesssim \si_k$.
\end{proof}

 Note that, by (3), the descendant $S$ of $N$ is a strongly log-convex weight sequence.

\begin{corollary} \label{cor:log-convex}
	The descendant $\si$ of $\nu$ satisfies
	\begin{equation} \label{eq:qequivalent}
		\exists C>0 ~\forall k : C^{-1} \le \frac{\si_k}{\nu_k} \le C
	\end{equation}
	if and only if $N$ is strongly non-quasianalytic.
	In that case the weight sequences $S$ and $N$ are equivalent.
\end{corollary}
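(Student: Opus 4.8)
The plan is to read off the equivalence from the maximality property in \Cref{lem:log-convex}(4), combined with a direct estimate showing that the lower bound $C^{-1} \le \si_k/\nu_k$ is precisely the ``non-losing'' direction. First I would observe that \Cref{lem:log-convex}(1) already gives the upper bound $\si_k/\nu_k \le C$ unconditionally, so \eqref{eq:qequivalent} is equivalent to the single inequality $\sup_k \nu_k/\si_k < \infty$, i.e.\ to $\nu_k \lesssim \si_k$.

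For the ``if'' direction, assume $N$ is strongly non-quasianalytic, meaning $\sup_k \frac{\nu_k}{k}\sum_{j\ge k}\frac{1}{\nu_j} < \infty$. Then $\mu:=\nu$ itself satisfies the hypotheses of \Cref{lem:log-convex}(4): it is an increasing positive sequence with $\sup_k \frac{\nu_k}{\nu_k}=1<\infty$ and $\sup_k \frac{\nu_k}{k}\sum_{j\ge k}\frac{1}{\nu_j}<\infty$ by assumption. Hence \Cref{lem:log-convex}(4) yields $\sup_k \frac{\nu_k}{\si_k}<\infty$, which together with (1) gives \eqref{eq:qequivalent}. The statement that $S$ and $N$ are then equivalent as weight sequences is immediate from \eqref{eq:qequivalent}: the two-sided bound $C^{-1}\le \si_k/\nu_k\le C$ gives $S_k/N_k = \prod_{j\le k}(\si_j/\nu_j)$, which is not bounded in general, so I cannot argue via the $S_k, N_k$ directly; instead I use the characterization $S\preceq N\preceq S$ in terms of the ratios $\si_k, \nu_k$. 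Concretely, log-convexity of $S$ and $N$ together with $\si_k \asymp \nu_k$ gives $S_k^{1/k}\asymp N_k^{1/k}$ — indeed from $S_k = \prod_{j=1}^k \si_j$ and the comparison $\si_j \le C\nu_j$ one gets $S_k \le C^k N_k$, hence $S_k^{1/k}\le C\, N_k^{1/k}$, and symmetrically $N_k^{1/k}\le C\, S_k^{1/k}$ — which is exactly $S\preceq N\preceq S$, i.e.\ equivalence.

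For the ``only if'' direction, assume \eqref{eq:qequivalent} holds, so in particular $\nu_k \lesssim \si_k$. By the definitions \eqref{tau} and \eqref{sigma} we have $\si_k = \ta_1 k/\ta_k$, so $\nu_k \lesssim \si_k$ reads $\nu_k \ta_k \lesssim k$, that is $\ta_k \lesssim k/\nu_k$. But by \eqref{tau}, $\ta_k = \frac{k}{\nu_k} + \sum_{j\ge k}\frac{1}{\nu_j} \ge \sum_{j\ge k}\frac{1}{\nu_j}$, so $\sum_{j\ge k}\frac{1}{\nu_j}\lesssim \frac{k}{\nu_k}$, which is precisely the strong non-quasianalyticity condition \eqref{eq:snq} for $N$. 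This completes the equivalence.

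The only genuinely delicate point is the claim that \eqref{eq:qequivalent} implies that $S$ and $N$ are equivalent weight sequences; here one must be careful that $S_k/N_k$ need not be bounded and argue instead at the level of $k$th roots via log-convexity (\Cref{lem:atinfinity}(2)) as indicated above. Everything else is a direct unwinding of the definitions \eqref{tau}, \eqref{sigma} and an appeal to \Cref{lem:log-convex}; I expect no real obstacle beyond bookkeeping the constants.
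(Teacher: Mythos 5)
Your proposal is correct and follows essentially the same route as the paper: the upper bound comes from Lemma \ref{lem:log-convex}(1), the lower bound under strong non-quasianalyticity from the maximality statement (4) applied to $\mu=\nu$, and your direct unwinding of $\tau_k\lesssim k/\nu_k$ for the converse is just the content of (2) combined with $\nu_k\lesssim\sigma_k$. The final observation that $\sigma_k\asymp\nu_k$ yields $S_k\le C^kN_k$ and $N_k\le C^kS_k$, hence equivalence at the level of $k$-th roots, is exactly the right (and only) subtlety, and you handle it correctly (log-convexity is not actually needed there, only the product formula).
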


\begin{proof}
	This follows from (1), (2), and (4) in \Cref{lem:log-convex}.
\end{proof}

\begin{lemma} \label{lem:Sepsilon}
	Let $N=(N_k)$ be a strongly non-quasianalytic strongly log-convex weight sequence and $S$ its descendant.
  Then,
  for all small $\ep >0$, $\tilde S := (S_k/k!^\ep)$ is a strongly non-quasianalytic weight sequence.
\end{lemma}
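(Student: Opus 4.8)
The plan is to work with the quotient sequences and reduce everything to a single summation‑by‑parts estimate. Set $\tilde\si_k:=\tilde S_k/\tilde S_{k-1}=\si_k/k^\ep$. First I would check that $\tilde S$ is a weight sequence: since $S$ is strongly log‑convex, $\si^*_k:=\si_k/k$ is increasing with $\si^*_k\ge1$ and $\si^*_k\nearrow\infty$ (\Cref{lem:log-convex}(3)), so $\tilde\si_k=\si^*_k\,k^{1-\ep}$ is, for $\ep\in(0,1)$, a product of positive increasing sequences and hence increasing; moreover $\tilde S_0=1$, $\tilde S_1=S_1=\si_1=1$, and $\tilde\si_k\ge\si^*_k\to\infty$, which forces $\tilde S_k^{1/k}\to\infty$ by \Cref{lem:atinfinity}(4). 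It then remains to verify \eqref{eq:snq} for $\tilde S$, i.e.\ that
\[
  \sup_k\ \frac{\tilde\si_k}{k}\sum_{j\ge k}\frac1{\tilde\si_j}=\sup_k\ \frac{\si_k}{k^{1+\ep}}\sum_{j\ge k}\frac{j^\ep}{\si_j}<\infty .
\]
By \Cref{cor:log-convex} there is $C\ge1$ with $C^{-1}\nu_k\le\si_k\le C\nu_k$ for all $k$, so it suffices to bound $\frac{\nu_k}{k^{1+\ep}}\sum_{j\ge k}\frac{j^\ep}{\nu_j}$ uniformly in $k$.

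For the key estimate I would fix a constant $L\ge1$ with $T_k:=\sum_{j\ge k}\frac1{\nu_j}\le\frac{Lk}{\nu_k}$ for all $k$ (possible since $N$ is strongly non‑quasianalytic) and show that, for every $\ep\in(0,\tfrac1{2L})$,
\[
  X_k:=\sum_{j\ge k}\frac{j^\ep}{\nu_j}\ \le\ \frac{L}{1-2\ep L}\cdot\frac{k^{1+\ep}}{\nu_k},\qquad k\ge1 .
\]
To prove this, fix $M\ge k$, substitute $\frac1{\nu_j}=T_j-T_{j+1}$, and sum by parts:
\[
  \sum_{j=k}^{M}\frac{j^\ep}{\nu_j}=k^\ep T_k-M^\ep T_{M+1}+\sum_{j=k+1}^{M}\big(j^\ep-(j-1)^\ep\big)T_j .
\]
Now drop the nonpositive term $-M^\ep T_{M+1}$; use $j^\ep-(j-1)^\ep\le\ep(j-1)^{\ep-1}\le2\ep\,j^{\ep-1}$ for $j\ge2$ together with $T_j\le Lj/\nu_j$ to get $\big(j^\ep-(j-1)^\ep\big)T_j\le2\ep L\,\frac{j^\ep}{\nu_j}$, and use $k^\ep T_k\le L\,\frac{k^{1+\ep}}{\nu_k}$. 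This yields $\sum_{j=k}^{M}\frac{j^\ep}{\nu_j}\le L\,\frac{k^{1+\ep}}{\nu_k}+2\ep L\sum_{j=k}^{M}\frac{j^\ep}{\nu_j}$, and since $2\ep L<1$ the last sum can be absorbed; letting $M\to\infty$ gives the displayed bound on $X_k$. Together with the reduction of the first paragraph this shows that $\tilde S$ satisfies \eqref{eq:snq} (with constant $\tfrac{C^2L}{1-2\ep L}$) for all $\ep\in(0,\tfrac1{2L})$, which proves the lemma.

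The one genuinely delicate step is the summation‑by‑parts estimate: the weight $j^\ep$ inside the tail $\sum_{j\ge k}j^\ep/\nu_j$ is compensated by the prefactor $k^{1+\ep}/\nu_k$ exactly when $\ep$ is small compared with the strong non‑quasianalyticity constant $L$ of $N$ — it is the inequality $2\ep L<1$ that makes the self‑improving absorption work, and this is precisely the source of the restriction ``for all small $\ep$'' in the statement. Everything else (that $\tilde S$ is a weight sequence, that $\si_k$ and $\nu_k$ are comparable, that $\si^*$ is increasing) is immediate from the properties of the descendant recorded in \Cref{lem:log-convex} and \Cref{cor:log-convex}.
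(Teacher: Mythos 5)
Your proof is correct, and it takes a genuinely different route from the paper's. The paper works entirely inside the descendant: it shows that the dyadic ratio $q:=\inf_k \si_{2k}^*/\si_k^*$ is strictly greater than $1$ (using strong non-quasianalyticity of $N$ and the explicit formula $\si_k^*=\ta_1/\ta_k$), chooses $\ep$ small enough that $q>2^\ep$, and then controls $\sum_{j\ge k}1/\tilde\si_j$ by splitting the tail into dyadic blocks $[2^\ell k,2^{\ell+1}k)$ and summing the resulting geometric series with ratio $\tilde q^{-1}=2^\ep/q$. You instead first pass from $\si$ back to $\nu$ via \Cref{cor:log-convex} (which holds precisely because $N$ is strongly non-quasianalytic), and then prove the weighted tail bound $\sum_{j\ge k}j^\ep/\nu_j\lesssim k^{1+\ep}/\nu_k$ directly by Abel summation plus a self-improving absorption, which works as soon as $2\ep L<1$ with $L$ the strong non-quasianalyticity constant of $N$. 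Both arguments isolate the same mechanism --- strong non-quasianalyticity survives a perturbation by $k!^\ep$ for small $\ep$ --- but they quantify ``small'' differently: the paper's restriction is $2^\ep<q$, yours is $\ep<1/(2L)$. The summation-by-parts route is perhaps more classical and yields an explicit constant $\tfrac{C^2L}{1-2\ep L}$, while the paper's dyadic argument avoids the detour through $\nu$ and has the virtue of only ever using $\si$ and the ratio $q$, which makes it a little more self-contained relative to \Cref{lem:log-convex}. Your verification that $\tilde S$ is a weight sequence (monotonicity of $\tilde\si_k=\si_k^*k^{1-\ep}$, normalization, $\tilde S_k^{1/k}\to\infty$) matches the paper's brief remark to the same effect.
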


\begin{proof}
	Note that $\nu_k^* := \frac{\nu_k}k$ is increasing. So, for $\si_k^*  := \frac{\si_k}{k}$ and $k\ge 1$,
	\begin{align*}
    \frac{\si_{2k}^*}{\si_{k}^*} = \frac{\ta_k}{\ta_{2k}}
	 	=\frac{\frac{1}{\nu_k^*} + \sum_{j\ge k} \frac 1 {\nu_j}}{\frac{1}{\nu_{2k}^*} + \sum_{j\ge 2k} \frac 1 {\nu_j}}
	 	\ge
	 	\frac{\frac{1}{\nu_{2k}^*} + \sum_{j\ge k} \frac 1 {\nu_j}}{\frac{1}{\nu_{2k}^*} + \sum_{j\ge 2k} \frac 1 {\nu_j}}
	 	= 1+  \frac{\sum_{k \le j<  2k} \frac 1 {\nu_j}}{\frac{1}{\nu_{2k}^*} + \sum_{j\ge 2k} \frac 1 {\nu_j}}.
	\end{align*}
	Since $\sum_{k \le j<  2k} \frac 1 {\nu_j} \ge \sum_{k \le j<  2k} \frac 1 {\nu_{2k}} = \frac k {\nu_{2k}}$
	and $\frac{1}{\nu_{2k}^*} + \sum_{j\ge 2k} \frac 1 {\nu_j} \lesssim \frac{1}{\nu_{2k}^*}$, we find that
	\[
		q:=\inf_{k\ge 1} \frac{\si_{2k}^*}{\si_{k}^*} > 1.
	\]
	Suppose that $\ep\in (0,1)$ is such that $q>2^{\ep}$.
	Then $\tilde \si_k^* := \frac{\tilde \si_k}{k} =  \frac{\si_k^*}{k^{\ep}}$ satisfies
	\[
		\tilde q:= \inf_{k\ge 1} \frac{\tilde \si_{2k}^*}{\tilde \si_{k}^*} = \inf_{k\ge 1} \frac{\si_{2k}^*}{\si_{k}^*} \cdot \frac{1}{2^\ep}> 1.
	\]
  The fact that $\si^*_k = \frac{\si_k}{k}$ is increasing implies that $\tilde \si_k$ is increasing. Thus,
	\begin{align*}
		\sum_{j \ge k} \frac{1}{\tilde \si_j}
		= \sum_{\ell =0}^\infty \sum_{j=2^\ell k}^{2^{\ell+1} k-1} \frac{1}{\tilde \si_j}
		\le  \sum_{\ell =0}^\infty  \frac{2^\ell k}{\tilde \si_{2^\ell k}}
    = \sum_{\ell =0}^\infty  \frac{1}{\tilde \si_{2^\ell k}^*}
		\le  \frac{1}{\tilde \si_k^*} \sum_{\ell =0}^\infty \tilde q^{-\ell}.
	\end{align*}
	It follows that $\tilde S$ is a strongly non-quasianalytic weight sequence.
\end{proof}

\begin{corollary} \label{cor:setupcutoff}
  Let $M=(M_k)$ be a strongly non-quasianalytic weight sequence.
  We may assume that $\tilde \mu := (\mu_k/k^\ep)$ is increasing to infinity and satisfies
	\eqref{eq:snq} if $\ep>0$ is small enough. Hence
	the sequence
	\begin{equation} \label{eq:modifiedseq}
		\underbrace{\mu_k,\ldots,\mu_k}_{k \text{ times}}, \mu_{k+1} \big(\tfrac{k}{k+1}\big)^\ep,\mu_{k+2} \big(\tfrac{k}{k+2}\big)^\ep, \ldots
	\end{equation}
	is increasing and for the sum of the reciprocals we have
	\begin{equation} \label{eq:cutoffk}
		\frac{k}{\mu_k} + \sum_{j \ge k+1} \frac{1}{\mu_j} \Big(\frac{j}{k}\Big)^\ep
		\le A \frac{k}{\mu_k}, \quad  k \ge 1,
	\end{equation}
	for some constant $A>0$. \qed
\end{corollary}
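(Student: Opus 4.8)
The plan is to derive the statement from two results already in hand. The first is \Cref{cor:log-convex}: a strongly non-quasianalytic weight sequence is equivalent to its descendant, whose ratio sequence is moreover comparable to the original one up to a constant (condition \eqref{eq:qequivalent}). The second is \Cref{lem:Sepsilon}: from a strongly log-convex strongly non-quasianalytic weight sequence one may divide out a small power $k!^\ep$ and stay strongly non-quasianalytic. First I would \emph{reduce to the strongly log-convex case}: since $M$ is strongly non-quasianalytic, \Cref{cor:log-convex} gives its descendant $S$ with $\sigma_k$ comparable to $\mu_k$, so $S$ is again strongly non-quasianalytic; and by \Cref{lem:log-convex}(3) the sequence $\sigma^*_k := \sigma_k/k$ is increasing with $\sigma^*_k \nearrow \infty$, so $S$ is strongly log-convex. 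As $S$ and $M$ are equivalent weight sequences, I replace $M$ by $S$ and assume henceforth that $\mu^*_k := \mu_k/k$ is increasing with $\mu^*_k \nearrow \infty$.

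Next I would \emph{produce the $\ep$}. For $0 < \ep \le 1$ the sequence $\tilde\mu_k := \mu_k/k^\ep = \mu^*_k\, k^{1-\ep}$ is a product of two increasing sequences, hence increasing, and $\tilde\mu_k \ge \mu^*_k \to \infty$; so $\tilde\mu$ is the ratio sequence of the weight sequence $\tilde M := (M_k/k!^\ep)$. To obtain \eqref{eq:snq} for $\tilde\mu$ I apply \Cref{lem:Sepsilon} with $N := M$: denoting by $D$ the descendant of $M$, it asserts that $(D_k/k!^\ep)$ is strongly non-quasianalytic for all small $\ep$; by \Cref{cor:log-convex} the ratio sequences of $D$ and $M$ are comparable up to a constant, hence so are those of $(D_k/k!^\ep)$ and $\tilde M$, and therefore $\tilde M$ is strongly non-quasianalytic as well. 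This shows that $\tilde\mu = (\mu_k/k^\ep)$ is increasing to infinity and satisfies \eqref{eq:snq} for all sufficiently small $\ep > 0$, which is the content of the ``we may assume''. This step carries essentially all the weight, but its one delicate ingredient --- gaining a small power of $k$, which hinges on the doubling estimate $\inf_k \sigma^*_{2k}/\sigma^*_k > 1$ for the descendant --- has already been carried out inside the proof of \Cref{lem:Sepsilon}, so here the work is merely to track the passage between $M$, its descendant, and the division by $k!^\ep$.

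Finally I would \emph{read off the two concrete conclusions}, which are routine once $\ep$ is fixed. The sequence \eqref{eq:modifiedseq} increases at the junction from the $k$ copies of $\mu_k$ to $\mu_{k+1}\big(\tfrac{k}{k+1}\big)^\ep$ because this is exactly the inequality $\tilde\mu_k \le \tilde\mu_{k+1}$, and for $j \ge k+1$ the step from $\mu_j\big(\tfrac kj\big)^\ep$ to $\mu_{j+1}\big(\tfrac k{j+1}\big)^\ep$ is again $\tilde\mu_j \le \tilde\mu_{j+1}$. For \eqref{eq:cutoffk}, set $A' := \sup_k \frac{\tilde\mu_k}{k}\sum_{j\ge k}\frac{1}{\tilde\mu_j} = \sup_k \frac{\mu_k}{k^{1+\ep}}\sum_{j\ge k}\frac{j^\ep}{\mu_j} < \infty$; then
\[
  \frac{k}{\mu_k} + \sum_{j\ge k+1}\frac{1}{\mu_j}\Big(\frac jk\Big)^\ep
  \;\le\; \frac{k}{\mu_k} + \frac{1}{k^\ep}\sum_{j\ge k}\frac{j^\ep}{\mu_j}
  \;\le\; \frac{k}{\mu_k} + A'\,\frac{k}{\mu_k},
\]
so \eqref{eq:cutoffk} holds with $A := 1 + A'$, which completes the plan.
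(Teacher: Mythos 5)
Your proposal is correct and follows exactly the route the paper intends for this (unproved, \qed'd) corollary: pass to the descendant via \Cref{cor:log-convex} to make $\mu_k/k$ increasing, invoke \Cref{lem:Sepsilon} to divide out $k^\ep$ while keeping \eqref{eq:snq}, and then read off the monotonicity of \eqref{eq:modifiedseq} and the bound \eqref{eq:cutoffk} as restatements of $\tilde\mu_j \le \tilde\mu_{j+1}$ and of \eqref{eq:snq} for $\tilde\mu$. The bookkeeping in your final display is accurate, so nothing further is needed.
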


\subsection{Proof of \Cref{thm:Borelsufficient}} \label{sec:proofBorelBeurlingsufficient}

  We give a detailed proof of (2).

\begin{claim*}
  For each $k \in \N$, there exists $\ps_k \in \cD^{(M)}(\R)$ such that $\ps_k^{(j)}(0) = \de_{jk}$
	and
  \begin{equation} \label{eq:claimsur}
    \|\ps_k\|^M_{\R,\rh} \le \frac{C_\rh H_\rh^k}{M_k} \quad \text{ for all small } \rh>0.
  \end{equation}
\end{claim*}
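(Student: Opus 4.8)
The plan is to construct the functions $\ps_k$ explicitly by a convolution/scaling procedure built on a single optimal cutoff-type function, following the recipe behind \Cref{prop:cutoff} but now keeping track of the quantitative $M$-bounds rather than merely $C^\infty$-bounds. The key quantitative input is \Cref{cor:setupcutoff}: after replacing $M$ by an equivalent sequence we may assume that the modified sequence \eqref{eq:modifiedseq} is increasing and satisfies the reciprocal-sum estimate \eqref{eq:cutoffk}. The idea is that, to produce a jet which is $\de_{jk}$ at the origin, one integrates a bump $k$ times; the first $k$ ``slots'' of the modified sequence are all equal to $\mu_k$ (reflecting the $k$-fold integration of a function of ``width'' $\sim k/\mu_k$), and the tail slots $\mu_j(j/k)^\ep$ encode the remaining regularity. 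Concretely, I would take $\ps_k$ of the form $c_k\, x^k/k!$ times a cutoff, or better, build $\ps_k = \chi_k * \cdots$ where the convolution factors are step functions $H_{a_i}$ with $a_i$ chosen as the reciprocals of the modified sequence \eqref{eq:modifiedseq}; then \Cref{lem:specialbump} gives the derivative bounds directly.

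First I would set, for fixed $k$, the decreasing positive sequence $a^{(k)}_0 = \cdots = a^{(k)}_{k-1} := 1/\mu_k$ and $a^{(k)}_{k+j} := \frac{1}{\mu_{k+j}}\big(\frac{k}{k+j}\big)^{-\ep}$ for $j \ge 1$ — wait, I need these decreasing, so more carefully $a^{(k)}_{\ell}$ should be the reciprocal of the $\ell$-th entry of \eqref{eq:modifiedseq}, which is increasing, hence $a^{(k)}$ is decreasing, and by \eqref{eq:cutoffk} its total sum is $\le A k/\mu_k < \infty$. Applying \Cref{lem:specialbump} to $(a^{(k)}_\ell)_\ell$ yields $g_k \in C^\infty$ with support in $[0, Ak/\mu_k]$, $\int g_k = 1$, and $|g_k^{(\ell)}(x)| \le 2^\ell/(a^{(k)}_0\cdots a^{(k)}_\ell)$. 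The crucial point: for $\ell = k$ the denominator is $\mu_k^{-k} = M_k^{\,-1}\cdot(\text{nothing})$ — actually $a^{(k)}_0\cdots a^{(k)}_{k-1} = \mu_k^{-k}$, and $M_k = \mu_1\cdots\mu_k \le \mu_k^k$, so $\mu_k^{-k}\le M_k^{-1}$; and for $\ell > k$ the extra factors $a^{(k)}_{k+1}\cdots a^{(k)}_{\ell}$ are comparable to $M_k/M_\ell$ up to the $(\cdot)^\ep$-corrections, which after a short computation produces bounds of the shape $C_\rh H_\rh^\ell M_\ell$ with $H_\rh$ absorbing the $\ep$-powers and the $2^\ell$. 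Then I would define $\ps_k$ by integrating $g_k$ (or a normalized antiderivative tower) $k$ times so that $\ps_k^{(j)}(0) = \de_{jk}$; equivalently $\ps_k^{(k)} = g_k$ suitably shifted and normalized, with $\ps_k$ supported near $0$, which makes $\ps_k \in \cD^{(M)}(\R)$.

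The main obstacle will be the bookkeeping that turns the mixed bound from \Cref{lem:specialbump} into the clean form \eqref{eq:claimsur}, namely verifying that for every small $\rh>0$ one has $\sup_\al \|\ps_k^{(\al)}\|_\R/(\rh^\al M_\al) \le C_\rh H_\rh^k/M_k$ with constants independent of $\al$ (only $\rh$) and the $k$-dependence packaged as $H_\rh^k/M_k$. This requires splitting into the ranges $\al \le k$ (where the $k$-fold integration gives a genuine $1/M_k$ gain and $\rh^{-\al}$ is harmless) and $\al > k$ (where one uses $M_{\al}/M_k \ge \mu_{k+1}\cdots\mu_\al$ together with the $(\cdot)^\ep$ corrections from \eqref{eq:cutoffk} and Stirling-type estimates $\big(\frac{\al}{k}\big)^{\ep\al} \lesssim H_\rh^{\al}$), and then noting that since the modified $\mu$ still satisfies \eqref{eq:snq}, the series in \eqref{eq:cutoffk} is uniformly controlled. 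Once the claim is in hand, summing the $\ps_k$ against an arbitrary jet $a \in \La^{(M)}$ — with convergence of $\sum_k a_k \ps_k$ in each seminorm $\|\cdot\|^M_{\R,\rh}$ guaranteed by \eqref{eq:claimsur} and the definition of $\La^{(M)}$ — gives a continuous linear $E : \La^{(M)} \to \cE^{(M)}(\R)$ with $j^\infty_{\{0\}}\circ E = \on{id}$, establishing part (2); part (1) follows a fortiori from the trivial inclusion $\cE^{(M)} \subseteq \cE^{\{M\}}$ and $\La^{(M)} \supseteq$ … rather, one repeats the same construction keeping only a single $\rh$, or deduces it directly since a strongly non-quasianalytic sequence is in particular treated by the Beurling construction applied after shrinking.
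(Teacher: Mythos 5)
Your overall strategy---use \Cref{cor:setupcutoff} to obtain the modified increasing sequence \eqref{eq:modifiedseq} satisfying \eqref{eq:cutoffk}, feed its reciprocals into the convolution construction of \Cref{lem:specialbump}/\Cref{prop:cutoff}, and then sum $\sum_k a_k\ps_k$ to build the operator---is exactly the paper's, and the first alternative you mention only in passing ($\ps_k$ equal to $t^k/k!$ times a cutoff) is precisely what the paper does. But the construction you actually commit to, defining $\ps_k$ for $k\ge1$ as a $k$-fold antiderivative of the bump $g_k$ from \Cref{lem:specialbump}, does not work, for two reasons. First, $g_k\ge0$ has $\int g_k=1\ne0$, so outside $\supp g_k$ any $k$-fold primitive agrees with polynomials of degree $\le k-1$ whose difference across the support has nonzero leading coefficient $1/(k-1)!$; hence at least on one side $\ps_k$ is a nonzero polynomial, it cannot be compactly supported, and $\ps_k\notin\cD^{(M)}(\R)$. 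Second, the jet condition $\ps_k^{(j)}(0)=\de_{jk}$ for \emph{all} $j$, in particular $j>k$, forces $g_k(0)=1$ and $g_k^{(i)}(0)=0$ for every $i\ge1$, i.e.\ $g_k$ must be identically $1$ near $0$; the normalized bump of \Cref{lem:specialbump} is not of this form, and no shift or rescaling can kill its higher derivatives at a point. The repair is the paper's: first convolve the bump with an indicator (this is \Cref{prop:cutoff} applied to $K=\{0\}$) to get $\vh_k$ with $\vh_k\equiv1$ near $0$, $\supp\vh_k\subseteq(-Ak/\mu_k,Ak/\mu_k)$ and the two-regime derivative bounds, and then set $\ps_k(t)=\vh_k(t)\,t^k/k!$. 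Note also that the $1/M_k$ in \eqref{eq:claimsur} comes from the monomial factor on a support of width $\sim k/\mu_k$, via $k!^{-1}(Ak/\mu_k)^k\le(Ae)^k\mu_k^{-k}\le(Ae)^kM_k^{-1}$, and not from the product $a^{(k)}_0\cdots a^{(k)}_{k-1}=\mu_k^{-k}$, which sits in the \emph{denominator} of the bound in \Cref{lem:specialbump} and therefore makes that bound large, not small.

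In addition, the quantitative heart of the claim---that the Leibniz expansion of $\ps_k^{(j)}$ yields the uniform bound $C_\rh H_\rh^k\rh^jM_j/M_k$ for every $j$---is dismissed as ``bookkeeping''. In the paper this is a genuine three-way case analysis $j\le k$, $k<j<2k$, $j\ge2k$ (cf.\ \eqref{eq:est1}--\eqref{eq:est3}); in the range $j\ge2k$ the $\ep$-corrections built into \eqref{eq:modifiedseq} produce the factor $\big(k^{j-2k}/(j-2k)!\big)^\ep$, which must be summed into $\exp\big(k\ep(\tfrac{2+A^{-1}}{\rh})^{1/\ep}\big)$ and absorbed into $H_\rh^k$. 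This is the step where the precise form of the modified sequence is used, and it cannot be omitted from a complete proof.
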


	Then $E : \La^{(M)} \to \cE^{(M)}(\R)$ defined by
	\(
		E: a = (a_k) \mapsto \sum_{k\ge 0} a_k \ps_k
	\)
  is the required right inverse of $j^\infty_{\{0\}}$.
	Indeed,
	for every $\si>0$ we have $|a|^M_\si < \infty$. Let $\rh>0$ (small) be given. Then
	\begin{align*}
		\Big| \sum_{k\ge 0} a_k \ps_k^{(j)} \Big| &\le \sum_{k\ge 0} |a_k| |\ps_k^{(j)}|
		\\
		&\le  \sum_{k\ge 0} |a|^M_\si\, \si^k M_k \cdot  \|\ps_k\|^M_{\R,\rh}\, \rh^j M_j
		\\
		&\le  \sum_{k\ge 0} |a|^M_\si \si^k M_k \cdot \frac{C_\rh H_\rh^k}{M_k} \rh^j M_j
		= |a|^M_\si  \cdot C_\rh \rh^j M_j \cdot \sum_{k\ge 0}  (\si H_\rh)^k.
	\end{align*}
	We may choose $\si=\si(\rh)>0$
	such that \(\sum_{k\ge 0}  (\si H_\rh)^k\) converges.
	It follows that $f := E(a)=\sum_{k\ge 0} a_k \ps_k$ defines a $C^\infty$-function $f$ on $\R$ such that
	for all $\rh>0$ there exist $\si>0$ and $C>0$ such that
	\begin{equation} \label{eq:extop}
		\|f\|^M_{\R,\rh} \le C  |a|^M_\si,
	\end{equation}
	in particular, $f \in \cE^{(M)}(\R)$. So the linear map $E$ is continuous and $\Bmap E(a) = a$ is clear from $\ps_k^{(j)}(0) = \de_{jk}$.

  Let us prove the claim.
	We apply \Cref{prop:cutoff} to the setup in \Cref{cor:setupcutoff} (for $K=\{0\}$ and $\sum d_j$ the left-hand side of \eqref{eq:cutoffk}).
  So for each $k \ge 1$ there is a $C^\infty$-function $\vh_k$ with $0 \le \vh_k \le 1$, $\on{supp} \vh_k \subseteq (-A \frac{k}{\mu_k},A \frac{k}{\mu_k})$,
	$\vh_k(0)=1$, $\vh_k^{(j)}(0) = 0$ for all $j \ge 1$, and
	\begin{align*}
	 	|\vh_k^{(j)}| \le
	 	\begin{cases}
	 		2^j \mu_k^j & \text{ if } 0 \le j \le k,
	 		\\
	 		2^j \mu_k^k \frac{M_j}{M_k} \Big( \frac{k^{j-k} k!}{j!}\Big)^\ep & \text{ if } j>k.
	 	\end{cases}
	\end{align*}
	Then $\ps_k(t):= \vh_k(t) \frac{t^k}{k!}$ clearly satisfies $\ps_k^{(j)}(0) = \de_{jk}$.
	It remains to show \eqref{eq:claimsur}.
	If $0 \le j \le k$, then
	\begin{align} \label{eq:est1}
		|\ps_k^{(j)}| &\le \sum_{i =0}^j \binom{j}{i} 2^i \mu_k^i \Big(A \frac{k}{\mu_k}\Big)^{k-j+i} \frac{1}{(k-j+i)!}
		 \notag \\
		 &\le \frac{M_j}{M_k}  \frac{\mu_{j+1} \cdots \mu_k}{\mu_k^{k-j}} (Ae)^k \sum_{i =0}^j \binom{j}{i} 2^i  A^{-j+i}
		 \le  \frac{M_j}{M_k} (Ae)^k (2 + A^{-1})^j.
	\end{align}
	If $j\ge 2 k$, then
	\begin{align} \label{eq:est2}
		|\ps_k^{(j)}| &\le \sum_{i =j-k}^j \binom{j}{i} 2^i \mu_k^k \frac{M_i}{M_k} \Big(\frac{k^{i-k} k!}{i!}\Big)^\ep \Big(A \frac{k}{\mu_k}\Big)^{k-j+i} \frac{1}{(k-j+i)!}
		 \notag \\
		 &\le \frac{M_j}{M_k}  \frac{\mu_k^{j-i}\mu_{k+1} \cdots \mu_i}{\mu_{k+1} \cdots \mu_j} (Ae)^k \Big(\frac{k^{j-2k} k!}{(j-k)!}\Big)^\ep \sum_{i =j-k}^j \binom{j}{i} 2^i A^{-j+i}
		 \notag \\
		 &\le \frac{M_j}{M_k} (Ae)^k  (2 + A^{-1})^j \Big(\frac{k^{j-2k} }{(j-2k)!}\Big)^\ep.
	\end{align}
	If $k < j < 2k$, then (since $\frac{k^{i-k} k!}{i!} = \frac{k^i}{i!} \frac{k!}{k^k} \le e^k$)
	\begin{align} \label{eq:est3}
		|\ps_k^{(j)}| &\le \sum_{i =0}^k \binom{j}{i} 2^i \mu_k^i \Big(A \frac{k}{\mu_k}\Big)^{k-j+i} \frac{1}{(k-j+i)!}
		\notag \\
		&\quad +
		\sum_{i =k+1}^j \binom{j}{i} 2^i \mu_k^k \frac{M_i}{M_k} \Big(\frac{k^{i-k} k!}{i!}\Big)^\ep \Big(A \frac{k}{\mu_k}\Big)^{k-j+i} \frac{1}{(k-j+i)!}
		 \notag \\
		 &\le \frac{M_j}{M_k} (Ae)^k  (2 + A^{-1})^j +
		\frac{M_j}{M_k} (Ae^2)^k  (2 + A^{-1})^j
		 \notag \\
		 &\le \frac{M_j}{M_k} 2(Ae^2)^k  (2 + A^{-1})^j.
	\end{align}
	Let $0<\rh < 2 + A^{-1}$ be fixed. For $j < 2k$, we get from \eqref{eq:est1} and \eqref{eq:est3} that
	\[
		\frac{|\ps_k^{(j)}|}{\rh^j M_j} \le \frac{1}{M_k} 2(Ae^2)^k  \Big(\frac{2 + A^{-1}}{\rh}\Big)^j
    \le \frac{1}{M_k} 2(Ae^2)^k  \Big(\frac{2 + A^{-1}}{\rh}\Big)^{2k}
	\]
	and, for $j \ge 2k$,  \eqref{eq:est2} gives
	\begin{align*}
		\frac{|\ps_k^{(j)}|}{\rh^j M_j}
		&\le \frac{1}{M_k} (Ae)^k  \Big(\frac{2 + A^{-1}}{\rh}\Big)^j \Big(\frac{k^{j-2k} }{(j-2k)!}\Big)^\ep
		\\
		&= \frac{1}{M_k} (Ae)^k \Big(\frac{2 + A^{-1}}{\rh}\Big)^{2k}  \Big(\frac{2 + A^{-1}}{\rh}\Big)^{j-2k} \Big(\frac{k^{j-2k} }{(j-2k)!}\Big)^\ep
		\\
		&= \frac{1}{M_k} (Ae)^k \Big(\frac{2 + A^{-1}}{\rh}\Big)^{2k}  \exp\Big(k \ep \Big(\frac{2 + A^{-1}}{\rh}\Big)^{1/\ep} \Big).
	\end{align*}
	This completes the proof of the claim and hence of (2) in \Cref{thm:Borelsufficient}.

  Let us briefly discuss the Roumieu case, that is (1) in \Cref{thm:Borelsufficient}.
  Given $a=(a_k) \in \La^{\{M\}}$, which means that $|a|^M_\rh<\infty$ for some $\rh>0$, one
  looks for functions $\ps_k$, $k \in \N$, such that $f:= \sum_{k \ge 0} a_k \ps$ is of class $\cE^{\{M\}}$ and $\Bmap f = a$.
  It suffices to repeat the above construction, where \eqref{eq:modifiedseq} is replaced by
  \begin{equation*}
		\underbrace{H\mu_k,\ldots,H\mu_k}_{k \text{ times}}, H\mu_{k+1},H\mu_{k+2}, \ldots
	\end{equation*}
  If $H>0$ is chosen large enough, one finds that there is a constant $B\ge 1$ such that
  \[
    \|f\|^M_{\R,B H} \le C(\rh) \, |a|^M_\rh,
  \]
  but $H$ and thus also $\ps_k$ and $f$ depend on $\rh$; cf.\ \Cref{rem:extopBorelRoumieu}.

\section{The Borel map in the quasianalytic setting} \label{sec:quasianalytic}

Let us investigate surjectivity of the Borel map in the quasianalytic case.
Since there are no cutoff functions, we will restrict $\Bmap$
to the ring of germs of $\cE^{[M]}$-functions at $0$ which we denote by $\cE^{[M]}_0$.
(The dimension of the ambient space will be clear from the context.)
A result of Carleman \cite{Carleman23}
states that the Borel map $\Bmap : \cE^{[M]}_0 \to \La^{[M]}$ is never surjective if $M$ is a quasianalytic weight sequence
with $m_k^{1/k} \to \infty$. Recall that $m_k^{1/k} \to \infty$ means that the real analytic class
is strictly contained in $\cE^{[M]}$; on the ring of real analytic germs the Borel map is
an isomorphism $\cE^{\{(k!)\}}_0 \cong \La^{\{(k!)\}}$.

Many different proofs of Carleman's result are known.
We present a much stronger result
which shows that certain elements of $\La^{[M]}$
cannot be extended to germs of class $\cE^{\{N\}}$, where $N$ is \emph{any} quasianalytic weight sequence.
It is based on a theorem of Bang \cite{Bang53} on the zeros of quasianalytic functions and their derivatives.
See also \cite{NazarovSodinVolberg04} and \cite{RainerSchindl15}.

\subsection{Zeros of quasianalytic functions and their derivatives}

\begin{theorem} \label{thm:Bang}
	Let $M$ be a quasianalytic weight sequence.
	Suppose that $f \in C^\infty([0,1])$ is not identically zero and satisfies
	\begin{equation} \label{eq:Bd1}
 		\|f^{(k)}\|_{[0,1]} \le M_k \quad  \text{ for all } k \in \N.
	\end{equation}
	Suppose that for $0\le j \le m$ there exists $x_j \in [0,1]$ such that $f^{(j)}(x_j) = 0$.
	Then
	\begin{equation} \label{eq:assertion}
		\sum_{j=1}^m |x_{j-1} -x_j| \ge \frac{1}{e} \sum_{\ka < k \le m} \frac{1}{\mu_k},
	\end{equation}
	where
	\begin{equation} \label{eq:kappa}
		\ka = - \log \sup_{j \ge 0} \frac{|f^{(j)}(x_0)|}{e^j M_j}.
	\end{equation}
\end{theorem}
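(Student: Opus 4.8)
The plan is to follow Bang's original strategy, adapted to the weight-sequence formalism of this survey. The statement quantifies the heuristic that a quasianalytic function cannot have its successive derivatives vanish at nearby points: to ``use up'' the $k$-th derivative one must travel a distance comparable to $1/\mu_k$. The key device is to track, along with the points $x_j$, the quantity
\[
  \ka_j := - \log \sup_{i \ge 0} \frac{|f^{(i)}(x_j)|}{e^i M_i}, \qquad 0 \le j \le m,
\]
so that $\ka = \ka_0$, and to show that consecutive values $\ka_{j-1}$ and $\ka_j$ cannot differ by much more than $e\,|x_{j-1}-x_j|\,\mu_j$ times the relevant scale. Note first that $\ka_j$ is finite and well-defined: by \eqref{eq:Bd1} the supremum is at most $\sup_i M_i/(e^i M_i)$, but more to the point, if the supremum were $+\infty$ (i.e. $\ka_j=-\infty$) then $f$ would be flat at $x_j$, and quasianalyticity of $M$ (\Cref{thm:DC}) together with \eqref{eq:Bd1} would force $f\equiv 0$, contrary to hypothesis. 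So the $\ka_j$ are genuine real numbers.

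The heart of the argument is a one-step comparison lemma: \emph{if $f^{(j)}(x_j)=0$, then}
\[
  \ka_{j-1} \ge \ka_j - \log\big(1 + e\,\mu_j\,|x_{j-1}-x_j|\big) \quad\text{(or a similar inequality with the roles of the geometry made precise).}
\]
First I would establish a purely local estimate: for any two points $x,y\in[0,1]$ and any index $i$, Taylor's theorem with integral remainder gives $|f^{(i)}(y)| \le \sum_{\ell\ge 0} |f^{(i+\ell)}(x)|\,|x-y|^\ell/\ell!$, and dividing by $e^i M_i$ and using $M_{i+\ell}\le M_i\,\mu_{i+1}\cdots\mu_{i+\ell}$ together with the monotonicity of $\mu$ (\Cref{lem:atinfinity}) one converts this into a relation between $e^{-\ka_y}$ and $e^{-\ka_x}$ with an error factor controlled by $|x-y|$ and the $\mu_k$'s. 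The role of the hypothesis $f^{(j)}(x_j)=0$ is that in passing from $x_{j-1}$ to $x_j$ the ``new'' top derivative one has to account for is the $j$-th, so the vanishing lets one start the relevant Taylor expansion at order $j$ rather than order $0$, which is exactly what produces the gain of one unit of $1/\mu_j$ per step. Telescoping the one-step inequalities from $j=1$ to $j=m$ then yields
\[
  \ka_m - \ka_0 \le \sum_{j=1}^m \log\big(1 + e\,\mu_j\,|x_{j-1}-x_j|\big) \le e \sum_{j=1}^m \mu_j\,|x_{j-1}-x_j|,
\]
using $\log(1+t)\le t$. On the other hand $\ka_m \ge 0$ is not quite enough; one needs the lower bound $\ka_m \ge \sum_{\ka<k\le m} \tfrac1{\mu_k}\cdot(\text{something})$—more precisely, I expect the clean route is to reorganize the telescoping so that the left side directly produces $\sum_{\ka<k\le m}\mu_k^{-1}$ after dividing through by the appropriate $\mu_j$'s, i.e. to run the comparison on the rescaled quantities and collect the geometric sum $\sum |x_{j-1}-x_j|$ on one side and $\tfrac1e\sum_{\ka<k\le m}\mu_k^{-1}$ on the other.

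The main obstacle, I expect, is getting the bookkeeping of the telescoping exactly right so that the cutoff ``$\ka < k$'' emerges naturally: the quantity $\ka=\ka_0$ measures how flat $f$ already is at the starting point, and the sum on the right of \eqref{eq:assertion} should only run over those indices $k$ that have not yet been ``spent'' by the initial flatness—this is precisely the content of the condition $k>\ka$ once one normalizes $\mu_k$ appropriately. Managing this requires choosing the right monovariant: rather than $\ka_j$ itself, one should probably track $\ka_j$ against a ``budget'' $\sum_{k\le j}\mu_k^{-1}$-type expression and show the deficit is monotone along the chain $x_0,x_1,\dots,x_m$. The factor $1/e$ and the $e^j$ in the definition \eqref{eq:kappa} are there exactly to absorb the $\ell!$ and the $\log(1+t)\le t$ losses, so once the scheme is set up the constants should fall out cleanly; getting the scheme set up is the delicate part. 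I would also need the quasianalyticity hypothesis a second time, implicitly, to guarantee that $f^{(i)}(x_0)\ne 0$ for at least one $i$ (else $\ka=+\infty$ and the statement is vacuous but the proof structure degenerates), which again follows from \Cref{thm:DC}.
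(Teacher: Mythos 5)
Your setup points in the right direction, but the two places you yourself flag as ``the delicate part'' are precisely where the proof lives, and the scheme you propose does not close them. First, the quantity you track, $\ka_j = -\log\sup_{i\ge 0}|f^{(i)}(x_j)|/(e^iM_i)$, takes the supremum over \emph{all} $i\ge 0$, so the hypothesis $f^{(j)}(x_j)=0$ gives it nothing: removing one vanishing term from a supremum over infinitely many indices does not change it in any usable way. The mechanism that actually converts the vanishing into progress is to work with the truncated suprema $B_{f,n}(t):=\sup_{i\ge n}|f^{(i)}(t)|/(e^iM_i)$ and to use the identity $B_{f,n}(x_n)=B_{f,n+1}(x_n)$ when $f^{(n)}(x_n)=0$; this lets the truncation level increase by one at each point of the chain at no cost, and it is this index shift (not a Taylor expansion ``starting at order $j$'') that the hypothesis buys.

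Second, the telescoping over the point index $j$ produces an inequality of the shape $\ka_m-\ka_0\le e\sum_j\mu_j|x_{j-1}-x_j|$, in which each distance is weighted by its own $\mu_j$. There is no way to ``divide through by the appropriate $\mu_j$'s'' to recover the unweighted sum $\sum_j|x_{j-1}-x_j|$ bounded below by $\frac1e\sum_{\ka<k\le m}\mu_k^{-1}$: the index $k$ in the conclusion is genuinely decoupled from the index $j$ of the points. The argument that works instead unfolds the chain $x_0,\dots,x_m$ into a single continuous function $\tilde B_f$ on an interval of length $\sum_j|x_{j-1}-x_j|$ (continuity coming from a two-point Taylor comparison of the form $B_{f,n}(s)<\max\{B_{f,n}(t),e^{-k}\}e^{e|t-s|\mu_k}$), observes via the intermediate value theorem that $\tilde B_f$ must pass through every level $e^{-k}$ for $\ka<k\le m$ (since it starts at $e^{-\ka}$ and ends below $e^{-m}$), and then uses the comparison inequality to show that consecutive level crossings are at least $1/(e\mu_k)$ apart. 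The crossings can sit anywhere relative to the segments between the $x_j$'s, which is exactly why the cutoff $\ka<k\le m$ appears. Without this level-set/intermediate-value step, the bookkeeping you describe cannot be made to produce \eqref{eq:assertion}, so the proposal as it stands has a genuine gap rather than a fillable routine verification.
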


\begin{proof}
	For $n \in \N$ and $t \in [0,1]$ consider
  \(
    B_{f,n}(t) :=  \sup_{j \ge n} \frac{|f^{(j)}(t)|}{e^j M_j}.
  \)
Then:
  \begin{enumerate}
    \item $B_{f,n}(t) \le e^{-n}$ for all $t \in [0,1]$.
    \item $B_{f,n} \ge B_{f,n+1}$, and $f^{(n)}(t_0) = 0$ implies $B_{f,n}(t_0) = B_{f,n+1}(t_0)$.
    \item For all $k > n$ and all $t, s \in [0,1]$,
    \[
      B_{f,n}(s) < \max \{B_{f,n}(t) , e^{-k} \}\, e^{e |t-s| \mu_k}.
    \]
  \end{enumerate}
  (1) and (2) are obvious. To see (3)
  let $k > n$, $n \le j < k$, and $t, s \in [0,1]$. Then, by Taylor's formula, for some $\xi$ between $t$ and $s$,
  \begin{align*}
    \frac{|f^{(j)}(s)|}{e^j M_j}
    &\le \sum_{i = 0}^{k-j-1} \frac{|f^{(j+i)}(t)|\, |t-s|^i}{e^j M_j\, i!}
    + \frac{ |f^{(k)}(\xi)|\, |t-s|^{k-j}}{e^j M_j\, (k-j)!}  \\
    &= \sum_{i = 0}^{k-j-1} \frac{M_{j+i}}{M_j}\frac{|f^{(j+i)}(t)| }{e^{j+i} M_{j+i}} \frac{(e|t-s|)^i}{i!}
    + e^{-k} \frac{M_{k}}{M_j} \frac{ |f^{(k)}(\xi)|}{M_k} \frac{ (e|t-s|)^{k-j}}{(k-j)!} \\
    &\le B_{f,N}(t) \sum_{i = 0}^{k-j-1} \mu_k^i \frac{(e|t-s|)^i}{i!}
    + e^{-k} \mu_k^{k-j}  \frac{ (e|t-s|)^{k-j}}{(k-j)!} \\
    &< \max\{ B_{f,n}(t) , e^{-k} \}\, e^{e |t-s| \mu_k},
  \end{align*}
  where we used that $\mu_k$ is increasing.
  If $j \ge k$, then trivially
  \begin{align*}
    \frac{|f^{(j)}(s)|}{e^j M_j} \le e^{-j} < \max\{ B_{f,n}(t), e^{-k} \} \, e^{e |t-s| \mu_k}.
  \end{align*}
  This implies (3).

    Let $f$ and $x_j$ be as in the theorem.
  Set $\ta_k := \sum_{j=1}^{k} |x_{j-1}-x_{j}|$, $k \ge 1$, $\ta_0:= 0$, and define for $t \in [\ta_{n-1},\ta_n]$,
  \[
    \tilde B_{f,n} (t) :=
    \begin{cases}
       B_{f,n}(x_{n-1} +\ta_{n-1} -t) & \text{if } x_n < x_{n-1}, \\
       B_{f,n}(x_{n-1} -\ta_{n-1} +t) & \text{if } x_n \ge x_{n-1}.
    \end{cases}
  \]
  The function $\tilde B_{f,n}$ is continuous, by (3), and
  $\tilde B_{f,n}(\ta_n) = B_{f,n}(x_n) = B_{f,n+1}(x_n) =\tilde B_{f,n+1}(\ta_n)$, by (2).
  Thus,
  \[
    \tilde B_{f}(t) := \tilde B_{f,n}(t) \quad \text{ if } t \in [\ta_{n-1},\ta_n],~ n \ge 1.
  \]
  defines a continuous function on $[0,\ta_m]$.
  By (1) and (2), we have $\tilde B_{f}(t) \le e^{-n}$ for all $t \ge \ta_{n-1}$,
  in particular, $\tilde B_{f}(t) \le e^{-m}$ for all $t \in  [\ta_{m-1}, \ta_m]$.
  Since $f$ does not vanish identically,
  \(
  	\max_{t \in [0,\ta_m]} \tilde B_{f}(t) \ge \tilde B_{f}(0) = B_{f,1}(x_0) = B_{f,0}(x_0) >0.
  \)
  Hence, the range of $\tilde B_f$ contains all numbers $e^{-k}$ for $\ka < k \le m$, where
  \(
  	 e^{-\ka} = B_{f,0}(x_0)
  \)
  which is equivalent to \eqref{eq:kappa}.
  So we may choose a strictly increasing sequence $t_k$, $\ka < k \le m$, such that $\tilde B_f(t_k) = e^{-k}$
  and $\tilde B_f(t) > e^{-k}$ for all
  $t \in (t_{k-1},t_k)$ (recursively, take for $t_k$ the smallest $t \in \tilde B_f^{-1}(e^{-k})$ with $t>t_{k-1}$).
  By (3) (applied to each interval in the subdivision of $(t_{k-1},t_k)$ induced by the points $\ta_n$ between $t_{k-1}$ and $t_k$)
  we may conclude that
  \[
    \tilde B_{f}(t_{k-1}) \le \tilde B_{f}(t_k)\, e^{e (t_k-t_{k-1}) \mu_k}.
  \]
  Since $\tilde B_f(t_k) = e^{-k}$,
  that means
  \[
     t_k-t_{k-1} \ge \frac1{e \mu_k}.
  \]
  Summing over $k$ we find
  \[
    t_m \ge \frac1{e} \sum_{\kappa < k\le m} \frac1{\mu_k}.
  \]
  By the choice of the sequence $t_k$, we have $\ta_k \ge t_k$ which implies \eqref{eq:assertion}.
\end{proof}

\begin{corollary} \label{corBang}
  Let $M$ be a quasianalytic weight sequence. Let $f \in C^\infty([0,1])$ satisfy \eqref{eq:Bd1}.
  If $f^{(j)}(0) > 0$ for all $j \in \N$, then $f^{(j)}(x) > 0$ for all $x \in [0,1]$ and $j \in \N$.
\end{corollary}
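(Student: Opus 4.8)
The plan is to derive a contradiction from Bang's theorem, \Cref{thm:Bang}. Suppose, for contradiction, that $f^{(j)}(x)\le 0$ for some $j$ and some $x\in(0,1]$. Since $f^{(j)}(0)>0$, the intermediate value theorem shows that $f^{(j)}$ has a zero in $(0,1]$; fix an index $j_0$ with this property and let $\xi_{j_0}\in(0,1]$ be the smallest zero of $f^{(j_0)}$ in $(0,1]$. Then $f^{(j_0)}>0$ on $[0,\xi_{j_0})$ and $f^{(j_0)}(\xi_{j_0})=0$, so $\xi_{j_0}$ is a one-sided local minimum of $f^{(j_0)}$ and hence $f^{(j_0+1)}(\xi_{j_0})\le 0$; since also $f^{(j_0+1)}(0)>0$, the intermediate value theorem produces a zero of $f^{(j_0+1)}$ in $(0,\xi_{j_0}]$. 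Iterating, I obtain for every $j\ge j_0$ the smallest zero $\xi_j\in(0,1]$ of $f^{(j)}$ in $(0,1]$, and the sequence $\xi_{j_0}\ge\xi_{j_0+1}\ge\xi_{j_0+2}\ge\cdots$ is decreasing.

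Next I renormalize to fit the hypotheses of \Cref{thm:Bang}. Set $\tilde M_k:=M_{j_0+k}/M_{j_0}$ and $\tilde g:=f^{(j_0)}/M_{j_0}$. Then $\tilde M$ is again a weight sequence (its quotient sequence is $\tilde\mu_k=\mu_{j_0+k}$, increasing to infinity, and $\tilde M_0=1\le\tilde M_1$), and it is quasianalytic because $\sum_k 1/\tilde\mu_k=\sum_{\ell>j_0}1/\mu_\ell=\infty$. The function $\tilde g\in C^\infty([0,1])$ is not identically zero, satisfies $\|\tilde g^{(k)}\|_{[0,1]}\le\tilde M_k$, and $\tilde g^{(j)}(\xi_{j_0+j})=0$ for all $j\ge 0$. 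Applying \Cref{thm:Bang} to $\tilde g$ with $x_j:=\xi_{j_0+j}$ yields, for every $m\ge 1$,
\[
  \xi_{j_0}-\xi_{j_0+m}=\sum_{j=1}^{m}|x_{j-1}-x_j|\ \ge\ \frac1e\sum_{\kappa<k\le m}\frac1{\tilde\mu_k},
  \qquad \kappa:=-\log\sup_{j\ge 0}\frac{|\tilde g^{(j)}(\xi_{j_0})|}{e^{j}\tilde M_j}.
\]
The left-hand side is $\le\xi_{j_0}\le 1$ by monotonicity of $(\xi_j)$, whereas the right-hand side tends to $+\infty$ as $m\to\infty$ as soon as $\kappa<\infty$, since $\sum_k 1/\tilde\mu_k=\infty$; this is the desired contradiction. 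To see $\kappa<\infty$, i.e.\ that $\tilde g$ is not flat at $\xi_{j_0}$, note that otherwise the function $t\mapsto\tilde g(\xi_{j_0}-t)$, extended by $0$ for $t\le 0$, would be a nonzero $C^\infty$ function vanishing on $(-\infty,0]$ whose $k$-th derivative is bounded by $\tilde M_k$; \Cref{lem:sizenearflat}, applied with $\ell$ fixed and $n\to\infty$ (using $\sum_k1/\tilde\mu_k=\infty$), would then force it to vanish on a right neighbourhood of $0$, contradicting $f^{(j_0)}>0$ on $[0,\xi_{j_0})$. Hence $\kappa<\infty$. Consequently no $f^{(j)}$ vanishes in $(0,1]$, and since $f^{(j)}(0)>0$ and $f^{(j)}$ is continuous, $f^{(j)}>0$ on all of $[0,1]$.

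The crux is the construction in the first paragraph: although a priori only a single derivative of $f$ is known to misbehave, the one-sided local-minimum argument propagates this to zeros of all the consecutive derivatives $f^{(j_0)},f^{(j_0+1)},\dots$, and, crucially, these zeros are \emph{monotone} in their position, so that their total variation is bounded by $\xi_{j_0}\le 1$; this is exactly what bounds the left-hand side of \eqref{eq:assertion} and lets the divergent series on the right side win. The only other point requiring an argument is the finiteness of $\kappa$, equivalently the non-flatness of $\tilde g$ at $\xi_{j_0}$, which is settled directly by the flatness estimate \Cref{lem:sizenearflat} and does not need any case distinction according to whether $\xi_{j_0}$ is interior.
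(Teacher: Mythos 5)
Your proof is correct and follows essentially the same route as the paper's: produce a decreasing sequence of zeros of the consecutive derivatives $f^{(j_0)}, f^{(j_0+1)},\dots$ and contradict \Cref{thm:Bang}. You merely fill in details the paper leaves implicit --- the one-sided-minimum/IVT mechanism in place of the paper's loosely invoked Rolle's theorem, the index shift to the weight sequence $(M_{j_0+k}/M_{j_0})_k$ needed to match the hypotheses of \Cref{thm:Bang}, and the verification that $\kappa<\infty$ --- and all of these are handled correctly.
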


\begin{proof}
  Suppose that some
  derivative $f^{(j)}$ has a zero $x_{j} \in (0,1]$. By Rolle's theorem, we find a strictly decreasing sequence
  $x_j > x_{j+1} > \cdots > 0$, where $f^{(k)}(x_k)=0$ for all $k \ge j$. This contradicts \Cref{thm:Bang}.
\end{proof}

\begin{corollary}
	Let $M$ be a quasianalytic weight sequence. Let $f \in C^\infty([0,1])$ satisfy \eqref{eq:Bd1}.
	The total number of zeros (counted with multiplicities) of $f$ is bounded by
	\[
		\sup\Big\{m \in \N : \sum_{\ka < k \le m} \frac{1}{\mu_k} \le e\Big\}+1.
	\]
\end{corollary}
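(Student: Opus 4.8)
The plan is to combine the standard interlacing (generalized) form of Rolle's theorem with the distance estimate of \Cref{thm:Bang}. First I would dispose of the trivial cases. If $f\equiv 0$, then by \eqref{eq:kappa} we have $\ka=-\log 0=+\infty$, every sum $\sum_{\ka<k\le m}\mu_k^{-1}$ is empty, and the asserted bound is $+\infty$; so assume $f\not\equiv 0$. If $f$ has no zero in $[0,1]$ there is nothing to prove, since the right-hand side is at least $1$ (for $m=0$ the sum is empty, as $\ka\ge 0$ because $|f^{(j)}(x_0)|\le M_j\le e^jM_j$). Otherwise let $x_0$ be the smallest zero of $f$ in $[0,1]$ and let $\ka$ be the quantity \eqref{eq:kappa} attached to this $x_0$; this is the $\ka$ appearing in the statement.

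It then suffices to show: if $f$ has at least $p$ zeros in $[0,1]$ counted with multiplicity, for some integer $p\ge 1$, then $\sum_{\ka<k\le p-1}\mu_k^{-1}\le e$, so that $p-1$ lies in the set $\{m\in\N:\sum_{\ka<k\le m}\mu_k^{-1}\le e\}$ and $p$ is at most the claimed bound; letting $p$ run up to the total zero count of $f$ then gives the result (if that count is infinite, so is the bound). Fix such a $p$ and choose $p$ zeros $z^{(0)}_1\le\cdots\le z^{(0)}_p$ of $f$ with $z^{(0)}_1=x_0$. Applying the interlacing Rolle theorem to $f$ on $[z^{(0)}_1,z^{(0)}_p]$, then to $f'$, then to $f''$, and so on, I obtain at step $j$ zeros $z^{(j)}_1\le\cdots\le z^{(j)}_{p-j}$ of $f^{(j)}$ with $z^{(j-1)}_i\le z^{(j)}_i\le z^{(j-1)}_{i+1}$. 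Setting $x_j:=z^{(j)}_1$ for $0\le j\le p-1$, these are points of $[0,1]$ with $f^{(j)}(x_j)=0$, and crucially $x_0\le x_1\le\cdots\le x_{p-1}$, since $x_{j-1}=z^{(j-1)}_1\le z^{(j)}_1=x_j$.

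Now apply \Cref{thm:Bang} with $m=p-1$ to $f$ and these points $x_j$. By monotonicity the left-hand side of \eqref{eq:assertion} telescopes,
\[
\sum_{j=1}^{p-1}|x_{j-1}-x_j|=\sum_{j=1}^{p-1}(x_j-x_{j-1})=x_{p-1}-x_0\le 1,
\]
so \eqref{eq:assertion} gives $\tfrac1e\sum_{\ka<k\le p-1}\mu_k^{-1}\le 1$, i.e. $\sum_{\ka<k\le p-1}\mu_k^{-1}\le e$, which is exactly what was required.

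The only genuinely non-routine ingredient is the interlacing form of Rolle's theorem for zeros counted with multiplicity, together with the resulting monotonicity of the chain $(x_j)$; the remainder is bookkeeping. One should also note explicitly that the $\ka$ in the statement is to be read as the value of \eqref{eq:kappa} at the smallest zero of $f$, which is precisely the choice of $x_0$ that the argument forces.
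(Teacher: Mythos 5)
Your proof is correct and follows essentially the same route as the paper: produce an increasing chain $x_0\le x_1\le\cdots\le x_{m-1}$ with $f^{(j)}(x_j)=0$ via Rolle's theorem, apply \Cref{thm:Bang}, and use the telescoping of $\sum_j|x_{j-1}-x_j|\le 1$. You are somewhat more careful than the paper about multiplicities, the degenerate cases, and the identification of $\ka$ with the value of \eqref{eq:kappa} at the smallest zero, but the argument is the same.
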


\begin{proof}
	Let $z_1 \le z_2 \le \cdots \le z_m$ be the zeros of $f$ in $[0,1]$.
	By Rolle's theorem, there exists a sequence of points $z_1 = x_0 \le x_1 \le \cdots \le x_{m-1}$ in $[0,1]$ such that $f^{(j)}(x_j) = 0$ for all $j$.
	By \eqref{eq:assertion},
	\[
		1 \ge \sum_{j=1}^{m-1} (x_j - x_{j-1}) \ge \frac{1}{e} \sum_{\ka < k \le m-1} \frac{1}{\mu_k}
	\]
	and the statement follows.
\end{proof}

\subsection{The quasianalytic Borel map is never onto}

\begin{theorem} \label{seqRoumieu}
  Let $M$ be a quasianalytic weight sequence such that $m_k^{1/k}\to \infty$.
  Then there exist elements in $\La^{(M)}$ that are not contained in $j^\infty_{\{0\}}  \cB^{\{N\}}((-r,r))$ for
  any quasianalytic weight sequence $N$ and any $r>0$.
\end{theorem}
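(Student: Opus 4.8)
The plan is to construct an explicit sequence $a = (a_k) \in \La^{(M)}$ and show, via \Cref{thm:Bang} (Bang's theorem on zeros of quasianalytic functions and their derivatives), that no quasianalytic weight sequence $N$ can accommodate an extension. The natural candidate is a sequence of the form $a_k = (-1)^k m_k^{1/k} \cdot k!$ or, more robustly, a sequence with strictly alternating signs whose growth is controlled by $M$ but whose ``oscillation'' is too violent for any quasianalytic class. Since $m_k^{1/k} \to \infty$, one can arrange $a_k$ so that $|a_k| \le M_k$ (so $a \in \La^{(M)}$, in fact $|a|^M_\rh < \infty$ for all $\rh$ if we are slightly careful with the growth, using that we have the full strength $m_k^{1/k}\to\infty$ to absorb factors), while the signs of the $a_k$ force any $C^\infty$ extension to have a derivative of every order vanishing somewhere near $0$.

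First I would make the sign structure precise. Suppose $f \in \cB^{\{N\}}((-r,r))$ with $f^{(k)}(0) = a_k$ for all $k$. After rescaling $t \mapsto \lambda t$ for a suitable $\lambda > 0$ (which only changes $N$ by an equivalent weight sequence, still quasianalytic) we may assume $f \in C^\infty([0,1])$ with $\|f^{(k)}\|_{[0,1]} \le N_k$ for all $k$. If the $a_k$ are chosen so that consecutive signs alternate in a controlled way — concretely, so that for each $j$ the function $g_j := f^{(j)}$ changes sign on an arbitrarily short interval to the right of $0$, which can be forced by making $|a_{j+1}|$ large relative to $|a_j|$ and $|a_{j+2}|$ (a ``near-flatness plus large next derivative'' configuration) — then by the intermediate value theorem and Rolle's theorem applied inductively one produces points $0 = x_0 < x_1 < x_2 < \cdots$ in $[0,1]$ with $f^{(j)}(x_j) = 0$ for all $j$ and with $\sum_j (x_j - x_{j-1})$ as small as we please (bounded by $1$, say). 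But \Cref{thm:Bang} then yields
\[
  1 \ge \sum_{j=1}^m (x_{j-1} - x_j) \ge \frac1e \sum_{\ka < k \le m} \frac{1}{\nu_k},
\]
where $\nu_k = N_k/N_{k-1}$ and $\ka$ depends only on $f$ at $x_0 = 0$, hence is finite. Letting $m \to \infty$ forces $\sum_k 1/\nu_k < \infty$, contradicting quasianalyticity of $N$ (the Denjoy--Carleman theorem, \Cref{thm:DC}). So no such $f$, and no such $N$, exists.

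The delicate point — and what I expect to be the main obstacle — is engineering the sequence $a = (a_k)$ so that it simultaneously (i) lies in $\La^{(M)}$, i.e.\ $|a_k| \le \rh^k M_k$ eventually for every $\rh > 0$, and (ii) rigidly forces the cascade of zeros $x_j$ regardless of which quasianalytic $N$ one is handed and regardless of the extension $f$. Requirement (ii) is subtle because the zeros must be produced from the jet data alone: one needs that the Taylor behavior $f^{(j)}(t) = a_j + a_{j+1} t + \tfrac12 a_{j+2} t^2 + \cdots$ is dominated, on the relevant scale, by the first two terms, so that a sign change of $a_j$ to $a_{j+1} t$ actually occurs before the tail takes over — and the scale on which this happens must shrink fast enough that $\sum (x_j - x_{j-1})$ stays bounded. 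Here is where $m_k^{1/k} \to \infty$ is essential: it gives enough ``room'' between $M$ and the real-analytic sequence $(k!)$ to choose the $|a_k|$ with the required lacunary growth (roughly $|a_k| \approx M_k / (\text{something} \to \infty)^k$ but with $|a_{k+1}|/|a_k|$ spiking along a sparse subsequence) while keeping $a \in \La^{(M)}$; for the real-analytic class itself ($m_k^{1/k} \not\to\infty$, e.g.\ $M_k = k!$) the Borel map *is* onto and no such obstruction can exist, so the hypothesis cannot be dropped. I would also invoke \Cref{lem:sizenearflat} (applied to the tails of the derivatives of $f$, built into weight sequences of the form $M^{+k}$ as in the proof of \Cref{thm:necessityB}) to get quantitative control of the location of the sign changes, making the choice of $(a_k)$ effective. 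A secondary technical nuisance is that $\ka$ in \Cref{thm:Bang} depends on $\sup_j |f^{(j)}(0)|/(e^j N_j) = \sup_j |a_j|/(e^j N_j)$, which is finite since $f \in \cB^{\{N\}}$ after rescaling; one must just make sure the rescaling constant is chosen uniformly enough that this does not blow up, but this is routine once the main construction is in place.
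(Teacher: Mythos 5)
Your overall framing (reduce to $[0,1]$ by rescaling, produce a chain of points $x_j$ with $f^{(j)}(x_j)=0$ whose gaps sum to at most $1$, and contradict \Cref{thm:Bang} because $\sum_k 1/\nu_k=\infty$) is sound \emph{once the zeros exist}, but the mechanism you propose for producing them does not work, and this is the heart of the matter rather than a technical nuisance. To force $f^{(j)}$ to change sign on $(0,t]$ from the jet alone you need the linear term $a_{j+1}s$ to overcome both $|a_j|$ and the Taylor remainder, and the only available bound on the remainder is $\tfrac12\|f^{(j+2)}\|\,s^2\le\tfrac12 N_{j+2}s^2$; this forces a relation of the type $|a_{j+1}|^2\gtrsim N_{j+2}|a_j|$ (and worse for higher-order expansions). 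But $a$ must be fixed \emph{before} $N$, and the quasianalytic weight sequences are unbounded above (for every quasianalytic $N$ there is a quasianalytic $N'$ with $N\lhd N'$, e.g.\ by passing to Denjoy sequences with more iterated logarithms, or simply $C^kN_k$ with $C$ huge), so no fixed sequence $a\in\La^{(M)}$ can satisfy such inequalities for every admissible $N$. The hypothesis $m_k^{1/k}\to\infty$ gives you room relative to $M$, not relative to the unknown $N$. A concrete symptom that oscillation alone forces nothing: the alternating jet $a_j=(-1)^j$ is the jet of $e^{-t}$, which lies in every quasianalytic class and has derivatives with no zeros whatsoever.

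The missing ingredient is Bernstein's theorem on absolutely monotone functions, which lets one run the zero-counting machinery in the \emph{opposite} direction. The paper takes $a$ \emph{positive}; \Cref{corBang} (which is exactly your cascade of zeros, but generated by Rolle's theorem from a single \emph{hypothetical} zero of some derivative of the extension, rather than engineered into the jet) shows that every derivative of any extension with $\|f^{(j)}\|_{[0,1]}\le N_j$ stays positive on $[0,1]$; Bernstein's theorem then forces $f$ to be analytic on the unit disk; and since $m_k^{1/k}\to\infty$ one can choose a positive $a$ that does not define a real analytic germ — contradiction. (The upgrade from $\La^{\{M\}}$ to $\La^{(M)}$ is done by running the argument for the log-convex minorant of $k!\sqrt{m_k}$, which is a quasianalytic weight sequence $\lhd M$ still satisfying $\ul\ell_k^{1/k}\to\infty$.) So the repair of your proposal is not a more careful choice of signs and lacunary spikes, but the insertion of the positivity/Bernstein step, after which no zeros need to be forced at all.
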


\begin{proof}
  Let $a=(a_j) \in \La^{\{M\}}$ be positive, i.e., $a_j >0$ for all $j$.
  Let $N$ be any quasianalytic weight sequence and $r>0$.
  We claim that
  if there exists $f \in \cB^{\{N\}}((-r,r))$ such that $j^\infty_{\{0\}} f = a$ then $f$ is real analytic.
  After rescaling we may assume that $\|f^{(j)}\|_{[0,1]} \le N_j$ for all $j \in \N$.
  \Cref{corBang} implies
  that $f^{(j)}(x) >0$ for all $x \in [0,1]$ and all $j \in \N$.
  By Bernstein's theorem (e.g.\ \cite[p.~146]{Widder41}), $f$ extends to an analytic function
  on the unit disk in $\C$.
  Now it suffices to choose $a=(a_j)$ such that it does not define a real analytic germ, which is possible by the assumption
  $m_k^{1/k}\to \infty$.

  To see that there exist such $a$ even in $\La^{(M)}$ set $L_k := k!\, \sqrt{m_k}$
  and let $\ul L$ be the log-convex minorant of $L$. Thus $\ul L \lhd M$ and $\underline \ell_k^{1/k} \to \infty$
  (cf.\ \Cref{fn1}), so that $\ul L$ is a quasianalytic weight sequence for which the argument in the
  previous paragraph applies.\footnote{It is not hard to see that
  $\La^{(M)} = \bigcup \{\La^{\{L\}} : L \text{ positive sequence}, ~L \lhd M, ~\ell_k^{1/k} \to \infty\}$.}
\end{proof}

\subsection{The impossibility of quasianalytic extension}

\begin{theorem}
	Let $M$ be a quasianalytic weight sequence with $m_k^{1/k} \to \infty$.
	Then $\cB^{(M)}((-1,1))$ contains functions $f$ which have no quasianalytic extension to a larger interval, i.e.,
	if $\tilde f$ is an extension of $f$ to a neighborhood of $[-1,1]$ and $N$ is any quasianalytic weight sequence, then $\tilde f \not\in \cE^{\{N\}}$.
\end{theorem}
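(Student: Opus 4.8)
The plan is to construct a single function $f\in\cB^{(M)}((-1,1))$ which is real analytic on $(-1,1)$, extends to a $C^\infty$ function on $(-\infty,1]$, has $f^{(j)}(1)>0$ for every $j$, but whose Taylor series at $1$ has radius of convergence $0$ (so $f$ is not real analytic at the endpoint $1$). Once such an $f$ is available, no quasianalytic extension past $1$ can exist. Indeed, suppose $\tilde f\in\cE^{\{N\}}(V)$ extends $f$ to a neighbourhood $V$ of $[-1,1]$ with $N$ a quasianalytic weight sequence. Then $\tilde f$ agrees with $f$ on $(-1,1)$ and is $C^\infty$ at $1$, so $\tilde f^{(j)}(1)=f^{(j)}(1)>0$ for all $j$; picking an interval $[1,1+\eta]\subseteq V$ with $\eta$ small, rescaling to $[0,1]$, and normalizing so that the $N$-seminorm bound of $\tilde f$ on $[1,1+\eta]$ becomes $\le N_k$, \Cref{corBang} forces all derivatives of (the rescaled) $\tilde f$ to be positive on $[0,1]$, hence $\tilde f^{(j)}>0$ on $[1,1+\eta]$ for all $j$. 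Then $\tilde f$ is absolutely monotonic on $[1,1+\eta]$, so by Bernstein's theorem (cf.\ \cite[p.~146]{Widder41}) it equals its Taylor series at $1$ there, i.e.\ $\sum_{j}\frac{f^{(j)}(1)}{j!}t^j$ converges for small $t$ — contradicting the construction. The key structural point is that \Cref{corBang} is one-sided: it requires positivity of the jet at the \emph{left} endpoint, which is exactly why $f$ may carry the critical positive jet at its \emph{right} endpoint $1$ and yet obstruct two-sided quasianalytic continuation (a positive jet at an interior point would, by \Cref{corBang} and Bernstein, already force $f$ itself to be analytic there).

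For the construction I would take
\[
  f(x):=\sum_{k\ge1}c_k\,e^{-\lambda_k(1-x)},\qquad 0<\lambda_k\nearrow\infty,\ c_k>0,\ \textstyle\sum_k c_k<\infty.
\]
Then $f\in C^\infty((-\infty,1])$, $f^{(j)}(x)=\sum_k c_k\lambda_k^j e^{-\lambda_k(1-x)}\ge0$, $\|f^{(j)}\|_{(-1,1)}\le a_j$, and $f^{(j)}(1)=a_j$, where $a_j:=\sum_k c_k\lambda_k^j$; moreover $\sum_j\frac{a_j}{j!}t^j=\sum_k c_k e^{\lambda_k t}$. So it remains to choose $(c_k,\lambda_k)$ so that (i) for each $\rho>0$ there is $C_\rho$ with $a_j\le C_\rho\rho^jM_j$ for all $j$ — this puts $f$ into the \emph{Beurling} class $\cB^{(M)}((-1,1))$ — and (ii) $\sum_k c_k e^{\delta\lambda_k}=\infty$ for every $\delta>0$ — this says precisely that $a=(a_j)$ is not a real analytic germ. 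Conditions (i) and (ii) are compatible thanks to the hypothesis $m_k^{1/k}\to\infty$: writing $\omega_M(t):=\sup_k\log(t^k/M_k)$, this hypothesis (equivalently $\mu_k/k\to\infty$) forces $\omega_M(t)=o(t)$ as $t\to\infty$, i.e.\ there is a genuine gap between the exponential scale $e^{\delta\lambda}$ and the $M$-scale into which the $c_k$ can be fitted. Concretely I expect that $\lambda_k:=k$, $\theta_k:=k\sqrt{\psi(k)}$ with $\psi(s):=\sup_{u\ge s}\omega_M(u)/u\searrow0$, and $c_k:=k^{-2}e^{-\theta_k}$ will do: $\theta_k=o(k)$ yields (ii), while $\theta_k\ge\omega_M(k/\rho)$ for $k$ large together with the elementary bound $k^j\le e^{\omega_M(k/\rho)}\rho^jM_j$ gives $a_j\le\rho^jM_j\sum_k k^{-2}e^{\omega_M(k/\rho)-\theta_k}<\infty$, which is (i). Verifying these estimates, and in particular that $m_k^{1/k}\to\infty$ entails $\omega_M(t)=o(t)$, is the technical heart of the proof and the step I expect to be the main obstacle; the rest is bookkeeping plus the two quoted results.

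Finally, the argument as stated rules out extensions to any neighbourhood of $[-1,1]$, which is what the theorem asks (such a neighbourhood always contains an interval $[1,1+\eta]$, so one endpoint suffices). If one wanted to forbid quasianalytic extensions past the left endpoint as well, I would add to $f$ the symmetric function $g(x):=\sum_k c'_k e^{\mu_k(x+1)}$, built by the same recipe at $-1$ (so that $\sum_k c'_k e^{2\mu_k}\mu_k^j\le C'_\rho\rho^jM_j$ for all $\rho$): $g$ has the analogous properties at $-1$, is real analytic on $\{\Re z>-1\}$, and $f+g$ still lies in $\cB^{(M)}((-1,1))$ and is non-extendable across either endpoint.
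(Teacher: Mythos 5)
Your proof is correct, but it takes a genuinely different route from the paper's. The paper (following Nazarov--Sodin--Volberg) works at the \emph{center} of the interval: it picks $c_j>0$ with $c_j^{1/j}\to 1$ and $\sum_j j^n c_j\le M_n$, sets $f(x)=\sum_k c_{2k}x^{2k}$, and gets a contradiction because a quasianalytic extension, having non-negative jet at $0$, would by \Cref{corBang} and Bernstein's theorem be analytic on a disk of radius $>1$ about $0$, while the Taylor series has radius exactly $1$; the Beurling case is then handled not by a direct construction but by replacing $M$ with the log-convex minorant of $(k!\,\sqrt{m_k})$, a smaller quasianalytic weight sequence still satisfying $\ul{\ell}_k^{1/k}\to\infty$. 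You instead place the critical jet at the \emph{endpoint} $1$, arrange Taylor radius $0$ there, and build the function directly in the Beurling class; the obstruction mechanism (\Cref{corBang} plus Bernstein on $[1,1+\eta]$) is identical, and your rescaling/normalization step is exactly the one performed in the proof of \Cref{seqRoumieu}. Your observation that a positive jet with Taylor radius $0$ can only sit at the boundary correctly explains why your construction must be one-sided, and obstructing one endpoint does suffice for the statement. The technical core you flag goes through: $m_k^{1/k}\to\infty$ gives $\om_M(t)=o(t)$ (for any $C$ one has $M_k\ge C^k k!$ for large $k$, whence $\limsup_{t\to\infty}\om_M(t)/t\le 1/C$); then with $\psi(s)=\sup_{u\ge s}\om_M(u)/u$ and $\rho<1$ one has $\om_M(k/\rho)\le(k/\rho)\psi(k)\le\theta_k$ as soon as $\sqrt{\psi(k)}\le\rho$, so $a_j\le\rho^j M_j\sum_k k^{-2}e^{\om_M(k/\rho)-\theta_k}\le C_\rho\,\rho^j M_j$ termwise (which also settles finiteness of $a_j$), while $\theta_k=o(k)$ makes $\sum_k c_k e^{\de k}$ diverge for every $\de>0$. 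What your approach buys is a single explicit Beurling-class example with the obstruction localized at a boundary point; what the paper's buys is an even, real-analytic power series on $(-1,1)$ whose non-extendability is detected at an interior point, together with the reusable $L_k=k!\,\sqrt{m_k}$ reduction from the Beurling to the Roumieu setting.
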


\begin{proof}
  We use an argument of \cite{NazarovSodinVolberg04} to see that $\cB^{\{M\}}((-1,1))$ contains
  functions with the asserted properties. That such functions can even be found in $\cB^{(M)}((-1,1))$
  follows from the reasoning at the end of the proof of \Cref{seqRoumieu}.
	Let $c_j$ be a positive sequence
	with
	\begin{gather}
		c_j^{1/j} \to 1, \label{eq:cond1} \\
		\sum_{j=1}^\infty j^n c_j \le M_n \quad \text{ for all } n \in \N.	\label{eq:cond2}
	\end{gather}
	The even function
	\(
		f(x) = \sum_{k=0}^\infty c_{2k} x^{2k}
	\)
	is real analytic on $(-1,1)$, by \eqref{eq:cond1},
	and belongs to $\cB^{\{M\}}((-1,1))$, by \eqref{eq:cond2}.
	Suppose that $f$ has a quasianalytic extension $\tilde f$ to a larger interval.
	Then $\tilde f^{(j)} (0) =  f^{(j)} (0) \ge 0$ for all $j \in \N$.
	Hence $\tilde f$ is analytic on a disk centered at $0$ with radius larger than $1$, by \Cref{corBang} und Bernstein's theorem.
	But this contradicts \eqref{eq:cond1}.
\end{proof}

\subsection{Digression: intricacies of the quasianalytic setting}

We want to point out some interesting intricacies of the quasianalytic setting. This section deviates from
the central theme of the article it is thus kept rather short.

  (1) For each smooth germ $f$ there exist quasianalytic weight sequences $M^1,M^2$ such that
  $f =f_1 + f_2$ with $f_i \in \cE^{\{M^i\}}$, $i=1,2$. See \cite{Mandelbrojt40} and
  \cite{RolinSpeisseggerWilkie03} (who concluded from this that there is no largest o-minimal expansion of the real field).

  (2) Weierstrass division and preparation, generally, fail in quasianalytic classes.
  See \cite{Acquistapace:2014wf,ElkhadiriSfouli08,Parusinski:2014tg}.
  If $M$ is derivation-closed and quasianalytic, then division in $\cE^{\{M\}}$ by a Weierstrass polynomial $\vh$ holds
  if and only if all roots of $\vh$ are real; see \cite{Childress76} and \cite{CC04}.
  For an overview of related results see \cite{Thilliez08}.

  (3) Assume that $M$ is a quasianalytic strongly log-convex and derivation-closed weight sequence.
  It is not known if the local ring $\cE^{[M]}_0$ is Noetherian.
  But $\cE^{[M]}$ admits resolution of singularities \cite{BM97,BM04} and also \cite{RolinSpeisseggerWilkie03}
  and, consequently, tools such as
  topological Noetherianity, curve selection, and {\L}ojasiewicz inequalities are available.

  (4) Ultradifferentiable quasianalyticity cannot be tested in lower dimensions (in contrast to real analyticity
  \cite{Siciak70,Bochnak70,Bochnak:2020tz}).
  Let $M$ be a quasianalytic strongly log-convex weight sequence with $m_k^{1/k}\to \infty$.
  For any $n\ge 2$ and any positive sequence $N$
  there exists a $C^\infty$-function $f \in \cE^{[M]}(\R^n \setminus \{0\}) \setminus \cE^{\{N\}}(\R^n)$
  such that $f \o p \in \cE^{[M]}$ for all $\cE^{[M]}$-mappings $p : \R^m \supseteq U \to \R^n$ with $m<n$.
  See \cite{Jaffe16} and \cite{Rainer:2019aa}.

  (5) Quasianalytic Roumieu classes are intersections of non-quasianalytic ones:
  Let $M$ be a quasianalytic strongly log-convex weight sequence. Then
  \[
    \cE^{\{M\}} = \bigcap_{N \in \cN(M)} \cE^{\{N\}},
  \]
  where $\cN(M)$ is the collection of all  non-quasianalytic weight sequences $N \ge M$.
  For suitable $M$ (e.g.\ Denjoy sequences $Q^{n,1}$, see \Cref{ex:Denjoy}), we may even restrict the intersection to all
  \emph{strongly log-convex} weight sequences in $\cN(M)$. But notice that we cannot describe all $\cE^{\{M\}}$
  in this way: the intersection of \emph{all} non-quasianalytic $\cE^{\{N\}}$, where $N$ is strongly log-convex,
  is the quasianalytic class $\cE^{\{Q^{1,1}\}} \supsetneq C^\om$.
  See \cite{Boman63} and \cite{KMRq} as well as \cite{Nenning:2021wd} for an application.

  (6) Let $M$ be a quasianalytic, strongly log-convex,  and derivation-closed weight sequence.
  Consider the quasianalytic equation
  \[
    \vh(x,y) = y^d + a_1(x) y^{d-1} + \cdots a_d(x) \in \cE^{[M]}_0[y], \quad x=(x_1,\ldots,x_n).
  \]
  Due to \cite{Thilliez10}, if $n = 1$, then a smooth solution $y=h(x)$ is of class $\cE^{[M]}$.
  It is not known, if this is true for $n>1$.
  There is the following partial solution \cite{Belotto-da-Silva:2017aa}:
  Let $\vh(x,y)$ be a function of class $\cE^{\{M\}}$ defined near $(a,b) \in \R^n \times \R$.
  Then there exist $p \in \N$ and a quasianalytic class $\cQ \subseteq \cE^{\{M^{(p)}\}}$ (where $M^{(p)}_k := M_{pk}$)
  such that if $\vh(x,y)=0$ admits a formal power series solution $y = H(x)$ at $a$ then
  there is a solution $y = h(x) \in \cQ$ near $a$ and $T_a h = H$. Note that, in general,
  $M^{(p)}$ is no longer quasianalytic and $\cQ$ is not a Denjoy--Carleman class.

\section{Borel's lemma with controlled loss of regularity}

Let $M$ be a weight sequence satisfying $m_k^{1/k} \to \infty$.
We have seen that the Borel map $j^\infty_{\{0\}} : \cE^{[M]}(\R) \to \La^{[M]}$
is injective if and only if $M$ is quasianalytic and
surjective if and only if $M$ is strongly non-quasianalytic.
If $M$ is not strongly non-quasianalytic, then several new natural questions arise.

\begin{question} \label{Q:summation}
	Assume that $M$ is not strongly non-quasianalytic.
	\begin{enumerate}
		\item What can be said about the image $j^\infty_{\{0\}} \cE^{[M]}(\R)$? Is there an intrinsic description?
		\item In the case that $M$ is quasianalytic, is there a constructive method for finding the unique function $f$
		with $j^\infty_{\{0\}}f=a$ for a given $a \in j^\infty_{\{0\}} \cE^{[M]}_0$?
		\item When do we have $\La^{[L]} \subseteq j^\infty_{\{0\}} \cE^{[M]}(\R)$ for another weight sequence $L$?
	\end{enumerate}
\end{question}

Note that, as see in \Cref{seqRoumieu},
positive sequences that grow fast enough cannot belong to $j^\infty_{\{0\}} \cE^{[M]}_0$ if
$M$ is quasianalytic.

\subsection{When do we have $\La^{[L]} \subseteq j^\infty_{\{0\}} \cE^{[M]}(\R)$?}

This question was completely answered by Schmets and Valdivia \cite{Schmets:2003aa}
if $M$ is a non-quasianalytic weight sequence.

Let $M\le N$ be weight sequences, $N$ non-quasianalytic.
For $p \in \N_{\ge 1}$ we consider the sequence $\la_p = \la_p(M,N)$ defined by
\[
	\la_{p,k} := \sup_{0 \le j <k} \Big(\frac{M_k}{p^k N_j} \Big)^{\frac{1}{k-j}},\quad  k \ge 1.
\]
Note that $\la_p \le \mu$ for all $p\ge 1$, indeed, since $M \le N$,
\[
	\Big(\frac{M_k}{p^k N_j} \Big)^{\frac{1}{k-j}}
	\le \Big(\frac{M_k}{M_j} \Big)^{\frac{1}{k-j}}
	= (\mu_{j+1} \cdots \mu_k)^{\frac{1}{k-j}} \le \mu_k.
\]
If $M$ has moderate growth, then $\mu_k \lesssim M_k^{1/k}$ and so also a converse estimate holds:
\begin{align*}
	\mu_k \lesssim M_k^{1/k} =  p \Big(\frac{M_k}{p^k N_0}\Big)^{1/k} \le  p \la_{p,k}.
\end{align*}

\begin{theorem}[\cite{Schmets:2003aa}] \label{thm:Borelloss}
	Let $M\le N$ be weight sequences, $N$ non-quasianalytic.
	The following conditions are equivalent:
	\begin{enumerate}
		\item $\La^{(M)} \subseteq j^\infty_{\{0\}} \cE^{(N)}(\R)$.
		\item $\La^{\{M\}} \subseteq j^\infty_{\{0\}} \cE^{\{N\}}(\R)$.
		\item There is $p \ge 1$ such that
		\begin{equation} \label{eq:SVcond}
			\sup_{k \ge 1} \frac{\la_{p,k}}{k} \sum_{j \ge k} \frac{1}{\nu_j} < \infty.
		\end{equation}
	\end{enumerate}
\end{theorem}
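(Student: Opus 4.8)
The plan is to establish $(1)\Leftrightarrow(3)$ and $(2)\Leftrightarrow(3)$ by upgrading the two techniques already available for the case $M=N$: the cutoff construction of \Cref{sec:proofBorelBeurlingsufficient} for the sufficiency of \eqref{eq:SVcond}, and the flatness estimate \Cref{lem:sizenearflat} together with the open mapping and Grothendieck factorization theorems for its necessity. When $M=N$ one checks $\la_{p,k}=\mu_k/p$, so \eqref{eq:SVcond} reduces to \eqref{eq:snq} and \Cref{thm:Borelloss} collapses to \Cref{thm:necessityB} and \Cref{thm:Borelsufficient}; this is the template to imitate. The role of $\mu$ in those arguments is taken over by the mixed sequence $\la_p$, and the proof rests on three elementary features of it: the bound $\la_{p,k}\le\mu_k$ (already recorded), the companion bound $\la_{p,k}\le\nu_k$ (since $M\le N$ forces $(M_k/(p^kN_j))^{1/(k-j)}\le(\nu_{j+1}\cdots\nu_k)^{1/(k-j)}\le\nu_k$), and the defining inequalities $\la_{p,k}^{\,k-j}\ge M_k/(p^kN_j)$ valid for all $0\le j\le k$, which are exactly what turns $M$-bounds on a jet into $N$-bounds on an extension. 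It is also useful to observe that \eqref{eq:SVcond} is equivalent to $\la_{p,k}\le C\si_k$ for some $p$ and $C$, where $\si=\si(\nu)$ is the sequence from \Cref{lem:log-convex}; this follows from parts (1), (2), (4) of that lemma and makes the role of the descendant of $N$ transparent.

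For $(3)\Rightarrow(2)$ and $(3)\Rightarrow(1)$ I would construct, for each $k$, a function $\ps_k\in\cD^{(N)}(\R)$ with $\ps_k^{(j)}(0)=\de_{jk}$ and $\|\ps_k\|^N_{\R,\rh}\le C_\rh H_\rh^k/M_k$ for all small $\rh$, so that $E(a):=\sum_k a_k\ps_k$ is the desired continuous right inverse exactly as in \Cref{sec:proofBorelBeurlingsufficient}. Take $\ps_k(t)=\vh_k(t)\,t^k/k!$, where $\vh_k$ is the cutoff furnished by \Cref{prop:cutoff} applied to a decreasing sequence $(d_m)$ that equals (a fixed multiple of) $1/\la_{p,k}$ for $1\le m\le k$ and then runs through the tail $1/\nu_m$ for $m>k$, after the $m!^\ep$-perturbation of \Cref{cor:setupcutoff} to make the sequence strictly increasing and its tail slightly thinner. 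Condition \eqref{eq:SVcond} is precisely what makes $\sum_m d_m\lesssim k/\la_{p,k}$, so $\on{supp}\vh_k$ stays in a fixed neighbourhood of $0$, and \Cref{prop:cutoff} gives $|\vh_k^{(i)}|\le(2\la_{p,k})^i$ for $i\le k$ and $|\vh_k^{(i)}|\lesssim 2^i\la_{p,k}^k(N_i/N_k)(\,\cdots\,)^\ep$ for $i>k$. Leibniz' rule then splits $\ps_k^{(j)}$ into the regimes $j\le k$, $k<j<2k$ and $j\ge 2k$, exactly as in \eqref{eq:est1}--\eqref{eq:est3}; using $\la_{p,k}^{-(k-j)}\le p^kN_j/M_k$ on the low-order part, the bound $t^{k-j+i}/(k-j+i)!\le\on{diam}(\on{supp}\vh_k)^{k-j+i}/(k-j+i)!$ on the support, and the inequality $(\ell+i)!\ge\ell!\,i!$ to convert the resulting double sums into genuinely exponential (Bessel-type) ones, together with $M\le N$ for the final division by $M_k$, one reaches the analogue of \eqref{eq:claimsur}. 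The Beurling statement follows by letting $\rh\to0$ as in the original proof; the Roumieu statement by replacing $\la_{p,k}$ with $H\la_{p,k}$ for $H$ large depending on the given $\rh$, as at the end of \Cref{sec:proofBorelBeurlingsufficient}.

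For $(1)\Rightarrow(3)$ (and identically $(2)\Rightarrow(3)$ with Grothendieck factorization in place of the open mapping theorem), multiply by a cutoff --- which exists since $N$ is non-quasianalytic --- to reduce to $\Bmap(\cB^{(N)}((-1,1)))\supseteq\La^{(M)}$. Applying the open mapping theorem to the Fréchet space $\{f\in\cB^{(N)}((-1,1)):\Bmap f\in\La^{(M)}\}$ with its graph topology, and then rescaling the unit vector $(\de_{jk})_j$ by $\rh^kM_k$ before pulling it back --- the same device used in \Cref{thm:necessityB} --- produces $f_k\in\cB^{(N)}((-1,1))$ with $f_k^{(j)}(0)=\de_{jk}$ and, crucially, $\|f_k\|^N_{[-1,1],1}\le C/(\rh^kM_k)$: the $N$-seminorm controlled by the $M$-sequence. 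From here one runs the argument of \Cref{thm:necessityB} with $\la_p$ in place of $\mu$. For the upper estimate, pick $j_0=j_0(k)$ that is (near-)extremal in the supremum defining $\la_{p,k}$ (taken over the lower half of $[0,k)$, the complementary range being absorbed by $\la_{p,k}\le\nu_k$) and apply the integration argument to $f_k^{(j_0)}$, which is flat at $0$ up to order $k-j_0-1$: on the initial interval where $f_k^{(k)}\ge\tfrac12$ one gets $f_k^{(j_0)}(t)\ge t^{k-j_0}/(2(k-j_0)!)$, while $|f_k^{(j_0)}|\le CN_{j_0}/(\rh^kM_k)$, so the flatness radius $t_k$ of $f_k^{(k)}$ obeys $t_k\lesssim k/\la_{p,k}$. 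For the lower estimate, apply \Cref{lem:sizenearflat} to the normalized flat functions $g_k=c_k(f_k^{(k)}-1)$, whose derivatives are controlled by the shifted weight sequence $N^{+k}_i=\nu_{k+1}\cdots\nu_{k+i}$, to obtain $t_k\gtrsim(M_k/N_k)^{1/k}\sum_{j\ge2k}1/\nu_j$. Splitting $\sum_{j\ge k}1/\nu_j$ into $\sum_{k\le j<2k}1/\nu_j\le k/\nu_k\le k/\la_{p,k}$ and $\sum_{j\ge 2k}1/\nu_j$, and feeding the two flatness bounds (at the indices $k$ and $2k$) into the second piece, yields \eqref{eq:SVcond} for the value of $p$ read off from the construction.

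The genuinely delicate half is the sufficiency construction: one must arrange $\vh_k$ so that its low-order derivatives sit at the scale $\la_{p,k}$ while its high-order derivatives are simultaneously dominated by $N$, and then verify that the Leibniz combination with $t^k/k!$ produces bounds of the shape $C^jH^kN_j/M_k$ uniformly in $j$, and --- in the Beurling case --- uniformly over all small $\rh$, which is exactly what forces the $m!^\ep$-perturbation of \Cref{cor:setupcutoff}. On the necessity side the subtle point is the bookkeeping around the extremal index $j_0$ and the doubling of $k$: one has to check that combining the upper bound on $t_k$ (from $f_k^{(j_0)}$, with $j_0$ possibly varying with $k$) with the lower bound on $t_{2k}$ really does yield $\sum_{j\ge 2k}1/\nu_j\lesssim k/\la_{p,k}$, using $\la_{p,k}\le\nu_k$ and the log-convexity inequality $M_{2k}\ge M_k^2$ from \Cref{lem:atinfinity}(3) to compare $\la_{p,k}$ with $\la_{p,2k}$.
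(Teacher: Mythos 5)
The paper states this theorem with a citation to Schmets and Valdivia and gives no proof, so there is no in-paper argument to compare against; I am assessing your proposal on its own.

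\textbf{The sufficiency direction} $(3)\Rightarrow(1),(2)$ looks structurally sound and does correctly transpose the construction of \Cref{sec:proofBorelBeurlingsufficient}: the scale $1/\la_{p,k}$ for the first $k$ convolution lengths and $1/\nu_m$ (with the $\ep$-perturbation) for the tail, with the defining inequality $\la_{p,k}^{-(k-j)}\le p^k N_j/M_k$ converting $\la_p$-powers into $N_j/M_k$-factors in the Leibniz expansion. There are details you have not nailed down --- whether $\la_p$ is increasing (you use this implicitly both for $\preceq\si$ and to apply the descendant machinery), and what exactly the mixed analogue of \Cref{cor:setupcutoff} looks like --- but these look repairable.

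\textbf{The necessity direction contains a genuine gap.} Your lower estimate reads $t_k\gtrsim(M_k/N_k)^{1/k}\sum_{j\ge 2k}1/\nu_j$, and this $(M_k/N_k)^{1/k}$ factor is not a cosmetic nuisance: it tends to $0$ whenever $M\lhd N$, and it cannot be removed by the tools you cite. Tracing through: in \Cref{thm:necessityB} the contradiction argument works because $c_k=(C\rho^{-k}+1)^{-1}$ is comparable to $\rho^k$, so $(c_k/2)^{1/k}$ is bounded below by a fixed positive constant. In the mixed case the normalising constant becomes $c_k\approx\rho^kM_k/(CN_k)$, whence $(c_k/2)^{1/k}\approx\rho\,(M_k/N_k)^{1/k}$ can tend to $0$, and the argument by contradiction that closed the $M=N$ proof (deducing $c_k/2\le(2h)^k$ and concluding for $h$ small) no longer produces a contradiction. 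Combining your two flatness bounds therefore only gives
$\sum_{j\ge 2k}\frac{1}{\nu_j}\lesssim\Big(\frac{N_k}{M_k}\Big)^{1/k}\frac{k}{\la_{p,k}},$
and to reach \eqref{eq:SVcond} you would have to absorb $(N_k/M_k)^{1/k}$ into $\la_{p',k}/\la_{p,k}$ for some $p'\ge 1$. But a change of parameter only changes $\la_{\cdot,k}$ by a factor at most $q$ (since $k/(k-j)\ge 1$ forces $\la_{pq,k}\le\la_{p,k}/q$), which is bounded, while $(N_k/M_k)^{1/k}$ can be unbounded. The log-convexity $M_{2k}\ge M_k^2$ and the bound $\la_{p,k}\le\nu_k$ do not change this. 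A concrete test: take $M_k=k!$, $N_k=(k!)^3$, so $\nu_k=k^3$, $\la_{p,k}\approx k/(ep)$ (sup at $j=0$), and \eqref{eq:SVcond} holds; yet the bound derived from the flatness estimates,
$\sum_{j\ge 2k}\frac{1}{\nu_j}\lesssim(N_k/M_k)^{1/k}\frac{k}{\la_{p,k}}\approx(k/e)^2\cdot ep,$
is unbounded and says nothing, even though the quantity on the left is $\approx 1/(8k^2)$. So the pair of bounds you produce is simply too lossy to yield \eqref{eq:SVcond}. You need either a sharper lower bound (e.g.\ one that exploits $M$-information rather than only the $N$-seminorm of $f_k^{(k)}$) or a different mechanism entirely; the cited splitting and the inequality $M_{2k}\ge M_k^2$ do not close the gap.

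\textbf{A smaller inaccuracy:} the remark ``one checks $\la_{p,k}=\mu_k/p$'' when $M=N$ is not literally true; $\la_{p,k}\le\mu_k$ always, but the reverse inequality $\mu_k\lesssim\la_{p,k}$ uses moderate growth of $M$, as the paper notes just before the theorem. The reduction of \eqref{eq:SVcond} to \eqref{eq:snq} for $M=N$ without moderate growth is therefore itself not immediate from the formula.
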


By the remarks before the theorem, if $M$ has moderate growth, then \eqref{eq:SVcond} (for any $p \ge 1$) is equivalent to
\begin{equation}\label{eq:mixedga1}
	\sup_{k \ge 1} \frac{\mu_k}{k} \sum_{j \ge k} \frac{1}{\nu_j} < \infty.
\end{equation}
In general, the latter condition is stronger than \eqref{eq:SVcond}.
By \Cref{lem:log-convex}, the descendant $\si$ of $\nu$ is the \emph{largest} sequence among all sequences $\mu$
satisfying $\mu \lesssim \nu$ and \eqref{eq:mixedga1}, in the sense that it generates the largest function space.

\subsection{Moment-type summation}

Some interesting results concerning the questions \ref{Q:summation}(1) and (2)
based on a moment-type summation method
are due to
Beurling, Carleson \cite{Carleson:1961wa} and recently Kiro \cite{Kiro:2020vc}.

Informally, it works as follows. Let $I \subseteq \R$ be an open interval containing $0$.
Given $f \in \cE^{(M)}(I)$, the \emph{singular transform} of $f$ is the entire function
\[
	S_M f (z) := \sum_{n \ge 0} \frac{f^{(n)}(0)}{n!\, m_{n+1}} z^n.
\]
It depends only on $j^\infty_{\{0\}} f$
and hence studying the image $S_M \cE^{(M)}(I)$ is equivalent to studying the image of the Borel map
$j^\infty_{\{0\}}\cE^{(M)}(I)$.
This study is based on the observation that elements in $S_M \cE^{(M)}(I)$ have a certain growth behavior in
horizontal strips.
(In the Roumieu case the series may have a finite radius of convergence and analytic continuation to a horizontal strip
is required.)

Let $K$ be the kernel that solves the moment problem
\[
	\int_0^\infty t^n K(t) \, dt = m_{n+1}, \quad n \in \mathbb N.
\]
Under suitable conditions, the \emph{regular transform}
\[
	R_M F(x) := \int_0^\infty F(xt) K(t)\, dt
\]
then yields the desired reconstruction $R_M S_M f =f$.

To make this approach work, one needs to control the asymptotic behavior of $K$ and of
\[
	E(z) := \sum_{n \ge 0} \frac{z^n}{m_{n+1}}
\]
which manifests itself in regularity properties of the weights $M$. The function $E$ regulates the growth
behavior of elements in $S_M \cE^{(M)}(I)$ in horizontal strips.

Kiro \cite{Kiro:2020vc} executes this program for a suitable class of weights; the case $M_k = k! \log(k +e)^k$
was treated by Beurling. Also a duality between suitable non-quasianalytic and quasianalytic classes, which have the
same image under the respective singular transform, is explored.

\section{Optimal cutoff functions} \label{sec:cutoff}

The most important tool for the extension problem for general closed subsets $A \subseteq \R^n$ are \emph{good} cutoff functions.
We start with an observation that shows how good ultradifferentiable cutoff functions can be.

\subsection{A lower bound for cutoff functions}
Cf.\ \cite{Bruna80}.
Let $M=(M_k)$ be a non-quasianalytic weight sequence.
Let $r,\ep,\rh>0$.
Suppose that $\vh \in \cB^{M}_\rh(\R)$ satisfies
\begin{equation} \label{eq:Bruna1}
	\vh|_{[-r,r]} = 1 \quad \text{ and } \quad
	\on{supp} \vh \subseteq [-(1+\ep)r,(1+\ep)r].
\end{equation}
It is to be expected that $\|\vh\|^M_{\R,\rh}$ tends to infinity,
if $r$, $\ep$, or $\rh$ approach $0$.
Let us check and quantify this guess.
For $t \in (r,(1+\ep)r)$, we have by Taylor's formula
\[
	|\vh(t)| \le \frac{\vh^{(k)}(\ta)}{k!} (r \ep)^k, \quad k \in \N,
\]
for some $\ta \in (t,(1+\ep)r)$. Since $\vh(t)\to 1$ as $t \to r$, we find
$\|\vh^{(k)}\|_\R \ge k!/(r\ep)^k$ for all $k$. Thus,
\[
	\|\vh\|^M_{\R,\rh} \ge \sup_{k \in \N} \frac{k!}{(\rh r \ep)^k M_k} = \frac{1}{\inf_{k \in \N} (\rh r \ep)^k m_k}.
\]
It turns out that $h_m(t) := \inf_{k \in \N} m_k t^k$ is a very useful auxiliary function; we will discuss it in \Cref{sec:associatedfunctions}.
So any cutoff function $\vh \in \cB^{M}_\rh(\R)$ with \eqref{eq:Bruna1}
must satisfy
\begin{equation} \label{eq:goodcutoff}
	\|\vh\|^M_{\R,\rh} \ge  \frac{1}{h_m(\rh r \ep)}.
\end{equation}

\subsection{Associated functions} \label{sec:associatedfunctions}

Suppose that $m=(m_k)$ is log-convex, $1 = m_0 \le m_1$ and $m_k^{1/k} \to \infty$
(i.e., $m$ is a weight sequence or, equivalently, $M_k = k!\, m_k$ is a
strongly log-convex weight sequence).
We associate the function $h_m : [0,\infty) \to [0,\infty)$, where  $h_m(0):=0$ and
\begin{equation} \label{h}
  h_m(t) := \inf_{k \in \N} m_k t^k, \quad t > 0.
\end{equation}
Then $h_m$ is increasing, continuous, and positive for $t>0$. For $t \ge 1/m_1$ we have $h_m(t) = 1$. From $h_m$
we may recover the sequence $m$ by $m_k = \sup_{t>0} h_m(t)/ t^{k}$.
We also associate the counting function $\Ga_m : (0,\infty) \to \N_{\ge 1}$ by setting
\begin{equation} \label{counting2}
  \Ga_m(t) := \min\{k \ge 1 : h_m(t) =  m_k t^k\} = \min\Big\{k \ge 1 : \frac{m_{k+1}}{m_k} \ge \frac{1}{t} \Big\};
\end{equation}
for this identity we need that $\frac{m_{k+1}}{m_k}$ is increasing, i.e., $m$ is log-convex. Then:
\begin{gather}
	\text{$k \mapsto m_k t^k$ is decreasing for $k \le \Ga_m(t)$,} \label{GaProp1}
	\\
	\text{$h_m(t) = m_{\Ga_m(t)} t^{\Ga_m(t)} \le m_k t^k$ for all $k$.} \label{GaProp2}
\end{gather}
It follows that
\begin{equation} \label{GaProp3}
  \Ga_m(t) = k \quad \text{ and }\quad h_m(t) = m_k t^k, \qquad \text{ for } t \in [\tfrac{m_k}{m_{k+1}},\tfrac{m_{k-1}}{m_{k}}).
\end{equation}

\begin{example}
	For $m_k = k!^s$, where $s > 0$, the function $h_m$ behaves like $t \mapsto \exp(-1/t^{1/s})$:
  \begin{align*}
     \sup_{k} \frac{t^k}{k!^s} = \Big( \sup_{k}  \frac{t^{k/s}}{k!} \Big)^s \le \Big( \sum_{k}  \frac{t^{k/s}}{k!} \Big)^s = e^{s t^{1/s}}
  \end{align*}
  and, conversely,
  \begin{align*}
     e^{st^{1/s}} = \Big(\sum_{k} \frac{1}{2^k}  \frac{(2 t^{1/s})^k}{k!} \Big)^s \le 2^s \sup_{k} \frac{(2^s t)^k}{k!^s}.
  \end{align*}
	Similarly, for $m_k = \log(k+e)^{sk}$,  $h_m$ behaves like $t \mapsto \exp(-\exp(1/t^{1/s}))$.
\end{example}

We shall need some properties of $h_m$ and $\Ga_m$ which are guaranteed if
$m$ has moderate growth:

\begin{lemma} \label{claim1}
 Suppose that $m$ is a weight sequence of moderate growth, i.e.,
 there is a constant $C>1$ such that
 \begin{equation} \label{eq:mgv1}
 	\frac{m_{2k}}{m_{2k-1}} \le C \frac{m_{k}}{m_{k-1}},  \quad k\ge 1.
 \end{equation}
 Then\footnote{Actually \eqref{eq:mgv1} and \eqref{hmg} are equivalent, see \cite[Remark 2.5]{RainerSchindl16a}.}
  \begin{gather}
       h_m(t) \le h_m(Ct)^2, \quad t>0, \label{hmg}
  \end{gather}
  and, for a possibly other constant $C>1$,
  \begin{gather}
      	2  \Ga_m(Ct) \le \Ga_m(t),  \quad t>0 \text{ small enough}.   \label{eq32}
  \end{gather}
\end{lemma}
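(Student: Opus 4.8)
The plan is to route both assertions through a single consequence of \eqref{eq:mgv1}, namely the estimate
\[
	m_{2j} \le C^{2j}\,m_j^2, \qquad j \in \N .
\]
This is the diagonal case $j=k$ of the moderate growth inequality $m_{j+k}\le C^{j+k}m_jm_k$, but for us the diagonal suffices and it comes with the \emph{sharp} constant from \eqref{eq:mgv1}. To obtain it, recall that $m$ is a weight sequence, hence log-convex, so $\mu_k^*=m_k/m_{k-1}$ is increasing; writing $m_{2j}=\prod_{i=1}^{2j}\mu_i^*$ (using $m_0=1$) and pairing the factors two at a time against $m_j^2=\prod_{n=1}^{j}(\mu_n^*)^2$, I would estimate each pair by
\[
	\frac{\mu_{2n-1}^*\,\mu_{2n}^*}{(\mu_n^*)^2} \le \frac{(\mu_{2n}^*)^2}{(\mu_n^*)^2} \le C^2,
\]
using first the monotonicity of $\mu^*$ and then \eqref{eq:mgv1}; multiplying over $n=1,\dots,j$ gives the claim.

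For \eqref{hmg}: since all terms $m_jC^jt^j$ are nonnegative and $x\mapsto x^2$ is increasing, squaring commutes with the infimum, so
\[
	h_m(Ct)^2 = \Big(\inf_{j\in\N} m_j(Ct)^j\Big)^2 = \inf_{j\in\N} m_j^2 C^{2j} t^{2j} \ge \inf_{j\in\N} m_{2j}\,t^{2j} \ge \inf_{n\in\N} m_n t^n = h_m(t),
\]
where the first inequality is the estimate just established and the second uses that $\{2j:j\in\N\}\subseteq\N$. No case distinction is needed.

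For \eqref{eq32} I would work with the description $\Ga_m(s)=\min\{k\ge 1: m_{k+1}/m_k\ge 1/s\}=\min\{k\ge 1:\mu_{k+1}^*\ge 1/s\}$ from \eqref{counting2}. Fix $t>0$ small enough that $k:=\Ga_m(t)\ge 2$; by this formula that holds precisely for $t<m_1/m_2$. Then $\mu_{k+1}^*\ge 1/t$, and applying \eqref{eq:mgv1} in the form $\mu_\ell^*\ge C^{-1}\mu_{2\ell}^*$ with $\ell=\lfloor k/2\rfloor+1$, together with $2\lfloor k/2\rfloor+2\ge k+1$ and the monotonicity of $\mu^*$, gives
\[
	\mu_{\lfloor k/2\rfloor+1}^* \ge \frac1C\,\mu_{2\lfloor k/2\rfloor+2}^* \ge \frac1C\,\mu_{k+1}^* \ge \frac{1}{Ct}.
\]
Hence $\lfloor k/2\rfloor$ (which is $\ge 1$) lies in the set defining $\Ga_m(Ct)$, so $\Ga_m(Ct)\le\lfloor k/2\rfloor\le k/2$, i.e.\ $2\Ga_m(Ct)\le k=\Ga_m(t)$.

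I do not expect a genuine obstacle: the entire content is the elementary telescoping estimate, which is immediate from log-convexity plus \eqref{eq:mgv1}. The only points to watch are the floor bookkeeping in \eqref{eq32} — checking $2\lfloor k/2\rfloor+2\ge k+1$ in both parities of $k$ and that $\lfloor k/2\rfloor\ge 1$ — and phrasing ``$t$ small enough'' so as to exclude the degenerate case $\Ga_m(t)=1$, where \eqref{eq32} genuinely fails.
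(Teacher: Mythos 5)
Your proposal is correct, and it differs from the paper's argument in an instructive way, most visibly for \eqref{hmg}. The paper proves \eqref{hmg} indirectly: from \eqref{eq:mgv1} it derives $2\Si_m(t)\le\Si_m(Ct)$ for the counting function $\Si_m(t)=|\{k\ge 1:\mu_k^*\le t\}|$, then invokes the integral representation $\om_m(t)=\int_0^t\Si_m(u)/u\,du$ (a nontrivial fact for log-convex sequences, cited from Mandelbrojt and Komatsu) to deduce $2\om_m(t)\le\om_m(Ct)$, which is a reformulation of \eqref{hmg}. You instead first extract the diagonal moderate growth estimate $m_{2j}\le C^{2j}m_j^2$ by the telescoping factorization $m_{2j}/m_j^2=\prod_{n=1}^j\mu^*_{2n-1}\mu^*_{2n}/(\mu^*_n)^2$ together with monotonicity of $\mu^*$ and \eqref{eq:mgv1}, and then read off \eqref{hmg} directly from the defining infimum, using that squaring commutes with infima of nonnegative quantities and that the even indices form a subset of $\N$. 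This is genuinely more elementary and self-contained: it makes no appeal to the external integral identity, and it keeps explicit track of the constant $C$ from \eqref{eq:mgv1}. The trade-off is that the paper's route via $\Si_m$ sits inside a broader machinery (it proves the equivalence with condition \eqref{eq:Sigmam}, which is re-used elsewhere), while your computation is narrowly targeted at \eqref{hmg}. For \eqref{eq32} you and the paper use the same underlying idea — lower-bound $\mu^*$ at roughly half the index at the cost of a factor $C$ in $t$ — but you unify the parity bookkeeping with $\lfloor k/2\rfloor$ and the inequality $2\lfloor k/2\rfloor+2\ge k+1$, whereas the paper splits into the cases $\ell=2k$ and $\ell=2k+1$ and needs the auxiliary bound $\mu^*_{2k+1}\le C_1\mu^*_{2k}$; your version uses \eqref{eq:mgv1} alone. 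Your remark that one must restrict to $t<m_1/m_2$ so that $\Ga_m(t)\ge 2$ (otherwise \eqref{eq32} fails) is exactly the ``$t$ small enough'' caveat in the statement.
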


\begin{proof}
	Consider the function
\(
  \Si_m(t) := |\{k \ge 1 : \tfrac{m_k}{m_{k-1}} \le t\}|.
\)
 Now \eqref{eq:mgv1} implies
 \begin{equation} \label{eq:Sigmam}
   2 \Si_{m}(t) \le \Si_m (Ct), \quad t>0.
 \end{equation}
It is well-known (cf.\ \cite{Mandelbrojt52} and \cite{Komatsu73}) that $\om_m$ defined by
$\om_m(t) = \log(1/h_m(1/t))$
satisfies\footnote{Note that the integral
$\int_1^\infty \frac{\om_M(t)}{t^2}\,dt$ converges if and only if $\int_1^\infty \frac{\Si_M(t)}{t^2}\,dt$ converges, and that is the case if and only if $M$ is
non-quasianalytic; \cite[Section 4]{Komatsu73}.}
\begin{equation} \label{omM}
  \om_m(t) = \int_{0}^t\frac{\Si_m(u)}{u} \,du.
\end{equation}
So \eqref{eq:Sigmam} implies $2\omega_{m}(t) \le \omega_m(Ct)$ for all $t>0$, which is clearly equivalent to \eqref{hmg}.

Let us check \eqref{eq32}. Since $m$ has moderate growth and is log-convex, there are constants $C_1,C_2>0$ such that
\[
  	\frac{m_{2k+1}}{m_{2k}} \le C_1 \frac{m_{2k}}{m_{2k-1}} \le C_2 \frac{m_{k}}{m_{k-1}} \le C_2 \frac{m_{k+1}}{m_{k}},  \quad k\ge 1.
\]
Fix $t>0$ and let $\ell := \Ga_m(t)$. If $\ell=2k$,
then
\[
  \frac{1}{t} \le \frac{m_{\ell+1}}{m_{\ell}} \implies \frac{1}{C_2 t} \le    \frac{m_{k+1}}{m_{k}},
\]
that is, $\Ga_m(C_2t) \le k = \tfrac{1}{2} \Ga_m(t)$.
If $\ell = 2k+1$, then we similarly see that
$\Ga_m(\frac{C_2}{C_1}t) \le k =  \tfrac{1}{2} (\Ga_m(t) -1) \le \tfrac{1}{2} \Ga_m(t)$.
Since $\Ga_m$ is decreasing, \eqref{eq32} follows.
\end{proof}

Moderate growth of the weight sequence is an essential technical tool in this section. So we make the following definition.

\begin{definition}
	A weight sequence $M=(M_k)$ is called \emph{strongly regular} if $M$ is strongly non-quasianalytic, has moderate growth,
	and $m$ is log-convex.
	Note that the last condition is actually for free:
  a strongly non-quasianalytic weight sequence $M$ is equivalent to a strongly log-convex weight sequence (that is still
  strongly non-quasianalytic),
  by \Cref{cor:log-convex}.
\end{definition}

\subsection{Optimal cutoff functions of Roumieu type} \label{sec:optcutoffRWS}

It turns out that there exist \emph{optimal} $\cE^{\{M\}}$ cutoff functions, i.e., $\vh \in \cE^{\{M\}}(\R)$ satisfying \eqref{eq:Bruna1} and realizing \eqref{eq:goodcutoff} so that
\begin{equation*}
    \|\vh\|^M_{\R,\rh} \sim  \frac{1}{h_m(\rh r \ep)},
 \end{equation*}
if and only if $M$ is strongly non-quasianalytic.
The existence of $\vh$ will follow from \Cref{prop:cutoff}. The following lemma provides the necessary assumptions.
Recall that $\mu_k^* = \frac{\mu_k}{k} = \frac{m_k}{m_{k-1}}$.

\begin{lemma} \label{lem:assforcutoff}
  Let $M$ be a strongly regular weight sequence.
  There is a constant $A\ge 1$ such that for each integer $p\ge 1$ there is a sequence $(\alpha^p_k)_{k\in\N}$ satisfying
\begin{align}
  \sum_{k\ge 0}\frac{\alpha^p_k}{\alpha^p_{k+1}}\le 1, \quad \alpha_0^p=1,
  \quad 0<\alpha^p_k\le   \frac{(\frac{A}{\mu_{p+1}^*})^k M_k}{h_{m}(\frac{1}{e\mu_{p}^*})}.
\label{alpha3}
\end{align}
\end{lemma}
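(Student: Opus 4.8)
The plan is to write down $\alpha^p$ essentially by hand and then verify the three conditions in \eqref{alpha3} directly. Fix $p\ge 1$, set $t_0:=\tfrac1{e\mu^*_p}$, let $D\ge 1$ be a moderate-growth constant (so $\mu^*_{k+1}\le D\mu^*_k$ for $k\ge 1$; this is available since moderate growth forces $\mu_{k+1}\lesssim\mu_k$), let $B:=\sup_k\tfrac{\mu_k}{k}\sum_{j\ge k}\tfrac1{\mu_j}<\infty$ be the strong non-quasianalyticity constant, put $A:=\max(1,B+D)$, and abbreviate
\[
  B_k:=\frac{(A/\mu^*_{p+1})^k\,M_k}{h_{m}(t_0)},\qquad k\ge 0 .
\]
Then I would define
\[
  \alpha^p_k:=
  \begin{cases}
    B_p^{\,k/p}, & 0\le k\le p,\\[1mm]
    B_k, & k\ge p ,
  \end{cases}
\]
the two formulas agreeing at $k=p$. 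Here $\alpha^p_0=1$ and $\alpha^p_k>0$ are immediate, and the consecutive ratios are $\alpha^p_{k-1}/\alpha^p_k=B_p^{-1/p}$ for $1\le k\le p$ and $\alpha^p_{k-1}/\alpha^p_k=B_{k-1}/B_k=\mu^*_{p+1}/(A\mu_k)$ for $k>p$.

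The pointwise bound $\alpha^p_k\le B_k$ holds with equality for $k\ge p$, so only $0\le k\le p$ needs work. Since $\log B_k$ is an affine function of $k$ plus $\log M_k$, it is convex by log-convexity of $M$; hence $B_p^{\,k/p}\le B_k$ on $[0,p]$ as soon as $g(k):=\log B_k-\tfrac kp\log B_p$ is non-increasing there (note $g(p)=0$). One computes $g(k)-g(k-1)=\log\tfrac{A\mu_k}{\mu^*_{p+1}}-\tfrac1p\log B_p$, and since $\mu_k$ is increasing this being $\le 0$ reduces to the single inequality $\mu_p^{\,p}\,h_{m}(t_0)\le M_p$. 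Now $h_{m}(t_0)\le m_p\,t_0^{\,p}=m_p/(e\mu^*_p)^p$ straight from the definition of $h_m$, while $M_p/\mu_p^{\,p}=p!\,m_p/(p\mu^*_p)^p$, so the claim is precisely $p^p\le e^p\,p!$, which holds by Stirling. For the summability condition,
\[
  \sum_{k\ge 0}\frac{\alpha^p_k}{\alpha^p_{k+1}}=p\,B_p^{-1/p}+\frac{\mu^*_{p+1}}{A}\sum_{k\ge p+1}\frac1{\mu_k}.
\]
The second term is $\le\tfrac{\mu^*_{p+1}}{A}\cdot\tfrac{B(p+1)}{\mu_{p+1}}=\tfrac BA$ by strong non-quasianalyticity. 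For the first, $h_{m}(t_0)\le m_p/(e\mu^*_p)^p$ and $\mu^*_{p+1}\le D\mu^*_p$ give $B_p\ge A^p\,p!\,e^p(\mu^*_p/\mu^*_{p+1})^p\ge A^p\,p!\,e^p D^{-p}\ge (Ap/D)^p$ (using $p!\ge(p/e)^p$ once more), hence $p\,B_p^{-1/p}\le\tfrac DA$. Altogether the sum is $\le (B+D)/A\le 1$.

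The point I expect to be the crux is recognizing that this elementary two-piece ansatz --- log-linear interpolation of $\alpha^p$ from $\alpha^p_0=1$ up to $\alpha^p_p=B_p$, then following the extremal profile $B_k$ --- already meets all three requirements, and in particular that the cap $\alpha^p_k\le B_k$ is not violated in the interpolation range. This is exactly where the precise shape of the target bound in \eqref{alpha3} is used: the argument $\tfrac1{e\mu^*_p}$ inside $h_m$ supplies the factor $e^p$ that makes $p^p\le e^p p!$ work (the bound would fail with $\tfrac1{\mu^*_p}$ in place of $t_0$), and the normalization by $\mu^*_{p+1}$ turns the tail ratios into a summable multiple of $\tfrac1{\mu_k}$, tailor-made for strong non-quasianalyticity. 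Everything else is routine bookkeeping with log-convexity, moderate growth, and Stirling's inequality.
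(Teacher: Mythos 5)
Your proof is correct and is essentially a reparametrization of the paper's construction: both use a two-piece ansatz split at $k=p$ --- a geometric sequence on $\{0,\dots,p\}$ followed by the profile $(A/\mu^*_{p+1})^kM_k$ (which you normalize by $h_m(1/(e\mu^*_p))$ so the pointwise bound is an equality for $k\ge p$) --- with the tail of the sum controlled by strong non-quasianalyticity, the transition by moderate growth, and the head-range pointwise bound reduced to the Stirling-type inequality $p^p\le e^p\,p!$. The paper simply sets $\alpha^p_k=(2p)^k$ on $\{0,\dots,p\}$ whereas you interpolate geometrically from $1$ up to $B_p$ and verify $\alpha^p_k\le B_k$ via the monotonicity-of-differences argument; both choices are justified by the same three ingredients, so this is a difference of presentation rather than a genuinely different route.
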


\begin{proof}
  We will show that, for a suitable constant $A \ge 1$, the family of sequences
    \begin{equation*}
      \al^p_k :=
      \begin{cases}
          \big(\frac{A}{\mu_{p+1}^*}\big)^k M_k & \text{ if } k >p, \\
          (2p)^k & \text{ if } k \le p,
      \end{cases}
    \end{equation*}
    has the desired properties.
    Since $M$ has moderate growth, $\mu_{k+1} \lesssim  M_k^{1/k}$ and thus,
    for some constant $C \ge 1$,
    \begin{align*}
      \alpha^p_p= 2^pp^p\le  \Big( \frac{2C}{\mu_{p+1}^*}\Big)^p M_p.
    \end{align*}
    So if $A$ is chosen large enough, then
    \begin{align*}
        \sum_{k\ge 0} \frac{\alpha_k^p}{\alpha^p_{k+1}}
        &\le\sum_{k<p}\frac{1}{2p}+ \frac{\mu_{p+1}^*}{A}\sum_{k\ge p}\frac{1}{\mu_{k+1}}
        \le 1,
    \end{align*}
    since $M$ is strongly non-quasianalytic.
    The bound for $\al^p_k$ (in \eqref{alpha3}) is obvious for $k>p$,
    since $h_m\le 1$.
    If $k\le p$, then, for large enough $A$,
    \begin{align*}
    \frac{\al^p_k}{(\frac{A}{\mu_{p+1}^*})^k\,  M_k}
    &=\frac{2^k p^k}{(\frac{A}{\mu_{p+1}^*})^k\,  M_k}
    \le \frac{\mu_{p+1}^k}{(\frac{A}{2})^{k} M_k}
    \le \frac{\mu_{p}^k}{M_k}
    \le \frac{\mu_{p}^p}{M_p} = \frac{\mu_{p}^p}{p! m_p} \le  \frac{(e \mu_p^*)^p}{m_p}.
    \end{align*}
    This implies the statement.
\end{proof}

\begin{theorem}[{\cite[Theorem 2.2]{Bruna80}}] \label{thm:optimalbump}
 	Let $M=(M_k)$ be a non-quasianalytic strongly log-convex weight sequence of moderate growth.
	Then the following conditions are equivalent:
	\begin{enumerate}
		\item $M$ is strongly non-quasianalytic.
		\item There is a constant $C>0$ such that for all $\rh, r, \ep>0$ there exists $\vh \in \cE^{\{M\}}(\R)$ satisfying \eqref{eq:Bruna1} and
		\begin{equation} \label{eq:cutoffRM}
		 		\|\vh\|^M_{\R,\rh} \le  \frac{1}{h_m(C \rh r \ep)}.
		 \end{equation}
	\end{enumerate}
\end{theorem}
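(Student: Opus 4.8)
The plan is to prove the two implications separately. The implication (1) $\Rightarrow$ (2) is the construction of optimal cutoff functions: it will come from \Cref{prop:cutoff} fed with the sequences of \Cref{lem:assforcutoff}, after a judicious choice of the integer parameter there as a function of $\rh r\ep$. The implication (2) $\Rightarrow$ (1) is the extraction of \eqref{eq:snq} from the existence of optimal cutoffs, and this is where I expect the real work to be: it will rest on \Cref{lem:sizenearflat} applied to (a rescaling of) the ``collar part'' of a cutoff, together with the $h_m$/$\Ga_m$ bookkeeping from \Cref{sec:associatedfunctions} and the moderate growth hypothesis.

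\emph{Plan for (1) $\Rightarrow$ (2).} Assume $M$ is strongly non-quasianalytic; then $M$ is strongly regular, so \Cref{lem:assforcutoff} supplies a constant $A\ge 1$ and, for every integer $p\ge1$, a log-convex sequence $(\al^p_k)$ with $\al^p_0=1$, $\sum_{k\ge0}\al^p_k/\al^p_{k+1}\le1$ and $\al^p_k\le (A/\mu^*_{p+1})^k M_k/h_m(1/(e\mu^*_p))$. Given $\rh,r,\ep>0$, I would apply \Cref{prop:cutoff} to $K=[-r,r]\subseteq U=(-(1+\ep)r,(1+\ep)r)$, for which $\on{dist}(K,\R\setminus U)=\ep r$, with the sequence $d_j:=\tfrac12(\ep r)\,\al^p_{j-1}/\al^p_j$; this is positive, decreasing (log-convexity of $\al^p$, for $A$ large) and sums to $\le\tfrac12\ep r<\ep r$, so one obtains $\vh\in C^\infty_c(U)$ with $0\le\vh\le1$, $\vh=1$ near $[-r,r]$ and $|\vh^{(k)}|\le 2^k/(d_1\cdots d_k)=(4/\ep r)^k\al^p_k$ for $k\ge1$. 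Choosing $p$ to be the smallest index with $\mu^*_{p+1}\ge 4A/(\rh r\ep)$ turns this into $|\vh^{(k)}|\le \rh^k M_k/h_m(1/(e\mu^*_p))$, and minimality (together with the base case) gives $\mu^*_p\le 4A/(\rh r\ep)$, hence $1/(e\mu^*_p)\ge \rh r\ep/(4eA)=C\rh r\ep$ with $C:=1/(4eA)$, so that $h_m(1/(e\mu^*_p))\ge h_m(C\rh r\ep)$ by monotonicity of $h_m$; with the trivial bound $\|\vh\|_\R\le 1\le 1/h_m(C\rh r\ep)$ this is exactly \eqref{eq:cutoffRM}. When $\rh r\ep$ is so large that this choice of $p$ fails, the elementary cutoff built from $d_j:=(\ep r)/(2s\mu_j)$, where $s:=\sum_j\mu_j^{-1}<\infty$, already satisfies $\|\vh\|^M_{\R,\rh}\le 1\le 1/h_m(C\rh r\ep)$, and the two regimes overlap because the constant $A$ of \Cref{lem:assforcutoff} already dominates $M_1 s$.

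\emph{Plan for (2) $\Rightarrow$ (1).} The existence of the cutoffs makes $\cE^{\{M\}}$ non-quasianalytic, so $T_\ell:=\sum_{k\ge\ell}\mu_k^{-1}<\infty$ for all $\ell$ by \Cref{thm:DC}. I would apply (2) with $\rh=r=1$ and a free $\ep>0$, obtaining $\vh$ with $\vh=1$ on $[-1,1]$, $\supp\vh\subseteq[-(1+\ep),1+\ep]$ and $\|\vh^{(k)}\|_\R\le M_k/h_m(C\ep)$ for all $k$. The function $F(s):=1-\vh(1+s)$ for $s\ge0$, extended by $0$ for $s<0$, is $C^\infty$, vanishes for $s\le0$, equals $1$ for $s\ge\ep$, and $G:=\tfrac12 h_m(C\ep)\,F$ satisfies $\|G^{(k)}\|_\R\le M_k$ for all $k$; feeding $G$ into \Cref{lem:sizenearflat} and letting $n\to\infty$ yields $|G(s)|\le (2s/T_\ell)^\ell$ for every $\ell\ge1$ and every $0\le s\le\tfrac14 T_\ell$. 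For large $\ell$ I would take $\ep=\ep_\ell$ with $C\ep_\ell=1/\mu^*_{\ell+1}$, so that \eqref{GaProp3} gives $\Ga_m(C\ep_\ell)=\ell$ and $h_m(C\ep_\ell)=m_\ell(C\ep_\ell)^\ell$, and then split into cases. If $\ep_\ell\le\tfrac14 T_\ell$, evaluating the flatness estimate at $s=\ep_\ell$ (where $G(\ep_\ell)=\tfrac12 h_m(C\ep_\ell)$) gives $\tfrac12 m_\ell(C\ep_\ell)^\ell\le (2\ep_\ell/T_\ell)^\ell$, hence $m_\ell^{1/\ell}T_\ell\lesssim 1$; if $\ep_\ell>\tfrac14 T_\ell$, then $T_\ell<4\ep_\ell=4/(C\mu^*_{\ell+1})\le 4/(C\mu^*_\ell)$, i.e.\ $\mu^*_\ell T_\ell\lesssim 1$. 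Since moderate growth gives $\mu_\ell\lesssim M_\ell^{1/\ell}$, hence $\mu^*_\ell=\mu_\ell/\ell\lesssim M_\ell^{1/\ell}/\ell\le m_\ell^{1/\ell}$, both cases yield $\mu^*_\ell T_\ell\lesssim1$ for all large $\ell$, which is \eqref{eq:snq}.

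The main obstacle is the parameter steering in (2) $\Rightarrow$ (1): one must drive the free parameter $\ep$ so that $\Ga_m$ selects exactly the index $\ell$ inside $h_m(C\ep)$ while simultaneously keeping $\ep$ within the window $\le\tfrac14 T_\ell$ demanded by \Cref{lem:sizenearflat}, and the dichotomy above is precisely the device that disposes of the case when that window is too narrow. In (1) $\Rightarrow$ (2) the only delicate points are verifying that $(d_j)$ comes out decreasing — which is why \Cref{lem:assforcutoff} is packaged through a log-convex sequence — and gluing the small- and large-$\rh r\ep$ regimes; both are routine once the correct choice of $p$ is in hand.
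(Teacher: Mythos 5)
Your proof is correct and follows essentially the same strategy as the paper's: for (1)$\Rightarrow$(2) you combine \Cref{lem:assforcutoff} with \Cref{prop:cutoff} after steering the index $p$ against $\rh r\ep$, and for (2)$\Rightarrow$(1) you rescale a one-sided piece of the cutoff, apply \Cref{lem:sizenearflat}, and extract \eqref{eq:snq} via the $h_m$/$\Ga_m$ bookkeeping and moderate growth, with a dichotomy covering the case when the free parameter misses the flatness window. The only cosmetic differences---scaling the $d_j$'s directly instead of reducing to $r=\ep=1$ and composing with an odd reparametrization, and freeing $\ep$ (evaluating $1-\vh$ at the support edge) rather than $r$ (evaluating $\vh$ near the centre, as the paper does with $\psi(x)=\vh(x-2r)h_m(Cr)$)---do not affect the substance, and your explicit two-case split matches the paper's implicit one (``for those $\ell$ that satisfy\ldots'').
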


\begin{proof}
  (1) $\Rightarrow$ (2)
  We first consider the case $r=\ep=1$.
  Let $A$ be the constant from \Cref{lem:assforcutoff}.
  Fix $0<\eta\le 2 A$.
  Since $\mu_k^* \nearrow \infty$,
there is an integer $p \ge 1$ such that
\begin{equation}\label{testfunctionconstructionequ1}
	\frac{2A}{\mu_{p+1}^*} < \et \le  \frac{2A}{\mu_p^*}.
\end{equation}
By \Cref{lem:assforcutoff}, we may
apply \Cref{prop:cutoff} (to $d_{k+1}=\frac{\alpha^p_k}{\alpha^p_{k+1}}$ and $K =[-1,1]$) and
get $\vh =\vh_{\et} \in C^\infty(\R)$ with
$0 \le \vh \le 1$, $\vh|_{[-1,1]} = 1$, $\on{supp}\vh \subseteq [-2,2]$, and
\begin{align*}
  |\vh^{(k)}(t)|
    \le 2^{k}\alpha_k^p
  	\le \frac{\eta^k M_k}{h_m(\frac{\et}{6A})}, \quad k \ge 1,
\end{align*}
by \eqref{alpha3} and \eqref{testfunctionconstructionequ1}.
For $\eta>2A$ we put $\vh_{\eta}:=\vh_{2A}$; then since $h_m \le1$,
\begin{align*}
|\vh^{(k)}(t)| &\le \frac{(2A)^k M_k}{h_m(\frac{2A}{6A})}
  \le \frac{1}{h_m(\frac{1}{3})}  \frac{\eta^k M_k}{h_m(\frac{\et}{6A})}.
\end{align*}
If $\de:= 1/h_m(\frac{1}{3})$ then for every $\eta>0$,
\begin{equation*}
|\vh^{(k)}(t)| \le \frac{(\de \eta)^k M_k}{h_m(\frac{\et}{6A})}.
\end{equation*}
This implies (2), for $r=\ep=1$.
For the general case it suffices to compose $\vh_{\rh r \ep}$
with an
odd smooth function $\th : \R \to \R$ satisfying $\th([-r,r]) = [-1,1]$ and
$\th(x) = \frac{x + r\ep -r}{r\ep}$ for $x \ge r$.

(2) $\Rightarrow$ (1)
	Let $\vh$ be as in (2) for given $r>0$ and $\rh = \ep =1$.
	Then the function $\ps(x):= \vh(x-2r) \cdot h_m(Cr)$ satisfies $\|\ps^{(k)}\|_\R \le M_k$ for all $k$ and vanishes on $(-\infty,0]$.
	\Cref{lem:sizenearflat} implies
	\[
		h_m(Cr) = |\ps(r)| \le \Big( \frac{2r}{\sum_{k \ge \ell} \frac{1}{\mu_k}} \Big)^\ell, \quad \ell \ge 1,
	\]
	provided that $r< \frac{1}{4}\sum_{k \ge \ell} \frac{1}{\mu_k}$.
  For those $\ell$ that satisfy $\frac{1}{C \mu_\ell^*} < \frac{1}{4}\sum_{k \ge \ell} \frac{1}{\mu_k}$,
  we obtain
  \[
			h_m(\tfrac{1}{\mu^*_\ell}) \le \Big( \frac{\tfrac{2}{C}\tfrac{1}{\mu^*_\ell}}{\sum_{k \ge \ell} \frac{1}{\mu_k}} \Big)^\ell.
	\]
  On the other hand,
  \[
		h_m(\tfrac{1}{\mu^*_\ell}) \stackrel{\eqref{GaProp3}}{=} m_{\ell-1} (\tfrac{1}{\mu^*_\ell})^{\ell-1} \ge H^{\ell-1}
	\]
  for some constant $H>0$, since $m$ has moderate growth. So we may conclude that $\sum_{k \ge \ell} \frac{1}{\mu_k} \lesssim \frac{1}{\mu^*_\ell}$
  for all $\ell$.
\end{proof}

\subsection{Optimal cutoff functions of Beurling type}

There exist also optimal cutoff functions of Beurling type. They will play a crucial role in \Cref{sec:extentionop}.

\begin{theorem}
  \label{thm:cutoffBM}
  Let $M$ be a strongly regular weight sequence.
  Then there exist functions $(\vh_r)_{r>0}$ in $\cE^{(M)}(\R)$ with the following properties:
  \begin{enumerate}
    \item $0 \le \vh_r \le 1$ for all $r>0$.
    \item $\vh_r|_{[-r,r]} = 1$ and $\supp \vh_r \subseteq [- \frac{9}8r,\frac{9}8r]$ for all $r>0$.
    \item For each $\rh>0$ there exist constants $A_\rh>0$ and $b_\rh>0$, such that
    \begin{equation} \label{eq:cutoffBM}
      \|\vh_r\|^M_{\R,\rh} \le  \frac{A_\rh}{h_m(b_\rh r)}, \quad r>0.
     \end{equation}
  \end{enumerate}
\end{theorem}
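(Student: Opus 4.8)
The plan is to obtain $\vh_r$ from \Cref{prop:cutoff}, applied in dimension one to a gauge sequence that depends on $r$ only through a single \emph{breakpoint} $p=p(r)$, and then to estimate the seminorms $\|\vh_r\|^M_{\R,\rh}$ by hand, matching them against $1/h_m(b_\rh r)$.

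\emph{Construction.} Since $M$ is strongly regular it is strongly non-quasianalytic, of moderate growth, and strongly log-convex. By \Cref{cor:setupcutoff} fix a small $\ep\in(0,1)$ with $\mu_k/k^\ep$ increasing to $\infty$ and \eqref{eq:cutoffk} valid, and note that moderate growth makes the consecutive ratios $\mu_{k+1}^*/\mu_k^*$ bounded by a constant $C$. Fix $b_0>0$; for $r>0$ set $p:=\Ga_m(b_0r)$, so that $\mu_p^*\le 1/(b_0r)<\mu_{p+1}^*\le C\mu_p^*$. Define $(\al^p_k)_{k\ge0}$ essentially as in \Cref{lem:assforcutoff} but with a Beurling-modified tail: $\al^p_0:=1$, $\al^p_k:=(2p)^k$ for $1\le k\le p$, and $\al^p_k:=(A/\mu_{p+1}^*)^k M_k\,\big(p!\,p^{k-p}/k!\big)^\ep$ for $k>p$, where $A$ is a sufficiently large constant. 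Using \eqref{eq:cutoffk} and the monotonicity of $\mu_k/k^\ep$ one checks that $(\al^p_k/\al^p_{k+1})$ is decreasing and $\sum_{k\ge0}\al^p_k/\al^p_{k+1}\le1$. Put $d_i:=\tfrac r{16}\,\al^p_{i-1}/\al^p_i$ (so $\sum_id_i<r/8$) and apply \Cref{prop:cutoff} with $K=[-r,r]$ and $U=(-\tfrac98r,\tfrac98r)$: this gives $\vh_r\in C_c^\infty((-\tfrac98r,\tfrac98r))$ with $0\le\vh_r\le1$, $\vh_r\equiv1$ near $[-r,r]$, and $|\vh_r^{(k)}|\le C^k/(d_1\cdots d_k)=(16C/r)^k\al^p_k$. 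Properties (1) and (2) hold by construction, and the super-geometric growth of the factor $(k!/p^k)^\ep$ in the tail guarantees $\|\vh_r\|^M_{\R,\rh}<\infty$ for \emph{every} $\rh>0$, hence $\vh_r\in\cE^{(M)}(\R)$. This tail correction, available precisely because $M$ is strongly non-quasianalytic, is what separates the Beurling construction from the naive one of taking a Roumieu-optimal cutoff of a smaller weight sequence, which would yield only a non-optimal bound.

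\emph{The seminorm estimate.} Since $\rh\mapsto\|\cdot\|^M_{\R,\rh}$ is decreasing we may assume $0<\rh\le1$; the remaining values of $\rh$ are handled by monotonicity with a fixed choice of constants. Split $\|\vh_r\|^M_{\R,\rh}=\sup_k|\vh_r^{(k)}|/(\rh^kM_k)$ at $k=p$. For $1\le k\le p$, using that $r\mu_p^*$ lies between two positive constants and $p\mu_p^*=\mu_p$, one gets $|\vh_r^{(k)}|/(\rh^kM_k)\lesssim(E\mu_p/\rh)^k/M_k$ with a constant $E\ge1$; for $\rh\le1$ these terms increase in $k$, so the supremum is attained at $k=p$ and is $\lesssim(E\mu_p/\rh)^p/M_p\le(E'/\rh)^p$, because moderate growth yields $\mu_p^p/M_p\le D^p$ (equivalently, $m_p$ and $(\mu_p^*)^p$ agree up to a geometric factor, via $\mu_p^*/\mu_j^*\le C^{\log(p/j)}$). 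For $k>p$, inserting the tail bound and using again that $r\mu_{p+1}^*$ is bounded between positive constants gives $|\vh_r^{(k)}|/(\rh^kM_k)\lesssim D(\rh)^k\big(p!\,p^{k-p}/k!\big)^\ep$ with $D(\rh)$ comparable to $1/\rh$; maximizing over $k>p$ (by Stirling the maximum is near $k=p\,D(\rh)^{1/\ep}$) yields $\lesssim\exp\!\big(\ep p\,(D(\rh)^{1/\ep}-1)\big)$. Altogether there is a constant $c(\rh)$, depending only on $\rh$, with $\|\vh_r\|^M_{\R,\rh}\le\exp(c(\rh)\,p(r))$ for all $r>0$.

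\emph{Matching, and the main obstacle.} To finish, we compare with $1/h_m(b_\rh r)$. By \eqref{counting2}, $p(r)=\Ga_m(b_0r)\le\Si_m(1/(b_0r))$ for $r$ small (the only nontrivial range), where $\Si_m(u)=|\{k:\mu_k^*\le u\}|$; since $\Si_m$ is increasing, the integral formula \eqref{omM} gives, for any $b_\rh\le b_0$,
\[
	-\log h_m(b_\rh r)=\om_m\!\big(1/(b_\rh r)\big)=\int_0^{1/(b_\rh r)}\frac{\Si_m(u)}{u}\,du\;\ge\;\Si_m\!\big(1/(b_0r)\big)\,\log\frac{b_0}{b_\rh}\;\ge\;p(r)\,\log\frac{b_0}{b_\rh}.
\]
Choosing $b_\rh:=b_0e^{-c(\rh)}$ then makes $-\log h_m(b_\rh r)\ge c(\rh)\,p(r)$, so $\|\vh_r\|^M_{\R,\rh}\le1/h_m(b_\rh r)$ for small $r$; absorbing the bounded range of large $r$ (where $p(r)$ is bounded and $1/h_m(b_\rh r)\ge1$) and all implicit constants into one $A_\rh$ independent of $r$ yields \eqref{eq:cutoffBM}. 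The step I expect to be the main obstacle is precisely this matching together with the estimate feeding it: one must control $\|\vh_r\|^M_{\R,\rh}$, a supremum over \emph{all} orders of derivatives, sharply enough to see that as $r\to0$ it grows only at the exponential rate $\exp(c(\rh)\,p(r))$ with $c(\rh)$ independent of $r$, and then recognize this as (at most) the exponential rate of $1/h_m(b_\rh r)$. Getting the constants to line up \emph{uniformly in $r$} is exactly what forces the breakpoint to depend on $r$ alone and the $\ep$-tail correction to be present, and it leans on the relations between $h_m$, $\Ga_m$, and $\Si_m$ underlying \Cref{claim1}.
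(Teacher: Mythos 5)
Your proposal is correct and follows essentially the same route as the paper's proof: the same $r$-dependent breakpoint $p(r)\approx\Ga_m(\mathrm{const}\cdot r)$, the same $\ep$-modified gauge sequence coming from \Cref{cor:setupcutoff} fed into \Cref{prop:cutoff}, and the same splitting of the seminorm estimate at the breakpoint using moderate growth for $j\le p$ and the $\ep$-tail for $j>p$. The only (harmless) divergence is the final comparison with $h_m$: the paper keeps the bound in the form $\rh^{j}M_{j}\,(b_\rh r)^{-k}m_k^{-1}$ and concludes directly from $h_m(b_\rh r)\le m_k(b_\rh r)^k$, whereas you aggregate everything into $\exp(c(\rh)\,p(r))$ and invoke the integral representation \eqref{omM} of $\om_m$ via the counting function $\Si_m$ — both routes produce a $b_\rh$ exponentially small in $(1/\rh)^{1/\ep}$ and are equally valid.
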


The essential difference to \Cref{thm:optimalbump} is that the cutoff functions may be taken independent of $\rh$.
The dependence of the argument of $h_m$ on $\rh$ in \eqref{eq:cutoffRM} is however simpler than in \eqref{eq:cutoffBM};
the linear dependence in \eqref{eq:cutoffRM} will be used in the proof of \Cref{thm:WRoumieu}.

\begin{proof}
We may assume the setup of \Cref{cor:setupcutoff}, in particular,
there is a constant $A>0$ such that \eqref{eq:cutoffk} holds.
Let $0<r \le 8A$ be fixed and take an integer $k=k(r) \ge 1$ such that
\[
  \frac{8A}{\mu_{k}^*} < r \le \frac{8A}{\mu_{k-1}^*}.
\]
By \Cref{prop:cutoff} (applied to $K=[-r,r]$ and $\sum d_j$ the left-hand side of \eqref{eq:cutoffk}),
there exists $\vh=\vh_r \in C^\infty(\R)$
with
$0 \le \vh \le 1$, $\vh|_{[-r,r]} = 1$, $\on{supp} \vh \subseteq [-\frac{9}8r,\frac{9}8r]$, and
\begin{align*}
  |\vh_r^{(j)}| \le
  \begin{cases}
    2^j \mu_k^j & \text{ if } 0 \le j \le k,
    \\
    2^j \mu_k^k \frac{M_j}{M_k} \Big( \frac{k^{j-k} k!}{j!}\Big)^\ep & \text{ if } j>k.
  \end{cases}
\end{align*}
Since $m$ has moderate growth, we have $\mu^*_k \le C \mu^*_{k-1}$ for some $C \ge 1$. Thus
\begin{equation*}
  \frac{\mu_k^k}{M_k} = \frac{\mu_k^k}{k!\,m_k} \le \frac{(e\mu_k^*)^k}{m_k} \le \frac{(Ce\mu_{k-1}^*)^k}{m_k}
  \le \Big(\frac{8ACe}{r}\Big)^k \frac{1}{m_k}.
\end{equation*}
Let $\rh \in (0,1)$ be given.
If $j \le k$, then $\mu_k^j \le \mu_k^k \frac{M_j}{M_k}$, and hence
\begin{align*}
   |\vh_r^{(j)}| \le 2^j M_j \Big(\frac{8ACe}{r}\Big)^k \frac{1}{m_k}
   \le \rh^j M_j \Big(\frac{16ACe}{r\rh}\Big)^k \frac{1}{m_k}.
\end{align*}
If $j> k$, then $\frac{k^{j-k} k!}{j!} \le \frac{k^{j-k}}{(j-k)!}$, and so
\begin{align*}
   |\vh_r^{(j)}| &\le 2^j M_j \Big(\frac{8ACe}{r}\Big)^k \frac{1}{m_k}  \Big( \frac{k^{j-k} }{(j-k)!}\Big)^\ep
   \\
   &= \rh^j M_j \Big(\frac{16ACe}{r \rh}\Big)^k \frac{1}{m_k}  \Big( \frac{(k(\frac{2}{\rh})^{1/\ep})^{j-k} }{(j-k)!}\Big)^\ep
   \\
   &\le \rh^j M_j \Big(\frac{16ACe}{r \rh}\Big)^k \frac{1}{m_k} e^{\ep k (\frac{2}{\rh})^{1/\ep}}
   \\
   &= \rh^j M_j  \frac{1}{(b_\rh r)^k m_k}, \quad \text{ where }\quad b_\rh := \frac{\rh}{16ACe^{1+ \ep (\frac{2}{\rh})^{1/\ep}}}.
\end{align*}
It follows that, for all $0 < r \le 8A$ and all $\rh >0$,
\begin{align*}
   \|\vh_r\|^M_{\R,\rh} \le  \frac{1}{h_m(b_\rh r)};
\end{align*}
for $\rh\ge 1$ the estimates are much simpler.
For $r> 8A$ we apply \Cref{prop:cutoff} to $K=[-r,r]$ and \eqref{eq:cutoffk} with $k=1$.
This gives $\vh_r$ having the same bounds as $\vh_{8A}$. Then
\[
  \|\vh_r\|^M_{\R,\rh} \le  \frac{1}{h_m(b_\rh 8A)} \le  \frac{1}{h_m(b_\rh 8A)}  \frac{1}{h_m(b_\rh r)} =: \frac{A_\rh}{h_m(b_\rh r)},
\]
since $h_m \le 1$.
\end{proof}

\subsection{Whitney cubes}

We denote by $d_A(z) =d(z,A) = \inf\{|z-y| : y \in A\}$ the Euclidean distance of $z \in \R^n$ to some set $A \subseteq \R^n$.

\begin{proposition}[{\cite[pp.167--170]{Stein70}, \cite{Bruna80}}] \label{prop:Wcubes}
	For each closed non-empty set $A \subseteq \R^n$ there exists a countable collection of closed cubes $(Q_j)_{j \ge 1}$ with sides parallel to the coordinate
	axes and the following properties.
	\begin{enumerate*}
		\item $\R^n \setminus A = \bigcup_{j\ge 1} Q_j$.
		\item The interiors of the cubes $Q_j$ are pairwise disjoint.
		\item For each $j \ge 1$ we have $$\on{diam} Q_j \le \on{dist} (Q_j,A) \le 4 \on{diam} Q_j.$$
		\item For each $j\ge 1$ let $Q_j^*$ be image of $Q_j$ under the dilation by the factor $\tfrac{9}{8}$ with center the center of $Q_j$.
		There exist constants $0< b_0 \le 1 \le B_0$ such that for all $j \ge 1$ and all $z \in Q_j^*$,
		\[
			b_0 \on{diam} Q_j \le d_A(z) \le B_0 \on{diam} Q_j.
		\]
		\item For each $j\ge 1$ the number of $i \ge 1$ with $Q_i^* \cap Q_j^* \ne \emptyset$ is bounded by $12^{2n}$.
		\item There exist constants $0< b_1 \le 1 \le B_1$ such that for all $i,j\ge 0$ with $Q_i^* \cap Q_j^* \ne \emptyset$ we have
		\[
			b_1 \on{diam} Q_j \le \on{diam} Q_i \le B_1 \on{diam} Q_j.
		\]
	\end{enumerate*}
	The constants $b_0,B_0,b_1,B_1$ are independent of $A$.
\end{proposition}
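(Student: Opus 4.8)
The plan is to take the $Q_j$ to be the maximal cubes of the classical Whitney dyadic selection and then to read off (1)--(6) from elementary distance estimates. Write $\Omega := \R^n \setminus A$, which is open; if $\Omega = \emptyset$ the empty collection works, so assume $\Omega \ne \emptyset$. For $k \in \Z$ let $\mathcal D_k$ be the family of half-open dyadic cubes $\prod_{i=1}^n [2^{-k}\ell_i, 2^{-k}(\ell_i+1))$, $\ell \in \Z^n$; each $\mathcal D_k$ partitions $\R^n$, every $Q \in \mathcal D_k$ lies in a unique member of $\mathcal D_{k-1}$ (its \emph{parent}, of twice the side), and for $Q \in \mathcal D_k$ one has $\on{diam} Q = \sqrt n\,2^{-k}$ and $\on{dist}(Q,A) = \on{dist}(\overline Q,A)$. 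Call $Q$ \emph{admissible} if $\on{diam} Q \le \on{dist}(Q,A)$.

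\textbf{Selection.} First I would check that every $x \in \Omega$ lies in a maximal admissible cube. With $d := d_A(x) > 0$, any $Q \in \mathcal D_k$ containing $x$ with $\sqrt n\,2^{-k} \le d/2$ is admissible, since $|y - a| \ge |x - a| - |x - y| \ge d - \on{diam} Q \ge \on{diam} Q$ for $y \in Q$, $a \in A$; thus $x$ lies in arbitrarily small admissible cubes. Conversely, if $x \in Q$ and $Q$ is admissible then $\on{diam} Q \le \on{dist}(Q,A) \le d_A(x) = d$, so the admissible cubes containing $x$ — totally ordered by inclusion, being nested across scales, and of bounded diameter — have a largest element. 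Let $(Q_j)_{j\ge1}$ enumerate the closures of all maximal admissible dyadic cubes, a countable family.

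\textbf{Properties (1)--(3).} An admissible cube $Q$ satisfies $\on{dist}(Q,A) \ge \on{diam} Q > 0$, hence $Q \subseteq \Omega$; with the selection step this gives $\bigcup_j Q_j = \Omega$, which is (1). Distinct $Q_i, Q_j$ have disjoint interiors, for otherwise the underlying dyadic cubes would be nested, one strictly inside the other, contradicting maximality of the inner one (the outer being admissible); this is (2). For (3), admissibility is the left-hand inequality; for the right-hand one, the parent $Q_j'$ of $Q_j$ fails to be admissible by maximality, and since any two points of $Q_j'$ lie within $\on{diam} Q_j' = 2\on{diam} Q_j$ of each other while $Q_j \subseteq Q_j'$, we get $\on{dist}(Q_j',A) \ge \on{dist}(Q_j,A) - 2\on{diam} Q_j$, whence
\[
  2\on{diam} Q_j = \on{diam} Q_j' > \on{dist}(Q_j',A) \ge \on{dist}(Q_j,A) - 2\on{diam} Q_j,
\]
i.e.\ $\on{dist}(Q_j,A) < 4\on{diam} Q_j$.

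\textbf{Properties (4)--(6), and the hard part.} The dilation by $\tfrac98$ about the centre of $Q_j$ pushes each face out by $\tfrac1{16}$ of the side of $Q_j$, so each $z \in Q_j^*$ has a nearest point $w \in Q_j$ with $|z-w| \le \tfrac1{16}\on{diam} Q_j$; since $\on{dist}(Q_j,A) \le d_A(w) \le \on{dist}(Q_j,A) + \on{diam} Q_j$ and, by (3), $\on{diam} Q_j \le \on{dist}(Q_j,A) \le 4\on{diam} Q_j$, the triangle inequality gives $\tfrac{15}{16}\on{diam} Q_j \le d_A(z) \le \tfrac{81}{16}\on{diam} Q_j$, which is (4) with $b_0 = \tfrac{15}{16}$, $B_0 = \tfrac{81}{16}$. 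Evaluating (4) at a common point of $Q_i^* \cap Q_j^*$ forces $\on{diam} Q_i/\on{diam} Q_j \in [b_0/B_0,\,B_0/b_0]$, which is (6) with $b_1 = b_0/B_0$, $B_1 = B_0/b_0$. For (5), fix $j$: by (6) any $Q_i$ with $Q_i^* \cap Q_j^* \ne \emptyset$ has side a dyadic multiple of that of $Q_j$ lying in a fixed bounded ratio, hence belongs to one of finitely many meshes; at each such scale the relevant $Q_i$ have pairwise disjoint interiors, all lie in a fixed dilate of $Q_j$, and each has volume comparable to $\on{diam}(Q_j)^n$, so a packing count bounds their number, and summing over the boundedly many scales yields a bound depending only on $n$; bookkeeping the constants one may take $12^{2n}$. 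All constants above depend only on $n$ and the fixed factor $\tfrac98$, never on $A$. The only steps that require genuine care are the choice of dilation factor — it must be close enough to $1$ that $Q_j^*$ stays in the zone where $d_A$ is comparable to $\on{diam} Q_j$ — and the explicit overlap estimate in (5), where the comparability (6) has to be combined with a careful volume/packing argument to extract the constant $12^{2n}$ rather than merely ``finitely many''.
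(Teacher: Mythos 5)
The paper does not reprove this proposition; it is cited to Stein~\cite[pp.167--170]{Stein70} and Bruna~\cite{Bruna80}, and your argument is a correct rendition of that standard Whitney construction. Taking the $Q_j$ to be the closures of the \emph{maximal} admissible dyadic cubes is an equivalent, slightly cleaner packaging of Stein's two-step procedure (preselect cubes by a distance criterion, then prune to the maximal ones): the selection lemma, the disjointness of interiors via nestedness of dyadic cubes, and the parent-cube argument for the upper bound $\on{dist}(Q_j,A) < 4\on{diam} Q_j$ are all exactly right, and the numerical constants $b_0=\tfrac{15}{16}$, $B_0=\tfrac{81}{16}$, $b_1=\tfrac{5}{27}$, $B_1=\tfrac{27}{5}$ do check out from the $\tfrac{1}{16}\on{diam}Q_j$ overshoot of the $\tfrac98$-dilate combined with (3).

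The one place you should not leave at the level of ``bookkeeping one may take $12^{2n}$'' is (5). Your outline is the right one, but for a self-contained proof you need to actually run it: (6) forces $\on{diam}Q_i/\on{diam}Q_j\in[\tfrac{5}{27},\tfrac{27}{5}]$, so the dyadic ratio $2^m$ satisfies $m\in\{-2,\dots,2\}$, giving five scales; at a fixed scale, the condition $Q_i^*\cap Q_j^*\ne\emptyset$ confines $Q_i$ to a cube of side at most $\big(\tfrac98(1+2^m)+2^m\big)s_j$ centred at the centre of $Q_j$ (where $s_j$ is the side of $Q_j$), and a volume count against $\mathrm{vol}(Q_i)=(2^m s_j)^n$ bounds the number of such $Q_i$; summing the five counts then has to come in under $12^{2n}=144^n$. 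It does, but the arithmetic must be exhibited rather than asserted, since the proposition claims the explicit constant $12^{2n}$ rather than just ``a constant depending only on $n$''. Apart from that bookkeeping, the proposal is complete.
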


A collection $(Q_j)_{j\ge 1}$ with these properties is called a \emph{family of Whitney cubes for $A$.}

\subsection{Optimal partitions of unity}

Having optimal cutoff functions, it is a standard procedure to construct
optimal partitions of unity subordinate to a given family of Whitney cubes.
We begin with the Roumieu case.

\begin{proposition}\label{Proposition6matrix}
Let $M$ be a strongly regular weight sequence.
Let $A \subseteq \R^n$ be a non-empty closed set and $(Q_j)_{j \ge 1}$ a family of Whitney cubes for $A$.
Then there exists
$C_1\ge 1$ such that for all $\ep>0$ there is a family of $C^\infty$-functions
$(\vh_{j,\ep})_{j \ge 1}$ satisfying
\begin{enumerate}
\item $0\le\vh_{j,\ep}\le 1$ for all $j \ge 1$,
\item $\on{supp}\vh_{j,\ep}\subseteq Q^*_j$ for all $j\ge 1$,
\item $\sum_{j\ge 1}\vh_{j,\ep}(x)=1$ for all $x \in \R^n\setminus A$,
\item for all $j\ge 1$, $\be \in\N^n$, and $x\in\R^n\setminus A$,
\begin{equation*}
  |\vh^{(\be)}_{j,\ep}(x)| \le \frac{\ep^{|\be|} M_{|\be|}}{h_m(C_1\ep\, d_A(x))}.
\end{equation*}
\end{enumerate}
\end{proposition}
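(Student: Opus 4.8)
The plan is to build, for each Whitney cube $Q_j$, a tensor-product cutoff out of the one-dimensional optimal bump functions supplied by \Cref{thm:optimalbump} (whose hypotheses hold since $M$ is strongly regular), and then to assemble these into a partition of unity by a \emph{telescoping product} $\vh_j=\Psi_j\prod_{i<j}(1-\Psi_i)$ rather than by dividing through $\sum_i\Psi_i$. The point of avoiding the division is that it would destroy the sharp bound in (4), whereas the telescoping product only ever multiplies a bounded number of well-controlled factors.

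Concretely, I would fix $\ep>0$, set $P:=12^{2n}$ and $\rho:=\ep/P$, and for each $j$ apply \Cref{thm:optimalbump} with seminorm parameter $\rho$, with $r$ equal to half the side length of $Q_j$, and with collar width $\tfrac18$ (so that $(1+\tfrac18)r$ is half the side length of $Q_j^*$), obtaining $\th_j\in\cE^{\{M\}}(\R)$ with $0\le\th_j\le1$, $\th_j\equiv1$ on $[-r,r]$, $\on{supp}\th_j$ contained in the $\tfrac98$-dilate of $[-r,r]$, and $\|\th_j\|^M_{\R,\rho}\le 1/h_m(c\rho\,\on{diam}Q_j)$ for a constant $c>0$ independent of $j$ and $\ep$. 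Tensorizing, $\Psi_j(x):=\prod_{i=1}^n\th_j(x_i-c_{j,i})$, with $c_j$ the centre of $Q_j$, is a $C^\infty$ cutoff with $0\le\Psi_j\le1$, $\Psi_j\equiv1$ on $Q_j$, $\on{supp}\Psi_j\subseteq Q_j^*$, and, by \Cref{lem:atinfinity}(3) applied coordinatewise, $\|\Psi_j\|^M_{\R^n,\rho}\le\bigl(1/h_m(c\rho\,\on{diam}Q_j)\bigr)^n$. Here I would use moderate growth of $m$: iterating \eqref{hmg} about $\log_2 n$ times yields a constant $D=D(n)\ge1$ with $h_m(t/D)\le h_m(t)^n$, hence $\|\Psi_j\|^M_{\R^n,\rho}\le 1/h_m(c_0\rho\,\on{diam}Q_j)$ with $c_0:=c/D$.

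I would then enumerate the cubes and put $\vh_{j,\ep}:=\Psi_j\prod_{i<j}(1-\Psi_i)$, a finite product. All factors take values in $[0,1]$, giving (1), and $\on{supp}\vh_{j,\ep}\subseteq\on{supp}\Psi_j\subseteq Q_j^*$, giving (2). The elementary identity $\sum_{j=1}^N a_j\prod_{i<j}(1-a_i)=1-\prod_{j=1}^N(1-a_j)$ gives $\sum_{j\le N}\vh_{j,\ep}=1-\prod_{j\le N}(1-\Psi_j)$, and since by \Cref{prop:Wcubes}(1) every $x\in\R^n\setminus A$ lies in some $Q_{j_0}$, so that $\Psi_{j_0}(x)=1$, the product vanishes for $N\ge j_0$; this is (3). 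For (4) I would fix $j$ and use \Cref{prop:Wcubes}(4) (so $Q_j^*\cap A=\emptyset$ and $\on{diam}Q_j\simeq d_A(x)$ for $x\in Q_j^*$): on $Q_j^*$ the only factors $1-\Psi_i$ with $i<j$ not identically $1$ are those with $Q_i^*\cap Q_j^*\ne\emptyset$, of which there are at most $P$ by \Cref{prop:Wcubes}(5), each with $\on{diam}Q_i\gtrsim\on{diam}Q_j$ by \Cref{prop:Wcubes}(6). Thus on $Q_j^*$ the function $\vh_{j,\ep}$ is a product of at most $P$ functions each with $\|\cdot\|^M_{\R^n,\rho}\le 1/h_m(c_1\rho\,\on{diam}Q_j)$ (using $h_m\le1$ for the constant-factor bounds and monotonicity of $h_m$). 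The Leibniz rule, \Cref{lem:atinfinity}(3), and $\sum_{\be^{(1)}+\dots+\be^{(P)}=\be}\binom{\be}{\be^{(1)},\dots,\be^{(P)}}=P^{|\be|}$ then give $|\vh_{j,\ep}^{(\be)}(x)|\le\bigl(1/h_m(c_1\rho\,\on{diam}Q_j)\bigr)^P(P\rho)^{|\be|}M_{|\be|}$ on $Q_j^*$; since $P\rho=\ep$, one more application of moderate growth ($h_m(t/D')\le h_m(t)^P$) together with $\on{diam}Q_j\gtrsim d_A(x)$ turns this into $|\vh_{j,\ep}^{(\be)}(x)|\le\ep^{|\be|}M_{|\be|}/h_m(C_1\ep\,d_A(x))$. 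Off $Q_j^*$ the inequality is trivial, since $\vh_{j,\ep}$ vanishes on the open set $\R^n\setminus Q_j^*$ and hence so do all its derivatives there. The constant $C_1$ is built only from the constant of \Cref{thm:optimalbump}, the moderate-growth constant of $m$, and the universal Whitney-cube constants, so it is independent of $A$ and of $\ep$.

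The main obstacle is obtaining (4) with a \emph{single} power of $h_m$ in the denominator and coefficient exactly $1$ in front of $\ep^{|\be|}M_{|\be|}$. The naive partition of unity $\Psi_j/\sum_i\Psi_i$ does not achieve this: differentiating $1/\sum_i\Psi_i$ (by Fa\`a di Bruno with $t\mapsto 1/t$, whose derivatives carry $k!$-factors) produces powers $h_m^{-k}$ with $k$ growing like $|\be|$, and moderate growth cannot repair an \emph{unbounded} power of $h_m^{-1}$. The telescoping product fixes this by capping the number of surviving factors at the combinatorial constant $P=12^{2n}$, so that only the \emph{fixed} power $h_m^{-P}$ appears, which \eqref{hmg} (iterated boundedly many times) compresses to $h_m^{-1}$ at the cost of shrinking $C_1$. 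The remaining subtleties — getting coefficient $1$ by pre-scaling $\rho=\ep/P$ so as to absorb the multinomial factor $P^{|\be|}$, keeping all constants dimension-dependent but $A$-independent, and trading $\on{diam}Q_j$ for $d_A(x)$ — are routine bookkeeping with \Cref{prop:Wcubes}.
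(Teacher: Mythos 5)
Your proof is correct and follows essentially the same route as the paper: build cutoffs from \Cref{thm:optimalbump}, assemble them into a partition of unity by the telescoping product $\vh_{j,\ep}=\Psi_j\prod_{i<j}(1-\Psi_i)$, use \Cref{prop:Wcubes}(5) to cap the number of surviving factors at $12^{2n}$, and then invoke \Cref{claim1} (iterated moderate growth, i.e.\ \eqref{hmg}) to compress $h_m^{-12^{2n}}$ to $h_m^{-1}$ at the cost of shrinking the constant in the argument. The one place you add detail the paper leaves implicit is the tensorization step from $\R$ to $\R^n$ (the paper applies \Cref{thm:optimalbump}, which is one-dimensional, and immediately writes down an $n$-dimensional cutoff with a single power of $h_m$); your extra application of \eqref{hmg} to absorb the resulting $h_m^{-n}$ is the right way to justify this and is worth spelling out.
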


\begin{proof}
  By \Cref{thm:optimalbump}, there is a constant $C >1$ such that for each $\rh>0$ and each $r>0$
  there exist functions $\ch_{\rh,r}$ such that
  \begin{itemize}
  \item $0\le\ch_{\rh,r}(x)\le 1$ for all $x\in\R^n$,
  \item $\ch_{\rh,r}(x)=1$ for all $x\in [-r,r]^n$,
  \item $\ch_{\rh,r}(x)=0$ for all $x\not\in [-\frac{9}{8}r,\frac{9}{8}r]^n$,
  \item for all $\be \in\N^n$ and all $x\in\R^n$,
  \begin{equation*}
    |\ch^{(\be)}_{\rh,r}(x)|\le \frac{\rh^{|\be|} M_{|\be|}}{h_m(C\rh r)}.
  \end{equation*}
  \end{itemize}
  Let $x_j$ be the center and $2 s_j$ the sidelength of $Q_j$. Set
   \[
      \ps_{j,\ep}(x) := \ch_{\frac{\ep}{12^{2n}},s_j}(x-x_j).
   \]
  Then $\ps_{j,\ep}$ is $1$ on $Q_j$ and $0$ outside $Q_j^*$.
  Moreover,
  \begin{align*}
   |\ps^{(\be)}_{j,\ep}(x)| \le \frac{(\frac{\ep}{12^{2n}})^{|\be|} M_{|\be|}}{h_{m}(  \frac{C\ep s_j}{12^{2n}} )}
   \le \frac{(\frac{\ep}{12^{2n}})^{|\be|} M_{|\be|}}{h_{m}(C_0 \ep\, d_A(x))}, \quad \text{ for } C_0 := \frac{C}{2 \sqrt{n} 12^{2n} B_0},
  \end{align*}
  since $d_A(x) \le B_0 \on{diam} Q_j = 2 \sqrt{n} B_0 s_j$ if $x \in Q^*_j$, by \Cref{prop:Wcubes}(4).
  Now define $(\vh_{j,\ep})_{j \ge 1}$ by
 \begin{equation} \label{eq:defpartition}
   \vh_{1,\ep}:=\psi_{1,\ep}, \quad \vh_{j,\ep}:=\ps_{j,\ep} \prod_{k=1}^{j-1}(1-\psi_{k,\ep}), ~ j\ge 2.
 \end{equation}
 It is clear that $0\le\vh_{j,\ep}\le 1$ and $\on{supp}\vh_{j,\ep}\subseteq Q^*_j$ for all $j \ge 1$.
 To see (3) observe that, for all $j \ge 1$,
 \[
   \vh_{1,\ep} + \cdots + \vh_{j,\ep} = 1 - (1-\ps_{1,\ep}) \cdots (1-\ps_{j,\ep}).
 \]
 If $x \in \R^n \setminus A$, then $x \in Q_{i_x}$ for some $i_x\ge 1$ and thus $\ps_{i_x,\ep}(x) =1$.
 Since the family $(\supp \vh_{j,\ep})_{j\ge 1}$ is locally finite, it follows that $\sum_{j\ge 1}\vh_{j,\ep}(x)=1$ for all $x \in \R^n\setminus A$.
 Finally, let us check (4).
 In the product defining $\vh_{j,\ep}$ at most $12^{2n}$ factors are different from $1$, by \Cref{prop:Wcubes}(5).
 Thus
 \[
   |\vh^{(\be)}_{j,\ep}(x)|
   \le \frac{\ep^{|\be|} M_{|\be|}}{h_{m}(C_0 \ep\, d_A(x))^{12^{2n}}}.
 \]
   By \Cref{claim1}, there is a constant $D>1$ such that $h_m(t) \le h_m(Dt)^{12^{2n}}$ for all $t>0$.
 Thus, we obtain (4) with $C_1 = C_0/D$.
\end{proof}

Let us come to the Beurling case.

\begin{proposition}\label{Proposition6matrixB}
  Let $M$ be a strongly regular weight sequence.
Let $A \subseteq \R^n$ be a non-empty closed set and $(Q_j)_{j \ge 1}$ a family of Whitney cubes for $A$.
Then there is a family of $C^\infty$-functions
$(\vh_{j})_{j \ge 1}$ satisfying
\begin{enumerate}
\item $0\le\vh_{j}\le 1$ for all $j \ge 1$,
\item $\on{supp}\vh_{j}\subseteq Q^*_j$ for all $j\ge 1$,
\item $\sum_{j\ge 1}\vh_{j}(x)=1$ for all $x \in \R^n\setminus A$,
\item for each $\rh>0$ there exist $C_1,\ta_1>0$ such that
for all $j\ge 1$, $\be \in\N^n$, and $x\in\R^n\setminus A$,
\begin{equation*}
  |\vh^{(\be)}_{j}(x)| \le C_1 \frac{\rh^{|\be|} M_{|\be|}}{h_m(\ta_1\, d_A(x))}.
\end{equation*}
\end{enumerate}
\end{proposition}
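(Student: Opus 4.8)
The plan is to follow the Roumieu argument of \Cref{Proposition6matrix} almost line by line, replacing the optimal Roumieu cutoffs of \Cref{thm:optimalbump} by the Beurling cutoffs of \Cref{thm:cutoffBM}. The one structural gain to be exploited is that \Cref{thm:cutoffBM} supplies a \emph{single} family $(\vh_r)_{r>0}$ that works simultaneously for every $\rho$, with $\rho$ entering only through the constants $A_\rho$ and $b_\rho$; hence the partition of unity can be built once and for all, and only the final derivative bound (4) will be $\rho$-dependent, as required.

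First I would pass from the one-dimensional cutoffs to $n$-dimensional ones adapted to the cubes. Set $\chi_r(x):=\vh_r(x_1)\cdots\vh_r(x_n)$; then $\chi_r\equiv 1$ on $[-r,r]^n$, $\supp\chi_r\subseteq[-\tfrac98 r,\tfrac98 r]^n$, and since $\chi_r$ is a tensor product, $\chi_r^{(\beta)}(x)=\prod_i\vh_r^{(\beta_i)}(x_i)$, so together with $M_{\beta_1}\cdots M_{\beta_n}\le M_{|\beta|}$ (\Cref{lem:atinfinity}(3)) one gets $|\chi_r^{(\beta)}(x)|\le A_\rho^n\rho^{|\beta|}M_{|\beta|}/h_m(b_\rho r)^n$ for each $\rho>0$. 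Because $m$ has moderate growth, iterating \eqref{hmg} yields a constant $D_n>1$ with $h_m(t)^n\ge h_m(t/D_n)$ (using $h_m\le 1$), so this becomes a bound of the shape $\widetilde A_\rho\rho^{|\beta|}M_{|\beta|}/h_m(\widetilde b_\rho r)$; I then rename $\widetilde A_\rho,\widetilde b_\rho$ back to $A_\rho,b_\rho$. Letting $x_j$ be the center and $2s_j$ the sidelength of $Q_j$, I put $\psi_j(x):=\chi_{s_j}(x-x_j)$, which is $\equiv 1$ on $Q_j$ and supported in $Q_j^*$. For $x\in Q_j^*$ one has $d_A(x)\le B_0\on{diam}Q_j=2\sqrt n\,B_0 s_j$ by \Cref{prop:Wcubes}(4), so monotonicity of $h_m$ turns the bound into $|\psi_j^{(\beta)}(x)|\le A_\rho\rho^{|\beta|}M_{|\beta|}/h_m(\theta_\rho d_A(x))$ on $\R^n\setminus A$, with $\theta_\rho:=b_\rho/(2\sqrt n B_0)$.

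Next I would define $(\vh_j)$ by the telescoping formula of \eqref{eq:defpartition}: $\vh_1:=\psi_1$ and $\vh_j:=\psi_j\prod_{k<j}(1-\psi_k)$ for $j\ge 2$. Items (1) and (2) are then immediate, and (3) follows from $\vh_1+\cdots+\vh_N=1-\prod_{k\le N}(1-\psi_k)$, the fact that every $x\in\R^n\setminus A$ lies in some $Q_i$ where $\psi_i(x)=1$, and the local finiteness of $(\supp\vh_j)$ guaranteed by \Cref{prop:Wcubes}. For (4), note that on $Q_j^*$ at most $12^{2n}$ of the factors $1-\psi_k$ are non-constant (\Cref{prop:Wcubes}(5)); applying the Leibniz rule, estimating each occurring derivative of $\psi_k$ and of $\psi_j$ by the bound just obtained, using $M_{\beta_1}\cdots\le M_{|\beta|}$ once more, and then iterating \Cref{claim1} to the form $h_m(t)\le h_m(Dt)^{12^{2n}}$ so as to merge the (at most $12^{2n}$) copies of $h_m$ into a single one, produces an estimate $|\vh_j^{(\beta)}(x)|\le C_\rho\,((12^{2n}+1)\rho)^{|\beta|}M_{|\beta|}/h_m(\theta_\rho D^{-1}d_A(x))$. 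Replacing $\rho$ by $\rho/(12^{2n}+1)$ from the start and relabelling constants then gives exactly (4) with $C_1=C_1(\rho)$ and $\theta_1=\theta_1(\rho)$.

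The main obstacle is bookkeeping rather than conceptual: one must track how the (opaque, non-linear) $\rho$-dependence of $A_\rho$ and $b_\rho$ in \Cref{thm:cutoffBM} propagates through the tensor product, the rescaling to the cubes, and the Leibniz expansion. Since the conclusion only asserts the \emph{existence} of $C_1,\theta_1$ for each fixed $\rho$, every such degradation is harmless. The one genuinely indispensable technical input is \Cref{claim1}: it is what allows the $n$-th power of $h_m$ coming from the tensor product and the $12^{2n}$-th power coming from the telescoping product to be absorbed into a single $h_m$ with merely a rescaled argument, which is precisely what gives estimate (4) its stated shape.
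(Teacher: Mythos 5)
Your proof is correct and follows the paper's argument essentially line by line: $n$-dimensional cutoffs from \Cref{thm:cutoffBM}, the telescoping construction \eqref{eq:defpartition}, the fact that at most $12^{2n}$ factors are non-trivial on $Q_j^*$, and \Cref{claim1} to absorb the resulting powers of $h_m$. You are in fact more explicit than the paper about the tensor product step from one to $n$ dimensions and about merging the $n$-th and $12^{2n}$-th powers of $h_m$ into a single $h_m$ with a rescaled argument, which is a genuine (if small) gap the paper leaves to the reader.
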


\begin{proof}
   By \Cref{thm:cutoffBM}, there exist smooth functions $(\ch_r)_{r>0}$ such that
   \begin{itemize}
   \item $0\le\ch_{r}(x)\le 1$ for all $x\in\R^n$,
   \item $\ch_{r}(x)=1$ for all $x\in [-r,r]^n$,
   \item $\ch_{r}(x)=0$ for all $x\not\in [-\frac{9}{8}r,\frac{9}{8}r]^n$,
   \item for each $\rh>0$ there exist $A_\rh\ge 1$ and $b_\rh>0$ such that for all $\be \in\N^n$ and $x\in\R^n$,
   \begin{equation*}
     |\ch^{(\be)}_{r}(x)|\le A_\rh \frac{\rh^{|\be|} M_{|\be|}}{h_m(b_\rh r)}.
   \end{equation*}
 \end{itemize}
   If $x_j$ denotes the center of the cube $Q_j$ and $2 s_j$ its sidelength, we set
   \[
      \ps_{j}(x) := \ch_{s_j}(x-x_j).
   \]
   Then
   $\ps_{j}|_{Q_j} =1$, $\on{supp} \ps_j \subseteq Q_j^*$, and
   \begin{align*}
    |\ps^{(\be)}_{j}(x)| \le  A_\rh \frac{\rh^{|\be|} M_{|\be|}}{h_{m}( b_\rh s_j)}
    \le A_\rh \frac{\rh^{|\be|} M_{|\be|}}{h_{m}(b_\rh' d_A(x))}.
   \end{align*}
   In analogy to \eqref{eq:defpartition} we define the family $(\vh_j)_{j\ge 1}$ which clearly has the properties (1)--(3).
   The fact that at most $12^{2n}$ factors in the product defining $\vh_j$ are different from $1$ together with the estimates for $\ps_j$
   and \eqref{hmg}
   easily imply (4).
\end{proof}

\section{Extension of Whitney ultrajets}

If we restrict an $\cE^{[M]}$-function defined on $\R^n$ to a closed subset of $\R^n$, then Taylor's theorem
implies necessary conditions which lead to the notion of Whitney ultrajets.

\subsection{Whitney ultrajets}

Let $M=(M_k)$ be a weight sequence.
Let $K \subseteq \R^n$ be a compact set.
A \emph{Whitney ultrajet of class $\cE^{\{M\}}$ on $K$} is a Whitney jet
$F= (F^\al)_{\al} \in \cE(K)$ such that
there exist $C>0$ and $\rh \ge 1$
with
\begin{gather}
  |F^\al(x)| \le C \rh^{|\al|}   M_{|\al|}, \quad \al \in \N^n,~ x \in K,
   \label{jets1}
  \\
  |(R^p_x F)^\al(y)| \le C \rh^{p+1} \, |\al|!\, m_{p+1}\,  |x-y|^{p+1-|\al|}, \quad p \in \N,\, |\al| \le p,~ x,y \in K.
  \label{jets2}
\end{gather}
A \emph{Whitney ultrajet of class $\cE^{(M)}$ on $K$} is a Whitney jet
$F= (F^\al)_{\al} \in \cE(K)$ with the property that for each $\rh>0$ there exists $C>0$ such that
\eqref{jets1} and \eqref{jets2} hold.

The set of Whitney ultrajets of class $\cE^{[M]}$ on $K$ is denoted by $\cE^{[M]}(K)$.
For a closed set $A \subseteq \R^n$ let
\[
  \cE^{[M]}(A) := \{F \in \cE(A) : F|_K \in \cE^{[M]}(K) \text{ for all compact } K \subseteq A\}
\]
be the set of \emph{Whitney ultrajets of class $\cE^{[M]}$ on $A$}.

We endow $\cE^{(M)}(A)$ with the projective limit topology with respect to the system of seminorms
$p_{K,1/\ell}(F) := \|F\|^M_{K,1/\ell} + |F|^M_{K,1/\ell}$,  $\ell \in \N_{\ge 1}$ and $K \subseteq A$ compact,
where
\begin{equation} \label{eq:jetseminorm1}
 	\|F\|^M_{K,\rh} := \sup_{\al \in \N^n} \frac{\|F^\al\|_K}{\rh^{|\al|} M_{|\al|}}
 \end{equation}
 and\footnote{Note that $|\al|!\, m_{p+1} \le \frac{M_{p+1}}{(p+1 -|\al|)!} \le 2^{p+1} |\al|!\, m_{p+1}$.}
 \begin{equation} \label{eq:jetseminorm2}
 	| F |^M_{K,\rh}  := \sup_{\substack{x,y \in K\\ x \ne y}} \sup_{p \in \N } \sup_{|\al| \le p}  |(R^p_x F)^\al (y)|
 	\frac{(p+1-|\al|)!}{|x-y|^{p+1-|\al|} \rh^{p+1} M_{p+1}}.
 \end{equation}
Similarly, the Roumieu space
\[
\cE^{\{M\}}(A) = \{F \in \cE(A) : \A K \subseteq_{cp} A \E m\in \N_{\ge 1} : p_{K,m}(F)<\infty \}
\]
is endowed with its natural locally convex topology.
Sometimes we shall need the Banach space $\cE^M_\rh(K) := \{F \in \cE(K): p_{K,\rh}(F)<\infty \}$.

\begin{remark}
	Let $k$ be a positive integer.
	A compact subset $K \subseteq \R^n$ is called \emph{Whitney $k$-regular} if there is a constant $C >0$
	such that for all $x,y \in K$ there exists a rectifiable path $\ga$ in $K$ joining $x$ and $y$ such that
	\[
		\on{length}(\ga) \le C |x-y|^{1/k}.
	\]
	It is not difficult to see that \eqref{jets1} implies \eqref{jets2} provided that $K$ is Whitney $1$-regular, e.g.,
	if $K$ is convex.
	This is no longer true if $K$ is only Whitney $k$-regular for some $k>1$; cf.\ \cite[1.9]{Plesniak:1994ti}.
\end{remark}

The goal of this section is to prove that Whitney ultrajets admit extensions preserving the class if the weight sequence $M$
is strongly regular. The proof of the Roumieu case is based on the existence of optimal cutoff functions and the extension problem for
the singleton (the Borel map). The Beurling case can be extracted from the Roumieu case by a reduction argument based on
\Cref{lem:CC16}.

In \Cref{sec:extentionop} we will prove that extension operators always exist in the Beurling case.
As a by-product it gives an alternative direct proof that extensions exist.

\subsection{Extension of Whitney ultrajets of Roumieu type}

Let $A \subseteq \R^n$ be a non-empty closed set and $F \in \cE(A)$ a Whitney jet on $A$.
A function $f \in C^\infty(\R^n)$ is said to be a \emph{local extension of $F$ in x} if
\[
  j^\infty_{\{x\}} f = F(x).
\]
The next lemma provides local extensions in a uniform way.

\begin{lemma}[{\cite{Bruna80}, \cite[Lemma 3.8]{BBMT91}}] \label{lem:BorelSilva}
	Let $M$ be a strongly regular weight sequence.
	Let $K\subseteq \R^n$ be a compact non-empty set and $F \in \cE^{\{M\}}(K)$.
	Then there exists $\rh>0$ such that
	for all $x \in K$ there exists $f_x \in \cB^{\{M\}}(\R^n)$ with $j^\infty_{\{x\}} f_x = F(x)$ and
	\begin{align} \label{eq:Wprep1}
			\sup_{x \in K} \|f_x\|^M_{\R^n,\rh} < \infty.
	\end{align}
\end{lemma}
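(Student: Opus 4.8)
The plan is to read this statement as a \emph{uniform} form of the sufficiency direction of the Borel problem along $K$: since $F \in \cE^{\{M\}}(K)$, all the jets $F(x)$, $x \in K$, lie in one fixed bounded subset of $\La^{\{M\}}$, so any extension procedure that depends linearly on the jet and is continuous with constants not depending on the jet will handle all $x \in K$ simultaneously. Note in particular that the Whitney condition \eqref{jets2} will play no role; only the zeroth-order bound \eqref{jets1} is used.

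First I would unwind the hypothesis. Since $F \in \cE^{\{M\}}(K)$, the bound \eqref{jets1} provides $C_0>0$ and $\rh_0 \ge 1$ with $|F^\al(x)| \le C_0\rh_0^{|\al|}M_{|\al|}$ for all $\al \in \N^n$ and $x \in K$; equivalently, $|F(x)|^M_{\rh_0} \le C_0$ for every $x \in K$. As $\|\cdot\|^M_{\R^n,\rh}$ is translation invariant, it then suffices to produce a radius $\rh$ depending only on $\rh_0$ and $n$, a constant $C_1 = C_1(\rh_0,n)$, and a linear map $E$ with $j^\infty_{\{0\}}E(a)=a$ and $\|E(a)\|^M_{\R^n,\rh} \le C_1|a|^M_{\rh_0}$ for all $a$ with $|a|^M_{\rh_0}<\infty$; for then $f_x(y):=E(F(x))(y-x)$ satisfies $j^\infty_{\{x\}}f_x = F(x)$ and, by translation invariance, $\|f_x\|^M_{\R^n,\rh}=\|E(F(x))\|^M_{\R^n,\rh}\le C_1|F(x)|^M_{\rh_0}\le C_1C_0$ uniformly in $x$, which is \eqref{eq:Wprep1}.

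Such a map $E$ is essentially what the proof of \Cref{thm:Borelsufficient} supplies, except that it was carried out in one variable. I would obtain the $n$-variable version by repeating that construction with \Cref{prop:cutoff} used directly in $\R^n$ — setting $\Ps_\al(y):=\vh_{|\al|}(y)\,y^\al/\al!$ with $\vh_{|\al|}$ a cutoff supported in a box of size comparable to $|\al|/\mu_{|\al|}$, which yields $\Ps_\al^{(\be)}(0)=\de_{\al\be}$ — or, more cheaply, by tensorizing the one-variable functions $\ps_k$ of \Cref{sec:proofBorelBeurlingsufficient}, $\Ps_\al:=\ps_{\al_1}\otimes\cdots\otimes\ps_{\al_n}$, using \Cref{lem:atinfinity}(3) to get $\|\Ps_\al\|^M_{\R^n,\rh}\le\prod_i\|\ps_{\al_i}\|^M_{\R,\rh}$ and moderate growth of $M$ to pass from $\prod_i M_{\al_i}$ to $M_{|\al|}$ at the cost of a factor $D^{|\al|}$. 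Running this as in the Roumieu discussion at the end of that section — choosing the scaling parameter $H=H(\rh_0)$ large enough — one gets $C^\infty_c$ functions $\Ps_\al$ with $\Ps_\al^{(\be)}(0)=\de_{\al\be}$ and
\[
  \Big\|\sum_{\al\in\N^n}a_\al\Ps_\al\Big\|^M_{\R^n,\rh}\le C_1\,|a|^M_{\rh_0},
\]
the estimate being \eqref{eq:est1}--\eqref{eq:est3} (with $\mu_k$ replaced by $H\mu_k$, and $k,j$ by $|\al|,|\be|$) summed against $|a_\al|\le|a|^M_{\rh_0}\rh_0^{|\al|}M_{|\al|}$, with $H$ enlarged until the resulting geometric series converges (the number of $\al$ with $|\al|=m$ contributing only a polynomial factor in $m$). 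Since the partial sums are then Cauchy in the Banach space $\cB^M_\rh(\R^n)$, the series defines $E(a)\in\cB^{\{M\}}(\R^n)$, and the previous paragraph finishes the proof.

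The main obstacle is precisely the bookkeeping in this last step: one must carry the dependence on $\rh_0$, $H$ and $n$ through \eqref{eq:est1}--\eqref{eq:est3} (and through the tensor-product/moderate-growth reduction) so that the target radius $\rh$ and the constant $C_1$ depend on $\rh_0$ and $n$ alone, never on the individual jet $F(x)$. Everything else — centering the construction at $x$ and taking the supremum over $x \in K$ — is then immediate.
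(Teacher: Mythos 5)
Your proof is correct, but it takes a genuinely different route from the paper's. The paper argues softly: by \eqref{jets1} the set $\{F(x):x\in K\}$ is bounded in $\La^{\{M\}}$, the Borel map $j^\infty_{\{0\}}:\cB^{\{M\}}(\R^n)\to\La^{\{M\}}$ is surjective by \Cref{thm:Borelsufficient}, and a continuous linear surjection between Silva spaces lifts bounded sets to bounded sets; translating the resulting lifts to the points $x$ gives \eqref{eq:Wprep1}. You instead make the lift explicit and quantitative, re-running the Roumieu Borel construction with a uniform continuity estimate $\|E(a)\|^M_{\R^n,\rh}\le C_1|a|^M_{\rh_0}$, passing to $n$ variables by tensorizing the one-dimensional $\ps_k$ (with $\prod_i M_{\be_i}\le M_{|\be|}$ from \Cref{lem:atinfinity}(3) in one direction and moderate growth in the other), and then translating to $x$. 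What your approach buys: it is self-contained (no appeal to the lifting property of Silva spaces), it produces a genuinely \emph{linear} and uniformly continuous dependence $F(x)\mapsto f_x$ — which is closer to what the cited \cite[Lemma 3.8]{BBMT91} actually establishes and is what one wants later for extension operators — and it makes explicit the step from the one-variable statement of \Cref{thm:Borelsufficient} to $\R^n$, which the paper's citation glosses over. The price is the constant-tracking through \eqref{eq:est1}--\eqref{eq:est3} with $\mu_k$ replaced by $H\mu_k$, which you correctly identify as the only real work; the paper's own sketch of the Roumieu case confirms that enlarging $H=H(\rh_0)$ makes the relevant geometric series converge with target radius $BH$. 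Your observation that only \eqref{jets1}, and not \eqref{jets2}, is used matches the paper's proof as well.
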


\begin{proof}
	By assumption, $\{F(x) : x \in K\}$ is a bounded subset of $\La^{\{M\}}$.
	We know that $j^\infty_{\{0\}} : \cB^{\{M\}}(\R^n) \to \La^{\{M\}}$ is surjective, by \Cref{thm:Borelsufficient}.
	Since both spaces are Silva spaces, each bounded set in $\La^{\{M\}}$ is the image of a bounded set in $\cB^{\{M\}}(\R^n)$.
	Thus, there is a bounded set $B$ in $\cB^{\{M\}}(\R^n)$ such that for all $x \in K$
	there exists $f_x \in B$ with $j^\infty_{\{x\}} f_x = F(x)$.
  This also implies \eqref{eq:Wprep1}.
\end{proof}

\begin{lemma} \label{lem:prepW2}
	In the setting of the previous lemma,
	there exist $D,\si>0$
	such that for all
	$x,y \in K$, $z \in \R^n$
	and $\al \in \N^n$ we have
	\begin{equation} \label{eq:prepW2}
  	|(f_x - f_y)^{(\al)}(z)| \le D \si^{|\al|} M_{|\al|}  h_m(\si(|z-x|+|z-y|)).
  \end{equation}
\end{lemma}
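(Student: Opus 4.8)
The plan is to combine the uniform bound from \Cref{lem:BorelSilva} with the Whitney conditions \eqref{jets1}--\eqref{jets2} satisfied by $F$. Write $g := f_x - f_y$ and $R := |z-x| + |z-y|$, and set $C := \sup_{w\in K}\|f_w\|^M_{\R^n,\rh}$, which is finite by \eqref{eq:Wprep1}, so that $|f_x^{(\be)}(\cdot)|, |f_y^{(\be)}(\cdot)| \le C\rh^{|\be|}M_{|\be|}$ on all of $\R^n$. Since $j^\infty_{\{x\}}f_x = F(x)$ and $j^\infty_{\{y\}}f_y = F(y)$, the degree-$p$ Taylor polynomial of $f_x$ at $x$ is $P_x(w) := \sum_{|\de|\le p}\frac{(w-x)^\de}{\de!}F^\de(x)$, and that of $f_y$ at $y$ is $P_y(w) := \sum_{|\de|\le p}\frac{(w-y)^\de}{\de!}F^\de(y)$. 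I would fix an integer $p \ge |\al|$ and use the splitting
\[
	g = (f_x - P_x) - (f_y - P_y) + (P_x - P_y),
\]
estimating the three summands separately.

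For the first two, $(f_x - P_x)^{(\al)}$ is the Taylor remainder of $f_x^{(\al)}$ at $x$ of order $p-|\al|$, whose relevant derivatives have order $p+1$ and hence are bounded by $C\rh^{p+1}M_{p+1}$; together with $\binom{p+1}{|\al|}\le 2^{p+1}$ and $\sum_{|\be|=q}\frac{1}{\be!}=\frac{n^q}{q!}$ this yields
\[
	|(f_x - P_x)^{(\al)}(z)| \le C\,(2n\rh)^{p+1}\,|\al|!\,m_{p+1}\,|z-x|^{p+1-|\al|},
\]
and similarly for $f_y - P_y$ with $|z-y|$ in place of $|z-x|$. For the third summand the key algebraic identity is
\[
	(P_x - P_y)^{(\al)}(z) = \sum_{|\be|\le p-|\al|}\frac{(z-x)^\be}{\be!}\,(R^p_y F)^{\al+\be}(x),
\]
obtained by Taylor-expanding the polynomial $P_x - P_y$ (of degree $\le p$) about $x$ and noting that $(P_x - P_y)^{(\de)}(x) = F^\de(x) - (T^p_y F)^\de(x) = (R^p_y F)^\de(x)$. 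Inserting \eqref{jets2}, using $(|\al|+|\be|)!\le 2^{|\al|+|\be|}|\al|!\,|\be|!$ and $|z-x|+|x-y|\le 2R$, I obtain
\[
	|(P_x - P_y)^{(\al)}(z)| \le C'\,(16n\rh')^{p+1}\,|\al|!\,m_{p+1}\,R^{p+1-|\al|},
\]
where $C',\rh'$ are the Roumieu constants of the ultrajet $F$. Adding the three bounds (and using $|z-x|,|z-y|\le R$) and writing $k:=p+1$, there are constants $\tilde C,\tilde\rh>0$, independent of $x,y,z,\al$, with
\[
	|g^{(\al)}(z)| \le \tilde C\,\tilde\rh^{k}\,|\al|!\,m_k\,R^{k-|\al|}\qquad\text{for every integer }k\ge|\al|+1,
\]
besides the trivial bound $|g^{(\al)}(z)| \le 2C\rh^{|\al|}M_{|\al|}$.

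The final step is to optimize over $k$ in order to produce the factor $h_m$. Moderate growth of $M$ passes to $m$, so $m_k \le B^k m_{|\al|}\,m_{k-|\al|}$ for some $B\ge 1$; hence, with $\si := B\tilde\rh$ and $j := k-|\al|\ge 1$,
\[
	|g^{(\al)}(z)| \le \tilde C\,\si^{|\al|}\,M_{|\al|}\,m_j\,(\si R)^j\qquad\text{for all }j\ge1.
\]
Taking the infimum over $j\ge1$ and recalling $h_m(t)=\inf_{j\ge1}m_j t^j$ whenever $t\le 1/m_1$ (since $m_0=1$), this gives $|g^{(\al)}(z)|\le\tilde C\,\si^{|\al|}\,M_{|\al|}\,h_m(\si R)$ when $\si R\le 1/m_1$. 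When $\si R> 1/m_1$ we have $h_m(\si R)=1$, so the trivial bound together with $\si\ge\rh$ — which we arrange by enlarging $\si$ if needed, only enlarging the right-hand side since $h_m$ is increasing — gives the same conclusion. With $D := \max\{\tilde C,2C\}$ this is exactly \eqref{eq:prepW2}. I expect the only genuine fuss to be the combinatorial bookkeeping that keeps every stray constant in the form $(\mathrm{const})^{p+1}$ so that it is absorbed into $\tilde\rh^{p+1}$; the remaining ingredients are just Taylor's formula, the defining seminorm bounds, and the moderate-growth split of $m_k$.
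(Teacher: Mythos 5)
Your proof is correct and follows essentially the same route as the paper: you decompose $f_x - f_y$ into the two Taylor remainders $f_x - T^p_x F$, $f_y - T^p_y F$ plus the polynomial difference $T^p_x F - T^p_y F$, bound each using \eqref{eq:Wprep1} and \eqref{jets2} (your identity for $(P_x-P_y)^{(\al)}(z)$ is precisely the $\al$-derivative of the identity $T_x^pF(z)-T_y^pF(z)=\sum_{\be\le p}\frac{(z-x)^\be}{\be!}(R^p_yF)^\be(x)$ used in the paper), split $M_{p+1}$ via moderate growth, and optimize over $p\ge|\al|$. The only difference is cosmetic: you carry the bookkeeping explicitly and treat the $h_m(\si R)=1$ regime via the trivial seminorm bound, while the paper leaves the last steps implicit.
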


\begin{proof}
	Let $x,y \in K$ be fixed. Take $p\in \N$ and $|\al| \le p$.
	We have
	\begin{align*}
		T_x^pF(z) - T_y^pF(z)
		&=	\sum_{\be \le p} \frac{(z-x)^\be}{\be!} (T_x^pF - T_y^pF)^{(\be)}(x)
		\\
		&=	\sum_{\be \le p} \frac{(z-x)^\be}{\be!} (R_y^pF - R_x^pF)^{\be}(x)
		= \sum_{\be \le p} \frac{(z-x)^\be}{\be!} (R^p_y F)^\be(x).
	\end{align*}
	Using \eqref{jets2}, we find that there exist $A,\ta>0$ such that
	\begin{align*}
		|(T_x^pF - T_y^pF)^{(\al)}(z)|
		&\le A \ta^{p+1} M_{p+1} \frac{(|z-x|+|x-y|)^{p+1-|\al|}}{(p+1-|\al|)!}
		\\
		&\le A (2\ta)^{p+1} M_{p+1} \frac{(|z-x|+|z-y|)^{p+1-|\al|}}{(p+1-|\al|)!}.
	\end{align*}
	By Taylor's formula and $j^\infty_{\{x\}} f_x = F(x)$,
	\begin{align*}
		f_x(z) - T_x^p F(z) = \sum_{|\be| = p+1} \frac{(z-x)^\be}{\be!}  f_x^{(\be)}(x+t(z-x)), \quad  t \in [0,1],
	\end{align*}
	and thus
	\begin{align} \label{eq:fxTxF}
		|(f_x - T_x^pF)^{(\al)} (z)| \le C \rh^{p+1} M_{p+1} \frac{|z-x|^{p+1-|\al|}}{(p+1-|\al|)!}, \quad |\al| \le p.
	\end{align}
	It follows that, for suitable constants $D,\si_1>0$,
	\begin{align*}
		|(f_x - f_y)^{(\al)}(z)| \le D \si_1^{p+1} M_{p+1} \frac{(|z-x|+|z-y|)^{p+1-|\al|}}{(p+1-|\al|)!},
	\end{align*}
	for all $x,y \in K$, $z \in \R^n$, $p \in \N$, and
	$|\al| \le p$. Using that $M$ has moderate growth, we infer
	\begin{align*}
		|(f_x - f_y)^{(\al)}(z)| \le D \si^{|\al|} M_{|\al|} \cdot (\si(|z-x|+|z-y|))^{p+1-|\al|} m_{p+1-|\al|}
	\end{align*}
  for all $p \ge |\al|$. This implies \eqref{eq:prepW2}.
\end{proof}

\begin{theorem} \label{thm:WRoumieu}
	Let $M$ be a strongly regular weight sequence.
	For each non-empty compact set $K \subseteq \R^n$ the mapping $j^\infty_K : \cE^{\{M\}}(\R^n) \to \cE^{\{M\}}(K)$ is surjective.
\end{theorem}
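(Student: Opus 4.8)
The plan is to glue the uniform local extensions provided by Lemma~\ref{lem:BorelSilva} using an optimal partition of unity of Roumieu type subordinate to a family of Whitney cubes for $K$. Since $K$ is compact, it is enough to produce, for $F \in \cE^{\{M\}}(K)$, a function $f \in \cB^{\{M\}}(\R^n)$ with $j^\infty_K f = F$; multiplying afterwards by a non-quasianalytic cutoff that equals $1$ near $K$ (available by \Cref{cor:cutoff}) gives a compactly supported extension, and the surjectivity of $j^\infty_K : \cE^{\{M\}}(\R^n) \to \cE^{\{M\}}(K)$ follows. First I would fix a family of Whitney cubes $(Q_j)_{j\ge 1}$ for $A:=K$ as in \Cref{prop:Wcubes} and, for a parameter $\ep>0$ to be chosen, the associated partition of unity $(\vh_{j,\ep})_{j\ge 1}$ from \Cref{Proposition6matrix}, so that $\sum_j \vh_{j,\ep} \equiv 1$ on $\R^n\setminus K$, $\supp\vh_{j,\ep}\subseteq Q_j^*$, and $|\vh_{j,\ep}^{(\be)}(x)| \le \ep^{|\be|}M_{|\be|}/h_m(C_1\ep\, d_K(x))$. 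For each $j$ pick a point $a_j \in K$ with $\on{dist}(Q_j^*,K)$ essentially realized at $a_j$ (possible by \Cref{prop:Wcubes}(3)–(4), with $d_K(x)\sim \on{diam}Q_j$ on $Q_j^*$) and let $f_{a_j} \in \cB^{\{M\}}(\R^n)$ be the local extension from \Cref{lem:BorelSilva}, satisfying $\sup_x\|f_{a_j}\|^M_{\R^n,\rh}<\infty$ uniformly in $j$. Then set
\[
  f(z) := \sum_{j\ge 1} \vh_{j,\ep}(z)\, f_{a_j}(z) \quad (z \in \R^n\setminus K), \qquad f|_K := \text{the jet } F \text{ (i.e. } f^{(\al)}(x):=F^\al(x)).
\]

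The heart of the argument is to show $f$ is $C^\infty$ across $K$ with $j^\infty_K f = F$ and that $f$ satisfies the $\cE^{\{M\}}$ bounds on $\R^n$. For a point $x\in K$ and $z$ near $x$, using $\sum_j\vh_{j,\ep}\equiv 1$ one writes, for any fixed $p$,
\[
  f(z) - T^p_xF(z) = \sum_{j}\vh_{j,\ep}(z)\bigl(f_{a_j}(z) - T^p_xF(z)\bigr)
   = \sum_j \vh_{j,\ep}(z)\Bigl[\bigl(f_{a_j}(z)-T^p_{a_j}F(z)\bigr) + \bigl(T^p_{a_j}F(z)-T^p_xF(z)\bigr)\Bigr],
\]
where only the boundedly many $j$ with $z \in Q_j^*$ contribute. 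The first bracket is controlled by \eqref{eq:fxTxF}-type estimates (the remainder of a local extension against its own Taylor polynomial), the second by the Whitney jet condition \eqref{jets2} for $F$, and in both cases $|z-a_j|\lesssim d_K(z)\le |z-x|$ when $z\in Q_j^*$. Differentiating the defining sum by the Leibniz rule, one must bound $\sum_{\al'\le\al}\binom{\al}{\al'}\vh_{j,\ep}^{(\al-\al')}(z)\,(f_{a_j}-T^p_{a_j}F)^{(\al')}(z)$ and similarly the jet-difference term; the decisive point is that the small factor $h_m(C_1\ep\,d_K(z))^{-1}$ coming from the cutoff derivatives is beaten by the factor $d_K(z)^{\,p+1-|\al|} m_{p+1}$ (after using moderate growth exactly as in \Cref{lem:prepW2}) coming from the Taylor remainders, because $h_m(t)\le m_k t^k$ for every $k$ by \eqref{GaProp2}. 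Choosing $\ep$ small relative to the $\rh$ from \Cref{lem:BorelSilva} and to $C_1$, and letting $p\to\infty$ where $z\to x$, yields both the flatness statement $(f-T^p_xF)^{(\al)}(z)=O(|z-x|^{p+1-|\al|})$ for all $p$ — hence $f\in C^\infty$ with $j^\infty_Kf=F$ — and, taking instead the crude bound with $p=|\al|$ and using that $d_K(z)$ is bounded on a neighborhood of $K$, uniform estimates $\|f^{(\al)}\|_{\R^n}\le C'\si^{|\al|}M_{|\al|}$, i.e. $f \in \cB^{\{M\}}(\R^n)$.

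The main obstacle I expect is the bookkeeping in this last estimate: one has to match the exponent $p$ in the Taylor-remainder bounds against the singularity $h_m(\cdot\,d_K(z))^{-1}$ of the cutoff uniformly over all cubes $Q_j^*$ meeting a given $z$, and uniformly over $z$ approaching $K$, while keeping the resulting constant $\si$ independent of $z$ and $j$. This is precisely where strong regularity enters twice — strong non-quasianalyticity is needed for the existence of the optimal cutoffs (\Cref{thm:optimalbump}), and moderate growth is needed both there and to convert the factorial remainder estimates into the form $\si^{|\al|}M_{|\al|}\cdot(\si\,d_K(z))^{p+1-|\al|}m_{p+1}$ as in \Cref{lem:prepW2}. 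Once the estimate on $\R^n\setminus K$ is in hand, continuity of all derivatives up to $K$ and the identification with $F$ there follow from the flatness bound, so $f$ defines an element of $\cE^{\{M\}}(\R^n)$ with $j^\infty_Kf=F$, completing the proof.
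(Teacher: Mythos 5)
Your overall strategy is the paper's: the extension formula $f=\sum_j\vh_{j,\ep}f_{a_j}$ with local extensions from \Cref{lem:BorelSilva} and the optimal partition of unity from \Cref{Proposition6matrix} is exactly what the paper uses. But there is a genuine gap in the key estimate, precisely at the point you flag as "the main obstacle". When a derivative $\be=\al-\al'\ne0$ falls on the cutoff, your jet-difference term is
\[
\sum_{j}\vh^{(\be)}_{j,\ep}(z)\,\bigl(T^p_{a_j}F-T^p_{x}F\bigr)^{(\al')}(z),
\]
and the Taylor-remainder gain here is measured in $|z-a_j|+|z-x|\sim|z-x|$, whereas the cutoff blow-up $h_m(C_1\ep\,d_K(z))^{-1}$ is measured in $d_K(z)$. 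These two scales are not comparable: $|z-x|$ can be vastly larger than $d_K(z)$ (e.g.\ $d_K(z)=|z-x|^2$ as $z\to x$, or $x$ a distant point of $K$ in the global estimate). Even after taking the infimum over $p$ to convert the remainder into $h_m(\si|z-x|)$, the quotient $h_m(\si|z-x|)/h_m(C_1\ep\,d_K(z))$ is unbounded; the moderate-growth inequality \eqref{hmg} only controls quotients of $h_m$ at \emph{proportional} arguments. The missing idea is the cancellation $\sum_j\vh^{(\be)}_{j,\ep}(z)=0$ for $\be\ne0$, which lets you replace the reference point $x$ by the \emph{nearest} point $\hat z\in K$ to $z$ in all terms where derivatives hit the cutoffs; then every distance that enters is $\lesssim d_K(z)$, the numerator becomes $h_m(\si\,d_K(z))$, and \eqref{hmg} with the choice $\ep=C\si/C_1$ closes the estimate. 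Only the undifferentiated term ($\be=0$) may keep $x$ as reference point, and there \Cref{lem:prepW2} applies directly.

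A secondary slip: the asserted mechanism "the factor $d_K(z)^{p+1-|\al|}m_{p+1}$ beats $h_m(C_1\ep\,d_K(z))^{-1}$ because $h_m(t)\le m_kt^k$" runs in the wrong direction. The inequality $h_m(t)\le m_kt^k$ bounds the remainder factor from \emph{below} by $h_m$; for a fixed $p$ the ratio $m_{p+1}t^{p+1-|\al|}/h_m(C_1\ep t)$ in fact tends to infinity as $t\to0$, since $h_m$ decays faster than any power. What one actually does is take the infimum over all admissible $p$ to \emph{produce} an $h_m$ in the numerator, and then invoke \eqref{hmg} for the quotient of two $h_m$'s at proportional arguments. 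With the cancellation trick inserted and this bookkeeping corrected, your argument becomes the paper's proof.
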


\begin{proof}
	Let $F \in \cE^{\{M\}}(K)$ satisfy \eqref{jets1} and \eqref{jets2}.
	By \Cref{lem:BorelSilva}, there are $f_x \in \cB^{\{M\}}(\R^n)$, $x \in K$, satisfying $j^\infty_{\{x\}}f_x = F(x)$ for all $x \in  K$
  and \eqref{eq:Wprep1}.
	Let $(Q_j)_{j\ge 1}$ be a family of Whitney cubes for $K$.
	Let $\ep>0$ (to be specified later).
	By \Cref{Proposition6matrix}, there is
	a family of $C^\infty$-functions $(\vh_{j,\ep})_{j\ge 1}$ satisfying
	$0 \le \vh_{j,\ep} \le 1$, $\on{supp} \vh_{j,\ep} \subseteq Q_j^*$, $\sum_j \vh_{j,\ep} =1$ on $\R^n \setminus K$,
	and
	\begin{equation} \label{estvh}
  	|\vh^{(\be)}_{j,\ep}(x)|
 	 \le \frac{\ep^{|\be|} M_{|\be|}}{h_{m}(C_1\ep\, d_K(x))}, \quad \be \in \N^n,~ x \in \R^n\setminus K,
	\end{equation}
	where $C_1>0$ is independent of $\ep$.
	Let $x_j$ denote the center of $Q_j$ and let
	$\hat x_j \in K$ be such that $|x_j - \hat x_j|= d_K(x_j)$. We define
\begin{equation} \label{eq:extensionformula}
  f(z) :=
  \begin{cases}
    \sum_{j \ge 1} \vh_{j,\ep}(z) \, f_{\hat x_j}(z),  & \text{ if } z \in \R^n \setminus K, \\
    F^0(z), & \text{ if } z \in  K.
  \end{cases}
\end{equation}
Clearly, $f$ is $C^\infty$ in $\R^n \setminus K$.

\begin{claim*}
There exist constants $D_1,\si_1>0$ such that, for $x \in K$,
\begin{align} \label{eq:Wclaim}
	|(f-f_x)^{(\al)} (z)| \le D_1 \si_1^{|\al|} M_{|\al|} h_m(\si_1 |z-x|), \quad z \in \R^n \setminus K.
\end{align}
\end{claim*}

We must estimate
\begin{align} \label{eq:toest}
	(f-f_x)^{(\al)} (z) = \sum_{\be \le \al} \binom{\al}{\be} \sum_{j \ge 1} \vh^{(\be)}_{j,\ep}(z) \, (f_{\hat x_j} -f_x)^{(\al-\be)}(z).
\end{align}
Let us first treat the term with $\be =0$.
If $z \in \on{supp} \vh_{j,\ep} \subseteq Q_j^*$, then
\begin{align} \label{consequenceWcubes}
  |z-\hat x_j|\le |z-x_j| + |x_j-\hat x_j|
  \le \frac{9}{8} \on{diam} Q_j + d_K(x_j) \lesssim  d_K(z) \le   |z-x|,
\end{align}
 by \Cref{prop:Wcubes}(3)--(4).
Thus, by \Cref{lem:prepW2}, there exist $D,\si>0$ such that
\begin{align*}
	\sum_{j\ge 1} \vh_{j,\ep}(z) \, \big|(f_{\hat x_j} -f_x)^{(\al)}(z)\big| \le D \si^{|\al|} M_{|\al|} h_m(\si |z-x|).
\end{align*}
Now we consider the terms with $\be \ne 0$ in \eqref{eq:toest}. To this end let $\hat z$ be a point in $K$ with $|\hat z - z| = d_K(z)$.
Since $\be \ne 0$, $\sum_{j \ge 1} \vh_{j,\ep}^{(\be)}(z) = 0$ and thus
\begin{align*}
  	\sum_{j\ge 1} \vh^{(\be)}_{j,\ep}(z) \, (f_{\hat x_j} -f_x)^{(\al-\be)}(z) = \sum_{j \ge 1} \vh^{(\be)}_{j,\ep}(z) \, (f_{\hat x_j} -f_{\hat z})^{(\al-\be)}(z).
  \end{align*}
We infer from \Cref{lem:prepW2}, \eqref{estvh}, \eqref{consequenceWcubes}, and \Cref{prop:Wcubes}(5) that there exist constants $D,\si>0$ such that
\begin{align*}
  	\sum_{j\ge 1} |\vh^{(\be)}_{j,\ep}(z)| | (f_{\hat x_j} -f_{\hat z})^{(\al-\be)}(z)|
  	&\le
  	12^{2n} D\ep^{|\be|}\si^{|\al-\be|} M_{|\be|}  M_{|\al - \be|} \frac{h_m(\si d_K(z))}{h_{m}(C_1\ep\, d_K(z))}
  	\\
  	&\stackrel{\eqref{hmg}}{\le}
  	12^{2n} D\ep^{|\be|}\si^{|\al-\be|} M_{|\al|} h_m(C\si d_K(z))
  \end{align*}
  if we choose $\ep:= C\si/C_1$, where $C$ is the constant from \eqref{hmg}. The claim follows.

In order to show that $f$ is a $C^\infty$-function on $\R^n$ with $j^\infty_K f = F$, let, for $\al \in \N^n$,
\[
	f^\al(z) :=
	\begin{cases}
		f^{(\al)}(z) & \text{ if } z \in \R^n \setminus K,
		\\
		F^\al(z) & \text{ if } z \in K.
	\end{cases}
\]
We prove that $f = f^0$ is a $C^\infty$-function with $f^{(\al)} = f^\al$ for all $\al \in \N^n$.
First we show that all $f^\al$ are continuous.
This is clear near points $x \not\in K$. So let $x \in K$.
Now if $z \in K$, then
\[
	|f^\al(z) - f^\al(x)| = |F^\al(z) - F^\al(x)| = |(R^{|\al|}_x F)^\al (z)| \to 0 \quad \text{ as } z\to x,
\]
by \eqref{jets2}.
If $z \not\in K$, then
\begin{align*}
	|f^\al(z) - f^\al(x)| &= |f^{(\al)}(z) - f_x^{(\al)}(x)|
	\\
	&\le |f^{(\al)}(z) - f_x^{(\al)}(z)| + |f_x^{(\al)}(z) - f_x^{(\al)}(x)| \to 0  \quad \text{ as } z\to x,
\end{align*}
by \eqref{eq:Wclaim} and since $f_x^{(\al)}$ is continuous.
At this point we may refer to the lemma of Hestenes \cite[Lemma 1]{Hestenes41} or argue as follows:
If $\de_i \in \N^n$ denotes the multiindex with $(\de_i)_j = \de_{ij}$ (the Kronecker symbol), then for $x\in K$ and $z \in \R^n$,
\begin{align*}
	|f^\al(z) - f^\al(x) - \sum_{i=1}^n (z_i - x_i) f^{\al+ \de_i}(x)| = |f^\al(z) - (T_x^{|\al|+1} F)^{(\al)}(z)| = o(|z-x|),
\end{align*}
by \eqref{jets2}, \eqref{eq:fxTxF}, and \eqref{eq:Wclaim}. It means that $f^\al$ is $C^1$ and $\p_{x_i} f^{\al} = f^{\al + \de_i}$.
The assertion follows.

Finally, we claim that $f \in \cE^{\{M\}}(\R^n)$. By \eqref{jets1}, it suffices to consider $z \not\in K$.
Then the claim follows from
\begin{align*}
	|f^{(\al)}(z)| \le  |f_{\hat z}^{(\al)} (z)| +
	|(f-f_{\hat z})^{(\al)} (z)|,
\end{align*}
\eqref{eq:Wprep1}, and \eqref{eq:Wclaim}.
\end{proof}

\subsection{Extension of Whitney ultrajets of Beurling type}

The Beurling case may be reduced to the Roumieu case by means of the next lemma.

\begin{lemma}[{\cite[Proposition 17]{ChaumatChollet94}}] \label{lem:CC17}
	Let $M=(M_k)$ be a strongly regular weight sequence.
	For any non-negative sequence $L=(L_k)$ with $L \lhd M$
	there exists a strongly regular weight sequence $N =(N_k)$ with $L \preceq N \lhd M$.
\end{lemma}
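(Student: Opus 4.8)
The plan is to realise $N$ as a ``shrunk copy'' of $M$, namely $N_k=M_k/\La_k$ with $\La_k=\la_1\cdots\la_k$ for a divergent multiplier $(\la_k)$ supplied by \Cref{lem:CC16}, and afterwards to repair log-convexity. We may assume $\liminf_k L_k^{1/k}=\infty$ (otherwise $L\preceq N$ holds automatically for every weight sequence $N$, so it suffices to take any strongly regular $N\lhd M$, which exists, cf.\ \Cref{cor:setupcutoff}), and, replacing $L_k$ by $\max(L_k,1)$, that $L_k\ge 1$. Write $\ta_k:=(L_k/M_k)^{1/k}$; since $L\lhd M$ we have $\ta_k\to0$, and replacing $\ta_k$ by $\sup_{j\ge k}\ta_j$ we may assume $(\ta_k)$ is non-increasing while still $\ta_k\to0$ and $L_k\le\ta_k^kM_k$. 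Now set
\[
  \si_k:=\frac1k\max_{1\le j\le k} j\,\ta_j .
\]
Then $\si_k\ge\ta_k$ (so $L_k\le\si_k^k M_k$), $\si_k\searrow 0$ (since $j\ta_j=o(j)$ forces $\max_{j\le k}j\ta_j=o(k)$), and, crucially, $k\si_k$ is non-decreasing, hence $\si_k/\si_{k+1}\le (k+1)/k$ for all $k$.

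Apply \Cref{lem:CC16} with $\al_k:=1/\mu_k$ (summable because $M$ is non-quasianalytic), $\ga_k:=\si_k$ and $\be_k:=k^{-1/2}$. This yields $\la_k\nearrow\infty$ with: $\sum_{j\ge k}\la_j/\mu_j\le 8\la_k\sum_{j\ge k}1/\mu_j$; $\la_k\si_k$ decreasing, hence $\la_k\le(\la_1\si_1)/\si_k$; and $\la_kk^{-1/2}\to0$, so in particular $\mu_k/\la_k\ge k/\la_k\to\infty$. Put $N^0_k:=M_k/\La_k$, i.e.\ $N^0$ has ratios $\mu^{N^0}_k:=\mu_k/\la_k$. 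Then: \emph{(i)} $N^0\lhd M$, since $(N^0_k/M_k)^{1/k}=\La_k^{-1/k}\to0$ as $\la_k\to\infty$; \emph{(ii)} $L\preceq N^0$, since $\La_k^{1/k}\le\la_k\le(\la_1\si_1)\si_k^{-1}$ gives $\La_k\le C^k\si_k^{-k}$, whence $N^0_k=M_k/\La_k\ge C^{-k}\si_k^kM_k\ge C^{-k}L_k$; \emph{(iii)} $N^0$ has moderate growth, because $\mu^{N^0}_{2k}/\mu^{N^0}_k=(\mu_{2k}/\mu_k)(\la_k/\la_{2k})\le\mu_{2k}/\mu_k$ is bounded ($M$ has moderate growth and $\la$ is increasing), and moderate growth is equivalent to $\mu^{N^0}_{2k}\lesssim\mu^{N^0}_k$; \emph{(iv)} $N^0$ is strongly non-quasianalytic, because
$\frac{\mu^{N^0}_k}{k}\sum_{j\ge k}\frac{1}{\mu^{N^0}_j}=\frac{\mu_k}{k\la_k}\sum_{j\ge k}\frac{\la_j}{\mu_j}\le\frac{8\mu_k}{k}\sum_{j\ge k}\frac{1}{\mu_j}$,
which is bounded by \eqref{eq:snq} for $M$.

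It remains to pass from $N^0$ to an honest strongly regular weight sequence. The ratio sequence $(\mu_k/\la_k)$ need not be increasing, but the $(k+1)/k$-regularity of $(\si_k)$ forces it to be non-decreasing away from a very sparse (geometrically spaced) set of ``plateau'' indices, where it can drop only by a bounded factor. Replacing $(\mu_k/\la_k)$ by a suitable non-decreasing modification $(\nu_k)$ with $\nu_0=1\le\nu_1$ and putting $N_k:=\nu_0\cdots\nu_k$ yields a weight sequence still satisfying \emph{(i)}--\emph{(iv)}, i.e.\ $L\preceq N\lhd M$ with $N$ strongly non-quasianalytic of moderate growth. Finally, by \Cref{cor:log-convex} the descendant $S$ of $N$ is a strongly \emph{log-convex} weight sequence equivalent to $N$ (as $N$ is strongly non-quasianalytic); being equivalent to $N$ it has moderate growth and still satisfies $L\preceq S\lhd M$. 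Thus $S$ is the required strongly regular weight sequence.

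The main obstacle is exactly this last step: reconciling log-convexity of $N$ (equivalently, monotonicity of the ratio sequence $\mu_k/\la_k$) with the lower bound $L\preceq N$. A careless monotonization can push $N$ below $L$ when the gap sequence $\si_k$ decays in bursts, which is precisely why $(\si_k)$ must be pre-smoothed so that $\la$ grows by at most a factor $(k+1)/k$ per step away from sparsely many plateau steps; this keeps $\mu_k/\la_k$ within a bounded factor of a non-decreasing sequence and lets $(\nu_k)$ be chosen without destroying any of $L\preceq N$, $N\lhd M$, moderate growth or strong non-quasianalyticity. Carrying all four properties simultaneously through the smoothing and the monotonization is the technical heart of the argument.
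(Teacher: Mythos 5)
Your core construction---dividing $M$ by the product $\La_k=\la_1\cdots\la_k$ of multipliers supplied by \Cref{lem:CC16}---is the same idea as the paper's proof (which sets $\nu_k=\mu_k/\th_k$); the difference is only in the parameters fed to \Cref{lem:CC16}, and that difference is what creates your ``technical heart''. Two issues. First, the opening parenthetical is false: $\liminf_k L_k^{1/k}<\infty$ constrains $L$ only along a subsequence and does not give $L\preceq N$ for every weight sequence $N$ (e.g.\ $L_{2k}=1$, $L_{2k+1}=\sqrt{M_{2k+1}}$ satisfies $L\lhd M$ and $\liminf L_k^{1/k}\le 1$, yet $L\preceq N$ fails for slowly growing $N$). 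Fortunately the rest of your argument never uses this reduction, so simply delete it. Second, the monotonization step is, as written, a genuine gap: the claims that $(\mu_k/\la_k)$ can fail to increase only at ``geometrically spaced plateau indices'' and that a ``suitable modification'' preserves (i)--(iv) are not derivable from the \emph{statement} of \Cref{lem:CC16}---they would require reopening its proof---and you never construct the modification or verify anything about it.

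The gap, however, closes itself, and your worry is unfounded: from the conclusion that $\la_k\ga_k=\la_k\si_k$ is decreasing you get $\la_{k+1}/\la_k\le\si_k/\si_{k+1}\le\tfrac{k+1}{k}$ (your pre-smoothing of $\si$), while strong log-convexity of $M$ gives $\mu_{k+1}/\mu_k\ge\tfrac{k+1}{k}$; hence $\mu_{k+1}/\la_{k+1}\ge\mu_k/\la_k$ for \emph{all} $k$, so $N^0$ is already log-convex and no modification is needed. With that one-line observation, your verifications (i)--(iv) and the final passage to the descendant via \Cref{cor:log-convex} complete a correct proof. For comparison, the paper avoids the detour entirely by choosing $\ga_k=k/\mu_k$ and $\be_k=\max\{\ep_k,k/\mu_k\}$ in \Cref{lem:CC16}: then ``$\la_k\ga_k$ decreasing'' literally says that $\nu_k/k=\mu_k/(k\th_k)$ is increasing, so the resulting $N$ is strongly log-convex from the start and neither the smoothing of $\ta$ nor the descendant is required, while $\th_k\ep_k\to0$ delivers $L\preceq N$ in place of your bound $\la_k\le\la_1\si_1/\si_k$.
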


\begin{proof}
	There is a decreasing sequence $(\ep_k)_{k\ge 1}$ tending to $0$ with $L_k \le \ep_1\cdots \ep_k M_k$
	for all $k \ge 1$; it suffices to take $\ep_k:= \sup_{j\ge k} (\frac{L_k}{M_k})^{1/k}$.
	Since $M$ is strongly non-quasianalytic, there is a constant $C>0$ such that
	\begin{equation} \label{eq:red1}
    \sum_{j\ge k} \frac{1}{\mu_k} \le C \frac{k}{\mu_k}, \quad k \in \N.
  \end{equation}
	Applying \Cref{lem:CC16} to $\al_k =\frac{1}{\mu_k}$, $\be_k = \max \{\ep_k,\frac{k}{\mu_k}\}$, and $\ga_k = \frac{k}{\mu_k}$,
	we find a sequence $(\th_k)_{k \ge 1}$ with $\th_k \nearrow \infty$
	such that
	\begin{align} \label{eq:red2}
		\sum_{j \ge k} \frac{\th_j}{\mu_j} \le 8 \th_k \sum_{j \ge k} \frac{1}{\mu_j}, \quad \text{ for all } k \ge 1,
	\end{align}
	$\th_k\ep_k \to 0$, and  $\frac{k\th_k}{\mu_k} \searrow 0$. Define
	$\nu_k := \frac{\mu_k}{\th_k}$, $k \ge 1$, and $\nu_0 := 0$.
  Then $N_k := \nu_0 \nu_1 \cdots \nu_k$ is a strongly log-convex weight sequence
  that is strongly non-quasianalytic, by \eqref{eq:red1} and \eqref{eq:red2},
  and has moderate growth,
  \[
    \frac{N_{j+k}}{N_jN_k} = \frac{M_{j+k}}{M_jM_k} \frac{\th_1\cdots\th_{j}  \th_1\cdots\th_{k}}{\th_1\cdots\th_{j+k}}
    \le \frac{M_{j+k}}{M_jM_k}.
  \]
  To see $L \preceq N \lhd M$ observe that
  $(\frac{L_k}{N_k})^{1/k} = (\frac{L_k}{M_k})^{1/k} (\th_1\cdots\th_{k})^{1/k} \le (\th_1\ep_1 \cdots\th_{k}\ep_k)^{1/k}$
  is bounded and $(\frac{M_k}{N_k})^{1/k} = (\th_1 \cdots \th_k)^{1/k} \to \infty$.
\end{proof}

\begin{theorem} \label{thm:WBeurling}
	Let $M$ be a strongly regular weight sequence.
	For each non-empty compact set $K \subseteq \R^n$ the mapping $j^\infty_K : \cE^{(M)}(\R^n) \to \cE^{(M)}(K)$ is surjective.
\end{theorem}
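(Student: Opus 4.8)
The plan is to reduce to the Roumieu case, Theorem~\ref{thm:WRoumieu}, exactly as announced, using Lemma~\ref{lem:CC17}. The key observation is that a Whitney ultrajet of Beurling type on a compact set is automatically a Whitney ultrajet of Roumieu type with respect to some strictly smaller weight sequence. So, given $F \in \cE^{(M)}(K)$, I would first set, for $k \ge 1$,
\[
  Q_k := \sup_{|\al| = k} \frac{\|F^\al\|_K}{M_k}
  + \sup \frac{(p+1-|\al|)!\,|(R^p_x F)^\al(y)|}{|x-y|^{p+1-|\al|}\,M_{p+1}},
\]
where the second supremum runs over $p = k-1$, $|\al| \le p$, and $x \ne y$ in $K$. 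Because $F \in \cE^{(M)}(K)$ satisfies \eqref{jets1}--\eqref{jets2} for every $\rho>0$, and $|\al|!\,m_{p+1} \le M_{p+1}/(p+1-|\al|)!$, we get $Q_k \le C_\rho \rho^k$ for all $\rho>0$, hence $Q_k^{1/k}\to 0$. Then $d_k := \sup_{j\ge k} Q_j^{1/j}$ is a decreasing sequence tending to $0$, and setting $L_0:=1$ and $L_k := d_k^k M_k$ for $k\ge1$ produces a non-negative sequence with $(L_k/M_k)^{1/k}=d_k\to0$, i.e.\ $L \lhd M$, such that $\|F^\al\|_K \le L_{|\al|}$ and $(p+1-|\al|)!\,|(R^p_x F)^\al(y)| \le L_{p+1}\,|x-y|^{p+1-|\al|}$ for all the relevant indices.

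Next I would apply Lemma~\ref{lem:CC17} to this $L$ to obtain a strongly regular weight sequence $N$ with $L \preceq N \lhd M$; write $L_k \le C_1^k N_k$. Using the companion inequality $M_{p+1}/(p+1-|\al|)! \le 2^{p+1}|\al|!\,m_{p+1}$ (now for $N$, $n$), the bounds for $F$ become
\[
  \|F^\al\|_K \le C_1^{|\al|} N_{|\al|}, \qquad
  |(R^p_x F)^\al(y)| \le (2C_1)^{p+1}\,|\al|!\,n_{p+1}\,|x-y|^{p+1-|\al|},
\]
so that $F \in \cE^{\{N\}}(K)$. Since $N$ is strongly regular, Theorem~\ref{thm:WRoumieu} provides $f \in \cE^{\{N\}}(\R^n)$ with $j^\infty_K f = F$. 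Finally, $N \lhd M$ gives the inclusion $\cE^{\{N\}}(\R^n) \subseteq \cE^{(M)}(\R^n)$ (by the inclusion relations for Denjoy--Carleman classes established in Section~2, with the roles of the two sequences reversed), whence $f \in \cE^{(M)}(\R^n)$ and $j^\infty_K f = F$. This shows $j^\infty_K : \cE^{(M)}(\R^n) \to \cE^{(M)}(K)$ is surjective.

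The main obstacle is the first step: checking that the single sequence $L$ genuinely ``captures'' $F$ as a Roumieu ultrajet, i.e.\ that the Whitney remainder estimates \eqref{jets2}---phrased in terms of $m_{p+1}$ rather than $M_{p+1}$---survive both the passage to $L_k=d_k^k M_k$ and the later comparison $L \preceq N$. This is where the elementary two-sided estimate $|\al|!\,m_{p+1} \le M_{p+1}/(p+1-|\al|)! \le 2^{p+1}|\al|!\,m_{p+1}$ does the bookkeeping; everything else is a formal chain of inclusions. (Harmless edge case: if $Q_j=0$ for all large $j$, then $F$ is a polynomial jet and extends to a polynomial, which lies in $\cE^{(M)}(\R^n)$ trivially; otherwise the $L_k$ are eventually positive and $\cE^{\{N\}}(K)$ is well defined as above.)
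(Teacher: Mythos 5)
Your proposal is correct and follows essentially the same route as the paper: define an auxiliary non-negative sequence $L \lhd M$ that captures the bounds of $F$, upgrade it via \Cref{lem:CC17} to a strongly regular $N$ with $L \preceq N \lhd M$, extend in the class $\cE^{\{N\}}$ by \Cref{thm:WRoumieu}, and conclude via $\cE^{\{N\}}(\R^n) \subseteq \cE^{(M)}(\R^n)$. The only cosmetic difference is your intermediate regularization $L_k = d_k^k M_k$ with $d_k = \sup_{j\ge k} Q_j^{1/j}$, which is harmless but redundant, since the proof of \Cref{lem:CC17} already performs exactly this step internally.
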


\begin{proof}
	Let $F = (F^\al)_\al \in \cE^{(M)}(K)$. Set $L_0:= \|F^0\|_K$ and
	\[
		L_k := \max \Big\{ \sup_{|\al|= k}\|F^\al\|_K, \sup_{x\ne y \in K,\, |\al|\le k-1}
		\frac{k!\, |(R^{k-1}_x F)^\al(y)|}{|\al|! \, |y-x|^{k-|\al|}}  \Big\}, \quad k \ge 1.
	\]
	Then $L \lhd M$ and, by \Cref{lem:CC17}, there is a strongly regular weight sequence
	$N$ with $L \preceq N \lhd M$. So $F \in \cE^{\{N\}}(K)$ and it has an extension $f \in \cE^{\{N\}}(\R^n)$,
	by \Cref{thm:WRoumieu}, which is also an element of $\cE^{(M)}(\R^n)$.
\end{proof}

\subsection{Extension from closed sets}

It is now easy to get extension theorems from arbitrary closed subsets of $\R^n$.

\begin{theorem} \label{thm:closed}
	Let $M$ be a strongly regular weight sequence.
	For each non-empty closed set $A \subseteq \R^n$ the mapping $j^\infty_A : \cE^{[M]}(\R^n) \to \cE^{[M]}(A)$ is surjective.
\end{theorem}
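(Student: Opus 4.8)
The plan is to reduce \Cref{thm:closed} to the compact case, which is already settled by \Cref{thm:WRoumieu} in the Roumieu setting and \Cref{thm:WBeurling} in the Beurling setting, and then to glue the local extensions by a locally finite $\cE^{[M]}$ partition of unity on $\R^n$. Since $M$ is strongly regular it is in particular non-quasianalytic, so such partitions are available via \Cref{cor:cutoff}.

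First I fix a locally finite open cover $(U_\ell)_{\ell\ge1}$ of $\R^n$ by bounded open sets with $\on{diam} U_\ell\le1$ (say, slightly inflated unit lattice cubes), so that every point of $\R^n$ has a neighbourhood meeting only finitely many of the $\overline{U_\ell}$. By \Cref{cor:cutoff} there are $\eta_\ell\in\cE^{[M]}(\R^n)$ with $0\le\eta_\ell\le1$, $\on{supp}\eta_\ell\subseteq U_\ell$, and $\eta_\ell\equiv1$ on a slightly smaller open set, chosen so that the sets $\{\eta_\ell=1\}$ still cover $\R^n$. Putting $\chi_1:=\eta_1$ and $\chi_\ell:=\eta_\ell\prod_{k<\ell}(1-\eta_k)$ for $\ell\ge2$ — a locally finite product, hence locally a finite product of elements of the ring $\cE^{[M]}(\R^n)$ — yields functions $\chi_\ell\in\cE^{[M]}(\R^n)$ with $0\le\chi_\ell\le1$, $\on{supp}\chi_\ell\subseteq U_\ell$, and $\sum_{\ell\ge1}\chi_\ell=1$ on $\R^n$, since $\chi_1+\cdots+\chi_\ell=1-(1-\eta_1)\cdots(1-\eta_\ell)$ and the $\{\eta_k=1\}$ cover $\R^n$.

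Now let $F\in\cE^{[M]}(A)$ be given. For each $\ell$ set $K_\ell:=A\cap\overline{U_\ell}$, a compact subset of $A$; if $K_\ell\ne\emptyset$, then $F|_{K_\ell}\in\cE^{[M]}(K_\ell)$ by definition of $\cE^{[M]}(A)$, so \Cref{thm:WRoumieu} (resp.\ \Cref{thm:WBeurling}) provides $f_\ell\in\cE^{[M]}(\R^n)$ with $j^\infty_{K_\ell}f_\ell=F|_{K_\ell}$; if $K_\ell=\emptyset$ set $f_\ell:=0$. Define $f:=\sum_{\ell\ge1}\chi_\ell f_\ell$. Near any point only finitely many $\chi_\ell$ are nonzero, so the sum is locally finite and $f\in C^\infty(\R^n)$. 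For $V\Subset\R^n$ the compact set $\overline V$ meets only finitely many $\on{supp}\chi_\ell$, say for $\ell$ in a finite set $S$, whence $f|_V=\sum_{\ell\in S}(\chi_\ell f_\ell)|_V$; each $\chi_\ell f_\ell$ lies in $\cE^{[M]}(\R^n)$ (a product of two such functions), so $f|_V\in\cB^{[M]}(V)$, and therefore $f\in\cE^{[M]}(\R^n)$. To check $j^\infty_Af=F$, fix $a\in A$ and a neighbourhood $U$ of $a$ on which $\sum_{\ell\in S}\chi_\ell=1$ for some finite $S$ containing all $\ell$ with $a\in\on{supp}\chi_\ell$. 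By the Leibniz rule the infinite jet at $a$ of a product is the product of the jets (as formal power series), so $j^\infty_{\{a\}}(\chi_\ell f_\ell)=j^\infty_{\{a\}}\chi_\ell\cdot j^\infty_{\{a\}}f_\ell$ for $\ell\in S$. If $a\notin\on{supp}\chi_\ell$ then $j^\infty_{\{a\}}\chi_\ell=0$; if $a\in\on{supp}\chi_\ell\subseteq U_\ell$ then $a\in A\cap\overline{U_\ell}=K_\ell$, so $j^\infty_{\{a\}}f_\ell=F(a)$. Summing,
\[
  j^\infty_{\{a\}}f=\Big(\sum_{\ell\in S}j^\infty_{\{a\}}\chi_\ell\Big)\cdot F(a)=j^\infty_{\{a\}}\Big(\sum_{\ell\in S}\chi_\ell\Big)\cdot F(a)=F(a),
\]
because $\sum_{\ell\in S}\chi_\ell\equiv1$ near $a$. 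Since $a\in A$ was arbitrary, $j^\infty_Af=F$, proving surjectivity.

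There is no genuine obstacle here beyond the bookkeeping; the entire substance sits in \Cref{thm:WRoumieu}/\Cref{thm:WBeurling}. The only point requiring a little care is that \emph{no} uniform control over the family $(f_\ell)_\ell$ is needed: local finiteness of the cover confines every seminorm estimate for $f$ on a compact set to a finite subsum, which is why the plain existence of the local extensions suffices.
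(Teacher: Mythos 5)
Your proposal is correct and follows essentially the same strategy as the paper: cover $\R^n$ by a locally finite family of bounded open sets (you use inflated lattice cubes, the paper uses concentric shells), take an $\cE^{[M]}$ partition of unity subordinate to it, extend $F$ from each compact piece $A\cap\overline{U_\ell}$ via the compact-case theorems, and glue by $f=\sum_\ell\chi_\ell f_\ell$, checking membership and jets via local finiteness. The only difference is cosmetic (shape of the cover, and you write out the standard telescoping construction of the partition of unity, which the paper merely asserts).
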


\begin{proof}
 	This follows easily from \Cref{thm:WRoumieu,thm:WBeurling}, since $\cE^{[M]}$ admits partitions of unity. Indeed,
	fix $F \in \cE^{[M]}(A)$.
  For $k \in \N_{\ge 1}$ set $U_k := \{x \in \R^n :  k-2 < |x| < k\}$; note that $U_k$, $k \ge 2$, are open shells and
  $U_1 = \{x \in \R^n : |x|<1\}$.
  There exist functions $\vh_k \in \cE^{[M]}(\R^n)$ such that $0 \le \vh_k \le 1$, $\on{supp} \vh_k \subseteq U_k$, and
  $\sum_{k=1}^\infty \vh_k =1$.
  For each $k \in \N_{\ge 1}$ the jet $F_k := F|_{\ol U_k}$ belongs to
  $\cE^{[M]}(A\cap \ol U_k)$ and thus has an extension
  $f_k \in \cE^{[M]}(\R^n)$, i.e.,
  $j^\infty_{A\cap \ol U_k}(f_k) = F_k$.
  Then
  $f := \sum_{k=1}^\infty \vh_k f_k$
  is a function in $\cE^{[M]}(\R^n)$ since on any compact set the sum is finite.
  Each $x \in A$ belongs to at most two consecutive sets $U_\ell$, $U_{\ell+1}$.
  Thus, for each $\al \in \N^n$,
  \begin{align*}
    f^{(\al)}(x)
    &= \sum_{\be \le \al} \binom{\al}{\be}\vh^{(\be)}_\ell(x) f^{(\al-\be)}_\ell(x)
    + \sum_{\be \le \al} \binom{\al}{\be}\vh^{(\be)}_{\ell+1}(x) f^{(\al-\be)}_{\ell+1}(x)
    \\&= \sum_{\be \le \al} \binom{\al}{\be}\p^\be(\vh_\ell(x) + \vh_{\ell+1}(x))
     F^{\al-\be}(x)
    =  F^\al(x),
  \end{align*}
  since all summands with $|\be|>0$ vanish.
\end{proof}

\subsection{Analytic extensions}

The ultradifferentiable extension in \Cref{thm:closed} can be made real analytic
on the complement of $A$. This follows from a result of Schmets and Validivia \cite{Schmets:1999aa}.
A different proof due to Langenbruch \cite[Theorem 13]{Langenbruch03} is based on a general
approximation theorem of Whitney type (cf.\ \cite[Lemma 6]{Whitney34a}) which is interesting in its own right.
In fact it is a special case of a quite general approximation theorem based
on a surjectivity criterion for continuous linear maps between Fr\'echet spaces.

\begin{theorem}[{\cite[Theorem 7]{Langenbruch03}}] \label{thm:Whitneyapproximation}
	Let $M=(M_k)$ be a non-quasianalytic weight sequence.
	Let $\Om \subseteq \R^n$ be open and $\et: \Om \to (0,\infty)$ continuous.
	For any $f \in \cE^{(M)}(\Om)$ there exists $g \in \cH(\Om^*)$ such that
	\[
		| f^{(\al)}(x) - g^{(\al)}(x) | \le \et(x)^{|\al|+1} M_{|\al|} \quad \text{ for all } x \in \Om, ~ \al \in \N^n.
	\]
	Here $\Om^* := \{ z \in \C^n : \Re (z) \in \Om, \, |\Im(z)| < \on{dist}(\Re (z) , \p \Om) \}$.
	We may assume that $g|_\Om$ is real-valued.
\end{theorem}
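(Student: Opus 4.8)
\emph{Strategy of proof.} The plan is to transfer Whitney's classical analytic-approximation argument to the ultradifferentiable setting. The two new ingredients are the existence of $\cE^{(M)}$ partitions of unity, which is available because $M$ is non-quasianalytic (cf.\ \Cref{cor:cutoff}), and a careful patching of a countable family of entire functions, which is what Langenbruch's general approximation theorem (itself resting on an abstract surjectivity criterion for continuous linear maps between Fr\'echet spaces) organizes once and for all. Two preliminary remarks: first, requiring $g$ to be real on $\Omega$ is cosmetic, since $\Omega^*$ is invariant under $z\mapsto\bar z$, so that if $g\in\cH(\Omega^*)$ satisfies the estimate then so does $z\mapsto\tfrac12(g(z)+\overline{g(\bar z)})$, which is holomorphic on $\Omega^*$, real on $\Omega$, and no worse than $g$ because $f$ is real. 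Second, the assertion is a strong approximation statement: one has $\cH(\Omega^*)|_\Omega\subseteq\cE^{(M)}(\Omega)$, by the Cauchy estimates on the $\on{dist}(\cdot,\partial\Omega)$-thickening $\Omega^*$ together with the fact that non-quasianalyticity (i.e.\ $\sum_k 1/\mu_k<\infty$) forces $\mu_k/k\to\infty$, hence $m_k^{1/k}\to\infty$, so $C^\omega(\Omega)\subseteq\cE^{(M)}(\Omega)$; and the theorem says that, writing $\|h\|_w:=\sup_{x\in\Omega,\,\alpha\in\N^n}|h^{(\alpha)}(x)|/(w(x)^{|\alpha|+1}M_{|\alpha|})$, the subspace $\cH(\Omega^*)|_\Omega$ approximates any $f\in\cE^{(M)}(\Omega)$ to within $1$ in $\|\cdot\|_\eta$.

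The construction goes as follows. Fix a compact exhaustion of $\Omega$ and a locally finite cover by relatively compact cubes $U_j\Subset\Omega$; by \Cref{cor:cutoff} there is a subordinate $\cE^{(M)}$ partition of unity $(\chi_j)$ with $\chi_j\in\cD^{(M)}(U_j)$ and $\sum_j\chi_j\equiv1$ on $\Omega$. Each $\psi_j:=\chi_j f\in\cD^{(M)}(U_j)$ has compact support, and its convolution $g_j:=\psi_j*G_{t_j}$ with a Gaussian of parameter $t_j$ extends to an entire function on $\C^n$, all of whose derivatives are super-exponentially small in $1/t_j$ away from $\supp\psi_j$. The fact that makes the ultradifferentiable seminorm estimates go through is that $\psi_j$ is of \emph{Beurling} type, so that its derivative bounds hold with every constant; consequently, in the norm $\|\cdot\|^M_{K,\rho}$ the high-order part of $\psi_j*G_{t_j}-\psi_j$ is uniformly negligible (estimate it using some $\rho'<\rho$), while only the finitely many low-order terms need $t_j$ small, whence $\psi_j*G_{t_j}\to\psi_j$ in $\cE^{(M)}(\R^n)$ with a controllable rate. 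One then chooses $t_j\downarrow0$ so that, simultaneously, (a) $g:=\sum_j g_j$ converges with all derivatives locally uniformly on $\Omega^*$, hence $g\in\cH(\Omega^*)$, and (b) $f-g=\sum_j(\psi_j-g_j)$, a sum locally finite on $\Omega$, satisfies $|f^{(\alpha)}(x)-g^{(\alpha)}(x)|\le\eta(x)^{|\alpha|+1}M_{|\alpha|}$; finally one symmetrizes $g$ to make it real.

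The main obstacle is exactly the simultaneous choice of the $t_j$. Near $\partial\Omega$ the tolerance $\eta$ is small, which forces $t_j$ to be tiny for the cubes $U_j$ close to $\partial\Omega$; but small $t_j$ makes $g_j$ grow rapidly in the imaginary directions, threatening the holomorphy of $\sum_j g_j$ on the complex thickening. The resolution is that $\Omega^*$ is defined precisely as the $\on{dist}(\cdot,\partial\Omega)$-thickening: its width degenerates at exactly the rate needed so that, for every compact $L\subseteq\Omega^*$, all but finitely many $g_j$ remain super-exponentially small on $L$ once the $t_j$ are chosen small enough in the right order. Making these countably many constraints compatible, while tracking in addition the weights $M_{|\alpha|}$, is the delicate part; it is exactly this bookkeeping that the abstract surjectivity criterion for maps between Fr\'echet spaces handles, its hypotheses reducing here to a local approximation lemma (provided by the mollification above) together with a Mittag--Leffler-type compatibility estimate between consecutive seminorms. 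A second, duality-based route would be via Hahn--Banach: a functional annihilating $\cH(\Omega^*)|_\Omega$ and continuous for some $\|\cdot\|_w$ is a weighted ultradistribution of finite order, and testing it against Cauchy kernels adapted to $\Omega^*$ forces it to vanish.
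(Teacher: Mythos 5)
The paper itself gives no proof of this theorem; it only cites Langenbruch and records that his proof rests on a surjectivity criterion for continuous linear maps between Fr\'echet spaces. Measured against that, several of your ingredients are correct and would appear in any proof: the $\cE^{(M)}$ partition of unity from \Cref{cor:cutoff}; the observation that the Beurling bounds, being valid for every $\rho'>0$, control the high-order tail of the mollification error $\psi_j*G_{t_j}-\psi_j$ in a norm $\|\cdot\|^M_{K,\rho}$ with \emph{fixed} small $\rho$; the fact that non-quasianalyticity forces $\mu_k/k\to\infty$, hence $m_k^{1/k}\to\infty$, hence $h_m>0$ and $\cH(\Om^*)|_\Om\subseteq\cE^{(M)}(\Om)$; and the symmetrization $g\mapsto\tfrac12\big(g+\overline{g(\bar\cdot)}\big)$.

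The gap is in the central construction $g=\sum_j\psi_j*G_{t_j}$, and it is not the bookkeeping you defer to the abstract criterion. First, $f-g=\sum_j(\psi_j-g_j)$ is \emph{not} locally finite: the $g_j$ are entire and vanish nowhere in general. Second, and fatally, you only discuss the growth of the $g_j$ in the imaginary directions (holomorphy on $\Om^*$); the real obstruction sits on $\Om$ itself, where $\et$ may tend to $0$ at $\p\Om$. Take $f\equiv 1$, $\Om=(0,1)$, $\et(x)=\tfrac12\min(x,1-x)$, and fix one piece $\psi_{j_0}=\chi_{j_0}\ge 0$ with $\int\psi_{j_0}>0$. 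For every $x\in\Om$ one has $g_{j_0}(x)\ge\big(\int\psi_{j_0}\big)\,G_{t_{j_0}}(\on{diam}\Om)=:c>0$, a constant independent of $x$ once $t_{j_0}$ is fixed; it enters the error as $-g_{j_0}(x)$ wherever $\psi_{j_0}(x)=0$, and since $f\ge0$ all the other ``far'' terms have the same sign, so they cannot cancel it. At points $x$ close to $\p\Om$, where $\et(x)<c$, the estimate for $\al=0$ could therefore only survive if the finitely many ``near'' terms undershoot by exactly this far-field excess, uniformly as $x\to\p\Om$; nothing in the construction arranges such a cancellation, and shrinking $t_{j_0}$ shrinks $c$ but never to $0$. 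This is precisely why Langenbruch (like Whitney) does \emph{not} sum global mollifications: one approximates $f$ only on the members of a compact exhaustion $\Om_p\nearrow\Om$, where $\inf\et>0$ so that your mollification argument genuinely works, and then glues by a Mittag--Leffler scheme, making the corrections $g_{p+1}-g_p$ small in sup norm on compact subsets of $\Om^*$ and converting that smallness back into the weighted norms via Cauchy estimates on $\Om^*$ together with $h_m>0$. The surjectivity criterion you mention in passing encodes exactly this mechanism; it is not an optional repackaging of your direct sum but the step that must replace it.
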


\begin{theorem}[{\cite[Theorem 13]{Langenbruch03}}] \label{thm:analyticmodification}
	Let $M=(M_k)$ be a non-quasianalytic weight sequence and $\Om \subseteq \R^n$ open.
	There exists a continuous linear mapping $T : \cB^{[M]}(\R^n) \to \cB^{[M]}(\R^n)$ such that
	$T(f)$ is an analytic $\Om$-modification of $f$, i.e.,
  $T(f)|_\Om$ is real analytic and $j^\infty_A T(f) = j^\infty_A  f$, where $A:= \R^n \setminus \Om$.
\end{theorem}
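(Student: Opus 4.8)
The plan is to build $T$ in the form $T = \on{id} - R$, where $R : \cB^{[M]}(\R^n) \to \cB^{[M]}(\R^n)$ is a continuous linear operator such that $R(f)$ is \emph{flat on $A$} (so that $j^\infty_A T(f) = j^\infty_A f$) while $(f - R(f))|_\Om$ is real analytic. It then suffices to produce, depending continuously and linearly on $f$, a function $g = g(f) \in \cH(\Om^*)$ whose derivatives approximate those of $f$ to infinite order of flatness at $\p\Om$, and to set $R(f) := f - g$ on $\Om$ and $R(f) := 0$ on $A$; then $T(f)$ equals $g(f)$ on $\Om$ (hence is real analytic there) and equals $f$ on $A$, and $j^\infty_A T(f) = j^\infty_A f - j^\infty_A R(f) = j^\infty_A f$.

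\textbf{Construction of $g$.} For $x \in \Om$ I would take the weight function $\et(x) := \exp(-1/d_A(x))$; it is continuous and positive on $\Om$, bounded by $1$, and vanishes to infinite order as $x \to \p\Om$. Since $f|_\Om \in \cB^{(M)}(\Om) \subseteq \cE^{(M)}(\Om)$, one applies \Cref{thm:Whitneyapproximation} in the form in which the approximant is delivered by a continuous linear operator $f \mapsto g(f)$ — which is exactly what the general Whitney-type approximation theorem underlying it provides via the surjectivity criterion for continuous linear maps between Fr\'echet spaces — obtaining $g = g(f) \in \cH(\Om^*)$, real-valued on $\Om$, with
\[
  |f^{(\al)}(x) - g^{(\al)}(x)| \le p(f)\, \et(x)^{|\al|+1}\, M_{|\al|}, \qquad x \in \Om,\ \al \in \N^n ,
\]
for some continuous seminorm $p$ on $\cB^{[M]}(\R^n)$, together with the growth bound on $\Om^*$ that the construction supplies: for each $\de > 0$, $\sup\{|g(z)| : z \in \Om^*,\ d_A(\Re z) \ge \de\}$ is dominated by a continuous seminorm of $f$.

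\textbf{Verification.} With $R(f) := (f-g)\,\mathbf 1_\Om$ extended by $0$ on $A$, the bound above gives $|R(f)^{(\al)}(x)| \le p(f)\,\et(x)^{|\al|+1}M_{|\al|} = o(d_A(x)^N)$ for every $N$ as $x \to \p\Om$, so by the usual flat-function argument $R(f) \in C^\infty(\R^n)$ and $j^\infty_A R(f) = 0$. For the growth estimate fix $\rh > 0$ and split $\Om = \Om'_\rh \cup \Om''_\rh$ with $\Om'_\rh := \{x : \et(x) \le \rh\}$ (a neighbourhood of $\p\Om$ in $\Om$) and $\Om''_\rh := \{x : \et(x) > \rh\}$, on which $d_A(x) \ge d_\rh > 0$. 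On $\Om'_\rh$ one gets $|R(f)^{(\al)}(x)| / (\rh^{|\al|}M_{|\al|}) \le p(f)\,\et(x)^{|\al|+1}/\rh^{|\al|} \le p(f)\,\rh$. On $\Om''_\rh$, Cauchy estimates for the holomorphic $g$ on polydiscs contained in $\Om^*$ give $|g^{(\al)}(x)| \le |\al|!\, c_\rh(f)\, \ka_\rh^{-|\al|}$ with $\ka_\rh > 0$ and $c_\rh(f)$ a continuous seminorm of $f$, whence
\[
  \frac{|R(f)^{(\al)}(x)|}{\rh^{|\al|}M_{|\al|}} \le \|f\|^M_{\R^n,\rh} + \frac{c_\rh(f)}{(\ka_\rh\rh)^{|\al|}\, m_{|\al|}} \le \|f\|^M_{\R^n,\rh} + \frac{c_\rh(f)}{h_m(\ka_\rh\rh)} .
\]
Thus $\|R(f)\|^M_{\R^n,\rh} < \infty$ for every $\rh > 0$, i.e.\ $R(f) \in \cB^{(M)}(\R^n)$, and each such bound is dominated by a continuous seminorm of $f$; hence $R$, and therefore $T = \on{id} - R$, is continuous and linear, which completes the Beurling case.

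\textbf{The Roumieu case and the main obstacle.} In the Roumieu case one argues in the same way, using the Roumieu version of \Cref{thm:Whitneyapproximation}; here the crude bound $|R(f)^{(\al)}| \le C(f)\, M_{|\al|}$ on $\Om$ already yields $R(f) \in \cB^M_1(\R^n) \subseteq \cB^{\{M\}}(\R^n)$, so the two-region refinement is unnecessary. The one genuinely non-elementary ingredient — and the point I expect to be the main obstacle — is the continuous linear dependence $f \mapsto g(f)$ carrying the displayed quantitative estimate (uniformly on bounded sets) together with the growth bound on $\Om^*$: \Cref{thm:Whitneyapproximation} is stated only existentially, and its use here needs this linear form, which is precisely what the general approximation theorem behind it supplies through the surjectivity criterion for continuous linear maps between Fr\'echet spaces. (If one merely wants that \emph{some} analytic $\Om$-modification exists — all that is needed to upgrade \Cref{thm:closed} to extensions that are real analytic off $A$ — then the existential \Cref{thm:Whitneyapproximation} already suffices via the same decomposition $f = g + R(f)$.)
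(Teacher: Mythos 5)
The survey contains no proof of this statement; it is cited from Langenbruch \cite{Langenbruch03} with a one-line remark that the proof rests on \Cref{thm:Whitneyapproximation}. So there is no in-paper argument to compare against, and I can only evaluate your proposal on its own terms.

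Your skeleton $T = \on{id} - R$ is surely the right one, and the formal verification that $R(f) := (f - g(f))\,\mathbf 1_\Om$ (extended by $0$ across $A$) is smooth, flat on $A$, and lies in $\cB^{[M]}(\R^n)$ is sound once the displayed estimates are granted. The genuine gap is precisely the step you flag as the ``main obstacle'' and then wave through. You assert that a continuous linear $f \mapsto g(f)$ is ``exactly what the general approximation theorem behind \Cref{thm:Whitneyapproximation} provides via the surjectivity criterion for continuous linear maps between Fr\'echet spaces.'' But a surjectivity criterion shows that a continuous linear map is onto; it does not produce a continuous linear right inverse. The gap between surjectivity and split surjectivity is the central theme of large parts of this survey (cf.\ the discussion of (DN), ($\Om$), and the Vogt--Wagner splitting theorem in \Cref{sec:topinv}), so this step cannot be taken on faith.

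There is also an internal warning sign that you noticed without drawing the conclusion. No linear operator can satisfy the raw estimate of \Cref{thm:Whitneyapproximation}: if $f \mapsto g(f)$ were linear and $|f^{(\al)}(x) - g(f)^{(\al)}(x)| \le \et(x)^{|\al|+1} M_{|\al|}$ held for every $f$, then replacing $f$ by $\la f$ and letting $\la \to \infty$ would force $g(f) = f$ on $\Om$, i.e.\ every $f \in \cB^{[M]}(\R^n)$ would be real analytic on $\Om$. You silently correct for this by inserting a seminorm $p(f)$ on the right-hand side -- the only possible shape of a linear version -- but that is a different statement from the one cited, and you have not established it. Two further unverified inputs: (i) the Cauchy estimate on $\Om''_\rh$ needs a bound on $\sup\{|g(z)| : z \in \Om^*,\ d_A(\Re z)\ge\de\}$ by a continuous seminorm of $f$, a growth estimate in the complex domain which \Cref{thm:Whitneyapproximation} does not provide and which the real estimates do not imply (a bound $|g^{(\al)}(x)| \le C(f)\,M_{|\al|}$ at real $x$ does not control the Taylor series off the reals); (ii) the Roumieu argument invokes a Roumieu version of \Cref{thm:Whitneyapproximation} which the survey does not state, and $\cB^{\{M\}}(\R^n)|_\Om$ need not lie in $\cE^{(M)}(\Om)$, so the stated Beurling version is not directly applicable there.

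Your closing parenthetical is accurate: if one only wants \emph{some} analytic $\Om$-modification -- enough to make the extensions of \Cref{thm:closed} real analytic off $A$ -- the existential \Cref{thm:Whitneyapproximation} already suffices via your decomposition. The continuous linear operator is the hard part of Langenbruch's theorem, and the argument as written does not deliver it.
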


\section{Continuous linear extension operators} \label{sec:extentionop}

In this section we address the question whether there exist continuous linear extension operators.
We concentrate on the Beurling case for which the question has an affirmative answer.
In \Cref{sec:extopRoumieu} we comment briefly on the Roumieu case which has a negative answer most of the time.

Let $M=(M_k)$ be a strongly regular weight sequence.
Let $A \subseteq \R^n$ be a non-empty closed set.
By \Cref{thm:closed}, the following sequence is exact:
\begin{equation}\label{shortexactsequenceA}
	\xymatrix{
	0 \ar[r] & \ker j^\infty_A ~ \ar@{^{(}->}[r] & \cE^{(M)}(\R^n) \ar[r]^{j^\infty_A} & \cE^{(M)}(A) \ar[r] & 0
	}
\end{equation}
We will show that the sequence \eqref{shortexactsequenceA} splits: there exists
a continuous linear right-inverse $E_A : \cE^{(M)}(A) \to \cE^{(M)}(\R^n)$ of $j^\infty_A$,
that is $j^\infty_A \o E_A = \on{id}_{\cE^{(M)}(A)}$. We say that $E_A$ is an \emph{extension operator}.

We give an elementary constructive proof based on the fact that the Borel map is split surjective
and on the existence of optimal cutoff functions.
In \Cref{sec:topinv} we shall briefly comment on the alternative approach based on
the splitting theorem of Vogt and Wagner \cite{Vogt:1980th}.

\subsection{The existence of extension operators is a local property}

We need a variant of \Cref{lem:prepW2}.

\begin{lemma} \label{lem:prepW3}
	Let $M$ be a strongly regular weight sequence.
	For each $\rh>0$ there exist $C,\si >0$ such that the following holds.
	If $K \subseteq \R^n$ is compact and $H \subseteq \R^n$ is compact convex with $K \subseteq \on{int} H$,
	$x_1,x_2 \in K$, and
	$f_1,f_2 \in C^\infty(\R^n)$ and $F \in \cE(K)$ satisfy $j^\infty_{\{x_1\}} f_1 = F(x_1)$, $j^\infty_{\{x_2\}} f_2 = F(x_2)$,
	and
	\[
		\|f_1\|^M_{H,\si} < \infty, \quad \|f_2\|^M_{H,\si} < \infty, \quad |F|^M_{K,\si} < \infty,
	\]
	then for all $z \in H \setminus K$ and $\al \in \N^n$,
	\begin{equation*}
		|(f_1 -f_2)^{(\al)}(z)| \le  (\|f_1\|^M_{H,\si} + \|f_2\|^M_{H,\si} + |F|^M_{K,\si}) \cdot C \rh^{|\al|} M_{|\al|} h_m(\rh (|z-x_1| + |z- x_2|)).
	\end{equation*}
\end{lemma}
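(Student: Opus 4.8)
The plan is to mimic the proof of \Cref{lem:prepW2}, the only new features being that we must carry an arbitrary convex compact set $H$ (instead of all of $\R^n$) and that all constants must be made to depend only on $\rh$ (and on $M$), not on $K$, $H$, or the particular choice of $f_1,f_2,F$. First I would fix $\rh>0$, set $\si := \rh$ (or a fixed multiple of $\rh$ to be determined), and carry out the Taylor-expansion estimate on the convex set $H$. For $z \in H$, $p \in \N$, and $|\al| \le p$, Taylor's formula with remainder applied inside $H$ (using convexity to keep all intermediate points in $H$) gives, exactly as in \eqref{eq:fxTxF},
\[
	|(f_i - T_{x_i}^pF)^{(\al)}(z)| \le \|f_i\|^M_{H,\si}\, \si^{p+1} M_{p+1} \frac{|z-x_i|^{p+1-|\al|}}{(p+1-|\al|)!},
\]
where $T_{x_i}^pF$ is the degree-$p$ Taylor polynomial of $F$ at $x_i$; here $\|f_i\|^M_{H,\si}$ replaces the constant $C$ in \eqref{eq:fxTxF} and accounts for the $H$-dependence transparently.

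Next I would estimate the difference of the Taylor polynomials. As in the proof of \Cref{lem:prepW2}, writing $T_{x_1}^pF - T_{x_2}^pF$ in terms of $(R^p_{x_2}F)^\be(x_1)$ and using the Whitney-jet bound \eqref{jets2} — with the constant there absorbed into $|F|^M_{K,\si}$ via \eqref{eq:jetseminorm2} — yields
\[
	|(T_{x_1}^pF - T_{x_2}^pF)^{(\al)}(z)| \le |F|^M_{K,\si}\, (2\si)^{p+1} M_{p+1} \frac{(|z-x_1|+|z-x_2|)^{p+1-|\al|}}{(p+1-|\al|)!}.
\]
Combining the three estimates through the triangle inequality $|(f_1-f_2)^{(\al)}(z)| \le |(f_1 - T_{x_1}^pF)^{(\al)}(z)| + |(T_{x_1}^pF - T_{x_2}^pF)^{(\al)}(z)| + |(f_2 - T_{x_2}^pF)^{(\al)}(z)|$ and bounding $|z-x_i| \le |z-x_1|+|z-x_2| =: \rho(z)$ gives, with $\si_1 := 2\si$ and $S := \|f_1\|^M_{H,\si} + \|f_2\|^M_{H,\si} + |F|^M_{K,\si}$,
\[
	|(f_1-f_2)^{(\al)}(z)| \le S\, \si_1^{p+1} M_{p+1} \frac{\rho(z)^{p+1-|\al|}}{(p+1-|\al|)!}, \qquad \text{for all } p \ge |\al|.
\]
Finally, exactly as in the last step of \Cref{lem:prepW2}, I invoke moderate growth of $M$ in the form $M_{p+1} \le C^{p+1} M_{|\al|}\, (p+1-|\al|)!\, m_{p+1-|\al|}$ (using \eqref{eq:mg} together with $M_{|\al|} \le |\al|!\,m_{|\al|}$-type bookkeeping, or more directly the moderate-growth inequality $M_{j+k} \le C^{j+k} M_j M_k$ and $M_k = k!\,m_k$) to convert the bound into
\[
	|(f_1-f_2)^{(\al)}(z)| \le S\, C \sigma_2^{|\al|} M_{|\al|} \cdot \big(\sigma_2 \rho(z)\big)^{p+1-|\al|} m_{p+1-|\al|},
\]
valid for every $p \ge |\al|$; taking the infimum over $p$ replaces the last two factors by $h_m(\sigma_2 \rho(z))$ (definition \eqref{h}), which after renaming $\sigma_2$ back to $\rho$ is precisely the claimed inequality. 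One checks that $\sigma_2$ is a fixed multiple of $\si = \rho$ determined only by $M$, so the hypotheses can indeed be stated in terms of a single $\si = \si(\rho)$.

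The only genuinely delicate point is the last bookkeeping step: making sure the moderate-growth conversion produces a constant $C$ and a rescaling $\si \mapsto \sigma_2$ that depend on $M$ and $\rho$ alone, uniformly in $K$, $H$, $x_1$, $x_2$. Since moderate growth \eqref{eq:mg} is a property of $M$ with a fixed constant, and the Taylor estimates above have no hidden dependence on $K$ or $H$ beyond the explicit factors $\|f_i\|^M_{H,\si}$ and $|F|^M_{K,\si}$, this uniformity is automatic; I would simply be careful to track that $\si$ can be chosen as, say, $\si := \rho/(2C_M)$ with $C_M$ the moderate-growth constant, so that $\sigma_2 = \rho$ comes out on the nose. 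Everything else is a routine repetition of the argument already given for \Cref{lem:prepW2}.
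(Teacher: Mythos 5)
Your proposal is correct and follows essentially the same route as the paper's proof: the same three-term triangle inequality through the Taylor polynomials $T^p_{x_i}F$, the same estimates via \eqref{jets2} and \eqref{eq:fxTxF} with $\|f_i\|^M_{H,\si}$ and $|F|^M_{K,\si}$ as the explicit constants, and the same moderate-growth conversion $M_{p+1}\le D^{p+1}M_{|\al|}M_{p+1-|\al|}$ followed by an infimum over $p$ to produce $h_m$, with $\si$ a fixed multiple of $\rh$ determined by the moderate-growth constant. The only point worth making explicit (which the paper also leaves to ``the assertion follows easily'') is that the infimum over $p\ge|\al|$ only yields $\inf_{k\ge 1}m_k t^k$, so the $k=0$ term in the definition \eqref{h} of $h_m$ must be covered separately by the trivial bound $|(f_1-f_2)^{(\al)}(z)|\le(\|f_1\|^M_{H,\si}+\|f_2\|^M_{H,\si})\,\si^{|\al|}M_{|\al|}$.
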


\begin{proof}
	As in the proof of \Cref{lem:prepW2} we find, for $|\al| \le p$,
	\[
		|(T^p_{x_1}F - T^p_{x_2}F)^{(\al)}(z)| \le |F|^M_{K,\si} (2\si)^{p+1} M_{p+1} \frac{(|z-x_1| +|z-x_2|)^{p+1 -|\al|}}{(p+1 -|\al|)!}.
	\]
	Since $j^\infty_{\{x_i\}} f_i  = F(x_i)$,
	\begin{align*}
		|(f_i- T^p_{x_i} F)^{(\al)}(z)| \le \|f_i\|^M_{H,\si} \si^{p+1} M_{p+1} \frac{(|z-x_i|)^{p+1 -|\al|}}{(p+1 -|\al|)!}, \quad i=1,2,
	\end{align*}
	as in \eqref{eq:fxTxF}.
	Since $M$ has moderate growth, $M_{p+1} \le D^{p+1} M_{|\al|}M_{p+1 - |\al|}$ for some $D>0$. Then the triangle inequality gives
	for all $p \ge |\al|$,
	\begin{align*}
		|(f_1 -f_2)^{(\al)}(z)| &\le (2\si D)^{|\al|} M_{|\al|} m_{p+1-|\al|} (2\si D (|z-x_1| + |z- x_2|))^{p+1 - |\al|}
		\\
		&\qquad \times	 (\|f_1\|^M_{H,\si} + \|f_2\|^M_{H,\si} + |F|^M_{K,\si})
	\end{align*}
	and the assertion follows easily.
\end{proof}

Let $M$ be a strongly regular weight sequence.
Let $A \subseteq \R^n$ be a closed non-empty set.
A continuous linear mapping $E : \cE^{(M)}(A) \to \cE^{(M)}(\R^n)$ is said to be a \emph{local extension operator for $A$ in $x$}
if
\begin{equation*}
	j^\infty_{\{x\}} E(F) =  F(x), \quad \text{ for all } F \in \cE^{(M)}(A).
\end{equation*}
Clearly, $E$ is an extension operator for $A$ if it is a local extension operator for $A$ in \emph{every} $x \in A$.

\begin{proposition}[{\cite[Lemma 3.6]{Franken:1993tn}}] \label{prop:localproperty}
	Let $M$ be a strongly regular weight sequence.
	Let $A \subseteq \R^n$ be a closed non-empty set.
	Suppose that for each $x \in \p A$ there exists a local extension operator $E_x$ for $A$ in $x$
	and that $\{E_x : x \in \p A\}$ is locally equicontinuous in $L(\cE^{(M)}(A), \cE^{(M)}(\R^n))$.
	Then there exists an extension operator $E : \cE^{(M)}(A) \to \cE^{(M)}(\R^n)$.
\end{proposition}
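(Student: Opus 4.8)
The statement to prove is \Cref{prop:localproperty}: if for each boundary point $x \in \p A$ there is a local extension operator $E_x$ for $A$ in $x$, and the family $\{E_x : x \in \p A\}$ is locally equicontinuous, then a global extension operator $E$ exists. The natural strategy is to patch the local extension operators together using an $\cE^{(M)}$ partition of unity — this is the Beurling analogue of the gluing performed in the proof of \Cref{thm:closed}, except that now we must be careful to keep everything continuous and linear in $F$, and to control the partition patching near $\p A$ by means of the optimal cutoff function estimate from \Cref{lem:prepW3}.

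\textbf{Step 1: reduce to a neighbourhood of $\p A$.} On the interior $\on{int} A$ the jet $F$ is already (the jet of) a smooth function, but to get an $\cE^{(M)}(\R^n)$-extension we only need a genuine extension near $\p A$: on a neighbourhood of the (closed) set $A \setminus (\text{small neighbourhood of }\p A)$ one can use a fixed Whitney-type extension (or simply the first component $F^0$ extended via a partition of unity, since all derivatives are prescribed consistently in the interior), and the delicate analytic work is localized at $\p A$. So it suffices to produce a continuous linear map that extends $F$ correctly on an open neighbourhood $U$ of $\p A$, and then glue with the interior extension by a fixed $\cE^{(M)}$ cutoff; the gluing on the overlap is fine because both pieces have the same infinite jet along $\p A$, which controls the difference via Taylor's formula exactly as in the continuity estimate of \Cref{lem:prepW3}.

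\textbf{Step 2: a locally finite cover of $\p A$ and a subordinate $\cE^{(M)}$ partition of unity.} Cover $\p A$ by countably many balls $B_i$, each small enough to sit (with its double) inside the domain on which the corresponding local extension operator $E_{x_i}$ (for some chosen $x_i \in B_i \cap \p A$) is controlled, with the cover locally finite. Using \Cref{cor:cutoff} (non-quasianalytic cutoff functions, available since a strongly regular weight sequence is non-quasianalytic) build $\th_i \in \cE^{(M)}(\R^n)$ with $0 \le \th_i \le 1$, $\on{supp}\th_i \subseteq B_i$, and $\sum_i \th_i = 1$ on a neighbourhood of $\p A$, plus one more cutoff $\th_0$ supported in $\on{int}A$ away from $\p A$ with $\th_0 + \sum_i \th_i = 1$ near $A$. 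Define
\[
  E(F) := \th_0 \cdot E_{\mathrm{int}}(F) + \sum_{i\ge 1} \th_i \cdot E_{x_i}(F),
\]
where $E_{\mathrm{int}}$ is the fixed interior extension. This is manifestly linear in $F$, and on any compact set only finitely many terms are nonzero, so $E(F) \in C^\infty(\R^n)$.

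\textbf{Step 3: check $j^\infty_{\{x\}} E(F) = F(x)$ for every $x \in A$, and continuity.} For $x \in \p A$: near $x$ only finitely many $\th_i$ are nonzero and they sum to $1$, so by the Leibniz rule the infinite jet of $E(F)$ at $x$ is $\sum_i (\th_i(x)\text{-weighted mixtures}) \cdot j^\infty_{\{x\}} E_{x_i}(F)$; the crucial point is that each local extension operator in $B_i$ produces a function whose jet at \emph{every} $y \in \p A \cap B_i$ equals $F(y)$ up to the required flatness — this is where \Cref{lem:prepW3} enters: the difference $E_{x_i}(F) - E_{x_j}(F)$ is $h_m$-flat along $\p A$, so all the cross terms $\th_i^{(\be)}(x)(E_{x_i}(F)-E_{x_j}(F))^{(\al-\be)}(x)$ with $|\be|>0$ vanish to infinite order, leaving $j^\infty_{\{x\}}E(F) = F(x)$. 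For $x \in \on{int} A$ the same computation with $\th_0$ and finitely many $\th_i$ (each of which, by hypothesis on $E_{x_i}$ or by the interior construction, reproduces the jet $F(x)$) gives the result; the summands with $|\be|>0$ again telescope against $\sum \th_i \equiv 1$. Finally, continuity of $E$: on a compact $K$ and a given $\rh$, the seminorm $\|E(F)\|^M_{K,\rh}$ is dominated by a finite sum of terms $\|\th_i E_{x_i}(F)\|^M_{K,\rho}$, each bounded by the Leibniz rule and the local equicontinuity of $\{E_{x_i}\}$ in terms of a single seminorm $p_{K',m}(F)$ on a slightly larger compact $K'$ — here it is essential that the family $\{E_x\}$ is \emph{locally equicontinuous}, so that the bound is uniform in the index $i$, and that $\cE^{(M)}(A)$ carries the projective-limit topology, so finitely many seminorms suffice on each compact piece.

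\textbf{Main obstacle.} The genuinely delicate point is Step 3's jet computation: one must verify that on the overlaps $B_i \cap B_j$ the two local extensions $E_{x_i}(F)$ and $E_{x_j}(F)$ agree to \emph{infinite order along all of $\p A$}, not merely at the base points $x_i, x_j$, so that multiplying by the derivatives of the partition functions kills all error terms — this requires applying \Cref{lem:prepW3} (with $K$ a suitable compact piece of $\p A$ and $H$ a convex neighbourhood) to control $(E_{x_i}(F)-E_{x_j}(F))^{(\al)}(z)$ by $h_m(\rho\,(|z-x_i|+|z-x_j|))$, and then observing that $d_{\p A}(z) \gtrsim |z - x_i|$ on the relevant region so the flatness is genuinely against $d_{\p A}$. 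Combining this with the moderate-growth property \eqref{hmg} (to absorb the partition-function derivative bounds, which carry their own $h_m(\cdot\, d_A)^{-1}$ factors from \Cref{Proposition6matrixB}) is exactly the mechanism already used in the proof of \Cref{thm:WRoumieu}, so the estimates are routine once set up — but setting them up carefully, keeping track of which compact set and which $\rho$ each constant depends on, is where the bookkeeping lives.
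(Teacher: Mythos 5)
There is a genuine gap, and it is located exactly where you placed the ``main obstacle'': the jet computation in Step 3 fails for the partition of unity you chose. You cover $\p A$ by balls $B_i$ of fixed (locally bounded below) radius, pick one base point $x_i\in B_i\cap \p A$ per ball, and set $E(F)=\th_0E_{\mathrm{int}}(F)+\sum_i\th_i E_{x_i}(F)$ with $\supp\th_i\subseteq B_i$. But a local extension operator $E_{x_i}$ only guarantees $j^\infty_{\{x_i\}}E_{x_i}(F)=F(x_i)$ at the single point $x_i$; at another point $x\in\p A\cap B_i$ there is no reason why even the zeroth-order value $E_{x_i}(F)(x)$ should equal $F^0(x)$. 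So already $E(F)(x)=\sum_i\th_i(x)E_{x_i}(F)(x)\ne F^0(x)$ in general. The tool you invoke to repair this, \Cref{lem:prepW3}, controls $(E_{x_i}(F)-E_{x_j}(F))^{(\al)}(z)$ only by $h_m(\rh(|z-x_i|+|z-x_j|))$, i.e.\ flatness relative to the distance to the \emph{base points}. For $z\in\p A$ at distance comparable to the ball radius from $x_i$, this quantity is bounded below by a positive constant and gives no flatness whatsoever. Your claim that ``the difference $E_{x_i}(F)-E_{x_j}(F)$ is $h_m$-flat along $\p A$'' is therefore false for fixed-size supports, and both the $\be=0$ term and the telescoping of the $\be\ne 0$ terms break down.

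The correct mechanism — and the one the paper uses — requires the supports of the partition functions to shrink as one approaches $A$, so that every base point associated with a patch containing $z$ satisfies $|z-x_j|\lesssim d_A(z)$. Concretely: take a family of Whitney cubes $(Q_j)$ for $A$ (\Cref{prop:Wcubes}), the optimal Beurling partition of unity $(\vh_j)$ of $\R^n\setminus A$ from \Cref{Proposition6matrixB} (supported in $Q_j^*$, with derivative bounds $\lesssim \rh^{|\be|}M_{|\be|}/h_m(\ta_1 d_A(z))$), choose $x_j\in\p A$ nearest to $Q_j$, and set $E(F)=\sum_j\vh_jE_{x_j}(F)$ off $A$ and $E(F)=F^0$ on $A$. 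Then \Cref{lem:prepW3} gives flatness $h_m(\rh\, d_A(z))$ for the relevant differences, which beats the $1/h_m(\ta_1 d_A(z))$ blow-up of the cutoff derivatives via the moderate-growth inequality \eqref{hmg}, and local equicontinuity of $\{E_x\}$ makes all constants uniform. Note also that no separate interior extension $E_{\mathrm{int}}$ is needed: the Whitney cubes exhaust $\R^n\setminus A$ and on $A$ one simply prescribes $F^0$, with smoothness across $\p A$ supplied by the Hestenes-type argument as in the proof of \Cref{thm:WRoumieu}.
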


\begin{proof}
	Let $(Q_j)_{j\in \N}$ be a family of Whitney cubes for $A$ (cf.\ \Cref{prop:Wcubes}) and let $(\vh_j)_{j\in \N}$ be the
	partition of unity provided by \Cref{Proposition6matrixB}.
  For each $j$ choose a point $x_j \in \p A$ such that $d_{Q_j}(x_j)=\on{dist}(A,Q_j)$.
  For $F \in \cE^{(M)}(A)$ define
	\[
		E(F)(z) :=
		\begin{cases}
			\sum_{j\ge1} \vh_j(z) E_{x_j}(F)(z) & \text{ if } z \in \R^n \setminus A,
			\\
			F^0(z) & \text{ if } z \in A.
		\end{cases}
	\]
  Clearly, $E(F)$ is of class $C^\infty$ on $\R^n \setminus A$.
  Recall that $p_{K,\si}(F) := \|F\|^M_{K,\si} + |F|^M_{K,\si}<\infty$ for all compact $K \subseteq A$ and $\si>0$.

  \begin{claim*}
		For each compact convex $H \subseteq \R^n$
	and for each $\rh>0$
	there exist $C,\si,\ta>0$ and a compact subset $K \subseteq A$
	such that
	for all $x \in \p A \cap H$, $z \in H \setminus A$ with $|x-z|\le \frac{b_0}{7}$ (with $b_0$ from \Cref{prop:Wcubes}),
	$F \in \cE^{(M)}(A)$, and $\al \in \N^n$,
	\begin{equation} \label{eq:estclaim}
		|(E(F) - E_x(F))^{(\al)}(z)| \le C\, p_{K,\si}(F)\, \rh^{|\al|} M_{|\al|} h_m(\ta|x-z|).
	\end{equation}
	\end{claim*}

  Let us prove the claim.
  By \Cref{claim1}, there is $C>1$ such that
  \begin{equation} \label{hmgR}
    h_m(t) \le h_m(Ct)^2, \quad t>0.
  \end{equation}
  Let a compact convex set $H \subseteq \R^n$ and $\rh>0$ be fixed.
  By \Cref{Proposition6matrixB}, there exist positive $C_1 =C_1(\rh)$ and $\ta_1 = \ta_1(\rh)$ such that, for all $j$,
  \begin{equation} \label{eq:estcutoffB}
    |\vh_j^{(\al)}(z)| \le C_1 \frac{\rh^{|\al|} M_{|\al|}}{h_m(\ta_1 d_A(z))},\quad \al\in \N^n,\, z \in \R^n.
  \end{equation}
  Set
  \begin{equation} \label{eq:rhotwo}
  \rh_2 := \min\{\rh, \tfrac{b_0\ta_1}{7C} \}.
  \end{equation}
  We invoke \Cref{lem:prepW3} for $\rh_2$: there exist
  $C_2,\si_2>0$ such that the assertion of the lemma
  holds for $\rh_2$, $C_2$, $\si_2$ (instead of $\rh$, $C$, $\si$ in the lemma).

  For $\ep>0$ let $H_\ep := \{z \in \R^n : d_H(z) \le \ep\}$ be the closed $\ep$-neighborhood of $H$.
  By assumption, the set $\{E_x : x \in \p A \cap H_2\}$ is equicontinuous in $L(\cE^{(M)}(A), \cE^{(M)}(\R^n))$.
  It follows that there exist constants $\si_3 \ge \si_2$, $C_3\ge 1$ and a compact set $K \subseteq A$ with $A \cap H_2 \subseteq K$
  such that
  \begin{equation*}
    \|E_x(F)\|^M_{H_2,\si_2} \le C_3 \, p_{K,\si_3}(F), \quad F \in \cE^{(M)}(A), ~ x \in \p A \cap H_2.
  \end{equation*}
  Then, by the assertion of \Cref{lem:prepW3}, for all $x,y \in \p A \cap H_1$, $z \in H_1 \setminus A$ and $\al \in \N^n$,
  \begin{align} \label{eq:equic}
    \MoveEqLeft
     |(E_x(F) - E_y(F))^{(\al)}(z)| \le C_4\, p_{K,\si_3}(F)\, \rh_2^{|\al|} M_{|\al|}  h_m(\rh_2(|z-x|+|z-y|)),
  \end{align}
  for some $C_4 \ge 1$.

  We are ready to estimate
  \[
    (E(F) - E_x(F))^{(\al)}(z) = \sum_{\be \le \al} \binom{\al}{\be} \sum_{j \ge 1} \vh_j^{(\be)}(z) (E_{x_j}(F) - E_x(F))^{(\al-\be)}(z),
  \]
  where $x \in \p A \cap H$ and $z \in H \setminus A$ with $|x-z|\le \frac{b_0}{7}$.
  By \Cref{prop:Wcubes}, for each $j \ge 1$ and $z \in Q_j^*$,
  \begin{align*}
    |x_j - z| &\le d_{Q_j}(x_j) + \on{diam} Q_j^*
    = \on{dist}(A,Q_j) + \tfrac{9}{8} \on{diam} Q_j \le 6 \on{diam} Q_j \le \tfrac{6}{b_0} d_A(z).
  \end{align*}
	Consequently (since $b_0 \le 1$),
	\[
		|x-x_j| \le |x-z| + |z-x_j| \le |x-z| + \tfrac{6}{b_0} d_A(z)  \le \tfrac{7}{b_0} |x-z| \le 1
	\]
	and thus $x_j \in H_1$.
  So, by \eqref{eq:equic}, for the term with $\be =0$ we have
	\begin{align*}
		\MoveEqLeft
		\sum_{j \ge 1} |\vh_j(z)| |(E_{x_j}(F) - E_x(F))^{(\al)}(z)|
			\\
			&\le
			C_4 \, p_{K,\si_3}(F)\, \rh_2^{|\al|} M_{|\al|}  h_m(\rh_2(|z-x|+|z-x_j|))
			\\
			&\le
			C_4 \, p_{K,\si_3}(F)\, \rh_2^{|\al|} M_{|\al|}  h_m(\tfrac{7\rh_2}{b_0} |z-x|)
			\\
      &\stackrel{\eqref{eq:rhotwo}}{\le}
			C_4 \, p_{K,\si_3}(F)\, \rh^{|\al|} M_{|\al|}  h_m(\ta_1 |z-x|).
	\end{align*}
  To estimate the terms with $\be \ne 0$, let $y \in \p A$ be such that $|y-z| = d_A(z)$.
  Then $\sum_{j \ge 1} \vh_j^{(\be)}(z) = 0$ implies
  \begin{align*}
    \sum_{j\ge 1} \vh^{(\be)}_{j}(z) \, (E_{x_j}(F) -E_x(F))^{(\al-\be)}(z) = \sum_{j\ge 1} \vh^{(\be)}_{j}(z) \, (E_{x_j}(F) -E_y(F))^{(\al-\be)}(z).
  \end{align*}
  We have $|y -z| = d_A(z) \le |z-x| \le \frac{b_0}7$ and thus $y \in H_1$.
  Moreover, if $z \in Q_j^*$, then
  \[
     |z-x_j| + |z-y| \le \tfrac{6}{b_0} d_A(z) + d_A(z)  \le \tfrac{7}{b_0} d_A(z) \le \tfrac{7}{b_0} |x-z| \le 1.
  \]
  Thus, by \eqref{eq:estcutoffB} and \eqref{eq:equic},
  \begin{align*}
    \MoveEqLeft
    \sum_{j \ge 1} |\vh_j^{(\be)}(z)| |(E_{x_j}(F) - E_y(F))^{(\al-\be)}(z)|
    \\
    &\le 12^{2n} C_1 C_4 \, p_{K,\si_3}(F)\, \rh^{|\be|} M_{|\be|} \rh_2^{|\al-\be|} M_{|\al-\be|} \frac{ h_m(\rh_2(|z-x_j|+|z-y|))}{h_m(\ta_1 d_A(z))}
    \\
    &\stackrel{\eqref{eq:rhotwo}}{\le} 12^{2n} C_1 C_4 \, p_{K,\si_3}(F)\, \rh^{|\al|} M_{|\al|}  \frac{ h_m(\frac{\ta_1}{C}d_A(z))}{h_m(\ta_1 d_A(z))}
    \\
    &\stackrel{\eqref{hmgR}}{\le} 12^{2n} C_1 C_4 \, p_{K,\si_3}(F)\, \rh^{|\al|} M_{|\al|}  h_m(\ta_1 d_A(z)).
  \end{align*}
  Then the claim follows easily.

  We are ready to complete the proof of the proposition.
  That $E(F)$ is a $C^\infty$-function on $\R^n$ with $j^\infty_A E(F) = F$ follows in analogy to the proof of \Cref{thm:WRoumieu}.

  It remains to show that $E(F) \in \cE^{(M)}(\R^n)$ and that the linear map $E : \cE^{(M)}(A) \to \cE^{(M)}(\R^n)$ is continuous.
  Let $H \subseteq \R^n$ be a compact convex set and $\rh>0$.
  Let $H_{b_0/7}$ be the closed $\tfrac{b_0}7$-neighborhood of $H$.
  By the claim, there exist $C,\si,\ta>0$ and a compact set $K \subseteq A$ such that for all $x \in \p A \cap H_{b_0/7}$, $z \in H_{b_0/7} \setminus A$
  with $|x-z| \le \tfrac{b_0}7$, $F \in \cE^{(M)}(A)$ and $\al \in \N^n$ we have \eqref{eq:estclaim}, in particular,
  \begin{equation} \label{eq:estclaimR}
		|(E(F) - E_x(F))^{(\al)}(z)| \le C\, p_{K,\si}(F)\, \rh^{|\al|} M_{|\al|}.
	\end{equation}
  If $z \in H \setminus A$ satisfies $d_A(z) \le \tfrac{b_0}7$, then there is $x \in \p A \cap H_{b_0/7}$ with $d_A(z) = |x-z|$.
  For such $z$ and $\al \in \N^n$,
  \begin{align*}
    |E(F)^{(\al)}(z)| &\le |E_x(F)^{(\al)}(z)| + |E(F)^{(\al)}(z) - E_x(F)^{(\al)}(z)|
    \\
    &\le C \, p_{K,\si}(F)\,  \rh^{|\al|} M_{|\al|},
  \end{align*}
  by \eqref{eq:estclaimR} and by the assumption on local equicontinuity of $E_x$ (increasing $K$, $C$, and $\si$ if necessary).
  Together with \eqref{jets1} this shows the assertion.
  The proof is complete.
\end{proof}

\subsection{Extension operators always exist in the Beurling case}

\begin{theorem} \label{thm:WSsplit}
	Let $M$ be a strongly regular weight sequence.
	Let $A \subseteq \R^n$ be a non-empty closed set.
	There exists an extension operator $E : \cE^{(M)}(A) \to \cE^{(M)}(\R^n)$.
\end{theorem}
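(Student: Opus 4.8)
The strategy is to reduce the global statement to the local criterion provided by \Cref{prop:localproperty}: it suffices to produce, for every boundary point $x \in \p A$, a local extension operator $E_x : \cE^{(M)}(A) \to \cE^{(M)}(\R^n)$, and to verify that the family $\{E_x : x \in \p A\}$ is locally equicontinuous in $L(\cE^{(M)}(A),\cE^{(M)}(\R^n))$. Once this is done, \Cref{prop:localproperty} immediately yields the desired extension operator $E$.

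The construction of $E_x$ will mirror the extension formula \eqref{eq:extensionformula} from the proof of \Cref{thm:WRoumieu}, but with two essential modifications that make it \emph{linear} in the jet $F$ and of Beurling type. First, in place of the ``local extensions'' $f_{\hat x_j}$ of \Cref{lem:BorelSilva} (whose dependence on $F$ is only bounded, not linear), I would use the Taylor polynomials $T^p_{\hat x_j} F$ of the jet itself, composed with a correction term built from the split right-inverse $E : \La^{(M)} \to \cE^{(M)}(\R)$ of the Borel map furnished by \Cref{thm:Borelsufficient}(2). More precisely: for a fixed reference point $x \in \p A$, set $g_x := E_{\text{Borel}}(F(x))$ where $E_{\text{Borel}}$ is the continuous linear right-inverse of $j^\infty_{\{0\}}$ (suitably translated to $x$); then for each Whitney cube $Q_j$ define a local patch that agrees with $g_x$ to high order and interpolates the jet data $F(\hat x_j)$ through its Taylor polynomial — the degree being chosen to grow as $d_A$ decreases, in the spirit of Dyn'kin's almost-analytic technique mentioned in the introduction. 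Second, the partition of unity $(\vh_j)_{j\ge 1}$ must be the \emph{Beurling} one from \Cref{Proposition6matrixB}, whose bounds are $\rho$-independent cutoff functions realizing $h_m(\ta_1 d_A)$-type estimates; this is precisely what allows the construction to land in $\cE^{(M)}$ rather than merely $\cE^{\{M\}}$.

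The analytic heart of the argument is a Beurling analogue of the chain of estimates in \Cref{thm:WRoumieu}: one shows that for each $\rho>0$ there are constants $C,\si,\ta>0$ and a compact $K\subseteq A$ such that
\[
  |(E_x(F) - g_x)^{(\al)}(z)| \le C\, p_{K,\si}(F)\, \rho^{|\al|} M_{|\al|}\, h_m(\ta |z-x|), \qquad z \in \R^n\setminus A,
\]
using \Cref{lem:prepW2}, \Cref{lem:prepW3}, the moderate-growth property \eqref{hmg}, and the fact that at most $12^{2n}$ cutoffs overlap at any point (\Cref{prop:Wcubes}(5)). The telescoping trick with $\sum_j \vh_j^{(\be)} = 0$ for $\be\ne 0$ and the replacement of $f_{\hat z}$ by the appropriate nearest-point jet datum carries over verbatim. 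Linearity of $E_x$ is clear from the formula since $F \mapsto T^p_{\hat x_j} F$, $F \mapsto E_{\text{Borel}}(F(x))$, and multiplication by the fixed cutoffs $\vh_j$ are all linear; continuity — and, crucially, local equicontinuity as $x$ ranges over a compact piece of $\p A$ — follows by tracking the constants above and observing that they depend on $x$ only through the compact set $K$, which can be enlarged to absorb all $x$ in a given compact neighborhood.

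The main obstacle I anticipate is the bookkeeping needed to make the Taylor-polynomial patches depend linearly and continuously on $F$ while still producing Beurling-type bounds: in the Roumieu proof one hides a loss of $\rho$ inside the choice of $\ep$, but here the estimate must hold \emph{for every} $\rho>0$ simultaneously, so the degree $p = p(j)$ of the Taylor polynomial and the parameter $\ep$ in the cutoff must be coupled to $d_A(z)$ (not to $\rho$) in a way that extracts the full $h_m$-decay. Verifying that this coupling is compatible with local equicontinuity of $\{E_x\}$ — i.e. that one compact set $K$ and one triple $(C,\si,\ta)$ work for all $x$ in a compact subset of $\p A$ and all $\al$ — is the delicate point, but it is exactly the content that \Cref{prop:localproperty} is designed to package, so once the pointwise estimate is in hand the conclusion is routine.
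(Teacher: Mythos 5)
You correctly identify the two ingredients the paper uses---split surjectivity of the Borel map (\Cref{thm:Borelsufficient}(2)) and the localization principle (\Cref{prop:localproperty})---but you then build a far more elaborate local operator than the statement of \Cref{prop:localproperty} requires, and the ``main obstacle'' you anticipate is an artifact of that over-engineering.

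Re-read the definition preceding \Cref{prop:localproperty}: a local extension operator for $A$ in $x$ is a continuous linear $E_x : \cE^{(M)}(A) \to \cE^{(M)}(\R^n)$ satisfying only
\[
  j^\infty_{\{x\}} E_x(F) = F(x), \quad F \in \cE^{(M)}(A),
\]
i.e.\ the jet must be matched at the \emph{single point} $x$. It is not required that $E_x(F)$ extend the whole jet $F$ near $x$. Consequently there is no need to introduce Whitney cubes, cutoff partitions, or Taylor-polynomial patching in the construction of $E_x$: one may simply set
\[
  E_x(F) := \ta_x \o E_{\mathrm{Borel}} \bigl( (F^\al(x))_{\al} \bigr),
\]
where $E_{\mathrm{Borel}} : \La^{(M)} \to \cE^{(M)}(\R^n)$ is the continuous linear right-inverse of the Borel map furnished by \Cref{thm:Borelsufficient}(2) and $\ta_x$ is translation by $x$. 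This is continuous and linear as a composite of two such maps; and $\{E_x : x \in \p A \cap H\}$ is equicontinuous for compact $H$ because the seminorm $\|F\|^M_{K,\si}$ defining the topology on $\cE^{(M)}(A)$ already controls $\sup_{x\in K} |F^\al(x)|$ uniformly over $x$, while $E_{\mathrm{Borel}}$ is a single fixed operator and translation preserves the seminorms on $\cE^{(M)}(\R^n)$ up to replacing the compact set.

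The entire apparatus you describe---Whitney cubes, telescoping via $\sum_j \vh^{(\be)}_j = 0$, coupling the Taylor degree and $\ep$ to $d_A(z)$, verifying Beurling-type $h_m$-decay---is precisely the \emph{content of the proof} of \Cref{prop:localproperty}, which has already been established. A user of that proposition is not asked to re-derive it inside the definition of $E_x$. Once you replace your elaborate $E_x$ by the one-line operator above, the difficulty you flag (making the coupling work ``for every $\rho$ simultaneously'') disappears, and the proof reduces, as in the paper, to citing \Cref{thm:Borelsufficient} and \Cref{prop:localproperty}.
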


\begin{proof}
	This follows from \Cref{cor:extopsingleton} and \Cref{prop:localproperty}.
\end{proof}

\subsection{A different approach based on a splitting theorem for Fr\'echet spaces} \label{sec:topinv}

Let $M=(M_k)$ be a strongly regular sequence.
Let $K$ and $L$ be a non-empty compact subsets of $\R^n$ such that $K \subseteq \on{int} L$.
By \Cref{thm:WBeurling}, the sequence
\begin{equation}\label{shortexactsequence}
	\xymatrix{
	0 \ar[r] & \cD^{(M)}(L,K)~ \ar@{^{(}->}[r] & \cD^{(M)}(L) \ar[r]^{j^\infty_K} & \cE^{(M)}(K) \ar[r] & 0
	}
\end{equation}
is an exact sequence of Fr\'echet spaces,
where we endow
\[
	\cD^{(M)}(L,K) := \{f \in \cE^{(M)}(\R^n) : \on{supp} f \subseteq L,\, j^\infty_K f = 0\}
\]
and $\cD^{(M)}(L)$
with the subspace topology of $\cE^{(M)}(\R^n)$.
By the splitting theorem of Vogt and Wagner \cite{Vogt:1980th} (see also \cite[30.1]{MeiseVogt97}),
the sequence splits if $\cE^{(M)}(K)$ has property (DN) and
 $\cD^{(M)}(L,K)$
has property ($\Om$).

A Fr\'echet space $E$ with fundamental system of seminorms $\sP = (p_j)_{j \in \N}$
has \emph{property} (DN) if
\[
  \E m \in \N \A k \in \N \E \ell \in \N \E C>0 : p_k^2 \le C p_m p_\ell.
\]
Then $p_m$ is a norm and every norm with this property is called a \emph{dominating norm}.
It has \emph{property} ($\Om$) if
\[
  \A m \in \N \E k \in \N \A \ell \in \N \E C>0 \E \th \in (0,1) : p_k^* \le C  (p_m^*)^{1-\th} (p_\ell^*)^\th,
\]
where $p^*_k(y) := \sup\{|y(x)| : p_k(x)\le 1\}$ is the dual norm of $p_k$.
The properties (DN) and ($\Om$) are linear topological invariants.
The property (DN) is inherited by all closed subspaces.\footnote{Hence, the splitting of
\eqref{shortexactsequence} implies that $\cE^{(M)}(K)$ has the property (DN).}
A nuclear Fr\'echet space $E$ has property (DN) if and only if $E$ is isomorphic to a closed subspace of $s$ (the space of rapidly decreasing sequences).
The property ($\Om$) is inherited by all quotient spaces.
A nuclear Fr\'echet space $E$ has property ($\Om$) if and only if $E$ is isomorphic to a quotient space of $s$.
Cf.\ \cite[29]{MeiseVogt97}.

That $\cE^{(M)}(K)$ has property (DN) is a triviality.
The fundamental system of seminorms
	\[
		p_m(F):= \|F\|^M_{K,1/m} + |F|^M_{K,1/m}, \quad m \in \N_{\ge 1},
	\]
cf.\ \eqref{eq:jetseminorm1} and \eqref{eq:jetseminorm2}, satisfies $p_k^2 \le p_1 p_{k^2}$ for all $k\ge 1$,
since
 \begin{align*}
 	\Big(\frac{k^{|\al|}}{M_{|\al|}} \Big)^2 = \frac{k^{2|\al|}}{M_{|\al|}} \frac{1^{|\al|}}{M_{|\al|}}.
 \end{align*}
That $\cD^{(M)}(L,K)$ has property ($\Om$) follows from a result by Franken \cite{Franken:1993tn} (building on work of Meise and Taylor \cite{Meise:1989un}).
Both papers are situated in the setting of Braun--Meise--Taylor classes of which the Denjoy--Carleman classes form a subclass if $M=(M_k)$ is a
strongly regular weight sequence; see \Cref{ex:strongWF}(2).

\subsection{Continuous linear extension operators in the Roumieu case} \label{sec:extopRoumieu}

As already pointed out in \Cref{rem:extopBorelRoumieu} in the case of the singleton, extension operators rarely exist in the Roumieu case.
For instance, the condition \eqref{eq:PetzscheRI} due to \cite{Petzsche88}
shows that $j^\infty_{\{0\}} : \cE^{\{M\}}(\R^n) \to \cE^{\{M\}}(\{0\}) = \La^{\{M\}}$ has no continuous linear right-inverse
if $M$ is any Gevrey sequence $M_k = G^s_k = k!^s$, $s>1$.

Actually, $j^\infty_{\{0\}} : \cE^{\{M\}}([-1,1]^n) \to \La^{\{M\}}$ is not split surjective
for any strongly regular weight sequence $M$ as seen by the following topological argument (cf.\ \cite[p.23]{MR964961}):
$\cE^{\{M\}}([-1,1]^n)$ is isomorphic to the dual of a power series space of finite type (cf.\ \cite{Langenbruch89}),
which cannot have $\La^{\{M\}}$ as a complemented subspace, since $\La^{\{M\}}$ is isomorphic to the
dual of a power series space of infinite type (cf.\ \cite[p.269]{Vogt82}).

For the Whitney problem, Langenbruch \cite{MR964961} proved that
for compact convex subsets $K,L$ of $\R^n$ with $\emptyset \ne \on{int} K \subseteq K \subseteq \on{int} L$
there is no extension operator $E : \cE^{\{M\}}(K) \to \cE^{\{M\}}(L)$ if
$M$ is a weight sequence satisfying
\begin{gather}
	\E a >0 \A b \in \N_{\ge a} :
	\ol \la(b):= \limsup_{k \to \infty} \frac{\mu_{bk}}{\mu_k} \le a  \liminf_{k \to \infty} \frac{\mu_{bk}}{\mu_k}
	=: a\, \ul \la(b), \label{eq:Lreg1}
	\\
	 \lim_{b \to \infty} \frac{\ul \la(b)}{b} = \infty. \label{eq:Lreg2}
\end{gather}
A weight sequence with these properties is strongly regular (cf.\ \cite[pp.355--356]{MR964961}
and \cite[Lemma 1.1]{Langenbruch89}). It is easy to check that the Gevrey sequences $G^s$, $s>1$, have the properties
\eqref{eq:Lreg1} and \eqref{eq:Lreg2} (indeed, in that case $\ol \la(b) = \ul \la(b) = b^s$).

\subsection{A different approach based on polynomial approximation} \label{sec:polyapprox}
Denote by $\sP_d$ the space of all polynomials on $\R^n$ of degree at most $d$, and let $\sP_{-1} := \{0\}$.
For compact $K \subseteq \R^n$ and
continuous $f : K \to \R$ we set
\[
	\on{dist}_K(f,\sP_d) := \inf\{\|f - p\|_K : p \in \sP_d\}.
\]
Let $M=(M_k)$ be a weight sequence.
Let $\cA^{\{M\}}(K)$ be the set of all $f \in C^0(K)$ such that there exists $\rh>0$ such that
\begin{equation} \label{eq:seminormapprox}
	\|f\|^{\cA^M}_{K,\rh}:= \sup_{k \in \N} \sup_{d \ge -1} \frac{(d+2)^k  \on{dist}_K(f,\sP_d)}{\rh^k M_k} < \infty
\end{equation}
and $\cA^{(M)}(K)$ the set of all $f \in C^0(K)$ such that \eqref{eq:seminormapprox} holds for all $\rh>0$.
The spaces $\cA^{[M]}(K)$ are endowed with their natural locally convex topologies.

The following theorem is an ultradifferentiable version of Jackson's theorem.

\begin{theorem}[{\cite[Theorem 2.7]{Plesniak:1994ti}}]
	Let $M$ be a derivation-closed weight sequence with $m_k^{1/k} \to \infty$.
	For any compact $K \subseteq \R^n$ we have $j^0_K \cE^{[M]}(\R^n) \subseteq \cA^{[M]}(K)$, where $j^0_K(f) = f|_K$.
	The inclusion is continuous.
\end{theorem}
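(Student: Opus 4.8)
The statement is the easy half of an ultradifferentiable Jackson-type theorem: given $f \in \cE^{[M]}(\R^n)$, its restriction $f|_K$ can be approximated by polynomials of degree $d$ with an error that decays in terms of $M$. The natural tool is ordinary Taylor approximation together with the classical (non-ultradifferentiable) Jackson theorem, or more directly, Taylor's formula with integral/Lagrange remainder applied on a cube containing $K$. First I would reduce to the case where $K$ is contained in a closed cube $Q$, say a ball or cube of radius $R$; enlarging $K$ only increases the seminorm, and $f$ restricted to $Q$ still lies in the appropriate global class $\cB^{[M]}(\on{int} Q')$ for a slightly larger cube $Q'$, so there are constants with $\|f^{(\al)}\|_Q \le C\rh_0^{|\al|} M_{|\al|}$ for suitable $\rh_0$ (all $\rh_0$ in the Beurling case, some $\rh_0$ in the Roumieu case).

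\textbf{Main construction.} Fix $d \ge 0$ and a center $x_0 \in K$. Take $p := T^d_{x_0} f \in \sP_d$, the degree-$d$ Taylor polynomial of $f$ at $x_0$. By Taylor's formula with remainder, for $x \in K$,
\[
	|f(x) - p(x)| \le \sum_{|\al| = d+1} \frac{|x-x_0|^{|\al|}}{\al!}\, \|f^{(\al)}\|_Q
	\le \frac{(n R_0)^{d+1}}{(d+1)!}\, \sup_{|\al|=d+1}\|f^{(\al)}\|_Q,
\]
where $R_0 := \on{diam} K$ and I have used $\sum_{|\al|=d+1} \frac{1}{\al!} = \frac{n^{d+1}}{(d+1)!}$ together with $|x - x_0| \le R_0$. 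Inserting the bound $\|f^{(\al)}\|_Q \le C\rh_0^{|\al|} M_{|\al|}$ gives
\[
	\on{dist}_K(f,\sP_d) \le C\, \frac{(nR_0\rh_0)^{d+1} M_{d+1}}{(d+1)!} = C\,(nR_0\rh_0)^{d+1}\, m_{d+1}.
\]
Thus $\on{dist}_K(f,\sP_d)$ decays like $t^{d+1} m_{d+1}$ with $t = nR_0\rh_0$; note this also covers $d = -1$ trivially since $\on{dist}_K(f,\{0\}) = \|f\|_K \le C = C\, m_0$.

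\textbf{Converting to the $\cA^{[M]}$ seminorm.} It remains to bound $\sup_{k,d} (d+2)^k \on{dist}_K(f,\sP_d)/(\rh^k M_k)$. Using the estimate above, the numerator is at most $C (d+2)^k (nR_0\rh_0)^{d+1} m_{d+1}$. The key inequality is that for any $\si > 0$ there is $C_\si$ with $(d+2)^k \le C_\si\, \si^{-k}\, \si^{-(d+1)}\,\frac{(d+1)!}{M_{d+1}}\cdot M_k \cdot (\text{something summable})$—more cleanly: since $m_k^{1/k}\to\infty$ and $M$ is derivation-closed, for each $\rh>0$ one can choose $\rh_0$ small enough (Beurling case) resp. absorb the fixed $\rh_0$ into a large $\rh$ (Roumieu case) so that $(nR_0\rh_0)^{d+1} m_{d+1} \le \rh^{d+1} M_{d+1}/(d+1)!$ and then use $(d+2)^k M_{d+1}/(d+1)! \le$ const$^{d+k} M_k M_{d+1}/((d+1)!\,k!)\cdot(\dots)$. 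The cleanest route: apply the elementary inequality $a^k b^d \le \varepsilon^{-k-d}(\text{const})^{k+d}$ is not quite it; instead use that $\cE^{[M]}$ is derivation-closed so $M_{d+1}\le C^{d+1}M_d$, reducing to controlling $(d+2)^k m_d$, and then exploit $m_k^{1/k}\to\infty$ to dominate any geometric factor. I expect the bookkeeping here — matching the double supremum over $k$ and $d$ in \eqref{eq:seminormapprox} against the single Taylor estimate, and getting the $\rho$-quantifiers right in both the Beurling and Roumieu cases — to be the only real work; continuity of $j^0_K$ follows because all constants depend only on $\rho$, $n$, $\on{diam} K$ and the seminorm $\|f\|^M_{Q,\rho_0}$, giving $\|f|_K\|^{\cA^M}_{K,\rho} \le C\|f\|^M_{Q,\rho_0}$ with $Q$ a fixed cube containing $K$.
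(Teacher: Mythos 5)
There is a genuine gap, and it is not in the bookkeeping: the route via Taylor polynomials cannot work. Your main estimate is $\on{dist}_K(f,\sP_d) \le C\,t^{d+1} m_{d+1}$ with $t = nR_0\rh_0$ \emph{fixed}, independent of $d$. Since $m_k^{1/k}\to\infty$, we have $t^{d+1}m_{d+1} = (t\, m_{d+1}^{1/(d+1)})^{d+1}\to\infty$ as $d\to\infty$, so for large $d$ this bound is worse than the trivial bound $\on{dist}_K(f,\sP_d)\le \|f\|_K$ and carries no decay in $d$ whatsoever. But membership in $\cA^{[M]}(K)$ demands exactly such decay: the condition $\sup_{k,d}(d+2)^k\on{dist}_K(f,\sP_d)/(\rh^k M_k)<\infty$ is equivalent to $\on{dist}_K(f,\sP_d)\le C\inf_{k}\rh^k M_k(d+2)^{-k}$, which tends to $0$ faster than any power of $(d+2)^{-1}$. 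Already for $k=1$ your ``key inequality'' would have to give $\sup_d (d+2)\,t^{d+1}m_{d+1}<\infty$, which is false; no choice of $\rh_0$ or $\si$ and no rearrangement of constants can extract decay in $d$ from a bound that blows up in $d$ (replacing $T^d_{x_0}f$ by the best lower-degree Taylor polynomial $T^e_{x_0}f$, $e\le d$, only improves this to the $d$-independent constant $\inf_e t^{e+1}m_{e+1}$). The conceptual point is that best polynomial approximation is far better than Taylor approximation: for the real-analytic function $f(x)=(1+25x^2)^{-1}$ on $K=[-1,1]$ the Taylor polynomials at $0$ do not even converge uniformly on $K$, while $\on{dist}_K(f,\sP_d)$ decays geometrically.

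The missing ingredient is Jackson's theorem itself --- which is why the result is called an ultradifferentiable Jackson theorem. For a cube $Q\supseteq K$, $f\in C^r(Q)$ and $d\ge r$ one has $\on{dist}_Q(f,\sP_d)\le (C/d)^r\sup_{|\al|=r}\|f^{(\al)}\|_Q$ with $C$ depending only on $n$ and $\on{diam}Q$; here the derivative order $r$ and the polynomial degree $d$ are decoupled, which is precisely what Taylor's formula does not give. Inserting $\|f^{(\al)}\|_Q\le C\rh_0^{r}M_r$ and minimizing over $r\le d$ yields $\on{dist}_K(f,\sP_d)\lesssim \inf_{r}(C\rh_0/(d+2))^{r}M_r$, which is the required bound; after that, the quantifier juggling over $\rh$ in the Beurling and Roumieu cases and the use of derivation-closedness to absorb index shifts really are routine, and your reduction to a cube and the continuity argument go through. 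So the skeleton of your write-up is salvageable, but the central analytic step must be Jackson, not Taylor.
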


Let $r \ge 1$.
A compact set $K \subseteq \R^n$ is said to have the \emph{Markov property} $(P_r)$ if
there exist $C_1,C_2>0$ such that
for every polynomial $p$\footnote{It is not hard to see that $(P_r)$ holds if and only if there is $C>0$ such that
$\|p^{(\al)}\|_K \le C \deg(p)^{r|\al|} \|p\|_K$ for all $|\al|\ge 1$ and all polynomials $p$;
cf.\ \cite[Theorem 3.3]{Plesniak:1990aa}.
Any compact subset of $\R^n$ that is uniformly polynomially cuspidal has the Markov property,
in particular, any fat subanalytic compact subset of $\R^n$.
}
\begin{equation} \label{eq:Markov}
	|p(z)| \le C_1 \|p\|_K \quad \text{ for } z \in \C^n \text{ with } \on{dist}(z,K) \le \frac{C_2}{\deg(p)^r}.
\end{equation}
If $K\subseteq \R^n$ satisfies $(P_r)$, then necessarily $r\ge 2$, by \cite{Baran:1995aa},
and $K$ is \emph{$C^\infty$-determining}, see \cite[Remark 3.5]{Plesniak:1990aa},
that means $j^0_K f =0$ implies $j^\infty_K f =0$ for any $C^\infty$-function $f$.

The following extension theorem involves an unavoidable loss of regularity: for instance
\cite{Franken:1995ta} shows that $j^0_{[-1,1]} : \cE^{(G^s)}(\R) \to \cA^{(G^s)}([-1,1])$ is not surjective.
On the other hand the weight sequence is not required to be strongly non-quasianalytic.

\begin{theorem}[{\cite{Plesniak:1994ti}}]
	Let $M$ be a non-quasianalytic weight sequence of moderate growth.
	Let $K \subseteq \R^n$ be a compact set satisfying $(P_r)$ with some $r\ge 2$.
	Then there exists a continuous linear operator
	\(L : \cA^{[M]}(K) \to \cE^{[M^{r+1}]}(\R^n)\)
	such that $j^0_K \o L = \on{id}$. Here $M^{r+1} := (M^{r+1}_k)$ is the $(r+1)$-st component-wise power of $M$.
\end{theorem}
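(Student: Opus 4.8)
The plan is to construct $L$ by a dyadic telescoping of near-best polynomial approximants, glued together by cutoff functions whose supports shrink exactly at the rate dictated by the Markov radius of the relevant polynomial degree. First I would fix, once and for all, a sequence of continuous linear operators $T_d\colon C^0(K)\to\sP_d$ ($d\in\N$) with $\|g-T_dg\|_K\le C_0\on{dist}_K(g,\sP_d)$ for all $g\in C^0(K)$; such near-best \emph{linear} approximation operators are part of the Jackson-type machinery available on any compact set carrying a Bernstein--Markov measure, in particular on any $K$ satisfying $(P_r)$ (cf.\ \cite{Plesniak:1994ti}). Put $d_j:=2^j$, $q_0(f):=T_1f$, and $q_j(f):=T_{d_j}f-T_{d_{j-1}}f$ for $j\ge1$, so that $\sum_{j=0}^{J}q_j(f)=T_{d_J}f\to f$ uniformly on $K$, while $\|q_j(f)\|_K\lesssim\on{dist}_K(f,\sP_{d_{j-1}})\le\|f\|^{\cA^M}_{K,\rh}\,\rh^kM_k\,d_{j-1}^{-k}$ for every $k$ and every admissible $\rh$. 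Next I would choose cutoffs using \Cref{cor:cutoff} and \Cref{prop:cutoff} (this is where non-quasianalyticity of $M$ enters): a fixed $\theta_0\in\cE^{[M]}(\R^n)$ with bounded support and $\theta_0=1$ near $K$, and for $j\ge1$ a function $\theta_j\in C^\infty_c(\R^n)$ equal to $1$ on $\{d_K\le c\,d_j^{-r}\}$, supported in $\{d_K<2c\,d_j^{-r}\}$, with $|\theta_j^{(\be)}|\le(C d_j^{r})^{|\be|}M_{|\be|}$ (obtained from \Cref{prop:cutoff} with $d_i\sim c\,d_j^{-r}/\mu_i$); the support width $d_j^{-r}$ is deliberately matched to the Markov radius of $\sP_{d_j}$. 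Finally set $L(f):=\sum_{j\ge0}q_j(f)\,\theta_j$. This is visibly linear, and on $K$ it equals $\lim_J T_{d_J}f=f$, so $j^0_K\circ L=\on{id}$.

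The heart of the matter is estimating $\p^\al L(f)(x)$ for $x\notin K$ with $\de:=d_K(x)$ small. Only the indices $j$ with $x\in\on{supp}\theta_j$, i.e.\ $d_j\lesssim\de^{-1/r}$, contribute, and for each such $j$ a polydisc about $x$ of radius $\sim d_j^{-r}$ lies inside the complex Markov region of $q_j(f)$; hence $(P_r)$ together with the Cauchy estimates give $|q_j(f)^{(\ga)}(x)|\le C_1\|q_j(f)\|_K\,|\ga|!\,(C d_j^{r})^{|\ga|}$. Feeding in the approximation bound with the index $k=r|\al|+1$ makes every power of $d_j$ cancel and leaves one spare factor $d_j^{-1}=2^{-j}$; after the Leibniz expansion of $\p^\al(q_j(f)\theta_j)$, and using moderate growth ($M_{r|\al|+1}\le C^{|\al|}M_{|\al|}^{r}$ and $M_{|\al-\be|}M_{|\be|}\le M_{|\al|}$, the latter from \Cref{lem:atinfinity}(3)) together with $|\al|!\le C^{|\al|}M_{|\al|}$ (valid since $m_k^{1/k}\to\infty$, which follows from non-quasianalyticity), one obtains
\begin{equation*}
  |\p^\al(q_j(f)\,\theta_j)(x)|\le C\,\|f\|^{\cA^M}_{K,\rh}\,(C\rh^{r})^{|\al|}\,M_{|\al|}^{r+1}\,2^{-j},\qquad j\ge1.
\end{equation*}
Summing this geometric series over $j\ge1$ and adding the contribution of the fixed function $q_0(f)\,\theta_0\in\cE^{[M]}(\R^n)\subseteq\cE^{[M^{r+1}]}(\R^n)$ yields $|\p^\al L(f)(x)|\le C\,\|f\|^{\cA^M}_{K,\rh}\,(C\rh^{r})^{|\al|}M_{|\al|}^{r+1}$, uniformly on every compact subset of $\R^n$ (the case $x\in K$, where all $\theta_j=1$ and one uses instead the derivative form $\|p^{(\al)}\|_K\lesssim\deg(p)^{r|\al|}\|p\|_K$ of $(P_r)$, is handled the same way and is even easier). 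The same geometric bound shows that $\sum_j\p^\al(q_j(f)\theta_j)$ converges locally uniformly on all of $\R^n$, including across $\p K$, so $L(f)\in C^\infty(\R^n)$; combined with the bound above this gives $L(f)\in\cE^{[M^{r+1}]}(\R^n)$. Reading the constants off the estimate gives continuity of $L$: in the Beurling case, for a prescribed target $\eta$ choose $\rh$ small enough that $C\rh^{r}\le\eta$; in the Roumieu case keep the $\rh$ that $f$ supplies.

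The step I expect to be the main obstacle — and the reason the naive telescoping fails — is the control of the $\sim\log(1/d_K(x))$ dyadic blocks that are simultaneously active at the scale $d_K(x)$: choosing the obvious index $k=r|\al|$ in the approximation inequality would give only an $O(1)$ bound per block and hence a $\log(1/d_K(x))$ loss in the derivative estimate, which is fatal in the Beurling case. Spending one extra unit, $k=r|\al|+1$, trades an inessential factor $\rh$ for the summable decay $2^{-j}$ that removes this loss; this, together with the matching of the cutoff widths to the Markov radii so that the powers of $d_j$ cancel, is what produces exactly the single extra power $M\mapsto M^{r+1}$. Two secondary technical points deserve care: the approximation must genuinely be realized by linear operators of uniformly bounded norm (so that $f\mapsto L(f)$ is linear), and one must track how the seminorm constant $\rh$ propagates through all the estimates, which is precisely what makes the Beurling/Roumieu dichotomy in the statement come out for free from one computation.
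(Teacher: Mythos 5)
Your construction is essentially the paper's: a telescoped sequence of linear polynomial approximants multiplied by cutoffs from \Cref{prop:cutoff} whose supports shrink like the Markov radius $C_2\deg^{-r}$, with summability bought by spending extra powers of $(d+2)$ in the seminorm \eqref{eq:seminormapprox} and the power $M\mapsto M^{r+1}$ produced by moderate growth. The only substantive differences are cosmetic (dyadic blocks $d_j=2^j$ instead of unit steps, and Cauchy estimates in the complex Markov region instead of the real Markov inequality $\|p^{(\al)}\|_K\lesssim\deg(p)^{r|\al|}\|p\|_K$ for polynomial derivatives — both routes give the same bound up to the factor $|\ga|!$, which you correctly absorb into $M_{|\ga|}$). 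One claim should be corrected: linear operators $T_d\colon C^0(K)\to\sP_d$ with \emph{uniformly} bounded near-best constant $C_0$ do not exist in general (projection constants onto $\sP_d$ grow with $d$); the paper instead uses Lagrange interpolation at Fekete--Leja extremal points, whose Lebesgue constant grows only polynomially in $d$, giving $\|f-L_df\|_K\le 2(d+1)^n\on{dist}_K(f,\sP_d)$ — a loss that your scheme absorbs by taking $k=r|\al|+n+2$ rather than $r|\al|+1$ in \eqref{eq:seminormapprox}, exactly as the paper does.
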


\begin{proof}
	By \Cref{prop:cutoff}, there is a sequence $(\vh_k)_{k\ge 1}$ of functions $\vh_k \in C^\infty(\R^n)$
	such that $0 \le \vh_k \le 1$, $\on{supp} \vh_k \subseteq U_k := \{x \in \R^n : d_K(x)< C_2 k^{-r}\}$
  (where $C_2$ is the constant from \eqref{eq:Markov}),
	$\vh_k=1$ in a neighborhood of $K$, and
	\[
		|\vh_k^{(\al)}| \le C(n)^{|\al|} k^{r|\al|} M_{|\al|}, \quad \al \in \N^n.
	\]
	For $f \in \cA^{[M]}(K)$ let $L_d f$ be the Lagrange interpolation polynomial of $f$ with
	nodes in Fekete--Leja extremal points of $K$ of order $d$; cf.\ \cite[Section 2]{Plesniak:1990aa}.
	Then
	\[
		\|f- L_df\|_K \le 2 (d+1)^n \on{dist}_K(f,\sP_d).
	\]
	Set
	\[
		Lf(x) := \vh_1(x)L_1f(x) + \sum_{d=1}^\infty \vh_d(x) (L_{d+1}f(x)- L_df(x))
	\]
	and let us check that $L$ is well-defined and has the desired properties.
	By \eqref{eq:Markov},
	\begin{align*}
		|\p^\al &(\vh_d(x) (L_{d+1}f(x)- L_df(x)))|
		\\
		&\le
		\sum_{\be \le \al} \binom{\al}{\be} |\vh_d^{(\be)}(x)| |(L_{d+1}f- L_df)^{(\al-\be)}(x)|
		\\
		&\le
		C_1 \sum_{\be \le \al} \binom{\al}{\be} \|\vh_d^{(\be)}\|_{U_d} \|(L_{d+1}f- L_df)^{(\al-\be)}\|_K
		\\
		&\le
		C C_1 \sum_{\be \le \al} \binom{\al}{\be} C(n)^{|\be|} d^{r|\be|} M_{|\be|} \cdot  (d+2)^{n+r|\al-\be|} \on{dist}_K(f,\sP_d)
		\\
		&\le D \si^{|\al|} (d+2)^{n+r|\al|} M_{|\al|} \on{dist}_K(f,\sP_d),
	\end{align*}
  for suitable constants $D,\si>0$.
	Thus, by \eqref{eq:seminormapprox} and moderate growth of $M$,
		\begin{align*}
		\|\p^\al (\vh_d (L_{d+1}f- L_df))\|_{\R^n}
		&\le D \si^{|\al|} (d+2)^{n+r|\al|} M_{|\al|}\cdot  \|f\|^{\cA^M}_{K,\rh}  \frac{\rh^{n+2+r|\al|} M_{n+2+r|\al|}}{(d+2)^{n+2+r|\al|}}
		\\
		&\le D_1 \si_1^{|\al|}  M^{r+1}_{|\al|} \|f\|^{\cA^M}_{K,\rh} \cdot \frac{1}{(d+2)^{2}}.
	\end{align*}
	It follows that $Lf$ is a well-defined $C^\infty$-function satisfying
	\[
		\|(Lf)^{(\al)}\|^{M^{r+1}}_{\R^n, \si_1} \le D_2 \, \|f\|^{\cA^M}_{K,\rh},
	\]
	which implies the assertion; notice for the Beurling case that $\si_1 \to 0$ as $\rh \to 0$.
\end{proof}

\section{Extension with controlled loss of regularity}

If the weight sequence $M=(M_k)$ is not strongly non-quasianalytic, and thus extension preserving the class is not possible,
it is natural to ask if the loss of regularity occurring in the extension can at least be controlled.
This is indeed the case.
It requires a new extension technique which is inspired by Dyn'kin's theory of \emph{almost analytic functions};
cf.\ \cite{Dynkin80,MR1253229} and also \cite{FurdosNenningRainer}.
For the singleton see \Cref{thm:Borelloss}.

\subsection{The setup: admissible pairs of weight sequences} \label{sec:setup}
Let $M,N$ be strongly log-convex weight sequences, $M$ of moderate growth with $m_k^{1/k}\to \infty$, $N$ non-quasianalytic with
$M \preceq N$, and
\begin{equation} \label{eq:msnq}
   \sup_{k} \frac{\mu_k}{k} \sum_{j \ge k} \frac{1}{\nu_j} < \infty,
\end{equation}
where $\mu_k = \tfrac{M_k}{M_{k-1}}$ and $\nu_k = \tfrac{N_k}{N_{k-1}}$.
In that case we call $(M,N)$ an \emph{admissible pair}.
Note that $(M,M)$ is an admissible pair if and only if $M$ is strongly regular.
Recall that the conditions \eqref{eq:SVcond} and \eqref{eq:msnq} are equivalent
if $M$ has moderate growth; so our definition is compatible with the case treated in \Cref{thm:Borelloss}.

\begin{remark}
  Let $N$ be a strongly log-convex non-quasianalytic weight sequence of moderate growth
  and let $S$ be the descendant of $N$.
  Then $(S,N)$ is an admissible pair
  and, if $(M,N)$ is another admissible pair, then $M \preceq S$; see \Cref{lem:log-convex}.
  Note that, if $M$ has moderate growth, then $M \preceq N$ if and only if $\mu \lesssim \nu$,
  indeed,
  $\mu_k \lesssim M_k^{1/k} \lesssim N_k^{1/k} \le \nu_k$.
\end{remark}

\subsection{Suitable cutoff functions}

Let $(M,N)$ be an admissible pair of weight sequences.
In analogy to the construction of optimal cutoff functions in \Cref{sec:optcutoffRWS}
we obtain cutoff functions with bounds reflecting the weaker assumption \eqref{eq:msnq}.

\begin{proposition}[{\cite[Proposition 4]{ChaumatChollet94}}]
  Let $(M,N)$ be an admissible pair of weight sequences.
  There is a constant $C>0$ such that for all $\rh,r,\ep>0$ there exists $\vh \in \cE^{\{N\}}(\R)$
  with $\vh|_{[-r,r]} = 1$, $\on{supp} \vh \subseteq [-(1+\ep)r,(1+\ep)r]$, and
  \[
    \|\vh\|^N_{\R,\rh} \le \frac{1}{h_m(C \rh r \ep)}.
  \]
\end{proposition}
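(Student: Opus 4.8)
The plan is to imitate the construction of the optimal Roumieu cutoff functions from \Cref{sec:optcutoffRWS}, replacing the summability estimate coming from strong non-quasianalyticity of $M$ by the mixed condition \eqref{eq:msnq} for the admissible pair $(M,N)$. Concretely, I would first establish the analogue of \Cref{cor:setupcutoff}: namely, since \eqref{eq:msnq} holds, the sequence $(\mu_k/k^\ep)$ can be arranged to be increasing to infinity and to still satisfy $\sup_k \frac{\mu_k}{k^{1+\ep}}\sum_{j\ge k}\frac{1}{\nu_j}<\infty$ for $\ep>0$ small. This yields an increasing sequence
\[
  \underbrace{\nu_1,\ldots,\nu_1}_{?},\ldots,\underbrace{\mu_k,\ldots,\mu_k}_{k\text{ times}},\,\mu_{k+1}\bigl(\tfrac{k}{k+1}\bigr)^\ep,\ldots
\]
interpolating between $\nu$-type growth for the first indices and $\mu$-type growth afterwards, whose reciprocals sum to at most $A\frac{k}{\mu_k}$ for a constant $A>0$ — this is where \eqref{eq:msnq} is used, exactly as \eqref{eq:cutoffk} uses \eqref{eq:snq}.

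**Key steps.** After reducing to $r=\ep=1$ (composing at the end with an affine odd rescaling $\th$ as in the proof of \Cref{thm:optimalbump}), I would: (i) for given $\rh>0$, pick the integer $k=k(\rh)\ge 1$ with $\frac{2A}{\mu_{k}^*}<\rh\le\frac{2A}{\mu_{k-1}^*}$ using $\mu_k^*\nearrow\infty$; (ii) apply \Cref{prop:cutoff} to $K=[-1,1]$ with the sequence $(d_j)$ given by the reciprocals above, obtaining $\vh\in C^\infty(\R)$ with $0\le\vh\le1$, $\vh|_{[-1,1]}=1$, $\on{supp}\vh\subseteq[-2,2]$, and derivative bounds $|\vh^{(j)}|\le 2^j\mu_k^j$ for $0\le j\le k$ and $|\vh^{(j)}|\le 2^j\mu_k^k\frac{N_j}{N_k}\bigl(\tfrac{k^{j-k}k!}{j!}\bigr)^\ep$ for $j>k$ (the $N_j$ enters because for large indices the interpolating sequence is governed by $\nu$); (iii) convert these bounds into the claimed estimate $\|\vh\|^N_{\R,\rh}\le h_m(C\rh)^{-1}$. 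For the last step the crucial inequality is
\[
  \frac{\mu_k^k}{N_k}\lesssim \frac{(Ce\mu_k^*)^k}{N_k/k!}\lesssim \Bigl(\frac{C'}{\rh}\Bigr)^k\frac{1}{?},
\]
but one must be careful: since $N_k/k!$ need not be $m$-like, I would instead use $M\preceq N$ to write $\mu_k^k/N_k \le \mu_k^k/M_k = (e\mu_k^*)^k/m_k$ up to a constant power, and then moderate growth of $M$ gives $\mu_k^*\lesssim\mu_{k-1}^*\le 2A/\rh$, yielding $\mu_k^k/N_k\lesssim (C''/\rh)^k/m_k$. The two derivative regimes then combine, after separating a factor $\exp(\ep k(\cdot)^{1/\ep})$ from the Gevrey-type tail $\bigl(\tfrac{k^{j-k}}{(j-k)!}\bigr)^\ep$ exactly as in \eqref{eq:est2}, to give $|\vh^{(j)}|/(\rh^j N_j)\le h_m(C\rh)^{-1}$ for a suitable $C>0$ depending on $\ep,A$.

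**Main obstacle.** The delicate point is matching the \emph{two} weight sequences correctly in the tail estimate: the Sobolev-type seminorm is taken with respect to $N$ (so $\vh\in\cE^{\{N\}}$), while the argument of $h_m$ — hence the quality of the bound — is governed by $m=(M_k/k!)$. Keeping track of where $M\preceq N$, moderate growth of $M$, and the mixed condition \eqref{eq:msnq} are each invoked, and ensuring that the constant $C$ in $h_m(C\rh r\ep)$ comes out independent of $r,\ep,\rh$, is the technically demanding part. I expect everything else — the reduction to $r=\ep=1$, the case $\rh\ge 1$ (which is easier since $h_m\le 1$), and the affine rescaling for general $r,\ep$ — to be routine and essentially identical to \Cref{thm:optimalbump} and \Cref{thm:cutoffBM}.
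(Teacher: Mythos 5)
Your high-level plan --- adapt the optimal-cutoff construction, let the mixed condition \eqref{eq:msnq} do the work that strong non-quasianalyticity did before, and use $M\preceq N$ together with moderate growth of $M$ to turn $\mu_k^k/N_k$ into an $h_m$-type bound --- is correct in spirit, and your final conversion step is sound. But you have chosen the wrong template, and the step on which your construction rests fails for general admissible pairs. You model the argument on \Cref{cor:setupcutoff} and \Cref{thm:cutoffBM}, i.e.\ on the Beurling-type construction whose tail lengths are $d_j=(j/k)^\ep/\nu_j$ (this is exactly what produces the factor $(k^{j-k}k!/j!)^\ep$ in your bound for $j>k$). For \Cref{prop:cutoff} to apply you then need
\[
\sum_{j>k}\frac{(j/k)^\ep}{\nu_j}\;\lesssim\;\frac{k}{\mu_k},
\]
and this does not follow from admissibility. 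Take $\mu_k=k\log(k+e)$ and $\nu_k=k\log(k+e)^2$: both sequences are strongly log-convex, $M$ has moderate growth, $N$ is non-quasianalytic, $\mu\le\nu$, and $\tfrac{\mu_k}{k}\sum_{j\ge k}\tfrac1{\nu_j}=\log(k+e)\sum_{j\ge k}\tfrac{1}{j\log(j+e)^2}$ is bounded, so $(M,N)$ is admissible; yet $\sum_{j>k}(j/k)^\ep/\nu_j=\infty$ for every $\ep>0$, so the sum of your $d_j$'s diverges and the construction cannot start. (The condition you wrote, $\sup_k\tfrac{\mu_k}{k^{1+\ep}}\sum_{j\ge k}\tfrac1{\nu_j}<\infty$, is trivially implied by \eqref{eq:msnq}; the condition you actually need carries $j^\ep$ inside the sum.) Dropping the $\ep$-factor does not rescue the argument: with $d_j=1/\nu_j$ for $j>k$ you get $|\vh^{(j)}|\le 2^j\mu_k^k N_j/N_k$, and $(2/\rh)^j$ is unbounded in $j$ for $\rh<2$, so $\|\vh\|^N_{\R,\rh}=\infty$.

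The right template is the Roumieu-type one of \Cref{sec:optcutoffRWS}. Since the proposition only asks for some $\vh$ for each given $\rh$, the cutoff may depend on $\rh$, and then no $k!^\ep$-perturbation is needed. Adapt \Cref{lem:assforcutoff}: put $\al^p_k:=(A/\mu^*_{p+1})^k N_k$ for $k>p$ and $\al^p_k:=(2p)^k$ for $k\le p$. The tail sum becomes $\sum_{k\ge p}\al^p_k/\al^p_{k+1}\le\tfrac{\mu^*_{p+1}}{A}\sum_{k\ge p}\tfrac1{\nu_{k+1}}\lesssim\tfrac1A$, using \eqref{eq:msnq} exactly where \Cref{lem:assforcutoff} used \eqref{eq:snq}; and for $k\le p$ the bound $\al^p_k\le(A/\mu^*_{p+1})^k N_k/h_m(1/(e\mu^*_p))$ follows from $N_k\ge D^{-k}M_k$ (i.e.\ $M\preceq N$), $\mu_{p+1}\lesssim\mu_p$ (moderate growth), and the same chain $\mu_p^k/M_k\le\mu_p^p/M_p\le(e\mu^*_p)^p/m_p$ as in the paper, which is $\le 1/h_m(1/(e\mu^*_p))$ by \eqref{GaProp2}. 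Feeding these sequences into \Cref{prop:cutoff} and repeating the proof of \Cref{thm:optimalbump} verbatim, with $M_j$ replaced by $N_j$ in the derivative bounds and $h_m$ left unchanged, gives the claimed estimate; the reduction to $r=\ep=1$ and the rescaling are as you describe.
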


Then it is straightforward to construct a partition of unity subordinate to a given family of Whitney cubes.

\begin{proposition}[{\cite[Proposition 6]{ChaumatChollet94}}] \label{prop:partitionmixed}
Let $(M,N)$ be an admissible pair of weight sequences.
Let $K \subseteq \R^n$ be a non-empty compact set and $(Q_j)_{j \ge 1}$ a family of Whitney cubes for $K$.
Then there exists
$C_1\ge 1$ such that for all $\ep>0$ there is a family of $C^\infty$-functions
$(\vh_{j,\ep})_{j \ge 1}$ satisfying
\begin{enumerate}
\item $0\le\vh_{j,\ep}\le 1$ for all $j \ge 1$,
\item $\on{supp}\vh_{j,\ep}\subseteq Q^*_j$ for all $j\ge 1$,
\item $\sum_{j\ge 1}\vh_{j,\ep}(x)=1$ for all $x \in \R^n\setminus K$,
\item for all $j\ge 1$, $\be \in\N^n$, and $x\in\R^n\setminus K$,
\begin{equation*}
  |\vh^{(\be)}_{j,\ep}(x)| \le \frac{\ep^{|\be|} N_{|\be|}}{h_m(C_1\ep\, d_K(x))}.
\end{equation*}
\end{enumerate}
\end{proposition}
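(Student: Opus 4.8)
The plan is to construct the cutoff functions $\vh_{j,\ep}$ in the familiar two-step fashion used in the proof of \Cref{Proposition6matrix}: first build a single "box cutoff" on a cube of arbitrary sidelength with the correct derivative bounds in terms of $N$ and $h_m$, then glue these into a partition of unity subordinate to the family of Whitney cubes $(Q_j)_{j\ge 1}$ by the standard product formula. The only genuinely new ingredient compared to \Cref{Proposition6matrix} is the input cutoff function, which now must come from the preceding proposition (the admissible-pair analogue of \Cref{thm:optimalbump}) rather than from \Cref{thm:optimalbump} itself. Since that proposition has already been stated, the argument becomes essentially a transcription of the proof of \Cref{Proposition6matrix}, keeping track of the fact that the derivative bounds carry $N_{|\be|}$ while the argument of $h_m$ still involves $m=(M_k/k!)$.

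More precisely: First I would invoke the preceding proposition in the $n$-dimensional box form — for each $\rh>0$ and $r>0$ there is $\ch_{\rh,r}\in\cE^{\{N\}}(\R^n)$ with $0\le\ch_{\rh,r}\le 1$, $\ch_{\rh,r}\equiv 1$ on $[-r,r]^n$, $\supp\ch_{\rh,r}\subseteq[-\tfrac98 r,\tfrac98 r]^n$, and $|\ch_{\rh,r}^{(\be)}(x)|\le \rh^{|\be|}N_{|\be|}/h_m(C\rh r)$ (the passage from one variable to $n$ variables is by taking tensor products and absorbing the dimensional constant into $C$, exactly as in \Cref{Proposition6matrix}). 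Then, denoting by $x_j$ the center and $2s_j$ the sidelength of $Q_j$, set $\ps_{j,\ep}(x):=\ch_{\ep/12^{2n},s_j}(x-x_j)$, so that $\ps_{j,\ep}\equiv 1$ on $Q_j$, $\supp\ps_{j,\ep}\subseteq Q_j^*$, and, using \Cref{prop:Wcubes}(4) to bound $s_j$ from below by a multiple of $d_K(x)$ for $x\in Q_j^*$, one gets $|\ps_{j,\ep}^{(\be)}(x)|\le (\tfrac{\ep}{12^{2n}})^{|\be|} N_{|\be|}/h_m(C_0\ep\,d_K(x))$ for a constant $C_0$ depending only on $C$, $n$, and $B_0$. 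Next, define $(\vh_{j,\ep})_{j\ge1}$ by the telescoping product formula \eqref{eq:defpartition}, i.e.\ $\vh_{1,\ep}:=\ps_{1,\ep}$ and $\vh_{j,\ep}:=\ps_{j,\ep}\prod_{k=1}^{j-1}(1-\ps_{k,\ep})$ for $j\ge2$; properties (1)--(3) then follow verbatim as in \Cref{Proposition6matrix} (the partial sums telescope to $1-\prod_{k\le j}(1-\ps_{k,\ep})$, and local finiteness of the supports together with \Cref{prop:Wcubes}(1) gives $\sum_j\vh_{j,\ep}\equiv1$ on $\R^n\setminus K$).

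For property (4), differentiating the product defining $\vh_{j,\ep}$ and using \Cref{prop:Wcubes}(5) (at most $12^{2n}$ of the factors $1-\ps_{k,\ep}$ are nonconstant near a given point, and those $k$ satisfy $Q_k^*\cap Q_j^*\neq\emptyset$, so by \Cref{prop:Wcubes}(6) their $d_K$-values are comparable), one gets a bound of the form $|\vh_{j,\ep}^{(\be)}(x)|\le \ep^{|\be|}N_{|\be|}/h_m(C_0\ep\,d_K(x))^{12^{2n}}$, where the $N$-submultiplicativity from \Cref{lem:atinfinity}(3) is used to absorb the product of $N$-factors into a single $N_{|\be|}$ (and any numerical constant raised to the $|\be|$ into a redefinition of $\ep$, or equivalently kept in $C_1$). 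Finally, moderate growth of $m$ gives via \Cref{claim1} a constant $D>1$ with $h_m(t)\le h_m(Dt)^{12^{2n}}$ for all $t>0$, so that $h_m(C_0\ep\,d_K(x))^{12^{2n}}\ge h_m(\tfrac{C_0}{D}\ep\,d_K(x))$, and setting $C_1:=C_0/D$ yields (4). The main (and only) obstacle is making sure the constant $C_1$ is genuinely independent of $\ep$: this is the case because $C$, $C_0$, and $D$ depend only on $M$, $N$, and $n$ — in the input proposition the constant $C$ is uniform in $\rh,r,\ep$, and the Whitney-cube constants $B_0$, the overlap bound $12^{2n}$, and $D$ are fixed once $K$ (hence the family $(Q_j)$) is fixed — so no hidden $\ep$-dependence creeps in. All remaining computations are the routine Leibniz-rule bookkeeping already carried out in \Cref{Proposition6matrix}, so I would simply refer to that proof for the details rather than reproduce them.
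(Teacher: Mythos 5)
Your proposal is correct and coincides with the paper's intended argument: the paper states the result without proof, merely remarking after the preceding cutoff proposition that "it is straightforward to construct a partition of unity," and what is meant is precisely your adaptation of the proof of \Cref{Proposition6matrix} — replace \Cref{thm:optimalbump} by the admissible-pair cutoff result, carry $N_{|\be|}$ through the Leibniz/telescoping bookkeeping using log-convexity of $N$, and handle the power $h_m^{12^{2n}}$ via \Cref{claim1} (moderate growth of $m$ being part of admissibility). The only point you gloss over is the one-to-$n$-dimensional tensor-product step, but the paper's own proof of \Cref{Proposition6matrix} is equally terse there, so this is not a gap relative to the source.
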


\subsection{Extension of Whitney ultrajets} \label{sec:mixedCC}

\begin{theorem}[\cite{ChaumatChollet94}] \label{thm:CCmixed}
  Let $M,N$ be strongly log-convex weight sequences, $M$ of moderate growth with $m_k^{1/k}\to \infty$, and $N$ non-quasianalytic with
  $M \preceq N$. Then $(M,N)$ is admissible if and only if
  $\cE^{\{M\}}(A) \subseteq j^\infty_A\cE^{\{N\}}(\R^n)$ for all closed $A \subseteq \R^n$.
\end{theorem}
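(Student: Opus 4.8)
The plan is to prove the two implications separately, using the machinery already developed for the strongly regular case as a template. For the \emph{sufficiency} direction — that admissibility of $(M,N)$ implies $\cE^{\{M\}}(A) \subseteq j^\infty_A\cE^{\{N\}}(\R^n)$ — I would follow the proof of \Cref{thm:WRoumieu} essentially verbatim, but with two modifications. First, the local extensions for the singleton are replaced by the mixed Borel result: since $(M,N)$ is admissible, \eqref{eq:msnq} holds, which (by the remark before \Cref{thm:Borelloss} and the equivalence of \eqref{eq:SVcond} with \eqref{eq:mixedga1} under moderate growth of $M$) gives $\Lambda^{\{M\}} \subseteq j^\infty_{\{0\}}\cE^{\{N\}}(\R)$, and by the Silva-space argument of \Cref{lem:BorelSilva} one obtains, for a Whitney ultrajet $F \in \cE^{\{M\}}(K)$, local extensions $f_x \in \cB^{\{N\}}(\R^n)$ with uniformly bounded $N$-seminorms. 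Second, the partition of unity is the mixed one from \Cref{prop:partitionmixed}, whose derivative bounds involve $N_{|\beta|}$ in the numerator but still $h_m$ (built from $M$) in the denominator. The key interplay — that $F$ satisfies $M$-type jet estimates \eqref{jets1}, \eqref{jets2} while the cutoffs satisfy $N$-bounds — must be tracked through the analogues of \Cref{lem:prepW2} and the central claim \eqref{eq:Wclaim}. The point is that the difference estimate $|(f_x-f_y)^{(\alpha)}(z)| \le D\sigma^{|\alpha|} N_{|\alpha|}\, h_m(\sigma(|z-x|+|z-y|))$ still has $h_m$ built from $m_k = M_k/k!$, because the $m_{p+1-|\alpha|}$ factor coming from the remainder bound \eqref{jets2} is an $M$-quantity; then \eqref{hmg} for $m$ (valid since $M$ has moderate growth) lets one absorb the $12^{2n}$-fold product exactly as in \Cref{thm:WRoumieu}, while the resulting $N$-type global bound shows $f \in \cE^{\{N\}}(\R^n)$.

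For the \emph{necessity} direction — that $\cE^{\{M\}}(A) \subseteq j^\infty_A\cE^{\{N\}}(\R^n)$ for all closed $A$ forces admissibility — one specializes to $A = \{0\}$, so that the hypothesis becomes $\Lambda^{\{M\}} = \cE^{\{M\}}(\{0\}) \subseteq j^\infty_{\{0\}}\cE^{\{N\}}(\R)$, and then invokes \Cref{thm:Borelloss}: condition (2) there holds, hence (3), i.e.\ \eqref{eq:SVcond} holds for some $p\ge 1$. Since $M$ has moderate growth, the discussion immediately after \Cref{thm:Borelloss} shows \eqref{eq:SVcond} is equivalent to \eqref{eq:mixedga1}, which is precisely \eqref{eq:msnq}. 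Together with the standing hypotheses ($M$ strongly log-convex of moderate growth with $m_k^{1/k}\to\infty$, $N$ non-quasianalytic strongly log-convex, $M \preceq N$), this is exactly the definition of an admissible pair, so nothing further is needed here.

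The main obstacle is the bookkeeping in the sufficiency direction: one must verify that the argument of \Cref{thm:WRoumieu} is genuinely robust under the asymmetry between $M$ (governing the jet $F$ and the auxiliary function $h_m$) and $N$ (governing the cutoffs), in particular that the choice $\varepsilon := C\sigma/C_1$ that balances $h_m(\sigma d_K(z))$ against $h_m(C_1\varepsilon\, d_K(z))$ still works, and that \eqref{hmg} is invoked only for $h_m$ (which requires moderate growth of $M$ alone, not of $N$) — admissibility does \emph{not} grant moderate growth of $N$, so every appeal to \Cref{claim1} must be for $m$. One should also note that the gluing from compact sets to arbitrary closed $A$ (as in \Cref{thm:closed}) goes through unchanged, using that $\cE^{\{N\}}$ admits partitions of unity since $N$ is non-quasianalytic. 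The Beurling-to-Roumieu reduction is not needed because \Cref{thm:CCmixed} as stated concerns only the Roumieu case; a parallel Beurling statement would follow via \Cref{lem:CC17} applied to an admissible pair, but that is outside the claim.
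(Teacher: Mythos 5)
Your necessity argument (specialize to $A=\{0\}$, invoke \Cref{thm:Borelloss}, and use that \eqref{eq:SVcond} is equivalent to \eqref{eq:mixedga1} when $M$ has moderate growth) is fine. The gap is in the sufficiency direction: the scheme ``mixed Borel theorem for local extensions $+$ gluing as in \Cref{thm:WRoumieu}'' does not go through, and the specific justification you give for the key difference estimate is incorrect. In the analogue of \Cref{lem:prepW2}, the difference $f_x-f_y$ is split as $(f_x-T^p_xF)+(T^p_xF-T^p_yF)+(T^p_yF-f_y)$. The middle term is indeed an $M$-quantity via \eqref{jets2}, but the outer terms are controlled by Taylor's formula using the $(p+1)$-st derivatives of $f_x$, and the local extensions supplied by the mixed Borel theorem are only of class $\cB^{\{N\}}$: one gets $|(f_x-T^p_xF)^{(\al)}(z)|\le C\rho^{p+1}N_{p+1}|z-x|^{p+1-|\al|}/(p+1-|\al|)!$. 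To reach the form $D\si^{|\al|}N_{|\al|}\,h_m(\si(\cdot))$ you would need to factor $N_{p+1}\le C^{p+1}N_{|\al|}M_{p+1-|\al|}$, a mixed moderate-growth condition that admissibility does not provide ($N$ need not have moderate growth, and even if it did you would obtain $n_{p+1-|\al|}$, hence $h_n$, not $h_m$). Since $M\preceq N$ gives $h_n\ge h_m$ pointwise, the resulting quotient $h_n(\si d_K(z))/h_m(C_1\ep\,d_K(z))$ cannot be absorbed by \eqref{hmg}, which is available only for $h_m$; the balancing step $\ep:=C\si/C_1$ therefore fails.

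This is precisely why the paper (following Chaumat--Chollet) abandons local extensions altogether and uses Dyn'kin's device: in the extension formula \eqref{eq:mixedextensionformula} the summands are the Taylor polynomials $T^{2\Ga_m(Ld_K(x_j))}_{\hat x_j}F$ of the jet itself, truncated at a degree $p(x_j)=2\Ga_m(Ld_K(x_j))$ that tends to infinity as $x_j$ approaches $K$. Because only the jet $F$ (which obeys $M$-bounds) enters, every estimate produces $h_m$, and the truncation at the ``optimal'' index $\Ga_m(Ld_K(\cdot))$ is exactly what converts the finite Taylor remainder into a bound of the form $h_m(L A_4 d_K(x))$ via \eqref{GaProp1}--\eqref{GaProp2} and \eqref{eq:twoGa}. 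Your proposal is missing this idea, and without it the central claim \eqref{eq:Wclaim} (and hence the whole gluing) cannot be established in the mixed setting.
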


We will sketch the proof of the ``only if'' part.
Assume that $(M,N)$ is admissible.
The new feature is the shape of the extension formula:
Let $K \subseteq \R^n$ be compact
and $F=(F^\al)_\al \in \cE^{\{M\}}(K)$ satisfy \eqref{jets1} and \eqref{jets2}.
Let $(Q_j)_{j\ge 1}$ be a family of Whitney cubes for $K$.
Let $\ep,L>0$ be fixed.
There is a family of $C^\infty$-functions $(\vh_{j,\ep})_{j\ge 1}$ satisfying
the conclusion of \Cref{prop:partitionmixed}.
Let $x_j$ be the center of $Q_j$ and let
$\hat x_j \in K$ be such that $|x_j - \hat x_j|= d_K(x_j)$. We define
\begin{equation} \label{eq:mixedextensionformula}
f(z) :=
\begin{cases}
  \sum_{j \ge 1} \vh_{j,\ep}(z) \, T^{2 \Ga_m(L d_K(x_j))}_{\hat x_j}F(z),  & \text{ if } z \in \R^n \setminus K, \\
  F^0(z), & \text{ if } z \in  K.
\end{cases}
\end{equation}
(So the local extensions $f_{\hat x_j}$ in \eqref{eq:extensionformula} are replaced by the Taylor polynomials $T^{2 \Ga_m(Ld_K(x_j))}_{\hat x_j}F$.)
The degree $p(x) := 2 \Ga_m(Ld_K(x))$ of the Taylor approximation tends to infinity as $x$ approaches the set $K$;
the function $\Ga_m$ was introduce in \Cref{sec:associatedfunctions}.

The proof that \eqref{eq:mixedextensionformula} is indeed the required extension
is similar to the proof of \Cref{thm:WRoumieu}.
First we need estimates for the partial derivatives of the Taylor polynomials $T_{\hat x}^{p(x)} F$.

\begin{lemma}
  There is a constant $A_0=A_0(M,N)>1$ such that
  for all $F \in \cE^{\{M\}}(K)$ satisfying \eqref{jets1} and \eqref{jets2} (with the constants $C,\rh$),
  all $L \ge A_0 \rh$, $x \in \R^n$, and $\al \in \N^n$,
  \begin{align}
  |(T_{\hat x}^{p(x)} F)^{(\al)}(x)| &\le C (2L)^{|\al|+1} M_{|\al|},  \label{prop91}
  \intertext{and, if $|\al| < p(x)$,}
  |(T_{\hat x}^{p(x)}F)^{(\al)}(x)-F^\al(\hat x)| &\le C (2L)^{|\al|+1} |\al|!\, m_{|\al|+1} d_K(x).
  \label{prop92}
\end{align}
\end{lemma}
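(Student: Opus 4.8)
The plan is to estimate the partial derivatives of the Taylor polynomial $T^{p(x)}_{\hat x}F$ at the point $x$ directly from the jet bounds \eqref{jets1} and \eqref{jets2}, using the interplay between $p(x) = 2\Ga_m(Ld_K(x))$ and the associated functions $h_m$, $\Ga_m$ studied in \Cref{sec:associatedfunctions}. Write $p = p(x)$ and $d = d_K(x) = |x - \hat x|$. For a multiindex $\al$ with $|\al| \le p$, differentiating the Taylor polynomial $T^p_{\hat x}F(z) = \sum_{|\be|\le p} \frac{(z-\hat x)^\be}{\be!} F^\be(\hat x)$ gives
\[
  (T^p_{\hat x}F)^{(\al)}(x) = \sum_{|\ga| \le p - |\al|} \frac{(x - \hat x)^\ga}{\ga!} F^{\al+\ga}(\hat x),
\]
so by \eqref{jets1}, $|(T^p_{\hat x}F)^{(\al)}(x)| \le C \sum_{|\ga|\le p-|\al|} \frac{d^{|\ga|}}{\ga!}\rh^{|\al|+|\ga|}M_{|\al|+|\ga|}$. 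First I would bound $M_{|\al|+|\ga|} \le D^{|\al|+|\ga|} M_{|\al|}\, |\ga|!\, m_{|\ga|}$ using moderate growth of $M$ (recall $M_k = k!\,m_k$, and moderate growth of $M$ is equivalent to that of $m$), absorbing the $|\ga|!$ against the multinomial coefficients so that the $\ga$-sum becomes $\sum_{i=0}^{p-|\al|} (nD\rh d)^i m_i$. The key point is that $p = 2\Ga_m(Ld)$, so that for indices $i$ in this range $m_i (Ld)^i$ is controlled by $h_m$ via \eqref{GaProp1}–\eqref{GaProp2}: for $i \le \Ga_m(Ld)$ the terms $m_i(Ld)^i$ are decreasing in $i$, hence bounded by $m_0 = 1$; more generally, choosing $L$ large compared to $\rh$ (this is where the constant $A_0 = A_0(M,N)$ enters, also accounting for the constants $n$ and $D$), the geometric series $\sum_i (nD\rh d)^i m_i$ is dominated by a convergent geometric series and is therefore $O(1)$, uniformly. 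Collecting the factors $C\,\rh^{|\al|} M_{|\al|} \cdot D^{|\al|}$ and enlarging the base from $\rh D$ to $2L$ yields \eqref{prop91}.

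For \eqref{prop92}, assuming $|\al| < p$, I would subtract $F^\al(\hat x)$ (the $\ga = 0$ term) and estimate the remaining sum $\sum_{1 \le |\ga| \le p - |\al|} \frac{(x-\hat x)^\ga}{\ga!}F^{\al+\ga}(\hat x)$ by the same device, now factoring out one copy of $d = d_K(x)$ and one copy of $|\al|!\,m_{|\al|+1}$. Concretely, from \eqref{jets1} and moderate growth, $|F^{\al+\ga}(\hat x)| \le C\rh^{|\al+\ga|}M_{|\al+\ga|} \le C (D\rh)^{|\al|+|\ga|} |\al|!\, m_{|\al|+1}\, (|\ga|-1)!\, m_{|\ga|-1}$ for $|\ga|\ge 1$ (again using $M_k = k!\,m_k$ and submultiplicativity/moderate growth of $m$, together with $m_{|\al|+|\ga|}\le C^{|\al|+|\ga|} m_{|\al|+1} m_{|\ga|-1}$); after absorbing the factorials into the multinomial coefficients, the $\ga$-sum reduces to $d \cdot \sum_{i\ge 0}(nD\rh d)^i m_i$, which is again $O(1)$ for $L$ large by the same $\Ga_m$-vs-$h_m$ argument. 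This produces the bound $C(2L)^{|\al|+1}|\al|!\,m_{|\al|+1} d_K(x)$.

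The main obstacle is bookkeeping the constants so that a \emph{single} threshold $A_0 = A_0(M,N)$ works uniformly: one must verify that once $L \ge A_0\rh$, all the geometric series appearing (with ratio essentially $nD\rh d \cdot (\text{ratio from }m)$, controlled on the range $i \le 2\Ga_m(Ld)$) converge with a ratio bounded away from $1$ independently of $x$ and $d$. This hinges on the precise statement that $k \mapsto m_k t^k$ is decreasing for $k \le \Ga_m(t)$ \eqref{GaProp1} and on the moderate-growth inequality $h_m(t) \le h_m(Ct)^2$ from \Cref{claim1} (used to compare the cutoff $\Ga_m(Ld_K(x_j))$ at scale $L$ with the scale $\rh$ at which \eqref{jets1}, \eqref{jets2} hold), so that choosing $L$ a fixed multiple of $\rh$ shifts the argument of $\Ga_m$ by only a bounded factor. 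Everything else is a routine — if slightly tedious — manipulation of multinomial sums and the identity $M_k = k!\,m_k$.
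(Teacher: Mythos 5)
Your proposal is correct and follows essentially the same route as the paper: differentiate $T^{p(x)}_{\hat x}F$, apply \eqref{jets1}, reduce via multinomial counting to a scalar sum $\sum_{i} (\mathrm{const}\cdot\rho\,d_K(x))^{i} m_i$ truncated at $p(x)=2\Gamma_m(L\,d_K(x))$, and then use \eqref{GaProp1} together with the doubling property $2\Gamma_m(Ct)\le\Gamma_m(t)$ from \Cref{claim1} so that, once $L\ge A_0\rho$, the sum is dominated by a geometric series with fixed ratio. The one cosmetic difference is that you factor $M_{|\alpha|+|\gamma|}$ by invoking moderate growth of $M$, whereas the paper's computation \eqref{calculation} only uses the identity $M_{|\beta|}=|\beta|!\,m_{|\beta|}$ and binomial estimates at that stage (moderate growth of $m$ entering only later via \Cref{claim1}); since $M$ has moderate growth by hypothesis, both bookkeeping schemes lead to the same bound.
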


\begin{proof}
For \eqref{prop91} we may restrict to the case $|\al| \le p(x)$. By \eqref{jets1},
\begin{align}
  |(T_{\hat x}^{p(x)}F)^{(\al)}(x)|
  &\le\sum_{\substack{\al \le \be\\ |\be|\le p(x)}}
  \frac{|x-\hat{x}|^{|\be|-|\al|}}{(\be-\al)!} C \rh^{|\be|} M_{|\be|}
  \notag \\
  &\le C |\al|! \sum_{\substack{\al \le \be\\ |\be|\le p(x)}}
  \frac{|\be|!\,(n d_K(x))^{|\be|-|\al|}}{|\al|!\, (|\be|-|\al|)!}  \rh^{|\be|} m_{|\be|}
  \notag \\
  &\le  \frac{C |\al|! }{(n d_K(x))^{|\al|}} \sum_{\substack{\al \le \be\\ |\be|\le p(x)}}
  (2n \rh  d_K(x))^{|\be|} m_{|\be|}
  \notag \\
  &\le \frac{C|\al|! }{(n d_K(x))^{|\al|}} \sum_{j = |\al|}^{p(x)}
  (2n^2 \rh  d_K(x))^{j} m_{j},  \label{calculation}
\end{align}
since the number of $\be \in \N^n$ with $|\be|=j$ is bounded by $n^j$.
By Lemma \ref{claim1},
\begin{equation} \label{eq:twoGa}
    \text{there exists $\la<1$ such that $2 \Ga_{m}(t) \le \Ga_m(\la t)$ for all $t>0$},
\end{equation}
and thus
\begin{align*}
  |(T_{\hat x}^{p(x)}F)^{(\al)}(x)|
  &\le \frac{C|\al|! }{(n d_K(x))^{|\al|}} \sum_{j = |\al|}^{\Ga_m(L\la d_K(x))}
  (2n^2 \rh  d_K(x))^{j} m_{j}.
\end{align*}
Since  $(L \la d_K(x))^{j} m_{j}\le (L \la d_K(x))^{|\al|} m_{|\al|}$ for $|\al|\le j \le \Ga_m(L \la d_K(x))$, by \eqref{GaProp1},
\begin{align*}
  |(T_{\hat x}^{p(x)}F)^{(\al)}(x)|
  &\le C M_{|\al|} \Big(\frac{L \la}{n}\Big)^{|\al|} \sum_{j = |\al|}^{\Ga_m(L\la d_K(x))}
  \Big(\frac{2n^2 \rh}{L \la } \Big)^{j}.
\end{align*}
We obtain \eqref{prop91} if $L$ is chosen such that $\frac{2n^2\rh}{L\la} \le \frac{1}{2}$;
then $A_0= \frac{4n^2}{\la}$.

For \eqref{prop92} it suffices to note that, if $|\al| < p(x)$,
\[
  (T_{\hat x}^{p(x)}F)^{(\al)}(x)-F^\al(\hat x)
  = \sum_{\substack{\al \le \be\\ |\al| < |\be|\le p(x)}}\frac{(x-\hat{x})^{\be-\al}}{(\be-\al)!} F^\be(\hat x),
\]
and to follow the same arguments.
\end{proof}

The core of the proof is the following lemma.

\begin{lemma}
  There exist constants $A_i = A_i(M,N)$, $i=1,\ldots,4$,
  such that the following holds.
  If $\ep=A_1L$ and $L>A_2\rh$,
  then for all $x \in \R^n \setminus K$ with $d_K(x)<1$ and all $\al \in \N^n$ we have
  \begin{equation} \label{eqclaim2}
    |(f - T^{p(x)}_{\hat x} F)^{(\al)}(x)| \le C (LA_3)^{|\al|+1} N_{|\al|} h_m(LA_4 \, d_K(x)).
  \end{equation}
  Here $C,\rh$ are the constants from \eqref{jets1} and \eqref{jets2}.
\end{lemma}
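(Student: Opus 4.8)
The plan is to estimate $(f - T^{p(x)}_{\hat x} F)^{(\al)}(x)$ for $x \in \R^n \setminus K$ with $d_K(x) < 1$ by exploiting the partition of unity identity. Since $\sum_{j\ge 1}\vh_{j,\ep} \equiv 1$ on $\R^n \setminus K$, we may write
\begin{equation*}
  (f - T^{p(x)}_{\hat x}F)^{(\al)}(x) = \sum_{\be \le \al}\binom{\al}{\be}\sum_{j\ge 1}\vh_{j,\ep}^{(\be)}(x)\,\bigl(T^{p(x_j)}_{\hat x_j}F - T^{p(x)}_{\hat x}F\bigr)^{(\al-\be)}(x),
\end{equation*}
using that for $\be = 0$ the sum telescopes against $\sum_j \vh_{j,\ep}(x) = 1$, and for $\be \ne 0$ we have $\sum_j \vh_{j,\ep}^{(\be)}(x) = 0$ so that one may subtract any fixed reference term (e.g.\ $T^{p(\hat z)}_{\hat z}F$ at a nearest point $\hat z \in K$ with $|\hat z - x| = d_K(x)$). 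The heart of the matter is thus a bound on the difference $\bigl(T^{p(x_j)}_{\hat x_j}F - T^{p(x_i)}_{\hat x_i}F\bigr)^{(\ga)}(x)$ of two Taylor polynomials of different (but comparable) degrees, centered at nearby points of $K$.

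First I would establish that the relevant quantities are all comparable near $x$. By Proposition~\ref{prop:Wcubes}(3)--(4), if $x \in Q_j^*$ then $|x - \hat x_j| \lesssim d_K(x)$; by Proposition~\ref{prop:Wcubes}(5)--(6) the degrees $p(x_j) = 2\Ga_m(Ld_K(x_j))$ for the finitely many cubes meeting a given $Q_i^*$ differ only by a bounded factor in the argument of $\Ga_m$, and by \eqref{eq32} (or \eqref{eq:twoGa}) passing between $\Ga_m$ at comparable arguments costs only a factor of $2$ in the index. The key difference estimate: for $|\al| \le p := \min(p(x_j),p(x_i))$ one uses the standard trick of inserting and removing the common lower-order part of the Taylor polynomials, reducing to estimating $(T^{p'}_{\hat x_j}F - T^{p''}_{\hat x_j}F)$ (same center, different degrees) — which is a single tail sum $\sum_{p < |\be| \le p'}$ controlled exactly as in \eqref{calculation} by $\sum_{j > p}(2n^2\rh d_K(x))^j m_j$ — together with the across-centers difference $(T^{p}_{\hat x_j}F - T^p_{\hat x_i}F)$, handled by the Whitney-jet estimate \eqref{jets2} exactly as in the proof of Lemma~\ref{lem:prepW2} (write $T^p_{\hat x_j}F - T^p_{\hat x_i}F$ in terms of $(R^p_{\hat x_i}F)^\be(\hat x_j)$). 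Converting the resulting factorial bounds via moderate growth of $m$ (which gives $m_{j} m_{p+1-j} \lesssim C^{p}m_{p}$ etc.) and recognizing the geometric-type tail $\sum_{j \ge \Ga_m(\la L d_K(x))}(\text{const})^j m_j \lesssim h_m(\text{const}\cdot L d_K(x))$ — using \eqref{GaProp1}, \eqref{GaProp2}, \eqref{GaProp3} precisely as in the previous lemma — produces the $h_m$ factor in \eqref{eqclaim2}.

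Then I would feed these into the partition-of-unity sum. The $\be = 0$ term is directly the across-centers difference evaluated with $|\al - \be| = |\al|$, contributing $C(LA)^{|\al|+1}M_{|\al|}h_m(LA\,d_K(x))$; since $M \preceq N$ this is absorbed into an $N_{|\al|}$ bound. For $\be \ne 0$ one multiplies the cutoff bound $|\vh_{j,\ep}^{(\be)}(x)| \le \ep^{|\be|}N_{|\be|}/h_m(C_1\ep d_K(x))$ from Proposition~\ref{prop:partitionmixed} against the difference bound $\lesssim (LA_3)^{|\al-\be|+1}N_{|\al-\be|}h_m(LA_4 d_K(x))$; choosing $\ep = A_1 L$ with $A_1$ so large that the constant in the argument of $h_m$ in the numerator dominates $C_1\ep = C_1 A_1 L$ up to the moderate-growth constant $C$ of \eqref{hmg}, the ratio $h_m(LA_4 d_K(x))/h_m(C_1 A_1 L d_K(x))^{(\cdot)}$ collapses via \eqref{hmg} to a single $h_m(LA_4' d_K(x))$ factor; the product $N_{|\be|}N_{|\al-\be|} \le N_{|\al|}$ by log-convexity (Lemma~\ref{lem:atinfinity}(3) applied to $N$), and the finite number $\le 12^{2n}$ of nonzero terms (Proposition~\ref{prop:Wcubes}(5)) plus the $\le 2^{|\al|}$ terms in $\sum_{\be \le \al}$ only alter constants.

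The requirement $L > A_2\rh$ is exactly what is needed for \eqref{prop91}--\eqref{prop92} and for the geometric series $\sum_j(2n^2\rh/(L\la))^j$ to converge with ratio $\le 1/2$. The main obstacle I anticipate is bookkeeping the interplay between the two scalings — the degree $p(x) = 2\Ga_m(Ld_K(x))$ attached to the \emph{local} quantities versus the cutoff scale $\ep$ attached to the partition of unity — so that after choosing $\ep$ proportional to $L$ the numerator's $h_m$ genuinely beats the denominator's $h_m$ raised to the relevant power; this is precisely where \eqref{hmg} (moderate growth of $m$, the ``$h_m(t) \le h_m(Ct)^2$'' inequality) and the ``doubling'' \eqref{eq32}/\eqref{eq:twoGa} of $\Ga_m$ are indispensable, and why $M$ must have moderate growth while $N$ only needs to satisfy \eqref{eq:msnq}. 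Everything else is a careful but routine repetition of the estimates already carried out for Lemma~\ref{lem:prepW2} and Theorem~\ref{thm:WRoumieu}.
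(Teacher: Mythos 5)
Your proposal follows the paper's proof essentially step for step: the Leibniz/partition-of-unity decomposition with the reference term $T^{p(x)}_{\hat x}F$ absorbed via $\sum_j\vh_{j,\ep}=1$ and $\sum_j\vh_{j,\ep}^{(\be)}=0$, the splitting of the Taylor-polynomial difference into a same-degree/different-centers piece (handled by \eqref{jets2} as in \Cref{lem:prepW2}) and a same-center/different-degrees tail (handled by the computation \eqref{calculation} together with \eqref{GaProp1}--\eqref{GaProp3}), and the final choice of $\ep$ proportional to $L$ so that \eqref{hmg} collapses the ratio of $h_m$-factors. This is the same argument as in the paper, and the plan is sound.
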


\begin{proof}
  By the Leibniz rule we have
  \[
    (f - T^{p(x)}_{\hat x} F)^{(\al)}(x) = \sum_{\be \le \al} \binom{\al}{\be} \sum_{j \ge 1}
    \vh_{j,\ep}^{(\al-\be)}(x) \big(T^{p(x_j)}_{\hat x_j} F-T^{p(x)}_{\hat x} F\big)^{(\be)} (x).
  \]
  Let us estimate
  \begin{align*}
    \MoveEqLeft
    |\big(T^{p(x_j)}_{\hat x_j} F-T^{p(x)}_{\hat x} F\big)^{(\be)} (x)|
    \\
    &\le
    |\big(T^{p(x_j)}_{\hat x_j} F-T^{p(x_j)}_{\hat x} F\big)^{(\be)} (x)| + |\big(T^{p(x_j)}_{\hat x} F-T^{p(x)}_{\hat x} F\big)^{(\be)} (x)|
    =: H_1 + H_2
  \end{align*}
  for $x \in Q_j^*$ (since $\on{supp} \vh_{j,\ep} \subseteq Q_j^*$).
  One checks easily (as in the proof of \Cref{lem:prepW2}) that, with $2q := p(x_j)$,
  \[
    H_1 \le C (2n^2 \rh)^{2q +1} |\be|!\, m_{2q+1} (|\hat x_j - x| + |\hat x_j - \hat x|)^{2q+1-|\be|}.
  \]
  By \Cref{prop:Wcubes}, there exist universal constants $0<b\le 1 \le B$ such that
  \begin{equation} \label{eq:bBd}
    b \,d_K(x) \le d_K(x_j) \le B \, d_K(x), \quad \text{ for } x \in Q^*_j.
  \end{equation}
  Moreover, $|\hat x_j - x| + |\hat x_j - \hat x| \lesssim d_K(x_j)$. So, using that $m$ has moderate growth,
  \[
    H_1 \le C (D_1 \rh)^{2q +1} |\be|!\, m_{q}^2 (D_2\, d_K(x_j))^{2q+1-|\be|},
  \]
  for constants $D_i\ge 1$ which depend only on $M$, $N$, and the dimension $n$.
  Since $h_m(L d_K(x_j)) = m_q (L d_K(x_j))^q \le m_{|\be|} (L d_K(x_j))^{|\be|}$, by \eqref{GaProp2},
  we get
  \[
    H_1 \le C D_1 D_2 \rh  \Big(\frac{D_1D_2 \rh}{L}\Big)^{2q}  d_K(x_j)  |\be|!\, m_{|\be|} L^{|\be|} h_m(L d_K(x_j)).
  \]
  Since $d_K(x_j) \lesssim d_K(x) < 1$ (by assumption), there is $D_3$ such that for $L > D_3 \rh$,
  \[
    H_1 \le C L^{|\be|+1} M_{|\be|}  h_m(L d_K(x_j)).
  \]

  Now consider $H_2$.
  By \eqref{eq:twoGa} and \eqref{eq:bBd},
  we have
  \[
   p(x_j)= 2 \Ga_m(Ld_K(x_j)) \le
    2 \Ga_m(Lb d_K(x)) \le \Ga_m(Lb \la d_K(x))
  \]
  and similarly $p(x) \le \Ga_m(Lb \la d_K(x))$. So the polynomial $T^{p(x_j)}_{\hat x} F-T^{p(x)}_{\hat x} F$
  has degree  at most $\Ga_m(Lb \la d_K(x))$.
  On the other hand, again by \eqref{eq:bBd}, the valuation of the polynomial $T^{p(x_j)}_{\hat x} F-T^{p(x)}_{\hat x} F$ is at least
  $2 \Ga_m(L B d_K(x)) =: 2 r$.
  Thus, the calculation \eqref{calculation} gives
  \begin{align*}
     H_2 \le \frac{C |\be|!}{(n d_K(x))^{|\be|}} \sum_{j = 2r}^{\Ga_m(Lb \la d_K(x))} (2n^2 \rh d_K(x))^j m_j.
  \end{align*}
  For such $j$ we have $m_j (Lb\la d_K(x))^j \le m_{2r} (Lb\la d_K(x))^{2r}$, by \eqref{GaProp1},
  and $h_m(LB d_K(x)) = m_r (LB d_K(x))^r \le  m_{|\be|} (LB d_K(x))^{|\be|}$, by \eqref{GaProp2}.
  Using moderate growth of $m$, we may thus conclude that there is $D_4 \ge 1$ such that, for $L>D_4 \rh$,
  \begin{align*}
     H_2 &\le C L^{|\be|+1} M_{|\be|}  h_m(L B d_K(x)).
  \end{align*}

  In summary, for $x \in Q_j^*$ and $d_K(x) < 1$,
  \begin{align*}
       |\big(T^{p(x_j)}_{\hat x_j} F-T^{p(x)}_{\hat x} F\big)^{(\be)} (x)|
       \le C (2 L)^{|\be|+1} M_{|\be|} h_m(L B d_K(x))
  \end{align*}
  and, consequently, in view of \Cref{prop:partitionmixed},
  \begin{align*}
    \MoveEqLeft
     |(f - T^{p(x)}_{\hat x} F)^{(\al)}(x)|
     \\
     &\le
     \sum_{\be \le \al} \frac{\al!}{\be!(\al-\be)!}
    \cdot 12^{2n} \cdot
     \frac{\ep^{|\al|-|\be|} N_{|\al|-|\be|}}{h_{m}(C_1\ep\, d_K(x))}
    \cdot C    (2 L)^{|\be|+1}  M_{|\be|} \, h_{m}(L B d_K(x))
    \\
    &\le  C 12^{2n}
    \sum_{j=0}^{|\al|} \frac{|\al|!\, n^{|\al|+j}}{j!(|\al|-j)!}
     \ep^{|\al|-j} (2L)^{j+1}  N_{|\al|-j}
          M_{j} \, \frac{h_{m}(LB d_K(x))}{h_{m}(C_1\ep\, d_K(x))}.
  \end{align*}
  Now, by \eqref{hmg},
  \[
    \frac{h_{m}(LB d_K(x))}{h_{m}(C_1\ep\, d_K(x))} \le h_{m}(LBC d_K(x))
  \]
  if we choose $\ep :=\frac{LBC}{C_1}$, where $C$ is the constant from \eqref{hmg}.
  Noting that $M \preceq N$, the lemma follows.
\end{proof}

Now we are ready to check that
\eqref{eq:mixedextensionformula} is the desired extension of $F$.
That $f$ defines a $C^\infty$-function on $\R^n$ with $j^\infty_K f=F$ can be seen as follows.
Let us fix a point $a \in K$ and $\al\in \N^n$.
Since $\Ga_{m}(t) \to \infty$ as $t \to 0$, we have $|\al| < p(x) = 2 \Ga_{m}(L d_K(x))$ if $x \in \R^n \setminus K$ is
sufficiently close to $a$. Thus, as $x \to a$,
\begin{align*}
  &|f^{(\al)}(x) - F^{\al}(a)|
  \\
  &\le
  |(f - T_{\hat x}^{p(x)} F)^{(\al)} (x)| +
    |(T_{\hat x}^{p(x)}F)^{(\al)}(x)-F^\al(\hat x)| + |F^\al(\hat x) - F^\al(a)|
  \\
  & = O(h_{m}(L A_4 d_K(x))) + O(d_K(x)) + O(|\hat x - a|),
\end{align*}
by \eqref{jets2}, \eqref{prop92}, and \eqref{eqclaim2}.
Hence $f^{(\al)}(x) \to  F^{\al}(a)$ as $x \to a$.

To get an extension in $\cE^{\{N\}}(\R^n)$ we first observe that,
for $x \in \R^n \setminus K$ with $d_K(x)<1$, $\al \in \N^n$, and a suitable constant $A=A(n,M,N)$,
 \begin{align*}
    |f^{(\al)}(x)|
    &\le
   |(T_{\hat x}^{p(x)} F)^{(\al)}(x)| + |f^{(\al)}(x) - (T_{\hat x}^{p(x) } F)^{(\al)} (x)|
   \le
    C (L A)^{|\al|+1} N_{|\al|}
 \end{align*}
 by \eqref{prop91} and \eqref{eqclaim2},
 because $h_{m} \le 1$. In view of \eqref{jets1},
 it suffices to multiply $f$ with a
 suitable cutoff function of class $\cE^{\{N\}}$ with support in $\{x : d_K(x) < 1\}$
 in order to obtain an extension in $\cE^{\{N\}}(\R^n)$.

\begin{remark} \label{rem:extopR}
  The proof shows that for each $\rh>0$ there is an extension operator
  $\cE^M_\rh(K) \to \cE^N_{A\rh}(\R^n)$, for a suitable constant $A$, but the operator depends
  on $\rh$ (through $L$ and $\ep$).
\end{remark}

There is a corresponding result in the Beurling case.

\begin{theorem}[\cite{ChaumatChollet94}] \label{thm:mixedBeurlingWS}
  Let $M,N$ be strongly log-convex weight sequences, $M$ of moderate growth with $m_k^{1/k}\to \infty$,
  and $N$ non-quasianalytic with
  $M \preceq N$. Then $(M,N)$ is admissible if and only if
  $\cE^{(M)}(A) \subseteq j^\infty_A\cE^{(N)}(\R^n)$ for all closed $A \subseteq \R^n$.
\end{theorem}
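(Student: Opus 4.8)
Both implications mirror arguments already developed in the survey, so the plan is to reduce everything to the Roumieu statement \Cref{thm:CCmixed}, to the singleton case \Cref{thm:Borelloss}, and to the reduction lemma \Cref{lem:CC16}.

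For the easy (``if'') direction I would argue that the extension property forces admissibility, which, given the standing hypotheses on $M$ and $N$ (strongly log-convex, $M$ of moderate growth with $m_k^{1/k}\to\infty$, $N$ non-quasianalytic, $M\preceq N$), amounts to establishing \eqref{eq:msnq}. Take $A=\{0\}\subseteq\R$; since $\cE^{(M)}(\{0\})=\La^{(M)}$ (the defect relations \eqref{jets2} are vacuous for a point), the hypothesis gives $\La^{(M)}\subseteq j^\infty_{\{0\}}\cE^{(N)}(\R)$. By the one-dimensional \Cref{thm:Borelloss} there is $p\ge1$ with \eqref{eq:SVcond}, and because $M$ has moderate growth \eqref{eq:SVcond} is equivalent to \eqref{eq:mixedga1}, which is exactly \eqref{eq:msnq}. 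Hence $(M,N)$ is admissible.

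For the substantial (``only if'') direction I would imitate the Beurling-to-Roumieu reduction carried out in the proof of \Cref{thm:WBeurling}, but at the level of \emph{admissible pairs} rather than of single strongly regular sequences. The key step is an analogue of \Cref{lem:CC17}: \emph{given an admissible pair $(M,N)$ and a non-negative sequence $L$ with $L\lhd M$, there exist strongly log-convex weight sequences $\tilde M,\tilde N$ with $L\preceq\tilde M\lhd M$, $\tilde N\lhd N$, $\tilde M$ of moderate growth, $\tilde m_k^{1/k}\to\infty$, $\tilde M\preceq\tilde N$, and $(\tilde M,\tilde N)$ admissible} (this is contained in \cite{ChaumatChollet94}). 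Granting it, fix $F\in\cE^{(M)}(A)$; on each compact piece of an exhaustion of $A$, $F$ satisfies \eqref{jets1}--\eqref{jets2} with some sequence $L\lhd M$ in place of $M$ (as in the proof of \Cref{thm:WBeurling}, and after harmlessly enlarging $L$ we may also assume $\ell_k^{1/k}\to\infty$ while keeping $L\lhd M$). Choosing $\tilde M,\tilde N$ as above, we get $F|_K\in\cE^{\{\tilde M\}}(K)$, so by the Roumieu result \Cref{thm:CCmixed} the jet $F|_K$ extends to some $f\in\cE^{\{\tilde N\}}(\R^n)\subseteq\cE^{(N)}(\R^n)$, the inclusion holding because $\tilde N\lhd N$. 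Finally one patches these local extensions together by an $\cE^{(N)}$-partition of unity — available since $N$ is non-quasianalytic — exactly as in the proof of \Cref{thm:closed}, obtaining a global extension of $F$ in $\cE^{(N)}(\R^n)$.

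The main obstacle is the construction of the pair $(\tilde M,\tilde N)$: one must multiply the quotient sequences $\mu_k=M_k/M_{k-1}$ and $\nu_k=N_k/N_{k-1}$ by suitable factors tending to infinity so as to simultaneously (i) make $\tilde\mu$ dominate the quotients of $L$, (ii) keep $\tilde\mu\lhd\mu$ and $\tilde\nu\lhd\nu$, (iii) preserve the summability $\sup_k\frac{\tilde\mu_k}{k}\sum_{j\ge k}\frac1{\tilde\nu_j}<\infty$ (whence non-quasianalyticity of $\tilde N$ as well), (iv) retain $\tilde\mu\lesssim\tilde\nu$, and (v) retain moderate growth and strong log-convexity. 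This should be achievable by a careful (double) application of \Cref{lem:CC16}: the correction factors telescope in the moderate-growth quotients of $\tilde M$, and normalising the resulting quotients to be increasing gives strong log-convexity — a more delicate variant of the single-sequence argument in \Cref{lem:CC17}. As in \Cref{rem:extopR}, one could additionally note that in the Beurling case the extension can even be realised by a continuous linear operator, but that refinement belongs to the circle of ideas in \Cref{sec:extentionop}.
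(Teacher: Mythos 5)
Your proposal follows essentially the same route as the paper: the ``only if'' direction is reduced to the Roumieu result \Cref{thm:CCmixed} via an admissible-pair analogue of \Cref{lem:CC17}, which you correctly identify as a double application of \Cref{lem:CC16} followed by patching with an $\cE^{(N)}$-partition of unity, and the ``if'' direction specializes to the singleton and invokes \Cref{thm:Borelloss} together with the moderate-growth equivalence \eqref{eq:SVcond} $\Leftrightarrow$ \eqref{eq:mixedga1}. The paper actually carries out the double-\Cref{lem:CC16} construction in full (multiplying $\mu_k$ by $1/\th_k$ and $\nu_k$ by $1/\vt_k$ with carefully chosen $\th,\vt\nearrow\infty$); your sketch correctly anticipates that the correction factors telescope in the moderate-growth and strong-log-convexity checks, so there is no gap in the plan.
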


Let $(M,N)$ be a pair of admissible weight sequences and let $L$ be a non-negative sequence such that $L \lhd M$.
Then there exists an admissible pair of weight sequences $(M',N')$ such that $L \preceq M'$ and $N' \lhd N$.
Indeed, as in the proof of \Cref{lem:CC17} there is a decreasing sequence $(\ep_k)_{k\ge 1}$ tending to $0$
with $L_k \le \ep_1\cdots \ep_k M_k$ for all $k$.
Applying \Cref{lem:CC16} to $\al_k = 0$, $\be_k = \max \{\ep_k,\frac{k}{\mu_k}\}$, and $\ga_k = \frac{k}{\mu_k}$,
gives a sequence $(\th_k)_{k \ge 1}$ such that $\th_k \nearrow \infty$, $\frac{k\th_k}{\mu_k} \searrow 0$ and $\th_k \ep_k \to 0$.
Applying \Cref{lem:CC16} once more, now to $\al_k = \frac{1}{\nu_k}$, $\be_k = \max \{\frac{1}{\th_k},\frac{k}{\nu_k}\}$, and $\ga_k = \frac{k}{\nu_k}$,
yields a sequence $(\vt_k)_{k \ge 1}$
with $\vt_k \nearrow \infty$
such that
\begin{align*}
  \sum_{j \ge k} \frac{\vt_j}{\nu_j} \le 8 \vt_k \sum_{j \ge k} \frac{1}{\nu_j}, \quad \text{ for all } k \ge 1,
\end{align*}
$\frac{\vt_k}{\th_k} \to 0$, and  $\frac{k\vt_k}{\nu_k} \searrow 0$.
Define
$\mu'_k := \frac{\mu_k}{\th_k}$ and $\nu'_k := \frac{\nu_k}{\vt_k}$ for $k \ge 1$ and $\mu'_0 = \nu'_0 :=1$.
Then $\frac{\mu'_k}{k}\nearrow \infty$ and $\frac{\nu'_k}{k}\nearrow \infty$ and hence
$M'_k := \mu'_0 \mu'_1 \cdots \mu'_k$ and $N'_k := \nu'_0 \nu'_1 \cdots \nu'_k$ are strongly log-convex weight sequences
such that $(m'_k)^{1/k} \to \infty$ and $(n'_k)^{1/k} \to \infty$.
That $M'$ has moderate growth follows from
\[
  \frac{M'_{j+k}}{M'_jM'_k} = \frac{M_{j+k}}{M_jM_k} \frac{\th_1\cdots\th_{j}  \th_1\cdots\th_{k}}{\th_1\cdots\th_{j+k}}
  \le \frac{M_{j+k}}{M_jM_k}.
\]
Furthermore,
\begin{align*}
  \sum_{j \ge k} \frac{1}{\nu'} =  \sum_{j \ge k} \frac{\vt_j}{\nu_j} \le 8 \vt_k \sum_{j \ge k} \frac{1}{\nu_j}
  \lesssim  \frac{k\vt_k}{\mu_k}
  \lesssim  \frac{k\th_k}{\mu_k} = \frac{k}{\mu_k'}
\end{align*}
and hence $(M',N')$ is an admissible pair.
We have $L \preceq M'$ and $N' \lhd N$, since
$(\frac{L_k}{M'_k})^{1/k} = (\frac{L_k}{M_k})^{1/k} (\th_1\cdots\th_{k})^{1/k} \le (\th_1\ep_1 \cdots\th_{k}\ep_k)^{1/k}$ is bounded
and $(\frac{N_k}{N'_k})^{1/k} = (\vt_1 \cdots \vt_k)^{1/k} \to \infty$.

Then it is easy to conclude $\cE^{(M)}(A) \subseteq j^\infty_A\cE^{(N)}(\R^n)$ for all closed $A \subseteq \R^n$,
by a reduction argument to the Roumieu case (similar to the proof of \Cref{thm:WBeurling}).

\subsection{Extension operator}

Concerning the existence of extension operators in the mixed setting we have

\begin{theorem}[\cite{ChaumatChollet94}]
  Let $M,N$ be strongly log-convex weight sequences, $M$ of moderate growth with $m_k^{1/k}\to \infty$,
  and $N$ non-quasianalytic with $M \lhd N$.
  Then the following conditions are equivalent:
  \begin{enumerate}
     \item $\lim_{k\to \infty} \frac{\mu_k}{k} \sum_{j \ge k} \frac{1}{\nu_j}=0$.
     \item $\La^{\{M\}} \subseteq j^\infty_{\{0\}}\cE^{(N)}(\R^n)$.
     \item There exists an extension operator $\cE^{\{M\}}(A) \to \cE^{(N)}(\R^n)$ for each closed non-empty subset $A \subseteq \R^n$.
   \end{enumerate}
\end{theorem}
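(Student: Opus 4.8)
The plan is to prove the cycle of implications $(3)\Rightarrow(2)\Rightarrow(1)\Rightarrow(3)$. The implication $(3)\Rightarrow(2)$ is immediate: apply $(3)$ to $A=\{0\}$ and recall that $\cE^{\{M\}}(\{0\})=\La^{\{M\}}$, so that the extension operator composed with $j^\infty_{\{0\}}$ already witnesses $\La^{\{M\}}\subseteq j^\infty_{\{0\}}\cE^{(N)}(\R^n)$. For $(2)\Rightarrow(1)$ I would run the argument of \Cref{thm:necessityB} in the mixed setting; this is the ``$\lim=0$'' refinement of the Schmets--Valdivia characterization \Cref{thm:Borelloss}. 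After multiplying by a cutoff function, available by \Cref{cor:cutoff} since $N$ is non-quasianalytic, one reduces to surjectivity of $j^\infty_{\{0\}}\colon\cB^{(N)}((-1,1))\to\La^{\{M\}}$, and a Grothendieck-type factorization/open-mapping argument (as in \Cref{thm:necessityB}, now onto the Silva space $\La^{\{M\}}$) provides, for every $\rho>0$, functions $f_k\in\cB^{(N)}((-1,1))$ with $f_k^{(j)}(0)=\de_{jk}$ and $\|f_k\|^N_{[-1,1],\rho}\le C_\rho/M_k$. Setting $t_k:=\inf\{t\in[0,1]:f_k^{(k)}(t)<\tfrac12\}$, one bounds $t_{2k}$ from above via the $N$-estimate on $f_{2k}^{(k)}$ and from below by applying \Cref{lem:sizenearflat} to renormalized tails of $f_k^{(k)}$ (whose governing weight sequence is assembled from $(\nu_j)$); combining the bounds and letting $\rho$ vary over all positive reals turns the resulting ``$\sup_k<\infty$'' estimate into ``$\lim_k=0$''. (The ``$\sup$'' version is automatic here from \Cref{thm:Borelloss}, because $M$ has moderate growth and $\La^{(M)}\subseteq\La^{\{M\}}$, so the improvement to a limit is the real point.)

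The core of the theorem is $(1)\Rightarrow(3)$, which I would handle in two stages, treating first the singleton $\{0\}$. Under $(1)$, imitating the proof of \Cref{thm:Borelsufficient}(2), I would construct a continuous linear right inverse $E_0\colon\La^{\{M\}}\to\cE^{(N)}(\R)$ of the Borel map by setting $\ps_k(t):=\vh_k(t)\,t^k/k!$, where $\vh_k\in C^\infty_c(\R)$ with $\vh_k\equiv1$ near $0$ is obtained from \Cref{prop:cutoff} applied to a decreasing sequence $(d^{(k)}_j)_j$ whose first $k$ terms govern the support radius $\sim\tfrac{k}{\mu_k}$ and whose tail is built from $(\nu_j)$ so as to control the derivative bounds. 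As in \Cref{cor:setupcutoff}, hypothesis $(1)$ forces $\sum_j d^{(k)}_j\le\varepsilon_k\,\tfrac{k}{\mu_k}$ with $\varepsilon_k\to 0$, and, running the estimates in the proof of \Cref{thm:Borelsufficient}, one obtains $|\ps_k^{(j)}(t)|\le C\,\rho^j N_j\,\bigl(\text{const}_\rho\bigr)^k/M_k$, hence for each fixed $\rho>0$
\[
  \bigl(M_k\,\|\ps_k\|^N_{\R,\rho}\bigr)^{1/k}\longrightarrow 0\qquad(k\to\infty).
\]
Such decay is precisely what makes $E_0(a):=\sum_{k\ge0}a_k\ps_k$ a well-defined element of $\cE^{(N)}(\R)$ for every $a\in\La^{\{M\}}$ and $E_0$ continuous from the Silva space $\La^{\{M\}}$ into the Fréchet space $\cE^{(N)}(\R)$: a \emph{Roumieu}-type growth of the jet is absorbed because the ``cost'' $M_k\|\ps_k\|^N_{\R,\rho}$ tends to $0$ faster than every geometric sequence. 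The $n$-dimensional statement follows by tensorization, $\ps_\al:=\ps_{\al_1}\otimes\cdots\otimes\ps_{\al_n}$.

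For the globalization I would use the mixed-weight version of \Cref{prop:localproperty}. For each $x\in\partial A$ the evaluation $F\mapsto F(x)$ is continuous $\cE^{\{M\}}(A)\to\La^{\{M\}}$ with norm locally uniform in $x$, so $E_x(F):=\tau_x\,E_0\bigl(\tau_{-x}(F(x))\bigr)$ is a local extension operator for $A$ in $x$ and $\{E_x:x\in\partial A\}$ is locally equicontinuous. One then glues the $E_x$ against a partition of unity subordinate to a family of Whitney cubes for $A$ (\Cref{prop:Wcubes}), using the $N$-valued partition of \Cref{prop:partitionmixed} (or \Cref{Proposition6matrixB}) together with mixed-weight analogues of the separation estimate \Cref{lem:prepW3} — which survive verbatim using that $M$ has moderate growth, $M\preceq N$, and the submultiplicativity $h_m(t)\le h_m(Ct)^2$ from \eqref{hmg} — to check that the resulting function is $C^\infty$ on $\R^n$, restricts to $F$, lies in $\cE^{(N)}(\R^n)$, and depends linearly and continuously on $F$. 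Since $(1)$ in particular makes $(M,N)$ an admissible pair, \Cref{thm:CCmixed} and \Cref{thm:mixedBeurlingWS} already give surjectivity of $j^\infty_A$, so here only the linearity and continuity have to be produced.

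The main obstacle is the singleton step: extracting from the qualitative hypothesis $\tfrac{\mu_k}{k}\sum_{j\ge k}\tfrac1{\nu_j}\to0$ the quantitative decay $\bigl(M_k\|\ps_k\|^N_{\R,\rho}\bigr)^{1/k}\to0$ \emph{with enough uniformity in $\rho$} that $E_0$ lands in, and is continuous into, the projective-limit topology of $\cE^{(N)}(\R)$, whereas the interpolated jet is only controlled in the inductive-limit topology of $\La^{\{M\}}$. Reconciling this mismatch of types is precisely the place where $(1)$ cannot be weakened to the admissibility condition \eqref{eq:msnq}; by comparison the Whitney-cube globalization is a standard, if laborious, adaptation of \Cref{prop:localproperty}, and the necessity $(2)\Rightarrow(1)$ is the Petzsche/Schmets--Valdivia argument in its sharp limit form, whose only delicate point is the bookkeeping between the $M$-weights of the domain, the $N$-weights of the target, and the tails $\sum_{j\ge k}\tfrac1{\nu_j}$.
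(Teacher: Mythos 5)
Your overall plan is sound, but for the pivotal implication $(1)\Rightarrow(3)$ you take a genuinely different route from the paper. The paper disposes of $(1)\Rightarrow(3)$ by a \emph{sandwiching} argument: using \Cref{lem:CC16} one constructs from~$(1)$ an admissible pair $(M',N')$ of strongly log-convex weight sequences with
$M\lhd M'\lhd N'\lhd N$, and the extension operator is then the composite
\[
\cE^{\{M\}}(K)\hookrightarrow \cE^{M'}_1(K)\longrightarrow \cE^{N'}_A(\R^n)\hookrightarrow \cE^{(N)}(\R^n),
\]
where the middle arrow is the continuous linear extension from \Cref{rem:extopR}. The point is that the Roumieu-to-Beurling topological mismatch (Silva-type domain, Fr\'echet codomain) is absorbed entirely by the outer, strict inclusions: $M\lhd M'$ makes the Roumieu inclusion factor through a single Banach step, and $N'\lhd N$ sends that Banach step into the projective limit. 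All the analytic work is then already contained in \Cref{thm:CCmixed}/\Cref{rem:extopR}. You, by contrast, rebuild the singleton operator $E_0$ from scratch \`a la \Cref{thm:Borelsufficient}(2) and then globalize through a mixed-weight version of \Cref{prop:localproperty}. This is more self-contained and explains \emph{why} condition $(1)$ (as opposed to the mere admissibility \eqref{eq:msnq}) produces an operator, which the paper's sandwiching trick hides; the price is that you must redo the Petzsche-type estimates in the mixed setting and verify a mixed version of \Cref{lem:prepW3} (for which, as you note, moderate growth of $M$ and $M\preceq N$ suffice). A small inaccuracy in your sketch: the displayed bound $|\psi_k^{(j)}|\le C\rho^j N_j\,(\mathrm{const}_\rho)^k/M_k$ does not by itself yield $\bigl(M_k\|\psi_k\|^N_{\R,\rho}\bigr)^{1/k}\to0$; what $(1)$ actually gives, through $\sum_j d_j^{(k)}\le\varepsilon_k\,k/\mu_k$ with $\varepsilon_k\to0$, is an improvement of the support radius of $\vh_k$ and hence a factor $(\mathrm{const}_\rho\,\varepsilon_k)^k$ coming from the $t^k/k!$ contribution — the $\varepsilon_k^k$ is what makes the $k$-th root decay. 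Worth making that explicit. Finally, for $(2)\Rightarrow(1)$ you should cite that the map to be factorized goes from a Fr\'echet space $\cB^{(N)}$ into the union of Banach balls $B_\rho\subseteq\La^{\{M\}}\subseteq\La^{(N)}$; one applies the Grothendieck argument to each $B_\rho$ separately and lets $\rho$ vary, which is where the $\sup$-condition is sharpened to the $\lim$-condition. That is essentially the Petzsche/Schmets--Valdivia refinement you describe, and it is correct in outline, but the uniformity bookkeeping is the delicate step, so it deserves more than a gesture.
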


We briefly comment on the implication (1) $\Rightarrow$ (3).
The idea is to show that (1) implies the existence of
an admissible pair $(M',N')$ of weight sequences such that
$M \lhd M' \lhd N' \lhd N$. This can be achieved again with the help of \Cref{lem:CC16}; for the details we refer to
\cite[Proposition 20]{ChaumatChollet94}.
Then the extension operator is given by the composite
\[
  \xymatrix{
  \cE^{\{M\}}(K) ~ \ar@{^{(}->}[r] & \cE^{M'}_1(K) \ar[r] & \cE^{N'}_{A}(\R^n) ~ \ar@{^{(}->}[r] & \cE^{(N)}(\R^n)
  }
\]
where the middle arrow is the extension operator from \Cref{rem:extopR}.

\begin{remark}
  All extensions in this section can be chosen to be analytic in the complement of $A$
  in view of \Cref{thm:analyticmodification}.
\end{remark}

\part{More general ultradifferentiable classes}

\section{Further ultradifferentiable functions}

We introduce weight functions and Braun--Meise--Taylor classes.
This quickly leads to the notion of the associated weight matrix
which allows us to build the theory of Braun--Meise--Taylor classes (and of more general ultradifferentiable classes) to a great extent upon the theory of
Denjoy--Carleman classes.

\subsection{Weight functions} \label{sec:weightfunctions}

\begin{definition}
A \emph{weight function} is a continuous increasing function
$\om: [0,\infty) \to [0,\infty)$ with $\om(0)=0$ satisfying the following three properties:
\begin{enumerate}
	\item $\om(2t)=O(\om(t))$ as $t\to \infty$.
	\item $\log(t)=o(\omega(t))$ as $t\to \infty$.
	\item $\ph : t \mapsto \om(e^t)$ is convex on $[0,\infty)$.
\end{enumerate}
\end{definition}

Note that (2) is equivalent to $\lim_{t \to \infty} \frac{t}{\ph(t)} = 0$ and implies $\lim_{t \to \infty} \om(t) = \infty$.
For each weight function $\om$ there is a weight function $\tilde \om$ which coincides with $\om$ for all sufficiently large $t>0$ and such that $\tilde \om|_{[0,1]}=0$.
It is hence no loss of generality to assume that $\om|_{[0,1]}=0$ which we shall do
tacitly if convenient.

Since $\ph$ is convex, increasing, and $\ph(0)=0$, we may consider the \emph{Young conjugate}
$\ph^* : [0,\infty) \to [0,\infty)$
of $\ph$,
given by
\[
\ph^*(t):=\sup_{s \ge 0} (st-\ph(s)), \quad t \ge 0.
\]
The function $\ph^*$ is convex, increasing, and
it satisfies $\ph^*(0)=0$, $\ph^{**}=\ph$ (here we use $\om|_{[0,1]}=0$),
and $\lim_{t\to \infty}\frac{t}{\ph^*(t)}=0$.
Moreover,
the functions $t \mapsto \tfrac{\ph(t)}{t}$ and $t \mapsto \tfrac{\ph^*(t)}{t}$ are increasing on $(0,\infty)$.

\subsection{Braun--Meise--Taylor classes}

Let $\om$ be a weight function.
Let $U$ be an open subset of $\mathbb{R}^n$ and $\rho >0$.
For $f \in C^\infty(U)$ and compact $K \subseteq U$ we consider
the seminorm
\[
	\|f\|^\om_{K,\rho}
	:= \sup_{x \in K} \sup_{\alpha \in \mathbb{N}^n} |f^{(\alpha)}(x)| \exp(- \tfrac{1}{\rh} \ph^*(\rh |\al|)).
\]
(Occasionally, we will also use $\|f\|^\om_{U,\rho}$ for open sets $U$.)
We define the \emph{Braun--Meise--Taylor classes}
\begin{align*}
	\mathcal{E}^{(\om)}(U)
	&:= \big\{f \in C^\infty(U) : \forall K \subseteq_{cp} U ~\forall \rh > 0 : \|f\|^\om_{K,\rho} < \infty\big\},
	\\
	\mathcal{E}^{\{\om\}}(U)
	&:= \big\{f \in C^\infty(U) : \forall K \subseteq_{cp} U ~\exists \rh > 0 : \|f\|^\om_{K,\rho} < \infty\big\}.
\end{align*}
The class $\mathcal{E}^{(\om)}(U)$ is said to be of \emph{Beurling type}
and the class $\mathcal{E}^{\{\om\}}(U)$ of \emph{Roumieu type}.
In analogy to Denjoy--Carleman classes we use $\mathcal{E}^{[\om]}(U)$ as placeholder for either $\mathcal{E}^{(\om)}(U)$
or $\mathcal{E}^{\{\om\}}(U)$; cf.\ \Cref{convention}.

We endow the vector spaces $\mathcal{E}^{[\om]}(U)$ with their natural locally convex topologies:
$\mathcal{E}^{(\om)}(U)$ is supplied with its natural Fr\'echet space topology and
$\mathcal{E}^{\{\om\}}(U)$ with
the projective limit topology over $K$ of the
inductive limit topology over $\rh$; note that it suffices to take countable limits.
Then $\mathcal{E}^{(\om)}(U)$ is a nuclear Fr\'echet space and
$\mathcal{E}^{\{\om\}}(U)$ is nuclear, complete, and reflexive; see
\cite[Proposition 4.9]{BMT90}.

\begin{remark}
  The classes $\mathcal{E}^{[\om]}$ originate from work of Beurling \cite{Beurling61} (see also Bj\"orck \cite{Bjoerck66}).
  Beurling described his non-quasianalytic classes by decay properties of the Fourier transform of a compactly supported function:
  \begin{align*}
   	\mathcal{D}^{(\om)}(U)
   	&= \Big\{f \in C_c(U) : ~\forall \rh>0 : \int_{\R^n} | \widehat f(\xi)| e^{\rh \om(\xi)} \, dt <\infty \Big\},
   	\\
   	\mathcal{D}^{\{\om\}}(U)
   	&= \Big\{f \in C_c(U) : ~\exists \rh>0 : \int_{\R^n} | \widehat f(\xi)| e^{\rh \om(\xi)} \, dt <\infty \Big\}.
   \end{align*}
  Here, and below, $\om(\xi) := \om(|\xi|)$.
  Braun, Meise, and Taylor \cite{BMT90} showed that these classes are non-trivial (for each non-empty open set $U \subseteq \RR^n$) provided
  that $\om$ is a weight function (not necessarily satisfying \ref{sec:weightfunctions}(3))\footnote{Beurling and Bj\"orck had additionally assumed that $\om$ is subadditive.}
  such that
  \begin{equation} \label{eq:nqOmega}
  		\int_{1}^\infty \frac{\om(t)}{t^2} \, dt <\infty.
  \end{equation}
  We will give an independent proof of this fact in
  \Cref{thm:(non)qaWF}.
  Furthermore, if also the condition \ref{sec:weightfunctions}(3) is fulfilled, then
  the classes $\mathcal{D}^{[\om]}$ also admit the description
  \begin{align*}
   	\mathcal{D}^{(\om)}(U)
   	&= \big\{f \in \mathcal{D}(U) : ~\forall \rh>0 : \|f\|^\om_{U,\rho} <\infty \big\},
   	\\
   	\mathcal{D}^{\{\om\}}(U)
   	&= \big\{f \in \mathcal{D}(U) : ~\exists \rh>0 : \|f\|^\om_{U,\rho} <\infty  \big\}.
   \end{align*}
   This was the starting point for the introduction of the local classes $\mathcal{E}^{[\om]}(U)$
   and the development of a corresponding theory of ultradistributions
   in \cite{BMT90}.
\end{remark}

\subsection{Associated weight matrix}

We associate with a weight function $\om$ a one-parameter family $\mathfrak{W} = \{W^x\}_{x>0}$
of weight sequences $W^x = (W^x_k)$ defined by
\[
	W^x_k := \exp(\tfrac{1}{x} \ph^*(x k)), \quad k \in \NN.
\]
The family $\mathfrak{W}$ is called the \emph{associated weight matrix}\index{weight matrix!associated} of $\om$.
As usual for weight sequences, we write
\[
  W^x_k = \vt^x_0 \vt^x_1 \vt^x_2 \cdots \vt^x_k =  k!\, w^x_k.
\]
Notice that, for any $\rh>0$,
\begin{equation} \label{eq:connectionSN}
	\|f\|^\om_{K,\rho}
	= \sup_{\alpha \in \mathbb{N}^n} \frac{\|f^{(\alpha)}\|_K}{\exp(\tfrac{1}{\rh} \ph^*(\rh |\al|))}
	= \sup_{\alpha \in \mathbb{N}^n} \frac{\|f^{(\alpha)}\|_K}{W^\rh_{|\al|}}
	= \|f\|^{W^\rh}_{K,1}.
\end{equation}

\begin{lemma} \label{lem:propertiesWF}
  We have:
	\begin{enumerate}
		\item Each $W^x$ is a weight sequence.
		\item $\vartheta^x \le \vartheta^y$ if $x \le y$, which entails $W^x \le W^y$.
		\item For all $x>0$ and all $j,k \in \NN$,  $W^x_{j+k} \le W^{2x}_{j} W^{2x}_{k}$ and $w^x_{j+k} \le w^{2x}_{j} w^{2x}_{k}$.
		\item For all $x>0$ and all $k \in \NN_{\ge 2}$, $\vartheta^x_{2k} \le  \vartheta^{4x}_{k}$.
    \item $\forall \rh>0 ~\exists H\ge 1 ~\forall x >0 ~\exists C \ge 1 ~\forall k \in \NN : \rh^k W^x_k \le C W^{Hx}_k$.
	\end{enumerate}
\end{lemma}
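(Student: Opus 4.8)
The plan is to derive all five statements from convexity of the Young conjugate $\ph^*$ together with the elementary facts already recorded in \Cref{sec:weightfunctions} ($\ph^*$ is convex and increasing, $\ph^*(0)=0$, $t/\ph^*(t)\to 0$ as $t\to\infty$, and $t\mapsto\ph^*(t)/t$ is increasing); only the last assertion (5) will additionally use the property $\om(2t)=O(\om(t))$ from the definition of a weight function. The argument uses repeatedly that for a convex $\psi:[0,\infty)\to[0,\infty)$ the right derivative $\psi'_+$ is non-decreasing with $\psi(b)-\psi(a)=\int_a^b\psi'_+$, and that the average slope $\tfrac{\psi(b)-\psi(a)}{b-a}$ of $\psi$ over an interval does not decrease when the interval is translated to the right.

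For (1): $W^x_0=\exp(\tfrac1x\ph^*(0))=1$ and $W^x_1\ge 1$ since $\ph^*\ge 0$; $W^x$ is log-convex because $k\mapsto\tfrac1x\ph^*(xk)$ is the composition of the convex $\ph^*$ with a dilation, hence convex; and $(W^x_k)^{1/k}=\exp(\ph^*(xk)/(xk))\to\infty$ by $t/\ph^*(t)\to 0$. For (2): the substitution $s=xu$ gives $\log\vartheta^x_k=\int_{k-1}^{k}(\ph^*)'_+(xu)\,du$, which is non-decreasing in $x$ because $(\ph^*)'_+$ is non-decreasing and $u>0$ on the range of integration; multiplying over $j\le k$ gives $W^x\le W^y$ for $x\le y$. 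For (3): $x(j+k)$ is the midpoint of $2xj$ and $2xk$, so convexity gives $2\ph^*(x(j+k))\le\ph^*(2xj)+\ph^*(2xk)$; dividing by $2x$ and exponentiating yields the bound for $W$, and the one for $w$ follows from $(j+k)!\ge j!\,k!$. For (4): taking logarithms and clearing denominators reduces the claim to $4\bigl(\ph^*(2xk)-\ph^*(2xk-x)\bigr)\le\ph^*(4xk)-\ph^*(4xk-4x)$, which says exactly that the average slope of $\ph^*$ over $[2xk-x,2xk]$ is at most that over $[4xk-4x,4xk]$; for $k\ge 2$ one has $2xk\le 4xk-4x$, so the first interval lies to the left of the second and the translation-monotonicity of slopes applies.

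The one genuinely non-formal step is (5), and this is where I expect the main obstacle to lie. For $\rh\le 1$ take $H=C=1$. For $\rh>1$, the property $\om(2t)=O(\om(t))$ yields $L\ge 2$ and $A\ge 0$ with $\om(2t)\le L\om(t)+A$ for all $t\ge 0$ (enlarge the constant to absorb the bounded range where $\om$ may still vanish), hence $\ph(s+\log 2)\le L\ph(s)+A$ for all $s\ge 0$. From $\ph^*(t)\ge ut-\ph(u)$, used with $u=v+\log 2$ and combined with $\ph(v+\log 2)\le L\ph(v)+A$, taking the supremum over $v\ge 0$ gives $\ph^*(t)\ge L\ph^*(t/L)+(\log 2)\,t-A$, and replacing $t$ by $Lt$ gives $\tfrac1L\ph^*(Lt)\ge\ph^*(t)+(\log 2)\,t-\tfrac AL$. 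Iterating $j$ times and summing the telescoping inequalities, the error terms form a geometric series bounded by $A':=A/(L-1)$, so
\[
	\tfrac1{L^{j}}\ph^*(L^{j}t)-\ph^*(t)\ \ge\ j(\log 2)\,t-A',\qquad t\ge 0 .
\]
Choosing $j$ with $j\log 2\ge\log\rh$ and setting $H:=L^{j}$ (which depends only on $\rh$ and $\om$), then taking $t=xk$, dividing by $x$ and exponentiating, one gets $W^{Hx}_k\ge e^{-A'/x}\rh^{k}W^x_k$, i.e.\ $\rh^{k}W^x_k\le C\,W^{Hx}_k$ with $C:=e^{A'/x}$. The delicate points here are extracting the linear-in-$k$ gain from the doubling hypothesis in precisely this dualized form, and respecting the quantifier order ($H$ must be uniform in $x$, while $C$ may depend on $x$).
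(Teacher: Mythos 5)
Your proof is correct and follows essentially the same route as the paper: items (1)--(4) are read off from the convexity of $\ph^*$ (your integral formula for $\log\vartheta^x_k$ and the ``average slope'' phrasing of (4) are just explicit renderings of the chord-monotonicity the paper invokes), and (5) is obtained, exactly as in the paper, by dualizing the doubling condition $\om(2t)=O(\om(t))$ into an inequality of the form $\tfrac1L\ph^*(Lt)\ge\ph^*(t)+ct-\text{const}$ and iterating, with the correct quantifier order ($H$ uniform in $x$, $C$ depending on $x$).
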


\begin{proof}
	(1)
  The convexity of $\ph^*$ implies that each $\vt^x$ is increasing.
  And
	$(W^x_k)^{1/k} \nearrow \infty$
  follows from
  $t \mapsto \frac{\ph^*(t)}{t} \nearrow \infty$

	(2)  Let $0<x \le y$. Then
	\[
		\vartheta^x_k  = \exp\Big(\frac{\ph^*(x k) - \ph^*(x (k-1))}{x} \Big)
		\le \exp\Big(\frac{\ph^*(y k) - \ph^*(y (k-1))}{y} \Big) = \vartheta^y_k
	\]
	again by the convexity of $\ph^*$.

	(3) The convexity of $\ph^*$ and $\ph^*(0)=0$ implies
	\begin{equation*}
	\ph^*(t)+\ph^*(s) \le \ph^*(t+s) \le \tfrac{1}{2}\ph^*(2t)+ \tfrac{1}{2}\ph^*(2s) , \quad t,s\ge 0.
	\end{equation*}
	As a consequence
	\[
		W^x_j W^x_k \le W^x_{j+k} \le W^{2x}_j W^{2x}_k, \quad j,k \in \NN,
	\]
	and equivalently
	\[
		w^x_j w^x_k \le \binom{j+k}{j} w^x_{j+k} \le w^{2x}_j w^{2x}_k, \quad j,k \in \NN,
	\]

	(4) The inequality $\vartheta^x_{2k} \le   \vartheta^{4x}_{k}$ is equivalent to
  	\begin{equation*}
    	\frac{\ph^*(2k x) - \ph^*((2k-1)x) }{x}  \le \frac{\ph^*(4k x) - \ph^*(4(k-1)x)}{4x},
  	\end{equation*}
  	which follows from the convexity of $\ph^*$ if $k \ge 2$.

  	(5) The statement follows from the following inequality
  \begin{equation*}
    \exists L\ge 1 ~\forall t \ge 0 ~\forall s \in \NN :
     L^s \ph^*(t)+ s L^s t \le  \ph^*(L^s t) + \sum_{i=1}^s L^i,
  \end{equation*}
  if we choose $s$ such that $e^s\ge \si$ and set $t:=\rh k$,
  $H:=L^s$ and $C := \exp(\tfrac{1}{H \rh} \sum_{i=1}^s L^i)$.
  By \ref{sec:weightfunctions}(1), there exists $L_1\ge1$ such that $\om(2t) \le L_1 \om(t)+L_1$ for all $t\ge0$ and hence
  there is $L\ge1$ such that $\ph(t+1) \le L \ph(t)+L$ for all $t\ge0$.
  Thus, for $t \ge 0$,
  \begin{align*}
    \ph^*(Lt) + L = \sup_{s\ge 0} (Lts - (\ph(s)-L)) \ge \sup_{s\ge 1} (Lts - L \ph(s-1)) = L \ph^*(t)+Lt,
  \end{align*}
  and the desired inequality follows by iteration.
\end{proof}

\subsection{Weight matrix description of Braun--Meise--Taylor classes}

\begin{theorem}[\cite{RainerSchindl12}] \label{thm:WMdescription}
	Let $\om$ be a weight function with associated weight matrix $\mathfrak{W} = \{W^x\}_{x>0}$.
	Let $U \subseteq \RR^n$ be open and $K \subseteq U$ compact.
	\begin{enumerate}
		\item For each $x>0$ we have the continuous inclusions
		$\mathcal{E}^{\{W^x\}}(U) \subseteq \mathcal{E}^{\{\om\}}(U)$
		and $\mathcal{E}^{(\om)}(U) \subseteq \mathcal{E}^{(W^x)}(U)$.
		\item As locally convex spaces
		\begin{align*}
			\mathcal{E}^{(\om)}(K) &= \on{proj}_{x>0} \mathcal{E}^{(W^x)}(K)
      = \on{proj}_{x>0} \on{proj}_{\rh>0} \mathcal{E}^{W^x}_\rh(K),
			\\
			\mathcal{E}^{\{\om\}}(K) &=  \on{ind}_{x>0} \mathcal{E}^{\{W^x\}}(K)
      = \on{ind}_{x>0} \on{ind}_{\rh>0} \mathcal{E}^{W^x}_\rh(K),
			\\
			\mathcal{E}^{(\om)}(U) &= \on{proj}_{K \subseteq_{cp} U} \on{proj}_{x>0} \mathcal{E}^{(W^x)}(K) =  \on{proj}_{x>0} \mathcal{E}^{(W^x)}(U),
			\\
			\mathcal{E}^{\{\om\}}(U) &= \on{proj}_{K \subseteq_{cp} U} \on{ind}_{x>0} \mathcal{E}^{\{W^x\}}(K).
		\end{align*}
	\end{enumerate}
\end{theorem}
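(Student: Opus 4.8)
The plan is to reduce everything to the identity \eqref{eq:connectionSN}, namely $\|f\|^\om_{K,\rho} = \|f\|^{W^\rh}_{K,1}$, together with the comparison properties of the matrix $\mathfrak W$ collected in \Cref{lem:propertiesWF}. First I would prove (1). For the Roumieu inclusion $\cE^{\{W^x\}}(U) \subseteq \cE^{\{\om\}}(U)$, fix $f \in \cE^{\{W^x\}}(U)$ and $K \subseteq_{cp} U$; there is $\rh>0$ with $\|f\|^{W^x}_{K,\rh}<\infty$. Using \Cref{lem:propertiesWF}(5) with this $\rh$, choose $H\ge 1$ and then $C\ge 1$ so that $\rh^k W^x_k \le C\, W^{Hx}_k$ for all $k$; then $\|f^{(\al)}\|_K \le \|f\|^{W^x}_{K,\rh}\,\rh^{|\al|}W^x_{|\al|} \le C\|f\|^{W^x}_{K,\rh}\, W^{Hx}_{|\al|}$, so $\|f\|^\om_{K,1/(Hx)} = \|f\|^{W^{Hx}}_{K,1}<\infty$ by \eqref{eq:connectionSN} (applied with parameter $1/(Hx)$, noting $W^{1/(1/(Hx))} = W^{Hx}$). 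Hence $f\in\cE^{\{\om\}}(U)$, and the estimate is linear in the seminorm, giving continuity. The Beurling inclusion $\cE^{(\om)}(U)\subseteq\cE^{(W^x)}(U)$ is the dual statement: given $f\in\cE^{(\om)}(U)$, for every $\rh>0$ we have $\|f\|^{W^{\rh}}_{K,1} = \|f\|^\om_{K,1/\rh}<\infty$; taking $\rh\le x$ and using \Cref{lem:propertiesWF}(2), $W^\rh \le W^x$, so $\|f\|^{W^x}_{K,1}\le\|f\|^{W^\rh}_{K,1}<\infty$ — actually one wants arbitrary $\rh'>0$ for the class $\cE^{(W^x)}$, so combine \Cref{lem:propertiesWF}(2) and (5): for given $\rh'>0$ pick $H$ with $(\rh')^k W^x_k \lesssim W^{Hx}_k$ and use $\|f\|^\om_{K,1/(Hx)}<\infty$. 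Continuity again follows since the bounds are linear in the seminorms.

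For (2), the four identities are all of the same flavour: rewrite each Braun--Meise--Taylor seminorm as a Denjoy--Carleman seminorm via \eqref{eq:connectionSN}, then recognise the locally convex structure. On a compact set $K$, by definition $\cE^{(\om)}(K)$ carries the projective topology generated by $\|\cdot\|^\om_{K,1/\ell}$, $\ell\ge 1$; since $\|f\|^\om_{K,1/\ell} = \|f\|^{W^\ell}_{K,1}$ and, more generally, $\|f\|^\om_{K,\rho}=\|f\|^{W^{1/\rho}}_{K,1}$, the system of seminorms $\{\|\cdot\|^\om_{K,1/\ell}\}_{\ell}$ is cofinal in (indeed essentially equal to) the system $\{\|\cdot\|^{W^x}_{K,\rho} : x>0,\ \rho>0\}$ — here one uses \Cref{lem:propertiesWF}(2) to absorb $x$ into a larger integer and \Cref{lem:propertiesWF}(5) to trade the factor $\rho^{-k}$ for a shift in the matrix parameter. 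This identifies $\cE^{(\om)}(K)$ with $\on{proj}_{x>0}\on{proj}_{\rho>0}\cE^{W^x}_\rho(K) = \on{proj}_{x>0}\cE^{(W^x)}(K)$ as locally convex spaces. The Roumieu identity $\cE^{\{\om\}}(K) = \on{ind}_{x>0}\on{ind}_{\rho>0}\cE^{W^x}_\rho(K) = \on{ind}_{x>0}\cE^{\{W^x\}}(K)$ is the mirror image: a function lies in $\cE^{\{\om\}}(K)$ iff $\|f\|^\om_{K,\rho}<\infty$ for \emph{some} $\rho$, i.e.\ $\|f\|^{W^{1/\rho}}_{K,1}<\infty$ for some $\rho$, i.e.\ $f\in\cE^{\{W^x\}}(K)$ for some $x$; matching the inductive-limit bornologies/topologies uses the same cofinality argument with the roles of ``for all'' and ``there exists'' exchanged, and one should note that countably many parameters ($x=\ell\in\N$, $\rho=1/m$) suffice, as remarked after the definition of $\cE^{\{\om\}}$.

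Passing from compact $K$ to open $U$ is then formal. By definition $\cE^{[\om]}(U) = \on{proj}_{K\subseteq_{cp}U}\cE^{[\om]}(K)$ (this is how the local Braun--Meise--Taylor spaces are topologised, parallel to $\cE^{[M]}(U)=\on{proj}_{V\Subset U}\cB^{[M]}(V)$), so substituting the already-proved compact identities gives $\cE^{(\om)}(U) = \on{proj}_{K}\on{proj}_{x>0}\cE^{(W^x)}(K)$ and $\cE^{\{\om\}}(U)=\on{proj}_{K}\on{ind}_{x>0}\cE^{\{W^x\}}(K)$ immediately. The only remaining point is the interchange $\on{proj}_{K}\on{proj}_{x>0}\cE^{(W^x)}(K) = \on{proj}_{x>0}\on{proj}_{K}\cE^{(W^x)}(K) = \on{proj}_{x>0}\cE^{(W^x)}(U)$, which is the standard fact that iterated projective limits may be exchanged (both index sets are directed and the transition maps commute). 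In the Roumieu case no such interchange is claimed — and indeed one cannot pull the inductive limit over $x$ outside the projective limit over $K$ — which is exactly why the fourth line of (2) is stated asymmetrically. The main obstacle is not any single estimate but getting the cofinality bookkeeping between the two-parameter family $\{(\|\cdot\|^{W^x}_{K,\rho})\}_{x,\rho>0}$ and the one-parameter family $\{\|\cdot\|^\om_{K,\rho}\}_{\rho>0}$ exactly right, in both the Beurling and Roumieu directions, so that the identifications are genuinely topological (not merely set-theoretic); here \Cref{lem:propertiesWF}(2) and especially (5) are the workhorses, and one must be careful that the constants in (5) depend on $\rho$ (equivalently on the target parameter) but this is harmless since we are taking limits over all parameters anyway.
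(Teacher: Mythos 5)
Your approach is the same as the paper's: everything is reduced to the identity \eqref{eq:connectionSN} together with \Cref{lem:propertiesWF}(2) and (5), and part (2) is then deduced from part (1) plus the trivial inclusions; your extra remarks on cofinality of the seminorm systems and on interchanging the projective limits only make explicit what the paper leaves implicit. Two slips should be corrected, though. First, you consistently invert the parameter in \eqref{eq:connectionSN}: the identity reads $\|f\|^\om_{K,\rho}=\|f\|^{W^{\rho}}_{K,1}$, not $\|f\|^\om_{K,\rho}=\|f\|^{W^{1/\rho}}_{K,1}$. Since both quantifiers range over all $\rho>0$, this does not affect any membership or cofinality claim, but several of your displayed identities (e.g.\ $\|f\|^\om_{K,1/(Hx)}=\|f\|^{W^{Hx}}_{K,1}$ and $\|f\|^\om_{K,1/\ell}=\|f\|^{W^{\ell}}_{K,1}$) are false as written.

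Second, and more substantively, in the Beurling inclusion $\cE^{(\om)}(U)\subseteq\cE^{(W^x)}(U)$ the inequality you invoke, $(\rh')^k W^x_k\lesssim W^{Hx}_k$, points the wrong way: it bounds $\|f\|^{W^{Hx}}_{K,1}$ by $\|f\|^{W^x}_{K,\rh'}$, the opposite of what is needed. The correct move is to apply \Cref{lem:propertiesWF}(5) with $\rho=1/\rh'$ to the parameter $x/H$, which yields $W^{x/H}_k\le C(\rh')^k W^x_k$ and hence $\|f\|^{W^x}_{K,\rh'}\le C\,\|f\|^{W^{x/H}}_{K,1}=C\,\|f\|^{\om}_{K,x/H}<\infty$. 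With these corrections your argument coincides with the paper's proof.
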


\begin{proof}
  (1)	Let $x>0$ be fixed.
  If $f \in \mathcal{E}^{\{W^x\}}(U)$,
  then for each compact subset $K \subseteq U$ there exists $\rh>0$ such that $\|f\|^{W^x}_{K,\rh} < \infty$.
  By \Cref{lem:propertiesWF}(5) and \eqref{eq:connectionSN}, there exist constants $H, C \ge 1$ such that
  \[
  \infty >
  C \|f\|^{W^x}_{K,\rh} \ge
  \|f\|^{W^{Hx}}_{K,1} = \|f\|^{\om}_{K,Hx},
  \]
  whence $f \in \mathcal{E}^{\{\om\}}(U)$.

  Assume that $f \in \mathcal{E}^{(\om)}(U)$.
  Let $x>0$ and $\rh >0$ be fixed. By \Cref{lem:propertiesWF}(5),
  there exist constants $H, C \ge 1$ such that
  $W^x_k \le C \rh^k W^{Hx}_k$ for all $k$.
  For each compact subset $K \subseteq U$ we have
  $\|f\|^{\om}_{K,\frac{x}{H}} < \infty$, and, thus, using \eqref{eq:connectionSN},
  \[
  \infty > C \|f\|^{\om}_{K,\frac{x}{H}} = C \|f\|^{W^{\frac{x}{H}}}_{K,1} \ge \|f\|^{W^x}_{K,\rh}.
  \]
  Since $\rh>0$ was arbitrary, we may conclude that $f \in \mathcal{E}^{(W^x)}(U)$.

  (2) now follows from (1),
  since we have the continuous inclusions $\mathcal{E}^{(\om)}(K) \supseteq \on{proj}_{x>0} \mathcal{E}^{(W^x)}(K)$ and
  $\mathcal{E}^{\{\om\}}(K) \subseteq \on{ind}_{x>0} \mathcal{E}^{\{W^x\}}(K)$ by definition.
\end{proof}

\begin{corollary}
	Let $\om$ be a weight function. Then:
	\begin{enumerate}
		\item $\mathcal{E}^{[\om]}(U)$ is a ring.
		\item If $C^\om(U)\subseteq \mathcal{E}^{[\om]}(U)$ and $\psi : \RR^m \supseteq V \to U$ is real analytic,
		then the pullback $\psi^* : \mathcal{E}^{[\om]}(U) \to \mathcal{E}^{[\om]}(V)$ is well-defined.
		\item $\mathcal{E}^{[\om]}(U)$ is stable under differentiation.
	\end{enumerate}
\end{corollary}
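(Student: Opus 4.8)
The plan is to prove (1) and (3) by direct estimates on the seminorms $\|\cdot\|^\om_{K,\rho}$, invoking only the elementary properties of the associated weight matrix $\mathfrak W=\{W^x\}_{x>0}$ collected in \Cref{lem:propertiesWF}, and to deduce (2) from the Denjoy--Carleman stability under analytic change of variables (\Cref{sec:stability}) via the weight matrix description \Cref{thm:WMdescription}. Throughout I use that $\|\cdot\|^\om_{K,\rh}$ is decreasing in $\rh$, by \Cref{lem:propertiesWF}(2).

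For (1), fix $f,g\in\cE^{[\om]}(U)$ and a compact $K\subseteq U$; choosing a common (large) $\rh$ we may assume $\|f\|^\om_{K,\rh},\|g\|^\om_{K,\rh}<\infty$. The Leibniz rule together with the superadditivity $\ph^*(s)+\ph^*(t)\le\ph^*(s+t)$ (convexity of $\ph^*$ and $\ph^*(0)=0$, as used in the proof of \Cref{lem:propertiesWF}(3)) gives $W^\rh_{|\be|}W^\rh_{|\al-\be|}\le W^\rh_{|\al|}$ and hence $|(fg)^{(\al)}(x)|\le 2^{|\al|}\|f\|^\om_{K,\rh}\|g\|^\om_{K,\rh}W^\rh_{|\al|}$ for $x\in K$. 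To absorb the factor $2^{|\al|}$ I apply \Cref{lem:propertiesWF}(5) with auxiliary parameter $2$: there are $H,C\ge1$ with $2^kW^{\rh/H}_k\le CW^\rh_k$ for all $k$, so running the estimate with $\rh$ replaced by $\rh/H$ yields $\|fg\|^\om_{K,\rh}\le C\|f\|^\om_{K,\rh/H}\|g\|^\om_{K,\rh/H}<\infty$. In the Beurling case this holds for every $\rh$, in the Roumieu case for the $\rh$ at hand, so $fg\in\cE^{[\om]}(U)$.

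For (3) it suffices to treat one partial derivative $\partial_i$. For $f\in\cE^{[\om]}(U)$, a compact $K$, $\al\in\N^n$ and $1\le i\le n$ one has $|f^{(\al+e_i)}(x)|\le\|f\|^\om_{K,\rh/2}\,W^{\rh/2}_{|\al|+1}$, and \Cref{lem:propertiesWF}(3) with $j=1$ and parameter $\rh/2$ gives $W^{\rh/2}_{|\al|+1}\le W^{\rh}_1 W^{\rh}_{|\al|}=\vt^\rh_1 W^\rh_{|\al|}$, where $\vt^\rh_1$ is a finite constant. Hence $\|\partial_i f\|^\om_{K,\rh}\le\vt^\rh_1\,\|f\|^\om_{K,\rh/2}<\infty$, with the same Beurling/Roumieu distinction as above, and iterating yields $\partial^\al\cE^{[\om]}(U)\subseteq\cE^{[\om]}(U)$ for all $\al$.

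For (2) I would move to the weight matrix. Combining \Cref{thm:WMdescription}, the inclusion relations for Denjoy--Carleman classes, and the identifications $C^\om\cong\cE^{\{(k!)_k\}}$, $\cH(\C^n)\cong\cE^{((k!)_k)}$, the hypothesis $C^\om\subseteq\cE^{[\om]}$ descends to $C^\om\subseteq\cE^{(W^x)}$ for every $x>0$ in the Beurling case, and to $C^\om\subseteq\cE^{\{W^x\}}$ for all sufficiently large $x>0$ in the Roumieu case (here \Cref{lem:propertiesWF}(2) is what makes ``sufficiently large'' the right notion). For each such $x$ the Denjoy--Carleman pullback $\psi^*\colon\cE^{[W^x]}(U)\to\cE^{[W^x]}(V)$ is well-defined by the stability result in \Cref{sec:stability}. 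In the Beurling case one then intersects over all $x>0$, using \Cref{thm:WMdescription}(2). In the Roumieu case, given $F\in\cE^{\{\om\}}(U)$ and a compact $K\subseteq V$, the jet of $F$ near the compact set $\psi(K)$ lies in some $\cE^{\{W^x\}}$, hence in one with $x$ large enough that $C^\om\subseteq\cE^{\{W^x\}}$, and its pullback lies in $\cE^{\{W^x\}}$ near $K$; thus $F\circ\psi\in\cE^{\{\om\}}(V)$. The only genuinely delicate point is exactly this descent of the hypothesis from the family $\mathfrak W$ to its individual members, together with the matching of the chosen parameters against the projective/inductive limit structure of \Cref{thm:WMdescription}; the rest is routine bookkeeping.
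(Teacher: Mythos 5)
Your proof is correct. Parts (2) and (3) take essentially the paper's route: for (3) you invoke \Cref{lem:propertiesWF}(3) exactly as the paper does, and for (2) you pass to the weight matrix $\fW=\{W^x\}_{x>0}$, descend the hypothesis $C^\om\subseteq\cE^{[\om]}$ to the individual $\cE^{[W^x]}$, apply the Denjoy--Carleman pullback result from \Cref{sec:stability}, and reassemble via \Cref{thm:WMdescription} --- which is precisely what the paper's one-line reference to the weight sequences $W^x$ amounts to, just spelled out.

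For (1) you take a genuinely different route. The paper reduces to the Denjoy--Carleman ring property of each $\cE^{[W^x]}$ (where the binomial factor $2^{|\al|}$ is harmlessly absorbed into the $\rho^{|\al|}$ present in the seminorm $\|\cdot\|^{W^x}_{K,\rho}$) and then passes through \Cref{thm:WMdescription}. You instead estimate the $\om$-seminorm $\|\cdot\|^\om_{K,\rho}$ of the product directly via Leibniz and the superadditivity of $\ph^*$, and then absorb the factor $2^{|\al|}$ by shifting the index $\rho\mapsto\rho/H$ using \Cref{lem:propertiesWF}(5). This is a valid and self-contained alternative; the parameter shift in \Cref{lem:propertiesWF}(5) plays exactly the role that passing from $W^x$ to $W^{Hx}$ would play in the paper's reduction, so the two arguments are morally the same, but yours works at the level of the $\om$-seminorms without invoking the Denjoy--Carleman ring property as a black box. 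One small point worth making explicit in the Roumieu case: since $H$ is fixed once and for all by \Cref{lem:propertiesWF}(5), you should choose the common $\rho$ for $f$ and $g$ large enough that $\rho/H$ still witnesses both $\|f\|^\om_{K,\rho/H}<\infty$ and $\|g\|^\om_{K,\rho/H}<\infty$; your ``choosing a common (large) $\rho$'' gestures at this but does not quite order the quantifiers, and a reader might stumble. No such issue arises in (3) or in the Beurling case.
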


\begin{proof}
	(1) and (2) follow from the fact that each $W^x$ is a weight sequence; cf.\ \Cref{sec:stability}.
	(3) is a consequence of \Cref{lem:propertiesWF}(3).
\end{proof}

\subsection{Braun--Meise--Taylor vs.\ Denjoy--Carleman classes}

\Cref{thm:WMdescription} shows that every Braun--Meise--Taylor class can be represented as a union or an intersection
of Denjoy--Carleman classes.
The question arises as to when Braun--Meise--Taylor and Denjoy--Carleman classes coincide.

\begin{theorem} \label{thm:comparison}
	Let $\om$ be a weight function with associated weight matrix $\mathfrak{W} = \{W^x\}_{x>0}$.
	Let $U \subseteq \RR^n$ be open and non-empty. The following conditions are equivalent:
\begin{enumerate}
	\item $\exists H\ge 1 ~\forall t\ge 0: 2 \om(t) \le \om(Ht) + H$.
	\item The weight sequences $W^x$ and $W^y$ are equivalent for all $x,y>0$.
	\item $\mathcal{E}^{\{\om\}}(U) = \mathcal{E}^{\{W^x\}}(U)$ for all $x>0$.
	\item $\mathcal{E}^{(\om)}(U) = \mathcal{E}^{(W^x)}(U)$ for all $x>0$.
	\item $W^x$ has moderate growth for some (equivalently, for all) $x>0$.
\end{enumerate}
\end{theorem}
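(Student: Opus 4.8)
The plan is to establish the cycle of implications $(1) \Rightarrow (2) \Rightarrow (4) \Rightarrow (1)$ and $(2) \Rightarrow (3) \Rightarrow (1)$, together with $(2) \Leftrightarrow (5)$, using the dictionary between the analytic condition on $\om$ and the combinatorial condition on its Young conjugate $\ph^*$ that is already implicit in \Cref{lem:propertiesWF}. The key translation is: the property ``$2\om(t) \le \om(Ht) + H$ for all $t \ge 0$'' on the weight function side corresponds, after passing to $\ph(t) = \om(e^t)$ and then to the Young conjugate, to ``$\tfrac12 \ph^*(Ht) \le \ph^*(t) + C$ for all $t \ge 0$'' (possibly with a different constant), i.e. to a reverse inequality of the type already used to prove \Cref{lem:propertiesWF}(3),(4),(5). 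Concretely, $\ph^*(Ht) = \sup_s(Hts - \ph(s))$; one rewrites the supremum and uses $2\om(t) \le \om(Ht) + H$ in the form $2\ph(u) \le \ph(u + \log H) + H$ to push the factor of $2$ through, as in the iteration argument at the end of the proof of \Cref{lem:propertiesWF}(5).

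First I would prove $(1) \Rightarrow (2)$. From (1) one obtains, by the Young-conjugate computation just described, a constant $H$ (and a constant $C$) with $W^{2x}_k = \exp(\tfrac{1}{2x}\ph^*(2xk)) \le C\, \exp(\tfrac{1}{Hx}\ph^*(Hxk)) = C\, W^{Hx}_k$ for all $k$ and all $x > 0$; combined with the monotonicity $W^x \le W^{2x}$ from \Cref{lem:propertiesWF}(2), this shows that $W^x$ and $W^{2x}$ are equivalent for every $x$, and iterating (the equivalence constants are uniform in $x$) gives that all $W^x$, $x > 0$, lie in a single equivalence class, which is (2). For $(2) \Rightarrow (3)$ and $(2) \Rightarrow (4)$: by \Cref{thm:WMdescription}(2), $\mathcal{E}^{\{\om\}}(U) = \on{proj}_{K}\on{ind}_{x>0}\mathcal{E}^{\{W^x\}}(K)$ and $\mathcal{E}^{(\om)}(U) = \on{proj}_{x>0}\mathcal{E}^{(W^x)}(U)$; if all $W^x$ are equivalent to a fixed weight sequence $W = W^1$, then each inductive/projective limit over $x$ collapses and we get $\mathcal{E}^{[\om]}(U) = \mathcal{E}^{[W^1]}(U)$, which by the inclusion lemma for equivalent weight sequences is $\mathcal{E}^{[W^x]}(U)$ for every $x$. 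The implications $(3) \Rightarrow (1)$ and $(4) \Rightarrow (1)$ go via the necessity direction of the inclusion lemma (valid already for a non-empty open subset of $\R$): if $\mathcal{E}^{\{W^2\}}(U) = \mathcal{E}^{\{\om\}}(U) = \mathcal{E}^{\{W^1\}}(U)$ as spaces, then $W^2 \preceq W^1$, i.e. $\sup_k (W^2_k/W^1_k)^{1/k} < \infty$, and unwinding the definition of $W^x_k$ in terms of $\ph^*$ and then $\om$ yields an inequality that is equivalent to (1) (one needs to be a little careful: the estimate $(W^2_k/W^1_k)^{1/k} \le A$ for all $k$ must be translated back through $\ph^{**} = \ph$ to a pointwise statement $2\om(t) \le \om(Ht)+H$, using that $m_k = \sup_{t>0} h_m(t)/t^k$ recovers $\om$ from the sequence).

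Finally, for $(2) \Leftrightarrow (5)$: if all $W^x$ are equivalent then in particular $W^1 \preceq W^2 \preceq W^1$ gives both $W^1_{2k} \asymp (W^2_k)^2 \le \text{const}^k W^1_k \cdot W^1_k$-type bounds — more precisely, from $W^x_{j+k} \le W^{2x}_j W^{2x}_k$ (\Cref{lem:propertiesWF}(3)) and the equivalence $W^{2x} \preceq W^x$ one reads off moderate growth of $W^x$ directly; conversely, moderate growth of $W^x$ for one $x$ gives, by the equivalence between \eqref{eq:mg} and $\mu_{2k} \lesssim \mu_k$, that $\vt^x_{2k} \lesssim \vt^x_k$, and then \Cref{lem:propertiesWF}(4) (which says $\vt^x_{2k} \le \vt^{4x}_k$) run in reverse — using convexity of $\ph^*$ to compare $\vt^x_k$ with $\vt^{4x}_k$ at the cost of doubling — yields $\vt^{4x}_k \lesssim \vt^x_k$, hence $W^{4x} \preceq W^x$, and then scaling gives all $W^x$ equivalent. \textbf{The main obstacle} I anticipate is the bookkeeping in the back-and-forth translation between the pointwise condition on $\om$ and the sequence condition on $W^x$: the Young conjugate does not interact cleanly with multiplicative constants, so one must repeatedly absorb additive error terms in the exponent (the ``$+H$'' in (1), the ``$+\sum L^i$'' in \Cref{lem:propertiesWF}(5)) and keep track that the resulting equivalence constants for the $W^x$ are uniform in $x$ so that the iteration closing up $(1)\Rightarrow(2)$ actually works; none of the individual steps is deep, but getting the quantifiers on $x$, $H$, $C$ in the right order is where the care is needed. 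I would reference \cite{RainerSchindl12} for the original statement and note that (1) is the Braun--Meise--Taylor condition traditionally called $(\om_1)$ or the ``strong non-quasianalyticity-free'' doubling condition.
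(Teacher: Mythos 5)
Your proposal is correct and follows essentially the same route as the paper: the paper itself only sketches this result, stating that $(1)\Leftrightarrow(2)$ follows from ``straightforward computations'' (the Young-conjugate translation you carry out), that $(2)\Leftrightarrow(3)\Leftrightarrow(4)$ follow from \Cref{thm:WMdescription} together with the inclusion lemma for weight sequences, and referring to \cite{RainerSchindl12} for the equivalence with~(5). Your filled-in details --- including the observation $W^{ax}_k = (W^x_{ak})^{1/a}$ underlying the convexity comparison of $\vartheta^{4x}_k$ with $\vartheta^x_{4k}$, and the absorption of the $x$-dependent prefactor $e^{b/(2x)}$ into the $\preceq$-relation --- are all consistent with this outline; the only minor imprecision is the remark that the equivalence constants in $(1)\Rightarrow(2)$ are ``uniform in $x$'' (the multiplicative prefactor is not, but that is harmless since $\preceq$ is insensitive to constant prefactors).
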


The equivalence of (1) and (2) can be seen by straightforward computations.
Then $(2) \Leftrightarrow (3) \Leftrightarrow (4)$ follow from \Cref{thm:WMdescription}.
For the equivalence with (5) see \cite{RainerSchindl12}.
Conversely, one may ask when for a given weight sequence $M$ there exists a weight function $\om$
such that $\mathcal{E}^{[M]}(U) = \mathcal{E}^{[\om]}(U)$.

\begin{theorem}[{\cite[Theorem 14]{BMM07}}]
	Let $M=(M_k)$ be a derivation-closed weight sequence with $m_k^{1/k} \to \infty$.
	The following conditions are equivalent:
	\begin{enumerate}
		\item There exists a weight function $\om$ such that for all open $U \subseteq \RR^n$ we
		have $\mathcal{E}^{[\om]}(U) = \mathcal{E}^{[M]}(U)$ as vector spaces and/or as locally convex spaces.
		\item The weight sequence $M$ has moderate growth and satisfies
		\[
			\exists Q \in \NN_{\ge 1}:
			\liminf_{k \to \infty} \frac{\mu_{Qk}}{\mu_k} >1.
		\]
		\item The weight sequence $M$ has moderate growth, the associated function $\om_M(t) = \sup_{k \in \N} \log \frac{t^k}{M_k}$ is a weight function, and
		(1) holds with $\om=\om_M$.
	\end{enumerate}
\end{theorem}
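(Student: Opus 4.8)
The plan is to establish the cycle $(1)\Rightarrow(3)\Rightarrow(2)$ together with $(2)\Rightarrow(3)\Rightarrow(1)$; since $(3)\Rightarrow(1)$ is immediate (take $\om=\om_M$), this reduces to $(1)\Rightarrow(3)$, $(3)\Rightarrow(2)$ and $(2)\Rightarrow(3)$. The common tool is the Legendre--Young dictionary: writing $\ph_M(t):=\om_M(e^t)=\sup_{k}(kt-\log M_k)$, the function $\ph_M$ is convex, and since $M$ is log-convex (so $M_k^{1/k}$ is increasing, \Cref{lem:atinfinity}(2)) the Young conjugate satisfies $\ph_M^*(k)=\log M_k$ for every $k\in\N$; equivalently, the weight sequence $W^1$ of the associated weight matrix of $\om_M$ \emph{equals} $M$. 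Consequently, once $\om_M$ is known to be a weight function, \Cref{thm:WMdescription} and \Cref{thm:comparison} give $\cE^{[\om_M]}=\cE^{[W^1]}=\cE^{[M]}$ as soon as $M$ (equivalently $W^1$) has moderate growth; so the whole theorem amounts to pinning down, under the standing hypothesis $m_k^{1/k}\to\infty$, exactly when \emph{$\om_M$ is a weight function and $W^x$ has moderate growth}.

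For $(1)\Rightarrow(3)$ I would first extract moderate growth of $M$. Let $\om$ be a weight function with associated matrix $\{W^x\}_{x>0}$ and $\cE^{[\om]}=\cE^{[M]}$ (say as vector spaces, in the Roumieu case). By \Cref{thm:WMdescription}(1), $\cE^{\{W^x\}}\subseteq\cE^{\{M\}}$, hence $W^x\preceq M$ for every $x$. Conversely, the characteristic function from \Cref{lem:characteristic} lies in $\cE^{\{M\}}(\R)=\cE^{\{\om\}}(\R)=\bigcup_{x>0}\cE^{\{W^x\}}(\R)$, hence in some $\cE^{\{W^{x_0}\}}(\R)$, which forces $M\preceq W^{x_0}$. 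Thus $M$ and $W^{x_0}$ are equivalent, so $\cE^{\{M\}}=\cE^{\{W^{x_0}\}}=\bigcup_{x}\cE^{\{W^x\}}$ and therefore $W^{2x_0}\preceq W^{x_0}$; combined with $W^{x_0}\le W^{2x_0}$ (\Cref{lem:propertiesWF}(2)) this makes $W^{x_0}$ equivalent to $W^{2x_0}$, whence $W^{x_0}_{2k}\le W^{2x_0}_kW^{2x_0}_k\le C^{2k}(W^{x_0}_k)^2$ by \Cref{lem:propertiesWF}(3), i.e.\ $W^{x_0}$ — and hence $M$ — has moderate growth. Next, equivalence of $M$ and $W^{x_0}$ means $|\log M_k-\tfrac1{x_0}\ph^*(x_0k)|=O(k)$, which after conjugation squeezes $\om_M(t)=\ph_M(\log t)$ between $\tfrac1{x_0}\om(t/A)$ and $\tfrac1{x_0}\om(At)$ for a suitable $A\ge1$; since $\om$ is a weight function, iterating $\om(2s)=O(\om(s))$ gives $\om_M(2t)=O(\om_M(t))$, and properties (2) ($\log t=o(\om_M(t))$) and (3) (convexity of $\ph_M$) are immediate, so $\om_M$ is a weight function. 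With $W^1=M$ of moderate growth, \Cref{thm:comparison} then yields $\cE^{[\om_M]}=\cE^{[W^1]}=\cE^{[M]}$, which is $(3)$.

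The heart of the matter is the equivalence, \emph{given that $M$ has moderate growth}, of the $\liminf$-condition in $(2)$ with property $(1)$ of $\om_M$, so that $(3)\Rightarrow(2)$ and $(2)\Rightarrow(3)$ are two halves of one calculation. Set $g(v):=\ph_M^*(v)/v$; by \Cref{lem:atinfinity}(2) $g$ is increasing, and moderate growth of $M$, i.e.\ $\log M_{2k}\le 2\log M_k+O(k)$, reads $g(2v)\le g(v)+O(1)$. From this and convexity one gets $(\ph_M^*)'(v)=g(v)+O(1)$, and hence $\log\mu_k=\ph_M^*(k)-\ph_M^*(k-1)=g(k)+O(1)$. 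On the other hand, substituting $u\mapsto Cv$ in $\ph_M(s)=\sup_u u\,(s-g(u))$ shows that property $(1)$ of $\om_M$ — i.e.\ $\ph_M(s+\log 2)\le C\,\ph_M(s)$ for large $s$ — is equivalent to $g(Cv)\ge g(v)+\log 2$ for large $v$. Iterating the latter gives $g(C^{m}v)\ge g(v)+m\log 2$; taking $Q:=C^{m}$ with $m$ large, $\log\mu_{Qk}-\log\mu_k=g(Qk)-g(k)+O(1)\ge m\log 2-O(1)\ge\log 2$ for $k$ large, i.e.\ $\liminf_k\mu_{Qk}/\mu_k\ge2>1$ — this is $(3)\Rightarrow(2)$. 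Running the dictionary backwards gives $(2)\Rightarrow(3)$: iterating $\mu_{Q_0k}\ge q\mu_k$ (some $q>1$) gives $\mu_{Q_0^{m}k}\ge q^{m}\mu_k$, hence $g(Q_0^{m}v)\ge g(v)+m\log q-O(1)\ge g(v)+\log 2$ for $m$ large, hence $\ph_M(s+\log 2)\le Q_0^{m}\,\ph_M(s)$; together with the immediate properties $(2)$ and $(3)$ this makes $\om_M$ a weight function, and then $\cE^{[\om_M]}=\cE^{[W^1]}=\cE^{[M]}$ as before.

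The main obstacle I anticipate is the precise bookkeeping in this conjugate-duality step: one must pass back and forth between additive $O(k)$-perturbations of $\log M_k$ and additive $O(1)$-perturbations of $(\ph_M^*)'$ and of $\log\mu_k$, and it is exactly moderate growth — through $g(2v)\le g(v)+O(1)$, i.e.\ the slow variation of $g=\ph_M^*/\mathrm{id}$ — that controls these errors; arranging the constants so that the iterated $O(1)$ is beaten by $m\log 2$ is the technical crux. A secondary point requiring care is the use of \Cref{lem:characteristic} to upgrade the set-theoretic equality $\cE^{\{M\}}=\bigcup_x\cE^{\{W^x\}}$ to an honest equivalence $M\preceq W^{x_0}$ and thence to moderate growth; the Beurling form of $(1)$ reduces to the same argument.
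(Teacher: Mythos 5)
The paper does not prove this statement itself --- it is quoted verbatim from \cite[Theorem 14]{BMM07} --- so there is no in-paper argument to compare your proposal against; I can only assess your proposal on its own terms.

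Your overall strategy is sound, and the central conjugate-duality calculation is correct: $\ph_M^*(k)=\log M_k$ (so $W^1=M$), moderate growth gives $\mu_k\asymp M_k^{1/k}$ and hence $\log\mu_k=g(k)+O(1)$ with $g=\ph_M^*/\mathrm{id}$, and passing $\om_M(2t)=O(\om_M(t))$ through the Young conjugate gives $g(Cv)\ge g(v)+\log 2-o(1)$; iterating both this and the liminf condition makes the two sides meet. The derivation of moderate growth in the Roumieu case of $(1)\Rightarrow(3)$ --- via \Cref{lem:characteristic} to get $M\preceq W^{x_0}$, hence $W^{2x_0}\preceq W^{x_0}$, hence $W^{x_0}_{2k}\le C^{2k}(W^{x_0}_k)^2$ (which for log-convex sequences upgrades to full moderate growth) --- is also fine.

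The genuine gap is your final sentence, ``the Beurling form of $(1)$ reduces to the same argument.'' It does not, for two reasons. First, the step in which you extract $M\preceq W^{x_0}$ uses the characteristic function of \Cref{lem:characteristic}, which saturates \emph{Roumieu} bounds and sits in $\cB^{\{M\}}$, not $\cB^{(M)}$; there is no such function in the Beurling class. Second, from $\cE^{(\om)}=\cE^{(M)}=\bigcap_x\cE^{(W^x)}$ one only gets $\cE^{(M)}\subseteq\cE^{(W^x)}$ (hence $M\preceq W^x$) for \emph{every} $x$; equality of the intersection does not by itself yield $\cE^{(W^{x_0})}\subseteq\cE^{(M)}$ for any single $x_0$, which is what you would need in order to conclude $W^{x_0}\preceq M$ and then moderate growth. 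To close this you would have to use either the closed-graph argument of Bruna (which the paper alludes to right after the inclusion lemma: ``$\cE^{(M)}(\R)\subseteq\cE^{(N)}(\R)$ implies $M\preceq N$'' requires it) together with a comparison of seminorms --- for instance on the exponentials $e^{itx}$, whose $\cE^{(M)}$- and $\cE^{(\om)}$-seminorms are $\exp(\om_M(\cdot))$ and $\exp(\om_{W^\rho}(\cdot))$ respectively --- to pin down $\om\asymp\om_M$ and from there $W^{1/2}_{\om_M}$ equivalent to $W^1_{\om_M}=M$; or a Grothendieck-factorization step. As written, the Beurling half of $(1)\Rightarrow(2)$ is not established.

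A smaller point: you should justify that the passage from the $\liminf$ hypothesis $\mu_{Q_0 k}\ge q\mu_k$ (valid for integer $k$ large) to the inequality $g(Cv)\ge g(v)+\log 2$ for all large real $v$ is legitimate; this is routine because $\ph_M^*$ is the piecewise-linear convex interpolant of $(\log M_k)_k$ so $g$ only varies by $O(1)$ between consecutive integers, but the step should be said out loud since the Young conjugate runs over real arguments while the weight sequence is indexed by $\N$.
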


\begin{example} \label{ex:WF}
  (1) Let $s\ge 1$. Then $\ga_s(t) := \max\{0,t^{\frac{1}{s}}-1\}$ is a weight function and we have the identity
  \[
    \mathcal{E}^{[\ga_s]}(U) = \mathcal{E}^{[G^s]}(U),
  \]
  where $G^s$ denotes the Gevrey sequence $G^s_k=k!^s$. See also \Cref{ex:strongWF}.

  (2) For $s> 1$, $\om_s(t) := \max\{0,(\log t)^s\}$ is a weight function with associated weight matrix $\fW^s = \{W^{s,x}\}_{x>0}$ given by
  $W^{s,x}_k = \exp(C_s \, x^{1/(s-1)}\, k^{s/(s-1)})$.
  The classes $\cE^{[\om_s]}$ are \emph{not} Denjoy--Carleman classes.
\end{example}

\subsection{Weight matrices and associated ultradifferentiable classes}

The weight matrix description of Braun--Meise--Taylor classes motivates the definition of ultradifferentiable classes by general weight matrices,
providing a general framework for ultradifferentiability.

\begin{definition}
  A \emph{weight matrix}\index{weight matrix} is by definition a family $\mathfrak{M}$ of weight sequences
  $M \in \mathfrak{M}$ which is totally ordered with respect to the natural order relation on sequences (i.e., for all $M,N \in \mathfrak{M}$ we have $M \le N$ or $N \le M$).
\end{definition}

Let $\mathfrak{M}$ be a weight matrix. Let $U \subseteq \RR^n$ open and $K \subseteq U$ compact. We define
\begin{align*}
	\mathcal{E}^{\{\mathfrak{M}\}}(K)
	&:= \bigcup_{M \in \mathfrak{M}} \mathcal{E}^{\{M\}}(K),
	\\
	\mathcal{E}^{(\mathfrak{M})}(K)
	&:= \bigcap_{M \in \mathfrak{M}} \mathcal{E}^{(M)}(K),
	\\
	\mathcal{E}^{\{\mathfrak{M}\}}(U)
	&:= \bigcap_{K \subseteq_{cp} U} \bigcup_{M \in \mathfrak{M}} \mathcal{E}^{\{M\}}(K) = \bigcap_{K \subseteq_{cp} U} \mathcal{E}^{\{\mathfrak{M}\}}(K),
	\\
	\mathcal{E}^{(\mathfrak{M})}(U)
	&:= \bigcap_{K \subseteq_{cp} U} \bigcap_{M \in \mathfrak{M}} \mathcal{E}^{(M)}(K) = \bigcap_{K \subseteq_{cp} U} \mathcal{E}^{(\mathfrak{M})}(K)
  = \bigcap_{M \in \mathfrak{M}} \mathcal{E}^{(M)}(U)
\end{align*}
and endow these spaces with their natural locally convex topologies.
The class $\mathcal{E}^{(\mathfrak{M})}(U)$ is said to be of \emph{Beurling type}\index{Beurling type}
and the class $\mathcal{E}^{\{\mathfrak{M}\}}(U)$ of \emph{Roumieu type}\index{Roumieu type}.
Again we use $\mathcal{E}^{[\mathfrak{M}]}(U)$ as placeholder for either $\mathcal{E}^{(\mathfrak{M})}(U)$
or $\mathcal{E}^{\{\mathfrak{M}\}}(U)$.

For each weight matrix $\fM$ there exists a countable weight matrix $\fM'$ such that $\cE^{[\fM]}(U) = \cE^{[\fM']}(U)$
as locally convex spaces; see \cite[Lemma 2.5]{FurdosNenningRainer}.

Note that all $\mathcal{E}^{[\mathfrak{M}]}(U)$ are rings and if $C^\om(U) \subseteq \mathcal{E}^{[\mathfrak{M}]}(U)$
then the class $\mathcal{E}^{[\mathfrak{M}]}$ is stable by analytic change of coordinates.

\subsection{Inclusion relations}

The inclusion relations for the classes $\mathcal{E}^{[\mathfrak{M}]}$ follow easily from those for Denjoy--Carleman classes.
For weight matrices $\mathfrak{M}$ and $\mathfrak{N}$ we define
\begin{align*}
	\mathfrak{M} (\preceq) \mathfrak{N}
	\quad &:\Leftrightarrow \quad \forall N \in \mathfrak{N} ~\exists M \in \mathfrak{M} : M \preceq N,
	\\
	\mathfrak{M} \{\preceq\} \mathfrak{N}
	\quad &:\Leftrightarrow \quad \forall M \in \mathfrak{M} ~\exists N \in \mathfrak{N} : M \preceq N,
	\\
	\mathfrak{M} (\preceq\} \mathfrak{N}
	\quad &:\Leftrightarrow \quad \exists M \in \mathfrak{M} ~\exists N \in \mathfrak{N} : M \preceq N,
	\\
	\mathfrak{M} \{\lhd) \mathfrak{N}
	\quad &:\Leftrightarrow \quad \forall M \in \mathfrak{M} ~\forall N \in \mathfrak{N} : M \lhd N.
\end{align*}
Moreover, we set
\begin{align*}
	\mathfrak{M} [\approx] \mathfrak{N} \quad &:\Leftrightarrow \quad
	\mathfrak{M} [\preceq] \mathfrak{N} [\preceq] \mathfrak{M}.
\end{align*}

\begin{lemma}[\cite{RainerSchindl12}] \label{lem:WMincl}
	Let $\mathfrak{M}$ and $\mathfrak{N}$ be two weight matrices. Let $U \subseteq \RR^n$ be open and non-empty. Then:
	\begin{enumerate}
		\item $\mathfrak{M} [\preceq] \mathfrak{N}$ if and only if
		$\mathcal{E}^{[\mathfrak{M}]}(U) \subseteq \mathcal{E}^{[\mathfrak{N}]}(U)$.
		\item $\mathfrak{M} (\preceq\} \mathfrak{N}$ if and only if
		$\mathcal{E}^{(\mathfrak{M})}(U) \subseteq \mathcal{E}^{\{\mathfrak{N}\}}(U)$.
		\item $\mathfrak{M} \{\lhd) \mathfrak{N}$ if and only if
		$\mathcal{E}^{\{\mathfrak{M}\}}(U) \subseteq \mathcal{E}^{(\mathfrak{N})}(U)$.
	\end{enumerate}
	All inclusions are continuous. For the ``if'' part it suffices to have the inclusions on $U\subseteq \R$.
\end{lemma}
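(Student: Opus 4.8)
The plan is to derive the inclusion relations for weight-matrix classes from the corresponding results for Denjoy--Carleman classes (the lemma in the subsection on inclusion relations for $\cE^{[M]}$, together with \Cref{lem:characteristic}), unwinding the quantifier structure in the definitions of $\mathcal{E}^{[\mathfrak M]}$ and of the relations $\mathfrak M [\preceq] \mathfrak N$, $\mathfrak M (\preceq\} \mathfrak N$, $\mathfrak M \{\lhd) \mathfrak N$. Throughout I will use that $M \preceq N$ implies $\mathcal{E}^{[M]}(U) \subseteq \mathcal{E}^{[N]}(U)$ and $M \lhd N$ implies $\mathcal{E}^{\{M\}}(U) \subseteq \mathcal{E}^{(N)}(U)$, and for the necessity direction that these inclusions on a single nonempty open subset of $\R$ already force $M \preceq N$ resp.\ $M \lhd N$.

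For the ``if'' (sufficiency) direction of (1), Roumieu case: assume $\mathfrak M \{\preceq\} \mathfrak N$, i.e.\ for every $M \in \mathfrak M$ there is $N \in \mathfrak N$ with $M \preceq N$. Given $f \in \mathcal{E}^{\{\mathfrak M\}}(U)$, fix a compact $K \subseteq U$; then $f \in \mathcal{E}^{\{M\}}(K)$ for some $M \in \mathfrak M$, pick the corresponding $N \in \mathfrak N$, and $\mathcal{E}^{\{M\}}(K) \subseteq \mathcal{E}^{\{N\}}(K)$ gives $f \in \mathcal{E}^{\{N\}}(K) \subseteq \mathcal{E}^{\{\mathfrak N\}}(K)$; since $K$ was arbitrary, $f \in \mathcal{E}^{\{\mathfrak N\}}(U)$. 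The Beurling case of (1) is even simpler: from $\mathfrak M (\preceq) \mathfrak N$, for every $N \in \mathfrak N$ choose $M \in \mathfrak M$ with $M \preceq N$; then $\mathcal{E}^{(\mathfrak M)}(U) \subseteq \mathcal{E}^{(M)}(U) \subseteq \mathcal{E}^{(N)}(U)$, and intersecting over $N \in \mathfrak N$ yields $\mathcal{E}^{(\mathfrak M)}(U) \subseteq \mathcal{E}^{(\mathfrak N)}(U)$. Statements (2) and (3) are handled the same way: for (2), $\mathfrak M (\preceq\} \mathfrak N$ provides a single pair $M \in \mathfrak M$, $N \in \mathfrak N$ with $M \preceq N$, and then $\mathcal{E}^{(\mathfrak M)}(U) \subseteq \mathcal{E}^{(M)}(U) \subseteq \mathcal{E}^{\{N\}}(U) \subseteq \mathcal{E}^{\{\mathfrak N\}}(U)$, using the trivial inclusion $\mathcal{E}^{(M)} \subseteq \mathcal{E}^{\{M\}}$ together with $M \preceq N$ (which even gives $\mathcal{E}^{(M)} \subseteq \mathcal{E}^{\{N\}}$ directly by part (3) of the $\cE^{[M]}$-lemma); for (3), $\mathfrak M \{\lhd) \mathfrak N$ gives $M \lhd N$ for all $M,N$, so for each $N \in \mathfrak N$ and each $M \in \mathfrak M$, $\mathcal{E}^{\{M\}}(U) \subseteq \mathcal{E}^{(N)}(U)$; taking the union over $M$ and then the intersection over $N$ gives $\mathcal{E}^{\{\mathfrak M\}}(U) \subseteq \mathcal{E}^{(\mathfrak N)}(U)$. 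Continuity of all these inclusions follows because each is built from continuous inclusions of the building-block spaces $\mathcal{E}^{[M]}$ via the universal properties of the projective and inductive limits defining the weight-matrix classes.

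For the ``only if'' (necessity) direction it suffices, as the lemma allows, to work on some fixed nonempty open $U \subseteq \R$; I may in fact take $U = \R$ or a bounded interval, whichever is convenient. For (1), Roumieu: suppose $\mathcal{E}^{\{\mathfrak M\}}(U) \subseteq \mathcal{E}^{\{\mathfrak N\}}(U)$ but $\mathfrak M \{\preceq\} \mathfrak N$ fails, so there is $M \in \mathfrak M$ with $M \not\preceq N$ for every $N \in \mathfrak N$. By \Cref{lem:characteristic} there is $f \in \cB^{\{M\}}(\R,\C) \subseteq \mathcal{E}^{\{M\}}(U) \subseteq \mathcal{E}^{\{\mathfrak M\}}(U)$ whose derivatives at $0$ satisfy $|f^{(k)}(0)| \ge M_k$; by hypothesis $f \in \mathcal{E}^{\{\mathfrak N\}}(U)$, hence $f \in \mathcal{E}^{\{N\}}(K)$ for some $N \in \mathfrak N$ and some compact neighborhood $K$ of $0$, which forces $M_k \le |f^{(k)}(0)| \le C\rho^k N_k$ for suitable $C,\rho$, i.e.\ $M \preceq N$ — a contradiction. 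The Beurling case of (1) and case (3) are analogous: one uses that the failure of the relevant relation produces, via the necessity part of the $\cE^{[M]}$-lemma applied on $U \subseteq \R$, a function in the left-hand class that cannot lie in the right-hand one; concretely, for (1) Beurling, if $\mathfrak M (\preceq) \mathfrak N$ fails there is $N \in \mathfrak N$ with $M \not\preceq N$ for all $M \in \mathfrak M$, and one must exhibit $f \in \mathcal{E}^{(\mathfrak M)}(U) \setminus \mathcal{E}^{(N)}(U)$, while for (2) the failure of $\mathfrak M (\preceq\} \mathfrak N$ means $M \not\preceq N$ for all $M \in \mathfrak M$, $N \in \mathfrak N$, and one needs $f \in \mathcal{E}^{(\mathfrak M)}(U) \setminus \mathcal{E}^{\{\mathfrak N\}}(U)$. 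The main obstacle is precisely these last (Beurling, left side) necessity arguments: \Cref{lem:characteristic} directly produces functions in a \emph{Roumieu} class with large derivatives, so for a Beurling left-hand side one cannot simply quote it. The remedy is a diagonal construction: since the relevant relation fails for \emph{every} $M$ in the (countable, by the cited reduction) matrix $\mathfrak M$, one builds a single $f$ lying in all $\mathcal{E}^{(M)}(U)$ simultaneously — e.g.\ as a series $\sum_j f_j$ of Bang-type characteristic functions for the members $M^{(j)}$ of $\mathfrak M$, rescaled so that $f \in \cap_j \cB^{(M^{(j)})}$ yet still has derivatives growing too fast to sit inside the prescribed right-hand class — appealing for the growth obstruction again to the single-sequence necessity statement. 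This is the step I expect to require the most care, and it is exactly where the countability reduction of weight matrices (\cite[Lemma 2.5]{FurdosNenningRainer}) and the standard trick of summing geometrically damped characteristic functions do the work.
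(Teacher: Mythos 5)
Your sufficiency arguments, and the necessity arguments in which the smaller class is of Roumieu type (part (1) Roumieu, and part (3), where one fixes $M\in\fM$, $N\in\fN$, reads off $\cE^{\{M\}}(U)\subseteq\cE^{(N)}(U)$, and quotes the single-sequence lemma), are correct and are the intended unwinding of the definitions combined with \Cref{lem:characteristic}. The paper gives no proof of this lemma (it is quoted from \cite{RainerSchindl12}); the only hint it leaves is the remark after the single-sequence inclusion lemma that the necessity in the cases where the smaller space is of \emph{Beurling} type is obtained ``by an argument of \cite{Bruna80/81} based on the closed graph theorem'', i.e.\ not by an explicit function.

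That is exactly where your proposal has a genuine gap: the necessity of $\fM(\preceq)\fN$ and of $\fM(\preceq\}\fN$ when the left-hand side is $\cE^{(\fM)}(U)$. Your sketch --- a series of ``Bang-type characteristic functions for the members $M^{(j)}$ of $\fM$, rescaled so that $f\in\bigcap_j\cB^{(M^{(j)})}$'' --- cannot work as stated, for a structural reason: the Bang function for $M^{(j)}$ satisfies $|f_j^{(k)}(0)|\ge M^{(j)}_k$, and no function with $|f^{(k)}(0)|\ge c\,\rho^kM_k$ for a \emph{fixed} $\rho>0$ can belong to $\cE^{(M)}$ (test the defining seminorm with $\si<\rho$); rescaling the variable only replaces $\rho$ by another fixed constant. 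So the frequencies and coefficients of the exponential series cannot be taken from the $M^{(j)}$; they must be chosen by a diagonal procedure relative to the target sequence $N$ one wants to escape, using for each $j$ and $n$ an index $k$ with $M^{(j)}_k\ge n^kN_k$ (such $k$ exist since $M^{(j)}\not\preceq N$, and they can be chosen compatibly for all $j$ only because the matrix is totally ordered, so these index sets are nested in $j$). Showing that this produces a single $f\in\bigcap_j\cE^{(M^{(j)})}(U)$ escaping $\cE^{(N)}(U)$ --- and, for part (2), escaping $\cE^{\{N\}}(K)$ for \emph{every} $N\in\fN$ simultaneously, which needs a second diagonalization over the countable matrix $\fN$ that your sketch does not mention --- is the actual content of the missing step; your proposal only asserts its feasibility. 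An alternative that avoids the construction entirely, and is the route the paper points to, is functional-analytic: after the countability reduction $\cE^{(\fM)}(U)$ is Fr\'echet, the closed graph theorem (with Grothendieck factorization when the target is of Roumieu type) makes the inclusion continuous, and evaluating the resulting seminorm estimates on the exponentials $x\mapsto e^{i\la x}$ yields $\om_N(t)\le\om_M(t/\rho')+C$ for a suitable $M\in\fM$, hence $M\preceq N$. As written, the Beurling-left necessity is not proved.
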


\begin{corollary} \label{cor:WMincl}
  A class of Roumieu type and a class of Beurling type
	can never coincide.
\end{corollary}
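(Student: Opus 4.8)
The plan is to argue by contradiction with the help of \Cref{lem:WMincl}. Suppose some class of Roumieu type coincided with some class of Beurling type, say $\mathcal{E}^{\{\mathfrak{M}\}}(U) = \mathcal{E}^{(\mathfrak{N})}(U)$ for weight matrices $\mathfrak{M},\mathfrak{N}$ and a non-empty open set $U \subseteq \R^n$. Reading off the two inclusions contained in this equality and feeding each into the appropriate part of \Cref{lem:WMincl} should produce two relations between $\mathfrak{M}$ and $\mathfrak{N}$ that cannot hold simultaneously.

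Concretely, I would first apply part (3) of \Cref{lem:WMincl} to the inclusion $\mathcal{E}^{\{\mathfrak{M}\}}(U) \subseteq \mathcal{E}^{(\mathfrak{N})}(U)$ to obtain $\mathfrak{M}\{\lhd)\mathfrak{N}$, i.e.\ $M \lhd N$ for \emph{all} $M \in \mathfrak{M}$ and \emph{all} $N \in \mathfrak{N}$. Then I would apply part (2) of the same lemma, with the roles of the two matrices interchanged, to the reverse inclusion $\mathcal{E}^{(\mathfrak{N})}(U) \subseteq \mathcal{E}^{\{\mathfrak{M}\}}(U)$; this yields $\mathfrak{N}(\preceq\}\mathfrak{M}$, i.e.\ there exist $N_0 \in \mathfrak{N}$ and $M_0 \in \mathfrak{M}$ with $N_0 \preceq M_0$. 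Here one uses that a weight matrix is non-empty, so that such $M_0$ and $N_0$ exist at all.

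It then remains to specialize the first relation to the pair $(M_0,N_0)$ and extract the contradiction: $M_0 \lhd N_0$ means $(M_{0,k}/N_{0,k})^{1/k} \to 0$, hence $(N_{0,k}/M_{0,k})^{1/k} \to \infty$, whereas $N_0 \preceq M_0$ means precisely that $\sup_k (N_{0,k}/M_{0,k})^{1/k} < \infty$, which is impossible. The same reasoning with the names of the two matrices exchanged also excludes an equality $\mathcal{E}^{(\mathfrak{M})}(U) = \mathcal{E}^{\{\mathfrak{N}\}}(U)$.

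I do not expect a genuine obstacle; once \Cref{lem:WMincl} is invoked the argument is essentially a one-line computation. The only point requiring attention is bookkeeping with the (non-symmetric) relations $\preceq$ and $\lhd$: one must remember that the backward inclusion goes from a Beurling to a Roumieu class, which is the case governed by part (2) after exchanging $\mathfrak{M}$ and $\mathfrak{N}$, and that it delivers only the existence of a single comparable pair $(M_0,N_0)$, which is nevertheless enough because the forward inclusion forces comparability of \emph{every} pair. I would also remark, as an immediate consequence, that specializing to singleton weight matrices recovers the statement that a Denjoy--Carleman class of Roumieu type never equals one of Beurling type, and specializing to the weight matrix associated with a weight function does the same for Braun--Meise--Taylor classes.
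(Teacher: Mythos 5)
Your argument is correct and is exactly the one the paper uses: apply parts (2) and (3) of Lemma~\ref{lem:WMincl} to the two inclusions and observe that the resulting relations $M_0 \lhd N_0$ and $N_0 \preceq M_0$ (for a suitable pair) are incompatible. The paper phrases this compactly as ``$\mathfrak M (\preceq\} \mathfrak N \{\lhd) \mathfrak M$ is impossible'', but the underlying computation is the same.
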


\begin{proof}
  Observe that
 $\mathfrak M (\preceq\} \mathfrak N \{\lhd) \mathfrak M$ is impossible
 for any two weight matrices $\fM$ and $\fN$ and apply \Cref{lem:WMincl}.
\end{proof}

In the special case of Braun--Meise--Taylor classes we obtain

\begin{proposition} \label{prop:Omincl}
	Let $\om$ and $\si$ be weight functions.  Let $U \subseteq \RR^n$ be open and non-empty. Then:
	\begin{enumerate}
		\item $\si(t) = O(\om(t))$ as $t \to \infty$ if and only if
		$\mathcal{E}^{[\om]}(U) \subseteq \mathcal{E}^{[\si]}(U)$.
		\item $\si(t) = o(\om(t))$ as $t \to \infty$ if and only if
		$\mathcal{E}^{\{\om\}}(U) \subseteq \mathcal{E}^{(\si)}(U)$.
		\item $\si(t) = O(\om(t))$ as $t \to \infty$ if and only if
		$\mathcal{E}^{(\om)}(U) \subseteq \mathcal{E}^{\{\si\}}(U)$.
	\end{enumerate}
	All inclusions are continuous. For the ``if'' part it suffices to have the inclusions on $U \subseteq \R$.
\end{proposition}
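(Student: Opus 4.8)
The plan is to reduce the statement to the inclusion criterion for weight matrices (\Cref{lem:WMincl}) by way of the weight-matrix description of Braun--Meise--Taylor classes (\Cref{thm:WMdescription}), and then to translate the pointwise comparisons $\si = O(\om)$ and $\si = o(\om)$ into the order relations between the associated weight matrices by means of the Legendre--Young conjugate. Concretely, write $\ph_\tau(t):=\tau(e^t)$ for a weight function $\tau$, and let $\fW = \{W^x\}_{x>0}$ and $\fV = \{V^y\}_{y>0}$ be the weight matrices associated with $\om$ and $\si$, so that $W^x_k = \exp(\tfrac1x\ph_\om^*(xk))$ and $V^y_k = \exp(\tfrac1y\ph_\si^*(yk))$. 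By \Cref{lem:propertiesWF}(2) these are indeed weight matrices, and by \Cref{thm:WMdescription} one has $\cE^{[\om]}(U)=\cE^{[\fW]}(U)$ and $\cE^{[\si]}(U)=\cE^{[\fV]}(U)$. Hence, by \Cref{lem:WMincl}, the inclusions in (1) and (3) are equivalent to $\fW (\preceq) \fV$, $\fW \{\preceq\} \fV$, $\fW (\preceq\} \fV$, the inclusion in (2) is equivalent to $\fW \{\lhd) \fV$, all inclusions are automatically continuous, and for the ``if'' direction it is enough to test them on $U\subseteq\R$. Thus the whole content reduces to two elementary equivalences about the weight functions:
\[
  \si = O(\om) \iff \fW (\preceq) \fV \iff \fW \{\preceq\} \fV \iff \fW (\preceq\} \fV,
  \qquad
  \si = o(\om) \iff \fW \{\lhd) \fV .
\]

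For the forward implications I would use the Young conjugate. Assuming (as we may) $\om|_{[0,1]}=\si|_{[0,1]}=0$: if $\si = O(\om)$, there are $L\ge1$, $C\ge0$ with $\ph_\si\le L\ph_\om+C$ on $[0,\infty)$, hence $\ph_\si^*(s)\ge L\ph_\om^*(s/L)-C$, and therefore
\[
  V^y_k = \exp\big(\tfrac1y\ph_\si^*(yk)\big) \ge e^{-C/y}\exp\big(\tfrac Ly\ph_\om^*(\tfrac yL k)\big) = e^{-C/y}\,W^{y/L}_k, \qquad k\in\N .
\]
So $W^{y/L}\preceq V^y$ for every $y>0$, which yields each of the three $\preceq$-relations upon choosing $x=y/L$ or $y=Lx$. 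If only $\si = o(\om)$, then $\ph_\si\le\ep\ph_\om+C_\ep$ for every $\ep>0$, giving $V^y_k\ge e^{-C_\ep/y}W^{y/\ep}_k$; for fixed $x,y$ and $\ep\le y/x$ this forces $\limsup_k(W^x_k/V^y_k)^{1/k}\le\limsup_k(W^x_k/W^{y/\ep}_k)^{1/k}$, and since $t\mapsto\ph_\om^*(t)/t$ increases to $\infty$ the latter $\limsup$ tends to $0$ as $\ep\to0$; hence $W^x\lhd V^y$, i.e.\ $\fW \{\lhd) \fV$.

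For the converse implications I would reverse these computations. From any of the $\preceq$-relations one obtains $x,y>0$ and $D\ge1$ with $W^x_k\le D^kV^y_k$ for all $k$, i.e.\ $\tfrac1y\ph_\si^*(yk)\ge\tfrac1x\ph_\om^*(xk)-k\log D$; interpolating convexly between the lattice points and conjugating back gives $\ph_\si(t)\le\tfrac yx\ph_\om(t+\log D)+O(t)$, where the shift in the argument is absorbed by property (1) of weight functions and the term $O(t)$ is $o(\om(t))$ by property (2), so $\si=O(\om)$. Likewise $\fW \{\lhd) \fV$ yields $\ph_\si^*(s)/s-\ph_\om^*(xs)/(xs)\to\infty$ for every $x>0$; conjugating this back, and crucially using that it holds for $x$ arbitrarily large (which is what distinguishes $o$ from $O$), one gets $\limsup_{t\to\infty}\ph_\si(t)/\ph_\om(t)\le K_x/x$ for every $x$, with $K_x$ controlled by the doubling of $\om$ and the cross-over between $\ph_\si^*(\cdot)/\cdot$ and $\ph_\om^*(x\cdot)/(x\cdot)$; letting $x\to\infty$ gives $\si=o(\om)$. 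Continuity of the inclusions and the reduction to $U\subseteq\R$ in the ``if'' part are inherited from \Cref{lem:WMincl}.

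The crux -- and the only place where there is genuine work -- is precisely this dictionary between the pointwise comparison of $\om,\si$ and the order of their matrices $\fW,\fV$. The forward direction is the short conjugate computation displayed above; the delicate points are (i) the discretisation, since the matrices only see the lattice $x\N$ while the Legendre transform runs over all of $[0,\infty)$, so one must interpolate using convexity of $\ph^*$, and (ii) in the $o$-case, extracting the \emph{strict} domination from $\fW \{\lhd) \fV$: this forces one to let the auxiliary parameter (the factor $\ep$ in one direction, the index $x$ in the other) run to its extreme and to use quantitatively that $\ph_\om^*(t)/t\nearrow\infty$. I expect this $o$-case to be the main obstacle to a fully self-contained write-up; everything else is bookkeeping built on \Cref{thm:WMdescription} and \Cref{lem:WMincl}.
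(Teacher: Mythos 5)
Your overall route is exactly the paper's: reduce to \Cref{lem:WMincl} via the weight-matrix description of \Cref{thm:WMdescription}, and translate the comparison of $\om$ and $\si$ into the order relations of the matrices through the Young conjugate. Your forward and converse computations for the $O$-case reproduce the paper's proof of part (1) (including glossing over the same discretisation/interpolation point that the paper also dismisses with ``by convexity''); note that the paper proves only (1) in detail and declares (2), (3) ``similar''.

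There is, however, one step in your $o$-case that is not correctly justified as written. You claim that $\limsup_k (W^x_k/W^{z}_k)^{1/k} \to 0$ as $z \to \infty$ ``since $t\mapsto\ph_\om^*(t)/t$ increases to $\infty$''. Writing $g(t)=\ph_\om^*(t)/t$, this limsup equals $\limsup_k \exp\big(g(xk)-g(zk)\big)$, and what you need is $\liminf_k\big(g(zk)-g(xk)\big)\to\infty$ as $z\to\infty$. That does \emph{not} follow from $g\nearrow\infty$ and convexity of $\ph^*$ alone: one can build a convex $\ph^*$ with $g\nearrow\infty$ for which $g$ is essentially constant on multiplicative intervals $[a_j,a_j^{N_j}]$ with $N_j\to\infty$, so that $\liminf_k(g(zk)-g(xk))\le 1$ for every $z$ (such a $\ph^*$ fails to come from a weight function only because the doubling condition \ref{sec:weightfunctions}(1) is violated). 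The property you actually need is precisely \Cref{lem:propertiesWF}(5), which encodes that doubling condition: for every $\rh>0$ there is $H$ with $\rh^k W^x_k \le C\, W^{Hx}_k$, hence $\limsup_k(W^x_k/W^{z}_k)^{1/k}\le 1/\rh$ for all $z\ge Hx$, and letting $\rh\to\infty$ gives the claim. With this substitution (choose $\ep$ so small that $y/\ep\ge Hx$ and combine with your estimate $V^y_k\ge e^{-C_\ep/y}W^{y/\ep}_k$) your argument for $\si=o(\om)\Rightarrow\fW\{\lhd)\fV$ closes cleanly. The converse of the $o$-case remains only sketched in your proposal, as you acknowledge; it too is most easily handled by first passing through \Cref{lem:propertiesWF}(5) to remove the geometric factors before conjugating back.
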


Note that $\om$ and $\si$ are said to be \emph{equivalent} if
$\si(t) = O(\om(t))$ and $\om(t) = O(\si(t))$ as $t \to \infty$.

\begin{proof}
	Let $\mathfrak{W}=\{W^x\}_{x>0}$ and $\mathfrak{S}=\{S^x\}_{x>0}$ be the associated weight matrices of
	$\om$ and $\si$, respectively.
	Let us prove (1); (2) and (3) can be shown similarly.
	In view of \Cref{lem:WMincl} it suffices to prove
	\begin{equation} \label{eq:Omincl}
	 	 \text{$\si(t) = O(\om(t))$ as $t \to \infty$ if and only if $\mathfrak{W} [\preceq] \mathfrak{S}$.}
	 \end{equation}
	Assume that $\si(t) = O(\om(t))$ as $t \to \infty$. Then there exists $H \ge 1$ such that $\si(t) \le H \om(t) + H$
	for all $t \ge 0$. This implies $H \ph_\om^*(t) \le \ph_\si^*(Ht)+H$ for the Young conjugates of $t \mapsto \si(e^t)$ and $t \mapsto \om(e^t)$. In particular, for $t = x k$ we find
	\[
		W^x_k = \exp(\tfrac{1}{x} \ph_\om^*(xk)) \le \exp(\tfrac{1}{Hx}\ph_\si^*(Hxk)+ \tfrac{1}{x}) = e^{1/x} S^{Hx}_k.
	\]
	That means there is $H \ge 1$ such that for all $x>0$ we have $W^x \le e^{1/x} S^{Hx}$
	which implies $\mathfrak{W} (\preceq) \mathfrak{S}$ as well as $\mathfrak{W} \{\preceq\} \mathfrak{S}$.

	For the converse direction we assume $\mathfrak{W} \{\preceq\} \mathfrak{S}$; for the assumption $\mathfrak{W} (\preceq) \mathfrak{S}$
  the arguments are analogous.
	This means, using \Cref{lem:propertiesWF}(5),
	\[
		\forall x>0 ~\exists y>0 ~\exists C>0 ~\forall k\in \NN :
		\tfrac{1}{x} \ph_\om^*(xk) \le \tfrac{1}{y} \ph_\si^*(yk) + C.
	\]
	By the convexity of $\ph_\om^*$ and $\ph_\si^*$ we may conclude that
	\[
		\forall x>0 ~\exists y>0 ~\exists D>0 ~\forall t\ge 0 :
		\tfrac{1}{x} \ph_\om^*(xt) \le \tfrac{1}{y} \ph_\si^*(yt) + C
	\]
	 (for different $y$ and $C$). Then
	 \begin{align*}
	 	\tfrac{1}{y} \ph_\si(t) = \sup_{s\ge 0} ( ts - \tfrac{1}{y}\ph_\si^*(ys)) \le
	 	 \sup_{s\ge 0} ( ts - \tfrac{1}{x}\ph_\om^*(xs)) + C = 	\tfrac{1}{x}\ph_\om(t)+C,
	 \end{align*}
	 that is
	 \begin{equation*}
	 	\tfrac{1}{y} \si(t) \le \tfrac{1}{x}\om(t)+C
  \end{equation*}
	 which implies $\si(t) = O(\om(t))$ as $t \to \infty$.
\end{proof}

\begin{corollary}
	Let $\om$ be a weight function. Let $U\subseteq \RR^n$ be open and non-empty.
	\begin{enumerate}
			\item $\mathcal{H}(\mathbb{C}^n) \subseteq \mathcal{E}^{(\om)}(U)$ if and only if
			$\om(t) = O(t)$ as $t \to \infty$.
		\item $C^\omega(U) \subseteq \mathcal{E}^{(\om)}(U)$ if and only if
		$\om(t) = o(t)$ as $t \to \infty$.
		\item $C^\omega(U) \subseteq \mathcal{E}^{\{\om\}}(U)$ if and only if
		$\om(t) = O(t)$ as $t \to \infty$.
	\end{enumerate}
\end{corollary}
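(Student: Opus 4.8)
The plan is to reduce all three statements to the single Gevrey weight function of index $1$, namely $\ga_1(t) := \max\{0,t-1\}$, together with the comparison result \Cref{prop:Omincl}. First I would check that $\ga_1$ really is a weight function: property \ref{sec:weightfunctions}(1) holds since $\ga_1(2t)/\ga_1(t) \to 2$, property \ref{sec:weightfunctions}(2) holds since $\log t = o(t)$, and property \ref{sec:weightfunctions}(3) holds since $t \mapsto \ga_1(e^t) = e^t - 1$ is convex on $[0,\infty)$. Since $G^1_k = k!$, \Cref{ex:WF}(1) with $s=1$ gives $\cE^{[\ga_1]}(U) = \cE^{[G^1]}(U)$, and combined with $C^\om(U) \cong \cE^{\{(k!)_k\}}(U)$ and $\cH(\C^n) \cong \cE^{((k!)_k)}(U)$ this yields $\cE^{\{\ga_1\}}(U) = C^\om(U)$ and $\cE^{(\ga_1)}(U) \cong \cH(\C^n)$. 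I would also record the trivial bounds $\tfrac12 t \le \ga_1(t) \le t$ for $t \ge 2$, so that for any weight function $\om$ one has, as $t\to\infty$, the equivalences ``$\om(t) = O(t)$ iff $\om(t) = O(\ga_1(t))$'' and ``$\om(t) = o(t)$ iff $\om(t) = o(\ga_1(t))$''.

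The three equivalences then drop out of \Cref{prop:Omincl} applied to the pair $\ga_1$ and $\om$. For (1): the inclusion $\cH(\C^n) \subseteq \cE^{(\om)}(U)$ reads $\cE^{(\ga_1)}(U) \subseteq \cE^{(\om)}(U)$, and applying \Cref{prop:Omincl}(1) (Beurling reading) with $\ga_1$ generating the larger space and $\om$ the smaller one shows this holds iff $\om(t) = O(\ga_1(t))$, i.e.\ iff $\om(t) = O(t)$. For (2): $C^\om(U) \subseteq \cE^{(\om)}(U)$ reads $\cE^{\{\ga_1\}}(U) \subseteq \cE^{(\om)}(U)$, and \Cref{prop:Omincl}(2) with the same roles shows this holds iff $\om(t) = o(\ga_1(t))$, i.e.\ iff $\om(t) = o(t)$. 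For (3): $C^\om(U) \subseteq \cE^{\{\om\}}(U)$ reads $\cE^{\{\ga_1\}}(U) \subseteq \cE^{\{\om\}}(U)$, and \Cref{prop:Omincl}(1) read with placeholder $[\cdot]=\{\cdot\}$, again with $\ga_1$ generating the larger space, shows this holds iff $\om(t) = O(\ga_1(t))$, i.e.\ iff $\om(t) = O(t)$.

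There is no real obstacle here: all the content has already been packed into \Cref{prop:Omincl} and the identity $\cE^{[\ga_1]} = \cE^{[G^1]}$ from \Cref{ex:WF}(1). The only points demanding care are matching the placeholders $(\cdot)/\{\cdot\}$ in \Cref{prop:Omincl} to the three mixed inclusions and not transposing $\ga_1$ and $\om$ when invoking the proposition. If one preferred to avoid the appeal to \Cref{ex:WF} altogether, one could argue directly via the associated weight matrix $\fW=\{W^x\}_{x>0}$ of $\om$ and the Cauchy estimates, translating e.g.\ ``$\cH(\C^n)\subseteq\cE^{(\om)}(U)$'' into ``for each $\rh>0$ there are $C,H$ with $W^\rh_k \le C H^k$'', i.e.\ $\ph^*(\rh k)\le \rh k\log H + \rh\log C$, which by the biconjugation $\ph^{**}=\ph$ is precisely $\om(t)=O(t)$; but routing through \Cref{prop:Omincl} is shorter.
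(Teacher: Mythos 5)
Your proof is correct and takes the same route the paper does: the paper's own proof is the one-liner ``This follows from \Cref{ex:WF}(1) and \Cref{prop:Omincl},'' and you have simply filled in the substitutions. One small verbal slip worth flagging: in part (1) you say you apply \Cref{prop:Omincl}(1) ``with $\ga_1$ generating the larger space and $\om$ the smaller one,'' but in the inclusion $\cE^{(\ga_1)}(U)\subseteq\cE^{(\om)}(U)$ it is $\ga_1$ that generates the \emph{smaller} function space (it is the larger weight function); the correct reading of \Cref{prop:Omincl}(1) is with $\ga_1$ in the role of the proposition's $\om$ and the corollary's $\om$ in the role of $\si$, which indeed yields the condition $\om(t)=O(\ga_1(t))$. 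The arithmetic you then carry out is right, so this is only a misstatement in the commentary, not a gap in the argument.
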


\begin{proof}
	This follows from \Cref{ex:WF}(1) and \Cref{prop:Omincl}.
\end{proof}

\subsection{Intersection and union of all non-quasianalytic Gevrey classes}

Let $\mathfrak G := \{G^s\}_{s > 1}$ be the weight matrix consisting of all non-quasianalytic Gevrey sequences $G^s_k = k!^s$.
It turns out that $\cE^{(\fG)}$ and $\cE^{\{\fG\}}$ are neither Denjoy--Carleman nor Braun--Meise--Taylor classes.

\begin{theorem}[{\cite{RainerSchindl12}}]
  	The weight matrix $\fG= \{G^s\}_{s > 1}$ has the following properties:
  	\begin{enumerate}
  		\item  We have $L \lhd G^s \lhd G^t$ for all $1<s<t$, where $L=(L_k)$ is the weight sequence $L_k := k^k (\log(k + e))^{2k}$.
  		\item  Let $U \subseteq \R^n$ be open and $K \subseteq U$ compact. As locally convex spaces
      \begin{align*}
        \mathcal E^{(\mathfrak G)}(K)
  			&= \on{proj}_{s>1} \mathcal E^{(G^s)}(K)
  			= \on{proj}_{s>1} \mathcal E^{\{G^s\}}(K),
        \\
        \mathcal E^{\{\mathfrak G\}}(K)
  			&= \on{ind}_{s>1} \mathcal E^{\{G^s\}}(K)
  			= \on{ind}_{s>1} \mathcal E^{(G^s)}(K),
        \\
        \mathcal E^{(\mathfrak G)}(U)
  			&= \on{proj}_{s>1} \mathcal E^{(G^s)}(U)
  			= \on{proj}_{s>1} \mathcal E^{\{G^s\}}(U),
        \\
        \mathcal E^{\{\mathfrak G\}}(U)
  			&= \on{proj}_{K \subseteq_{cp} U} \on{ind}_{s>1} \mathcal E^{\{G^s\}}(K)
  			= \on{proj}_{K \subseteq_{cp} U} \on{ind}_{s>1} \mathcal E^{(G^s)}(K).
      \end{align*}
  		\item $\mathcal E^{(\mathfrak G)}$ and $\mathcal E^{\{\mathfrak G\}}$ are non-quasianalytic.
  		\item Neither $\mathcal E^{(\mathfrak G)}$ nor
  		$\mathcal E^{\{\mathfrak G\}}$ coincides with a Denjoy--Carleman or a Braun--Meise--Taylor class (as sets).
  	\end{enumerate}
\end{theorem}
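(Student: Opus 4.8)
The strategy is to verify each of the four assertions separately, with (1) and (2) being direct consequences of the general theory already developed in the survey, (3) following from the inclusion relations, and (4) being the genuinely new point that requires a comparison argument.

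For (1), recall that $L \lhd G^s$ means $(L_k/G^s_k)^{1/k} \to 0$. Writing $L_k = k^k(\log(k+e))^{2k}$ and $G^s_k = k!^s \sim (k/e)^{sk}$ (up to subexponential factors via Stirling), one computes $(L_k/G^s_k)^{1/k} \sim k^{1-s}(\log(k+e))^2 e^s \to 0$ for $s>1$; similarly $(G^s_k/G^t_k)^{1/k} \sim k^{s-t} \to 0$ for $s<t$. I would also check that $L$ is indeed a weight sequence (log-convexity of $k^k(\log(k+e))^{2k}$, which follows since both factors are log-convex, and $L_k^{1/k} \to \infty$), so that $\fG\{\lhd)\fG$ is well-defined and each $G^s$ dominates $L$. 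The point of having $L$ is that it exhibits a concrete lower bound living strictly below the whole family.

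For (2), this is exactly the content of \Cref{lem:WMincl} combined with (1): because consecutive members of $\fG$ satisfy $G^s \lhd G^t$, we have $\cE^{\{G^s\}}(K) \subseteq \cE^{(G^t)}(K) \subseteq \cE^{\{G^t\}}(K)$ for $s<t$, so taking the union over $s>1$ gives $\on{ind}_{s>1}\cE^{\{G^s\}}(K) = \on{ind}_{s>1}\cE^{(G^s)}(K)$ as locally convex spaces (the two inductive systems are cofinal in one another), and dually for projective limits; the statements for $U$ follow by taking the appropriate limit over compact $K \subseteq_{cp} U$, exactly as in \Cref{thm:WMdescription}. For (3), non-quasianalyticity is immediate: $\cE^{\{\fG\}}$ and $\cE^{(\fG)}$ both contain, say, $\cE^{(G^2)}$, which contains bump functions since $G^2$ is non-quasianalytic (indeed strongly non-quasianalytic), by \Cref{thm:DC}; hence the Borel map is not injective on these classes either.

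\textbf{The main obstacle is (4).} Here the plan is a proof by contradiction using the inclusion criteria. Suppose first $\cE^{[\fG]}$ equals a Denjoy--Carleman class $\cE^{[N]}$ for some weight sequence $N$. In the Roumieu case, $\cE^{\{\fG\}} = \cE^{\{N\}}$ forces, via \Cref{lem:WMincl}, both $N \lhd G^s$ for every $s>1$ (from $\cE^{\{N\}} \subseteq \cE^{\{\fG\}}$, since $N$ must be dominated by \emph{some} member but actually by all, because $G^t \lhd G^s$) and $G^s \preceq N$ for some $s$ (from $\cE^{\{G^s\}} \subseteq \cE^{\{N\}}$); but $G^s \preceq N \lhd G^s$ is absurd. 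Wait --- one must be careful: $\cE^{\{N\}} \subseteq \cE^{\{\fG\}} = \on{ind}_s \cE^{\{G^s\}}$ only gives $N \preceq G^s$ for some $s$ (by \Cref{lem:WMincl}(1) read for $\fM=\{N\}$), and $\cE^{\{\fG\}} \subseteq \cE^{\{N\}}$ gives $G^t \preceq N$ for all $t$, in particular $G^t \preceq N \preceq G^s$; choosing $t>s$ contradicts $G^s \lhd G^t$. The Beurling case is symmetric using $\cE^{(\fG)} = \on{proj}_s \cE^{(G^s)}$. Finally, suppose $\cE^{[\fG]} = \cE^{[\om]}$ for a weight function $\om$ with associated weight matrix $\fW = \{W^x\}_{x>0}$. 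By \Cref{thm:WMdescription}, $\cE^{[\om]} = \on{proj}$ or $\on{ind}$ over $\fW$; comparing with $\cE^{[\fG]}$ via \Cref{lem:WMincl} yields $\fW[\approx]\fG$. Now invoke \Cref{thm:comparison}: the family $\fG$ fails the self-equivalence property, since $G^s$ and $G^t$ are inequivalent for $s\ne t$ (by (1), $G^s \lhd G^t$), whereas for any weight function the associated $W^x,W^y$, although not necessarily equivalent, satisfy $W^x \preceq W^y$ for $x\le y$ with the quantitative relation in \Cref{lem:propertiesWF}(5); the obstruction is that $\fW[\approx]\fG$ would make $\fW$ "too spread out." Concretely, I would argue: $\fW\{\lhd)\fG$ fails because $\fW\{\preceq\}\fG\{\preceq\}\fW$ (from $[\approx]$) forces for each $x$ some $s$ with $W^x \preceq G^s$, and then some $y$ with $G^s \preceq W^y$, so $W^x \preceq G^s \preceq W^y$; but also taking $t>s$ and $W^z \succeq G^t$ we would need $W^y$ and $W^z$ comparable --- and the key point is that $\fG$ has the stronger "gap" property $G^s\lhd G^t$ that no weight-function matrix $\fW$ can have uniformly, because $\fW$ satisfies $\vt^x \le \vt^y$ for $x\le y$ with controlled ratios. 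The cleanest finish: if $\cE^{[\om]}=\cE^{[\fG]}$ then $\om$ must satisfy neither the moderate-growth-type condition (else $\cE^{[\om]}$ would be a single Denjoy--Carleman class, already excluded) nor can its matrix be cofinal with $\fG$ without violating $G^s\lhd G^t$; I expect the honest argument here is to show directly that $L \lhd G^s$ for all $s$ plus $\bigcup_s \cE^{\{G^s\}} \supsetneq \cE^{\{L\}}$ is incompatible with the structural form of $\fW$, citing \cite{RainerSchindl12} for the precise weight-matrix inequalities. This last incompatibility --- ruling out \emph{all} weight functions --- is where the real work lies and where I would lean most heavily on \Cref{lem:propertiesWF}, \Cref{thm:WMdescription}, and \Cref{thm:comparison}.
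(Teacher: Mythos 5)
Your treatment of (1), (2), and the Denjoy--Carleman half of (4) matches the paper's proof, but there are two problems. The first is in (3): your argument is wrong in the Beurling case, because $\cE^{(\fG)}=\bigcap_{s>1}\cE^{(G^s)}$ is \emph{contained in} $\cE^{(G^2)}$, not the other way around, so bump functions in $\cE^{(G^2)}$ tell you nothing about $\cE^{(\fG)}$. The intended use of the sequence $L$ from (1) is precisely here: $L\lhd G^s$ for all $s>1$ gives $\cE^{\{L\}}\subseteq\bigcap_{s>1}\cE^{(G^s)}=\cE^{(\fG)}$, and $L$ is non-quasianalytic (its quotients $L_k/L_{k-1}$ grow like $k(\log k)^2$, so the sum of their reciprocals converges). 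You observed that $L$ "exhibits a concrete lower bound living strictly below the whole family" but then did not use it where it is actually needed.

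The second and more serious issue is that the Braun--Meise--Taylor half of (4) is a genuine gap, and the heuristic you propose for closing it rests on a false premise. You claim that no associated weight matrix $\fW$ can have the "gap" property $W^x\lhd W^y$ uniformly because the $\vt^x$ have controlled ratios. This is not true: by \Cref{ex:WF}(2), the weight function $\om_s(t)=\max\{0,(\log t)^s\}$ has associated matrix $W^{s,x}_k=\exp(C_s x^{1/(s-1)}k^{s/(s-1)})$, and $W^{s,x}\lhd W^{s,y}$ for $x<y$; associated weight matrices can be exactly as spread out as $\fG$. The paper's actual argument is different: from $\fW[\approx]\fG$ one extracts, for each $y>0$, a sandwich $W^x\preceq G^s\preceq W^y$, and then uses that the Gevrey class is \emph{itself} a Braun--Meise--Taylor class, $\cE^{[G^s]}=\cE^{[\ga_s]}$ with $\ga_s(t)=t^{1/s}$ (\Cref{ex:WF}(1)). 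Since every member of $\fW$ generates a class between $\cE^{(\om)}$ and $\cE^{\{\om\}}$, the sandwich yields $\cE^{(\om)}\subseteq\cE^{(\ga_s)}$ and $\cE^{\{\ga_s\}}\subseteq\cE^{\{\om\}}$, hence $\om$ and $\ga_s$ are equivalent by \Cref{prop:Omincl}, so $\cE^{[\om]}=\cE^{[G^s]}$ --- which is already excluded by the Denjoy--Carleman part. Without this identification of the Gevrey classes as Braun--Meise--Taylor classes, your sketch does not reach a contradiction.
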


\begin{proof}
  (1) If $1<s<t$, then
	\[
		\Big(\frac{G^s_k}{G^t_k}\Big)^{1/k} = k!^{\frac{s-t}k} \le k^{s-t} \to 0
	\]
	and
	\[
		\Big(\frac{L_k}{G^s_k}\Big)^{1/k} = \frac{k}{k!^{s/k}} (\log(k + e))^{2}
		\le e k^{1-s}  (\log(k + e))^{2} \to 0.
	\]

	(2) This follows from the definition and from the fact that $G^s \lhd G^t$ if
	$s<t$.

	(3) is a consequence of (1), since the weight sequence $L$ is non-quasianalytic. It also follows from \Cref{thm:WMqa}.

	(4)
	Suppose that there is a weight sequence $M$ such that
	$\mathcal{E}^{(M)}(\RR)= \mathcal{E}^{(\mathfrak G)}(\RR)$.
	Then \Cref{lem:WMincl} implies $M (\preceq) \mathfrak G (\preceq) M$,
	i.e., $M \preceq G^s$
	for all $s>1$ and there exists $t>1$ such that $G^t \preceq M$.
	Thus $M$ is equivalent to $G^t$ and hence
	\[
	\mathcal{E}^{(M)}(\RR)
	= \mathcal{E}^{(G^t)}(\RR)
	\supsetneq \mathcal{E}^{(\mathfrak G)}(\RR)
	=\mathcal{E}^{(M)}(\RR),
	\]
	a contradiction.
  Similarly,
	if $\mathcal{E}^{\{M\}}(\RR)= \mathcal{E}^{\{\mathfrak G\}}(\RR)$ for
	some weight sequence $M$, then
	we conclude $M \{\preceq\} \mathfrak G \{\preceq\} M$,
	i.e., $M \preceq G^s$
	for some $s>1$ and $G^t \preceq M$ for all $t>1$.
	So $M$ is equivalent to $G^s$ which leads to the contradiction
	\[
	\mathcal{E}^{\{M\}}(\RR)
	= \mathcal{E}^{\{G^s\}}(\RR)
	\subsetneq \mathcal{E}^{\{\mathfrak G\}}(\RR)
	=\mathcal{E}^{\{M\}}(\RR).
	\]

	Now assume that there exists a weight function $\om$ such that
	$\mathcal{E}^{(\om)}(\RR)= \mathcal{E}^{(\mathfrak G)}(\RR)$.
	Let $\mathfrak W = \{W^x\}_{x>0}$ be the weight matrix associated with $\om$.
	Then $\mathfrak W (\preceq) \mathfrak G (\preceq) \mathfrak W$,
	i.e., for all $t>1$ there exists $x>0$ such that $W^x \preceq G^t$ and
	for all $y>0$ there exists $s>1$ such that $G^s \preceq W^y$.
	It follows that
	\begin{equation*}
	 	\forall y>0 ~\exists s>1 ~\exists x>0 : W^x \preceq G^s \preceq W^y.
	 \end{equation*}
	 As a consequence $\mathcal{E}^{[W^x]}(\RR) \subseteq \mathcal{E}^{[G^s]}(\RR)
	 \subseteq \mathcal{E}^{[W^y]}(\RR)$.
	 But, by \Cref{ex:WF}(1), we have $\mathcal{E}^{[G^s]}(\RR)
	 = \mathcal{E}^{[\ga_s]}(\RR)$, where $\ga_s(t)= t^{1/s}$.
	 Consequently,
	 \[
	 	\mathcal{E}^{(\om)}(\RR) \subseteq \mathcal{E}^{(W^x)}(\RR)
	 	\subseteq \mathcal{E}^{(G^s)}(\RR) = \mathcal{E}^{(\ga_s)}(\RR)
	 \]
	 and thus $\ga_s(t) = O(\om(t))$ as $t \to \infty$, by \Cref{prop:Omincl}.
	 On the other hand,
	 \[
	 	\mathcal{E}^{\{\ga_s\}}(\RR) = \mathcal{E}^{\{G^s\}}(\RR)
	 	\subseteq \mathcal{E}^{\{W^y\}}(\RR)
	 	\subseteq \mathcal{E}^{\{\om\}}(\RR)
	 \]
	 and hence $\om(t) = O (\ga_s(t))$ as $t \to \infty$.
	 But this leads to a contradiction:
	 \[
	 	\mathcal{E}^{(G^s)}(\RR)=\mathcal{E}^{(\ga_s)}(\RR) = \mathcal{E}^{(\om)}(\RR)
	 	= \mathcal{E}^{(\mathfrak G)}(\RR) \subsetneq \mathcal{E}^{(G^s)}(\RR).
	 \]
   In analogy, one shows that $\mathcal{E}^{\{\om\}}(\RR)= \mathcal{E}^{\{\mathfrak G\}}(\RR)$ for some weight function $\om$
   is impossible.

  In view of \Cref{cor:WMincl} the proof of (4) is complete.
\end{proof}

\begin{remark}
  The classes $\cE^{[\fG]}$ enjoy good properties that the single Gevrey classes do not have,
  as investigated in \cite{ChaumatChollet98}.
  In fact, if $M$ is a strongly log-convex weight sequence of moderate growth such that the ``power''
  $M^{(a)} := (k!\, m_k^a)$ is non-quasianalytic for every $a>0$ (e.g.\ strongly regular sequences or
  $M_k:=k!\, \exp(\sum_{j=1}^k (\log j)^\de)$ for $\de \in (0,1]$),
  then the class $\bigcap_{a>0} \cB^{\{M^{(a)}\}}$ admits versions of
  \begin{enumerate}
    \item Whitney's extension theorem,
    \item {\L}ojasiewicz's theorem on regularly situated compact sets,
    \item Weierstrass' division and preparation theorems,
    \item Whitney's spectral theorem.
  \end{enumerate}
  Let us give a short argument for (1):
  for any jet $F$ of class $\bigcap_{a>0} \cB^{\{M^{(a)}\}}$ on a compact set $K$
  one can find a strongly log-convex non-quasianalytic weight sequence $L$ of moderate growth
  such that $L \lhd M^{(a)}$ for all $a>0$ and $F$ is of class $\cE^{\{L\}}$; see \cite[Proposition 5]{ChaumatChollet98}.
  Then, if $L_k = k!\, \ell_k$ and $\la_k^*:= \frac{\ell_k}{\ell_{k-1}}$, we find
  \[
     \sum_{j\ge k} \frac{1}{j (\la_j^*)^2} \le \frac{1}{\la_k^*} \sum_{j\ge k} \frac{1}{j \la_j^*} \lesssim \frac{1}{\la_k^*},
  \]
  and hence $F$ has an extension $f$ in $\cB^{\{L^{(2)}\}}(\R^n)$, by \Cref{thm:CCmixed}.
  But
  \[
  \Big(\frac{L_k^{(2)}}{M_k^{(a)}}\Big)^{1/k} = \Big(\frac{\ell_k^{2}}{m_k^{a}}\Big)^{1/k} = \Big(\frac{\ell_k}{m_k^{a/2}}\Big)^{2/k} = \Big(\frac{L_k}{M_k^{(a/2)}}\Big)^{2/k} \to 0,
  \]
  for all $a>0$ so that $f \in \bigcap_{a>0} \cB^{\{M^{(a)}\}}$.
\end{remark}

\subsection{Quasianalyticity and non-quasianalyticity}

The weight matrix description of Braun--Meise--Taylor classes, i.e.\ \Cref{thm:WMdescription},
allows us to deduce the characterization of (non-)quasianalyticity from the Denjoy--Carleman theorem \ref{thm:DC}.
For a different proof relying on H\"ormander's $L^2$-method and a Paley--Wiener theorem see \cite{BMT90}.

\begin{theorem} \label{thm:WMqa}
	Let $\mathfrak{M}$ be a weight matrix. Then:
	\begin{enumerate}
		\item $\mathcal{E}^{\{\mathfrak{M}\}}$ is quasianalytic if and only if all $M \in \mathfrak{M}$ are quasianalytic.
		\item $\mathcal{E}^{(\mathfrak{M})}$ is quasianalytic if and only if some $M \in \mathfrak{M}$ is quasianalytic.
	\end{enumerate}
\end{theorem}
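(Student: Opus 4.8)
The plan is to reduce each equivalence to the Denjoy--Carleman theorem~\ref{thm:DC} via the weight matrix description in \Cref{thm:WMdescription}, together with the fact (proved via the one-dimensional bump functions of \Cref{lem:specialbump}) that quasianalyticity is equivalent to the non-existence of nontrivial compactly supported functions (a bump function in one variable yields one in all dimensions, and vice versa, and flatness at a point propagates along lines, as noted in the remark following the definition of quasianalyticity). So throughout I work in dimension one and use that $\mathcal{E}^{[\mathfrak M]}$ is closed under multiplication, hence non-quasianalyticity of $\mathcal{E}^{[\mathfrak M]}$ is the same as the existence of a nontrivial $\vh \in \mathcal{E}^{[\mathfrak M]}(\R)$ with compact support.

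\textbf{Roumieu case (1).} Suppose every $M \in \mathfrak M$ is quasianalytic. If $f \in \mathcal{E}^{\{\mathfrak M\}}(U)$ has $j^\infty_{\{0\}} f = 0$, then by definition $f|_K \in \mathcal{E}^{\{M\}}(K)$ for some $M \in \mathfrak M$ on a suitable compact neighborhood $K$ of $0$; since $M$ is quasianalytic, the Denjoy--Carleman theorem~\ref{thm:DC} gives $f = 0$. Conversely, if some $N \in \mathfrak M$ is non-quasianalytic, then $\mathcal{E}^{\{N\}}(\R)$ contains a nontrivial bump function $\vh$ by \Cref{thm:DC}; since $\mathcal{E}^{\{N\}}(\R) \subseteq \mathcal{E}^{\{\mathfrak M\}}(\R)$ (by the definition of the union), $\vh \in \mathcal{E}^{\{\mathfrak M\}}$, so the class is non-quasianalytic.

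\textbf{Beurling case (2).} Suppose some $M_0 \in \mathfrak M$ is quasianalytic. Since $\mathcal{E}^{(\mathfrak M)}(U) = \bigcap_{M \in \mathfrak M} \mathcal{E}^{(M)}(U) \subseteq \mathcal{E}^{(M_0)}(U)$, any $f$ in the Beurling class flat at $0$ lies in the quasianalytic class $\mathcal{E}^{(M_0)}$ (note $\mathcal{E}^{(M_0)}$ is quasianalytic because $\mathcal{E}^{\{M_0\}}$ is, by the equivalence of (1) and (2) in \Cref{thm:DC}), hence $f = 0$. Conversely, suppose every $M \in \mathfrak M$ is non-quasianalytic; I must produce a nontrivial bump function in $\mathcal{E}^{(\mathfrak M)}(\R) = \bigcap_{M \in \mathfrak M} \mathcal{E}^{(M)}(\R)$. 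By \Cref{rem:reduction}, for each non-quasianalytic $M$ there is a non-quasianalytic weight sequence $N$ with $N \lhd M$, and then $\mathcal{E}^{\{N\}} \subseteq \mathcal{E}^{(M)}$. Using that $\mathfrak M$ is totally ordered and replacing it by a countable cofinal subfamily $M^{(1)} \le M^{(2)} \le \cdots$ (which suffices to describe the class; cf.\ the countability reduction mentioned in the excerpt), I want a single non-quasianalytic $N$ with $N \lhd M^{(j)}$ for \emph{all} $j$. This is the heart of the matter: it is a diagonalization across a countable decreasing-in-strength family, producing $N$ of the form $N_k = k!\, n_k$ with $n_k^{1/k} \to \infty$ slowly enough that $(N_k/M^{(j)}_k)^{1/k} \to 0$ for every $j$, while keeping $\sum 1/\nu_k < \infty$; the tool is the Chaumat--Chollet lemma~\ref{lem:CC16} (applied to $\al_k = 1/\nu^{(1)}_k$ or similar, with $\be_k$ encoding the bounds $(M^{(j)}_k/M^{(1)}_k)^{1/k}$), exactly as in \Cref{lem:CC17}. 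Once $N$ is fixed, \Cref{thm:DC} gives a nontrivial $\vh \in \mathcal{E}^{\{N\}}(\R) \subseteq \bigcap_j \mathcal{E}^{(M^{(j)})}(\R) = \mathcal{E}^{(\mathfrak M)}(\R)$, so the Beurling class is non-quasianalytic.

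The main obstacle is precisely this simultaneous construction of one non-quasianalytic $N$ beaten by all members of $\mathfrak M$; everything else is a direct transcription of the Denjoy--Carleman theorem through the union/intersection definitions. For the Beurling direction one should be slightly careful that the countable reduction of $\mathfrak M$ is legitimate for the purpose of detecting quasianalyticity (it is, since adding a bump function to the class is a statement about a single function, visible already in the countable subfamily), and that \Cref{lem:CC16} can indeed be iterated/diagonalized against countably many decay constraints simultaneously — one packages the countably many sequences $\be^{(j)}$ into a single $\be$ by a standard diagonal argument ($\be_k := \max_{j \le k} \be^{(j)}_k$, say, after arranging $\be^{(j)} \to 0$).
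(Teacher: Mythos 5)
Your overall blueprint matches the paper's: part (1) falls directly out of the union definition, and the only substantial content in part (2) is the construction, in the all-non-quasianalytic case, of a single non-quasianalytic weight sequence sitting below every member of a countable co-initial subfamily. Two remarks on detail before the main comparison. For (1), the paper argues via compactly supported functions rather than the Borel map; your Borel-map version needs a small continuation argument (the $M\in\fM$ witnessing the Roumieu bounds for $f$ on a compact neighborhood of $0$ may vary with the compactum, so $f\equiv0$ near $0$ must be propagated across connected $U$ by an open-and-closed argument). And for (2), the countable subfamily must be \emph{decreasing}, $M^1\ge M^2\ge\cdots$, co-initial in $\fM$ — with an increasing cofinal family, as you write it, $\bigcap_j\cE^{(M^{(j)})}$ collapses to $\cE^{(M^{(1)})}$ and the Beurling hard direction becomes trivial; this is surely a slip since you later describe the family as ``decreasing-in-strength''.

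Where you genuinely diverge from the paper is in the mechanism for the simultaneous construction, and there your sketch has a gap. The paper does not invoke \Cref{lem:CC16} here: it passes to the root sequences $\al^n_k:=(M^n_k)^{-1/k}$ — decreasing in $k$, summable (by Carleman, \Cref{lem:Carleman}), monotone in $n$ — and produces, by an explicit interleaving construction with two index sequences $(p_j),(q_j)$ (attributed to Schindl), a single decreasing summable $\al$ with $\al_k\ge\al^n_k$ eventually for each $n$; then $M_k:=\al_k^{-k}$ satisfies $M\preceq M^n$ for every $n$, and $\cE^{(M)}$ is non-quasianalytic by \Cref{lem:smaller} and \Cref{lem:Carleman}. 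Your \Cref{lem:CC16}-plus-diagonal route could plausibly be pushed through, but as stated it does not yet close: you would need $(\th_1\cdots\th_k)^{1/k}$ to outgrow $(M^1_k/M^n_k)^{1/k}$ simultaneously for all $n$, and the $\be$-encoding you name, $(M^{(j)}_k/M^{(1)}_k)^{1/k}$, is the wrong object — for the decreasing orientation it is $\le 1$ and need \emph{not} tend to $0$, since the total order on $\fM$ is $\le$, not $\lhd$. Moreover the raw packaging $\be_k:=\max_{j\le k}\be^{(j)}_k$ does not converge to $0$ without a normalization (choose $k_j$ with $\be^{(i)}_k\le 1/j$ for $i\le j$, $k\ge k_j$, and redefine $\be_k:=1/j$ on $[k_j,k_{j+1})$, say). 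The paper's move to the root sequences $\al^n$ avoids both the log-convexity bookkeeping and any ratio comparison between distinct $M^n$'s, which is precisely what makes its interleaving argument clean.
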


\begin{proof}
	(1) If some $M \in \mathfrak{M}$ is non-quasianalytic, then there exists a
	non-trivial function with compact support of class $\mathcal{E}^{\{M\}}$, thus of of class $\mathcal{E}^{\{\mathfrak{M}\}}$.
	Conversely, any $\mathcal{E}^{\{\mathfrak{M}\}}$-function with compact support is a
	$\mathcal{E}^{\{M\}}$-function for some $M \in \mathfrak{M}$.

	(2) If some $M \in \mathfrak{M}$ is quasianalytic, then clearly $\mathcal{E}^{(\mathfrak{M})}$ is quasianalytic since
	$\mathcal{E}^{(\mathfrak{M})} \subseteq \mathcal{E}^{(M)}$.
	Conversely, assume that all $M \in \mathfrak{M}$ are non-quasianalytic. We may assume that the weight matrix
	$\mathfrak{M}$ is countable (\cite[Lemma 2.5]{FurdosNenningRainer}),
  i.e.,  $\mathfrak{M} = \{M^n\}_{n\in\NN}$, where $M^n \ge M^{n+1}$ for all $n$.
	Set $\al^n_k := (M^n_k)^{-1/k}$. Then, for each $n$ the sequence $\al^n$ is decreasing and $\sum_k a^n_k <\infty$, by \Cref{lem:Carleman}.
  Moreover $\al^n \le \al^{n+1}$.

	We claim that there is a decreasing positive sequence $\al$ such that $\sum_k \al_k <\infty$
	and for each $n$ there exists $k_n$ such that $\al_k \ge \al^n_k$ for all $k \ge k_n$.
  Then we may use \Cref{lem:smaller} and \Cref{lem:Carleman} as in the proof of \Cref{thm:DC} to see that
	$\mathcal{E}^{(M)}$ with $M_k := a_k^{-k}$, thus also $\mathcal{E}^{(\mathfrak{M})}$, is non-quasianalytic.

	It remains to show the claim; the proof is based on ideas from \cite[Proposition 4.7]{Schindl15}.
	We define recursively two increasing sequences of integers $(p_j)_{j \in \N}$ and $(q_j)_{j \in \N}$.
	Set $p_0=q_0=0$ and let
	\begin{itemize}
		\item $p_j$ be the minimal integer such that $p_j > q_{j-1}$ and $\sum_{k>p_j} \al^{j+1}_k \le 2^{-j}$,
		\item $q_j$ be the minimal integer such that $\al^{j}_{p_j} > \al^{j+1}_{q_j+1}$.
	\end{itemize}
	This makes sense since $\al^n_k \to 0$ as $k \to \infty$ for each $n$.
	Moreover it implies $q_j\ge p_j$.
	Then we define
	\[
		\al_k :=
		\begin{cases}
			\al^{j}_k & \text{ if } q_{j-1}< k \le p_j,
			\\
			\al^{j}_{p_j} & \text{ if } p_j < k \le q_j.
		\end{cases}
	\]
	By construction $\al$ is decreasing.
	By the minimality of $q_j$ we have $\al^{j}_{p_j} \le \al^{j+1}_{k}$ for $p_j < k \le q_j$. Thus
	\begin{align*}
		\sum_k \al_k
		&= \sum_j \Big(\sum_{p_{j}< k \le q_{j}} \al^{j}_{p_j} + \sum_{q_j < k \le p_{j+1}} \al^{j+1}_{k}\Big)
		\le \sum_j \Big(\sum_{p_j < k \le p_{j+1}} \al^{j+1}_{k}\Big)
		\le  \sum_j 2^{-j}.
	\end{align*}
	For fixed $n$ we have $a_k \ge a^{n}_k$ for all $k \ge q_{n-1}$. The claim is proved.
\end{proof}

\begin{theorem} \label{thm:(non)qaWF}
	Let $\om$ be a weight function with associated weight matrix $\mathfrak{W}= \{W^x\}_{x>0}$.
	The following conditions are equivalent:\index{quasianalytic}
	\begin{enumerate}
		\item $\mathcal{E}^{\{\om\}}$ is quasianalytic.
		\item $\mathcal{E}^{(\om)}$ is quasianalytic.
		\item The weight function $\om$ is \emph{quasianalytic}\index{weight function!quasianalytic}, i.e.,
			\[
				\int_1^\infty \frac{\om(t)}{t^2}\, dt = \infty.
			\]
		\item The weight sequence $W^x$ is quasianalytic for all $x>0$.
		\item The weight sequence $W^x$ is quasianalytic for some $x>0$.
	\end{enumerate}
\end{theorem}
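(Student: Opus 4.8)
The plan is to reduce the whole theorem to the Denjoy--Carleman theorem through the associated weight matrix $\mathfrak W=\{W^x\}_{x>0}$, the only real work being the dictionary between the integral condition~(3) and quasianalyticity of the individual weight sequences $W^x$.

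First I would invoke \Cref{thm:WMdescription}(2): it identifies $\mathcal E^{\{\om\}}(U)=\mathcal E^{\{\mathfrak W\}}(U)$ and $\mathcal E^{(\om)}(U)=\mathcal E^{(\mathfrak W)}(U)$ for every open $U$, so that $\mathcal E^{\{\om\}}$ (resp.\ $\mathcal E^{(\om)}$) is quasianalytic precisely when $\mathcal E^{\{\mathfrak W\}}$ (resp.\ $\mathcal E^{(\mathfrak W)}$) is. Since each $W^x$ is a weight sequence by \Cref{lem:propertiesWF}(1), \Cref{thm:WMqa} then yields $(1)\Leftrightarrow(4)$ and $(2)\Leftrightarrow(5)$ directly, while $(4)\Rightarrow(5)$ is trivial. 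So it remains to establish $(5)\Rightarrow(3)\Rightarrow(4)$.

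For that I would compare the function $\om_{W^x}(t):=\sup_{k\in\N}\log(t^k/W^x_k)$ associated with $W^x$ to $\om$ itself. Because $W^x_k=\exp(\tfrac1x\ph^*(xk))$, the substitution $s=xk$ gives
\[
\om_{W^x}(t)=\tfrac1x\sup_{s\in x\N}\big(s\log t-\ph^*(s)\big)\le\tfrac1x\sup_{s\ge0}\big(s\log t-\ph^*(s)\big)=\tfrac1x\ph^{**}(\log t)=\tfrac1x\om(t),
\]
using $\ph^{**}=\ph$ (we may assume $\om|_{[0,1]}=0$) and $\ph(\log t)=\om(t)$. For the matching lower bound I would note that $g(s):=s\log t-\ph^*(s)$ is concave with maximizer $s^*$, and that passing from $s^*$ to the grid point $x\lfloor s^*/x\rfloor$ costs at most $x\log t$ — on the interval between them $g'=\log t-(\ph^*)'$ lies in $[0,\log t]$ — so $\om_{W^x}(t)\ge\tfrac1x\om(t)-\log t$. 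Since $\log t=o(\om(t))$ by property~(2) of a weight function, these two bounds give $\tfrac1{2x}\om(t)\le\om_{W^x}(t)\le\tfrac1x\om(t)$ for all large $t$; hence $\int_1^\infty\om_{W^x}(t)\,t^{-2}\,dt$ and $\int_1^\infty\om(t)\,t^{-2}\,dt$ converge together, for each $x>0$. Finally, by the equivalence recorded after \eqref{omM} (see \cite[Section~4]{Komatsu73}), $\int_1^\infty\om_{W^x}(t)\,t^{-2}\,dt<\infty$ iff $W^x$ is non-quasianalytic. Chaining these: $\int_1^\infty\om(t)\,t^{-2}\,dt<\infty$ $\Leftrightarrow$ $\int_1^\infty\om_{W^x}(t)\,t^{-2}\,dt<\infty$ for some (equivalently every) $x$ $\Leftrightarrow$ $W^x$ non-quasianalytic for some (equivalently every) $x$, which delivers both $(3)\Rightarrow(4)$ and $(5)\Rightarrow(3)$ and closes the cycle.

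The hard part is precisely the comparison $\om_{W^x}\asymp\tfrac1x\om$: one must verify that the discretization error between the supremum over integers $k$ and the continuous Young conjugate is genuinely of size $O(\log t)$ and therefore swallowed by the $o(\om)$-gap from property~(2); the degenerate case where the maximizer $s^*$ lies below $x$ only occurs for bounded $t$ and so does not affect the tail of the integral. All remaining steps are a formal assembly of \Cref{thm:WMdescription}, \Cref{thm:WMqa}, and the Denjoy--Carleman theorem. (As an alternative for $(4)\Leftrightarrow(5)$ one could argue directly from \Cref{lem:propertiesWF}(2) and~(4) together with the divergence criterion $\sum_k 1/\vt^x_k=\infty$, but the integral condition~(3) still requires the estimate above.)
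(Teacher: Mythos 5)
Your proposal is correct and follows essentially the same route as the paper: reduce $(1)\Leftrightarrow(4)$ and $(2)\Leftrightarrow(5)$ to \Cref{thm:WMqa} via \Cref{thm:WMdescription}, and link $(3)$ to $(4)$ and $(5)$ through the associated functions $\om_{W^x}$ and Komatsu's integral criterion. The only difference is that you supply a (correct) self-contained proof of the equivalence $\om_{W^x}\asymp\tfrac1x\om$ via the $O(\log t)$ discretization estimate, whereas the paper simply cites \cite[Lemma 5.7]{RainerSchindl12} for this fact.
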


\begin{proof}
  It suffices to show the equivalence of (3), (4), and (5). The rest follows from \Cref{thm:WMqa}.
  Now $W^x$ is quasianalytic if and only if the associated function $\om_{W^x}(t) = \sup_{k \in \N} \log \frac{t^k}{W^x_k}$ satisfies
  \(\int_1^\infty \frac{\om_{W^x}(t)}{t^2}\, dt = \infty\); cf.\
  \cite[Lemma 4.1]{Komatsu73}. Each $\om_{W^x}$ is equivalent to $\om$; cf.\ \cite[Lemma 5.7]{RainerSchindl12}.
\end{proof}

Note that a \emph{non-quasianalytic} weight function $\om$ necessarily
satisfies $\om(t) = o(t)$ as $t \to \infty$,
	i.e.,
	$C^\om(U) \subseteq \mathcal{E}^{(\om)}(U)$.
Indeed, since $\om$ is increasing,
	\begin{align*}
	 \frac{\om(t)}{t} = \int_t^\infty \frac{\om(t)}{s^2}\, ds \le \int_t^\infty \frac{\om(s)}{s^2}\, ds \to 0 \quad \text{ as } t \to \infty.
	\end{align*}

\subsection{Stability properties}

We state without proof a characterization of stability under composition for Braun--Meise--Taylor classes;
for the general classes $\cE^{[\fM]}$ we refer to \cite{RainerSchindl12}.

\begin{theorem}[\cite{FernandezGalbis06}, \cite{RainerSchindl12}]
  Let $\om$ be a weight function satisfying $\om(t)=O(t)$ as $t \to \infty$.
 	The following conditions are equivalent.
 	\begin{enumerate}
 		\item $\exists C>0 ~\exists t_0>0 ~\forall \la\ge 1 ~\forall t \ge t_0 :  \om(\la t) \le C \la \om(t)$.
 		\item $\om$ is equivalent to a concave weight function.
 		\item $\om$ is equivalent to a subadditive weight function.
 		\item $\mathcal{E}^{\{\om\}}$ is stable under composition.
 		\item $\mathcal{E}^{(\om)}$ is stable under composition.
 	\end{enumerate}
 	The condition $\om(t)=O(t)$ as $t \to \infty$ is only used in the directions
 	(4) $\Rightarrow$ (1)
 	and (5) $\Rightarrow$ (1).
\end{theorem}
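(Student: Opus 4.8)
The plan is to prove the cycle of equivalences $(1)\Rightarrow(2)\Rightarrow(3)\Rightarrow(4)\Rightarrow(5)\Rightarrow(1)$, with the final implications $(4)\Rightarrow(1)$ and $(5)\Rightarrow(1)$ being the only places where the hypothesis $\om(t)=O(t)$ enters. First I would treat the ``geometric'' equivalences $(1)\Leftrightarrow(2)\Leftrightarrow(3)$ among conditions on the weight function alone. That $(2)$ and $(3)$ imply $(1)$ is immediate: subadditivity gives $\om(\la t)\le \lceil\la\rceil\,\om(t)\le 2\la\om(t)$ for $\la\ge 1$, and concavity together with $\om(0)=0$ gives $\om(\la t)=\om(\la t)\le \la\om(t)$ (since $t\mapsto\om(t)/t$ is decreasing for a concave function vanishing at $0$). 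For $(1)\Rightarrow(2)$ one uses the standard construction of a concave majorant: set $\kappa(t):=\inf\{a+bt: a,b\ge 0,\ a+bs\ge\om(s)\ \forall s\}$, the least concave majorant of $\om$; condition $(1)$ (with $t\ge t_0$) and the continuity and monotonicity of $\om$ give $\kappa(t)\le C'\om(t)+C'$ for all $t$, so $\kappa$ is equivalent to $\om$, and $\kappa$ is a weight function (properties \ref{sec:weightfunctions}(1)--(3) are inherited or easy to check; \ref{sec:weightfunctions}(3) uses that $t\mapsto\kappa(e^t)$ is convex since $\kappa$ is concave increasing and $e^t$ is convex increasing --- this needs a small argument, e.g.\ via the one-sided derivatives). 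For $(2)\Rightarrow(3)$ one passes from a concave $\kappa$ with $\kappa(0)=0$ to the subadditive function $\tilde\om(t):=\kappa(t)$, using that a nonnegative concave function with value $0$ at the origin is automatically subadditive: $\kappa(s+t)\le\kappa(s)+\kappa(t)$ follows by writing $s=\tfrac{s}{s+t}(s+t)$, $t=\tfrac{t}{s+t}(s+t)$ and applying concavity twice, then $\kappa(0)=0$.

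Next I would handle $(3)\Rightarrow(4)$ and $(3)\Rightarrow(5)$ together (they are the same statement by \Cref{prop:Omincl}, since equivalent weight functions define the same classes, and stability under composition is preserved under equivalence). Assuming $\om$ subadditive, one invokes the weight matrix description \Cref{thm:WMdescription}: $\cE^{\{\om\}}=\on{ind}_{x>0}\cE^{\{W^x\}}$ and $\cE^{(\om)}=\on{proj}_{x>0}\cE^{(W^x)}$. Subadditivity of $\om$ translates, via the Legendre transform, into a relation among the $\ph^*(x\cdot)$ that yields $W^x_{j+k}\le C^{j+k}W^{\la x}_jW^{\la x}_k$ for a fixed dilation factor $\la$ --- i.e.\ a ``mixed moderate growth'' property for the matrix. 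This is exactly the hypothesis needed for the Faà di Bruno argument of \Cref{sec:stability}, applied across the matrix: given $f\in\cE^{[\om]}$ and $g\in\cE^{[\om]}$, one estimates the derivatives of $f\o g$ by Faà di Bruno, bounds each factor by the appropriate $W^x$-norm, and absorbs the combinatorial loss into a shift $x\mapsto\la x$ (Roumieu) or $x\mapsto x/\la$ (Beurling), using \Cref{lem:propertiesWF}(3),(5). The relevant computation is spelled out in \cite{FernandezGalbis06,RainerSchindl12}, so here I would only indicate it.

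Finally, $(4)\Rightarrow(1)$ and $(5)\Rightarrow(1)$ are where I expect the main difficulty, and where $\om(t)=O(t)$ (equivalently $C^\om\subseteq\cE^{\{\om\}}$) is used. The idea is to argue contrapositively: if $(1)$ fails, then $\limsup_{\la\to\infty}\sup_{t\ge t_0}\om(\la t)/(\la\om(t))=\infty$, and one builds explicitly a pair of functions $f,g\in\cE^{[\om]}$ whose composite is not in $\cE^{[\om]}$. A natural choice is to take $g$ a suitable real-analytic (or $\cE^{[\om]}$) rescaling/reparametrization with unbounded ``stretch'' --- this is where $C^\om\subseteq\cE^{[\om]}$ guarantees $g$ lies in the class --- and $f$ a function whose derivative growth at a point realizes the weight $\om$ sharply, so that the chain rule forces $f\o g$ to exhibit derivative growth governed by $\om(\la t)$ for arbitrarily large $\la$, exceeding any $\om(C t)$ allowed in the class. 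Constructing $f$ with prescribed sharp growth is precisely the kind of statement provided by the characteristic-function construction (\Cref{lem:characteristic}) adapted to the matrix $\mathfrak W$, and the failure of $(1)$ is what prevents any shift $x\mapsto Hx$ in the matrix from restoring membership. Making the two choices of $f$ and $g$ fit together --- so that the violation of $(1)$ is genuinely transferred through the composition and cannot be absorbed by passing to a larger parameter in the weight matrix --- is the technical heart of the argument; the details are in \cite{FernandezGalbis06}. Since this is a survey, I would present the easy directions in full and refer to the literature for $(4),(5)\Rightarrow(1)$, remarking only on the role of the hypothesis $\om(t)=O(t)$.
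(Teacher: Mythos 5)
The paper states this theorem without proof, referring to \cite{FernandezGalbis06,RainerSchindl12}, and afterwards sketches only $(3)\Rightarrow(4),(5)$ via the associated weight matrix; your overall decomposition is sensible and consistent with that, but two points need correcting. First, the mechanism you give for $(3)\Rightarrow(4),(5)$ is off: you claim subadditivity yields $W^x_{j+k}\le C^{j+k}W^{\la x}_jW^{\la x}_k$ (``mixed moderate growth''), but that inequality is \Cref{lem:propertiesWF}(3) and holds for \emph{every} weight function (with $\la=2$, $C=1$); it is not a consequence of subadditivity and is not on its own the hypothesis that drives Fa\`a di Bruno. What subadditivity actually delivers, as the paper's sketch makes explicit, is the opposite-direction inequality $w^x_jw^x_k\le w^x_{j+k}$ for $w^x_k = W^x_k/k!$, i.e.\ strong log-convexity of each $W^x$; combining this with \Cref{lem:propertiesWF}(3) gives $(W^x)^\circ\preceq W^{2x}$, the matrix form of \eqref{eq:FdB}, which is what the composition estimate needs.

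Second, $(1)\Rightarrow(2)$ has a genuine gap. You take the least concave majorant $\ka$ and claim $\ka(e^t)$ is convex ``since $\ka$ is concave increasing and $e^t$ is convex increasing''. That is not a valid inference: convexity of $t\mapsto\ka(e^t)$ amounts to $s\ka'(s)$ being increasing, which a concave increasing $\ka$ need not satisfy even with $\ka(0)=0$ and $\log t=o(\ka(t))$. For instance, take $\ka$ piecewise linear with slope $e^{-n/2}$ on $[e^n,e^{n+1})$; it is concave, unbounded, satisfies $\log t=o(\ka(t))$ and $\ka(2t)=O(\ka(t))$ (the latter is automatic for concave functions vanishing at $0$), yet $s\ka'(s)$ drops by the fixed factor $e^{1/2}$ at each corner, so property \ref{sec:weightfunctions}(3) fails. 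Thus the least concave majorant need not be a weight function, and producing a concave weight function equivalent to $\om$ requires an additional regularization step that you have not supplied; this is not the small technicality your parenthetical suggests. The elementary directions $(2)\Rightarrow(1)$ and $(3)\Rightarrow(1)$, as well as your reference to the literature for $(4),(5)\Rightarrow(1)$, are fine.
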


That subadditivity is a sufficient condition for stability under composition is seen as follows:
 Let $\mathfrak{W} = \{W^x\}_{x>0}$ be the associated weight matrix.
	We have
	\begin{align*}
		W^x_k &= \exp(\tfrac{1}{x} \ph^*(xk))
    = \exp \sup_{t \ge 1} (k \log(t) - \tfrac{1}{x} \om(t))
		= \sup_{t \ge 1} \big(t^k e^{- \tfrac{1}{x} \om(t)}\big).
	\end{align*}
	Thus subadditivity of $\om$ implies
	\begin{align*}
		w^x_j w^x_k &=
		\sup_{t,s \ge 1} \Big(\frac{t^js^k}{j!k!} e^{- \tfrac{1}{x} (\om(t)+\om(s))}\Big)
    \le
		\sup_{t,s \ge 1} \Big(\frac{(t+s)^{j+k}}{(j+k)!} e^{- \tfrac{1}{x} \om(t+s)}\Big)
		= w^x_{j+k}.
	\end{align*}
  Together with \Cref{lem:propertiesWF}(3) we see that,
	for all $\al_i \in \NN_{>0}$ with $\al_1 + \cdots + \al_j = k$,
	\begin{align*}
		w^x_j w^x_{\al_1} \cdots w^x_{\al_j}
		\le w_1^j w^{2x}_j w^{2x}_{\al_1-1} \cdots w^{2x}_{\al_j-1}
		\le w_1^j w^{2x}_k
	\end{align*}
	which implies $(W^x)^\o \preceq W^{2x}$ for all $x>0$. From this stability under composition follows easily (cf.\ \Cref{sec:stability}).

\begin{remark} \label{rem:WMconcave}
  Let $\om$ be a weight function with $\om(t)=o(t)$ as $t \to \infty$.
  That $\om$ is equivalent to a concave weight function is furthermore equivalent to any of the following conditions:
  \begin{itemize}
   \item There is a weight matrix $\mathfrak S$ consisting of strongly log-convex weight sequences such that $\mathcal E^{\{\om\}} = \mathcal E^{\{\mathfrak{S}\}}$.
   \item There is a weight matrix $\mathfrak S$ consisting of strongly log-convex weight sequences such that $\mathcal E^{(\om)} = \mathcal E^{(\mathfrak{S})}$.
   \item $\mathcal E^{\{\om\}}$ can be described by almost analytic extensions.
   \item $\mathcal E^{(\om)}$ can be described by almost analytic extensions.
 \end{itemize}
  For all this (including the meaning of \emph{almost analytic extensions}) we refer to \cite[Theorem 4.8]{FurdosNenningRainer} and
  \cite[Theorem 11]{Rainer:2020aa}.
  Furthermore, these conditions are equivalent to the classes $\cE^{[\om]}$ to be stable under inverse/implicit functions and solving ODEs (in the sense described in \Cref{sec:stability}),
  respectively;
  see \cite{RainerSchindl14}.
\end{remark}

\section{Extension in Braun--Meise--Taylor classes} \label{sec:BMT}

\subsection{Whitney ultrajets}

Let $A\subseteq \R^n$ be a closed non-empty set. Let $\om$ be a weight function.
A Whitney jet $F=(F^\al)_{\al \in \N^n} \in \cE(A)$
is called a \emph{$\om$-Whitney ultrajet of Beurling type} on $A$
if for all compact subsets $K \subseteq A$ and all integers $m \ge 1$
we have
\begin{equation} \label{Omjet1}
 	\|F\|^\om_{K,1/m}:=\sup_{x \in K} \sup_{\al \in \N^n} |F^\al(x)| \exp\big(-m \vh^*\big(\tfrac{|\al|}{m}\big)\big) < \infty
 \end{equation}
 and
 \begin{equation} \label{Omjet2}
 	| F |^\om_{K,1/m}  := \sup_{\substack{x,y \in K\\ x \ne y}} \sup_{p \in \N } \sup_{|\al| \le p}  |(R^p_x F)^\al (y)|
 	\frac{(p+1-|\al|)!}{|x-y|^{p+1-|\al|}} \exp\big(-m \vh^*\big(\tfrac{p+1}{m}\big)\big) <\infty.
 \end{equation}
 We denote by $\cE^{(\om)}(A)$
 the locally convex space of all $\om$-Whitney ultrajets $F$ of Beurling type on $A$ equipped with the projective limit topology
 with respect to the system of seminorms $\|F\|^\om_{K,1/m} + |F|^\om_{K,1/m}$.
The space of \emph{$\om$-Whitney ultrajets of Roumieu type} on $A$
 is
 \[
 \cE^{\{\om\}}(A) := \{F \in \cE(A) : \forall K \subseteq_{cp} A ~\exists m\in \N_{\ge 1} : \|F\|^\om_{K,m} + |F|^\om_{K,m}<\infty \}
 \]
supplied with its natural locally convex topology.
In view of \Cref{lem:propertiesWF}(5), we have
\begin{align*}
   \cE^{(\om)}(A) &= \on{proj}_{K \subseteq_{cp} A} \on{proj}_{m>0} \cE^{(W^{1/m})}(K),
   \\
   \cE^{\{\om\}}(A) &= \on{proj}_{K \subseteq_{cp} A} \on{ind}_{m>0} \cE^{\{W^{m}\}}(K),
\end{align*}
where $\fW = \{W^x\}_{x>0}$ is the associated weight matrix of $\om$.

\subsection{Strong weight functions}
We shall see that $\cE^{[\om]}$ admits extension theorems preserving the class if and only if $\om$ is a \emph{strong} weight function.

\begin{definition}
  A non-quasianalytic weight function $\om$ is called \emph{strong} if
  \begin{equation} \label{def:strong}
    \E C>0 \A t>0 : \int_1^\infty \frac{\om(ut)}{u^2} \, du \le C \om(t) + C.
  \end{equation}
\end{definition}

\begin{lemma}[{\cite[Propositions 1.3 and 1.7]{MeiseTaylor88}}] \label{lem:MeiseTaylor}
  Let $\om : [0,\infty) \to [0,\infty)$ be an increasing function with $\om(0) =0$ and $\om(t) \to \infty$ as $t \to \infty$.
	The following conditions are equivalent:\footnote{If $\om$ satisfies these equivalent conditions, then $\om(t)=O(t^\al)$ as $t \to \infty$ for some $0<\al<1$;
	see \cite[Corollary 1.4]{MeiseTaylor88}. That means the class contains a Gevrey class (of Roumieu or Beurling type, respectively).}
	\begin{enumerate}
    \item $\om$ satisfies \eqref{def:strong}.
    \item There exist constants $K>H>1$ such that $\om(Kt) \le H \om(t)$ for all sufficiently large $t$.
	 	\item The increasing concave function
    \begin{equation}
      \ka(t):= \int_1^\infty \frac{\om(ut)}{u^2}\, du, \quad t>0,
    \end{equation}
    satisfies \eqref{def:strong} and $\om \le \ka \le C \om +C$ for some $C>0$.
    \item The harmonic extension
    \begin{equation}
      P_\om(x+ iy):= \begin{cases}
        \frac{|y|}{\pi} \int_\R \frac{\om(t)}{(t-x)^2 + y^2} \, dt & \text{ if } |y|>0,
        \\
        \om(x) & \text{ if } y=0,
    \end{cases}
    \end{equation}
    satisfies $P_\om(z) = O(\om(z))$ as $|z| \to \infty$, where $\om(z) := \om(|z|)$.
	 \end{enumerate}
\end{lemma}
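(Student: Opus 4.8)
The plan is to prove the equivalence of (1)--(4) in Lemma~\ref{lem:MeiseTaylor} by establishing the cycle $(1) \Rightarrow (2) \Rightarrow (3) \Rightarrow (1)$ first, treating the harmonic extension condition (4) separately as equivalent to (1) at the end. Throughout we may assume $\om|_{[0,1]} = 0$, which is harmless for all the conditions since they only involve the behavior of $\om$ for large $t$; also it is convenient to write $\al := \limsup_{t\to\infty}\frac{\log\om(2t)}{\log\om(t)}$-type quantities in terms of the submultiplicativity-like inequalities rather than fussing with the exact thresholds.

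First I would prove $(1) \Rightarrow (2)$. Assuming \eqref{def:strong}, split the integral $\int_1^\infty \frac{\om(ut)}{u^2}\,du$ at a large parameter $K$: the tail $\int_K^\infty \frac{\om(ut)}{u^2}\,du \le \frac{1}{K}\int_1^\infty \frac{\om(vt)}{v^2}\,dv \cdot K \le \dots$ — more precisely, monotonicity of $\om$ gives $\int_1^K \frac{\om(ut)}{u^2}\,du \ge \om(t)\int_1^K \frac{du}{u^2} = \om(t)(1 - \tfrac1K)$ while splitting at $K$ and using $\om(ut)\ge\om(Kt)$ for $u\ge K$ gives a lower bound $\om(Kt)\int_K^\infty\frac{du}{u^2} = \frac{1}{K}\om(Kt)$. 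Combining with the upper bound $C\om(t)+C$ from \eqref{def:strong} yields $\frac1K\om(Kt) \le C\om(t) + C - (1-\tfrac1K)\om(t)$; for $K$ chosen large enough that $1 - \tfrac1K > C$ fails we instead argue the other way — the cleaner route is: from \eqref{def:strong}, $\om(Kt)\cdot\frac1K \le \int_K^\infty\frac{\om(ut)}{u^2}\,du \le \int_1^\infty\frac{\om(ut)}{u^2}\,du \le C\om(t)+C$, so $\om(Kt) \le KC\,\om(t) + KC$, and then pick $K$ so large that $\log K / \log(KC) $ forces, after iteration, a genuine gain; the standard trick is that $\om(Kt)\le KC\,\om(t)+KC$ iterated $n$ times gives $\om(K^n t)\lesssim (KC)^n\om(t)$, and choosing $K$ with $\log(KC)/\log K < 1 + \epsilon$ small is not possible, so instead one uses condition \ref{sec:weightfunctions}(1) ($\om(2t)=O(\om(t))$) together with the integral bound to get a \emph{strict} improvement: the point of (2) is the existence of $K > H > 1$, and this follows because \eqref{def:strong} says the averaged quantity $\ka(t)$ is comparable to $\om(t)$, while $\ka(Kt) = \int_1^\infty\frac{\om(uKt)}{u^2}du$ and a change of variables shows $\ka$ is \emph{subadditive after $\log$}, giving the doubling-with-gain. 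I would carry out this elementary but slightly delicate estimate carefully following \cite[Proposition 1.3]{MeiseTaylor88}.

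Next, $(2) \Rightarrow (3)$: given $\om(Kt)\le H\om(t)$ for $t$ large with $K>H>1$, I would show $\ka(t) = \int_1^\infty\frac{\om(ut)}{u^2}\,du$ is finite (non-quasianalyticity is needed here, or rather $\om(t)=o(t)$ which follows, so the integral converges), that $\ka$ is concave and increasing (differentiate under the integral / note it is an average of the concave-after-$\log$... actually $\ka$ is manifestly increasing in $t$ and concavity follows from writing $\ka(t) = \int_0^\infty \om(s)\,d\bigl(-\tfrac{t}{s}\bigr)$-type representation, i.e.\ $\ka(t) = t\int_t^\infty \frac{\om(s)}{s^2}\,ds$ after substituting $s=ut$, and this form makes concavity a short computation), that $\om \le \ka$ trivially (since $\int_1^\infty u^{-2}du = 1$ and $\om(ut)\ge\om(t)$ for $u\ge1$), and that $\ka \le C\om + C$ by splitting the integral at $K$: $\int_1^K \frac{\om(ut)}{u^2}du \le \om(Kt) \le H\om(t)$ (for $t$ large) and $\int_K^\infty \frac{\om(ut)}{u^2}du \le \sum_{j\ge 0}\om(K^{j+1}t)\int_{K^j}^{K^{j+1}}\frac{du}{u^2} \le \sum_{j\ge0} H^{j+1}\om(t)\cdot\frac{1}{K^j}(1-\tfrac1K) = \om(t)\cdot\frac{H(1-1/K)}{1-H/K}$, which converges precisely because $H < K$. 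That $\ka$ itself satisfies \eqref{def:strong} then follows because $\ka\le C\om+C\le C\ka+C$ and $\om\le\ka$, so $\int_1^\infty\frac{\ka(ut)}{u^2}du \le C\int_1^\infty\frac{\om(ut)}{u^2}du + C = C\ka(t)+C$. The implication $(3)\Rightarrow(1)$ is immediate since $\om\le\ka$ and $\ka\le C\om+C$ make \eqref{def:strong} for $\om$ follow from the same for $\ka$.

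Finally I would handle (4). The harmonic extension $P_\om$ of the boundary data $\om(|\cdot|)$ on $\R = \partial(\text{upper half-plane})$, restricted to the positive imaginary axis $z = iy$, satisfies $P_\om(iy) = \frac{y}{\pi}\int_\R \frac{\om(|t|)}{t^2+y^2}dt = \frac{2y}{\pi}\int_0^\infty \frac{\om(t)}{t^2+y^2}dt$; substituting $t = yu$ gives $P_\om(iy) = \frac{2}{\pi}\int_0^\infty \frac{\om(yu)}{u^2+1}du$, which is comparable (up to the bounded contribution of $u\in[0,1]$, where $\om(yu)\le\om(y)$) to $\int_1^\infty\frac{\om(yu)}{u^2}du = \ka(y)$. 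Hence $P_\om(iy) \asymp \ka(y) + O(\om(y))$, so $P_\om(iy) = O(\om(y))$ along the imaginary axis is equivalent to $\ka(y) = O(\om(y))$, i.e.\ to (3) (equivalently (1)). For general $z = x+iy$ with $|z|\to\infty$, one uses monotonicity and the doubling property \ref{sec:weightfunctions}(1) of $\om$ to compare $P_\om(x+iy)$ with $P_\om(i|z|)$ up to a multiplicative constant — this is where one invokes that $\om$ is a genuine weight function, not merely increasing, and the estimate $P_\om(x+iy) \le P_\om(i\sqrt{x^2+y^2})\cdot(\text{const})$ follows from a standard Poisson-kernel domination argument (the kernel $\frac{y}{(t-x)^2+y^2}$ is dominated by a constant times $\frac{|z|}{t^2+|z|^2}$ after using $\om$'s growth to absorb the shift). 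The reverse implication $(4)\Rightarrow(1)$ then reads off from the axis estimate.

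\medskip

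I expect the main obstacle to be the implication $(1)\Rightarrow(2)$: extracting a genuine doubling-\emph{with-gain} estimate $\om(Kt)\le H\om(t)$ with $H<K$ from the integral bound \eqref{def:strong} requires the right splitting of the integral and a careful choice of constants, and it is easy to produce only the weaker $\om(Kt)\le KC\om(t)+KC$ which does not give $H<K$. The resolution, following Meise--Taylor, is to observe that \eqref{def:strong} forces $\ka\asymp\om$, then use that $\ka$ is concave with $\ka(t)/t$ decreasing to get, for $K$ large, $\ka(Kt)/\ka(t) \le$ something strictly less than $K$ — concavity is doing the real work here — and finally transfer back to $\om$ via $\om\le\ka\le C\om+C$. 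I would present this step with full detail and treat the remaining implications, which are essentially bookkeeping with the Poisson kernel and the substitution $s=ut$, more briskly.
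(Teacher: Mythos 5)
The paper does not actually prove \Cref{lem:MeiseTaylor}; it is quoted verbatim from \cite[Propositions 1.3 and 1.7]{MeiseTaylor88}. So you are supplying a proof where the survey only supplies a citation, and your overall architecture --- the cycle $(1)\Rightarrow(2)\Rightarrow(3)\Rightarrow(1)$ plus a separate reduction of (4) to (1) via the substitution $t=yu$ on the imaginary axis and a split of the Poisson integral at $|t|\sim 2|z|$ --- is exactly the route of the original source. Your steps $(2)\Rightarrow(3)$ (finiteness of $\ka$ from $\om(t)=O(t^{\log H/\log K})$, the geometric series over the blocks $[K^j,K^{j+1}]$ converging because $H<K$, and the self-improvement $\int_1^\infty \ka(ut)u^{-2}\,du\le C\ka(t)+C$ from $\om\le\ka\le C\om+C$), $(3)\Rightarrow(1)$, and both directions of (4) all check out.

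The one genuine gap is $(1)\Rightarrow(2)$, which you correctly identify as the hard step but then leave unresolved with a misleading heuristic. Concavity of $\ka$ together with $\ka(t)/t$ decreasing yields only $\ka(Kt)\le K\ka(t)$, with \emph{no} gain: $\ka(t)=t$ is concave with $\ka(t)/t$ decreasing and satisfies $\ka(Kt)=K\ka(t)$ exactly. The gain must come from \eqref{def:strong} itself. The mechanism is the identity $t\ka'(t)=\ka(t)-\om(t)$ (from $\ka(t)=t\int_t^\infty \om(s)s^{-2}\,ds$), into which one feeds $\om\ge \ka/C-1$ from \eqref{def:strong}; this gives $t\ka'(t)\le(1-\tfrac1C)\ka(t)+1$, and integrating the logarithmic derivative from $t$ to $Kt$ yields $\ka(Kt)\le K^{1-1/C}\ka(t)+\mathrm{const}$. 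Transferring back to $\om$ via $\om\le\ka\le C\om+C$ costs a factor $C$, so one obtains $\om(Kt)\le \big(CK^{1-1/C}+o(1)\big)\om(t)$, and one must then \emph{choose $K$ large enough} (namely $K>C^{C}$, so that $CK^{1-1/C}<K$) --- or equivalently iterate a fixed $K$ --- to land strictly below $K$. Without this last adjustment the argument produces $H<K$ for $\ka$ but not for $\om$, which is precisely the trap your first, abandoned attempt falls into. With that step written out, the proof is complete.
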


\begin{example} \label{ex:strongWF}
  (1) $\om_s(t) = \max(0,(\log t)^s)$ is a strong weight function for each $s>1$.

  (2) Let $M=(M_k)$ be a strongly regular weight sequence.
  Then $\om_M$ satisfies \eqref{def:strong}, by \cite[Proposition 4.4]{Komatsu73}.
  By \Cref{lem:MeiseTaylor}(3), we have $\om_M \le \ka \le C \om_M +C$. Since $\ka$ is subadditive,
  \[
    \om_M(2t) \le \ka(2t)\le 2 \ka(t) \le 2C \om_M(t) + 2C,
  \]
  i.e., $\om_M$ satisfies \ref{sec:weightfunctions}(1) and hence is a weight function (the other conditions are always
  fulfilled by $\om_M$).
  Moreover, $2\om_M(t) \le \om_M(Dt) +D$, as $M$ has moderate growth, see \cite[Proposition 3.6]{Komatsu73}.
  Then \Cref{thm:comparison} applied to $\om_M$ shows that $\cE^{[M]} = \cE^{[\om_M]}$,
  since $M_k = \sup_{t\ge 0} \frac{t^k}{\exp(\om_M(t))} = e^{\vh^*(k)} =W^1_k$ (cf.\ \cite[Proposition 3.2]{Komatsu73}). Consequently, the extension results for strong weight functions
  comprise those for strongly regular weight sequences.
\end{example}

\subsection{The singleton and other sets with nice geometry}

It was shown in \cite{MeiseTaylor88} that Whitney ultrajets of class $\cE^{(\om)}$ admit extension from sets with nice geometry, including the singleton $\{0\}$.

\begin{theorem}[{\cite[Theorem 3.10]{MeiseTaylor88}}]
  Let $\om$ be a non-quasianalytic weight function.
  The following conditions are equivalent:
  \begin{enumerate}
    \item $\om$ is strong.
    \item $j^\infty_{\{0\}} : \cE^{(\om)}(\R^n) \to \cE^{(\om)}(\{0\})$ is surjective.
    \item $j^\infty_{K} : \cE^{(\om)}(\R^n) \to \cE^{(\om)}(K)$ is surjective for all compact convex $K \subseteq \R^n$ with non-empty interior.
    \item $j^\infty_{\ol \Om} : \cE^{(\om)}(\R^n) \to \cE^{(\om)}(\ol \Om)$ is surjective for all bounded open $\Om \subseteq \R^n$ with real analytic boundary.
  \end{enumerate}
\end{theorem}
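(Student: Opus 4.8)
The plan is to prove the chain of implications $(1) \Rightarrow (4) \Rightarrow (3) \Rightarrow (2) \Rightarrow (1)$, exploiting that the hard analytic content—the existence of optimal cutoff functions—has, in the $\om$ framework, an exact translation to the weight-matrix language via \Cref{thm:WMdescription}. The implications $(4) \Rightarrow (3) \Rightarrow (2)$ are essentially geometric degeneration: a point is the intersection of shrinking convex bodies, and a convex body is (up to bi-Lipschitz change of coordinates, locally) a bounded open set with real analytic boundary, so each restriction map factors appropriately; these require only routine bookkeeping with the seminorms \eqref{Omjet1} and \eqref{Omjet2}. The reverse implication $(2) \Rightarrow (1)$ is where a genuine obstruction lies: one must show that surjectivity of the Borel map onto $\cE^{(\om)}(\{0\}) = \La^{(\om)}$ forces the integral condition \eqref{def:strong}. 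For this I would argue by contraposition, mimicking the proof of \Cref{thm:necessityB}: the open mapping theorem for Fréchet spaces produces extension functions $f_k$ with controlled $\|\cdot\|^\om_{[-1,1],1/m}$ norms, and one converts the failure of \eqref{def:strong} into a quantitative lower bound (via the points $t_k$ where the $k$th derivative drops below $1/2$ and \Cref{lem:sizenearflat} applied to suitable auxiliary weight sequences extracted from the matrix $\fW$) that is incompatible with those norm bounds. The translation between the $\om$-estimate on a single $f_k$ and an $M$-type estimate uses \Cref{lem:propertiesWF}(5) to pass between $W^x$ at different parameters.

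The main implication $(1) \Rightarrow (4)$ is the substantive one. Here the strategy is to reduce to the Denjoy--Carleman extension theorem \Cref{thm:WBeurling} applied to \emph{each} member of the associated weight matrix. Concretely: fix a bounded open $\Om$ with real analytic boundary and let $F \in \cE^{(\om)}(\ol\Om)$. By the Beurling-side weight matrix description in \Cref{thm:WMdescription}(2), $\cE^{(\om)}(\ol\Om) = \on{proj}_{x>0} \cE^{(W^x)}(\ol\Om)$, so $F$ is a Whitney ultrajet of class $\cE^{(W^x)}$ on $\ol\Om$ for every $x>0$. The first key point is that \emph{strength} of $\om$ guarantees that each $W^x$ is strongly regular: by \Cref{lem:propertiesWF} each $W^x$ is a weight sequence of moderate growth with $w^x$ log-convex, and \Cref{lem:MeiseTaylor}(2) (the $\om(Kt)\le H\om(t)$ reformulation, together with non-quasianalyticity) translates into strong non-quasianalyticity \eqref{eq:snq} for each $W^x$—this computation, relating $\sum_j 1/\vt^x_j$ to the integral of $\om(t)/t^2$, is the technical heart and should be isolated as a lemma. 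Granting this, \Cref{thm:WBeurling} gives, for each fixed $x$, an extension of $F$ in $\cE^{(W^x)}(\R^n)$; but to conclude $F$ extends in $\cE^{(\om)}(\R^n) = \on{proj}_{x>0}\cE^{(W^x)}(\R^n)$ one must produce a \emph{single} function, so the extensions for varying $x$ must be reconciled.

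The reconciliation is precisely the reason one wants the extension \emph{operator} from \Cref{thm:WSsplit}: for a strongly regular weight sequence $N$, there is a continuous linear $E_A : \cE^{(N)}(A) \to \cE^{(N)}(\R^n)$ with $j^\infty_A \circ E_A = \on{id}$. The plan is to show that this operator is, up to harmless constants in the parameters, \emph{independent of the weight sequence} within the matrix $\fW$: since the construction in \Cref{thm:WSsplit} rests on \Cref{prop:localproperty}, \Cref{cor:extopsingleton}, and the partition of unity from \Cref{Proposition6matrixB}, and since the optimal Beurling cutoff functions of \Cref{thm:cutoffBM} depend only on the underlying geometry and on $h_m$, one can run the construction uniformly over the one-parameter family $\{W^x\}$ (using \Cref{lem:propertiesWF}(5) to absorb parameter shifts) to obtain $E_A$ with $j^\infty_A \circ E_A = \on{id}$ simultaneously on $\cE^{(W^x)}$ for all $x$; applying $E_A$ to $F$ then lands in $\bigcap_x \cE^{(W^x)}(\R^n) = \cE^{(\om)}(\R^n)$. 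The hard part will be making this uniformity precise—checking that all the constants appearing in \Cref{prop:localproperty}, \Cref{thm:cutoffBM}, and \Cref{Proposition6matrixB} can be chosen to depend only on the ``strength constant'' $C$ of \eqref{def:strong} and not on the individual sequence $W^x$; an alternative, if one prefers to cite rather than rework, is to invoke the Vogt--Wagner splitting approach of \Cref{sec:topinv} directly for the Fréchet space $\cE^{(\om)}(\ol\Om)$, verifying property (DN) for $\cE^{(\om)}(K)$ (immediate from the seminorm inequality $p_k^2 \le p_1 p_{k^2}$ exactly as in \Cref{sec:topinv}, now using \eqref{Omjet1}--\eqref{Omjet2}) and property $(\Om)$ for the kernel, the latter being the delicate step and the one genuinely requiring strength of $\om$.
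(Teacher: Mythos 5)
The paper does not reprove this theorem; it cites \cite{MeiseTaylor88} and sketches only (1) $\Leftrightarrow$ (2), by an entirely different route: dualizing $j^\infty_{\{0\}}$ via the Fourier--Laplace transform, identifying the duals with the weighted spaces $\cA_1(\C^n)$ and $\cA_2(\C^n)$ of entire functions, and characterizing when the inclusion $\cA_2 \to \cA_1$ is a topological homomorphism by $P_\om(z)=O(\om(z))$ (Phragm\'en--Lindel\"of plus H\"ormander's $L^2$-estimates for $\ol\p$), which by \Cref{lem:MeiseTaylor} is equivalent to strength. Your real-variable plan is therefore genuinely different, but its central step has a gap that I do not think can be repaired as stated. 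Reducing (1) $\Rightarrow$ (4) to \Cref{thm:WBeurling} applied to each $W^x$ produces, for each $x$, a \emph{different} extension of $F$; a projective limit of surjections need not be surjective, so this proves nothing about $\cE^{(\om)}=\on{proj}_x\cE^{(W^x)}$. The proposed fix --- that the extension operator of \Cref{thm:WSsplit} is ``independent of the weight sequence within the matrix'' because the cutoffs of \Cref{thm:cutoffBM} ``depend only on the geometry and on $h_m$'' --- overlooks that $h_m=h_{w^x}$ \emph{is} the $x$-dependence: the construction in \Cref{thm:cutoffBM} chooses the lengths $d_j$ and the threshold $k(r)$ from $\vt^x$, and a cutoff built for $W^{x_0}$ carries no bound whatsoever in $\|\cdot\|^{W^x}_{\R,\rh}$ for $x<x_0$, since then $W^x\le W^{x_0}$ and the relevant denominators shrink. \Cref{lem:propertiesWF}(5) cannot help: it trades a geometric factor for an \emph{increase} of the matrix parameter ($\rh^kW^x_k\le CW^{Hx}_k$ with $H\ge1$), whereas the binding constraints in the Beurling projective limit are the small $x$. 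Producing a single family of cutoffs with bounds valid for all $x$ simultaneously is exactly the content of \Cref{thm:WFoptimalcutoff} (with bounds in terms of $\om^\star$, not $h_{w^x}$); the survey emphasizes that these are obtained by Paley--Wiener/$\ol\p$-methods and that their existence is \emph{equivalent} to $\om$ being strong. So the ``uniformity check'' you defer is not bookkeeping --- it is the theorem. (Separately, the prerequisite claim that strength of $\om$ forces each $W^x$ to be strongly non-quasianalytic is true but is nowhere established in this survey and is itself a nontrivial lemma.)

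Two further problems. First, the fallback via Vogt--Wagner is circular for the purpose at hand: the splitting theorem applies to an already exact sequence, i.e., it upgrades a surjection to a split surjection; it cannot be used to establish surjectivity of $j^\infty_{\ol\Om}$ in the first place. Second, the ``geometric degeneration'' (4) $\Rightarrow$ (3) $\Rightarrow$ (2) is not routine. A compact convex body (a cube, say) is not bi-Lipschitz-equivalent to a real-analytic-boundary domain in any way that preserves ultradifferentiable classes --- composition with merely Lipschitz maps destroys even $C^1$. And (3) $\Rightarrow$ (2) requires exhibiting, for a given $a\in\La^{(\om)}$, \emph{some} Whitney ultrajet on a convex body $K\ni0$ whose jet at $0$ is $a$; since the jets $F^\al$ on $\on{int}K$ are determined by $F^0$, this is precisely the Borel surjectivity problem again, so nothing has been gained. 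In \cite{MeiseTaylor88} the four statements are each tied to (1) through the duality picture rather than to one another by geometric reduction.
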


Let us sketch an argument for the equivalence of (1) and (2).
The Fourier--Laplace transform is a linear topological isomorphism between $\cE^{(\om)}(\R^n)_b'$ and the weighted space of entire functions
\[
  \cA_1 (\C^n) := \Big\{f \in \cH(\C^n) : \E j\in \N : \sup_{z \in \C^n} |f(z)| e^{-j (|\Im z| + \om(z))}<\infty \Big\}
\]
equipped with its natural inductive limit topology; cf.\ \cite{BMT90}.
The dual $\cE^{(\om)}(\{0\})_b'$ can be identified with $(\La^{(\om)}_n)_b'$, where
\[
  \La^{(\om)}_n := \Big\{(c_\al) \in \C^{\N^n} : \A m \in \N : \sum_{\al} |c_\al| e^{-m \sum_{j=1}^n \vh^*(\frac{\al_j}{m})} < \infty\Big\}.
\]
Then the map $(\La^{(\om)}_n)_b' \ni (c_\al) \mapsto (z \mapsto \sum_\al c_\al (-iz)^\al)$
is a linear topological isomorphism between $(\La^{(\om)}_n)_b'$ and
\[
  \cA_2 (\C^n) := \Big\{f \in \cH(\C^n) : \E j\in \N : \sup_{z \in \C^n} |f(z)| e^{-j  \om(z)}<\infty \Big\}
\]
and we have the commutative diagram
\[
  \xymatrix{
    (\La^{(\om)}_n)_b' \ar[rr]^{(j^\infty_{\{0\}})^t} \ar[d]_{\cong} && \cE^{(\om)}(\{0\})_b' \ar[d]^{\cong}
    \\
    \cA_2 (\C^n) \ar[rr] && \cA_1 (\C^n)
  }
\]
where the bottom arrow is the inclusion map.
So (2) holds if and only if the inclusion $\cA_2(\C^n) \to \cA_1(\C^n)$ is an injective topological homomorphism.
It is shown in \cite{MeiseTaylor88} that the latter holds if and only if $P_\om(z) = O(\om(z))$ as $|z| \to \infty$;
the proof relies on the Phragm\'en--Lindel\"of principle and H\"ormander's $L^2$-estimates for the solution of the $\ol \p$-problem.

\begin{remark}
    An analogous result holds in the Roumieu case $\cE^{\{\om\}}$, see \cite{BonetMeiseTaylor89}.
\end{remark}

Concerning the existence of extension operators in the above cases we have

\begin{theorem}[\cite{Meise:1989un}] \label{thm:nicegeometry}
  Let $\om$ be a strong weight function.
  Then
  \begin{enumerate}
    \item $j^\infty_{\ol \Om} : \cE^{(\om)}(\R^n) \to \cE^{(\om)}(\ol \Om)$ is split surjective for all bounded open $\Om \subseteq \R^n$ with real analytic boundary.
    \item $j^\infty_{\{0\}} : \cE^{(\om)}(\R^n) \to \cE^{(\om)}(\{0\})$ is split surjective if and only if
    \begin{equation} \label{eq:DNweight}
      \A C>1 \E \de>0 \E t_0>0 \A t\ge t_0 :
        \om^{-1}(Ct)\om^{-1}(\de t) \le (\om^{-1}(t))^2.
    \end{equation}
  \end{enumerate}
\end{theorem}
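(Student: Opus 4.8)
\textbf{Proof plan for Theorem \ref{thm:nicegeometry}.}

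The plan is to reduce everything to the corresponding statements in the Beurling setting of the associated weight matrix $\mathfrak W = \{W^x\}_{x>0}$ and the splitting theorem of Vogt and Wagner (cf.\ \Cref{sec:topinv}), exactly as in the Denjoy--Carleman case. Since $\om$ is a strong weight function, \Cref{lem:MeiseTaylor} provides the increasing concave $\ka$ with $\om \le \ka \le C\om + C$, and hence $\om$ is equivalent to a subadditive (in particular concave) weight function; so by \Cref{rem:WMconcave} the matrix $\mathfrak W$ may be taken to consist of strongly log-convex weight sequences, and \eqref{def:strong} forces each $W^x$ to have moderate growth and to be strongly non-quasianalytic (one checks that $2\om(t)\le\om(Ht)+H$-type estimates on $\om$ transfer to the $W^x$ via \Cref{lem:propertiesWF}; strong non-quasianalyticity of $W^x$ is the matrix analogue of the computation in \Cref{ex:strongWF}(2)). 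Thus every $W^x$ is strongly regular, which is the hypothesis under which all the Denjoy--Carleman machinery of \Cref{sec:extentionop} applies.

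For part (1), fix a bounded open $\Om$ with real analytic boundary and set $K = \ol\Om$. Using \Cref{thm:WMdescription}(2) we write $\cE^{(\om)}(K) = \on{proj}_{x>0}\cE^{(W^x)}(K)$ as a reduced projective limit of nuclear Fréchet spaces, and similarly realise $j^\infty_K : \cE^{(\om)}(\R^n) \to \cE^{(\om)}(K)$ as a projective limit of the maps $j^\infty_K : \cD^{(W^x)}(L) \to \cE^{(W^x)}(K)$ (after multiplying by a fixed $\cE^{(\om)}$ cutoff supported in a compact $L$ with $K\subseteq\on{int}L$, which exists since $\om$ is non-quasianalytic). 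Each such sequence is exact by \Cref{thm:WBeurling} applied to the strongly regular $W^x$. By the splitting theorem it splits once $\cE^{(W^x)}(K)$ has (DN) and $\cD^{(W^x)}(L,K)$ has $(\Om)$; the first is the triviality noted in \Cref{sec:topinv} (the seminorms satisfy $p_k^2\le p_1 p_{k^2}$), and the second holds for compact sets with real analytic boundary by the results of Meise--Taylor \cite{Meise:1989un} and Franken \cite{Franken:1993tn} (this is where the geometry of $\Om$ enters: a real analytic boundary is enough to verify $(\Om)$, whereas sharp cusps are not). A right-inverse $E$ for $j^\infty_K$ then yields the desired extension operator, and a partition-of-unity argument (as in the proof of \Cref{thm:closed}) is not even needed since $\Om$ is bounded. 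Passing to the projective limit preserves continuity of $E$, giving split surjectivity on $\cE^{(\om)}$.

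For part (2), the singleton: by the commutative diagram sketched after the previous theorem, $j^\infty_{\{0\}}$ split surjective is equivalent to the inclusion $\cA_2(\C^n)\hookrightarrow\cA_1(\C^n)$ being a topological homomorphism \emph{with a continuous linear right-inverse}, equivalently (via Fourier--Laplace duality) to the quotient $\cE^{(\om)}(\R^n)\to\La^{(\om)}_n$ admitting a continuous linear right-inverse. By the splitting theorem this holds iff the nuclear Fréchet space $\La^{(\om)}_n$ has property (DN); writing out $\La^{(\om)}_n$ with the seminorms $p_m$ coming from the weights $e^{m\sum_j\vh^*(\al_j/m)}$, property (DN) says
\begin{equation*}
  \E m \A k \E \ell \E C>0 : p_k^2 \le C p_m p_\ell,
\end{equation*}
which after taking logarithms becomes a convexity-type inequality on the function $k\mapsto \ph^*(k)$, and then, passing through the Legendre/Young duality $\ph^{**}=\ph$ and $\om(t)=\ph(\log t)$, is seen to be equivalent to the condition \eqref{eq:DNweight} on $\om^{-1}$; this is precisely the computation carried out in \cite{Meise:1989un}. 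I expect the main obstacle to be exactly this last translation: verifying that (DN) for $\La^{(\om)}_n$ is equivalent to \eqref{eq:DNweight} requires a careful manipulation of the Young conjugate $\ph^*$ and its behaviour under the substitutions $t = \om^{-1}(\cdot)$, together with the multi-dimensional bookkeeping in $\sum_{j=1}^n\vh^*(\al_j/m)$ versus the one-dimensional $\vh^*(|\al|/m)$ (these differ by a factor controlled by \Cref{lem:propertiesWF}(3), which suffices). The geometric input of part (1) is by contrast already packaged in the cited $(\Om)$-results, so I would treat that part as essentially formal once strong regularity of the $W^x$ is established.
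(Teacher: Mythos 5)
Your reduction of part (1) to the Denjoy--Carleman machinery rests on the claim that every $W^x$ in the weight matrix of a strong weight function is strongly regular, and this is false. Strongness (condition \eqref{def:strong}, equivalently $\om(Kt)\le H\om(t)$ for some $K>H>1$ by \Cref{lem:MeiseTaylor}) is not the condition $2\om(t)\le\om(Ht)+H$ of \Cref{thm:comparison}, and only the latter yields moderate growth of the individual $W^x$ (equivalently, that $\cE^{[\om]}$ is a Denjoy--Carleman class at all). The strong weight functions $\om_s(t)=\max\{0,(\log t)^s\}$, $s>1$, of \Cref{ex:strongWF}(1) have $W^{s,x}_k=\exp(C_s x^{1/(s-1)}k^{s/(s-1)})$, which fail moderate growth, and \Cref{ex:WF}(2) records that $\cE^{[\om_s]}$ is not a Denjoy--Carleman class; so \Cref{thm:WBeurling}, \Cref{thm:WSsplit} and the $(\Om)$-results you want to quote for $\cD^{(W^x)}(L,K)$ simply do not apply levelwise. (This is precisely why \Cref{sec:BMT} builds its own optimal cutoff functions via entire functions and the $\ol\p$-method instead of quoting Part 1.) There is a second, independent gap: even if each level split, a projective limit of split exact sequences need not split --- the levelwise right-inverses need not be compatible with the linking maps, and (DN) does not pass to projective limits. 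Part (2) of the very theorem you are proving illustrates this: each $\La^{(W^x)}$ has (DN) by the trivial seminorm computation of \Cref{sec:topinv}, yet the intersection $\cE^{(\om)}(\{0\})$ has (DN) only when $\om$ satisfies \eqref{eq:DNweight}.

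The intended argument applies the Vogt--Wagner splitting theorem once, directly to the exact sequence $0\to\cD^{(\om)}(\ol B\setminus\Om)\to\cD^{(\om)}(\ol B)\to\cE^{(\om)}(\ol\Om)\to 0$ of Fr\'echet spaces; the nontrivial inputs are that $\cE^{(\om)}(\ol\Om)$ has (DN) --- which genuinely uses the nonempty interior and real analytic boundary and is \emph{not} the triviality it is for Denjoy--Carleman jet spaces --- and that $\cD^{(\om)}(\ol B\setminus\Om)$ has ($\Om$), both taken from Meise--Taylor. Your part (2) is much closer to the mark: the crux is indeed the equivalence of \eqref{eq:DNweight} with property (DN) of $\La^{(\om)}_n$, which is cited rather than proved here as well. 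But the splitting theorem only gives the sufficiency of (DN); for the converse you must observe that a splitting exhibits $\cE^{(\om)}(\{0\})$ as a complemented subspace of $\cD^{(\om)}(\ol B)$, which has (DN), and that (DN) is inherited by closed subspaces. For the sufficiency direction you also still need property ($\Om$) of the kernel $\cD^{(\om)}(\ol B,\{0\})$ and the tensor-product reduction to $n=1$, neither of which your outline addresses.
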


A weight function $\om$ satisfying \eqref{eq:DNweight} is called a \emph{(DN)-weight}.

The proof of (1) is based on the splitting theorem of Vogt and Wagner \cite{Vogt:1980th} for the short exact sequence
of nuclear Fr\'echet spaces
\[
  \xymatrix{
  0 \ar[r] & \cD^{(\om)}(\ol B\setminus \Om) \ar[r] & \cD^{(\om)}(\ol B) \ar[r]^{j^\infty_{\ol \Om}} & \cE^{(\om)}(\ol \Om) \ar[r] & 0
  }
\]
where $B$ is a large open ball containing $\ol \Om$. By assumption, $K :=\ol B\setminus \Om$ is
the closure of a bounded open set with real analytic boundary
and, for such $K$, the space  $\cD^{(\om)}(K)$ has property ($\Om$), see \cite[Corollary 2.9]{Meise:1989un}.
On the other hand $\cE^{(\om)}(\ol \Om)$ has property (DN), by \cite[Proposition 5.7]{Meise:1989tj}.

For the singleton, we note that $\om$ satisfies \eqref{eq:DNweight} if and only if $\cE^{(\om)}(\{0\})$ has property (DN),
by \cite[Theorem 2.17 and Proposition 3.1]{Meise:1987tp}. Thus, in dimension $n=1$ the splitting theorem
can be applied as above. The  case $n>1$ follows by a tensor product argument; cf.\ \cite[3.1]{Meise:1989un}.
Since $\cD^{(\om)}(\ol B)$ has (DN), by \cite[Lemma 1.10(b)]{Meise:1989tj}, so does $\cE^{(\om)}(\{0\})$ if
the sequence splits, since (DN) is inherited by closed subspaces.
See also \cite[Corollary 3.12]{MeiseTaylor88}.

\begin{example}
  (1) If $M$ is a strongly regular weight sequence, then $\om_M$ is a (DN)-weight.

  (2) The weight functions $\om_s(t) = \max\{0,(\log t)^s\}$, $s>1$, are strong, but not (DN)-weights.
\end{example}

\begin{remark}
  The function $\om(t) = \max\{0,\log t\}$ is not a weight function, since condition \ref{sec:weightfunctions}(2) is violated.
  Nevertheless it satisfies \eqref{def:strong} and violates \eqref{eq:DNweight}. Then each seminorm $\|\cdot\|^\om_{K,\rh}$
  vanishes identically.
  So, formally, one may identify  $\cE^{(\om)}(U)$ with $C^\infty(U)$.
  Indeed,
  the theory of extension operators in the Braun--Meise--Taylor setting of Beurling type on the one hand and in
  the $C^\infty$ setting on the other hand
  have many similarities.
\end{remark}

\begin{remark}
  As for Denjoy--Carleman classes
  the Borel map is never onto in the quasianalytic case:
  Let $\om$ be a quasianalytic weight function such that $\om(t) = o(t)$ as $t \to \infty$, i.e.,
  the real analytic class is strictly contained in $\cE^{(\om)}$.
  Then there exist elements in $\La^{(\om)}$ that are not contained in the Borel image
  $\Bmap \cE^{\{\si\}}_0$ of \emph{any} quasianalytic weight function $\si$.
  This can be deduced from \Cref{seqRoumieu} using the description of Braun--Meise--Taylor classes
  given in \Cref{thm:WMdescription}. For details see \cite{RainerSchindl15}.
\end{remark}

\subsection{Optimal cutoff functions}

As in the Denjoy--Carleman setting, to address
the extension problem for general closed sets one needs cutoff functions with certain \emph{optimal} estimates.
Such functions are obtained by constructing certain entire functions and applying the Paley--Wiener theorem.
This in turn is based on H\"ormanders estimates for the solution of the $\ol \p$-problem and ultimately
boils down to finding subharmonic functions on $\C$ with suitable upper and lower bounds.

The optimal bounds are here expressed in terms of the conjugate $\om^\star$:
for a function $\omega : [0,\infty) \to [0,\infty)$ with $\om(t) = o(t)$ as $t \to \infty$ one defines
\begin{equation}\label{omegaconjugate}
\omega^{\star}(t):=\sup_{s\ge 0} \big(\omega(s)-st\big), \quad t>0.
\end{equation}
Then $\omega^\star$ is decreasing, continuous, and convex
with $\omega^{\star}(t) \to \infty$ as $t \to 0$, see \cite[Remark 1.5]{PetzscheVogt84}.
Since $\om(t) = o(t)$ as $t \to \infty$, $\om^\star(t)$ is finite for all $t$.
If $\omega$ is concave and increasing, then, by \cite[Proposition 1.6]{PetzscheVogt84},
\begin{equation}\label{omegaconjugate1}
\omega(t)=\inf_{s>0} \big(\omega^{\star}(s)+st\big), \quad t>0.
\end{equation}

\begin{theorem}[{\cite[Proposition 2.2]{BBMT91}}] \label{thm:WFcutoffR}
  Let $\om$ be a strong concave weight function.
  For each $n \in \N_{\ge 1}$ there exist $m \in \N_{\ge 1}$, $C>0$ and $0<r_0 <\frac{1}{2}$ such that
  for all $0<r<r_0$ there exist $\vh_{n,r} \in C^\infty(\R)$ with the following properties:
  \begin{enumerate}
    \item $0 \le \vh_{n,r} \le 1$.
    \item $\vh_{n,r}|_{[-r,r]}=1$ and $\on{supp} \vh_{n,r} \subseteq [-\frac{9}8r,\frac{9}8r]$.
    \item We have
    \[
      \|\vh_{n,r}\|^\om_{\R,m} \le C \exp\Big(\frac{1}{n} \om^\star(nr)\Big).
    \]
  \end{enumerate}
\end{theorem}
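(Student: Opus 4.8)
The plan is to reduce the statement to the corresponding result for Denjoy--Carleman classes, namely \Cref{thm:optimalbump} (or rather the Beurling-type companion \Cref{thm:cutoffBM}), by exploiting the weight matrix description of Braun--Meise--Taylor classes in \Cref{thm:WMdescription} together with the connection $\|f\|^\om_{K,\rho} = \|f\|^{W^\rho}_{K,1}$ from \eqref{eq:connectionSN}. The key observation is that a strong concave weight function $\om$ has the property that each $W^x$ in its associated weight matrix is strongly regular, and more importantly that the bound $\exp(\tfrac1n\om^\star(nr))$ appearing in (3) is precisely (up to constants and rescaling of parameters) the quantity $h_{w^x}(b r)$ for a suitable $x$. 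I would first establish this dictionary: since $\om$ is concave one has the duality \eqref{omegaconjugate1}, and for the associated sequence $W^x_k = \exp(\tfrac1x\ph^*(xk))$ with $\ph^*$ the Young conjugate of $t \mapsto \om(e^t)$, the function $h_{w^x}$ (in the notation of \Cref{sec:associatedfunctions}, with $w^x_k = W^x_k/k!$) can be computed as $\log(1/h_{w^x}(1/t)) = \om_{w^x}(t)$, which by \cite[Lemma 5.7]{RainerSchindl12} is equivalent to $\om$. The asymptotics $h_{w^x}(t) \sim \exp(-\text{const}\cdot\om^\star(\text{const}\cdot t))$ then follow from unwinding the conjugates.

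Concretely, the steps I would carry out are: (i) Fix $n$; using that $\om$ is strong, \Cref{lem:MeiseTaylor}(2) gives constants $K>H>1$ with $\om(Kt)\le H\om(t)$ for large $t$, which forces moderate growth and strong non-quasianalyticity of each $W^x$; combined with concavity of $\om$ (giving strong log-convexity of $W^x$ via subadditivity considerations) each $W^x$ is strongly regular. (ii) Apply \Cref{thm:cutoffBM} to the strongly regular weight sequence $M = W^{x_0}$ for a conveniently chosen $x_0 = x_0(n)$, obtaining cutoff functions $(\vh_r)_{r>0}$ with $\vh_r|_{[-r,r]}=1$, $\supp\vh_r\subseteq[-\tfrac98 r,\tfrac98 r]$, and $\|\vh_r\|^{W^{x_0}}_{\R,\rho} \le A_\rho/h_{w^{x_0}}(b_\rho r)$ for each $\rho>0$. (iii) Translate back: by \eqref{eq:connectionSN} and \Cref{lem:propertiesWF}(5), $\|\vh_r\|^{W^{x_0}}_{\R,\rho}$ dominates $\|\vh_r\|^\om_{\R,m}$ for a suitable integer $m$ (depending on $n$ through $x_0$, $\rho$, and $H$ from \Cref{lem:propertiesWF}(5)); (iv) Identify $1/h_{w^{x_0}}(b_\rho r)$ with $C\exp(\tfrac1n\om^\star(nr))$ for small $r$ by choosing $x_0$ and $\rho$ so that the internal linear rescalings match $n$; here the concavity duality \eqref{omegaconjugate1} and the equivalence $\om_{w^{x_0}}\sim\om$ are used to absorb the various constants. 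One subtlety is that \Cref{thm:cutoffBM} gives a $\rho$-dependent argument $b_\rho r$ inside $h_m$, whereas here we want the cleaner single scale $nr$; but since we are allowed to fix one $m$ at the end and only need an upper bound, selecting $\rho$ (hence $b_\rho$) as a function of $n$ and then absorbing the mismatch between $b_\rho r$ and $nr$ into the constant $C$ via the moderate-growth inequality \eqref{hmg} for $h_{w^{x_0}}$ resolves this.

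\textbf{Main obstacle.} The delicate point will be step (iv): making the rescaling constants align so that the exponent is genuinely $\tfrac1n\om^\star(nr)$ and not merely $\tfrac1{n'}\om^\star(n'r)$ for some other $n'$ comparable to $n$. This requires exploiting the specific homogeneity-type behaviour of $\om^\star$ under the parameter $x$ in $W^x$: one has $\tfrac1x\ph^*(xk) = \ph^*_x(k)$ where $\ph_x(t)=\tfrac1x\om(e^t)$, so varying $x$ scales $\om$ linearly, and correspondingly scales $\om^\star$; thus choosing $x_0$ proportional to $1/n$ (or to $n$, depending on the direction) should convert the ``$x_0$-scale'' estimate of \Cref{thm:cutoffBM} into the ``$n$-scale'' estimate required. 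I expect that a careful bookkeeping of which constants depend on $n$ and which are absolute — together with one application of \Cref{lem:MeiseTaylor}(3), which lets us replace $\om$ by the equivalent subadditive (indeed concave) majorant $\ka$ when convenient — will close the gap, but this is precisely the computational heart of the argument and the place where \cite[Proposition 2.2]{BBMT91} presumably does real work. A secondary point is verifying that $r_0 < \tfrac12$ and that one genuinely only needs $r$ small, which is automatic since $h_{w^{x_0}}(t)=1$ for $t$ bounded below, so the estimate is only informative (and only needed) for small $r$.
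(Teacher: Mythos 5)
There is a genuine gap at step (i), and it is not a small one. You assert that strongness of $\om$ forces moderate growth of each $W^x$ (deducing this from \Cref{lem:MeiseTaylor}(2)), but the implication goes the wrong way. The condition $\om(Kt)\le H\om(t)$ with $K>H>1$ is an \emph{upper} bound on the growth of $\om$ (indeed it forces $\om(t)=O(t^\al)$ for some $\al<1$), whereas moderate growth of $W^x$ is, by \Cref{thm:comparison}, equivalent to the \emph{lower}-bound condition $\exists H':\ 2\om(t)\le\om(H't)+H'$; the two conditions are independent. The paper supplies an explicit counterexample: $\om_s(t)=\max\{0,(\log t)^s\}$ for $s>1$ is a strong concave weight function (\Cref{ex:strongWF}(1)), yet by \Cref{ex:WF}(2) and \Cref{thm:comparison} the $W^{s,x}$ do \emph{not} have moderate growth — equivalently, $\cE^{[\om_s]}$ is not a Denjoy--Carleman class. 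For $s=2$ one can also check directly: $\ph^*(t)=t^2/4$, so $\vt^x_k=\exp(x(2k-1)/4)$ and $\vt^x_{2k}/\vt^x_k=\exp(xk/2)\to\infty$. Hence the $W^x$ need not be strongly regular, and \Cref{thm:cutoffBM} (or \Cref{thm:optimalbump}) cannot be applied to any $W^{x_0}$; the reduction collapses before you reach step (iv).

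Beyond the gap, note that the paper itself does not prove \Cref{thm:WFcutoffR}: it cites \cite[Proposition 2.2]{BBMT91}, and the paragraph preceding the theorem explains the route taken there — construction of entire functions with prescribed bounds, the Paley--Wiener theorem, H\"ormander's $L^2$-estimates for $\ol\p$, and suitable subharmonic functions. Your proposed real-variable reduction to the Denjoy--Carleman cutoff results is therefore a genuinely different strategy, and the complex-analytic machinery in \cite{BBMT91} is presumably there precisely because the associated weight sequences $W^x$ do not satisfy the hypotheses (moderate growth) under which the elementary convolution constructions of \Cref{prop:cutoff} and \Cref{lem:assforcutoff} yield the required sharp $h_m$-type bounds. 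If one wishes to avoid complex analysis, one would have to exploit the whole weight matrix $\{W^x\}_{x>0}$ simultaneously (the strongness of $\om$ encodes a "mixed" strong non-quasianalyticity across the family, cf.\ the admissible-pair condition in \Cref{sec:setup}), not a single member — this is closer in spirit to \Cref{thm:WFcutoffRmixed}, but also there the paper derives the estimates by modifying the \cite{BBMT91} construction rather than by the reduction you propose.

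A secondary concern, subordinate to the one above but worth flagging: even granting some $W^{x_0}$ to be strongly regular, the bridge in step (iv) between $1/h_{w^{x_0}}$ and $\exp(\tfrac1n\om^\star(nr))$ would require a two-sided comparison, whereas the inequality the paper records (after \Cref{thm:mixedRoumieuWF}, quoting \cite[Corollary 3.11]{Rainer:2019ac}) is one-sided, namely $e^{c\om^\star(t)}\le e/h_{s^x}(ct)$; for (3) of the theorem you need the opposite direction, an upper bound on $1/h_{s^x}(\cdot)$ by $\exp(\tfrac1n\om^\star(n\,\cdot))$, with constants tracked uniformly in $x$. That matching is exactly the delicacy you flag, but it cannot even be attempted until the moderate-growth issue is resolved.
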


The functions $\vh_{n,r}$ are useful for the Roumieu case. The following cutoff functions
are needed in the Beurling case.

\begin{theorem}[{\cite[Corollary 2.6]{Franken:1993tn}}] \label{thm:WFoptimalcutoff}
  Let $\om$ be a strong concave weight function.
  There exist functions $(\vh_r)_{r>0}$ in $\cE^{(\om)}(\R)$ such that:
  \begin{enumerate}
    \item $0 \le \vh_{r} \le 1$.
    \item $\vh_{r}|_{[-r,r]}=1$ and $\on{supp} \vh_{r} \subseteq [-\frac{9}8r,\frac{9}8r]$.
    \item For each $m \in \N_{\ge 1}$ there exist $C>0$ and $n \in\N_{\ge 1}$ such that for all $r>0$
    \[
      \|\vh_{r}\|^\om_{\R,\frac{1}{m}} \le C \exp(n \om^\star(r)).
    \]
  \end{enumerate}
\end{theorem}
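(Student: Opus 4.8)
The plan is to transfer the problem to the associated weight matrix of $\om$ and there to re-run, essentially verbatim, the construction producing the Denjoy--Carleman cutoffs of \Cref{thm:cutoffBM}; the one genuinely new difficulty is that the matrix sequences need not have moderate growth, and this is precisely where the \emph{strong} condition \eqref{def:strong} enters. We may assume $\om|_{[0,1]}=0$ (see \Cref{sec:weightfunctions}). Let $\mathfrak W=\{W^x\}_{x>0}$ be the weight matrix associated with $\om$. By \Cref{thm:WMdescription}, $\cE^{(\om)}(\R)=\on{proj}_{x>0}\cE^{(W^x)}(\R)$, and by \eqref{eq:connectionSN} one has $\|f\|^\om_{\R,\rh}=\|f\|^{W^\rh}_{\R,1}=\sup_{j\in\N}|f^{(j)}|/W^\rh_{j}$. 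Hence it suffices to produce a single family $(\vh_r)_{r>0}$ in $C^\infty(\R)$ with (1), (2) and
\[
\forall\,x>0\ \ \exists\,C,n>0\ \ \forall\,j\in\N\ \ \forall\,r>0:\qquad |\vh_r^{(j)}|\le C\,e^{\,n\om^\star(r)}\,W^x_j\,;
\]
this is exactly property (3) rewritten over the values $x=1/m$, and it forces $\vh_r\in\cE^{(\om)}(\R)$. Since $\om$ is concave, each $W^x$ is strongly log-convex (a straightforward computation; cf.\ \Cref{rem:WMconcave}); since $\om$ is strong, each $W^x$ is strongly non-quasianalytic (this is the content of \eqref{def:strong}; non-quasianalyticity alone already follows from \Cref{thm:(non)qaWF}). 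Thus each $W^x$ satisfies all hypotheses of \Cref{thm:cutoffBM} except moderate growth, and the dictionary between the two shapes of the optimal bound is the relation $\log\bigl(1/h_{w^x}(b\,r)\bigr)\asymp\om^\star(r)$ (up to $x$-dependent multiplicative and additive constants; it reduces to the Gevrey computation of \Cref{sec:associatedfunctions} when $\om$ is a power), which turns the bound $\lesssim 1/h_{w^\rh}(b_\rh r)$ of \Cref{thm:cutoffBM} into $\lesssim e^{\,n\om^\star(r)}$.

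As in the proof of \Cref{prop:cutoff}, it is enough to find for each $r>0$ a \emph{nonnegative} mollifier $\eta_r\in C^\infty(\R)$ with $\int_\R\eta_r\,dx=1$, with support in an interval of length $\le r/16$ about the origin, and with $\|\eta_r^{(j)}\|_{L^1(\R)}$ in place of $|\vh_r^{(j)}|$ in the displayed estimate: then $\vh_r:=\mathbf 1_{[-\frac{17}{16}r,\frac{17}{16}r]}*\eta_r$ is $C^\infty$, satisfies $0\le\vh_r\le1$, equals $1$ on $[-r,r]$, is supported in $[-\tfrac98 r,\tfrac98 r]$, and $|\vh_r^{(j)}|=|\mathbf 1*\eta_r^{(j)}|\le\|\eta_r^{(j)}\|_{L^1}$ for $j\ge1$. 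To build $\eta_r$ I would use \Cref{lem:specialbump}: for a decreasing positive sequence $(b^r_k)_{k\ge0}$ with $\sum_{k\ge0}b^r_k\le r/16$, the recentred infinite convolution $\eta_r:=H_{b^r_0}*H_{b^r_1}*\cdots$ is a nonnegative $C^\infty$ mollifier of total mass $1$, with support of length $\sum_k b^r_k$ and $\int|\eta_r^{(j)}|\,dx\le 2^j/(b^r_0\cdots b^r_{j-1})$. Everything is thereby reduced to choosing $(b^r_k)$; following \Cref{cor:setupcutoff} and the proof of \Cref{thm:cutoffBM}, this sequence should be $\asymp r$ times the reciprocal of a counting-type function of $\om$ read off at a scale proportional to $r$ --- concretely one lets the weight sequence governing scale $r$ be (a sequence equivalent to) the descendant-type optimal sequence of \Cref{lem:log-convex} at that scale, sets $b^r_0\cdots b^r_k$ equal to $r^{k+1}$ divided by the value of that sequence at $k$, and normalises so that $\sum_k b^r_k\le r/16$, which is possible because the $W^x$ are strongly non-quasianalytic.

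The step I expect to be the main obstacle is the estimate $2^j/(b^r_0\cdots b^r_{j-1})\le C_x\,e^{\,n_x\om^\star(r)}\,W^x_j$ for \emph{every} $x>0$ with one and the same sequence $(b^r_k)$. In \Cref{thm:cutoffBM} the analogous inequality is obtained by binning $r$ between two consecutive values $\mu^*_k$, which relies on moderate growth; that device is unavailable here, so the base sequence must vary genuinely with $r$, and one has to exploit the chain structure of $(W^x)_{x>0}$ together with the $\om^\star$-analogues of \eqref{hmg} and \eqref{eq32} --- submultiplicativity-type bounds for the $h_{w^x}$ that are exactly the reformulation of \eqref{def:strong} --- in order to absorb the factor $r^{-j}$ coming from the partial products into $e^{\,n\om^\star(r)}W^x_j$. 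Should this direct route prove too awkward, the alternative (the one taken in \cite{BBMT91,Franken:1993tn}) is to dispense with the special-bump construction and instead obtain $\eta_r$ as the inverse Fourier transform of an entire function built from subharmonic minorants via the Paley--Wiener theorem and H\"ormander's $\ol\p$-estimates --- the machinery already behind the Roumieu cutoffs of \Cref{thm:WFcutoffR} --- the only additional work being to run it with enough uniformity in $r$ to obtain a single family $(\vh_r)_{r>0}$ serving all levels $1/m$ at once. In either case, once $(\vh_r)_{r>0}$ is constructed, (1) and (2) are immediate from the convolution structure and (3) follows by combining $\|\vh_r\|^\om_{\R,1/m}=\sup_j|\vh_r^{(j)}|/W^{1/m}_j$ with the level-$(x=1/m)$ estimate above.
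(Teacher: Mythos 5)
You should first be aware that the survey offers no proof of this statement: it is quoted from Franken \cite[Corollary 2.6]{Franken:1993tn}, and the method behind that reference (and behind the Roumieu analogue \Cref{thm:WFcutoffR}) is the Fourier-analytic one you mention only as a fallback --- entire functions with prescribed growth built from subharmonic functions, the Paley--Wiener theorem, and H\"ormander's $\ol\p$-estimates. So you are attempting a genuinely different, elementary route. Your reduction is sound as far as it goes: by \eqref{eq:connectionSN}, property (3) is indeed equivalent to the level-wise estimate $|\vh_r^{(j)}|\le C_m e^{n_m\om^\star(r)}W^{1/m}_j$, and writing $\vh_r$ as the convolution of an indicator with a recentred mollifier from \Cref{lem:specialbump} correctly reduces everything to the choice of the sequences $(b^r_k)_k$.

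The difficulty is that the proof stops exactly where the theorem begins. The whole content of the statement is the existence of a single family $(b^r_k)_k$, depending on $r$ but not on the level, for which $2^j/(b^r_0\cdots b^r_{j-1})\le C_x\,e^{n_x\om^\star(r)}\,W^x_j$ holds for \emph{every} $x>0$; you describe what such a sequence ``should'' look like but never define it nor verify the estimate, and the device you propose to imitate (\Cref{cor:setupcutoff} together with the binning of $r$ against $\mu_k^*$ in the proof of \Cref{thm:cutoffBM}) is genuinely unavailable, since the $W^x$ have moderate growth only when $\om$ already defines a Denjoy--Carleman class (\Cref{thm:comparison}); for $\om_s(t)=(\log t)^s$ it fails. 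Deferring to ``the alternative taken in \cite{BBMT91,Franken:1993tn}'' is not a proof --- it is the citation the theorem already carries. Two preparatory claims also need justification rather than assertion: concavity of $\om$ does not make the $W^x$ themselves strongly log-convex (\Cref{rem:WMconcave} only yields an \emph{equivalent} matrix with that property, which is precisely why the paper passes to the matrices $\fS$ and $\fT$ in the discussion of \Cref{thm:mixedRoumieuWF}); and the implication ``$\om$ strong $\Rightarrow$ each $W^x$ strongly non-quasianalytic'' is a nontrivial fact about growth indices, not an immediate rereading of \eqref{def:strong}. As written, the argument establishes a correct reduction but not the theorem.
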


In combination with \Cref{prop:Wcubes} we easily obtain corresponding partitions of unity
subordinate to families of Whitney cubes for a given closed set $A$.

\begin{remark}
  The existence of such \emph{optimal} cutoff functions is equivalent to the fact that $\om$ is a strong
  weight function and additionally to the exactness of certain $\ol \p$-complexes, see \cite{Langenbruch:1994vt}.
  Recall that a strong weight function is equivalent to a strong concave weight function, by \Cref{lem:MeiseTaylor}.
\end{remark}

\subsection{Extension of Whitney ultrajets}

\begin{theorem}[{\cite{BBMT91}}] \label{thm:OmWhitney}
  Let $\om$ be a non-quasianalytic weight function.
  Then the following conditions are equivalent:
  \begin{enumerate}
    \item $\om$ is strong.
    \item $j^\infty_A : \cE^{\{\om\}}(\R^n) \to \cE^{\{\om\}}(A)$ is surjective for every closed subset $A \subseteq \R^n$.
    \item $j^\infty_A : \cE^{(\om)}(\R^n) \to \cE^{(\om)}(A)$ is surjective for every closed subset $A \subseteq \R^n$.
  \end{enumerate}
\end{theorem}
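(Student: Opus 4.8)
The strategy is to reduce everything to the weight-matrix description of $\cE^{[\om]}$ (\Cref{thm:WMdescription}) together with the extension theorems for Denjoy--Carleman classes proved in Part~I, and to use the optimal cutoff functions of \Cref{thm:WFcutoffR} and \Cref{thm:WFoptimalcutoff}. By \Cref{lem:MeiseTaylor} we may assume $\om$ is concave. The implications $(2)\Rightarrow(1)$ and $(3)\Rightarrow(1)$ should be obtained by testing surjectivity on the singleton $A=\{0\}$: indeed $\cE^{[\om]}(\{0\}) = \La^{[\om]}$, and by the equivalence $(1)\Leftrightarrow(2)$ in the theorem on the singleton already recorded in this section (the one attributed to \cite{MeiseTaylor88}, resp.\ \cite{BonetMeiseTaylor89} for the Roumieu case), surjectivity onto $\La^{[\om]}$ forces $\om$ to be strong. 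So the heart of the matter is $(1)\Rightarrow(2)$ and $(1)\Rightarrow(3)$.

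\textbf{The Roumieu case $(1)\Rightarrow(2)$.} This mirrors the proof of \Cref{thm:WRoumieu}. Fix a compact $K\subseteq\R^n$; it suffices to treat compact sets, since a partition of unity argument exactly as in \Cref{thm:closed} then yields the statement for arbitrary closed $A$. Given $F\in\cE^{\{\om\}}(K)$, by the weight-matrix description $\cE^{\{\om\}}(K) = \on{ind}_{x>0}\cE^{\{W^x\}}(K)$, so $F\in\cE^{\{W^x\}}(K)$ for some $x>0$. Since $\om$ is strong, $\om_{W^x}$ is equivalent to $\om$ (as in \Cref{ex:strongWF}) and one checks that $W^x$ is a strongly regular weight sequence — here one needs that strongness of $\om$ transfers to moderate growth and strong non-quasianalyticity of $W^x$; this is the analogue of \Cref{ex:strongWF}(2) read backwards, and I would cite \cite{BBMT91} for it. Then \Cref{thm:WRoumieu} provides an extension $f\in\cE^{\{W^x\}}(\R^n)\subseteq\cE^{\{\om\}}(\R^n)$. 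The subtlety, and where the optimal cutoff functions of \Cref{thm:WFcutoffR} genuinely enter rather than just those of \Cref{thm:optimalbump}, is that one wants the extension within the \emph{fixed} class $\cE^{\{\om\}}$ with control that does not degrade across the scale $\{W^x\}_x$; but since we only need surjectivity (not a bounded/split operator) in the Roumieu case, the crude reduction through a single $W^x$ already suffices.

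\textbf{The Beurling case $(1)\Rightarrow(3)$.} Here the single-sequence reduction is not available directly, because $\cE^{(\om)}(K)=\on{proj}_{x>0}\cE^{(W^x)}(K)$ is an intersection, and a jet in $\cE^{(\om)}(K)$ need not satisfy strongly regular bounds for any individual sequence in a way compatible with \Cref{thm:WBeurling}. Instead I would run the extension construction directly at the level of $\om$, repeating the scheme of the proof of \Cref{thm:WRoumieu} / \Cref{prop:localproperty}: take a family of Whitney cubes for $A$ (\Cref{prop:Wcubes}), build a partition of unity subordinate to it using the optimal Beurling cutoff functions of \Cref{thm:WFoptimalcutoff} (whose bounds are expressed through $\om^\star$), patch together local extensions at the centers $\hat x_j$ — these local extensions exist by the singleton/nice-geometry result $(1)\Rightarrow(2)$ recorded above, applied on small balls — and estimate the derivatives of $f=\sum_j\vh_j f_{\hat x_j}$ using the analogue of \Cref{lem:prepW2}. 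The key analytic inputs are: the interplay $\om\le\ka\le C\om+C$ for the concave majorant $\ka$ of \Cref{lem:MeiseTaylor}, the identity \eqref{omegaconjugate1} allowing one to pass between $\om$ and $\om^\star$, and the submultiplicativity/convexity properties of $\ph^*$ collected in \Cref{lem:propertiesWF}. I expect the main obstacle to be bookkeeping the $\om^\star$-bounds through the Leibniz rule in the partition-of-unity sum — precisely the place where in the Denjoy--Carleman proof one used $h_m(t)\le h_m(Ct)^2$ (\eqref{hmg}); the correct surrogate is the defining inequality \eqref{def:strong} of a strong weight function, which is exactly what makes the errors from the cutoff derivatives absorbable into the main term. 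Finally, as in \Cref{thm:WBeurling}, one could alternatively deduce the Beurling case from the Roumieu case by a reduction through an intermediate weight function $\si$ with $\om(t)=o(\si(t))$ and $\si$ still strong; constructing such a $\si$ via \Cref{lem:MeiseTaylor}(2) and \Cref{prop:Omincl} gives a cleaner route and I would present that as the primary argument, keeping the direct construction as a remark.
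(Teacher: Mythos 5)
Your overall scaffolding — necessity via the singleton, the partition-of-unity reduction from closed $A$ to compact $K$, and a reduction lemma to derive the Beurling case from the Roumieu case — all match the paper's approach. However, your argument for $(1)\Rightarrow(2)$ has a genuine gap.

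You claim that strongness of $\om$ forces each $W^x$ in the associated weight matrix to be a strongly regular weight sequence, which would let you simply invoke \Cref{thm:WRoumieu} for a single $W^x$. This is false. Take $\om_s(t)=\max\{0,(\log t)^s\}$ with $s>1$: by \Cref{ex:strongWF}(1) this is a strong weight function, yet by \Cref{ex:WF}(2) the classes $\cE^{[\om_s]}$ are \emph{not} Denjoy--Carleman classes, which by \Cref{thm:comparison} forces each $W^{s,x}$ to fail moderate growth. Thus no single $W^x$ is strongly regular, and \Cref{thm:WRoumieu} cannot be applied to it. Your hedge at the end of that paragraph ("the crude reduction through a single $W^x$ already suffices for surjectivity") misidentifies the obstruction: the issue is not whether you get a bounded operator, but that the hypotheses of \Cref{thm:WRoumieu} are simply not met. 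What does hold is the \emph{mixed} moderate growth $W^x_{j+k}\le W^{2x}_jW^{2x}_k$ of \Cref{lem:propertiesWF}(3), which is why the paper's sketch runs the whole construction of \Cref{thm:WRoumieu} at the level of $\om$: local extensions from the singleton case (via an $\om$-version of \Cref{lem:BorelSilva}, using that $j^\infty_{\{0\}}$ is surjective onto $\cE^{\{\om\}}(\{0\})$) are glued with the optimal $\om$-cutoff functions of \Cref{thm:WFcutoffR} by a formula like \eqref{eq:extensionformula}, absorbing the cutoff derivatives using \eqref{def:strong} and \eqref{omegaconjugate1} directly in place of \eqref{hmg}. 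This "mixed" bookkeeping across different $x$ in the weight matrix is exactly what you cannot avoid, and it is why \Cref{thm:WFcutoffR} (rather than \Cref{thm:optimalbump}) is the right tool.

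Your Beurling argument $(1)\Rightarrow(3)$ via an intermediate $\si$ with $\om(t)=o(\si(t))$, $\si$ strong, and $F\in\cE^{\{\si\}}(K)$ is exactly the reduction the paper alludes to (the reduction lemma "similar in spirit to \Cref{lem:CC17}"), and your direction of the inclusion $\cE^{\{\si\}}\subseteq\cE^{(\om)}$ via \Cref{prop:Omincl} is correct. You should foreground this route and drop the claim about strong regularity of $W^x$ in the Roumieu step.
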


The sufficiency of strongness of $\om$ for (2) is proved in analogy to \Cref{thm:WRoumieu}:
Without loss of generality one may assume that $A$ is compact. The
case of the singleton yields local extensions $f_x$, $x \in A$, which form a bounded set in $\cB^{\{\om\}}(\R^n)$, cf.\
\Cref{lem:BorelSilva}.
The existence of an optimal partition of unity for a
family of Whitney cubes for $A$ allows to define the required extension by a formula similar to \eqref{eq:extensionformula}.

The Beurling case (3) can be reduced to the Roumieu case (2) by a reduction lemma (see \cite{BBMT91}) which is similar in
spirit to \Cref{lem:CC17}.

The necessity of (1) for (2) as well as for (3) follows from the special case of the singleton.
But is was shown in \cite{Abanin01} that $\om$ must be strong, if there is \emph{any} non-empty compact set $K \subseteq \R^n$
such that $j^\infty_K : \cE^{\{\om\}}(\R^n) \to \cE^{\{\om\}}(K)$
or $j^\infty_K : \cE^{(\om)}(\R^n) \to \cE^{(\om)}(K)$ is surjective.

\subsection{Extension operators}

We saw in \Cref{thm:nicegeometry} that
compact sets with real analytic boundary admit extension operators
in the Beurling case $\cE^{(\om)}$ for all strong weight functions $\om$.
The singleton $\{0\}$, on the other hand, admits an extension operator if and only if $\om$ is a strong (DN)-weight.
The existence of optimal cutoff functions of Beurling type (\Cref{thm:WFoptimalcutoff})
make it possible to extend this result to all closed sets.

\begin{theorem}[{\cite[Theorem 1]{Franken:1993tn}}]
  Let $\om$ be a strong weight function.
  Then the following conditions are equivalent:
  \begin{enumerate}
    \item $\om$ is a (DN)-weight.
    \item $j^\infty_A : \cE^{(\om)}(\R^n) \to \cE^{(\om)}(A)$ is split surjective for every closed subset $A \subseteq \R^n$.
  \end{enumerate}
\end{theorem}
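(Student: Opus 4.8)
The plan is to transpose the proof of \Cref{thm:WSsplit} from the Denjoy--Carleman to the Braun--Meise--Taylor setting, using the weight matrix picture and the optimal cutoff functions of Beurling type. The implication (2) $\Rightarrow$ (1) is immediate: applying (2) to the closed set $A=\{0\}$ shows that $\Bmap : \cE^{(\om)}(\R^n) \to \cE^{(\om)}(\{0\}) = \La^{(\om)}$ is split surjective, and by \Cref{thm:nicegeometry}(2) this forces $\om$ to be a (DN)-weight. So the substance is the direction (1) $\Rightarrow$ (2).

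For (1) $\Rightarrow$ (2) I would first reduce to the case that $\om$ is a \emph{strong concave} weight function; this is harmless since by \Cref{lem:MeiseTaylor} every strong weight function is equivalent to such a one, and equivalent weights give the same class and, up to isomorphism, the same jet spaces (via \Cref{thm:WMdescription} and the definitions). In this situation \Cref{thm:WFoptimalcutoff} provides optimal Beurling cutoff functions $(\vh_r)_{r>0}$, and combining them with a family of Whitney cubes $(Q_j)_{j\ge1}$ for $A$ (\Cref{prop:Wcubes}) yields, exactly as in \Cref{Proposition6matrixB}, a smooth partition of unity $(\vh_j)_{j\ge1}$ with $\on{supp}\vh_j \subseteq Q_j^*$, $\sum_j \vh_j = 1$ on $\R^n\setminus A$, and derivative bounds involving $\exp\big(n\,\om^\star(c\, d_A(\cdot))\big)$ for suitable constants $n,c$. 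The core step is then the weight-function analogue of \Cref{prop:localproperty}: \emph{if for every $x\in\p A$ there is a local extension operator $E_x : \cE^{(\om)}(A)\to\cE^{(\om)}(\R^n)$, i.e.\ $j^\infty_{\{x\}}E_x(F) = F(x)$ for all $F$, and the family $\{E_x : x\in\p A\}$ is locally equicontinuous, then there exists an extension operator $E : \cE^{(\om)}(A)\to\cE^{(\om)}(\R^n)$.} As in the Denjoy--Carleman case one sets $E(F)(z) := \sum_{j}\vh_j(z)E_{x_j}(F)(z)$ for $z\in\R^n\setminus A$ and $E(F)(z):=F^0(z)$ for $z\in A$, where $x_j\in\p A$ satisfies $d_{Q_j}(x_j)=\on{dist}(A,Q_j)$; the verification that $E(F)\in\cE^{(\om)}(\R^n)$, that $E$ is continuous, and that $j^\infty_A E(F)=F$ proceeds by Leibniz-expanding $(E(F)-E_x(F))^{(\al)}(z)$, using a preparation lemma (the $\om$-version of \Cref{lem:prepW3}, obtained from Taylor expansion, \eqref{Omjet2}, and moderate growth of the associated weight sequences $W^x$ via \Cref{lem:propertiesWF}) together with the strongness inequality for $\om$ to absorb the quotient of the two conjugate-weight factors produced by the cutoffs.

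It remains to construct the local extension operators, and this is where the (DN) hypothesis enters. Since $\om$ is a strong (DN)-weight, \Cref{thm:nicegeometry}(2) gives a continuous linear right inverse $E_0 : \La^{(\om)}\to\cE^{(\om)}(\R^n)$ of $\Bmap$. For $x\in\p A$ I would put $E_x(F) := \ta_x\big(E_0(F(x))\big)$, where $F\mapsto F(x)$ is the evaluation $\cE^{(\om)}(A)\to\La^{(\om)}$ and $\ta_x$ is translation by $x$; then $j^\infty_{\{x\}}E_x(F)=F(x)$. On any compact $K\subseteq\p A$ the evaluations $F\mapsto F(x)$ are uniformly equicontinuous (the seminorm bounds \eqref{Omjet1} are uniform for $x\in K$), and $\{\ta_x : x\in K\}$ is locally equicontinuous on $\cE^{(\om)}(\R^n)$ because $\|\ta_x f\|^\om_{L,\rh}=\|f\|^\om_{x+L,\rh}$; hence $\{E_x : x\in\p A\}$ is locally equicontinuous, and feeding this into the localization principle finishes the proof.

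I expect the localization step to be the main obstacle: one must carry the estimates through the Young conjugate $\ph^*$ and the conjugate $\om^\star$, and the decisive cancellation of the ``bad'' factor coming from $\|\vh_j\|^\om$ against the good decay of the differences $E_{x_i}(F)-E_{x_j}(F)$ is precisely where strongness of $\om$ is used, in direct analogy with the quotient estimates in the proof of \Cref{prop:localproperty}. The (DN) condition, by contrast, is used only to launch the construction, namely to guarantee the existence of the right inverse $E_0$ for the singleton via \Cref{thm:nicegeometry}(2).
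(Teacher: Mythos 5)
Your proposal follows exactly the route indicated in the paper: reduce the question for general closed $A$ to the singleton case (where (DN) enters, via \Cref{thm:nicegeometry}(2)), obtain local extension operators $E_x$ by translating the right inverse of the Borel map, verify local equicontinuity, and then invoke a Braun--Meise--Taylor version of the localization principle \Cref{prop:localproperty} built on the optimal Beurling cutoff functions of \Cref{thm:WFoptimalcutoff}. The paper gives only this sketch and attributes the full details to Franken; the one minor imprecision in your write-up is attributing ``moderate growth'' to the individual weight sequences $W^x$ --- it is the cross-index estimate $W^x_{j+k}\le W^{2x}_jW^{2x}_k$ from \Cref{lem:propertiesWF}(3), not moderate growth of each $W^x$, that replaces \eqref{eq:mg} in the preparation lemma --- but this does not affect the argument.
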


The proof is analogous to the one of \Cref{thm:WSsplit}. Indeed, the case of the singleton guarantees that local extension operators
$E_x$, $x \in A$, exist and $\{E_x : x \in A\}$ is locally equicontinuous in $L(\cE^{(\om)}(A),\cE^{(\om)}(\R^n))$.
And as in \Cref{prop:localproperty} one shows that admitting an extension operator is a local property.

If $\om$ is strong but not a (DN)-weight, then
the existence of an extension operator depends on the set.
In fact, if $K \subseteq \R^n$ is a non-empty compact set
and $L$ a compact neighborhood of $K$, then
\begin{equation}\label{Omshortexactsequence}
 \xymatrix{
 0 \ar[r] & \cD^{(\om)}(L,K)~ \ar@{^{(}->}[r] & \cD^{(\om)}(L) \ar[r]^{j^\infty_K} & \cE^{(\om)}(K) \ar[r] & 0
 }
\end{equation}
is an exact sequence of Fr\'echet spaces, by \Cref{thm:OmWhitney},
where
\[
 \cD^{(\om)}(L,K) := \{f \in \cE^{(\om)}(\R^n) : \on{supp} f \subseteq L,\, j^\infty_K f = 0\}
\]
and $\cD^{(\om)}(L)$
carry the subspace topology of $\cE^{(\om)}(\R^n)$.
It is shown in \cite[Proposition 4.4]{Franken:1993tn} that
$\cD^{(\om)}(L,K)$ has the property ($\Om$); the proof is based on the result of Meise and Taylor \cite[Corollary 2.9]{Meise:1989un} that $\cD^{(\om)}(K)$ has property ($\Om$) if $K$
has real analytic boundary and on the existence of optimal cutoff functions (\Cref{thm:WFoptimalcutoff}).
Thus the splitting theorem of Vogt and Wagner \cite{Vogt:1980th} gives

\begin{theorem}[{\cite[Theorem 2]{Franken:1993tn}}]
  Let $\om$ be a strong weight function and let $K \subseteq \R^n$ be a non-empty compact set.
  Then the following conditions are equivalent:
  \begin{enumerate}
    \item $\cE^{(\om)}(K)$ has property (DN).
    \item $j^\infty_K : \cE^{(\om)}(\R^n) \to \cE^{(\om)}(K)$ is split surjective.
  \end{enumerate}
\end{theorem}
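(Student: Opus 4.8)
The plan is to derive the equivalence from the Vogt--Wagner splitting theorem \cite{Vogt:1980th}, applied to the short exact sequence \eqref{Omshortexactsequence} with $L$ chosen, once and for all, to be a closed ball $\ol B \subseteq \R^n$ containing $K$ in its interior. First I would record the structural input: since $\om$ is strong it is in particular non-quasianalytic, so \Cref{thm:OmWhitney} makes $j^\infty_K$ surjective and hence \eqref{Omshortexactsequence} an exact sequence of nuclear Fr\'echet spaces; the space $\cD^{(\om)}(L)=\cD^{(\om)}(\ol B)$ has property (DN) by \cite[Lemma 1.10(b)]{Meise:1989tj}; property (DN) is inherited by closed (in particular complemented) subspaces; and $\cD^{(\om)}(L,K)$ has property ($\Om$) by \cite[Proposition 4.4]{Franken:1993tn}, which itself rests on the ($\Om$)-property of $\cD^{(\om)}(K)$ for sets with real analytic boundary \cite[Corollary 2.9]{Meise:1989un} and on the existence of the optimal Beurling-type cutoff functions of \Cref{thm:WFoptimalcutoff}.

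For the implication (1) $\Rightarrow$ (2): assuming $\cE^{(\om)}(K)$ has (DN), the sequence \eqref{Omshortexactsequence} has quotient with (DN) and kernel with ($\Om$), so by the Vogt--Wagner theorem it splits. A continuous linear section $E_0 \colon \cE^{(\om)}(K) \to \cD^{(\om)}(L)$ composed with the continuous inclusion $\cD^{(\om)}(L) \hookrightarrow \cE^{(\om)}(\R^n)$ then yields a continuous linear right-inverse of $j^\infty_K \colon \cE^{(\om)}(\R^n) \to \cE^{(\om)}(K)$, which is precisely (2).

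For the converse (2) $\Rightarrow$ (1): let $E \colon \cE^{(\om)}(K) \to \cE^{(\om)}(\R^n)$ be a continuous linear right-inverse of $j^\infty_K$. I would pick $\chi \in \cD^{(\om)}(L)$ with $\chi \equiv 1$ on a neighbourhood of $K$ (such $\chi$ exists because $\om$ is non-quasianalytic, so $\cE^{(\om)}$ admits cutoff functions). Since $\cE^{(\om)}$ is a ring and multiplication by the fixed element $\chi$ is continuous, $\tilde E \colon F \mapsto \chi \cdot E(F)$ maps $\cE^{(\om)}(K)$ continuously into $\cD^{(\om)}(L)$, and $j^\infty_K \tilde E(F) = j^\infty_K(\chi \cdot E(F)) = j^\infty_K E(F) = F$ because $j^\infty_K$ depends only on the jet along $K$, where $\chi = 1$. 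Thus $\tilde E$ splits \eqref{Omshortexactsequence}, so $\cE^{(\om)}(K)$ is isomorphic to a complemented --- hence closed --- subspace of $\cD^{(\om)}(L)$; since the latter has (DN), so does $\cE^{(\om)}(K)$.

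The genuinely hard part is not reproved here: it is the property ($\Om$) for $\cD^{(\om)}(L,K)$, i.e.\ \cite[Proposition 4.4]{Franken:1993tn}, which ultimately rests on $\ol\p$-estimates and the subharmonic constructions behind \Cref{thm:WFoptimalcutoff}. Granting that and the cited linear-topological facts, the remaining points to verify are merely that every space in sight is nuclear Fr\'echet (so the Vogt--Wagner theorem applies), that the localization by $\chi$ indeed lands in $\cD^{(\om)}(L)$ and leaves the $K$-jet unchanged, and that (DN) descends to closed subspaces --- all routine.
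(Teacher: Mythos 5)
Your proposal is correct and follows essentially the same route as the paper: the implication (1) $\Rightarrow$ (2) via the Vogt--Wagner splitting theorem applied to \eqref{Omshortexactsequence}, using the property ($\Om$) of $\cD^{(\om)}(L,K)$ from \cite[Proposition 4.4]{Franken:1993tn}, and the converse by localizing the extension operator with a cutoff so that $\cE^{(\om)}(K)$ becomes a complemented closed subspace of $\cD^{(\om)}(\ol B)$, which has (DN) by \cite[Lemma 1.10(b)]{Meise:1989tj}. The bookkeeping (nuclearity, the jet being unchanged by the cutoff, heredity of (DN)) is handled correctly.
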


Compact sets that do not admit extension operators are the
\emph{flat cusps}
\[
  K_f=\{(x,y) \in [0,1]^2 : |y| \le |f(x)|\},
\]
where $f \in \cE^{(\om)}(\R)$ with $j^\infty_{\{0\}} f = 0$.
If $\om$ is not a (DN)-weight, then $\cE^{(\om)}(K_f)$ does not have (DN), and
$j^\infty_K : \cE^{(\om)}(\R^n) \to \cE^{(\om)}(K)$ is not split surjective;
see \cite{Franken:1994uh}.

\begin{remark}
  Recall that on compact sets with the Markov property,
  functions, that can be approximated by polynomials in an $[M]$-rapid manner,
  admit extensions to functions of Denjoy--Carleman type on the ambient space by a continuous linear map.
  But this extension involves a loss of regularity; cf.\ \Cref{sec:polyapprox}.
  In the framework of Braun--Meise--Taylor classes there is a class of weight functions, namely strong weight functions $\om$ such that\footnote{Note that \eqref{eq:tsquare} implies that $\om$ is strong, e.g.\ by \Cref{lem:MeiseTaylor}.}
  \begin{equation} \label{eq:tsquare}
    \E L>1 \A t\ge 0 : \om(t^2) \le L\om(t) +L,
  \end{equation}
  for which the analogous problem allows a solution without loss of regularity (at least in the Beurling case), see
  \cite{Franken:1995ta}. For instance,
    the weight functions $\om_s(t) = \max\{0,(\log t)^s\}$, $s>1$, satisfy \eqref{eq:tsquare}.
\end{remark}

\section{Extensions not preserving the ultradifferentiable class} \label{sec:BMTmixed}

As for Denjoy--Carleman classes it is natural to ask
whether the loss of regularity in the extension, for weight functions that are not strong,
can be determined and controlled.

\subsection{The singleton}

For the singleton we have

\begin{theorem}[\cite{BonetMeiseTaylor92}]
  Let $\om$ be a non-quasianalytic weight function, and $\si$ another weight function.
  Then the following conditions are equivalent:
  \begin{enumerate}
    \item $\exists C>0 \A t > 0 : \int_{1}^\infty \frac{\om(tu)}{u^2}\,du \le C\si(t) + C$.
    \item $\cE^{\{\si\}}(\{0\}) \subseteq j^\infty_{\{0\}} \cE^{\{\om\}}(\R^n)$.
    \item $\cE^{(\si)}(\{0\}) \subseteq j^\infty_{\{0\}} \cE^{(\om)}(\R^n)$.
  \end{enumerate}
\end{theorem}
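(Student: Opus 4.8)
The plan is to deduce the theorem from its Denjoy--Carleman counterpart, the singleton case of extension with controlled loss of regularity (\Cref{thm:Borelloss}), by passing through the associated weight matrices via \Cref{thm:WMdescription}. Write $\fW=\{W^x\}_{x>0}$ and $\fS=\{S^x\}_{x>0}$ for the weight matrices of $\om$ and $\si$, and let $\vt^x$ be the quotient sequence of $W^x$, so $W^x_k=\vt^x_1\cdots\vt^x_k$ and $\vt^x_k=\exp\big(\tfrac1x(\ph^*(xk)-\ph^*(x(k-1)))\big)$. First I would make two harmless reductions. It suffices to treat $n=1$: for ``$\Leftarrow$'' one constructs one--variable functions $\ps_k$ with $\ps_k^{(j)}(0)=\de_{jk}$ and good $W^y$--bounds exactly as in the proof of \Cref{thm:Borelsufficient}, forms $\ps_\al:=\ps_{\al_1}\otimes\cdots\otimes\ps_{\al_n}$ (the blow--up of the parameter being absorbed by \Cref{lem:propertiesWF}(3)) and puts $E(a):=\sum_\al a_\al\ps_\al$; for ``$\Rightarrow$'' one restricts an extension to the coordinate lines. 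Secondly, since $\om$ is non--quasianalytic all $W^x$ are non--quasianalytic (\Cref{thm:(non)qaWF}), so $\cE^{[\om]}$ admits cutoff functions, and multiplying by one reduces the surjectivity statements on $\cE^{[\om]}(\R)$ to surjectivity of $\Bmap$ on $\cB^{[\om]}((-1,1))$, which by \Cref{thm:WMdescription}(2) is the inductive (Roumieu), resp.\ projective (Beurling), limit over $y$ of the Banach spaces $\cB^{W^y}_\rh((-1,1))$.

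Identifying $\cE^{[\om]}(\{0\})$ with the sequence space $\La^{[\om]}$ and using \Cref{thm:WMdescription} on both sides, (2) becomes $\bigcup_x\La^{\{S^x\}}\subseteq\Bmap\big(\bigcup_y\cB^{\{W^y\}}((-1,1))\big)$ and (3) becomes $\bigcap_x\La^{(S^x)}\subseteq\Bmap\,\cB^{(\om)}((-1,1))$. Condition (1) reads $\ka(t):=\int_1^\infty\om(tu)u^{-2}\,du\le C\si(t)+C$ with a fixed $C$; since $\ka\ge\om$ this forces $\om=O(\si)$, and (unwinding Young conjugates as in the proof of \Cref{prop:Omincl}) $S^x\preceq W^{Cx}$ for every $x>0$, so after replacing $S^x$ by an equivalent sequence we may apply \Cref{thm:Borelloss} to the pair $(S^x,W^{Cx})$. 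For the Roumieu case, regularity of the inductive limit defining $\La^{\{\om\}}$ together with the open mapping / Grothendieck factorisation theorems (as in the proof of \Cref{thm:necessityB}) shows that (2) is equivalent to ``$\forall x>0\,\exists y>0$ such that $\La^{\{S^x\}}\subseteq\Bmap\,\cB^{\{W^y\}}((-1,1))$'', and by \Cref{thm:Borelloss} the inner statement is in turn
\[
  (\star_{x,y})\qquad \exists\,p\ge1:\quad \sup_{k\ge1}\frac{\la_{p,k}(S^x,W^y)}{k}\sum_{j\ge k}\frac1{\vt^y_j}<\infty .
\]
The Beurling case (3) is reduced to the Roumieu case by a reduction lemma of Chaumat--Chollet type (cf.\ \Cref{lem:CC16}, \Cref{lem:CC17}, and the reduction lemma of \cite{BBMT91}): given $a\in\bigcap_x\La^{(S^x)}$ and (1), one manufactures weight functions $\tilde\si$ with $a\in\La^{\{\tilde\si\}}$ and $\tilde\om\lhd\om$ (so $\cE^{\{\tilde\om\}}\subseteq\cE^{(\om)}$, by \Cref{prop:Omincl}) for which (1) still holds with $(\tilde\om,\tilde\si)$, and applies the Roumieu case. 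Carrying out the necessity parts in the same way, one sees that (2) and (3) are each equivalent to the single combinatorial condition ``$\forall x\,\exists y:(\star_{x,y})$''.

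It remains to prove that ``$\forall x\,\exists y:(\star_{x,y})$'' is equivalent to the integral inequality (1); this is the technical heart and, I expect, the main obstacle. It is a computation with the Young conjugates $\ph^*_\om,\ph^*_\si$ of $t\mapsto\om(e^t),\ t\mapsto\si(e^t)$: one substitutes $W^x_k=\exp(\tfrac1x\ph^*_\om(xk))$, $S^x_k=\exp(\tfrac1x\ph^*_\si(xk))$ and $\la_{p,k}(S^x,W^y)=\sup_{0\le j<k}(S^x_k/p^kW^y_j)^{1/(k-j)}$, estimates $\sum_{j\ge k}1/\vt^y_j$ by an integral of $\exp(-(\ph^*_\om)')$, and matches the resulting bound against $\ka$ --- of the same Tauberian flavour as the known facts ``$\int_1^\infty\om(t)t^{-2}\,dt<\infty$ iff $W^x$ is non--quasianalytic'' (\Cref{thm:(non)qaWF}) and the moderate--growth translations in \Cref{lem:MeiseTaylor}. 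Conceptually, $(\star_{x,y})$ expresses that $S^x$ lies below the \emph{descendant} of $W^y$ (\Cref{lem:log-convex}), and the family of these descendants over $y$ is, up to equivalence, the weight matrix of $\ka$; thus the equivalence records that $\ka$ is the smallest admissible choice of ``$\si$'', recovering, when $\om$ is strong (hence $\ka\asymp\om$), the singleton case of \Cref{thm:OmWhitney}. An alternative route --- the original one of \cite{BonetMeiseTaylor92} --- bypasses the weight matrices: via the Fourier--Laplace transform, (2)/(3) translate into the assertion that an inclusion of weighted algebras of entire functions on $\C^n$ is a topological homomorphism, which is proved by a Phragm\'en--Lindel\"of argument together with H\"ormander's $L^2$--estimates for $\ol\p$; in that approach the decisive step is the complex--analytic estimate.
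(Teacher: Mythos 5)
The survey does not prove this theorem: it is cited from \cite{BonetMeiseTaylor92}, and the only indication given is that "the proofs are based on descriptions of the duals of the spaces at hand as weighted spaces of entire functions," i.e.\ the Fourier--Laplace route you correctly identify at the end. Your alternative plan --- deduce the statement from the Schmets--Valdivia Denjoy--Carleman theorem (\Cref{thm:Borelloss}) by passing through the associated weight matrices --- is a genuinely different strategy and very much in the spirit of this survey (it is exactly how quasianalyticity, inclusion relations, and stability are handled here). Conceptually it buys a unified Denjoy--Carleman treatment of Braun--Meise--Taylor classes, whereas the original proof buys sharp quantitative control via $\bar\partial$-estimates.

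However, as written the proposal is a plan, not a proof, and two steps contain genuine gaps. First, the reduction of (2) to the per-pair assertion ``$\forall x\,\exists y:\ \La^{\{S^x\}}\subseteq\Bmap\,\cB^{\{W^y\}}((-1,1))$'' is not supplied by the tools you invoke. Grothendieck's factorization theorem \cite[24.33]{MeiseVogt97} factors a continuous linear map \emph{from a Baire space} through a step of an (LF)-space; $\La^{\{S^x\}}$ is a Silva space, hence not Baire, and what you actually have is a set-theoretic containment, not a continuous linear surjection in the hypotheses of that theorem. Likewise ``regularity of the inductive limit'' only localizes \emph{bounded} sets, and $\La^{\{S^x\}}$ is not bounded in $\La^{\{\om\}}$. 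One would need a localization argument specific to the situation (e.g.\ via the closed graph $G=\{(f,a)\in\cD^{\{\om\}}\times\La^{\{S^x\}}:\Bmap f=a\}$ and an open mapping/localization theorem for Silva spaces), and this is not a formality. Second, and more fundamentally, the equivalence of ``$\forall x\,\exists y:(\star_{x,y})$'' with the integral inequality (1) \emph{is} the content of the theorem, and you flag it yourself as ``the technical heart and, I expect, the main obstacle'' while giving only a heuristic (that the descendants of the $W^y$ form the weight matrix of $\ka=\int_1^\infty\om(t\cdot)u^{-2}\,du$). Without carrying out this Young-conjugate computation the proposal does not establish the result; it transfers the difficulty to a Denjoy--Carleman-to-weight-function translation that is at least as hard as the statement being proved. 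The rest of the scaffolding (reduction to $n=1$ by tensoring one-variable $\ps_k$'s and restricting to lines, reduction of the Beurling case to the Roumieu case by a Chaumat--Chollet-type lemma, and the observation that (1) forces $\om=O(\si)$ hence $S^x\preceq W^{Hx}$ via the Young-conjugate bookkeeping in \Cref{prop:Omincl}) is sound and would need only routine detail.
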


This result was extended to compact convex sets with non-empty interior, by Langenbruch \cite{Langenbruch94}.
The proofs are based on descriptions of the duals of the spaces at hand as weighted spaces of
entire functions.

\subsection{Roumieu extension on arbitrary closed sets}
The general case was recently solved in the Roumieu case:

\begin{theorem}[{\cite{Rainer:2019ac,Rainer:2020aa}}] \label{thm:mixedRoumieuWF}
    Let $\om$ be a non-quasianalytic concave weight function and $\si$ a weight function with $\si(t)=o(t)$ as $t\to \infty$.
    Then the following conditions are equivalent:
    \begin{enumerate}
      \item $\exists C>0 \A t > 0 : \int_{1}^\infty \frac{\om(tu)}{u^2}\,du \le C\si(t) + C$.
      \item $\cE^{\{\si\}}(A) \subseteq j^\infty_{A} \cE^{\{\om\}}(\R^n)$ for each closed subset $A \subseteq \R^n$.
    \end{enumerate}
\end{theorem}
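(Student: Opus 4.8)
The plan is to reduce \Cref{thm:mixedRoumieuWF} to the weight sequence case \Cref{thm:CCmixed} via the associated weight matrices, following the philosophy that pervades Part~II: phrase the assertion in terms of the families $\fW=\{W^x\}_{x>0}$ and $\fS=\{S^x\}_{x>0}$ associated with $\om$ and $\si$, and then invoke what is already known for Denjoy--Carleman classes. The necessity of (1) is the easy half: if (2) holds, then in particular $\cE^{\{\si\}}(\{0\})\subseteq j^\infty_{\{0\}}\cE^{\{\om\}}(\R^n)$, which is the singleton case handled by \cite{BonetMeiseTaylor92} (stated just above), so (1) follows immediately. The substance is the implication (1)~$\Rightarrow$~(2).

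For (1)~$\Rightarrow$~(2), first I would translate (1) into an inequality between the Young conjugates $\ph^*_\om$ and $\ph^*_\si$, hence into a comparison of the weight matrices. Using $\ka(t)=\int_1^\infty\om(tu)u^{-2}\,du$, condition (1) reads $\ka\le C\si+C$; since $\om$ is concave and non-quasianalytic, $\ka$ is a strong concave weight function with $\om\le\ka$, by \Cref{lem:MeiseTaylor}. The key point is that for a \emph{strong} weight function $\ka$, each member $W^x_\ka$ of its associated weight matrix is (essentially) strongly regular: by \Cref{ex:strongWF}(2) the associated function of a strongly regular sequence is a strong weight function, and conversely one checks that $W^x_\ka$ has moderate growth (because $\ka$ is strong, via the characterization \Cref{lem:MeiseTaylor}(2) translated through \Cref{lem:propertiesWF}) and is strongly non-quasianalytic (because $\ka$ is non-quasianalytic, via \Cref{thm:(non)qaWF}). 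Likewise $W^x_\ka$ is strongly log-convex after passing to an equivalent sequence, by \Cref{cor:log-convex}. So one obtains: for each $x>0$ there is an admissible pair $(S^{x}_\si, W^{x'}_\ka)$ in the sense of \Cref{sec:setup}, where the comparison $W^{x'}_\ka\preceq$ (something) and the mixed-summability condition \eqref{eq:msnq} are exactly the translations of (1) and of the strongness of $\ka$.

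Concretely, the chain I would set up is: given $F\in\cE^{\{\si\}}(A)$, by \Cref{thm:WMdescription} (applied to $\cE^{\{\si\}}$) there is $x>0$ with $F\in\cE^{\{S^x\}}(A)$; then condition (1) provides, via the weight-matrix translation, a strongly log-convex weight sequence $N$ with $W^y_\ka\preceq N\lhd$(relevant bound) for suitable $y$ such that $(S^x,N)$ is an admissible pair and $N\preceq W^y_\om$ up to equivalence; by \Cref{thm:CCmixed}, $F$ has an extension $f\in\cE^{\{N\}}(\R^n)\subseteq\cE^{\{W^y_\om\}}(\R^n)$, and the latter is contained in $\cE^{\{\om\}}(\R^n)$ by \Cref{thm:WMdescription}(1). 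Here the intermediate sequence $N$ is produced with the help of \Cref{lem:CC16}/\Cref{lem:CC17}-type arguments applied to the reciprocals of $\vt^x_\si$ and $\vt^y_\ka$, exactly as in the construction of admissible pairs at the end of \Cref{sec:mixedCC}; the role of concavity of $\om$ is to guarantee (through \Cref{rem:WMconcave}) that $\cE^{\{\si\}}$ and $\cE^{\{\om\}}$ are describable by weight matrices of strongly log-convex sequences, which is what lets the machinery of \Cref{sec:setup}--\Cref{sec:mixedCC} apply.

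The main obstacle I expect is bookkeeping with the two parameters: the matrix-version of \Cref{thm:CCmixed} requires, for a \emph{fixed} compact $K$ and a \emph{fixed} $x$ coming from $F$, the production of a \emph{single} admissible pair and a single target index $y$ with uniform constants, and one must check that the constant $A$ in \Cref{rem:extopR} does not blow up as these indices vary — but since we only need surjectivity (not an operator), it suffices to run the argument $K$ by $K$ and $x$ by $x$, so no uniformity in $x$ is actually needed, which removes the difficulty. A second, more delicate point is verifying that condition (1) genuinely translates to the mixed-summability condition \eqref{eq:msnq} between the \emph{associated} sequences $\vt^x_\si$ and $\vt^y_\ka$ rather than merely between $\si$ and $\ka$ themselves; this requires the computations relating $\om$-type growth of a weight function to $\mu$-type growth of the sequences in its matrix, which are carried out in \cite{RainerSchindl12,Rainer:2019ac} and which I would cite rather than reprove. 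The Beurling case, were it needed, would follow by the same reduction-to-Roumieu device used in \Cref{thm:WBeurling} and at the end of \Cref{sec:mixedCC}, but it is not part of the present statement.
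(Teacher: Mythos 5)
Your necessity direction is fine (the singleton case via the Bonet--Meise--Taylor theorem is exactly the right reduction), but the proposed proof of (1) $\Rightarrow$ (2) has a genuine gap: the reduction to \Cref{thm:CCmixed} cannot work in general, because the hypothesis of that theorem (and of the definition of an admissible pair in \Cref{sec:setup}) requires the \emph{first} weight sequence $M$ to have \emph{moderate growth}, and the individual members $S^x$ of the weight matrix associated with $\si$ need not have this property. By \Cref{thm:comparison}, the $W^x$ associated with a weight function have moderate growth precisely when the Braun--Meise--Taylor class collapses to a Denjoy--Carleman class; for $\om_s(t)=(\log t)^s$, $s>1$, which is even a \emph{strong} weight function, the matrix members $W^{s,x}_k=\exp(C_s x^{1/(s-1)}k^{s/(s-1)})$ fail moderate growth (cf.\ \Cref{ex:WF}(2)). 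The same objection defeats your claim that each $W^x_\ka$ for a strong $\ka$ is ``essentially strongly regular'': strongness gives $\om(Kt)\le H\om(t)$ with $K>H>1$, which is strictly weaker than the condition $2\om(t)\le\om(Ht)+H$ characterizing moderate growth of the matrix members. So there is in general no single admissible pair of weight sequences $(S^x,N)$ to which \Cref{thm:CCmixed} applies, and the ``bookkeeping'' you flag as the main obstacle is not the real one.

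What the paper does instead is to re-run the Dyn'kin-type extension of \Cref{sec:mixedCC} at the level of the whole weight matrix: moderate growth of a single sequence is replaced by the \emph{mixed} inequalities \eqref{eq:hmixed}, $h_{s^x}(t)\le h_{s^{2x}}(Ht)^2$, and \eqref{eq:Gamixed}, $2\Ga_{s^{2x}}=\Ga_{t^x}$, which shift the matrix index and serve as substitutes for \eqref{hmg} and \eqref{eq32}; the degree function $p(z)=2\Ga_{s^{2x}}(Ld_K(z))-1$ and the estimates are then carried through with these index shifts. The second independent ingredient is \Cref{thm:WFcutoffRmixed}: cutoff functions built directly from the admissible pair $(\si,\om)$ of weight \emph{functions}, with bounds expressed via the conjugate $\si^\star$ and linked to the $h_{s^x}$ through $e^{c\si^\star(t)}\le e/h_{s^x}(ct)$. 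Concavity of $\om$ (via \Cref{rem:WMconcave}) is used, as you correctly note, to get strong log-convexity of the matrix members, but it does not repair the missing moderate growth. If you want to salvage a reduction strategy, you would have to first generalize \Cref{thm:CCmixed} to pairs of weight \emph{matrices} satisfying the mixed conditions \eqref{eq:hmixed}--\eqref{eq:Gamixed} --- which is essentially what \cite{Rainer:2019ac,Rainer:2020aa} prove.
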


Let us comment on the proof (of (1) $\Rightarrow$ (2)).
For convenience we call a pair $(\si,\om)$ of weight functions as in the theorem and satisfying (1)
\emph{admissible}.
The two crucial ingredients are
\begin{itemize}
  \item cutoff functions with bounds that reflect the condition (1),
  \item the extension method of Dyn'kin by Taylor approximation to higher and higher degree as $A$ is approached.
\end{itemize}
Easy modifications in the proof of \Cref{thm:WFcutoffR} yield the desired cutoff functions:

\begin{theorem}[{\cite[Proposition 4.1]{Rainer:2019ac}}] \label{thm:WFcutoffRmixed}
  Let $(\si,\om)$ be an admissible pair of weight functions.
  For each $n \in \N_{\ge 1}$ there exist $m \in \N_{\ge 1}$, $C>0$ and $0<r_0 <\frac{1}{2}$ such that
  for all $0<r<r_0$ there exist $\vh_{n,r} \in C^\infty(\R)$ with the following properties:
  \begin{enumerate}
    \item $0 \le \vh_{n,r} \le 1$.
    \item $\vh_{n,r}|_{[-r,r]}=1$ and $\on{supp} \vh_{n,r} \subseteq [-\frac{9}8r,\frac{9}8r]$.
    \item We have
    \begin{equation} \label{eq:WFcutoffRbound}
      \|\vh_{n,r}\|^\om_{\R,m} \le C \exp\Big(\frac{1}{n} \si^\star(nr)\Big).
    \end{equation}
  \end{enumerate}
  Here $\si^\star$ is the conjugate of $\si$ defined in \eqref{omegaconjugate}.
\end{theorem}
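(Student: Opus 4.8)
The plan is to follow the proof of \Cref{thm:WFcutoffR} (that is, \cite[Proposition 2.2]{BBMT91}) almost verbatim, the only point being to isolate the single place where the strongness of $\om$ is used and to replace it by the admissibility hypothesis (1). Recall the structure of that argument. One builds $\vh_{n,r}$ through its Fourier--Laplace transform, so that the support requirement $\on{supp}\vh_{n,r}\subseteq[-\tfrac98 r,\tfrac98 r]$ becomes an exponential-type bound of order $\lesssim r$, membership in $\cE^{\{\om\}}$ becomes a decay estimate $\lesssim e^{-m\om(\xi)}$ on $\R$, and the size $\|\vh_{n,r}\|^\om_{\R,m}$ is dictated by the normalization $\vh_{n,r}\equiv 1$ on $[-r,r]$. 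Such an entire function is produced by H\"ormander's $L^2$-estimates for $\ol\p$: one perturbs a smooth compactly supported model into a holomorphic function using a subharmonic weight $\phi$ on $\C$ that majorizes $m\om$ on the real axis (this is what makes the outcome lie in $\cE^{\{\om\}}$) and is controlled in the bulk of $\C$. The natural choice is
\[
  \phi(z) = m\,P_\om(z) + \tau|\Im z| + c, \qquad \tau\sim r,
\]
whose upper bound --- and here $\om$ strong, i.e.\ $P_\om(z)=O(\om(z))$ by \Cref{lem:MeiseTaylor}(4), is invoked --- feeds, through a Phragm\'en--Lindel\"of/indicator estimate, into the cost $e^{\frac1n\om^\star(nr)}$, the Legendre conjugate \eqref{omegaconjugate} of $\om$ evaluated at the type. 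Together with \eqref{omegaconjugate1} (valid since $\om$ is concave and increasing) this is how $\om^\star$ appears in item~(3) of \Cref{thm:WFcutoffR}.

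For an admissible pair $(\si,\om)$ the weight $\om$ need not be strong, but condition (1) says \emph{exactly} that $P_\om(z)=O(\si(|z|))$. Indeed, with the usual convention $\om(\xi):=\om(|\xi|)$,
\[
  P_\om(iy) = \frac{2}{\pi}\int_0^\infty \frac{\om(yu)}{1+u^2}\,du,
\]
which, by monotonicity of $\om$, is comparable up to a bounded additive term to $\int_1^\infty \frac{\om(yu)}{u^2}\,du$; hence (1) gives $P_\om(iy)\lesssim\si(y)+1$, and --- exactly as in the proof of the equivalence $(3)\Leftrightarrow(4)$ in \Cref{lem:MeiseTaylor}, using the doubling property \ref{sec:weightfunctions}(1) of $\si$ --- this propagates to $P_\om(z)\lesssim\si(|z|)+1$ on all of $\C$. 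I would therefore keep the same weight $\phi=m\,P_\om+\tau|\Im z|+c$, so that the lower bound $\phi|_\R=m\om+c$, and with it the membership $\vh_{n,r}\in\cE^{\{\om\}}$, is left untouched, but estimate $\phi$ from above by $m'\si(|z|)+\tau|\Im z|+c$. The same H\"ormander solution and the same Phragm\'en--Lindel\"of bookkeeping then produce the cost $e^{\frac1n\si^\star(nr)}$, which is \eqref{eq:WFcutoffRbound}. The remaining assertions --- $0\le\vh_{n,r}\le1$, the value $1$ on $[-r,r]$, the support in $[-\tfrac98 r,\tfrac98 r]$ --- are arranged, as in \cite{BBMT91}, by a concluding convolution with a fixed unit bump and a truncation, which do not involve the weight.

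The main difficulty here is verification rather than a new idea. One must confirm that in \cite{BBMT91} the hypothesis ``$\om$ strong'' enters only through the size comparison $P_\om(z)=O(\om(z))$ (equivalently, $\ka_\om(t):=\int_1^\infty \frac{\om(ut)}{u^2}\,du = O(\om(t))$, the function $\ka$ of \Cref{lem:MeiseTaylor}(3)), and that replacing this comparison by $\ka_\om = O(\si)$ --- which is precisely condition (1) --- leaves every other ingredient intact: the concavity of $\ka_\om$, the doubling $2\om(t)\le\om(Dt)+D$ of $\om$ itself, and, crucially, the lower bound $\phi|_\R = m\om + c$ that guarantees the solution $v$ of $\ol\p v=\ol\p(\text{model})$ has the real-axis decay required for $\cE^{\{\om\}}$. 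One also has to recheck that the constants $m$, $C$ and the threshold $r_0$ come out uniformly in $0<r<r_0$ and depend only on $n$ and $(\si,\om)$, as in the unmixed statement. Finally, since \Cref{lem:MeiseTaylor} is stated for a single weight, the equivalence ``(1) $\Leftrightarrow$ $P_\om(z)=O(\si(|z|))$'' used above is best recorded as a short separate lemma; its proof is the computation displayed in the previous paragraph together with the doubling argument.
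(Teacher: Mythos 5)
Your proposal is correct and takes essentially the same route as the paper, which itself gives no independent proof but defers to \cite[Proposition 4.1]{Rainer:2019ac} with the remark that ``easy modifications in the proof of \Cref{thm:WFcutoffR} yield the desired cutoff functions.'' You correctly pinpoint that strongness of $\om$ enters the construction of \cite{BBMT91} only through the comparison $P_\om(z)=O(\om(z))$, and that the admissibility condition supplies the substitute $P_\om(z)=O(\si(|z|))$, which is exactly what replaces $\om^\star$ by $\si^\star$ in the cost \eqref{eq:WFcutoffRbound} while leaving the real-axis lower bound (hence membership in $\cE^{\{\om\}}$) untouched.
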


Note that $\ka(t) := \int_1^\infty \frac{\om(ut)}{u^2}\, du$, $t>0$, is a (possibly quasianalytic) weight function
and $(\ka,\om)$ is an admissible pair.
If $(\si,\om)$ is another admissible pair, then $\ka(t) = O(\si(t))$ as $t \to \infty$, i.e., $\cE^{[\si]} \subseteq \cE^{[\ka]}$.
Moreover, $\ka$ is concave; cf.\ \Cref{lem:MeiseTaylor}. It follows that we may assume without loss of generality that $\si$ is concave.

In order to implement Dyn'kin's extension procedure one
uses the description of Braun--Meise--Taylor spaces of functions and jets by the associated weight matrix, see
\Cref{thm:WMdescription}.
In view of \Cref{rem:WMconcave} we may assume that there is a weight matrix $\fS = \{S^x\}_{x>0}$
such that $\cE^{[\si]} = \cE^{[\fS]}$,
where each $S^x$ is strongly log-convex and
\[
  \sup_{j,k} \Big(\frac{s^x_{j+k}}{s^{2x}_js^{2x}_k}\Big) =H< \infty
\]
which entails
\begin{equation} \label{eq:hmixed}
  h_{s^x}(t) \le h_{s^{2x}}(Ht)^2, \quad  t>0, ~ x>0.
\end{equation}
Setting $t^x_k := \min_{0 \le j \le k} s^{2x}_j s^{2x}_{k-j}$ we obtain another
weight matrix $\fT = \{T^x\}_{x>0}$
such that $\cE^{[\si]} = \cE^{[\fT]}$,
where each $T^x$ is strongly log-convex and
\[
  \Big(\frac{t^x_k}{t^x_{k-1}}\Big)_{k \ge 1}
  = \Big(\frac{s^{2x}_1}{s^{2x}_{0}}, \frac{s^{2x}_1}{s^{2x}_{0}},\frac{s^{2x}_2}{s^{2x}_{1}}, \frac{s^{2x}_2}{s^{2x}_{1}},
  \frac{s^{2x}_3}{s^{2x}_{2}}, \frac{s^{2x}_3}{s^{2x}_{2}},\ldots\Big)
\]
which implies (cf.\ \eqref{counting2})
\begin{equation} \label{eq:Gamixed}
  2\Ga_{s^{2x}} = \Ga_{t^x}.
\end{equation}

Now, if $F=(F^\al)_\al$ is a Whitney ultrajet of class $\cE^{\{\si\}}$ on some compact set $K \subseteq \R^n$,
then there exists $x>0$ such that $F \in \cE^{\{T^x\}}(K)$.
Let $(Q_j)_{j\ge1}$ be a family of Whitney cubes for $K$ and $(\vh_{j,\ep})_{j\ge 1}$ a corresponding partition of
unity based on the cutoff functions from \Cref{thm:WFcutoffRmixed}. Let $x_j$ be the center of $Q_j$ and $\hat x_j \in K$
with $|\hat x_j - x_j| = d_K(x_j)$.
We define
\[
  f(z) := \begin{cases}
     \sum_{j \ge 1} \vh_{j,\ep} (z) T^{p(x_j)}_{\hat x_j} F(z) & \text{ if } z \in \R^n \setminus K,
     \\
     F^0(z) & \text{ if } z \in K,
  \end{cases}
\]
where
\[
  p(z) :=  2 \Ga_{s^{2x}}(L d_K(z))-1
\]
and $L$ is a positive constant.
Then is not difficult to mimic the steps sketched in \Cref{sec:mixedCC}
(in particular, choosing $\ep$ and $L$ suitably)
to see that $f$ indeed provides an extension of class $\cE^{\{\om\}}$.
The properties \eqref{eq:hmixed} and \eqref{eq:Gamixed} serve as substitutes of \eqref{hmg} and \eqref{eq32}.
The connection between the bound \eqref{eq:WFcutoffRbound} and the functions $h_{s^x}$ is established
by the observation (see \cite[Corollary 3.11]{Rainer:2019ac})
\[
  \A x >0 \E 0 <c \le 1 \A t>0 : e^{c\si^\star (t)} \le  \frac{e}{h_{s^x}(ct)}.
\]
For a detailed presentation we refer to \cite{Rainer:2020aa}.

\begin{remark} \label{rem:extopWFmixed}
  The proof shows that, if $\fS$ and $\fW$ are weight matrices with $\cE^{\{\fS\}} = \cE^{\{\si\}}$ and
  $\cE^{\{\fW\}} = \cE^{\{\om\}}$, respectively, then
  for each $S \in \fS$ and $a>0$ there exist $W \in \fW$, $b>0$, and an extension operator
  $\cE^{S}_a(K) \to \cE^{W}_b(\R^n)$.
\end{remark}

\subsection{Beurling extension on arbitrary closed sets}

We intend to reduce the Beurling to the Roumieu case (in a way similar to \Cref{thm:mixedBeurlingWS}).
So, for an admissible pair of weight functions $(\si,\om)$
and for a Whitney ultrajet $F$ of class $\cE^{(\si)}$,
we would like to find
an admissible pair of weight functions $(\tilde \si,\tilde \om)$
such that $F$ is also of Roumieu class $\cE^{\{\tilde \si\}}$ and  $\om(t) = o(\tilde \om(t))$ as $t \to \infty$.
Then we could infer from \Cref{thm:mixedRoumieuWF} that $F$ has an extension of class $\cE^{\{\tilde \om\}}$
and hence of class $\cE^{(\om)}$, thanks to  $\om(t) = o(\tilde \om(t))$ as $t \to \infty$.

Unfortunately, it is not clear how to transfer the condition
\[
  \E C>0 \A t > 0 : \int_{1}^\infty \frac{\om(tu)}{u^2}\,du \le C\si(t) + C
\]
to the pair $(\tilde \si,\tilde \om)$ (in such a way that also all the other requirements are fulfilled).
But a stronger condition can be transferred:
let $\om$ be a non-quasianalytic concave weight function and $\si$ a weight function with $\si(t)=o(t)$ as $t\to \infty$.
We say that the pair $(\si,\om)$ is \emph{strongly admissible} if
\begin{equation} \label{eq:stronglyadmissible}
  \exists r \in (0,1) \E C>0 \A t > 0 : \int_{1}^\infty \frac{\om(tu)}{u^{1+r}}\,du \le C\si(t) + C.
\end{equation}
It is obvious that a strongly admissible pair is admissible.
On the other hand $(\om_{\al -1},\om_\al)$, where $\om_\al(t) = t (\log t)^{-\al}$ and $\al>1$,
is admissible, but not strongly admissible; see \cite[Example 11]{Rainer:2020ab}.

\begin{lemma}[{\cite[Lemma 13]{Rainer:2020ab}}] \label{lem:lemmareduction}
  Let $(\si,\om)$ be a strongly admissible pair of weight functions
  and $f : [0,\infty) \to [0,\infty)$ any function satisfying $\si(t) = o(f(t))$ as $t \to \infty$.
  Then there exists a strongly admissible pair of weight functions $(\tilde \si,\tilde \om)$
  such that
  \[
  	\text{$\om(t) = o(\tilde \om(t))$, $\si(t) = o(\tilde \si(t))$, and
  	$\tilde \si(t) = o(f(t))$ as $t \to \infty$}.
  \]
\end{lemma}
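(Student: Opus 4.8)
The goal is to find, given a strongly admissible pair $(\si,\om)$ and an arbitrary growth function $f$ with $\si(t)=o(f(t))$, a strongly admissible pair $(\tilde\si,\tilde\om)$ dominating $(\si,\om)$ in the $o$-sense while keeping $\tilde\si(t)=o(f(t))$. The natural strategy is to produce multiplicative ``correction factors'' $g(t)\to\infty$ (applied in the $e^t$-variable, i.e.\ to $\ph_\si$ and $\ph_\om$) that grow slowly enough to preserve every structural property. Concretely, I would set $\tilde\om(t):=g_1(t)\om(t)$ and $\tilde\si(t):=g_2(t)\si(t)$ for suitable slowly varying increasing factors $g_1\le g_2$; the hard constraints are: (a) $\tilde\om,\tilde\si$ remain weight functions (condition \ref{sec:weightfunctions}(1)--(3)), with $\tilde\om$ concave and $\tilde\si(t)=o(t)$; (b) the strong-admissibility integral inequality \eqref{eq:stronglyadmissible} survives, i.e.\ $\int_1^\infty \frac{\tilde\om(tu)}{u^{1+r'}}\,du\lesssim \tilde\si(t)$ for some $r'\in(0,1)$; (c) $\om=o(\tilde\om)$, $\si=o(\tilde\si)$, and $\tilde\si=o(f)$ simultaneously.

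The order I would carry this out: first, reduce to a clean normal form. By the discussion after \Cref{lem:MeiseTaylor} (or \Cref{lem:MeiseTaylor}(3) itself applied with the exponent $1+r$ in place of $2$), I may assume $\om$ is concave, and after replacing $\si$ by an equivalent concave weight it costs nothing to assume $\si$ concave as well. Second, I would build the factor $g$ as in the proof of \Cref{lem:CC16}/\Cref{lem:CC17}: choose a rapidly increasing sequence of ``checkpoints'' $t_p\nearrow\infty$, dictated jointly by the decay of $\int_{t_p}^\infty \om(tu)u^{-1-r}\,du$ relative to $\si(t)$, by the gap between $\si$ and $f$, and by the need for $g$ to grow; then define $g$ piecewise so that $g(t)\approx 2^p$ between consecutive checkpoints, giving $g(t)\to\infty$ but $g(2t)\le C g(t)$ and $\log g(t)=o(\log t)$ (so property \ref{sec:weightfunctions}(1),(2) and the $o(t)$-property are untouched). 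Third, verify the integral inequality: split $\int_1^\infty = \int_1^{T} + \int_T^\infty$, use monotonicity of $g$ and the fact that $g$ changes by a bounded factor over each dyadic block to pull $g$ out up to constants, and exploit that we have a little room in the exponent — passing from $u^{-1-r}$ to $u^{-1-r/2}$ absorbs the factor $g(tu)/g(t)$ since $g$ is of slower-than-polynomial growth. This is exactly the trick that makes the \emph{strongly} admissible version work where the merely admissible version (with $u^{-2}$) would not. Fourth, take $g_1=g$ and $g_2$ a further (even slower) modification with $g_2\ge g_1$ and $g_2\si = o(f)$ — achievable because $\si=o(f)$ leaves infinitely much room; record that concavity of $\om$ is preserved under multiplication by a concave slowly varying factor (or re-concavify via \Cref{lem:MeiseTaylor}(3) one last time, which only improves the constant in \eqref{eq:stronglyadmissible}).

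\textbf{Main obstacle.} The delicate point is the simultaneous balancing in step three: the checkpoints $t_p$ must be chosen so that \emph{all three} competing requirements hold at once — the tail integral $\int_{t_p}^\infty\om(tu)u^{-1-r}\,du$ must be small compared to $\si(t)$ for $t\ge t_p$ (this needs $t_p$ large), $g$ must not have grown too much by $t_p$ so that $g\cdot$(integral) is still $\lesssim g\cdot\si=\tilde\si$ (this also wants $t_p$ large but couples $g$'s growth rate to the margin in \eqref{eq:stronglyadmissible}), and yet $g$ must eventually dominate any prescribed sequence so that $\om=o(\tilde\om)$ (this wants $g$ to grow, hence checkpoints not too sparse). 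The resolution is the standard one from \Cref{lem:CC16}: the exponent slack $r\to r/2$ provides a geometric gain $u^{-r/2}$ that beats the bounded-per-block growth of $g$, so a greedy recursive choice of $t_p$ works; I would cite \cite[Lemma 13]{Rainer:2020ab} for the full bookkeeping and present here only this outline together with the verification that concavity and the weight-function axioms are stable under the construction.
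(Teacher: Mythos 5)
Your core mechanism is the right one, and it matches what the survey itself points to: the paper gives no self-contained proof but records that the crux is the equivalence of \eqref{eq:stronglyadmissible} with the discrete condition \eqref{strongercondition}, i.e.\ $\om(K^jt)\le CH^j\si(t)$ with a genuine gap $K>H$. Your ``exponent slack $r\to r/2$ absorbs the sub-polynomial growth of $g$'' is precisely that gap in integral form, and your Lemma~\ref{lem:CC16}-style greedy choice of checkpoints (driven simultaneously by the tail of the integral, by the margin between $\si$ and $f$, and by the need for $g\to\infty$) is the same bookkeeping scheme as in \cite[Lemma 13]{Rainer:2020ab}. So the skeleton is sound. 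Two caveats prevent this from being a proof. First, you defer ``the full bookkeeping'' to the very lemma you are proving, which is circular; since the survey also only cites the reference, you should at least carry out the absorption estimate and the recursive choice of $t_p$ explicitly rather than outsourcing them.

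Second, and more substantively, the verification that $(\tilde\si,\tilde\om)$ is again a \emph{pair of weight functions} with $\tilde\om$ concave is where a genuine gap sits. A step-like factor $g$ destroys axiom \ref{sec:weightfunctions}(3): $s\mapsto g(e^s)\om(e^s)$ is not convex if $g$ has jumps or flat pieces followed by jumps. Your proposed repairs do not close this. The claim that ``concavity of $\om$ is preserved under multiplication by a concave slowly varying factor'' is false: for increasing concave $g,\om$ the second derivative of $g\om$ contains the term $2g'\om'\ge 0$, and e.g.\ $g(t)=\om(t)=t^{3/5}$ gives the convex product $t^{6/5}$. Re-concavifying via $\ka$ from \Cref{lem:MeiseTaylor}(3) fixes concavity (and is harmless for the $o$-relations since $\ka\ge\tilde\om$, and for \eqref{eq:stronglyadmissible} up to constants), but concavity of $\ka$ does not imply convexity of $s\mapsto\ka(e^s)$, so axiom \ref{sec:weightfunctions}(3) must be re-established separately --- these are independent conditions and neither implies the other. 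The clean way out is to build $g$ so that $s\mapsto g(e^s)$ is itself convex, increasing and positive (e.g.\ a convex piecewise-linear interpolation in the $\log$-variable of the values $2^p$); then $\ph_{\tilde\om}$ is a product of two nonnegative increasing convex functions, hence convex, and only the concavity of $\tilde\om$ needs the $\ka$-regularization afterwards. Without spelling out this order of regularizations, the construction as written need not produce admissible weight functions at all.
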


For the proof of the lemma it is crucial that
\eqref{eq:stronglyadmissible} is equivalent to
\begin{equation}
		\label{strongercondition}
   		 \exists C>0 ~\exists K>H> 1   ~\exists t_0 \ge 0 ~\forall t\ge t_0
    	~\forall j\in \N_{\ge1} :
    	\om(K^jt) \le C H^j \si(t);
\end{equation}
see \cite[Proposition 7]{Rainer:2020ab}.
As a consequence (invoking also \Cref{lem:MeiseTaylor}),
the pair $(\om,\om)$ is strongly admissible if and only if $\om$ is strong; see \cite[Lemma 8]{Rainer:2020ab}.

Thanks to \Cref{lem:lemmareduction}, the reduction scheme alluded to above then gives

\begin{theorem}[{\cite[Theorem 2]{Rainer:2020ab}}]
Let $(\si,\om)$ be a strongly admissible pair of weight functions.
Then for every closed $A \subseteq \R^n$
we have $\cE^{(\si)}(A) \subseteq j^\infty_A(\cE^{(\om)}(\R^n))$.
\end{theorem}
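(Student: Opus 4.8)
The plan is to run the reduction scheme sketched just above the statement: turn the Beurling extension problem for the pair $(\si,\om)$ into a Roumieu extension problem for an auxiliary strongly admissible pair $(\tilde\si,\tilde\om)$, and then invoke \Cref{thm:mixedRoumieuWF}.

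First I would reduce to the case that $A=K$ is compact. Since $(\si,\om)$ being strongly admissible forces $\om$ to be a non-quasianalytic weight function, $\cE^{(\om)}(\R^n)$ admits partitions of unity. Writing $\R^n$ as the union of the shells $U_k=\{x:k-2<|x|<k\}$ and choosing $\vh_k\in\cE^{(\om)}(\R^n)$ with $\on{supp}\vh_k\subseteq U_k$, $0\le\vh_k\le 1$ and $\sum_k\vh_k=1$, one glues extensions of the compact pieces $F|_{A\cap\overline U_k}$ of a given $F\in\cE^{(\si)}(A)$ to a global extension exactly as in the proof of \Cref{thm:closed}, the flatness identities used there going through verbatim. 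So it is enough to extend $F\in\cE^{(\si)}(K)$ with $K$ compact.

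The core step is the reduction to the Roumieu setting. Given $F\in\cE^{(\si)}(K)$, I would first produce a weight function $f$ with $\si(t)=o(f(t))$ as $t\to\infty$ such that $F\in\cE^{\{f\}}(K)$: since the Beurling bounds on $F$ hold for every parameter, $F$ in fact decays strictly faster than any $\si$-type bound, and passing to the (log-convexified) defect sequences of $F$ --- equivalently, using the weight matrix description of jet spaces from \Cref{thm:WMdescription} together with a Chaumat--Chollet type reduction lemma (the weight-function analogue of \Cref{lem:CC17}, cf.\ \cite{BBMT91}) --- yields such an $f$. Then I apply \Cref{lem:lemmareduction} to the strongly admissible pair $(\si,\om)$ and this function $f$, obtaining a strongly admissible pair $(\tilde\si,\tilde\om)$ with $\om(t)=o(\tilde\om(t))$, $\si(t)=o(\tilde\si(t))$ and $\tilde\si(t)=o(f(t))$ as $t\to\infty$. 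From $\tilde\si(t)=o(f(t))$ and \Cref{prop:Omincl}(1) we get the continuous inclusion $\cE^{\{f\}}\subseteq\cE^{\{\tilde\si\}}$ (and likewise on the jet spaces over $K$), hence $F\in\cE^{\{\tilde\si\}}(K)$. As $(\tilde\si,\tilde\om)$ is in particular an admissible pair, \Cref{thm:mixedRoumieuWF} supplies $g\in\cE^{\{\tilde\om\}}(\R^n)$ with $j^\infty_K g=F$; and $\om(t)=o(\tilde\om(t))$ together with \Cref{prop:Omincl}(2) gives $\cE^{\{\tilde\om\}}(\R^n)\subseteq\cE^{(\om)}(\R^n)$, so that $g\in\cE^{(\om)}(\R^n)$ is the desired extension.

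The hard part is \Cref{lem:lemmareduction}, i.e.\ the stability of the growth condition along this reduction. The plain integral condition in \Cref{thm:mixedRoumieuWF}(1) does \emph{not} survive the passage from $(\si,\om)$ to $(\tilde\si,\tilde\om)$, which is precisely why the theorem must be stated for \emph{strongly} admissible pairs: the stronger requirement \eqref{eq:stronglyadmissible}, equivalently the geometric-growth form \eqref{strongercondition}, is the one that can be transferred, and the real work is to choose $\tilde\si$, $\tilde\om$ (and the interpolating function $f$) so that strong admissibility, concavity of $\tilde\om$, non-quasianalyticity, $\tilde\si(t)=o(t)$, and the three asymptotic $o$-relations all hold simultaneously. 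Everything else is bookkeeping resting on \Cref{thm:mixedRoumieuWF}, \Cref{thm:WMdescription} and \Cref{prop:Omincl}.
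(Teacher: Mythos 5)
Your proposal is correct and follows essentially the same route as the paper: the paper's proof of this theorem is exactly the reduction scheme it sketches beforehand (localize to compact sets via an $\cE^{(\om)}$ partition of unity, encode the regularity of the given Beurling jet in an auxiliary function $f$ with $\si(t)=o(f(t))$, invoke \Cref{lem:lemmareduction} to produce the strongly admissible pair $(\tilde\si,\tilde\om)$, apply \Cref{thm:mixedRoumieuWF}, and descend to $\cE^{(\om)}$ via $\om(t)=o(\tilde\om(t))$ and \Cref{prop:Omincl}). You also correctly locate the real difficulty in \Cref{lem:lemmareduction} and in the fact that only the strengthened condition \eqref{eq:stronglyadmissible} survives the transfer to $(\tilde\si,\tilde\om)$.
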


It is an open question if the conclusion holds for admissible pairs $(\si,\om)$.

\subsection{Extension operators}
As a by-product we obtain an extension operator on certain subspaces of $\cE^{(\si)}(A)$ with values in $\cE^{(\om)}(\R^n)$.

\begin{theorem}[{\cite[Theorem 5]{Rainer:2020ab}}]
Let $(\si,\om)$ be a strongly admissible pair of weight functions.
If $\ta$ is a weight function with $\si(t) = o(\ta(t))$ as $t\to \infty$, then for each non-empty closed subset $A \subseteq \R^n$
there is an extension operator $\cE^{\{\ta\}}(A) \to \cE^{(\om)}(\R^n)$.
\end{theorem}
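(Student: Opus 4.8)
The plan is to deduce this from the previous theorem (the Roumieu extension result for strongly admissible pairs, \cite[Theorem 2]{Rainer:2020ab}) together with the reduction lemma \Cref{lem:lemmareduction}, in close analogy to the Beurling-case arguments carried out earlier (cf.\ \Cref{thm:WBeurling} and the reduction scheme preceding \cite[Theorem 2]{Rainer:2020ab}). The key observation is that the hypothesis ``$\si(t) = o(\ta(t))$ as $t\to\infty$'' is exactly what is needed to apply the reduction lemma with $f=\ta$, producing an auxiliary strongly admissible pair $(\tilde\si,\tilde\om)$ that simultaneously dominates the data we need.

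First I would fix a non-empty closed set $A \subseteq \R^n$. Applying \Cref{lem:lemmareduction} to the strongly admissible pair $(\si,\om)$ and the function $f=\ta$ (which satisfies $\si(t)=o(\ta(t))$ by assumption), I obtain a strongly admissible pair $(\tilde\si,\tilde\om)$ with $\om(t)=o(\tilde\om(t))$, $\si(t)=o(\tilde\si(t))$, and $\tilde\si(t)=o(\ta(t))$ as $t\to\infty$. By \Cref{prop:Omincl}(2), these relations give the continuous inclusions $\cE^{\{\ta\}}(A)\subseteq\cE^{(\tilde\si)}(A)$, $\cE^{(\tilde\om)}(\R^n)\subseteq\cE^{\{\om\}}(\R^n)$, and also $\cE^{\{\tilde\si\}}(A)\subseteq\cE^{(\ta)}(A)$ if needed; more to the point, $\cE^{\{\ta\}}(A)\subseteq\cE^{\{\tilde\si\}}(A)$ since $\ta$ is (up to equivalence) dominated by $\tilde\si$ in the Roumieu sense---here one uses $\tilde\si(t)=o(\ta(t))$, hence $\ta(t)=O(\tilde\si(t))$ fails, so I must instead route through the weight matrix description: the point is that a Whitney ultrajet of Roumieu class $\cE^{\{\ta\}}$ on $A$ is in particular of class $\cE^{\{\tilde\si\}}$ because $\tilde\si$ grows more slowly, so the $\tilde\si$-bounds are weaker. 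Concretely, $\tilde\si(t)=o(\ta(t))$ gives $\exp(-m\tilde\ph^*(\cdot))$-type bounds from $\exp(-m\ph_\ta^*(\cdot))$-type bounds, i.e.\ $\cE^{\{\ta\}}(A)\hookrightarrow\cE^{\{\tilde\si\}}(A)$ continuously.

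Next, since $(\tilde\si,\tilde\om)$ is strongly admissible, it is in particular admissible, so by \Cref{thm:mixedRoumieuWF} (applied with the pair $(\tilde\si,\tilde\om)$, recalling that admissibility of $(\tilde\si,\tilde\om)$ is condition (1) there) the restriction map $j^\infty_A:\cE^{\{\tilde\om\}}(\R^n)\to\cE^{\{\tilde\si\}}(A)$ is surjective. Moreover---and this is the crucial quantitative point---the proof of \Cref{thm:mixedRoumieuWF}, as recorded in \Cref{rem:extopWFmixed}, provides extension \emph{operators} at the level of Banach spaces: for each $S\in\fS$ (a weight matrix with $\cE^{\{\fS\}}=\cE^{\{\tilde\si\}}$) and each $a>0$ there exist $W\in\fW$ (with $\cE^{\{\fW\}}=\cE^{\{\tilde\om\}}$), $b>0$, and a continuous linear extension operator $\cE^S_a(K)\to\cE^W_b(\R^n)$. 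The plan is to assemble these Banach-space operators into a single continuous linear map. Using the weight matrix description $\cE^{\{\ta\}}(A)=\on{proj}_{K\subseteq_{cp}A}\on{ind}_{x>0}\cE^{\{W^x_\ta\}}(K)$ and the inclusion $\cE^{\{\ta\}}(A)\subseteq\cE^{\{\tilde\si\}}(A)$, I would factor the desired extension operator $\cE^{\{\ta\}}(A)\to\cE^{(\om)}(\R^n)$ as the composite
\[
  \cE^{\{\ta\}}(A) \hookrightarrow \cE^{\{\tilde\si\}}(A) \xrightarrow{E} \cE^{\{\tilde\om\}}(\R^n) \hookrightarrow \cE^{(\om)}(\R^n),
\]
where $E$ is the extension operator from the Roumieu theorem and the last inclusion is continuous by $\om(t)=o(\tilde\om(t))$ together with \Cref{prop:Omincl}(2). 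To build $E$ itself on the closed set $A$ (rather than a compact $K$), I would use a partition of unity argument exactly as in the proof of \Cref{thm:closed}, gluing the Banach-space extension operators over an exhaustion of $A$ by compact shells $\ol U_k$; the local equicontinuity needed is furnished by \Cref{rem:extopWFmixed} and the $\cE^{\{\ta\}}\hookrightarrow\cE^{\{\tilde\si\}}$ inclusion being bounded on compact pieces.

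The main obstacle I expect is the bookkeeping that makes the composite map \emph{continuous linear} rather than merely well-defined: one must check that the Banach-space operators from \Cref{rem:extopWFmixed}, when patched together over the Whitney-cube partition of unity for $A$ and over the compact exhaustion, respect the projective/inductive limit structure of $\cE^{(\om)}(\R^n)$ and $\cE^{\{\ta\}}(A)$, and that the loss in the parameters $a\mapsto b$ and $S\mapsto W$ can be absorbed because the target is a Beurling class (which, being an intersection over all $W\in\fW$, is forgiving). A secondary technical point is verifying the inclusion $\cE^{\{\ta\}}(A)\subseteq\cE^{\{\tilde\si\}}(A)$ at the level of jets with uniform constants on compacta---this is routine from $\tilde\si(t)=o(\ta(t))$ and the monotonicity properties of the Young conjugates collected in \Cref{sec:weightfunctions}, but it must be stated carefully since it is what lets the Roumieu machinery receive our Beurling-side input. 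For the detailed construction I would refer to \cite[Theorem 5]{Rainer:2020ab}, presenting here only the reduction strategy.
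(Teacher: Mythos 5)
Your overall strategy --- apply \Cref{lem:lemmareduction} with $f=\ta$, then compose continuous inclusions with the Banach-space extension operator of \Cref{rem:extopWFmixed}, and finally pass from compact $K$ to closed $A$ by a partition of unity --- is indeed the paper's strategy. But the central step of your factorization does not work as written: you route the jet through $\cE^{\{\ta\}}(A)\hookrightarrow\cE^{\{\tilde\si\}}(A)\xrightarrow{E}\cE^{\{\tilde\om\}}(\R^n)$ with ``$E$ the extension operator from the Roumieu theorem''. No such operator exists in general. \Cref{thm:mixedRoumieuWF} gives surjectivity of the restriction map, not split surjectivity, and \Cref{rem:extopWFmixed} only provides operators $\cE^{S}_a(K)\to\cE^{W}_b(\R^n)$ between individual Banach steps, with $W$ and $b$ depending on $S$ and $a$. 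A linear map out of the inductive limit $\cE^{\{\tilde\si\}}(K)=\on{ind}_{S,a}\cE^{S}_a(K)$ would require these operators to be mutually compatible, which there is no reason to expect; this is exactly the obstruction behind the general non-existence of Roumieu extension operators discussed in \Cref{rem:extopBorelRoumieu} and \Cref{sec:extopRoumieu}. ``Patching over the inductive limit'' cannot absorb this, and the forgiving nature of the Beurling \emph{target} is irrelevant: the problem sits on the domain side.

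The fix is the inclusion you mention in your first paragraph and then explicitly set aside. The whole point of \Cref{lem:lemmareduction} producing $\tilde\si(t)=o(\ta(t))$ (little-$o$, not merely $O$) is that \Cref{prop:Omincl}(2) yields the continuous inclusion $\cE^{\{\ta\}}(K)\hookrightarrow\cE^{(\tilde\si)}(K)$ into the \emph{Beurling} space, which in turn includes continuously into a \emph{single} Banach space $\cE^{\tilde S}_1(K)$ for one fixed $\tilde S\in\tilde\fS$. One application of the operator from \Cref{rem:extopWFmixed} then gives
\[
\cE^{\{\ta\}}(K)\hookrightarrow\cE^{(\tilde\si)}(K)\hookrightarrow\cE^{\tilde S}_1(K)\longrightarrow\cE^{\tilde W}_b(\R^n)\hookrightarrow\cE^{\{\tilde\om\}}(\R^n)\hookrightarrow\cE^{(\om)}(\R^n),
\]
a single continuous linear map with no compatibility issue. (Incidentally, your justification of $\cE^{\{\ta\}}\subseteq\cE^{\{\tilde\si\}}$ is garbled: it follows directly from $\tilde\si(t)=O(\ta(t))$ and \Cref{prop:Omincl}(1), with no detour through weight matrices; but that inclusion is the wrong one to use anyway.) The gluing over a compact exhaustion of $A$ as in \Cref{thm:closed} is then as you describe.
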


Applying \Cref{lem:lemmareduction} to $f = \ta$, yields a strongly admissible pair $(\tilde \si,\tilde \om)$
satisfying
$\om(t) = o(\tilde \om(t))$, $\si(t) = o(\tilde \si(t))$, and
  $\tilde \si(t) = o(\ta(t))$ as $t \to \infty$.
Let $K$ be a compact subset of $\R^n$.
Then we have continuous inclusions $\cE^{\{\ta\}}(K) \hookrightarrow \cE^{(\tilde \si)}(K)$ and
$\cE^{\{\tilde \om\}}(\R^n) \hookrightarrow \cE^{(\om)}(\R^n)$. Composing these maps with
\[
\xymatrix{
\cE^{(\tilde \si)}(K) ~\ar@{^{(}->}[r] &\cE^{\tilde S}_1(K) \ar[r] & \cE^{\tilde W}_b(\R^n) ~\ar@{^{(}->}[r] & \cE^{\{\tilde \om\}}(\R^n)
}
\]
where the middle arrow is the extension operator from \Cref{rem:extopWFmixed},
gives an extension operator $\cE^{\{\ta\}}(K) \to \cE^{(\om)}(\R^n)$.
Here $\tilde S \in \tilde \fS$, $\tilde W \in \tilde \fW$, and $\tilde \fS$, $\tilde \fW$ are weight matrices associated with $\tilde \si$, $\tilde \om$,
respectively.
If $A$ is a closed subset of $\R^n$, then we may use a suitable partition of unity
in order to construct the required extension operator.

\begin{remark}
  All extensions in \Cref{sec:BMT,sec:BMTmixed}
  can be chosen to be analytic in the complement of $A$.
  This follows from a result of Schmets and Validivia \cite{Schmets:1999aa} or
  by adapting the proof of Langenbruch \cite[Theorem 13]{Langenbruch03}.
\end{remark}


\begin{thebibliography}{10}

\bibitem{Abanin01}
A.~V. Abanin, \emph{On {W}hitney's extension theorem for spaces of
  ultradifferentiable functions}, Math. Ann. \textbf{320} (2001), no.~1,
  115--126.

\bibitem{Acquistapace:2014wf}
F.~Acquistapace, F.~Broglia, M.~Bronshtein, A.~Nicoara, and N.~Zobin,
  \emph{{Failure of the Weierstrass Preparation Theorem in quasi-analytic
  Denjoy{\textendash}Carleman rings}}, Advances in Mathematics \textbf{258}
  (2014), 397--413.

\bibitem{Bang46}
T.~Bang, \emph{Om quasi-analytiske {F}unktioner}, Ph.D. thesis, University of
  Copenhagen, 1946.

\bibitem{Bang53}
\bysame, \emph{The theory of metric spaces applied to infinitely differentiable
  functions}, Math. Scand. \textbf{1} (1953), 137--152.

\bibitem{Baran:1995aa}
M.~Baran and W.~Plesniak, \emph{{Markov's Exponent of Compact Sets in $\mathbb
  C^n$}}, Proc. Amer. Math. Soc. \textbf{123} (1995), no.~9, 2785--2791.

\bibitem{Belotto-da-Silva:2017aa}
A.~{Belotto da Silva}, I.~Biborski, and E.~Bierstone, \emph{Solutions of
  quasianalytic equations}, Selecta Math. (N.S.) \textbf{23} (2017), no.~4,
  2523--2552.

\bibitem{Beurling61}
A.~Beurling, \emph{Quasi-analyticity and general distributions}, Lecture notes,
  AMS Summer Institute, Stanford, 1961.

\bibitem{BM97}
E.~Bierstone and P.~D. Milman, \emph{Canonical desingularization in
  characteristic zero by blowing up the maximum strata of a local invariant},
  Invent. Math. \textbf{128} (1997), no.~2, 207--302.

\bibitem{BM04}
\bysame, \emph{Resolution of singularities in {D}enjoy-{C}arleman classes},
  Selecta Math. (N.S.) \textbf{10} (2004), no.~1, 1--28.

\bibitem{Bjoerck66}
G.~Bj{\"o}rck, \emph{Linear partial differential operators and generalized
  distributions}, Ark. Mat. \textbf{6} (1966), 351--407.

\bibitem{Bochnak70}
J.~Bochnak, \emph{Analytic functions in {B}anach spaces}, Studia Math.
  \textbf{35} (1970), 273--292.

\bibitem{Bochnak:2020tz}
J.~Bochnak, J.~Koll{\'{a}}r, and W.~Kucharz, \emph{Checking real analyticity on
  surfaces}, Journal de Math{\'{e}}matiques Pures et Appliqu{\'{e}}es
  \textbf{133} (2020), 167--171.

\bibitem{Boman63}
J.~Boman, \emph{On the intersection of classes of infinitely differentiable
  functions}, Ark. Mat. \textbf{5} (1963/1965), 301--309.

\bibitem{MR1382568}
\bysame, \emph{Microlocal quasianalyticity for distributions and
  ultradistributions}, Publ. Res. Inst. Math. Sci. \textbf{31} (1995), no.~6,
  1079--1095.

\bibitem{BBMT91}
J.~Bonet, R.~W. Braun, R.~Meise, and B.~A. Taylor, \emph{Whitney's extension
  theorem for nonquasianalytic classes of ultradifferentiable functions},
  Studia Math. \textbf{99} (1991), no.~2, 155--184.

\bibitem{BMM07}
J.~Bonet, R.~Meise, and S.~N. Melikhov, \emph{A comparison of two different
  ways to define classes of ultradifferentiable functions}, Bull. Belg. Math.
  Soc. Simon Stevin \textbf{14} (2007), 424--444.

\bibitem{BonetMeiseTaylor89}
J.~Bonet, R.~Meise, and B.~A. Taylor, \emph{Whitney's extension theorem for
  ultradifferentiable functions of {R}oumieu type}, Proc. Roy. Irish Acad.
  Sect. A \textbf{89} (1989), no.~1, 53--66.

\bibitem{BonetMeiseTaylor92}
\bysame, \emph{On the range of the {B}orel map for classes of nonquasianalytic
  functions}, Progress in functional analysis ({P}e\~niscola, 1990),
  North-Holland Math. Stud., vol. 170, North-Holland, Amsterdam, 1992,
  pp.~97--111.

\bibitem{BMT90}
R.~W. Braun, R.~Meise, and B.~A. Taylor, \emph{Ultradifferentiable functions
  and {F}ourier analysis}, Results Math. \textbf{17} (1990), no.~3-4, 206--237.

\bibitem{Bruna80}
J.~Bruna, \emph{An extension theorem of {W}hitney type for non-quasi-analytic
  classes of functions}, J. London Math. Soc. (2) \textbf{22} (1980), no.~3,
  495--505.

\bibitem{Bruna80/81}
\bysame, \emph{On inverse-closed algebras of infinitely differentiable
  functions}, Studia Math. \textbf{69} (1980/81), no.~1, 59--68.

\bibitem{Carleman23}
T.~Carleman, \emph{{Sur le calcul effectif d'une fonction quasi analytique dont
  on donne les d\'eriv\'ees en un point.}}, C. R. Acad. Sci. Paris \textbf{176}
  (1923), 64--65 (French).

\bibitem{zbMATH02598188}
\bysame, \emph{{Sur les fonctions ind\'efiniment d\'erivables.}}, {C. R.
  Acad. Sci., Paris} \textbf{177} (1923), 422--424 (French).

\bibitem{zbMATH02599917}
\bysame, \emph{{Sur les fonctions quasi-analytiques.}}, {5. Kongre{\ss} der
  Skandinav. Mathematiker in Helsingfors, 4.--7. Juli 1922. Helsingfors:
  Akadem. Buchh., 181-196 (1923).}, 1923.

\bibitem{Carleson:1961wa}
L.~Carleson, \emph{On universal moment problems.}, Math. Scand.
  \textbf{9} (1961), 197--206.

\bibitem{Cartan40}
H.~Cartan, \emph{Sur les classes de fonctions d\'efinies par des in\'egalit\'es
  portant sur leurs d\'eriv\'ees successives}, Actual. Sci. Ind., no. 867,
  Hermann et Cie., Paris, 1940.

\bibitem{ChaumatChollet94}
J.~Chaumat and A.-M. Chollet, \emph{Surjectivit\'e de l'application restriction
  \`a un compact dans des classes de fonctions ultradiff\'erentiables}, Math.
  Ann. \textbf{298} (1994), no.~1, 7--40.

\bibitem{ChaumatChollet98}
\bysame, \emph{Propri\'et\'es de l'intersection des classes de {G}evrey et de
  certaines autres classes}, Bull. Sci. Math. \textbf{122} (1998), no.~6,
  455--485.

\bibitem{CC04}
\bysame, \emph{Division par un polyn\^ome hyperbolique}, Canad. J. Math.
  \textbf{56} (2004), no.~6, 1121--1144.

\bibitem{Childress76}
C.~L. Childress, \emph{Weierstrass division in quasianalytic local rings},
  Canad. J. Math. \textbf{28} (1976), no.~5, 938--953.

\bibitem{MR225957}
P.~J. Cohen, \emph{A simple proof of the {D}enjoy-{C}arleman theorem}, Amer.
  Math. Monthly \textbf{75} (1968), 26--31.

\bibitem{Denjoy21}
A.~Denjoy, \emph{Sur les fonctions quasi-analytiques de variable r\'eelle}, C.
  R. Acad. Sci. Paris \textbf{173} (1921), 1320--1322.

\bibitem{Dynkin80}
E.~M. Dyn'kin, \emph{{Pseudoanalytic extension of smooth functions. The uniform
  scale.}}, Transl., Ser. 2, Am. Math. Soc. \textbf{115} (1980), 33--58
  (English).

\bibitem{MR1253229}
\bysame, \emph{The pseudoanalytic extension}, J. Anal. Math. \textbf{60}
  (1993), 45--70.

\bibitem{ElkhadiriSfouli08}
A.~Elkhadiri and H.~Sfouli, \emph{Weierstrass division theorem in quasianalytic
  local rings}, Studia Math. \textbf{185} (2008), no.~1, 83--86.

\bibitem{FernandezGalbis06}
C.~Fern{\'a}ndez and A.~Galbis, \emph{Superposition in classes of
  ultradifferentiable functions}, Publ. Res. Inst. Math. Sci. \textbf{42}
  (2006), no.~2, 399--419.

\bibitem{Franken:1993tn}
U.~Franken, \emph{Continuous linear extension of ultradifferentiable functions
  of {B}eurling type}, Mathematische Nachrichten \textbf{164} (1993), no.~1,
  119--139.

\bibitem{Franken:1994uh}
\bysame, \emph{Examples of compact sets with non-empty interior which do not
  admit a continuous linear extension operator for ultradifferentiable
  functions of {B}eurling type}, Archiv der Mathematik \textbf{62} (1994), no.~3,
  239--247.

\bibitem{Franken:1995ta}
\bysame, \emph{Extension of functions with $\omega$-rapid polynomial
  approximation}, Journal of Approximation Theory \textbf{82} (1995), no.~1,
  88--98.

\bibitem{FurdosNenningRainer}
S.~F{{\"u}}rd{{\"o}}s, D.~N. Nenning, A.~Rainer, and G.~Schindl, \emph{Almost
analytic extensions of ultradifferentiable functions with applications to
microlocal analysis}, J. Math. Anal. Appl.
  \textbf{481} (2020), no.~1, doi:10.1016/j.jmaa.2019.123451.

\bibitem{Gorny39}
A.~Gorny, \emph{Contribution \`a l'\'etude des fonctions d\'erivables d'une
  variable r\'eelle}, Acta Math. \textbf{71} (1939), 317--358.

\bibitem{Hadamard:1912aa}
J.~{Hadamard}, \emph{{Sur la g\'en\'eralisation de la notion de fonction
  analytique}}, Bull. Soc. Math. France \textbf{40} (1912), 28--29,
  (suppl{\'e}ment sp{\'e}cial: vie de la soci{\'e}t{\'e}, s{\'e}ance du 28
  f{\'e}vrier 1912).

\bibitem{Hestenes41}
M.~R. Hestenes, \emph{Extension of the range of a differentiable function},
  Duke Mathematical Journal \textbf{8} (1941), no.~1, 183--192.

\bibitem{Hoermander83I}
L.~H\"ormander, \emph{The analysis of linear partial differential operators.
  {I}}, Grundlehren der Mathematischen Wissenschaften [Fundamental Principles
  of Mathematical Sciences], vol. 256, Springer-Verlag, Berlin, 1983,
  Distribution theory and Fourier analysis.

\bibitem{Jaffe16}
E.~Y. Jaffe, \emph{Pathological phenomena in {D}enjoy-{C}arleman classes},
  Canad. J. Math. \textbf{68} (2016), no.~1, 88--108.

\bibitem{Kiro:2020vc}
A.~Kiro, \emph{Taylor coefficients of smooth functions}, Journal
  d{\textquotesingle}Analyse Math{\'{e}}matique \textbf{142} (2020), no.~1,
  193--269.

\bibitem{Komatsu73}
H.~Komatsu, \emph{Ultradistributions. {I}. {S}tructure theorems and a
  characterization}, J. Fac. Sci. Univ. Tokyo Sect. IA Math. \textbf{20}
  (1973), 25--105.

\bibitem{KMRc}
A.~Kriegl, P.~W. Michor, and A.~Rainer, \emph{The convenient setting for
  non-quasianalytic {D}enjoy--{C}arleman differentiable mappings}, J. Funct.
  Anal. \textbf{256} (2009), 3510--3544.

\bibitem{KMRq}
\bysame, \emph{The convenient setting for quasianalytic {D}enjoy--{C}arleman
  differentiable mappings}, J. Funct. Anal. \textbf{261} (2011), 1799--1834.

\bibitem{KMRu}
\bysame, \emph{The convenient setting for {D}enjoy--{C}arleman differentiable
  mappings of {B}eurling and {R}oumieu type}, Rev. Mat. Complut. \textbf{28}
  (2015), no.~3, 549--597.

\bibitem{MR964961}
M.~Langenbruch, \emph{Extension of ultradifferentiable functions of {R}oumieu
  type}, Arch. Math. (Basel) \textbf{51} (1988), no.~4, 353--362.

\bibitem{Langenbruch89}
\bysame, \emph{{Ultradifferentiable functions on compact intervals.}},
  {Math. Nachr.} \textbf{140} (1989), 109--126 (English).

\bibitem{Langenbruch:1994vt}
\bysame, \emph{{Differentiable functions and the
  \(\overline{\partial}\)-complex}}, {Functional analysis. Proceedings of the
  Essen Conference, held in Essen, Germany, November 24 - 30, 1991}, New York,
  NY: Dekker, 1994, pp.~415--434 (English).

\bibitem{Langenbruch94}
\bysame, \emph{Extension of ultradifferentiable functions}, Manuscripta
  Math. \textbf{83} (1994), no.~2, 123--143.

\bibitem{Langenbruch:1999aa}
\bysame, \emph{Analytic extension of smooth functions}, Results in Mathematics
  \textbf{36} (1999), no.~3-4, 281--296.

\bibitem{Langenbruch03}
\bysame, \emph{A general approximation theorem of {W}hitney type}, RACSAM Rev.
  R. Acad. Cienc. Exactas F\'\i s. Nat. Ser. A Mat. \textbf{97} (2003), no.~2,
  287--303.

\bibitem{Mandelbrojt40}
S.~Mandelbrojt, \emph{Sur les fonctions ind{\'e}finiment d{\'e}rivables}, Acta
  Math. \textbf{72} (1940), 15--29.

\bibitem{Mandelbrojt52}
\bysame, \emph{S\'eries adh\'erentes, r\'egularisation des suites,
  applications}, Gauthier-Villars, Paris, 1952.

\bibitem{Matsumoto84}
W.~Matsumoto, \emph{Characterization of the separativity of ultradifferentiable
  classes}, J. Math. Kyoto Univ. \textbf{24} (1984), no.~4, 667--678.

\bibitem{Meise:1987tp}
R.~Meise and B.~A. Taylor, \emph{{Splitting of closed ideals in
  (DFN)-algebras of entire functions and the property (DN)}}, {Trans. Am. Math.
  Soc.} \textbf{302} (1987), 341--370 (English).

\bibitem{MeiseTaylor88}
\bysame, \emph{Whitney's extension theorem for ultradifferentiable functions of
  {B}eurling type}, Ark. Mat. \textbf{26} (1988), no.~2, 265--287.

\bibitem{Meise:1989tj}
\bysame, \emph{A decomposition lemma for entire functions and
  its applications to spaces of ultradifferentiable functions}, Mathematische
  Nachrichten \textbf{142} (1989), no.~1, 45--72.

\bibitem{Meise:1989un}
\bysame, \emph{Linear extension operators for
  ultradifferentiable functions of {B}eurling type on compact sets}, Amer. J.
  Math. \textbf{111} (1989), no.~2, 309--337.

\bibitem{MeiseVogt97}
R.~Meise and D.~Vogt, \emph{Introduction to functional analysis}, Oxford
  Graduate Texts in Mathematics, vol.~2, The Clarendon Press Oxford University
  Press, New York, 1997.

\bibitem{NazarovSodinVolberg04}
F.~Nazarov, M.~Sodin, and A.~Volberg, \emph{Lower bounds for quasianalytic
  functions. {I}. {H}ow to control smooth functions}, Math. Scand. \textbf{95}
  (2004), no.~1, 59--79.

\bibitem{Nenning:2021wd}
D.~N. Nenning, A.~Rainer, and G.~Schindl, \emph{Nonlinear conditions for
  ultradifferentiability},  (2021).

\bibitem{Parusinski:2014tg}
A.~Parusi{\'{n}}ski and J.-P. Rolin, \emph{A note on the {W}eierstrass
  preparation theorem in quasianalytic local rings}, Canadian Mathematical
  Bulletin \textbf{57} (2014), no.~3, 614--620.

\bibitem{Petzsche88}
H.-J. Petzsche, \emph{On {E}. {B}orel's theorem}, Math. Ann. \textbf{282}
  (1988), no.~2, 299--313.

\bibitem{PetzscheVogt84}
H.-J. Petzsche and D.~Vogt, \emph{Almost analytic extension of
  ultradifferentiable functions and the boundary values of holomorphic
  functions}, Math. Ann. \textbf{267} (1984), no.~1, 17--35.

\bibitem{Plesniak:1990aa}
W.~Ple{\'{s}}niak, \emph{Markov{\textquotesingle}s inequality and the existence
  of an extension operator for $C^\infty$ functions}, Journal of Approximation
  Theory \textbf{61} (1990), no.~1, 106--117.

\bibitem{Plesniak:1994ti}
\bysame, \emph{{Extension and polynomial approximation of
  ultradifferentiable functions in \(\mathbb{R}^ N\)}}, {Bull. Soc. R. Sci.
  Li\`ege} \textbf{63} (1994), no.~5, 393--402 (English).

\bibitem{Rainer:2019aa}
A.~Rainer, \emph{Quasianalytic ultradifferentiability cannot be tested in lower
  dimensions}, Bull. Belg. Math. Soc. Simon Stevin \textbf{26} (2019),
  505--517.

\bibitem{Rainer:2020ab}
\bysame, \emph{{On the extension of Whitney ultrajets of Beurling type}},
  Results Math. (2021), Article Number 36,
  https://doi.org/10.1007/s00025-021-01347-z.

\bibitem{RainerSchindl12}
A.~Rainer and G.~Schindl, \emph{Composition in ultradifferentiable classes},
  Studia Math. \textbf{224} (2014), no.~2, 97--131.

\bibitem{RainerSchindl14}
\bysame, \emph{Equivalence of stability properties for ultradifferentiable
  function classes}, Rev. R. Acad. Cienc. Exactas Fis. Nat. Ser. A Math.
  RACSAM. \textbf{110} (2016), no.~1, 17--32.

\bibitem{RainerSchindl16a}
\bysame, \emph{Extension of {W}hitney jets of controlled growth}, Math. Nachr.
  \textbf{290} (2017), no.~14-15, 2356--2374, doi:10.1002/mana.201600321.

\bibitem{RainerSchindl15}
\bysame, \emph{On the {B}orel mapping in the quasianalytic setting}, Math.
  Scand. \textbf{121} (2017), 293--310.

\bibitem{Rainer:2019ac}
\bysame, \emph{On the extension of {W}hitney ultrajets}, Studia Math.
  \textbf{245} (2019), no.~3, 255--287.

\bibitem{Rainer:2020aa}
\bysame, \emph{On the extension of {W}hitney ultrajets, {II}}, Studia Math.
  \textbf{250} (2020), no.~3, 283--295.

\bibitem{RolinSpeisseggerWilkie03}
J.-P. Rolin, P.~Speissegger, and A.~J. Wilkie, \emph{Quasianalytic
  {D}enjoy-{C}arleman classes and o-minimality}, J. Amer. Math. Soc.
  \textbf{16} (2003), no.~4, 751--777 (electronic).

\bibitem{Rudin62}
W.~Rudin, \emph{Division in algebras of infinitely differentiable functions},
  J. Math. Mech. \textbf{11} (1962), 797--809.

\bibitem{Schindl15}
G.~Schindl, \emph{Characterization of ultradifferentiable test functions
  defined by weight matrices in terms of their {F}ourier transform}, Note di
  Matematica \textbf{36} (2016), no.~2, 1--35, doi:10.1285/i15900932v36n2p1.

\bibitem{Schmets:1999aa}
J.~Schmets and M.~Valdivia, \emph{Analytic extension of ultradifferentiable
  {W}hitney jets}, Collect. Math. \textbf{50} (1999), no.~1, 73--94.

\bibitem{SchmetsValdivia00}
\bysame, \emph{Extension maps in ultradifferentiable and ultraholomorphic
  function spaces}, Studia Math. \textbf{143} (2000), no.~3, 221--250.

\bibitem{Schmets:2003aa}
\bysame, \emph{On certain extension theorems in the mixed {B}orel setting}, J.
  Math. Anal. Appl. \textbf{297} (2004), no.~2, 384--403, Special issue
  dedicated to John Horv{{\'a}}th.

\bibitem{Siciak70}
J.~Siciak, \emph{A characterization of analytic functions of {$n$} real
  variables}, Studia Math. \textbf{35} (1970), 293--297.

\bibitem{Stein70}
E.~M. Stein, \emph{Singular integrals and differentiability properties of
  functions}, Princeton Mathematical Series, No. 30, Princeton University
  Press, Princeton, N.J., 1970.

\bibitem{Thilliez08}
V.~Thilliez, \emph{On quasianalytic local rings}, Expo. Math. \textbf{26}
  (2008), no.~1, 1--23.

\bibitem{Thilliez10}
\bysame, \emph{Smooth solutions of quasianalytic or ultraholomorphic
  equations}, Monatsh. Math. \textbf{160} (2010), no.~4, 443--453.

\bibitem{Vogt82}
D.~Vogt, \emph{Eine {C}harakterisierung der {P}otenzreihenr\"{a}ume von
  endlichem {T}yp und ihre {F}olgerungen}, Manuscripta Math. \textbf{37}
  (1982), no.~3, 269--301.

\bibitem{Vogt:1980th}
D.~Vogt and M.~Wagner, \emph{{Charakterisierung der Quotientenr{\"a}ume von s
  und eine Vermutung von Martineau}}, Studia Mathematica \textbf{67} (1980),
  no.~3, 225--240.

\bibitem{Whitney34a}
H.~Whitney, \emph{Analytic extensions of differentiable functions defined in
  closed sets}, Trans. Amer. Math. Soc. \textbf{36} (1934), no.~1, 63--89.


\bibitem{Widder41}
D.~V. Widder, \emph{The {L}aplace {T}ransform}, Princeton Mathematical Series,
  v. 6, Princeton University Press, Princeton, N. J., 1941.

\end{thebibliography}

\def\cprime{$'$}
\providecommand{\bysame}{\leavevmode\hbox to3em{\hrulefill}\thinspace}
\providecommand{\MR}{\relax\ifhmode\unskip\space\fi MR }
\providecommand{\MRhref}[2]{%
  \href{http://www.ams.org/mathscinet-getitem?mr=#1}{#2}
}
\providecommand{\href}[2]{#2}

\end{document}